\def\smono[#1]{\ar@{->}[#1]|@{|}}
\newcommand{\xymono}{\ar@{ >->}}
\newcommand{\xyepi}{\ar@{->>}}
\newcommand{\xyhookrightarrow}{\ar@{^(->}}
\newtheorem{theorem}{Theorem}[section]
\newtheorem{lemma}[theorem]{Lemma}
\newtheorem{corollary}[theorem]{Corollary}
\newtheorem{proposition}[theorem]{Proposition}
\theoremstyle{definition}
\newtheorem{definition}[theorem]{Definition}
\newtheorem{example}[theorem]{Example}
\newtheorem{remark}[theorem]{Remark}
 \numberwithin{equation}{section}
\newcommand{\Z}{\mathbb{Z}}
\newcommand{\A}{\mathcal{A}}
\newcommand{\scA}{\mathscr{A}}
\newcommand{\B}{\mathcal{B}}
\newcommand{\scB}{\mathscr{B}}
\newcommand{\C}{\mathcal{C}}
\newcommand{\scC}{\mathscr{C}}
\newcommand{\D}{\mathcal{D}}
\newcommand{\E}{\mathscr{E}}
\newcommand{\F}{\mathscr{F}}
\renewcommand{\H}{\mathcal{H}}
\newcommand{\M}{\mathcal{M}}
\newcommand{\N}{\mathbb{N}}
\newcommand{\scP}{\mathscr{P}}
\renewcommand{\P}{\mathcal{P}}
\newcommand{\scS}{\mathscr{S}}
\newcommand{\X}{\mathcal{X}}
\newcommand{\ffi}{\varphi}
\newcommand{\eps}{\varepsilon}
\newcommand{\colim}{\operatorname{colim}}
\newcommand{\Spec}{\operatorname{Spec}}
\newcommand{\Sch}{\operatorname{Sch}}
\newcommand{\im}{\operatorname{Im}}
\newcommand{\Cat}{\operatorname{Cat}}
\newcommand{\Hom}{\operatorname{Hom}}
\newcommand{\T}{\mathcal{T}}
\newcommand{\Fun}{\operatorname{Fun}}
\newcommand{\Vect}{\operatorname{Vect}}
\newcommand{\Ch}{\operatorname{Ch}}
\newcommand{\can}{\operatorname{can}}
\newcommand{\Cone}{\operatorname{Cone}}
\newcommand{\ad}{\operatorname{ad}}
\newcommand{\Ar}{\operatorname{Ar}}
\newcommand{\RR}{\mathcal{R}}
\newcommand{\pt}{\operatorname{pt}}
\newcommand{\dgCatWD}{\operatorname{dgCatWD}}
\newcommand{\dgCat}{\operatorname{dgCat}}
\newcommand{\dgCatD}{\operatorname{dgCatD}}
\newcommand{\Sp}{\operatorname{Sp}}
\newcommand{\Top}{\operatorname{Top}}
\newcommand{\CatD}{\operatorname{CatD}}
\newcommand{\GW}{\mathbb{G}W}
\newcommand{\str}{\operatorname{str}}
\newcommand{\bH}{\mathbb{H}}
\newcommand{\Mod}{\operatorname{Mod}}
\newcommand{\dgMod}{\operatorname{dgMod}}
\newcommand{\dgFun}{\operatorname{dgFun}}
\newcommand{\quis}{\operatorname{quis}}
\newcommand{\rk}{\operatorname{rk}}
\newcommand{\K}{\mathcal{K}}
\newcommand{\TriD}{\operatorname{TriD}}
\newcommand{\pretr}{\operatorname{ptr}}
\newcommand{\Map}{\operatorname{Map}}
\newcommand{\SH}{\operatorname{SH}}
\newcommand{\Sing}{\operatorname{Sing}}
\newcommand{\cpt}{\operatorname{cpt}}
\newcommand{\Ab}{\operatorname{Ab}}
\newcommand{\sat}{\operatorname{sat}}
\newcommand{\rr}{\mathbb{R}}
\newcommand{\cc}{\mathbb{C}}
\newcommand{\hh}{\mathbb{H}}
\newcommand{\ff}{\mathbb{F}}
\newcommand{\U}{\mathcal{U}}
\newcommand{\V}{\mathcal{V}}
\newcommand{\W}{\mathcal{W}}
\renewcommand{\1}{\mathds{1}}
\newcommand{\Qed}{\hfil\penalty-5\hspace*{\fill}$\square$}
\newcommand{\poSet}{\operatorname{poSet}}
\newcommand{\poSetD}{\operatorname{poSetD}}
\newcommand{\bK}{\mathbb{K}}
\newcommand{\meps}{\varepsilon}
\newcommand{\dgCatW}{\operatorname{dgCatW}}
\newcommand{\Ev}{\operatorname{Ev}}
\newcommand{\BiSp}{\operatorname{BiSp}}
\newcommand{\Sym}{\operatorname{Sym}}
\newcommand{\ie}{{\it i.e.}}
\newcommand{\pp}{\mathbb{P}}
\newcommand{\Proj}{\operatorname{Proj}}
\newcommand{\sPerf}{\operatorname{sPerf}}
\newcommand{\coker}{\operatorname{coker}}
\newcommand{\bA}{\mathbb{A}}
\newcommand{\scU}{\mathscr{U}}
\newcommand{\Qcoh}{\operatorname{Qcoh}}
\newcommand{\Coh}{\operatorname{Coh}}
\newcommand{\bL}{\mathbb{L}}
\newcommand{\Q}{\mathbb{Q}}
\newcommand{\bk}{k}
\newcommand{\supp}{\operatorname{supp}}
\newcommand{\red}[1]{#1}
\newcommand{\Ac}{\operatorname{Ac}}
\newcommand{\scT}{\mathscr{T}}
\newtheorem*{exampleNoNb}{Example}
\newtheorem*{remarkNoNb}{Remark}
\title[Hermitian $K$-theory and derived equivalences]{Hermitian $K$-theory, derived equivalences and Karoubi's fundamental theorem}
\author{Marco Schlichting}
\address{Marco Schlichting, Mathematics Institute,
Zeeman Building,
University of Warwick,
Coventry CV4 7AL, UK} 
\thanks{}
\email{m.schlichting@warwick.ac.uk}
\subjclass{}
\keywords{}
\begin{document}
\bibliographystyle{alpha}

\begin{abstract}
Within the framework of dg categories with weak equivalences and duality that have uniquely $2$-divisible mapping complexes,
we show that higher Grothendieck-Witt groups (aka. hermitian $K$-groups) are invariant under derived equivalences and that Morita exact sequences induce long exact sequences of Grothendieck-Witt groups.
This implies an algebraic Bott sequence and a new proof and generalisation of Karoubi's Fundamental Theorem.
For the higher Grothendieck-Witt groups of vector bundles of (possibly singular) schemes $X$ with an ample family of line-bundles such that $\frac{1}{2}\in \Gamma(X,O_X)$, we obtain Mayer-Vietoris long exact sequences for Nisnevich coverings and blow-ups along regularly embedded centers, projective bundle formulas, and a Bass fundamental theorem.
For coherent Grothendieck-Witt groups, we obtain a localization theorem analogous to Quillen's $K'$-localization theorem.
\end{abstract}

\maketitle

\tableofcontents

\section*{Introduction}

By a result of Thomason \cite[Theorem 1.9.8]{TT}, algebraic K-theory is ``invariant under derived equivalences''.
This is a tremendously powerful fact.
Together with Waldhausen's Fibration Theorem \cite[Theorem 1.6.4]{wald:spaces}
it implies a Localization Theorem which - omitting hypothesis - says that a ``short exact sequence of triangulated categories'' induces a long exact sequence of algebraic $K$-groups.
This property of algebraic $K$-theory implies long exact Mayer-Vietoris sequences for Nisnevich coverings \cite{TT} and for blow-ups along regularly embedded centers \cite{Thomason:eclate}.
Both results are fundamental for the recent advances in our understanding of the algebraic $K$-theory of singular varieties \cite{CHSW}, \cite{CHW:Vorst}.

In this article, we investigate the problem of ``invariance under derived equivalences'' for the higher Grothendieck-Witt groups introduced by the author in \cite{myMV}.
These groups naturally occur in $\bA^1$-homotopy theory \cite{Morel:book}, \cite{Fasel:Asok} and are to oriented Chow groups what algebraic K-theory is to ordinary Chow groups.
They are the algebraic analog of the topological $KO$-groups, or, in the context of schemes with involution, they are the algebraic analog of Atiyah's $KR$-theory.

Within the framework of dg categories with weak equivalences and duality that have uniquely $2$-divisible mapping complexes,
we show in Theorems \ref{thm:Invariance} and \ref{thm:InvarianceKGW} that the higher Grothendieck-Witt groups are invariant under derived equivalences.
This is in some sense the best one can hope for since without an assumption such as ``uniquely $2$-divisible homomorphism groups'', there are examples of derived equivalences that do not induce isomorphisms of Grothendieck-Witt groups; see Proposition \ref{enum:prop:Counterex}.

Together with our Fibration Theorem \cite[Theorem 6]{myMV} this implies 
Theorems \ref{thm:LcnConn1} and \ref{thm:locnForKGW} which are the analogs for higher Grothendieck-Witt groups of the Thomason-Waldhausen Localization Theorem mentioned above.
We obtain several new results as application of our Localization Theorem: 
algebraic versions of the Bott sequence in topology
(Theorems \ref{thm:PeriodicityExTriangle} and \ref{thm:PeriodicityExTriangleForKGW}), a new and more general proof of Karoubi's Fundamental Theorem \cite{Karoubi:Annals}  (Theorem \ref{thm:KaroubiFundThm}), Nisnevich descent for (possibly singular) noetherian schemes with an ample family of line bundles (Theorem \ref{thm:NisnDescent}), 
a Mayer-Vietoris property for blow-ups along regularly embedded centers (Theorem \ref{thm:blowup}),
projective bundle formulas (Theorem \ref{thm:projLineBld} and Remark \ref{rmk:otherProjBdlFormulas}),
a Bass fundamental theorem for Grothendieck-Witt groups (Theorem \ref{thm:Bassfund}), and
an analog for Grothendieck-Witt groups of Quillen's $K'$-theory localization theorem based on coherent sheaves (Theorem \ref{thm:LocnCohGW}).

Special cases of our results have been obtained by other authors using different methods.
Notably, Karoubi proved his Fundamental Theorem for $\Z[1/2]$-algebras with involution in \cite{Karoubi:Annals}, and Hornbostel proved Nisnevich descent and a version of the Projective Line Bundle Theorem both in the case of regular noetherian separated $\Z[1/2]$-schemes in \cite{hornbostel:A1reps}. 
At the very least all geometric results regarding singular schemes are new.
Our interpretation of Karoubi's $U$ and $V$-theories as the odd shifted higher Grothendieck-Witt groups and our careful study of the multiplicative structure of higher Grothendieck-Witt groups in Sections \ref{sec:GWspectrum} and \ref{sec:KGWspectrum} should be of independent interest.

Results of this article have already been used in \cite{fasel:srinivas}, \cite{zibrowius}, \cite{PaninWalter}, in the solution of a conjecture of Suslin on the structure of stably free projective modules \cite{Fasel:Rao:Swan} and in the solution of a conjecture of Williams and the analog of the Quillen-Lichtenbaum conjecture for hermitian K-theory \cite{meBerrickKaroubiOestvar}.
\vspace{1ex}

We give an outline of the article.
Section \ref{sect:GWgrpsDGcats} doesn't contain anything essentially new.
We recall definitions and basic facts about dg categories and introduce the obvious notion of a dg category with weak equivalences and duality.
The main point here is our treatment of the pretriangulated hull $\A^{\pretr}$ of a dg category $\A$ (Definition \ref{dfn:pretr}) which makes it clear that $\A^{\pretr}$ is a dg category with weak equivalences and duality whenever $\A$ is, and the assignment $\A \mapsto \A^{\pretr}$ is symmetric monoidal in some sense; see Section \ref{subsec:GWsymmSeq}.
The Grothendieck-Witt group $GW_0(\scA)$ of a dg category with (weak equivalences and) duality is defined to be the Grothendieck-Witt group of the exact category with weak equivalences and duality in the sense of \cite[Definition 1]{myMV} of the category $Z^0\scA^{\pretr}$ of closed degree zero maps in the pretriangulated hull of $\scA$.
When $\scA$ is a dg algebra $A$ then $GW_0(A)$ is the Grothendieck-Witt group of finitely generated semi-free dg $A$-modules.
When $\scA = \sPerf(X)$ is the dg category of bounded  complexes of vector bundles on a scheme $X$ with weak equivalences the quasi-isomorphisms, then 
$GW_0(\sPerf(X))$ is the Grothendieck-Witt group \red{$GW_0(X)$} of $X$ as introduced by Knebusch in \cite{Knebusch:L(X)}.

In Section \ref{sec:ConeCounterex}, we introduce the ``duality preserving'' cone functor, a version for dg categories with duality of the functor that sends a map of  complexes to its cone.
We use the cone functor to construct in Proposition \ref{enum:prop:Counterex}  examples of functors between dg categories with weak equivalences and duality which induce equivalences of derived categories but the induced map on Grothendieck-Witt groups is not an isomorphism.
Our examples use dg categories for which the mapping complexes are not uniquely $2$-divisible.

In Section \ref{sec:GWofTriD}
we slightly generalize the notion of a triangulated category with duality given in \cite{Balmer:TWGI} in that we don't require the shift of the triangulated category to be an isomorphism; it is only required to be an equivalence.
As in \cite{Balmer:TWGI} and \cite{walter:GWTrPreprint} we have a Witt and a Grothendieck-Witt group of such categories, but, contrary to {\it loc.cit.}, our presentation doesn't use shifted dualities.
For a dg category with weak equivalences and duality $\scA$, we verify in Lemma \ref{lem:TAisTriD} that the triangulated category $\T\scA$ of $\scA$, that is, the localization of the homotopy category $H^0\scA^{\pretr}$ of the pretriangulated hull of $\scA$ with respect to the weak equivalences, is indeed a triangulated category with duality.
We show in Proposition \ref{prop:GW0AisGW0TA} that the Grothendieck-Witt group of a dg category with weak equivalences and duality $\scA$ agrees with the Grothendieck-Witt group of its triangulated category $\T\scA$ provided its mapping complexes are uniquely $2$-divisible, a condition we denote by $\frac{1}{2}\in \scA$.
Without an assumption such as $\frac{1}{2}\in \scA$, this proposition cannot hold as shown by the examples in Section \ref{sec:ConeCounterex}.

In Section \ref{sec:multiSimplRdot}, we recall from \cite{myMV} the hermitian analog $\RR_{\bullet}$ (denoted by $S^e_{\bullet}$ in \cite{myMV}) of Waldhausen's $S_{\bullet}$-construction and introduce its iterated versions $\RR_{\bullet}^{(n)}$, $n\in \N$.
The main result here is Proposition \ref{prop:SpaceKaroubiProto1} which is the basis for the construction of the Grothendieck-Witt spectrum functor to be introduced in Section \ref{sec:GWspectrum}.
Together with Corollary \ref{cor:RFunIsSdot} it implies Karoubi's Fundamental Theorem \cite{Karoubi:Annals}.

In Section \ref{sec:GWspectrum}, we construct the Grothendieck-Witt theory functor $GW$ which is a symmetric monoidal functor from dg categories with weak equivalences and duality (that have uniquely $2$-divisible mapping complexes) to symmetric spectra of topological spaces.
The higher Grothendieck-Witt groups of a dg category with weak equivalences and duality $\scA$ are the homotopy groups $GW_i(\scA)$ of its Grothendieck-Witt spectrum $GW(\scA)$.
To explain the idea of the construction, recall that Waldhausen's $S_{\bullet}$-construction can be iterated to yield a symmetric spectrum
$\{ w\scA^{\pretr}, wS_{\bullet}\scA^{\pretr}, wS_{\bullet}^{(2)}\scA^{\pretr},...\}$ which is a positive $\Omega$-spectrum.
Similarly, the $\RR_{\bullet}$-construction can be iterated.
However, contrary to the $K$-theory case, $(w\RR_{\bullet}^{(2)}\scA^{\pretr})_h$
is not a delooping of $(w\RR_{\bullet}\scA^{\pretr})_h$.
To construct a spectrum as in the $K$-theory case, we note that the pretriangulated hull $\scA^{\pretr}$ can be equipped with many dualities.
For each integer $n$, we have a dg category with weak equivalences and duality $\scA^{[n]}$ all having the same pretriangulated hull as $\scA$,
but they are equipped with dualities that depend on $n$.
When $\scA = \sPerf(X)$, the categories $\scA^{[n]}$ correspond to the dg categories $\sPerf(X)$ equipped with the shifted dualities $E\mapsto Hom^{\bullet}_{O_X}(E,O_X[n])$.
With appropriate models $\scA^{(n)}$ for $\scA^{[n]}$, we show in Theorem \ref{thm:GWAOmegaInfinity}
that the sequence 
$$GW(\scA) = \{ (w\scA)_h, (w\RR_{\bullet}\scA^{(1)})_h, (w\RR_{\bullet}^{(2)}\scA^{(2)})_h,...\}$$
together with the bonding maps of Definition \ref{dfn:GWspectrum} is a positive symmetric $\Omega$-spectrum.
By Proposition \ref{prop:GWspecIsGWspace}, its infinite loop space is the Grothendieck-Witt space of $Z^0\scA^{\pretr}$ as defined in \cite{myMV}.
In particular, its zero-th homotopy group is the Grothendieck-Witt group of Section \ref{sect:GWgrpsDGcats}.
The spectrum $GW(\scA)$ is the Grothendieck-Witt analog of connective $K$-theory.
However (!), this spectrum is rarely connective: its negative homotopy groups are Balmer's triangular Witt groups of $\T\scA$; see Proposition \ref{prop:NegGWistriangularW}.

Section \ref{sec:PeriodInvLocn} contains our main results regarding the higher Grothendieck-Witt groups of dg categories with weak equivalences and duality.
Write $GW^{[n]}(\scA)$ for the Grothendieck-Witt spectrum of $\scA^{[n]}$.
Then $GW^{[0]}(\scA) \simeq GW(\scA)$ and $GW^{[n]}(\scA) \simeq GW^{[n+4]}(\scA)$.
We show in Theorem \ref{thm:PeriodicityExTriangle} an algebraic version of the Bott sequence in topology.
Our theorem asserts that if $\frac{1}{2}\in \scA$ then the sequence
$$GW^{[n]}(\scA) \stackrel{F}{\longrightarrow} K(\scA) \stackrel{H}{\longrightarrow} GW^{[n+1]}(\scA) \stackrel{\eta\cup}{\longrightarrow}S^1\wedge GW^{[n]}(\scA)$$
is an exact triangle in the homotopy category of spectra.
Here $F$ and $H$ are forgetful and hyperbolic functors, and $\eta \in GW^{[-1]}_{-1}(k)$ corresponds to the unit $\langle 1\rangle \in W(k)\cong GW^{[-1]}_{-1}(k)$ in the Witt ring of the base ring $k$.
This theorem implies Karoubi's Fundamental Theorem for dg categories (Theorem \ref{thm:KaroubiFundThm}) and the Localization Theorem \ref{thm:LcnConn1} asserting that a sequence $\scA_0 \to \scA_1 \to \scA_2$ of dg categories with weak equivalences and duality such that $\frac{1}{2}\in \scA_i$ and such that the sequence $\T\scA_0 \to \T\scA_1 \to \T\scA_2$ of associated triangulated categories is exact induces a homotopy fibration 
$$GW(\scA_0) \to GW(\scA_1) \to GW(\scA_2)$$
of Grothendieck-Witt spectra.
In particular, our higher Grothendieck-Witt groups are invariant under derived equivalences (Theorem \ref{thm:Invariance}).

In Section \ref{section:LandTate} we generalize results of Kobal and Williams \cite{kobal} to dg categories with weak equivalences and duality.
For such a category $\scA$, its connective $K$-theory spectrum $K(\scA)$ is equipped with a canonical $C_2$-action induced by the duality on $\scA$.
When $\frac{1}{2}\in \scA$, we show in Theorem \ref{thm:williamsKobal} a homotopy fibration 
$$K(\scA)_{hC_2} \to  GW(\scA) \to L(\scA)$$
and a homotopy cartesian square of spectra
$$
\xymatrix{
GW(\scA) \ar[r]\ar[d] & K(\scA)^{hC_2} \ar[d]\\
L(\scA) \ar[r] & \hat{\bH}(C_2,K(\scA))
}$$
where $K(\scA)_{hC_2}$, $K(\scA)^{hC_2}$ and $\hat{\bH}(C_2,K(\scA))$ denote the homotopy orbit, homotopy fixed point and the Tate spectrum of the $C_2$-spectrum $K(\scA)$.
The spectrum $L(\scA) = \eta^{-1}GW(\scA)$ is $4$-periodic and has homotopy groups the Balmer triangular Witt groups of $\scA$.
The map $GW(\scA) \to K(\scA)^{hC_2}$ is the subject of Williams' conjecture which predicts it to be a $2$-adic equivalence.
The results of this section are essential in the solution of this conjecture for schemes in \cite{meBerrickKaroubiOestvar}.

In Section \ref{sec:KGWspectrum}, we define the analog $\GW$, called {\em Karoubi-Grothendieck-Witt spectrum functor}, of the non-connective $K$-theory functor $\bK$.
As a spectrum it is equivalent to the mapping telescope of a sequence of spectra
$$GW(\scA) {\longrightarrow}  \Omega GW(S\scA) {\longrightarrow} \Omega^2 GW(S^2\scA) {\longrightarrow} \cdots $$
where $S$ denotes the algebraic suspension ring (Remark \ref{rmk:BiSpVsSpForGW}).
However, this mapping telescope does not define a symmetric monoidal functor from dg categories to spectra for the same reason that the infinite loop space functor $\Omega^{\infty}GW$ of $GW$ is not symmetric monoidal.
That's why our symmetric monoidal functor $\GW$ takes values in the category of bispectra.
The symmetric monoidal model category of bispectra is yet another model for the homotopy category of spectra.
It contains the category of symmetric spectra as a full monoidal subcategory, the inclusion of spectra into bispectra preserves stable equivalences and induces an equivalence of associated homotopy categories. 
There is a natural map $GW(\scA) \to \GW(\scA)$ which is an isomorphism in degrees $\geq 1$ and a monomorphism in degree $0$ (Theorem \ref{thm:KbKGWKGW}).
If $\T\scA$ is idempotent complete, then this map is also an isomorphism in degree $0$.
For instance, for a ring with involution $A$ with $\frac{1}{2}\in A$, the group $\GW_0(A)$ is the Grothendieck-Witt group of the category of finitely generated projective $A$-modules.
We show in Theorems \ref{thm:InvarianceKGW}, \ref{thm:locnForKGW} and \ref{thm:PeriodicityExTriangleForKGW} the $\GW$-analogs of the Invariance, Localization, and Bott Sequence theorems proved for $GW$ in Section \ref{sec:PeriodInvLocn}.

In Section \ref{sec:GWschemes} we prove the results regarding the higher Grothendieck-Witt groups of schemes mentioned above.

In Section \ref{section:Bott} we show that our results imply the $8$ homotopy equivalences in Bott's Periodicity Theorem for the infinite orthogonal group as envisioned by Karoubi in \cite{karoubi:battelle}.
Whereas Karoubi's proof of his Fundamental Theorem in \cite{Karoubi:Annals} is based on classical $8$-fold Bott periodicity, our new proof of his theorem actually implies $8$-fold Bott periodicity.
This is explained in Section \ref{subsec:ClassicalBottPeriod}.
Of course, Bott periodicity is not new, but the reader may find it a little reward at the end of a long paper.

There are two appendices to this article.
One, Section \ref{appdx:Homology}, shows that for a ring with involution $A$ such that $\frac{1}{2}\in A$, the connected component of the infinite loop space of $GW(A)$ is homotopy equivalent to Quillen's plus construction $BO(A)^+$ applied to the classifying space of the infinite orthogonal group of $A$.
This is necessary if Theorem \ref{thm:KaroubiFundThm} is truly to be a new proof of Karoubi's Fundamental Theorem.
In \cite{mygiffen}, we gave proofs of the results of Section \ref{appdx:Homology} based on Karoubi's version of the Fundamental Theorem.
Here we avoid Karoubi's theorem.
The other appendix, Section \ref{Appx:TateSp}, recollects definitions and results about symmetric spectra, homotopy orbit, homotopy fixed and Tate spectra, and about bispectra used throughout the paper.
\vspace{1ex}

{\bf DG-categories versus complicial exact categories}.
In some preprint versions of this paper I used ``complicial exact categories'' (in the sense of, for instance, \cite[Definition 3.2.2]{mySedano}) whereas I decided to work with ``dg-categories'' here. 
In practice, all complicial exact categories have a ``dg-enhancement'' even though the exact structures may differ (this, however, doesn't change the $GW$-theories, by \cite[Lemma 7]{myMV}).
The passage from exact categories to  complexes which are complicial exact categories underlying a dg-category is explained in \cite[\S 6]{myMV}; see also \cite[Remark 14]{myMV}.

The advantage of the category of dg-categories is that it has a symmetric monoidal tensor-product. 
This considerably simplifies the construction of the $GW$-spectrum and the treatment of products in $GW$-theory.
\vspace{1ex}

\red{
{\bf Advice for the hurried reader.}
For a first reading, I suggest to skip the technical sections \ref{sect:GWgrpsDGcats} -- \ref{sec:KGWspectrum} while nevertheless bearing in mind Examples \ref{exn:VectAsExCatDual} and \ref{ex:ChbEDGwithWeaksAndDuals} and the analogies of $GW$ and $\GW$ with connective and non-connective $K$-theory $K$ and $\bK$.
Take note, however, that the negative homotopy groups $GW_i$ of $GW$ are the Balmer Witt groups $W^{-i}$ for $i<0$ which are non-zero in general; see the example following Proposition \ref{prop:GWspecIsGWspace}.
Then take the Localization Theorems \ref{thm:LcnConn1} and \ref{thm:locnForKGW} as axioms and use Remark \ref{rmk:FundThmFromLocnThm} to deduce the Bott sequences \ref{thm:PeriodicityExTriangle} and \ref{thm:PeriodicityExTriangleForKGW}.
Now you are ready to jump right into Section \ref{sec:GWschemes}.
}
\vspace{1ex}

\red{
{\bf Acknowledgements.}
This paper is part of a collection of papers published in honor of Charles A. Weibel's 65th birthday. 
Ever since I wrote my PhD, Chuck has been incredibly supportive; I warmly thank him for that.

It should be clear to the reader that this paper owes a lot to the mathematics of Max Karoubi, Robert Thomason, and Paul Balmer. 
I would like to thank them.
Thanks also go to Baptiste Calmes and Heng Xie for comments on earlier drafts, and to the anonymous referee for his comments.

 It took me about 10 years to put the results of this paper into its final form.
 I gave the first talks about the main results of Section \ref{sec:KGWspectrum} and its consequences at a workshop on algebraic $K$-theory in Montreal, 2004, and in a series of lectures at ETH Zurich in 2005, though proofs and framework were very different then.
 I apologize for the delay in publication.
 
 Over the years, various institutions and grant agencies have given financial support, such as I.H.E.S. and Max Planck in Bonn, NSF and EPSRC. 
 I would like to thank them.
 }

\section{Grothendieck-Witt groups of dg categories}
\label{sect:GWgrpsDGcats}

Let $\bk$ be a commutative ring.
Our work takes place in the framework of differential graded categories over $\bk$.
We recall standard notation and terminology; see also \cite{keller:ICM}.

\subsection{Differential graded $\bk$-modules}
\label{subsec:difk}
A dg $\bk$-module is a $\bk$-module $M$ together with a direct sum decomposition $M=\bigoplus_{i\in \Z}M^i$ and a $\bk$-linear map $d:M\to M$, called differential, such that $d(M^i)\subset  M^{i+1}$ for all $i\in \Z$ and $d\circ d =0$.
Elements $x \in M^i$ are said to be homogeneous of degree $|x| = i$.
A morphism $f:M\to N$ of dg $\bk$-modules is a morphism of graded $\bk$-modules commuting with differentials, that is,
$f(M^i)\subset N^i$ and $f(dx)=d(fx)$ for all $x\in M$.
Composition is composition of $\bk$-modules.
This defines the category $\dgMod_{\bk}$ of dg $\bk$-modules.

There are additive functors $Z^0, B^0, H^0: \dgMod_{\bk} \to \Mod_{\bk}$ to the category $\Mod_{\bk}$ of $\bk$-modules defined by
$$\renewcommand\arraystretch{2}
\begin{array}{lcl}
Z^0M &=& \ker(d:M^0\to M^1)\\
B^0M &=& \im(d:M^{-1}\to M^0)\\ 
H^0M &=& Z^0M/B^0M.
\end{array}
$$
The tensor product of two dg $\bk$-modules $M$ and $N$ is given by 
$$(M\otimes N)^n=\bigoplus_{i+j=n}M^i\otimes N^j$$ with differential $d(x\otimes y) = dx \otimes y + (-1)^{|x|}x\otimes dy$.
The function dg $\bk$-module $[M,N]$ from $M$ to $N$ is the dg $\bk$-module given by  
$$[M,N]^n=\prod_{-i+j=n}\Hom_{\bk}(M^i,N^j)$$ with differential 
$df = d\circ f - (-1)^{|f|} f \circ d$.
Note that the dg $\bk$-module morphisms from $M$ to $N$ are precisely the elements of $Z^0[M,N]$. 
Tensor product $\otimes$, function object $[\phantom{A},\phantom{A}]$ and unit $\1 = \bk$
make the category of dg $\bk$-modules into a closed symmetric monoidal category where evaluation, coevaluation and symmetry (or switch) are given by 
$$
\renewcommand\arraystretch{2}
\begin{array}{ll}
e:[M,N]\otimes M \to N: f \otimes x \mapsto f(x),& \hspace{-8ex}\nabla:M \mapsto [N,M\otimes N]: x \mapsto (y \mapsto x\otimes y),\\
\tau: M\otimes N \to N\otimes M: x\otimes y \mapsto (-1)^{|x||y|}y\otimes x. &
\end{array}$$

There is a coherence theorem for closed symmetric monoidal categories \cite{KellyMacLane:Coherence}.
This will be convenient when checking the commutativity of diagrams. 
It is therefore important to consider the various categories to be defined below   as closed symmetric monoidal categories, and it is important to insist that functors and natural transformations be defined using only the basic building blocks of closed symmetric monoidal categories such as unit, tensor product, function object, evaluation and coevaluation.

\subsection{Pointed differential graded categories}
\label{subsec:pointedDGcats}
DG categories are categories enriched in the closed symmetric monoidal category of dg $\bk$-modules.
The category of small dg-categories is therefore endowed with the structure of a closed symmetric monoidal category \cite{kelly:book}.
In particular, it is equipped with internal tensor products, function objects, unit, evaluation and coevaluation.
For the purpose of $K$-theory and Grothendieck-Witt theory it will be convenient to assume the following.
\vspace{1ex}

\begin{center}
{\it All dg categories in this paper are pointed.}
\end{center}
\vspace{1ex}

\noindent
This means that they are equipped with a choice of zero object called base-point.
All dg functors are to preserve base points, and tensor products, function objects and all other constructions pertaining to dg categories are to be understood to be performed in the pointed category.
The functor $\A \mapsto \A_+$ from dg categories to pointed dg categories that adds a base point zero object is symmetric monoidal and doesn't change $K$-theory nor Grothendieck-Witt theory.
So, there is no loss in assuming our dg categories to be pointed.
To be less repetitive we may drop the adjective pointed and write ``dg category'' for what should be ``pointed dg category''.

For the reader's convenience and to fix notation, we recall what some of this means in detail.

\begin{definition}
A small {\em pointed dg category} $\A$ \red{(henceforth called {\em dg category})} is given by the following data:
\begin{itemize}
\item a non-empty set of objects, 
\item
for any two objects $A,B$ of $\A$ a dg $\bk$-module $\A(A,B) = \bigoplus_{i\in \Z}\A(A,B)^i$, called {\em mapping complex} from $A$ to $B$, 
\item
for any object $A$ a unit morphism $1_A \in \A(A,A)^0$,
\item
for any three objects $A,B,C$ a map of dg $\bk$-modules
$$\circ: \A(B,C)\otimes \A(A,B) \to \A(A,C):f\otimes g \mapsto f\circ g$$
satisfying the usual associativity and unit constraints of a category,
\item
a choice of zero object $0_{\A}$ called base point.
\end{itemize}
\end{definition}

Note that $d(1_A)=0$ for every object $A\in \A$ (since $d(1)=d(1\circ 1) = d(1)\circ 1 + 1\circ d(1)$) and that $d(f\circ g) = 0$ whenever $df=0$ and $dg=0$.
In particular, we have a subcategory $Z^0\A \subset \A$ which has the same objects as $\A$ and morphism $\bk$-modules $(Z^0\A)(A,B)=Z^0(\A(A,B))$.
We call $Z^0\A$ the {\em underlying category} of the dg category $\A$
and think of a dg category $\A$ as the $\bk$-linear category $Z^0\A$ 
together
with the additional homological algebra data of the mapping complexes $\A(X,Y)$ \red{for all pairs of objects  $X$, $Y$ of $\A$}.
\vspace{1ex}

A {\em morphism of pointed dg categories}, also called dg functor, $F:\A \to \B$ from $\A$ to $\B$ is given by the following data: to every object $A$ of $\A$ is assigned an object $FA$ of $\B$, for every two objects $A,B$ of $\A$ there is given a morphism of dg $\bk$-modules $\A(A,B) \to \B(FA,FB):f \mapsto F(f)$ such that  $F(1_A) = 1_{F(A)}$ and for any three objects $A,B,C$ of $\A$ and any $f\in \A(B,C)$ and $g\in \A(A,B)$ we have $F(f\circ g)=F(f)\circ F(g) \in \B(FA,FC)$.
Moreover, $F$ is required to send the base point of $\A$ to the base point of $\B$.
We denote by $\dgCat_{\bk}$ the category of small pointed dg $\bk$-categories.
The category of small pointed $\bk$-linear categories will be denoted by 
$\Cat_{\bk}$.
We may drop the index $\bk$ if the base ring $\bk$ is understood.

A {\em natural transformation} $\alpha:F \to G$ between dg functors $F,G:\A \to \B$ 
is a collection $\{\alpha_A\}_{A\in \A}$ of morphisms $\alpha_A\in Z^0\B(FA,GA)$ such that for all $A,B\in \A$ and $f\in \A(A,B)$ we have $\alpha_B \circ F(f) = G(f)\circ \alpha_A$. 
A dg functor $F: \A \to \B$ is an {\em equivalence of dg categories}
if there exists a dg functor $G: \B \to \A$ such that $FG$ and $GF$ are naturally isomorphic to the identity functors.
If $F:\A \to \B$ is an equivalence of dg categories, then $Z^0F:Z^0\A \to Z^0\B$ is an equivalence of linear categories.
If $F$ is fully faithful then the converse also holds.

For $\A,\B \in \dgCat_{\bk}$, the {\em tensor product dg category} $\A \otimes \B$ has objects the pairs $(A,B)$ of objects $A$ of $\A$ and $B$ of $\B$.
The objects of the form $(A,0_{\B})$ and $(0_{\A},B)$ are identified with the base point $0_{\A\otimes\B}$ of $\A\otimes \B$, that is,
$Ob(\A\otimes \B) = Ob\A \wedge Ob\B$.
We may sometimes write $A\otimes B$ or $AB$ for the object $(A,B)$ of $\A \otimes \B$.
Morphism complexes are
$$(\A\otimes \B)((A_1,B_1),(A_2,B_2)) = \A(A_1,A_2)\otimes \B(B_1,B_2)$$
with composition defined by 
$(f_1\otimes g_1)\circ (f_2\otimes g_2) = (-1)^{|g_1||f_2|}(f_1\circ f_2)\otimes (g_1\circ g_2)$ and unit $1_{(A,B)}=1_A\otimes 1_B$.
The switch dg-functor $\A\otimes \B \to \B\otimes \A$ is the switch of pointed sets $(A,B)\mapsto (B,A)$ on objects and the switch of dg $k$-modules (\S \ref{subsec:difk}) on morphism complexes.

The {\em homomorphism dg category} $\dgFun(\A,\B) \in \dgCat_{\bk}$ for $\A, \B\in \dgCat_{\bk}$ has objects the dg functors $F:\A \to \B$ (which, by our convention, preserve base points) and the morphism complex
$[F,G]$ in degree $n$ is the set $[F,G]^n$ of collections 
$\alpha = (\alpha_A)_{A\in \A}$ with $\alpha_A \in \B(FA,GA)^n$ satisfying 
$G(f)\circ \alpha_A = (-1)^{|\alpha||f|}\ \alpha_B \circ F(f)$ for all homogeneous $f \in \A(A,B)$.
The differential $d\alpha$ is defined by $(d\alpha)_A = d^{\B}\alpha_A$ where $d^{\B}$ is the differential in $\B$.

If $\C$ is a small category and $\A \in \dgCat_{\bk}$, we write $\Fun(\C,\A)$ for the dg category
$$\Fun(\C,\A) = \dgFun(k[\C]_+,\A)$$
where $k[\C]$ is the dg category whose objects are the objects of $\C$ and the mapping complex $k[\C](A,B)$ in degree $0$ is the free $\bk$-module  $k[\C](A,B)^0=k[\C(A,B)]$ generated by the elements of $\C(A,B)$ and in degree $i\neq 0$ we have $k[\C](A,B)^i=0$.
Note that dg functors $k[\C]_+ \to \A$ are the same as functors $\C \to Z^0\A$.

The functors $Z^0, H^0: \dgMod_{\bk} \to \Mod_{\bk}$ extend to functors
$$Z^0,H^0: \dgCat_{\bk} \to \Cat_{\bk}.$$
For example, the category $Z^0\A$ is the underlying $\bk$-linear category of $\A$, and $H^0\A$ 
has the same objects as $\A$ and the morphism $\bk$-module $H^0\A(A,B)$ is the module $H^0(\A(A,B))$.

\begin{example}[The category $\scC_{\bk}$]
\label{ex:Ck}
Let $\scC_{\bk}$, or simply $\scC$ if $\bk$ is understood, be the full subcategory of $\dgMod_{\bk}$ of those dg $\bk$-modules $M$ such that 
$M^i = k^{n_i}$ for some $n_i \in \N$ and $M^i \neq 0$ for only finitely many $i\in \Z$.
In other words, $\scC_{\bk}$ is (a small model of) the category of bounded  complexes of finitely generated free $\bk$-modules.
Tensor product, function space and unit in the category of dg $\bk$-modules provide $\scC_{\bk}$ with a structure of a closed symmetric monoidal category where evaluation, coevaluation and symmetry are as in Section \ref{subsec:difk}.
This makes $\scC_{\bk}$ into a dg category.
We consider $\scC_{\bk}$ as a pointed dg category with base point the unique zero object in $\scC_{\bk}$.

In $\scC_{\bk}$, we have the following distinguished objects $\1$, $C$ and $T$ and an exact sequence $\Gamma$ relating them.
The object $\1$ is the unit $\bk$ of the tensor product in $\scC_{\bk}$.
Let $C$ be the dg $\bk$-algebra $\bk[\eps]/\eps^2$, $|\eps|=-1$, $d\eps = 1$.
As a graded $\bk$-module, we have $C=k\cdot 1 \oplus k\cdot \eps$, and the differential is determined by $d \eps = 1$.
Note that $C$ is a commutative dg $\bk$-algebra containing $\1$ as a dg submodule.
We let $T$ be the quotient dg $k$-module $C/\1$.
As a graded $k$-module, we have $T = k\eps$, $|\eps| = -1$, and the differential is zero.
If we denote by $i:\1 \to C: 1 \mapsto 1$ the inclusion and by $p: C\to T: 1\mapsto 0,\ \eps \mapsto \eps$ the projection map, we have
an exact sequence of dg $k$-modules in $\scC_{\bk}$
\begin{equation}
\label{eqn:fundExSeq}
\xymatrix{\Gamma \hspace{5ex} & {\1} \xymono[r]^i  & C \xyepi[r]^p  & T.}
\end{equation}
Often we will write $\scC_{\bk}\A$ \red{(or $\scC \A$ if $k$ is understood)} for the tensor \red{product} dg category $\scC_{\bk}\otimes \A$.
There is a canonical fully faithful embedding of dg categories 
$\A \to \scC_{\bk}\A:A\mapsto \1\otimes A$.
\end{example}

\subsection{Exact and pretriangulated dg categories}
\label{subsec:ExDGCats}
Recall from \cite{quillen:higherI}, \cite{keller:obscureAxiom}, \cite[\S 2.1]{myEx} that a {\it small exact category} is a small additive category $\E$ equipped with a set of
sequences of maps in $\E$, called {\it conflations} (or admissible exact
sequences),
\begin{equation}
\label{eqn:Conflation}
X \stackrel{i}{\to} Y \stackrel{p}{\to} Z
\end{equation}
satisfying the properties (Ex0) -- (Ex4) below.
In a conflation (\ref{eqn:Conflation}), the map $i$ is called {\it inflation}
(or admissible monomorphism) and may be depicted as $\rightarrowtail$, and the map $p$ is called {\it deflation} (or admissible
epimorphism) and may be depicted as $\twoheadrightarrow$.

\begin{itemize}
\item[(Ex0)]
\label{enum:ExCat2}
Conflations are closed under isomorphisms.
\item[(Ex1)]
\label{enum:ExCat1}
In a conflation (\ref{eqn:Conflation}), the map $i$ is a kernel of $p$, and
$p$ is a cokernel of $i$.
\item[(Ex2)]
\label{enum:ExCat6}
For any two objects $X,Y\in \E$, the following sequence is a conflation
$$
X \stackrel{\left(\begin{smallmatrix}1\\0\end{smallmatrix}\right)}{\to} X
\oplus Y \stackrel{\left(\begin{smallmatrix}0&1\end{smallmatrix}\right)}{\to}
Y.
$$
\item[(Ex3)]
\label{enum:ExCat3}
Inflations are closed under compositions, and deflations are closed under
compositions.
\item[(Ex4)]
\label{enum:ExCat4}
The pushout of an inflation along any map in $\E$ exists and is an inflation.
Dually, the pull-back of a deflation along any map in $\E$ exists and is a deflation.
\end{itemize}

\red{
\begin{example}
A typical example of an exact category is the category $\Vect(X)$ of vector bundles (that is, locally free sheaves of finite rank) on a scheme $X$.
The conflations are the usual short exact sequence of locally free sheaves.
\end{example}
}

\red{
\begin{example}
\label{exn:ChbE}
If $\E$ is a $k$-linear exact category, we make the category of bounded complexes in $\E$ into a dg category $\Ch^b(\E)$ as follows.
Its objects are the bounded  complexes in $\E$ (unless stated otherwise, all complexes in this article are cohomologically indexed and thus have their differentials increase degrees).
Let $M$ and $N$ be two objects of $\Ch^b(\E)$.
Their mapping complex is the dg $k$-module $[M,N]$ with
$$[M,N]^n=\prod_{-i+j=n}\E(M^i,N^j)$$ and differential 
$df = d_N\circ f - (-1)^{|f|} f \circ d_M$.
The underlying category $Z^0\Ch^b(\E)$ is the usual category of bounded  complexes in $\E$ and maps the chain maps (that is, degree $0$ maps that commute with differentials).
The category $H^0\Ch^b(\E)$ is the usual homotopy category of bounded  complexes in $\E$ where the maps are the homotopy classes of chain maps.
Note that the dg category $\Ch^b(\E)$ only depends on $\E$ as an additive category.
At this point, the exact structure is irrelevant.
\end{example}
}

\begin{definition}
Let $\A$ be a dg category.
A sequence $A \stackrel{f}{\to} B \stackrel{g}{\to} C$ of morphisms in the underlying category $Z^0\A$
with $gf=0$ is called {\em exact} if there are $s\in \A(C,B)^0$ and $r \in \A(B,A)^0$ such that $1_A = rf$, $1_B= fr + sg$, and $1_C = gs$.
Note that $s$ determines $r$ and vice versa.
A pointed dg category $\A$ is called {\em exact} if these exact sequences make the underlying linear category $Z^0\A$ into an exact category.
\end{definition}

\red{
\begin{example}
\label{exn:dgChbEisExact}
Let $\E$ be a $k$-linear exact category.
Recall from Example \ref{exn:ChbE} the dg category $\Ch^b\E$ of bounded  complexes in $\E$.
This is an exact dg category.
The conflations in the underlying category $Z^0\Ch^b\E$ are the short exact sequences of complexes which degree-wise (that is, forgetting differentials) are split exact.
In particular, the exact structure on $Z^0\Ch^b\E$ does not depend on the exact structure of $\E$.
\end{example}
}

\begin{definition}
\label{dfn:pretriangulated}
A pointed dg category $\A$ is called {\em pretriangulated} if it is exact
and if 
the functor $\A \to \scC_{\bk}\A: A \mapsto \1 \otimes A$ is an equivalences of dg categories.
Since $\A \to \scC_{\bk}\A$ is fully faithful, the last condition is equivalent to requiring that $Z^0\A \to Z^0(\scC_{\bk} \A)$ be an equivalences of $k$-linear categories.
\end{definition}

\red{
\begin{example}
\label{ex:ChbEPreTr}
We already noticed in Example \ref{exn:dgChbEisExact} that for a $k$-linear exact category $\E$, the dg category $\Ch^b\E$ of Example \ref{exn:ChbE} is exact.
In fact, it is pretriangulated.
The inverse of the dg functor $\Ch^b\E \to \scC_k\otimes \Ch^b\E$ is the total complex dg functor
$\scC_k\otimes \Ch^b\E \to \Ch^b\E$ which sends the object $(A,E)$ of $\scC_k\otimes \Ch^b\E$ to the total complex object
$A\otimes E$ with 
$$(A\otimes E)^n = \bigoplus_{i+j=n} A^i\otimes E^j$$
and differential restricted to $A^i\otimes E^j$ the map $d_A^i\otimes 1_{E_j} + (-1)^i 1_{A^i}\otimes d_E^j$.
Here,  for $A=k^n$, the expression $A^i\otimes E^j=k^n\otimes E^j$ stands for $(E^j)^{\oplus n}$.
The total complex dg functor is the "identity" on mapping complexes.
\end{example}
}

\begin{remark}
For a dg category to be exact (pretriangulated) is a property, not an extra structure.
Note also that any dg functor $\A \to \B$ preserves exact sequences.
In particular, if $\A \to \B$ is a dg functor between exact dg categories, then the functor $Z^0\A \to Z^0\B$ between underlying exact categories is exact.
\end{remark}

\begin{lemma}
\label{lem:ABdgFunProps}
Let $\A$ and $\B$ be pointed dg categories.
\begin{enumerate}
\item
\label{item1:lem:ABdgFunProps}
If $\B$ is exact then so is  $\dgFun(\A,\B)$.
\item
\label{item2:lem:ABdgFunProps}
If $\B$ is pretriangulated then so is $\dgFun(\A,\B)$.
\item
\label{item3:lem:ABdgFunProps}
If $\B$ is pretriangulated then so is $\scC_{\bk}\B$.
\end{enumerate}
\end{lemma}

\begin{proof}
We prove part (\ref{item1:lem:ABdgFunProps}).
Since $\B$ is exact, $Z^0\B$, $\B^0$ and $\B$ are all additive categories.
This implies that $Z^0\dgFun(\A,B)$, $\dgFun(\A,B)^0$ and $\dgFun(\A,B)$ are all additive categories with $(F\oplus G)(A)=F(A)\oplus G(A)$ for $F,G\in \dgFun(\A,\B)$ and $A\in \A$.
Given a map $i:F_0 \to F_1$ in $Z^0\dgFun(\A,\B)$ such that there is a homogeneous degree $0$ map $r: F_1 \to F_0$ with $ri=1$, then $i$ is an admissible monomorphism if and only if for every $A\in \A$ the map $i(A):F_0(A) \to F_1(A)$ is an admissible monomorphism in $Z^0\B$.
This implies that the composition of two admissible monomorphisms in $Z^0\dgFun(\A,\B)$ is an admissible monomorphism.
It also implies that the push-out of a map in $Z^0\dgFun(\A,\B)$ along an admissible monomorphism exists and is an admissible monomorphism, the push-out being defined object-wise and made into a dg functor by functoriality of the push-out and the fact that the inclusions $Z^0\B \subset \B^0 \subset \B$ are exact where the additive categories $\B^0$ and $\B$ are equipped with only those exact sequences which split.
This shows (Ex3) and (Ex4) for inflations.
By a dual argument, (Ex3) and (Ex4) hold for deflations.
The remaining axioms (Ex0) - (Ex2) trivially hold, and thus $Z^0\dgFun(\A,\B)$ is indeed an exact category.

Next we prove part (\ref{item2:lem:ABdgFunProps}).
We already know that $\dgFun(\A,\B)$ is exact.
We are left with proving that 
$\dgFun(\A,\B) \to \scC_k\otimes \dgFun(\A,\B): F \mapsto \1\otimes F$ 
is an equivalence.
Since that functor is fully faithful, we need to show essential surjectivity.
Now, for any dg categories $\A$ and $\B$, the dg functor
$$\scC_k \otimes \dgFun(\A,\B) \to \dgFun(\A,\scC_k\otimes \B): X \otimes F \mapsto (A \mapsto X \otimes F(A))$$
is fully faithful.
This follows from the fact that for any $X,Y\in \scC_k$ the dg $k$-module $[X,Y]$ is finitely generated free as $k$-module and thus tensoring with $[X,Y]$ commutes with arbitrary equalizers of dg $k$-modules.
Therefore, all functors in the diagram
$$\xymatrix{
Z^0\dgFun(\A,\B) \ar[rd]\ar[d] & \\
Z^0( \scC_k\dgFun(\A,\B)) \ar[r] & Z^0\dgFun(\A,\scC_k\B) }$$
are fully faithful.
If $\B$ is pretriangulated, then the diagonal map is an equivalence.
By fully faithfulness of the horizontal functor, the vertical functor is also an equivalence.

Finally, we prove (\ref{item3:lem:ABdgFunProps}).
If $\A$ and $\B$ are equivalent dg categories, then $\A$ is pretriangulated if and only if $\B$ is.
Therefore, if $\B$ is pretriangulated, then $\B$ and $\scC_k\B$ are equivalent and hence $\scC_k\B$ is pretriangulated.
\end{proof}

\subsection{The pretriangulated hull. Introduction}
\red{
Pretriangulated dg categories are dg categories whose homotopy categories are naturally tringulated.
Most dg categories of interest are pretriangulated, for instance $\Ch^b\E$ as noted in Example \ref{ex:ChbEPreTr}.
Our treatment of products in $GW$-theory, in fact, the very definition of the $GW$-spectrum, 
do require us to consider dg categories of the form
$\A \otimes \B$ which, in general, are not pretriangulated even if $\A$ and $\B$ are.
Thus, we need to complete $\A \otimes \B$ into a pretriangulated dg category.
Now, for any dg category $\A$ there is a pretriangulated completion $\A^{\pretr}$, called {\em pretriangulated hull of $\A$}, which 
}
was introduced in \cite{bondalKapranov:pretr}; see also \cite{Drinfeld:DGquotient}.

\red{
In short, the pretriangulated hull of a dg category $\A$  is the category of finitely generated semi-free dg $\A$-modules, that is, those dg $\A$-modules $M: \A^{op} \to \dgMod_k$ which have a
finite filtration 
$$0=F^0M\subset ... \subset F^rM \subset F^{r+1}M \subset ... \subset F^nM=M$$  by dg submodules such that $F^{r+1}M/F^rM$ is finitely generated free (that is, a finite direct sum of (shifts of) objects of $\A$).
In \cite{bondalKapranov:pretr}, \cite{Drinfeld:DGquotient} the authors give an explicit model for  
$\A^{\pretr}$.
Below, we give a variant of their construction.
For us, the pretriangulated hull of $\A$}
is the dg category of extensions of $\scC_k\A$; see Definition \ref{dfn:pretr}.
Our treatment here is somewhat heavier than one would expect.
This is because we will need a description of $\A^{\pretr}$ in the context of dg categories with dualities which is in some sense symmetric monoidal with respect to the tensor product of such categories.
So, along the way we will take care of the monoidal structures.
We start with the definition of the dg category of extensions.

\subsection{The dg category of extensions}
\label{subsec:dgCatExtn}
Let $\poSet$ be the category of small posets (that is, partially ordered sets) with injective order preserving maps as morphisms.
This is a symmetric monoidal category under the cartesian product as monoidal product.
Recall that the category of small pointed dg categories $\dgCat$ is symmetric monoidal under the tensor product of dg categories (Section \ref{subsec:pointedDGcats}).
In particular, the product category $\poSet \times \dgCat$ is symmetric monoidal with component-wise monoidal product.
We will define a symmetric monoidal functor
\begin{equation}
\label{eqn:PAmonoidalFuntor}
\poSet \times \dgCat \to \dgCat: (\P,\A) \mapsto \P\A
\end{equation}
as follows.
For a poset $\P$ and a pointed dg category $\A$
the pointed dg category $\P\A$ has objects the pairs $A = (A,q)$ where $A:Ob \P\to Ob\A$ is a function that sends all but a finite number of elements of $\P$ to the base point zero object of $\A$, and $q=(q_{ij})_{i,j\in \P}$ is a matrix of elements $q_{ij}\in \A(A_j,A_i)^1$ of degree $1$ such that $q_{ij}=0$ for $i\red{\nless} j\in P$ and $dq +q^2 =0$.
The support $\supp(A)$ of $(A,q)$ is the set of \red{indices} $i\in \P$ such that $A_i\neq 0$.
This is a finite set.
In particular, only finitely many entries in $q$ are non-zero, and thus, the matrix product $q^2$ makes sense where $(q^2)_{ij}=\sum_kq_{ik}q_{kj}$.
The base point of $\P\A$ is the pair $(A,0)$ where the function $A$ sends all objects of $\P$ to the base point of $\A$. 
For two objects $A=(A,q^A)$ and $B=(B,q^B)$ of $\P\A$, the morphism complex is
$$\P\A(A,B)^n = \prod_{i,j\in \P}\A(A_i,B_j)^n$$
whose elements are the matrices $f = (f_{ji})_{i,j\in \P}$ with entries 
$f_{ji} \in \A(A_i,B_j)^n$.
The differential $d^{\P\A} f$ for such a matrix $f$ is defined by 
$$d^{\P\A} f= d^{\A}f + q^B\circ f - (-1)^{|f|}f\circ q^A$$
where $(d^{\A}f)_{i,j}=d^{\A}(f_{i,j})$ and $d^{\A}$ is the differential in $\A$.
Composition in $\P\A$ is matrix multiplication in $\A$.
One checks that $\P\A$ is indeed a pointed dg category.
Clearly, the assignment $(\P,\A) \mapsto \P\A$ is functorial in $\A$.
It is also functorial in $\P$ as an embedding $\P_0 \subset \P_1$ defines a full embedding of dg categories $\P_0\A \subset \P_1\A$ sending $(A,q) \in \P_0\A$ to its extension by zero $(\bar{A},\bar{q})$ in $\P_1\A$ where $\bar{A}_i=A_i$ if $i\in \P_0$, $\bar{A}_i=0$ for $i\notin \P_0$, $\bar{q}_{ij}=q_{ij}$ if $i,j\in \P_0$ and $\bar{q}_{ij}=0$ otherwise.

The functor $(\P,\A) \mapsto \P\A$ is symmetric monoidal with monoidal compatibility map
\begin{equation}
\label{eqn:MonoidalComMap}
\P_0\A{\otimes}\P_1\X \longrightarrow (\P_0\times\P_1)(\A{\otimes}\X)
\end{equation}
sending the object $(A,q^A)\otimes (X,q^X)$ of $\P_0\A{\otimes}\P_1\X$ to the object 
$(AX,q^{A X})$ where 
$(A X)_{ax}=(A_a,X_x)$ for $a\in \P_0$, $x\in \P_1$, 
and $q^{AX}$ is the sum of matrix tensor products $q^{AX} = q^A\otimes 1_X + 1_A \otimes q^X$.
On morphism complexes the dg functor sends the tensor product of matrices
$(f)\otimes (g)$ to the matrix tensor product $(f\otimes g)$, that is,
$(f\otimes g)_{ax,by}=f_{ab}\otimes g_{xy}$ for $a,b\in \P_0$ and $x,y \in \P_1$.

\begin{remark}
\label{rmk:P0AP1AessentiallySurj}
An embedding of posets $\P_0 \subset \P_1$ induces a fully faithful
embedding $\P_0\A \subset \P_1\A$ of dg categories.
Any two embeddings $i_0,i_1: \P_0 \subset \P_1$ induce naturally isomorphic embeddings $i_0,i_1:\P_0\A \subset \P_1\A$ of dg categories.
This is because for $(A,q)\in \P_0\A$ the ``identity matrix'' induces an isomorphism in $Z^0\P_1\A$ between the two objects $i_0(A,q)$ and $i_1(A,q)$.
In particular, the embedding $\P_0\A \subset \P_1\A$ is an equivalence of dg categories if for every embedding of posets $P \subset \P_1$ with $P$ finite, there is an embedding $P \subset P_0$.
For example, any poset embedding $\N \subset \P$ induces an equivalence of dg categories $\N\A \subset \P\A$ since every finite poset can be embedded into $\N$.
\end{remark}

\begin{remark}
\label{rmk:PPA}
Let $\A$ be a dg category and $\P_0$ and $\P_1$ be posets.
Then there is a fully faithful embedding of dg categories
\begin{equation}
\label{P0(P1)=P0P1}
\P_0(\P_1\A) \to (\P_0\P_1)\A: (A,Q) \mapsto (B,q)
\end{equation}
where $\P_0\P_1=\P_0\times \P_1$ is equipped with the lexicographic order: $(i,r)< (j,s)$ if and only if $i<j$, or $i=j$ and $r< s$.

To define the functor, recall that an object $(A,Q)$ of 
$\P_0(\P_1\A)$ is given by a map of sets $\P_0 \to Ob\P_1\A:i\mapsto (A_i,q^{A_i})$ with finite support together with a matrix $Q=(Q_{ij})_{i,j\in \P_0}$ where $Q_{ij}\in \P_1\A(A_j,A_i)^1$ such that $Q_{ij}=0$ whenever $i\red{\nless}\ j$ in $\P_0$
and $Q^2+d^{\P_1\A}Q=0$.
Here $(A_i,q^{A_i})$ is an object of $\P_1\A$, that is, $A_i:\P_1\to Ob\A:r\mapsto (A_i)_r$ is a finitely supported map of sets, and $q^{A_i}= (q^{A_i}_{rs})_{r,s\in \P_1}$ is a matrix with $q^{A_i}_{rs}\in \A((A_i)_s,(A_i)_r)^1$ such that 
$q^{A_i}_{rs}=0$ for $r\red{\nless} s$ in $\P_1$ and $(q^{A_i})^2+d^{\A}q^{A_i}=0$.
The map $Q_{ij}$ itself is a matrix with entries $(Q_{ij})_{rs}\in \A((A_j)_s, (A_i)_r)^1$.
Now, the image $(B,q)$ of $(A,Q)$ under the map (\ref{P0(P1)=P0P1}) is given by the map of sets $B:\P_0\times \P_1 \to \A$ defined by 
$$B_{i,r}=(A_i)_r,\hspace{3ex}i\in \P_0,\ r\in \P_1$$
and the matrix $q=(q_{ir,js})_{i,j\in \P_0,\ r,s\in\P_1}$ has entries
$$\renewcommand\arraystretch{1.5}
q_{ir,js} = \left\{\begin{matrix} (Q_{ij})_{rs} & i\neq j\\
(q^{A_i})_{rs} & i=j. \end{matrix}\right.$$
On morphisms, the functor (\ref{P0(P1)=P0P1}) is given by the ``identity''.
\end{remark}

\begin{definition}
\label{dfn:dgcatofextensions}
Let $\A$ be a pointed dg category.
The {\em dg category of extensions} of $\A$ is the pointed dg category
$$\Z(\A)$$
defined in Section \ref{subsec:dgCatExtn}
where $\Z$ is the poset of integers with its usual ordering.
The dg category of extensions is
equipped with the fully faithful inclusion of dg categories $\A \subset \Z\A$
induced by the embedding of posets $\{0\} \subset \Z$. 
\end{definition}

\red{
\begin{example}
\label{exn:EasDG}
If we consider an exact category $\E$ as a dg category with $\E^0=\E$ and $\E^i=0$ for $i\neq 0$,
then its dg category of extensions $\Z\E$ is (equivalent) to $\E$.
If we consider a ring $R$ as a dg category with one object, then $\Z R$ is the category of finitely generated free $R$-modules.
\end{example}
}

\begin{lemma}
\label{lem:ZAisExact}
Let $\A$ be a pointed dg category.
Then the dg category of extensions $\Z\A$ is exact.
\end{lemma}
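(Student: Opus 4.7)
The plan is to verify that the underlying linear category $Z^0(\Z\A)$, with the class of sequences from Section \ref{subsec:ExDGCats} declared admissible, satisfies Quillen's axioms for an exact category. The argument proceeds in three steps: additivity, a concrete characterisation of admissible short exact sequences as twisted biproducts, and verification of the pushout/pullback axioms by matrix calculation. Conceptually this is the familiar argument showing that (one-sided) twisted complexes à la Bondal--Kapranov form an exact (in fact pretriangulated) category, adapted here to the indexing poset $\Z$.

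First I would check additivity. A zero object is the constant-zero function with $q=0$, available because $\A$ is pointed. For binary biproducts, observe that any translation $n\mapsto n+k$ of $\Z$ is an injective order-preserving map and hence, by functoriality of $\P\mapsto\P\A$ in the poset variable, induces a self-equivalence $\Z\A\to\Z\A$ that sends each $(A,q^A)$ to an isomorphic object (an explicit isomorphism being the matrix whose only nonzero entries are $1_{A_i}$ from index $i$ to index $i+k$). Consequently any pair of objects of $\Z\A$ is isomorphic to a pair with disjoint supports in $\Z$, and for such pairs the evident concatenation $(A\sqcup C,\, q^A\oplus q^C)$ is a categorical biproduct in $Z^0(\Z\A)$.

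Next I would straighten an admissible short exact sequence $(A,q^A)\xrightarrow{f}(B,q^B)\xrightarrow{g}(C,q^C)$ equipped with biproduct data $r\in\A(B,A)^0$, $s\in\A(C,B)^0$. Differentiating the relations $rf=1$, $gs=1$, $fr+sg=1$ in $\Z\A$ yields $(dr)f=0$, $g(ds)=0$, and $f(dr)+(ds)g=0$. The element $fr$ is idempotent, and after replacing $s$ by $s-frs$ we may arrange $rs=0$; then $(f,s)$ and $\binom{r}{g}$ are mutually inverse (not necessarily closed) degree-zero elements. Transporting $q^B$ along $(f,s)$ exhibits $B$, after a suitable shift, as a twisted biproduct of $A$ and $C$ whose $q$-matrix is block-upper-triangular with diagonal entries $q^A,q^C$ and an off-diagonal twist which, together with $dr$ and $ds$, satisfies a Maurer--Cartan-type compatibility with $q^A$ and $q^C$. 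Closure under isomorphism and admissibility of split sequences are immediate, and composability of inflations (resp.\ deflations) follows by stacking two such straightenings.

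Finally I would construct pushouts along inflations; pullbacks along deflations are dual. Given an admissible inflation $f:(A,q^A)\to(B,q^B)$ straightened as above and an arbitrary morphism $h:(A,q^A)\to(A',q^{A'})$ in $Z^0(\Z\A)$, after shifting $(C,q^C)$ to have support disjoint from that of $(A',q^{A'})$ I would write the pushout as a twisted biproduct of $(A',q^{A'})$ and $(C,q^C)$ whose off-diagonal block is assembled from the twist of $B$, the retraction $r$, and the map $h$. The main obstacle is verifying the Maurer--Cartan equation $dq'+(q')^2=0$ for this new object and checking the universal property; both amount to direct matrix computations using the identities for $dr,ds$ above, following the standard Bondal--Kapranov pattern.
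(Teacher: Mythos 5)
Your proposal is correct and follows essentially the same route as the paper: the differentiated splitting identities, the mutually inverse pair $(f\ s)$ and $\left(\begin{smallmatrix}r\\ g\end{smallmatrix}\right)$, the identification of the middle term as a twisted biproduct with twist $r\circ ds$, and the pushout with twist $h\circ r\circ ds$ are exactly the computations in the paper's proof. The only cosmetic difference is that the paper realizes these twisted biproducts via the auxiliary category $[1]\B$ and the lexicographic embedding $[1](\Z\A)\hookrightarrow ([1]\Z)\A$, whereas you re-index by translating supports within $\Z$; these amount to the same observation.
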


\begin{proof}
The axioms (Ex0) and (Ex1) hold in any dg category.
For (Ex2), we have to show that $\Z\A$ is an additive category.
Every object $(A,q)$ in $\Z\A$ is isomorphic to its shift $(A[1],q[1])$ where $A[1]_i=A_{i+1}$ and $q[1]_{i,j} = q_{i+1,j+1}$.
Hence, for any two objects $(A,q^A)$ and $(B,q^{B})$ we can assume that they have disjoint support.
Then $(A,q^A)\oplus (B,q^B) = (C,q^C)$ where 
$$\renewcommand\arraystretch{1.5}
 C_i = \left\{ \begin{matrix} A_i & i\in \supp(A) \\ B_i & i\in \supp(B) \\ 0 & \text{otherwise}, \end{matrix} \right.
\hspace{8ex}
q^C_{ij} = \left\{ \begin{matrix} q^A_{ij} & i,j\in \supp(A) \\ q^B_{ij} & i,j\in \supp(B) \\ 0 & \text{otherwise.} \end{matrix} \right.
$$

We are left with showimg axioms (Ex3) and (Ex4) for inflations, the case of deflations being dual.
Let $[2]$ be a the poset $\{0<1<2\}$.
Let $\B$ be a dg category and let 
$$A_{i-1} \stackrel{f_i}{\rightarrowtail} A_i \stackrel{g_{i}}{\twoheadrightarrow} B_{i} $$
be admissible exact sequences in $Z^0\B$ where $i = 1,2$.
So, there are $r_i:A_i \to A_{i-1}$, $s_{i}:B_{i} \to A_i$ in $\B^0$ such that
$1_{A_{i-1}}= r_if_{i}$, $1_{A_i}= f_ir_i + s_{i}g_{i}$, and $1_{B_{i}} = g_{i}s_{i}$.
Note that 
$(dr_i)f_i = 0$, $g_{i}(ds_{i})=0$ and thus $(dr_i)(ds_{i}) = (dr_i)(f_ir_i + s_{i}g_{i})(ds_{i}) = 0$.
Note also that $(dr_i)s_i+r_i(ds_i) = 0$ because $r_is_i=0$.
In $Z^0([2]\B)$ we have a commutative diagram 
$$\xymatrix{
A_0 \ar[r]^{f_1} \ar[d]^1 
& A_1 \ar[rr]^{f_2} 
\ar[d]^{\left(\begin{smallmatrix}r_1 \\ g_{1} \end{smallmatrix}\right)} 
&& A_2 
\ar[d]_{\left(\begin{smallmatrix}r_1r_2 \\ g_{1}r_2\\g_{2} \end{smallmatrix}\right)}\\
 A_0  \ar[r]_{\hspace{-10ex}\left(\begin{smallmatrix}1\\ 0\end{smallmatrix}\right)} 
& 
\left(A_0,B_{1},
{\left(\begin{smallmatrix} 0 & r_1(ds_{1}) \\ 0&0\end{smallmatrix}\right)}
\right) 
\ar[rr]_{\hspace{-12ex}\left(\begin{smallmatrix}1 & 0 \\ 0 & 1 \\ 0&0 \end{smallmatrix}\right)} 
& &
\hspace{1ex}
\left(A_0,\hspace{1ex}B_{1},\hspace{1ex}B_{2}, \hspace{2ex}{\left(\begin{smallmatrix}0 & r_1(ds_{1}) & r_1r_2(ds_{2})\\ 0&0 & g_{1}r_2(ds_{2}) \\ 0&0&0\end{smallmatrix}\right)}\right)
}$$
in which the vertical arrows are isomorphisms with inverses given by
$1_{A_0}$, $(f_1,s_{1})$ and $(f_2f_1, f_2s_{1},s_{2})$.
It follows that the composition $f_2f_1$ is an inflation in $Z^0([2]\B)$ with cokernel 
$$\coker(f_2f_1)\cong (B_{1},B_{2},\left(\begin{smallmatrix}0 & g_{1}r_2(ds_{2}) \\ 0&0\end{smallmatrix}\right)).$$
If $h:A_0 \to X$ is a map in $Z^0\B$, we have a push-out diagram in $Z^0([2]\B)$ as follows
$$\xymatrix{ 
 A_0 \ar[r]^{\hspace{-8ex}\left(\begin{smallmatrix}1\\ 0\end{smallmatrix}\right)} \ar[d]_h & 
(A_0,B_{1},{\left(\begin{smallmatrix}0 & r_1(ds_{1})\\ 0&0\end{smallmatrix}\right)}) \ar[d]_{\left(\begin{smallmatrix}h & 0\\ 0&1\end{smallmatrix}\right)} \\
 X \ar[r]^{\hspace{-8ex}\left(\begin{smallmatrix}1\\ 0\end{smallmatrix}\right)} & 
(X,B_{1},{\left(\begin{smallmatrix}0 & hr_1(ds_{1})\\ 0&0\end{smallmatrix}\right)}).}$$
Therefore, compositions of inflations in $Z^0\B$ are inflations in $Z^0([2]\B)$, and push-outs of inflations in $Z^0\B$ along any map in $Z^0\B$ exist in $Z^0([2]\B)$ and are inflations.

This applies in particular to the dg category $\B = \Z\A$.
We claim that the inclusion $\{0\} \subset [2]$ induces an equivalence of dg categories $\Z\A \subset [2](\Z\A)$, and hence, (Ex3) and (Ex4) hold in $\Z\A$ for inflations.
To prove the claim, note that by Remark \ref{rmk:PPA} we have a fully faithful embedding of dg categories
$[2](\Z\A) \subset ([2]\Z)\A$ where $[2]\Z=[2]\times \Z$ is equipped with the lexicographic order.
Since both posets $[2]\Z$ and $\Z$ contain $\N$, the full embeddings
$\Z\A \subset [2](\Z\A) \subset ([2]\Z)\A$ are equivalences of dg categories,
in view of Remark \ref{rmk:P0AP1AessentiallySurj}.
\end{proof}

\subsection{The pretriangulated hull of a dg category}
Finally, we come to the definition of the pretriangulated hull of a dg category;
compare \cite{bondalKapranov:pretr}, \cite{Drinfeld:DGquotient}.

\begin{definition}
\label{dfn:pretr}
Let $\A$ be a pointed dg category.
The {\em pretriangulated hull} of $\A$ is the pointed dg category 
$$\A^{\pretr} = \Z\left(\scC_{\bk}\A\right).$$
It is equipped with the full inclusion $\A \subset \A^{\pretr}$
induced by the inclusions in Example \ref{ex:Ck} and Definition \ref{dfn:dgcatofextensions}:
$\A \subset \scC_{\bk}\A  \subset \Z(\scC_{\bk}\A)$.
By Lemma \ref{lem:ptrprops} below, the dg category $\A^{\pretr}$ is indeed pretriangulated, and the inclusion $\A\subset \A^{\pretr}$ is an equivalence if and only if $\A$ is pretriangulated.
\end{definition}

\red{
\begin{remark}
The model of the pretriangulated hull of a dg category $\A$ given in \cite{bondalKapranov:pretr} and\cite{Drinfeld:DGquotient} is the dg category $\Z(\scT_k \otimes \A)$ where $\scT_k \subset \scC_k$ is the full dg subcategory whose objects are all shifts $k[i]$ of $k$,  $i\in \Z$.
The canonical inclusion $\Z(\scT_k \otimes \A) \subset \Z(\scC_k \otimes \A)$ is an equivalence of dg categories. This follows from Lemma \ref{lem:ptrprops} below since both dg categories are pretriangulated (hulls of $\A$).
\end{remark}
}

\begin{remark}
\label{rmk:ActiononAptr}
There is an action
\begin{equation}
\label{eqn:CkActiononAptr}
\otimes : \scC_{\bk} \otimes \A^{\pretr} \longrightarrow \A^{\pretr}
\end{equation}
of the symmetric monoidal category $\scC_{\bk}$ on $\A^{\pretr}$ 
given by the composition of dg functors
$$\scC_{\bk} \otimes \Z(\scC_{\bk}\A) \longrightarrow \Z(\scC_{\bk}\otimes \scC_{\bk}\A) \longrightarrow \Z( \scC_{\bk}\otimes \A)$$
in which the first map is the monoidal compatibility map (\ref{eqn:MonoidalComMap}) with $\P_0=[0]$ and $\P_1=\Z$, and the second map is induced by $\otimes: \scC_{\bk}\otimes \scC_{\bk} \to \scC_{\bk}$.

For objects $A$ of $\scC_{\bk}$ and $X$ of $\A^{\pretr}$, we may write 
$AX$ for the image in $\A^{\pretr}$ of the object $A\otimes X$ under the map (\ref{eqn:CkActiononAptr}).
In this sense, for any object $X$ of $\A^{\pretr}$ the objects $CX$ and $TX$ are defined where $C,T\in \scC_k$ are as in Example \ref{ex:Ck}.

The action (\ref{eqn:CkActiononAptr}) makes $Z^0\A^{\pretr}$ into a complicial exact category in the sense of 
\cite[Definition 3.2.2]{mySedano}.
By \cite[A.2.16]{mySedano}, the exact category $Z^0\A^{\pretr}$ is therefore a Frobenius exact category, that is, an exact category with enough injective and projective objects, and injective and projective objects coincide. 
By \cite[A.2.16(f)]{mySedano}, an object of $Z^0\A^{\pretr}$ is injective and projective if and only if it is a direct factor of an object of the form $CX$ with $X\in \A^{\pretr}$.

In fact, the exact structure on $Z^0\A^{\pretr}$ coincides with its Frobenius exact structure defined in \cite[A.2.15]{mySedano}.
This is because for an inflation $f:X \rightarrowtail Y$ in $Z^0\A^{\pretr}$ with retract $r:Y \to X$ in $\red{(\A^{\pretr})^0}$ and any map $g: X \to CU$ in $Z^0\A^{\pretr}$, the map $g$ is the composition 
$$X \stackrel{f}{\longrightarrow} Y \stackrel{1\otimes r-\eps\otimes dr}{\longrightarrow} CX \stackrel{1_C\otimes g}{\longrightarrow} CCU \stackrel{m\otimes 1}{\longrightarrow} CU$$
where $m:CC \to C$ is the multiplication in the dga $C$ \red{(use that $(dr)f=0$)}.
\end{remark}

\red{
\begin{remark}
The action (\ref{rmk:ActiononAptr}) of $\scC_k$ on $\A^{\pretr}$ plays an important role in the rest of the paper. 
This is the reason we chose $\A^{\pretr} = Z(\scC_k\A)$ as our model of the pretriangulated hull of $\scA$ rather than the one given in  \cite{bondalKapranov:pretr}, \cite{Drinfeld:DGquotient}.
\end{remark}
}

\begin{lemma}
\label{lem:ptrprops}
\phantom{1}
\vspace{0ex}

\begin{enumerate}
\item
\label{item1:lem:ptrprops}
Let $\A \subset \B$ be a full inclusion of pointed dg categories with $\A$ pretriangulated. Then if in an exact sequence in $Z^0\B$
$$0 \to X \to Y \to Z \to 0,$$
$X$ and $Z$ are in $\A$ then $Y$ is (isomorphic to an object) in $\A$. 
\item
\label{item2:lem:ptrprops}
If $\A$ is a pretriangulated dg category, then the inclusion 
$$\A \subset \Z \A$$
is an equivalence of dg categories.
\item
\label{item3:lem:ptrprops}
For any pointed dg category $\A$, the dg category $\A^{\pretr}$ is pretriangulated.
\item
\label{item4:lem:ptrprops}
A pointed dg category $\A$ is pretriangulated if and only if the inclusion
$$\A \subset \A^{\pretr}$$
is an equivalence.
\end{enumerate}
\end{lemma}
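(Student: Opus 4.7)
I treat the four items in order, using Lemma \ref{lem:ZAisExact} and the preceding parts freely.

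For \eqref{item1:lem:ptrprops}, fix splittings $r\in\B(Y,X)^0$, $s\in\B(Z,Y)^0$ satisfying $rf=1_X$, $gs=1_Z$, $fr+sg=1_Y$, and set $q=r\circ(ds)\in\A(Z,X)^1$. Differentiating the splitting relations (and using $df=dg=0$) yields $dr=-qg$ and $g\cdot ds=0$, whence $dq=(dr)(ds)=-qg\cdot ds=0$. Since $\A$ is pretriangulated, $q$ represents a closed degree-zero map $Z\to X[1]$, whose fiber $Y'\in\A$ fits in an exact sequence $X\rightarrowtail Y'\twoheadrightarrow Z$ of extension class $q$. After replacing $s$ by $s-frs$ to ensure $rs=0$, the matrices $(f,\,s):Y'\to Y$ and $\binom{r}{g}:Y\to Y'$ are mutually inverse closed degree-zero morphisms in $\B$, so $Y$ is isomorphic to $Y'\in\A$.

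For \eqref{item2:lem:ptrprops}, $\Z\A$ is exact by Lemma \ref{lem:ZAisExact}, and any $(A,q)\in\Z\A$ has support a finite chain $\{i_0<\cdots<i_n\}\subset\Z$; I induct on $n$. The base $n=0$ follows from an explicit matrix isomorphism placing the single nonzero component at index $0$. For the inductive step, the naive inclusion of $A_{i_n}^{(i_n)}$ and projection onto the restriction $(A^{<i_n},q|)$ are actually closed degree-zero morphisms in $\Z\A$---the potentially obstructing entries vanish by the triangularity condition $q_{jk}=0$ for $j\leq k$---and they fit into an admissible exact sequence witnessed by the obvious (non-closed) splittings. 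The quotient lies in $\A$ by induction, so \eqref{item1:lem:ptrprops} places $(A,q)$ in $\A$.

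For \eqref{item3:lem:ptrprops}, $\A^{\pretr}$ is exact by Lemma \ref{lem:ZAisExact}, and the unit inclusion $\A^{\pretr}\to\scC_\bk\otimes\A^{\pretr}$, $X\mapsto\1\otimes X$, is fully faithful on $Z^0$ since $\scC_\bk(\1,\1)=\bk$. For essential surjectivity, given $A\otimes X\in\scC_\bk\otimes\A^{\pretr}$, set $Y=A\cdot X\in\A^{\pretr}$ using the $\scC_\bk$-action defined just after Definition \ref{dfn:pretr}, built from the monoidal compatibility map \eqref{eqn:MonoidalComMap} and the multiplication $\scC_\bk\otimes\scC_\bk\to\scC_\bk$. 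The required isomorphism $A\otimes X\cong\1\otimes(A\cdot X)$ in $Z^0(\scC_\bk\otimes\A^{\pretr})$ is produced from the strong dualizability of $A$ in $\scC_\bk$ (coevaluation $\1\to A\otimes A^\vee$, evaluation $A^\vee\otimes A\to\1$) paired with the structure maps of the action. This coherence verification is the main technical obstacle of the lemma.

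For \eqref{item4:lem:ptrprops}, if $\A\subset\A^{\pretr}$ is an equivalence, then $\A$ inherits pretriangulatedness from \eqref{item3:lem:ptrprops}. Conversely, if $\A$ is pretriangulated, it has a zero object (from exactness), so $\A\subset\A_+$ is an equivalence; the defining property gives $\A\simeq\scC_\bk\A\simeq(\scC_\bk\A)_+$, and \eqref{item2:lem:ptrprops} applied to the pointed pretriangulated $(\scC_\bk\A)_+$ yields $(\scC_\bk\A)_+\simeq\Z((\scC_\bk\A)_+)=\A^{\pretr}$. A direct check confirms that the composite equivalence is the canonical inclusion.
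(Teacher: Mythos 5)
Your proposal is correct, and parts (\ref{item2:lem:ptrprops})--(\ref{item4:lem:ptrprops}) follow essentially the paper's own route (the paper's proof of (\ref{item2:lem:ptrprops}) is just the one-line ``filter by the support and apply (\ref{item1:lem:ptrprops})'' version of your induction, and its proofs of (\ref{item3:lem:ptrprops}) and (\ref{item4:lem:ptrprops}) likewise rest on exactness from Lemma \ref{lem:ZAisExact}, the tensor-product action as inverse to the unit inclusion, and part (\ref{item2:lem:ptrprops})). The genuine divergence is in part (\ref{item1:lem:ptrprops}). The paper argues dually-to-you: it uses that $\A$ pretriangulated provides an admissible monomorphism $X\rightarrowtail CX$ with $CX$ injective, extends it along $X\rightarrowtail Y$ to get a map of exact sequences over $X\rightarrowtail CX\twoheadrightarrow TX$, and then identifies $Y$ with the pullback of $CX\twoheadrightarrow TX\leftarrow Z$, which exists in $\A$ and is preserved by the exact inclusion $\A\subset\B$. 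You instead extract the extension class $q=r\circ ds$ (your computation that $dq=0$, via $dr=-qg$ and $g\,ds=0$, is correct, and $rs=0$ actually holds automatically from $frs=0$ and $rf=1$, so your normalization of $s$ is harmless but unnecessary), realize that class by an object $Y'$ of $\A$ using pretriangulatedness, and exhibit the explicit inverse matrices $(f\ s)$ and $\binom{r}{g}$ exactly as in the paper's proof of Lemma \ref{lem:ZAisExact}. Both routes work; yours buys an explicit isomorphism and avoids invoking injectives and pullbacks in $\B$, while the paper's avoids the one point you leave implicit, namely that the fiber of $\tilde q\colon T^{-1}Z\to X$ in $\scC_\bk\A\simeq\A$ is isomorphic, \emph{with the correct sign on the twist}, to the twisted-sum object $(X,Z,q)$ of $\Z(\B_+)$ --- without that identification the matrix formulas do not a priori define closed morphisms to and from $Y'$ (one checks $ds=fq$, so the twist must be exactly $q=r\,ds$, not $-q$). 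This is a standard but nontrivial verification you should at least acknowledge. Similarly, in (\ref{item3:lem:ptrprops}) your appeal to strong dualizability of $A$ in $\scC_\bk$ is a legitimate way to produce the isomorphism $A\otimes X\cong\1\otimes(A\cdot X)$, though it is left as an assertion; the paper is equally terse there.
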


\begin{proof}
(\ref{item1:lem:ptrprops})
Embedding $\B$ into $\B^{\pretr}$, we can assume $Z^0\B$ to be a Frobenius exact category as in Remark \ref{rmk:ActiononAptr}.
Since $\A$ is pretriangulated, there is an admissible monomorphism $i\otimes 1_X: X \rightarrowtail CX$ in $Z^0\A$ (use the equivalence $\A \subset \scC \A$).
The map $X \to Y$ is an admissible monomorphism in $Z^0\B$, and $CX$ is an injective object of $Z^0\B$ (Remark \ref{rmk:ActiononAptr}).
It follows that the map $X \rightarrowtail CX$ extends to a map $Y \to CX$, and we obtain a map of exact sequences in $Z^0\B$
$$\xymatrix{X \xymono[r] \ar@{=}[d] & Y \ar[d]\xyepi[r] & Z \ar[d]\\
X \xymono[r] & CX \xyepi[r] & TX.}
$$
The right hand square is a pull-back square in $Z^0\B$.
Since $\A$ is exact and the maps $CX \to TX$ and $Z \to TX$ are in $Z^0\A$ with $CX \twoheadrightarrow TX$ an admissible epimorphism in $Z^0\A$, the pull-back $P$ of 
$CX \twoheadrightarrow TX \leftarrow Z$ exists in $Z^0\A$.
Since $\A \subset \B$ is exact, it preserves pull-backs along admissible epimorphisms.
Hence, $Y$ is isomorphic to $P$, an object of $\A$.

(\ref{item2:lem:ptrprops})
Every object $X$ of $\Z\A$ has a filtration $0=X_0 \subset X_1\subset ...\subset X_n=X$ with $X_i\subset X_{i+1}$ an admissible monomorphism in $\Z\A$ and quotients $X_{i+1}/X_i$ in $\A$.
By (\ref{item1:lem:ptrprops}), the object $X$ is (isomorphic to an object) in $\A$.

(\ref{item3:lem:ptrprops})
By Lemma \ref{lem:ZAisExact}, the dg category $\A^{\pretr}$ is exact.
Moreover, the inclusion $\A^{\pretr}\subset \scC_{\bk}\A^{\pretr}$ is an equivalence 
since the fully faithful dg functors in Remark \ref{rmk:ActiononAptr} 
$$\Z\scC_k\A \subset \scC_k \Z\scC_k\A \to \Z\scC_k\scC_k\A \stackrel{\otimes}{\to} \Z\scC_k\A$$
are essentially surjective.
Therefore, $\A^{\pretr}$ is indeed pretriangulated.

(\ref{item4:lem:ptrprops})
If $\A$ is pretriangulated, then $\scC\A$ is equivalent to $\A$ and pretriangulated.
Therefore, and in light of (\ref{item2:lem:ptrprops}), 
the inclusions $\A \subset \scC\A \subset \Z(\scC\A) = \A^{\pretr}$ are equivalences of dg categories.
Conversely, if $\A \subset \A^{\pretr}$ is an equivalence then $\A$ is pretriangulated in view of (\ref{item3:lem:ptrprops}).
\end{proof}

\red{
\begin{example}
Let $\E$ be an exact category considered as a dg category as in Example \ref{exn:EasDG}.
Then $\E^{\pretr}=\Ch^b\E$ since every object of the pretriangulated dg category $\Ch^b\E$ has a filtration with quotients shifts of objects of $\E$.
If $R$ is a ring considered as a dg category with one object, then $R^{\pretr}$ is the dg category of bounded  complexes of finitely generated free $R$-modules.
\end{example}
}

\begin{remark}
\label{rmk:H0isFrobeniusStableCat}
If $\A$ is a pretriangulated dg category, then we know (Remark \ref{rmk:ActiononAptr} and Lemma \ref{lem:ptrprops} (\ref{item4:lem:ptrprops})) that its underlying category $Z^0\A$ is a Frobenius exact category.
I claim that a map $f$ in $Z^0\A$ is zero in $H^0\A$ if and only if it factors through some injective projective object of $Z^0\A$, and therefore $H^0\A$ is the stable category $\underline{Z^0\A}$ of the Frobenius category $Z^0\A$:
$$H^0\A = \underline{Z^0\A}.$$
To prove the claim, note that objects of the form $CX$ are zero in $H^0\A$ because its identity morphism $1_{CX}=1_C\otimes 1_X = d(\eps \otimes 1_X)$ is the zero map in $H^0\A$. 
Since injective projective objects in $Z^0\A$ are the direct factors of objects of the form $CX$ (Remark \ref{rmk:ActiononAptr}), this shows that they all are zero in $H^0\A$ and that a map $f$ in $Z^0\A$ which is zero in the stable category is zero in $H^0\A$.
Conversely, assume that the map $f:X \to Y$ in $Z^0\A$ is zero in $H^0\A$ then $f = dg$ for some $g \in \A(X,Y)^1$.
But then $f$ factors through $CX$ and hence is zero in the stable category since $f$ is the composition in $Z^0\A$ 
$$X \stackrel{i\otimes 1}{\longrightarrow} CX \stackrel{\eps^{\vee}\otimes g}{\longrightarrow} Y$$
where $\eps^{\vee} \in [C,\1]^1$ is the unique degree-one map $C \to \1$ with 
$\eps^{\vee}(\eps)=1$.
\end{remark}

\subsection{The homotopy category of a dg category}
\label{subsec:htpycat}
Let $\A$ be a dg category.
Recall that the homotopy category $H^0\A^{\pretr}$ of its pretriangulated hull 
is the stable category of the Frobenius category $Z^0\A^{\pretr}$ (Remark \ref{rmk:H0isFrobeniusStableCat}).
Therefore, it is equipped with the structure of a triangulated category \cite{Happel:FrobTriang}, \cite{Keller:uses}.
The {\em shift functor} of the triangulated category is the functor
$T: H^0\A^{\pretr} \to H^0\A^{\pretr}: X \mapsto TX$ defined by the object $T\in \scC_k$ using the action (\ref{eqn:CkActiononAptr}).
The exact triangles in $H^0\A^{\pretr}$ are 
those which are isomorphic in $H^0\A^{\pretr}$
to a triangle of the following form, called {\em standard exact triangle}, 
\begin{equation}
\label{eqn:standardTriangle}
X \stackrel{f}{\to} Y \stackrel{g}{\to} C(f) \stackrel{h}{\to} TX
\end{equation}
defined by the following map of exact sequences in $\A^{\pretr}$
\begin{equation}
\label{eqn:exTrsDfn}
\xymatrix{
X\hspace{1ex} \ar@{}[dr]|{\square} \ar@{>->}[r]^{i\otimes 1} \ar[d]_f & CX \ar@{->>}[r]^{p\otimes 1} \ar[d] & TX \ar[d]^1\\
Y\hspace{1ex} \ar@{>->}[r]_{g} & C(f) \ar@{->>}[r]_h & TX}
\end{equation}
where $f:X \to Y$ is any morphism of $\A^{\pretr}$.
The left-hand square is cocartesian in $\A^{\pretr}$.
It exists because $\A^{\pretr}$ is exact and is equipped with an action of $\scC_k$; see Remark \ref{rmk:ActiononAptr}.
The object $TX$ is the {\em shift} of $X$, the object $CX$ is called the {\em cone} of $X$ and the object $C(f)$ is called the {\em cone} of $f$ where $T$ and $C$ are the objects of $\scC_k$ defined in Example \ref{ex:Ck}.

\begin{remark}
As a triangulated category, the category $H^0\A^{\pretr}$ is generated by $\A$, that is, every object of $H^0\A^{\pretr}$ is obtained from $\A$ by iteration of taking finite direct sums, shifts and cones.
This is because every object of $\Z\scC_{\bk}\A$ has a filtration with quotients in $\scC_{\bk}\A$, and every object in $\scC_{\bk}\A$ is obtained from $\A$ by iteration of taking finite direct sums, shifts and cones (since $\scC_{\bk}$ is obtained from $\bk$ in this way).
\end{remark}

\begin{definition}
A {\em localization pair} \cite{keller:cyclic}, also called {\em dg pair}, is a pair $(\A,\A_0)$ consisting of a pointed dg category $\A$ and a full pointed dg subcategory $\A_0 \subset \A$.
A morphism of localization pairs $(\A,\A_0) \to (\B,\B_0)$ is a dg functor $\A\to \B$ which sends $\A_0$ into $\B_0$.
The {\em associated triangulated category} of a localization pair 
$(\A,\A_0)$ is the
Verdier quotient triangulated category 
$$\T(\A,\A_0) = (H^0\A^{\pretr})/(H^0\A_0^{\pretr}).$$
The associated triangulated category is equipped with an additive functor 
$$Z^0\A \to \T(\A,\A_0)$$
 obtained as the composition
$Z^0\A \subset Z^0\A^{\pretr} \to H^0\A^{\pretr} \to \T(\A,\A_0)$.
Write $A^{\sat}$
for the full dg subcategory of $\A$ consisting of the objects $A \in\A$ which are zero in the associated triangulated category $\T(\A,\A_0)$.
The dg pair is called {\em saturated} if $\A_0 = \A^{\sat}$.
The saturation of $(\A,\A_0)$ is the dg pair $(\A,\A^{\sat})$.
Note that a dg pair and its saturation have the same associated triangulated category.
\end{definition}

\red{
\begin{example}
\label{ex:DGpairChbE}
Let $\E$ be an exact category, and let $\Ac^b_0\E \subset \Ch^b\E$ be the full dg subcategory of those complexes $E^*$ such that each $d^i:E^i \to E^{i+1}$ has a factorization $E^i \twoheadrightarrow \im(d^i) \rightarrowtail E^{i+1}$ in $\E$ into a deflation followed by an inflation such that
$\im(d^{i-1}) \rightarrowtail E^i \twoheadrightarrow \im(d^i)$ is a conflation for all $i\in \Z$.
Then $(\Ch^b\E,\Ac^b_0\E)$ is a dg pair with saturation $(\Ch^b\E,\Ac^b\E)$ where $\Ac^b\E \subset \Ch^b\E$ is the full dg subcategory of those complexes which are homotopy equivalent to an object of $\Ac^b_0\E$.
If $\E$ is idempotent complete (or more generally, if every map that admits a retraction is an inflation), then $\Ac_0^b\E = \Ac^b\E$, and $(\Ch^b\E,\Ac^b\E)$ is saturated.
The associated triangulated category $\T(\Ch^b\E,\Ac^b\E)$ is the usual bounded derived category $\D^b(\E)$ of the exact category $\E$.
For details; see \cite{neeman:excat}.
\end{example}
}

\subsection{DG categories with weak equivalences}
\label{subsec:DGweakEq}
Let $\A$ be a pointed dg category, and let $w\subset Z^0\A^{\pretr}$ be a set of morphisms containing all isomorphisms.
Let $\A^w$ be the full dg subcategory of $\A$ consisting of those objects $A$ for which the map $0_{\A} \to A$ is in $w$.
Such objects are called $w$-acyclic objects (in $\A$).
Note that the base point zero object $0_{\A}$ is in $\A^w$ since the identity on $0_{\A}$ is in $w$.
We call the set $w$ {\em saturated in $\A$} if a map $f$ in $Z^0\A^{\pretr}$ is in the set $w$ if and only if 
the functor $\A \to \T(\A,\A^w)$ sends $f$ to an isomorphism.

\begin{definition}
\label{dfn:dgCatW}
A {\em dg category with weak equivalences} is a pair $\scA = (\A,w)$ where $\A$ is a pointed dg category and $w\subset Z^0\A^{\pretr}$ is a set of morphisms
which is saturated in $\A$.
A map $f$ in $w$ is called {\em weak equivalence}.
The {\em triangulated category of a dg category with weak equivalences} $\scA = (\A,w)$ is the triangulated category
$$\T\scA = \T(\A,w)= \T(\A,\A^w)$$
of the associated saturated dg pair $(\A,\A^w)$.
\end{definition}

\red{
\begin{example}
\label{ex:ChbEisDGwithWeaks}
Continuing Example \ref{ex:DGpairChbE}, denote by $\quis$ the set of chain maps in $Z^0\Ch^b\E$ which are quasi-isomorphisms, that is, whose cones are in $\Ac^b\E$.
Then $(\Ch^b\E,\quis)$ is a dg category with weak equivalences.
The associated triangulated  category $\T(\Ch^b\E,\quis)$ is the usual bounded derived category $\D^b\E$ of the exact category $\E$.
\end{example}
}

A dg functor $F: \A \to \B$ between dg categories with weak equivalences $(\A,w)$ and $(\B,w)$ is called {\em exact} if it sends weak equivalences to weak \red{equivalences}, or equivalently, if it sends $\A^w$ to $\B^w$.
Such a functor induces a triangle functor $\T(\A,w) \to \T(\B,w)$ which strictly commutes with the shifts in both categories.
We write 
$$\dgCatW$$
 for the category of small dg categories with weak equivalences. 
Note that this category is the same as the category of saturated dg pairs 
where a dg category with weak equivalences $(\A,w)$ corresponds to the saturated dg pair $(\A,\A^w)$.
This allows us to switch freely between the two concepts.

A pointed dg category $\A$ is considered a dg category with weak equivalences where the weak equivalences are the homotopy equivalences, that is the maps which are isomorphisms in $H^0\A \subset H^0\A^{\pretr}$.
Equivalently, it is the saturation of the localization pair $(\A,0)$.

The category $\dgCatW$ is closed symmetric monoidal.
The tensor product 
$$(\A,w)\otimes (\B,w)$$ is the saturation of the dg pair 
$(\A\otimes\B,\A^w\otimes \B \cup \A \otimes \B^w)$.
The function object 
$$\dgFun(\A,w;\B,w)$$ is the saturation of the dg pair $(\dgFun^w(\A,\B), \dgFun(\A;\B^w))$
where $\dgFun^w(\A,\B)$ is the full dg subcategory of $\dgFun(\A,\B)$ of those dg functors that preserve weak equivalences.
The unit is the pointed dg category $\bk_+$ associated with the base ring $\bk$.

\subsection{Categories with duality}
\label{subsec:CatsWDual}
Recall from \cite{myEx}, \cite{myMV} that
a {\it category with duality} is a triple $(\C,*,\eta)$ with $\C$ a category, $*: \C^{op} \to
\C$ a functor, $\eta: 1 \to *\circ *^{op}$ a natural transformation, called {\em double
  dual identification}, such that 
$1_{A^{*}}=\eta_A^{*}\circ \eta_{A^{*}}$ for all objects $A$ in $\C$.
If $\eta$ is a natural isomorphism, we say that the duality is {\it strong}.
In case $\eta$ is the identity (in which case $**=id$), we call the duality
{\it strict}.

A {\it symmetric form}  in a category with duality
$(\C,*,\eta)$ is a pair $(X,\ffi)$ where $\ffi:X \to
X^{*}$ is a morphism in $\C$ satisfying $\ffi^{*}\eta_X=\ffi$.
A map of symmetric forms $(X,\ffi) \to (Y,\psi)$ is a map $f:X \to Y$ in $\C$
such that $\ffi=f^*\circ\psi\circ f$.
Composition of such maps is composition in $\C$.
For a category with duality $(\C,*,\eta)$, we denote by
$\C_h$ the {\em category of symmetric forms in $\C$}.
It has objects the symmetric forms in $\C$ and its morphisms are the maps of symmetric forms as defined above. 

A {\em form functor}  from a category with duality
$(\A,*,\alpha)$ to another such category $(\B,*,\beta)$  is a pair $(F,\ffi)$
with 
$F:\A \to \B$ a functor and $\ffi: F* \to *F$ a natural transformation, called
{\em duality compatibility morphism},
such that $\ffi_A^*\beta_{FA}=\ffi_{A^*}F(\alpha_A)$ for every object $A$ of
$\A$.
There is an evident definition of composition of form functors; see
\cite[3.2]{myEx}. 
We write $\CatD$ for the category of small categories with duality and form functors as morphisms.
A natural transformation $f:(F,\ffi) \to (G,\psi)$ of form functors $\A \to \B$ is a natural transformation of functors $f:F\to G$ such that for all objects $A$ of $\A$ we have $f_A^*\circ \psi_A\circ f_{A^*} = \ffi_A$.

If $\A$ and $\B$ are categories with duality, then
the category $\Fun(\A,\B)$ of functors $\A \to \B$ is a category with duality,
where the dual
$F^{\sharp}$ of a functor $F$ is $*F*$, and the double dual identification 
$\eta_F: F \to F^{\sharp\sharp}$ at an object $A$ of $\A$ is the map 
$\beta_{F(A^{**})}\circ F(\alpha_A) = F(\alpha_A)^{**}\circ \beta_{FA}$.
To give a form functor $(F,\ffi)$ is the same as to give a symmetric form
$(F,\hat{\ffi})$ in the category with duality $\Fun(\A,\B)$
in view of the formulas ${\ffi}_A=F(\alpha_A)^*\circ \hat{\ffi}_{A^*}$ and
$\hat{\ffi}_A={\ffi}_{A^*}\circ F(\alpha_A)$.
A natural transformation $(F,\ffi) \to (G,\psi)$ of form functors is the same as a map
$(F,\hat{\ffi}) \to (G,\hat{\psi})$ of symmetric forms in $\Fun(\A,\B)$.

A {\em duality preserving functor} between categories with duality
$(\A,*,\alpha)$ and $(\B,*,\beta)$ is a functor $F: \A \to \B$ which commutes
with dualities and double dual identifications, that is, we have
$F*=*F$ and $F(\alpha)=\beta_{F}$.
In this case, $(F,id)$ is a form functor.
We write $\CatD_{str}$ for the category of small categories with strict duality and duality preserving functors as morphisms.

\red{
A {\em $k$-linear exact category with duality} is a $k$-linear category with duality $(\E,*,\eta)$ where $\E$ is an exact category, $*:\E^{op}\to \E$ is a $k$-linear exact functor, and $\eta$ is a natural isomorphism.
}

\red{
\begin{example}
\label{exn:VectAsExCatDual}
Let $X$ be a $k$-scheme, $L$ a line bundle on $X$, and $\Vect(X)$ the exact category of vector bundles on $X$.
Then $$(\Vect(X),\sharp_L,\can^L)$$
is a $k$-linear exact category with duality where $E^{\sharp_L}$ is the sheaf of $O_X$-module homomorphisms $Hom_{O_X}(E,L)$ for $E\in \Vect(X)$, and $\can^L_E:E \to E^{\sharp_L\sharp_L}$ is the canonical double dual identification which (locally on X) is the evaluation at $x$ for $x\in E(U)$ and $U\subset X$ an affine open subset.
Note that $(\Vect(X),\sharp_L,-\can^L)$ is also a $k$-linear exact category with duality.
\end{example}
}

\subsection{DG categories with duality}
If $\A$ is a pointed dg category, its opposite pointed dg category 
$\A^{op}$ has the same objects and base point as $\A$ and the mapping complexes are
$\A^{op}(X,Y) = \A(Y,X)$.
Composition is defined by $f^{op}\circ g^{op} = (-1)^{|f||g|}(g\circ f)^{op}$ where $f^{op}$ and $g^{op}$ are the maps in $\A^{op}$ corresponding to the homogeneous maps $f$ and $g$ in $\A$.
For a dg functor $F:\A \to \B$, the assignment $F^{op}:\A^{op} \to \B^{op}: X \mapsto F(X), f^{op} \mapsto F(f)^{op}$ defines a dg functor between opposite categories.
The identity $\A^{op}\otimes \B^{op} \to (\A\otimes\B)^{op}:  (A,B)\mapsto (A,B), f^{op}\otimes g^{op}\mapsto (f\otimes g)^{op}$ is an isomorphism of dg categories and lets us identify these two dg categories.
Note that $(\A^{op})^{op}=\A$.

\begin{definition}
\label{dfn:dgCatD}
A {\it pointed dg category with duality} is a triple $(\A,\vee,\can)$ where $\A$ is a pointed dg category, $\vee:\A^{op} \to \A$ is a dg functor, called {\em duality functor}, and $\can:1 \to \vee\circ \vee^{op}$ is a natural transformation of dg functors (that is, an element of $Z^0[1,\vee\circ \vee^{op}]$), called {\em double dual identification}, such that $\can_A^{\vee}\circ \can_{A^{\vee}}=1_{A^{\vee}}$ for all objects $A$ in $\A$.
Note that the duality functor satisfies $(f\circ g)^{\vee}=(-1)^{|f|\cdot |g|}g^{\vee}\circ f^{\vee}$ for composable homogeneous morphisms $f$ and $g$ of degrees $|f|$ and $|g|$. 
Any dg category with duality $(\A,\vee,\can)$ defines a category with duality $(Z^0\A,\vee,\can)$ by restriction of structure along the inclusion $Z^0\A \subset \A$.
\end{definition}

\red{
\begin{example}
\label{ex:ChbEduality}
Let $(\E,*,\eta)$ be a $k$-linear exact category with duality.
We will endow the dg category $\Ch^b\E$ of bounded  complexes in $\E$ (Example \ref{exn:ChbE}) with the structure 
$$(\Ch^b\E,*,\eta)$$ 
of a dg category with strong duality such that the canonical embedding $\E \to \Ch^b\E$ is duality preserving.
On objects, the dg functor $\ast:(\Ch^b\E)^{op} \to \Ch^b\E$ is given by 
$$(E^*)^i = (E^{-i})^*,\hspace{3ex} (d^*)^i=(-1)^{i+1}(d^{-i-1})^*$$
for $(E,d) \in \Ch^b\E$.
On function complexes, the dg functor $\ast$ is the map of complexes $\ast:[M,N] \to [N^*,M^*]$ given by
$$\E(M^i,N^j) \to \E(N^{-i},M^{-j}): f\mapsto (-1)^{i(j-i)}f^*$$
for $M,N\in \Ch^b\E$.
The canonical double dual identification on $E\in \Ch^b\E$ is 
$(\eta_E)^i=(-1)^i\eta_{E^i}$.
The signs occurring in these formulas are chosen to be compatible with Subsection \ref{subsec:DualtiesCk}.
In case $\E=\Vect(X)$, they agree with the signs coming from the closed symmetric monoidal structure on $\Ch^b\Vect(X)$ as in Section \ref{sec:GWschemes}.
The signs here differ from the choices in \cite{Balmer:TWGII}.
For a comparison isomorphism; see \cite[Section 6.1]{myMV}.
\end{example}
}

A morphism $(\A,\vee,\can) \to (\B,\vee,\can)$ of dg categories with duality, also called {\em dg form functor}, is a pair $(F,\ffi)$ where $F:\A \to \B$ is a dg functor and $\ffi: F\circ \vee\to \vee\circ (F^{op})$ is a natural transformation of dg functors, called {\em duality compatibility morphism}, such that 
$\ffi_{A^{\vee}}\circ F(\can_A) = \ffi^{\vee}_A\circ \can_{FA}$ for all objects $A$ of $\A$.
Composition of $(F,\ffi):(\A,\vee,\can) \to (\B,\vee,\can)$ and $(G,\psi): (\B,\vee,\can) \to (\C,\vee,\can)$ is $(G\circ F, \psi_F\circ G(\ffi)): (\A,\vee,\can) \to (\C,\vee,\can)$.
Composition is associative and unital with unit on $\A$ the identity dg form functor $(id_{\A},id)$.
This defines the category 
$$\dgCatD_{\bk}$$
 whose objects are the small pointed dg categories with duality (over $\bk$) and whose morphisms are the dg form functors.

The category $\dgCatD_{\bk}$ is closed symmetric monoidal.
The tensor product 
$$(\A,\vee,\can^{\A})\otimes (\B,*,\can^{\B}) = (\A\otimes \B,\vee\otimes *,\can^{\A}\otimes \can^{\B})$$ has duality functor
$\vee \otimes *: (\A\otimes \B)^{op}=\A^{op}\otimes \B^{op} \to \A\otimes \B$ 
and double dual identification $\can_A^{\A}\otimes \can_B^{\B}:(A,B) \to (A^{\vee\vee},B^{**})$.
The unit of the tensor product is the dg category $\bk_+$ equipped with the trivial duality.
The switch $\tau:\A \otimes \B \to \B \otimes \A$ in $\dgCatD_{\bk}$ is the switch in $\dgCat_{\bk}$ with identity as duality compatibility map.
The internal function object of $(\A,\vee,\can^{\A})$ and $(\B,*,\can^{\B})$ is the dg category $\dgFun(\A,\B)$ of dg functors equipped with the duality
$$\sharp: \dgFun(\A,\B)^{op} \to \dgFun(\A,\B):F \mapsto F^{\sharp}= *\circ F^{op} \circ \vee^{op}$$
and double dual identification $\can^{\B}_F\circ F(\can^{\A}):F \to F^{\sharp\sharp}$.
Note that a dg form functor $(F,\ffi):\A \to \B$ between dg categories with duality is the same as a symmetric form in the dg category with duality $\dgFun(\A,\B)$ of dg functors from $\A$ to $\B$, or, in the notation of Section \ref{subsec:CatsWDual}, an object of the category $(Z^0\dgFun(\A,\B))_h$ of symmetric forms in $Z^0\dgFun(\A,\B)$.

If $\C$ is a category with duality and $\A$ a pointed dg category with duality, we write $\Fun(\C,\A)$ for the pointed dg category with duality 
$$\Fun(\C,\A) = \dgFun(k[\C]_+,\A).$$

\subsection{Dualities in $\scC_k$}
\label{subsec:DualtiesCk}
Recall that tensor products and function complexes make the category $\scC_{\bk}$ of
bounded  complexes of finitely generated free $\bk$-modules
into a closed symmetric monoidal category.
Therefore, an object $A$ in $\scC_{\bk}$ defines a pointed dg category with duality 
\red{
$$\scC^{[A]}_{\bk} = (\scC_{\bk},\vee_A,\can^A).$$
On objects the duality is defined by
$$\vee_A:\scC_{\bk}^{op} \to \scC_{\bk}: X \mapsto [X,A].$$
On morphism complexes it is the unique admissble natural transformation
$$[X,Y] \stackrel{\nabla}{\to} [[Y,A],[X,Y]\otimes [Y,A]] \stackrel{[1,\tau]}{\longrightarrow} [[Y,A],[Y,A]\otimes [X,Y]] \stackrel{[1,\circ]}{\longrightarrow} [[Y,A],[X,A]],$$
that is, the map
$$[X,Y] \to [[Y,A],[X,A]]: f \mapsto \left\{ g\mapsto (-1)^{|g||f|}g f \right\} $$
for homogenerous composable $f\in [X,Y]$ and $g\in [Y,A]$.
}
The canonical double dual identification 
$\can^A_X: X \to X^{\vee_A\vee_A}: x \mapsto \can_X^A(x)$
is given by 
$$\can_X^A(x)(f) = (-1)^{|x||f|}f(x)$$
for homogeneous $f\in [X,A]$ and $x\in X$.

Tensor product of complexes defines an equivalence of dg categories with duality
\begin{equation}
\label{equn:CACBtoCAB}
(\otimes, \can): \scC^{[A]}\otimes \scC^{[B]} \stackrel{\simeq}{\longrightarrow} \scC^{[A\otimes B]}
\end{equation}
where the duality compatibility isomorphism is 
\begin{equation}
\label{eqn:dualityCompIsoForTensorProds}
\can: [X,A]\otimes [Y,B] \to [X\otimes Y,A\otimes B]: f\otimes g \mapsto f\otimes g
\end{equation}
with $(f\otimes g)(x\otimes y) = (-1)^{|x||g|}f(x) \otimes g(y)$.
For later reference, we note that the following diagram of dg form functors commutes up \red{to} a natural isomorphism of form functors defined below
\begin{equation}
\label{eqn:CAandSwitches}
\xymatrix{
\scC^{[A]}\otimes \scC^{[B]} \ar[r]^{\hspace{2ex}(\otimes,\can)} \ar[d]_{\tau} & \scC^{[A\otimes B]} \ar[d]^{(id,\tau)} \\
\scC^{[B]}\otimes \scC^{[A]} \ar[r]^{\hspace{2ex}(\otimes,\can)} & \scC^{[B\otimes A]}
}
\end{equation}
In the diagram, the left vertical dg form functor is the switch in the symmetric monoidal 
$\dgCatD_{\bk}$ and the right vertical dg form functor is the identity functor equipped with the duality compatibility map 
$[1,\tau]: [X,A\otimes B] \to [X,B\otimes A]$ induced by the switch $\tau: A\otimes B \to B\otimes A$ in $\scC_{\bk}$.
The isomorphism of form functors between the two compositions in the diagram is given by the switch isomorphism $\tau:X\otimes Y \to Y\otimes X$ in $\scC_{\bk}$.

\begin{remark}
\label{rem:SymmToForm}
As in any closed symmetric monoidal category, a map $\mu: X \otimes Y \to A$ in $\scC_{\bk}$ 
defines, by adjunction, a map  
$$\ffi_{\mu}: X \stackrel{\nabla}{\to} [Y,X\otimes Y] \stackrel{[1,\mu]}{\to}
[Y,A]=Y^{\vee_A}$$
which satisfies $\ffi_{\mu}^{\vee_A} \circ \can_Y^A = \ffi_{\mu\circ \tau}$ where
$\mu\circ \tau: Y\otimes X \stackrel{\tau}{\to} X \otimes Y \stackrel{\mu}{\to} A$.
In particular, a map $\mu: X \otimes X \to A$ with $\mu\circ \tau = \mu$ defines
a symmetric form $\ffi_{\mu}$ on $X$, that is, a map $\ffi_{\mu}: X \to X^{\vee_A}$ satisfying $\ffi_{\mu}^{\vee_A}\can_X^A=\ffi_{\mu}$.
\end{remark}

\subsection{$\scC^{[n]}$ and shifted dualities}
\label{subsec:DGshiftedDuals}
For $n\in \Z$, we write $\scC_{\bk}^{[n]}$, or simply $\scC^{[n]}$, for the dg $\bk$-category with duality $\scC^{k[n]}$ where $k[n]$ is the dg $\bk$-module with $k[n]^{-n}=k$ and $k[n]^i=0$ for $i \neq -n$.
Tensor product defines an equivalence of dg categories with duality
\begin{equation}
\label{eqn:CiCj=Ci+j}
\mu_{i,j}:\scC^{k[i]}\otimes \scC^{k[j]} \stackrel{(\otimes,\can)}{\longrightarrow}\scC^{k[i]\otimes k[j]} \stackrel{(1,\mu)}{\longrightarrow} \scC^{k[i+j]}
\end{equation}
where the second form functor is the identity together with the duality compatibility map $[X,k[i]\otimes k[j]] \to [X,k[i+j]]$ induced by the isomorphism of dg $k$-modules
$k[i]\otimes k[j] \to k[i+j]:x\otimes y \mapsto xy$.
Note that under this isomorphism, the switch map $k[j] \otimes k[i] \to k[i]\otimes k[j]$ is identified with multiplication by $(-1)^{ij}:k[i+j]\to k[i+j]$.
Therefore, in the following diagram, the left hand square commutes up to the natural isomorphism of dg form functors as in diagram (\ref{eqn:CAandSwitches}), and the right hand square commutes
\begin{equation}
\label{equn:CiCjcommuting}
\xymatrix{
\scC^{k[i]}\otimes \scC^{k[j]} \ar[d]^{\tau} \ar[r]^{\hspace{2ex}(\otimes,\can)} & \scC^{k[i]\otimes k[j]} \ar[d]^{(1,\tau)} \ar[r]^{(1,\mu)} & \scC^{k[i+j]} \ar[d]^{(1,(-1)^{ij})}\\
\scC^{k[j]}\otimes \scC^{k[i]} \ar[r]^{\hspace{2ex}(\otimes,\can)} & \scC^{k[j]\otimes k[i]} \ar[r]^{(1,\mu)} & \scC^{k[i+j]}.
}\end{equation}

\begin{remark}
Let $(X,\ffi)$ be a symmetric form in $\scC^{[n]}$ and assume that $\ffi:X \to [X,k[n]]$ is an isomorphism.
The symmetric form defines a dg form functor
$$(X,\ffi): k_+ \to \scC^{[n]}: k\mapsto X,\ [k,k] \to [X,X]: a\mapsto a\cdot 1_X$$
with duality compatibility map the isomorphism $\ffi: X \to [X,k[n]]$.
Therefore, we obtain dg form functors
$$(X,\ffi)\otimes: \scC^{[m]} \cong k_+\otimes \scC^{[m]} \stackrel{(X,\ffi)\otimes  1}{\longrightarrow} \scC^{[n]}\otimes \scC^{[m]} \stackrel{\mu_{n,m}}{\longrightarrow} \scC^{[n+m]}.$$
\end{remark}

\begin{lemma}
\label{lem:CiCjcommuting}
The following diagram in $\dgCatD_{\bk}$ commutes up to natural isomorphism of dg form functors
$$\xymatrix{
\scC^{k[i]}\otimes \scC^{k[j]} \ar[d]^{\tau} \ar[r]^{\hspace{2ex}\mu_{i,j}} & \scC^{k[i+j]} \ar[d]^{ \langle -1\rangle^{ij}\otimes }\\
\scC^{k[j]}\otimes \scC^{k[i]} \ar[r]^{\hspace{2ex}\mu_{j,i}} &  \scC^{k[i+j]}
}$$
where $\langle -1 \rangle$ is the inner product space $(k,-1)$ in $\scC^{[0]}$ given by $x,y\mapsto -xy$.
\end{lemma}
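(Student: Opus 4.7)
The proof is essentially bookkeeping: unpacking $\mu_{i,j}$ as defined in (\ref{eqn:CiCj=Ci+j}) and checking the two pieces separately. I would split the big square into the two halves of diagram (\ref{equn:CiCjcommuting}) and verify them one at a time.

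For the left square,
$$\xymatrix{
\scC^{k[i]}\otimes \scC^{k[j]} \ar[d]^{\tau} \ar[r]^{(\otimes,\can)} & \scC^{k[i]\otimes k[j]} \ar[d]^{(1,\tau)} \\
\scC^{k[j]}\otimes \scC^{k[i]} \ar[r]^{(\otimes,\can)} & \scC^{k[j]\otimes k[i]},
}$$
this is an instance of the general square recorded just after (\ref{eqn:dualityCompIsoForTensorProds}) with $A = k[i]$, $B = k[j]$. A commuting natural isomorphism between the two composite dg form functors is given by the symmetry $\tau : X\otimes Y \to Y\otimes X$ in $\scC_{\bk}$; the duality compatibility condition $\tau^{\vee}\circ\can = \can\circ\tau$ reduces to the Koszul sign rule
$$(f\otimes g)(x\otimes y) = (-1)^{|x||g|}f(x)\otimes g(y)$$
combined with $\tau(x\otimes y) = (-1)^{|x||y|}y\otimes x$, which I would verify by a direct element-level computation (a single display of signs).

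For the right square,
$$\xymatrix{
\scC^{k[i]\otimes k[j]} \ar[d]^{(1,\tau)} \ar[r]^{(1,\mu)} & \scC^{k[i+j]} \ar[d]^{(1,(-1)^{ij})}\\
\scC^{k[j]\otimes k[i]} \ar[r]^{(1,\mu)} & \scC^{k[i+j]},
}$$
the underlying dg functor in each corner is the identity on $\scC_{\bk}$, so commutativity is strict and amounts to the identity $\mu \circ \tau_{k[j],k[i]} = (-1)^{ij}\mu$ of maps $k[j]\otimes k[i] \to k[i+j]$. This is immediate from the definition of $\mu: k[i]\otimes k[j] \to k[i+j]$ as multiplication of generators sitting in degrees $-i$ and $-j$ together with the sign in the symmetry of $\scC_{\bk}$.

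Composing the two squares gives the claimed natural isomorphism between $(\otimes \langle -1\rangle^{ij}) \circ \mu_{i,j}$ and $\mu_{j,i}\circ \tau$; one just notes that the right vertical map $\otimes \langle -1\rangle^{ij}$ of the statement is by definition the form functor $(1,(-1)^{ij})$ since tensoring the identity functor by the inner product space $\langle -1\rangle = (k,-1)$ has the effect of multiplying the duality compatibility map by $-1$. The only non-trivial ingredient is the sign check in the first square, and I expect that to be the main (though entirely mechanical) obstacle — essentially the standard Koszul sign lemma for the symmetry of the internal-hom pairing.
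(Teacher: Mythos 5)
Your proposal is correct and matches the paper's argument exactly: the paper states the lemma with \Qed because it is just a record of diagram (\ref{equn:CiCjcommuting}), whose left square is the instance $A=k[i]$, $B=k[j]$ of the general switch square following (\ref{eqn:dualityCompIsoForTensorProds}) (with the symmetry $\tau$ as the natural isomorphism of form functors) and whose right square commutes strictly because the switch $k[j]\otimes k[i]\to k[i]\otimes k[j]$ is identified with multiplication by $(-1)^{ij}$ on $k[i+j]$. No further comment is needed.
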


\begin{proof}
This follows from diagram (\ref{equn:CiCjcommuting}) and the identification of its right vertical map with the right vertical map in the Lemma.
\end{proof}

\begin{remark}
If we denote by $m:k[2] \otimes k[2] \to k[4]$ the multiplication map, then tensor product with the symmetric form $(k[2],m)$ in $\scC^{[4]}$ induces isomorphisms of dg categories with duality
\begin{equation}
\label{eqn:Cj=Cj+4}
(k[2],m)\otimes :\scC^{[n]} \stackrel{\cong}{\longrightarrow} \scC^{[n+4]};
\end{equation}
compare \cite[Proposition 7]{myMV}.
\end{remark}

\begin{definition}
\label{dfn:nshifteddgcat}
Let $\A$ be a dg category with duality.
The {\em $n$-th shifted} dg category with duality is 
$$\A^{[n]} = \scC_{\bk}^{[n]}\otimes \A.$$
The equivalences (\ref{eqn:CiCj=Ci+j}) and isomorphisms (\ref{eqn:Cj=Cj+4}) induce equivalences and isomorphisms of dg-categories with duality
$$(\A^{[n]})^{[m]}=\scC_{\bk}^{[m]}\otimes \A^{[n]} \stackrel{\simeq}{\longrightarrow} \A^{[m+n]}\hspace{4ex}\text{and}$$
$$\A^{[n]} \cong \A^{[n+4]}.$$
The dg categories with duality $\A^{[n]}$ all have the same underlying dg category but are equipped with a duality depending on $n\in \Z$.
If the dg-category with duality $\A$ is pretriangulated, then so is $\A^{[n]}$, $n\in \Z$, and $\A \to \A^{[0]}$ is an equivalence of dg categories with duality.
In general, the dg form functor $\A \to \A^{[0]}$ always induces an equivalence on pretriangulated hulls for any dg category with duality $\A$.
\end{definition}

\subsection{The pretriangulated hull of a dg category with duality}
Let $\A$ be a dg category with duality.
We will make its pretriangulated hull $\A^{\pretr}$ into a dg category with duality such that the inclusion $\A \subset \A^{\pretr}$ is duality preserving.

For that, let $(\A,\vee,\can)$ be a dg category with duality.
Consider the ordered set $\Z$ as a category with strict duality $\Z^{op}\to \Z: n\mapsto -n$.
Then the dg category of extensions $\Z\A$ is equipped with the duality 
$$\vee: (\Z\A)^{op} \to \Z\A: (A,q) \mapsto (A,q)^{\vee} = (A^{\vee}, -q^{\vee})$$ where $(A^{\vee})_{i}=(A_{-i})^{\vee}$ and $q^{\vee}$ has entries $(q^{\vee})_{ij}=(q_{-j,-i})^{\vee}$.
On morphism complexes, the duality sends a matrix $f$ to the matrix $f^{\vee}$ with entries $(f^{\vee})_{ij}=(f_{-j,-i})^{\vee}$.
The double dual identification $\can:(A,q) \to (A,q)^{\vee\vee} = (A^{\vee\vee}, q^{\vee\vee})$ is the matrix with entries $(\can_{(A,q)})_{ij} = \can_{A_i}$ for $i=j\in \Z$ and $(\can_{(A,q)})_{ij} = 0$ for $i\neq j \in \Z$; see \S \ref{subsec:MoreOnExtCats} below.
Clearly, the inclusion $\A \subset \Z\A$ preserves dualities.

For a dg category with duality $\A$, the category $\scC_{\bk}^{[0]}\A$ is a dg category with duality, and the inclusion $\A \subset \scC_{\bk}^{[0]}\A:X\mapsto \1\otimes X$ is duality preserving.
Therefore, the pretriangulated hull $\A^{\pretr} = \Z\scC^{[0]}_{\bk}\A$ is a pretriangulated dg category with duality containing $\A$ as a full dg subcategory with duality.
If $\A$ is a pretriangulated dg category with duality then the inclusion 
$\A \subset \A^{\pretr}$ is an equivalence of dg categories with duality.

\red{
\begin{example}
Let $(\E,*,\eta)$ be a $k$-linear exact category with duality.
Recall from Example \ref{ex:ChbEduality} and the above discussion that $\Ch^b\E$ and $\E^{\pretr}$ are naturally equipped with the structure of a dg category with strong duality.
Since both dg categories are pretriangulated hulls for $\E$, the natural dg form functors 
$$\E^{\pretr} \to (\Ch^b\E)^{\pretr} \leftarrow \Ch^b\E$$
are equivalences of dg categories with dualities.
\end{example}
}

\begin{definition}
A {\em dg category with weak equivalences and duality} is a quadruple
$\scA = (\A,w,\vee,\can)$ where 
$(\A,w)$ is a dg category with weak equivalences and
$(\A,\vee,\can)$ is a dg category with duality such that the dg subcategory $\A^w\subset \A$ of $w$-acyclic objects is closed under the duality functor $\vee$ and
$\can_A:A \to A^{\vee\vee}$ is a weak equivalence for all objects $A$ of $\A$.
Note that then $f\in w$ if and only if $f^{\vee}\in w$.
We may sometimes omit $w$, $\vee$ or $\can$ from the notation when they are understood.
\end{definition}

\red{
\begin{example}
\label{ex:ChbEDGwithWeaksAndDuals}
Let $(\E,*,\eta)$ be an exact category with duality.
From Examples \ref{ex:ChbEisDGwithWeaks} and \ref{ex:ChbEduality} we obtain 
the dg category with weak equivalences and (strong) duality
$$(\Ch^b\E,\quis,*,\eta).$$
\end{example}
}

A morphism of dg categories with weak equivalences and duality (also called exact dg form functor) is a dg form functor $(F,\ffi)$ where $F$ an exact dg functor.
Compositions of exact dg form functors are exact dg form functors.
This defines the category $\dgCatWD_{\bk}$ of small dg $\bk$-categories with weak equivalences and duality.
Note that if $\scA = (\A,w)$ is a dg category with weak equivalences and duality, then so is its pretriangulated hull $\scA^{\pretr} = (\A^{\pretr},w)$.

Tensor product and function object given in Definition \ref{dfn:dgCatW} and \ref{dfn:dgCatD} 
make $\dgCatWD_{\bk}$ into a closed symmetric monoidal category.
If $\scA = (\A,w)$ is a dg category with weak equivalences and duality, then so is $\scA^{[n]} = (\A^{[n]}, w)$ which is the saturation of the dg pair $(\A^{[n]}, (\A^w)^{[n]})$.

\subsection{Grothendieck-Witt groups of dg categories}
\label{dfn:GW0}
A dg category with weak equivalences and duality 
$\scA = (\A,w,\vee,\can)$ defines an exact category with weak equivalences and duality 
$(Z^0\A^{\pretr},w,\vee,\can)$ in the sense of \cite[\S 2.3]{myMV}
with exact sequences as defined in Section \ref{subsec:ExDGCats}.
As such its Grothendieck Witt group $GW_0(\scA)$ was defined in \cite[Definition 1]{myMV}.
We will remind the reader of the definition below.
But first, recall that a symmetric space in $\scA^{\pretr}$ is a pair $(A,\ffi)$ where 
$\ffi: A\to A^{\vee}$ is a weak equivalence in $\scA^{\pretr}$ (in particular $\ffi \in Z^0\A$) such that $\ffi^{\vee}\can_A=\ffi$.

\begin{definition}[\cite{myMV}]
\label{dfn:GW0A}
Let $\scA = (\A,w,\vee,\can)$ be a dg category with weak equivalences and duality.
The Grothendieck-Witt group $GW_0(\scA)$ of $\scA$ is the abelian group 
generated by symmetric spaces $[X,\ffi]$ in $(Z^0\A^{\pretr},w,\vee,\can)$, subject to the following
relations 
\begin{enumerate}
\item
\label{cor:itm0:Kh_0}
$[X,\ffi]+[Y,\psi] = [X \oplus Y,\ffi \oplus \psi]$
\item
\label{cor:itm1:Kh_0}
if $g:X \to Y$ is a weak equivalence, then $[Y,\psi]=[X,g^{\vee}\psi g]$,
and
\item
\label{cor:itm2:Kh_0}
if $(E_{\bullet},\ffi_{\bullet})$ is a symmetric space in the category
of exact sequences in $Z^0\A^{\pretr}$, that is, a map 
$$\xymatrix{
\hspace{2ex}E_{\bullet} \ar[d]^{\ffi_{\bullet}}_{\wr}:  & E_{-1}
\xymono[r]^i\ar[d]^{\ffi_{-1}}_{\wr} & E_0 \xyepi[r]^p\ar[d]^{\ffi_{0}}_{\wr}
&  E_1 \ar[d]^{\ffi_{1}}_{\wr} \\
\hspace{2ex}E_{\bullet}^{\vee}: & E_1^{\vee} \xymono[r]_{p^{\vee}} & E_0^{\vee} \xyepi[r]_{i^{\vee}} &
E_{-1}^{\vee}}$$ 
of exact sequences with
$(\ffi_{-1},\ffi_0,\ffi_1)=(\ffi_1^{\vee}\can,\ffi_0^{\vee}\can,\ffi_{-1}^{\vee}\can) $ a 
weak equivalence, then $$[E_0,\ffi_0] = \left[ E_{-1}\oplus E_1,
\left(\begin{smallmatrix} 0 & \ffi_1\\ \ffi_{-1} &
    0\end{smallmatrix}\right)\right].
$$
\end{enumerate}
\end{definition}

Similarly, the Witt group $W_0(\scA)$ of $\scA$ is
the abelian group generated by symmetric spaces $[X,\ffi]$ in 
$(Z^0\A^{\pretr},w,\vee,\can)$, subject to the
relations (\ref{cor:itm0:Kh_0}), (\ref{cor:itm1:Kh_0}) and
\begin{itemize}
\item[(3')]
if $(E_{\bullet},\ffi_{\bullet})$ is a symmetric space in the category
of exact sequences in $(Z^0\A^{\pretr},w,\vee,\can)$, then $[E_0,\ffi_0]=0$.
\end{itemize}

Furthermore, we define the {\it shifted Witt and Grothendieck-Witt groups} of $\scA$ as
$$W^{[n]}(\scA)= W_0(\scA^{[n]}) \hspace{3ex}\text{and}\hspace{3ex} GW^{[n]}_0(\scA)
 = GW_0(\scA^{[n]}).$$
\red{Note that for $n=0$ we have $GW^{[0]}_0(\scA) = GW_0(\scA)$ and $W^{[0]}(\scA) = W_0(\scA)$.}

\red{
\begin{example}
Let $(\E,*,\eta)$ be a $k$-linear exact category with duality.
Then the Grothendieck-Witt group $GW_0(\Ch^b\E,\quis,*,\eta)$ of the dg category with weak equivalences and duality from Example \ref{ex:ChbEDGwithWeaksAndDuals} is naturally isomorphic to Knebusch's Grothendieck-Witt group $GW_0(\E,*,\eta)$; see \cite[Remark 1]{myMV}.
This follows for instance from \cite[Proposition 6 and Remark 14]{myMV}.
The same applies to the associated Witt groups.
\end{example}
}

\subsection{More on extension categories}
\label{subsec:MoreOnExtCats}
Let $\poSetD_{\str}$ be the category of posets with strict duality.
Morphisms in that category are the order preserving embeddings which commute with dualities.
Cartesian product makes $\poSetD_{\str}$ into a symmetric monoidal category.
We will extend the symmetric monoidal functor (\ref{eqn:PAmonoidalFuntor}) to a symmetric monoidal functor
\begin{equation}
\label{eqn:poSetDdgCatDFun}
\poSetD_{\str} \times \dgCatD \to \dgCatD : \P, (\A,\vee,\can) \mapsto (\P\A,\vee,\can).
\end{equation}
Let $\P$ be a poset with strict duality $\P^{op} \to \P:x \mapsto x'$, and let $(\A,\vee,\can)$ be a pointed dg category with duality.
Then $\P\A$ is equipped with the duality 
$\vee: (\P\A)^{op} \to \P\A: (A,q) \mapsto (A,q)^{\vee} = (A^{\vee}, -q^{\vee})$ where $(A^{\vee})_{i}=(A_{i'})^{\vee}$ and $q^{\vee}$ has entries $(q^{\vee})_{ij}=(q_{j'i'})^{\vee}$.
Note that $(-q^{\vee})^2-d(q^{\vee})=0$ because $(q^2)^{\vee}=-(q^{\vee})^2$ and $d(q^{\vee})=(dq)^{\vee}$.
On morphism complexes, the duality sends a matrix $f$ to the matrix $f^{\vee}$ with entries $(f^{\vee})_{ij}=(f_{j'i'})^{\vee}$.
The double dual identification $\can:(A,q) \to (A,q)^{\vee\vee} = (A^{\vee\vee}, q^{\vee\vee})$ is the matrix with entries $(\can_{(A,q)})_{ij} = \can_{A_i}$ for $i=j\in \P$ and $(\can_{(A,q)})_{ij} = 0$ for $i\neq j \in \P$.
In the obvious way this construction is functorial in posets with strict duality $\P$ and in pointed dg categories with duality $(A,\vee,\can)$.
The monoidal compatibility map (\ref{eqn:MonoidalComMap}) is duality preserving
and thus equipped with the identity as duality compatibility map.

Let $\scA = (\A,w)$ be a dg category with weak equivalences.
Then we define $\P\scA$ as the saturation of the dg pair $(\P\A,\P\A^w)$.
If $\scA = (\A,w)$ is a dg category with weak equivalences and duality, then 
the duality on $\P\A$  makes $\P\scA$ into a dg category with weak equivalences and duality.
Finally, the symmetric monoidal functor (\ref{eqn:PAmonoidalFuntor}) extends to a symmetric monoidal functor
$$\poSetD_{\str} \times \dgCatWD \to \dgCatWD.$$

\section{The cone functor and a counter-example to invariance}
\label{sec:ConeCounterex}

Let $\A$ be a pretriangulated dg category with duality.
Any morphism $f$ in $\A$ has a cone $C(f)$ in $\A$ defined by diagram 
(\ref{eqn:exTrsDfn}).
This defines a (dg) functor 
$$\Cone:  \Fun([1],\A) \to \A: f \mapsto C(f)$$
where $[n]$ denotes, as usual,  the category (with unique duality) $0 \to 1 \to \cdots \to n$ associated with the poset $0 <1 < ... <n$.
Thus, both categories $\Fun([1],\A)$ and $\A$ are exact dg categories with duality.
In what follows, we make the cone functor into a dg form functor
$$\Cone: \Fun([1],\A) \to \A^{[1]}$$
with double dual identification a natural isomorphism.
Note that the target will be equipped with a shifted duality.
The construction of the cone functor and its properties will be fundamental for the rest of this paper.
In this section, we will use it in Proposition \ref{enum:prop:Counterex} to give a counter-example to invariance under derived equivalences.

\subsection{The mapping cone as a dg form functor}
\label{subsec:mappingConedgForm}
Recall from Example \ref{ex:Ck} the commutative dg $\bk$-algebra $C$ and the exact sequence (\ref{eqn:fundExSeq})
in $\scC_{\bk}$.
For a dg category $\A$, let $S_2\A$ denote the full dg subcategory of $\Fun([2],\A)$ of those functors $A: [2] \to \A$ for which the sequence 
$A_0 \to A_1 \to A_2$ is exact in $\A$.
Thus, $\Gamma$ defines an object in $S_2\scC_{\bk}$.

Recall from Remark \ref{rem:SymmToForm} that the multiplication map $\mu:C \otimes C \to C$ of the commutative dg $\bk$-algebra $C$ together with
the composition $p\mu: C \otimes C \to T$ defines
a symmetric form $(C,\mu\circ p)$ in $\scC_{\bk}^{[1]}$.
Note that its adjunction $\ffi_{\mu\circ p}: C \to [C,T]$ is an isomorphism.
In this way we obtain a symmetric isomorphism of short exact sequences
\begin{equation}
\label{eqn:FormOnFundExSeq}
\xymatrix{
\Gamma: \ar[d]^{\gamma}&k\hspace{1ex} \ar@{>->}[r]^i \ar[d]^{\nabla}_{\gamma_{-1}=} & C \ar@{->>}[r]^p \ar[d]^{\ffi_{\mu\circ p}}_{\gamma_0=} & T \ar[d]^{id}_{\gamma_1=}\\
 [\Gamma,T]:& [T,T] \hspace{1ex} \ar@{>->}[r]^{[p,1]} &  [C,T]
\ar@{->>}[r]^{[i,1]} & [k,T].}
\end{equation}
In other words, the pair $(\Gamma,\gamma)$ defines a symmetric space in $S_2\scC_{\bk}^{[1]}$
Tensoring with $(\Gamma,\gamma)$ defines a dg form functor 
$$(\Gamma,\gamma)\otimes id: \A \to (S_2\scC^{[1]}) \A \subset S_2(\scC^{[1]}\A) =S_2\A^{[1]}.$$ 
with duality compatibility map a natural isomorphism.
Note that $S_2(\A^{[n]}) = (S_2\A)^{[n]}$.

Next, for any exact dg category with duality $\A$ we define a dg form functor 
$$(\Delta, \delta): \Fun([1],S_2\A) \to \A$$
as follows.
The exact dg-category with duality 
$\Fun([2],S_2\A)$ 
has objects the sequences $A_{\bullet}^{0} \stackrel{f^{0}}{\to} A_{\bullet}^{1}
\stackrel{f^{1}}{\to} A_{\bullet}^{2}$ of morphisms of short exact sequences 
$A_{\bullet}^i: A_{0}^i \rightarrowtail A_{1}^i \twoheadrightarrow A_{2}^i$ of objects in $\A$.
The evaluation at $(1,1)$
$$e: \Fun([2],S_2\A) \to \A:\hspace{1ex} (A_{\bullet}^{0} \stackrel{f^{0}}{\to} A_{\bullet}^{1} \stackrel{f^{1}}{\to} A_{\bullet}^{2}) \mapsto A_1^1$$
preserves dualities and thus defines a dg form functor between exact dg categories with duality.

Let $\M(\A) \subset \Fun([2],S_2\A)$ be the full dg-subcategory of those sequences $A_{\bullet}^{0} \stackrel{f^{0}}{\to} A_{\bullet}^{1} \stackrel{f^{1}}{\to} A_{\bullet}^{2}$ for which the maps $f_{0}^1: A_{0}^1 \to A_{0}^2$ and  $f_{2}^{0}: A_2^{0} \to A_2^1$ are the identity maps.
The duality on $\Fun([2],S_2\A)$ preserves the subcategory $\M(\A)$ and thus makes $\M(\A)$ into a dg category with duality.
The dg-functor $\M(\A) \to \Fun([1],S_2\A): (f^{0},f^1) \mapsto f^1\circ f^{0}$ preserves dualities.
If $\A$ is exact then this is an equivalence of dg-categories.
By Lemma \ref{lem:Aheq} below, we can choose an inverse dg form functor $\Fun([1],S_2\A) \to \M(\A)$ which is unique up to natural isomorphism of dg form functors.
The dg form functor $(\Delta, \delta)$ is the composition 
$$(\Delta, \delta): \Fun([1],S_2\A) \stackrel{\simeq}{\longrightarrow} \M(\A) \subset \Fun([2],S_2\A) \stackrel{e}{\longrightarrow} \A$$

Finally, the mapping cone dg form functor is the composition
\begin{equation}
\label{eqn:cone}
\Cone: \Fun([1],\A) \stackrel{(\Gamma,\gamma) \otimes id}{\longrightarrow} \Fun([1],S_2\A^{[1]})
\stackrel{(\Delta,\delta)}{\longrightarrow} \A^{[1]}.
\end{equation}
More precisely, it is the zigzag
\begin{equation}
\label{eqn:ConeDfnZigzag}
\Fun([1],\A) \stackrel{(\Gamma,\gamma) \otimes id}{\longrightarrow} \Fun([1],S_2A^{[1]}) \stackrel{\simeq}{\longleftarrow}
\M(\A)
\stackrel{e}{\longrightarrow} \A^{[1]}
\end{equation}
of dg form functors in which the arrow in the wrong direction is an equivalence of dg categories with dualities for which we may choose an inverse if we wish.
Replacing $\A$ with $\A^{[n]}$ we obtain the dg form functor
$$\Cone: \Fun([1],\A^{[n]}) \to \A^{[n+1]}$$
for any exact dg category with duality and $n\in \Z$.
Note that the duality compatibility morphism for this form functor is an isomorphism.

\subsection{A counter-example to invariance under derived equivalences}
The following proposition shows that an exact dg form functor $\scA \to \scB$ between pretriangulated dg categories with weak equivalences and duality which induces an equivalence of associated triangulated categories $\T\scA \stackrel{\cong}{\to} \T\scB$ need not induce an isomorphism of Grothendieck-Witt groups 
$GW_0(\scA) \to GW_0(\scB)$ contrary to the situation in $K$-theory.

To state the proposition, let $R$ be commutative ring.
Equip the exact dg category $\scC_R$ of bounded  complexes of finitely generated free $R$-modules with the set of weak equivalences which are the quasi-isomorphisms, that is,
the morphisms which become isomorphisms in $H^0\scC_R$, or in other words, which are homotopy equivalences of bounded  complexes.
Recall from \S \ref{subsec:mappingConedgForm} the cone dg form functor
\begin{equation}
\label{enq:App:CounterexCone}\Cone:\Fun([1],\scC_R^{[-1]}) \to \scC_R^{[0]}.
\end{equation}
Let $w$ be the set of morphisms $f$ in $\Fun([1],\scC_R^{[-1]})$ for which $\Cone(f)$ is a quasi-isomorphism.

\begin{proposition}
\label{enum:prop:Counterex}
Let $R$ be a commutative ring.
\begin{enumerate}
\item
\label{enum:prop:Counterex:DerEq}
The cone functor (\ref{enq:App:CounterexCone}) induces an equivalence of
triangulated categories 
$$w^{-1}\Fun([1],\scC_R^{[-1]}) \stackrel{\simeq}{\to} \quis^{-1}\scC_R^{[0]}$$
\item
\label{enum:prop:Counterex:notSurj}
If $2$ is not a unit in $R$, then the map 
$$GW_0(\Fun([1],\scC_R^{[-1]}),w) \to GW_0(\scC_R^{[0]},\quis)$$ 
induced by the cone functor (\ref{enq:App:CounterexCone}) is not surjective.
\end{enumerate}
\end{proposition}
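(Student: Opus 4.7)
The plan is to prove (1) via a standard derived-equivalence argument and (2) by identifying an invariant that vanishes on the image of $\Cone_*$ but is nontrivial on $\langle 1\rangle$ when $2\notin R^\times$.

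For (1), essential surjectivity holds because every complex $X\in\scC_R^{[0]}$ is quasi-isomorphic to $\Cone(0\to X')$ with $X'=X[-1]\in\scC_R^{[-1]}$; the underlying dg categories coincide and only the dualities differ. Fully faithfulness is a standard consequence of the axioms of triangulated categories: any morphism $C(f)\to C(g)$ in the derived category of $R$ lifts, through a zig-zag involving $w$-equivalences, to a morphism $f\to g$ of arrows by TR3, and two such lifts inducing the same map on cones are related by a $w$-equivalence via a $3\times 3$ argument.

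For (2), let $GW_0^{\mathrm{ev}}(R)\subset GW_0(\scC_R^{[0]},\quis)$ denote the subgroup of classes admitting a representative $(X,\phi)$ that is \emph{even}, meaning $\phi(x,x)\in 2R$ for all $x\in X$. One checks that this is indeed a subgroup: direct sums and isometries preserve evenness, hyperbolic forms (to which the metabolic relation (3) of Definition \ref{dfn:GW0A} reduces Lagrangian situations) are always even, and the inverse $-[X,\phi]=[X,-\phi]-H(X)$ preserves evenness. Moreover, $\langle 1\rangle=[R,1]$ is not in $GW_0^{\mathrm{ev}}(R)$ when $2\notin R^\times$: for any stabilization $(R,1)\oplus H(Y)$ and any $v=(x,h)$, the relation $\phi(v,v)=x^2+2\alpha(y)\equiv x^2\pmod{2R}$ shows the form restricted to the $R$-summand must itself be even, forcing $1=1^2\in 2R$ and thus $2\in R^\times$.

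The main step is to show the image of $\Cone_*$ lies in $GW_0^{\mathrm{ev}}(R)$. The form $C\phi$ on $C(f)=A[1]\oplus B$ decomposes into off-diagonal pieces from $\phi_0$ and $\phi_1$, which pair $(x,y)$ with itself as $2\phi_0(x)(y)\in 2R$ by symmetry, and diagonal pieces arising from the self-pairing $\gamma_0=\ffi_{\mu p}:C\otimes C\to T$ built into $\Cone$ via $(\Gamma,\gamma)\otimes id$. The decisive computation is that for $c=\alpha+\beta\eps\in C$ the relation $\eps^2=0$ gives $c^2=\alpha^2+2\alpha\beta\eps$, so $\gamma_0(c,c)=p(c^2)=2\alpha\beta\in 2R$. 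Thus all diagonal contributions to $C\phi$ land in $2R$ and $(C(f),C\phi)$ is even.

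The principal obstacle will be unpacking the construction of $\Cone$ from $(\Gamma,\gamma)\otimes id$ and $(\Delta,\delta)$ precisely enough to verify that the diagonal contributions to $C\phi$ on $A[1]\oplus B$ really factor through $\gamma_0$, and hence take values in $2R$, rather than picking up unwanted pieces outside $2R$ from the duality compatibility isomorphisms of the zig-zag defining $\Cone$.
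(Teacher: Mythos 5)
Your part (\ref{enum:prop:Counterex:DerEq}) is essentially correct and matches the paper, which simply exhibits the inverse $A\mapsto(0\to A)$ (no shift $X[-1]$ is needed: the $[-1]$ in $\scC_R^{[-1]}$ refers only to the duality, and $\Cone(0\to A)\cong A$). For part (\ref{enum:prop:Counterex:notSurj}) your germ of an idea --- the cone form is ``even'' while $\langle 1\rangle$ is not --- is indeed the paper's idea, but your implementation has a genuine gap. The claim $\langle1\rangle\notin GW_0^{\mathrm{ev}}(R)$ is not established by inspecting representatives of the form $(R,1)\perp H(Y)$: equality of classes in $GW_0$ does not give stable isometry of representatives, because one must also quotient by the weak-equivalence relation (\ref{cor:itm1:Kh_0}) and the metabolic relation (\ref{cor:itm2:Kh_0}) of Definition \ref{dfn:GW0A}, and metabolic forms need not be even. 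Over $\ff_2$, for instance, $\langle1\rangle\perp\langle1\rangle$ is metabolic, so $2\langle1\rangle$ \emph{does} lie in your even subgroup --- which shows how little room there is for a representative-level argument. To exclude $\langle1\rangle$ you need an honest homomorphism out of $GW_0$ that kills even classes, and the natural one is exactly the paper's: reduce modulo a maximal ideal $m\ni2$ and take rank mod $2$ on $W_0(R/m)$, where nondegenerate even forms over the characteristic-$2$ field $R/m$ are symplectic and hence of even rank, while $\langle1\rangle$ has rank $1$.

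The second incomplete step is the one you yourself flag: the evenness of the cone form. What actually makes it work is that the degree-zero component of $\Cone(M,\ffi)$, for $(M,\ffi)=(P\stackrel{f}{\to}Q,\ffi^P,\ffi^Q)$, is the block form $\left(\begin{smallmatrix}\alpha&0\\0&\alpha^*\eta\end{smallmatrix}\right):Q_0\oplus P_1\to P_1^*\oplus Q_0^*$, whose self-pairings are $(1\pm1)(\alpha q)(p)\in 2R$; the computation $p(c^2)=2\alpha\beta\varepsilon$ in $C$ is the right heuristic but is not by itself a proof that no non-even diagonal terms survive the zigzag (\ref{eqn:ConeDfnZigzag}). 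Moreover the cone form is only a quasi-isomorphism of complexes, so to compare with $GW_0^{free}(R)$ (or $W_0(k)$) one must pass to $H_0$, a subquotient of the degree-zero term, and check that isotropy of every vector --- which in characteristic $2$ is what evenness gives --- is inherited by subquotients, whence the induced nonsingular form is symplectic and of even rank. The paper carries out precisely these two verifications; without them, and without the rank-mod-$2$ homomorphism above, your argument does not close.
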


\begin{proof}
An inverse to the functor in (\ref{enum:prop:Counterex:DerEq}) is given by 
the functor $\scC_R \to \Fun([1],\scC_R)$
sending and object $A$ of $\scC_R$ to the map $0 \to A$.
Of course, this does not preserve dualities.

For part (\ref{enum:prop:Counterex:notSurj}),
recall that $GW_0(\scC^{[0]}_R,\quis)$ is isomorphic to the usual
Grothendieck-Witt group $GW^{free}_0(R)$ of non-degenerate symmetric bilinear forms on finitely generated free $R$-modules \cite[Proposition 6]{myMV}.
The isomorphism is induced by the map that sends a finitely generated
free $R$-module equipped with a non-degenerated symmetric bilinear form
to the complex concentrated in degree zero where it is that $R$-module
together with the induced form on the complex.
Let $m\subset R$ be a maximal ideal containing $2$ which exists since $2$ is
not a unit in $R$.
Then $k=R/m$ is a field of characteristic $2$.
The composition 
$$\rk_m: GW_0^{free}(R) \to GW^{free}_0(k)=GW_0(k) \stackrel{\rk}{\to} \Z:[M,\ffi]
\mapsto \dim(M\otimes_R k)$$
is surjective since $[R,1]$ is sent to $1$. 
We will show that for every symmetric space $(M,\ffi)$ in
$(\scC_R^{[-1]},w)$, the rank $\rk_m(\Cone M)$ of $\Cone M$ at $m$
is even.
For that, we can assume $R=k$ and it suffices to show that the composition
\begin{equation}
\label{enq:App:CounterexRkmap}
W_0(\Fun([1],\scC_{\bk}^{[-1]}),w) \stackrel{\Cone}{\longrightarrow} W_0(k)
\stackrel{\rk}{\longrightarrow} \Z/2
\end{equation}
is zero.
Since $k$ has characteristic $2$, we will ignore all signs.
As mentioned above, inclusion as complexes concentrated in degree zero induces
an isomorphism 
$W_0(k) \to W_0(\scC_{\bk}^{[0]},\quis)$.
The inverse $W_0(\scC_{\bk}^{[0]},\quis) \to W_0(k)$ is given by the
zero-homology functor $[M,\ffi] \mapsto [H_0M,H_0\ffi]$.
Let $(M,\ffi)= (P\stackrel{f}{\to}Q, \ffi^P,\ffi^Q)$ be a symmetric space in
$(\Fun([1],\scC_{\bk}^{[-1]}),w)$.
We have to show that the symmetric space 
$$(N,\psi)=H_0\Cone(M,\ffi)$$
in $\Vect_{\bk}$ has even rank.
Symmetry of the map $(\ffi^P,\ffi^Q):(P,Q) \to (Q^*[-1],P^*[-1])$ means that
$\ffi^Q_i=(\ffi^P_{1-i})^*\eta_{Q_i}$
where $V^*=Hom_{\bk}(E,k)$ is the usual dual of a $k$-vector space $V$ and $\eta$ is the usual canonical double dual identification $\eta_V:V \to V^{**}$.  
The cone symmetric space of $(M,\ffi)$ in
degree zero is the symmetric map of $k$-vector spaces
\begin{equation}
\label{eqn:propCounterex}
\left(\begin{smallmatrix} \alpha & 0 \\ 0 & \alpha^*\eta
  \end{smallmatrix}\right):  Q_0 \oplus 
  P_1 \to P_1^* \oplus Q_0^*,\hspace{4ex} {\rm where}\hspace{2ex} \alpha=\ffi^Q_0.
\end{equation}
The particular shape of the symmetric map shows that $Q_0 \oplus P_1$ 
has a basis
$v_1,...,v_n$, $n=\dim Q_0
+ \dim P_1$, of isotropic vectors, that is, of vectors $v_i$ for which the
(possibly singular) associated symmetric bilinear form (\ref{eqn:propCounterex}) satisfies
$\langle v_i,v_i\rangle = 0$, $i=1,...,n$.
Since the field $k$ has characteristic $2$ this implies that every vector $v \in Q_0
\oplus P_1$ has to be isotropic.
As a subquotient, the non-singular symmetric space
$(N,\psi)$ also consists 
of isotropic vectors.
Hence, the symmetric space $(N,\psi)$ over the field $k$ of characteristic $2$
is symplectic.
Every symplectic inner product space has even rank \cite{MilnorHusemoeller}, hence, the map (\ref{enq:App:CounterexRkmap}) is the zero map.
\end{proof}

We conclude this section with two lemmas used in the construction of the cone dg form functor.
Recall terminology and notation from \S \ref{subsec:CatsWDual}.

\begin{lemma}
\label{lem:PreAheq}
Let $(F,\ffi):(\A,*,\alpha) \to (\B,*,\beta)$ be a form functor between categories with duality. 
If $F$ is an equivalence of categories and $\ffi$ a natural isomorphism, then $(F,\ffi)$ induces an equivalence of categories of symmetric forms
$$(F,\ffi)_h: \A_h \stackrel{\simeq}{\longrightarrow} \B_h.$$
\end{lemma}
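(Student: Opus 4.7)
The plan is to define the induced functor on symmetric forms explicitly, then verify it is well-defined, fully faithful, and essentially surjective, using only the hypotheses that $F$ is an equivalence and $\ffi$ is a natural isomorphism.

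First I would define $(F,\ffi)_h:\A_h \to \B_h$ on objects by $(X,\psi)\mapsto (FX,\ffi_X\circ F\psi)$ and on morphisms by $f\mapsto Ff$. I would check this is well-defined: symmetry of $\ffi_X\circ F\psi$ (i.e.\ $(\ffi_X F\psi)^{*}\beta_{FX}=\ffi_X F\psi$) follows by applying the form functor compatibility $\ffi_A^{*}\beta_{FA}=\ffi_{A^{*}}F(\alpha_A)$, the naturality of $\ffi$ at $\psi^{op}$, and the symmetry relation $\psi^{*}\alpha_X=\psi$. Preservation of morphisms of symmetric forms is immediate from the next step.

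Full faithfulness is the key identity. Given $f:X\to X'$ in $\A$, naturality of $\ffi$ at $f^{op}$ yields $\ffi_X\circ F(f^{*})=(Ff)^{*}\circ \ffi_{X'}$, hence
\[
(Ff)^{*}\circ\ffi_{X'}\circ F\psi'\circ Ff \;=\; \ffi_X\circ F(f^{*}\psi' f).
\]
Because $\ffi_X$ is an isomorphism and $F$ is faithful, the equation $\ffi_X F\psi=(Ff)^{*}\ffi_{X'} F\psi' Ff$ in $\B$ is equivalent to $\psi=f^{*}\psi' f$ in $\A$. Combined with $F$ being fully faithful on the underlying categories, this shows $(F,\ffi)_h$ is fully faithful.

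For essential surjectivity, take any $(Y,\sigma)\in\B_h$. Using essential surjectivity of $F$, pick $X\in\A$ and an isomorphism $g:FX\stackrel{\sim}{\to} Y$ in $\B$. Since $F$ is full and faithful, there exists a unique $\psi:X\to X^{*}$ in $\A$ with $F\psi=\ffi_X^{-1}\circ g^{*}\circ\sigma\circ g$. To verify $\psi^{*}\alpha_X=\psi$, one applies $F$, uses naturality of $\ffi$ to rewrite $F(\psi^{*})=\ffi_X^{-1}(F\psi)^{*}\ffi_{X^{*}}$, invokes the form-functor axiom $F(\alpha_X)=\ffi_{X^{*}}^{-1}\ffi_X^{*}\beta_{FX}$, and then uses the symmetry $\sigma^{*}\beta_Y=\sigma$ together with the naturality of $\beta$ at $g$ (i.e.\ $g^{**}\beta_{FX}=\beta_Y g$); faithfulness of $F$ then gives the desired identity. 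Finally $g:(FX,\ffi_X F\psi)=(FX,g^{*}\sigma g)\to (Y,\sigma)$ is a morphism of symmetric forms and an isomorphism in $\B$, so an isomorphism in $\B_h$. The only real obstacle here is the bookkeeping in the symmetry verification for $\psi$; everything else is a direct translation between the $\A$-side and the $\B$-side via $\ffi$.
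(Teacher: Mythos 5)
Your proof is correct; the paper itself gives no argument (the proof is ``Easy verification left to the reader''), and your explicit check of well-definedness, full faithfulness, and essential surjectivity is exactly the intended verification. In particular the three ingredients you isolate -- naturality of $\ffi$, the form-functor axiom $\ffi_A^{*}\beta_{FA}=\ffi_{A^{*}}F(\alpha_A)$, and the symmetry conditions -- are used in the right places, and your observation that the isomorphism $g$ is automatically an isometry closes the essential-surjectivity step.
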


\begin{proof}
Easy verification left to the reader.
\end{proof}

\begin{lemma}
\label{lem:Aheq}
Let $(F,\ffi):(\A,*,\alpha) \to (\B,*,\beta)$ be a form functor between categories with duality. 
If $F$ is an equivalence of categories and $\ffi$ a natural isomorphism, then $(F,\ffi)$ is an equivalence of categories with duality, that is, there is a form functor $(G,\gamma): (\B,*,\beta) \to (\A,*,\alpha)$ such that
$(F,\ffi)\circ (G,\gamma)$ and $(G,\gamma) \circ (F,\ffi)$ are naturally equivalent to the identity form functors.
Moreover, any two inverses of $(F,\ffi)$ are naturally isomorphic.
\end{lemma}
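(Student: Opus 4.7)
The plan is to construct an inverse form functor explicitly and then verify the duality‐theoretic axioms by direct diagram chases. Since $F\colon\A\to\B$ is an equivalence of underlying categories, choose any ordinary inverse $G\colon\B\to\A$ together with natural isomorphisms $\eta\colon 1_\A\to GF$ and $\meps\colon FG\to 1_\B$ satisfying the triangle identities. The only nontrivial work is to equip $G$ with a duality compatibility map
\begin{equation*}
\gamma_B\colon G(B^*)\xrightarrow{G(\meps_B^{\,*})} G((FGB)^*)\xrightarrow{G(\ffi_{GB}^{-1})} GF((GB)^*)\xrightarrow{\eta^{-1}_{(GB)^*}} (GB)^*.
\end{equation*}
This composite is natural in $B$ (componentwise), and it is an isomorphism because each of its factors is ($\ffi$ is a natural isomorphism by hypothesis, and $\eta,\meps$ are equivalence data).

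Next I would verify the form functor axiom $\gamma_B^{\,*}\circ\alpha_{GB}=\gamma_{B^*}\circ G(\beta_B)$. Unfolding the definition of $\gamma$ reduces this to a diagram chase combining (i) the form functor axiom $\ffi_A^{\,*}\circ\beta_{FA}=\ffi_{A^*}\circ F(\alpha_A)$ for $(F,\ffi)$, (ii) naturality of $\eta$ and $\meps$ applied to the various double-dual instances, and (iii) the relation $1_{A^*}=\alpha_A^{\,*}\circ\alpha_{A^*}$ (and its analogue for $\beta$). Once $(G,\gamma)$ is known to be a form functor, I would show $(G,\gamma)\circ(F,\ffi)$ is naturally isomorphic as a form functor to the identity via $\eta^{-1}\colon GF\to 1_\A$; the required identity $(\eta^{-1}_A)^{\,*}\circ\eta^{-1}_{A^*}=\gamma_{FA}\circ G(\ffi_A)$ falls out of the definition of $\gamma$ after substituting $B=FA$ and applying the triangle identity $\meps_{FA}\circ F(\eta_A)=1_{FA}$. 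The other composition $(F,\ffi)\circ(G,\gamma)\simeq 1$ via $\meps$ is handled symmetrically using the second triangle identity.

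For the uniqueness clause, suppose $(G,\gamma)$ and $(G',\gamma')$ are two form functor inverses of $(F,\ffi)$. The whiskered natural isomorphism $\tau=\eta'^{-1}\ast FG\circ G'\ast\eta\colon G\to G'$ (a standard construction in ordinary category theory) is a natural isomorphism of underlying functors, and I would check that $\tau$ is compatible with the duality compatibility maps, i.e.\ $\tau_B^{\,*}\circ\gamma'_B=\gamma_B\circ G(\tau_B\text{-type datum})$, directly from the two defining expressions for $\gamma$ and $\gamma'$ in terms of $\ffi$. The main obstacle is purely bookkeeping: the diagram chases are large but routine, and the real content is the definition of $\gamma$, which is forced by the requirements that it invert $\ffi$ and respect the double-dual identifications.
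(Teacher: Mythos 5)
Your construction is correct, but it takes a genuinely different route from the paper. You build the inverse by hand: pick an adjoint quasi-inverse $G$ with unit $\eta$ and counit $\meps$, define $\gamma_B$ explicitly, and verify the axioms by diagram chases. Your formula does work: applying $F$ and using the triangle identity and naturality of $\meps$, it is characterized by $F(\gamma_B)=\ffi_{GB}^{-1}\circ\meps_B^*\circ\meps_{B^*}$, from which the symmetry axiom $\gamma_B^*\circ\alpha_{GB}=\gamma_{B^*}\circ G(\beta_B)$ follows by faithfulness of $F$, the axiom for $(F,\ffi)$ at $GB$, and naturality of $\beta$ and $\meps$ (the relation $\alpha_A^*\circ\alpha_{A^*}=1$ is not actually needed); the compatibilities of $\eta^{-1}$ and $\meps$ with the composite duality maps check out the same way. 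The paper avoids all of these chases: it applies the preceding lemma (that an equivalence with invertible duality compatibility induces an equivalence $\A_h\to\B_h$ on categories of symmetric forms) to the form functor $\Fun(\B,\A)\to\Fun(\B,\B)$, $H\mapsto F\circ H$, and uses that a form functor $\B\to\A$ \emph{is} a symmetric form in $\Fun(\B,\A)$. The symmetry of $\gamma$ and the uniqueness of the right inverse then come for free from essential surjectivity and essential uniqueness of preimages under an equivalence, and a short formal argument upgrades the right inverse to a two-sided one. Your approach buys an explicit formula for $\gamma$ (and produces a two-sided inverse in one step); the paper's buys brevity and makes the symmetry condition automatic rather than verified.

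One small caveat on your uniqueness clause: checking compatibility of $\tau\colon G\to G'$ ``directly from the two defining expressions for $\gamma$ and $\gamma'$'' only covers inverses produced by your recipe, whereas the statement concerns arbitrary inverses $(G',\gamma')$. The clean fix is formal: for any two inverses, $(G,\gamma)\cong(G,\gamma)\circ(F,\ffi)\circ(G',\gamma')\cong(G',\gamma')$ as form functors, using that whiskering a natural isomorphism of form functors with a form functor is again one; the condition to verify for your $\tau$ is $\tau_B^*\circ\gamma'_B\circ\tau_{B^*}=\gamma_B$.
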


\begin{proof}
Composition with $(F,\ffi)$ induces a form functor between categories with duality
$$(F,\ffi): \Fun(\B,\A) \to \Fun(\B,\B): G \mapsto F\circ G$$
which is an equivalence of underlying categories (since $F$ is) and whose duality compatibility map is an isomorphism (since $\ffi$ is).
By Lemma \red{\ref{lem:PreAheq}}, the induced functor between categories of symmetric forms is an equivalence of categories
$$\Fun(\B,\A)_h \stackrel{\simeq}{\longrightarrow} \Fun(\B,\B)_h: (G,\gamma)\mapsto (F,\ffi)\circ (G,\gamma).$$
In particular, there is an object $(G,\gamma)$ of $\Fun(\B,\A)_h$ such that
$(F,\ffi) \circ (G,\gamma)$ is naturally isomorphic to the identity form functor on $\B$.
This object is unique up to natural isomorphism of form functors.
Thus, we have shown that any form functor $(F,\ffi)$ with $F$ an equivalence and $\ffi$ an isomorphism has a right inverse (up to natural isomorphism of form functors) which is unique up to natural isomorphisms of form functors.
Let $(G,\gamma)$ be such an inverse. 
Then since $F$ is an equivalence, so is $G$, and since, furthermore, $\ffi$ is an isomorphism, so is $\gamma$.
Therefore, $(G,\gamma)$ has a right inverse, too, say $(H,\eta)$.
But then $(F,\ffi)$ is naturally isomorphic to $(F,\ffi)\circ [(G,\gamma)\circ (H,\eta)] = [(F,\ffi)\circ (G,\gamma)]\circ (H,\eta)$ which is naturally isomorphic to $(H,\eta)$.
Hence, $(G,\gamma)$ is not only a left but also a right inverse of $(F,\ffi)$.
\end{proof}

\section{Grothendieck-Witt groups of triangulated categories}
\label{sec:GWofTriD}

\red{The theory of Witt groups has experienced a renaissance some 15 years ago due to the introduction of triangulated category methods by Balmer \cite{Balmer:TWGI}, \cite{Balmer:TWGII}.
The purpose of this section is to associate to every dg category with weak equivalences and duality a triangulated category with duality (Lemma \ref{lem:TAisTriD}), and to show that both have isomorphic Grothendieck-Witt groups (Proposition \ref{prop:GW0AisGW0TA}).
This is the base case of the Invariance Theorem \ref{thm:Invariance} which plays an important role in applications. 
Unfortunately, Balmer's framework is too restrictive for our purpose.
So, part of this section recasts some of the definitions in \cite{Balmer:TWGI}.
}

Our reference for triangulated categories is \cite{Keller:uses}.
In particular, we will only assume that the shift functor $T: \K \to \K$ 
in a triangulated category  $\K$ is an equivalence of categories.

\begin{definition}
\label{dfn:TriDwithoutTed}
A {\em triangulated category with duality} is a triangulated category $\K$ together with an additive functor $\sharp:\K^{op} \to \K$ and natural isomorphisms $\lambda: \sharp \stackrel{\cong}{\to} T\sharp T$ and $\varpi: 1 \stackrel{\cong}{\to} \sharp \sharp$ satisfying (\ref{TriD1}) - (\ref{TriD3}) below.
\begin{enumerate}
\item
\label{TriD1}
The following diagram commutes
$$\xymatrix{
T \ar[r]^{\varpi_T} \ar[d]_{T\varpi} & \sharp\sharp T \ar[d]^{\lambda_{\sharp T}} \\
T\sharp\sharp & T\sharp T\sharp T. \ar[l]^{T\sharp\lambda}}$$
\item
\label{TriD2}
For all objects $X$ of $\K$ we have $\varpi^{\sharp}_X\circ\varpi_{X^{\sharp}}=1_{X^{\sharp}}$.
\item
\label{TriD3}
If
\begin{equation}
\label{eqn:genericExTriangle}
X \stackrel{f}{\longrightarrow} Y \stackrel{g}{\longrightarrow} Z \stackrel{h}{\longrightarrow} TX
\end{equation}
is an exact (also called distinguished) triangle in $\K$.
Then the following triangle, called dual triangle, is also exact in $\K$
\begin{equation}
\label{eqn:genericDualExTriangle}
Z^{\sharp} \stackrel{g^{\sharp}}{\longrightarrow} Y^{\sharp} \stackrel{f^{\sharp}}{\longrightarrow} X^{\sharp} \stackrel{T(h^{\sharp})\circ\lambda_X}{\longrightarrow} T(Z^{\sharp}).
\end{equation}
\end{enumerate}
\end{definition}

\begin{remark}
\label{rem:TriangularSharpExact}
Given a triangulated category $\K$, an additive functor $\sharp:\K^{op} \to \K$, a natural transformation $\varpi:1 \to \sharp\sharp$ and a natural isomorphism $\lambda:\sharp \to T\sharp T$ satisfying (\ref{TriD1}) and (\ref{TriD2}) of Definition \ref{dfn:TriDwithoutTed}.
Then (\ref{TriD3}) is equivalent to requiring that the following triangle be exact
\begin{equation}
\label{eqn:AlternativeDualExactTriangle}
(TY)^{\sharp} \stackrel{(Tf)^{\sharp}}{\longrightarrow} (TX)^{\sharp}
\stackrel{h^{\sharp}}{\longrightarrow} Z^{\sharp} \stackrel{\lambda_Y\circ g^{\sharp}}{\longrightarrow} T(TY)^{\sharp}.
\end{equation}
Indeed, under the natural isomorphism $\lambda$, the triangle (\ref{eqn:genericDualExTriangle}) becomes the triangle (\ref{eqn:AlternativeDualExactTriangle}) shifted twice.
\end{remark}

\begin{remark}
There is an evident category $\Delta\K$ of exact triangles in $\K$.
The duality functor $\sharp$ on $\K$ makes $\Delta\K$ into a category with duality $(\Delta\K,\sharp,\varpi)$ where the duality functor 
$$\sharp:(\Delta\K)^{op} \to \Delta\K$$
sends the exact triangle (\ref{eqn:genericExTriangle}) to the exact triangle (\ref{eqn:genericDualExTriangle}).
The double dual identification for the exact triangle (\ref{eqn:genericExTriangle}) is the map of triangles $(\varpi_X,\varpi_Y,\varpi_Z)$ which is indeed a map of triangles in view of the commutative diagram in Definition \ref{dfn:TriDwithoutTed} (\ref{TriD1}).
\end{remark}

\begin{definition}
\label{dfn:MorphTriD}
A morphism of triangulated categories with duality 
$$(F,\rho,\ffi): (\K_1,\sharp,\varpi,\lambda) \to (\K_2,\sharp,\varpi,\lambda)$$
is a triple $(F,\rho,\ffi)$ where $F:\K_1 \to \K_2$ is an additive functor, $\rho:FT \stackrel{\cong}{\to} TF$ and $\ffi:F\sharp \stackrel{\cong}{\to} \sharp F$ are natural isomorphisms such that $(F,\rho): \K_1 \to \K_2$ is a triangle functor and such that the following diagrams commute
$$
\xymatrix{
F \ar[r]^{F\varpi} \ar[d]_{\varpi_F} & F\sharp\sharp \ar[d]^{\ffi_{\sharp}} & F\sharp 
\ar[r]^{F(\lambda)} \ar[d]_{\ffi} & FT\sharp T \ar[r]^{\rho_{\sharp T}} & TF\sharp T 
\ar[d]^{T(\ffi_T)} \\
\sharp\sharp F \ar[r]_{\ffi^{\sharp}} & \sharp F \sharp & \sharp F \ar[r]_{\lambda_F} & T\sharp T F \ar[r]_{T\sharp (\rho)} & T\sharp F T.
}$$
Composition is defined as 
$$(\bar{F},\bar{\rho},\bar{\ffi}) \circ (F,\rho,\ffi) = (\bar{F}\circ F,\bar{\rho}_F\circ \bar{F}\rho,\bar{\ffi}_F\circ \bar{F}\ffi).$$
In this way we obtain the category $\TriD$ of small triangulated categories with duality and their morphisms. 
Note that a morphism of triangulated categories with duality $(F,\rho,\ffi)$ as above induces morphisms of additive categories with duality $(F,\ffi):(\K_1,\sharp,\varpi) \to (\K_2,\sharp,\varpi)$ and $(F,\rho,\ffi): (\Delta\K_1,\sharp,\varpi) \to (\Delta\K_2,\sharp,\varpi)$.
\end{definition}

For the purpose of the next definition, an inner product space in an additive category with duality $(\A,\sharp,\varpi)$ will mean a pair $(A,\ffi)$ where $\ffi:A \to A^{\sharp}$ is an isomorphism satisfying $\ffi^{\sharp}\varpi_A=\ffi$.
An isometry from $(A,\ffi)$ to $(B,\psi)$ is an isomorphism $f:A \to B$ in $\A$ such that $\ffi=f^{\sharp}\psi f$.
This applies in particular to the additive categories with duality $\K$ and $\Delta\K$ associated with a triangulated category with duality $\K$.

\begin{definition}
\label{dfn:TriGW0andW}
The {\em Grothendieck-Witt group} 
$$GW^0(\K)$$
of a triangulated category with duality $\K=(\K,\sharp,\varpi,\lambda)$
is the  abelian group generated by isometry classes $[X,\ffi]$ of
inner product spaces $(X,\ffi)$ in $\K$, subject to the following
relations 
\begin{enumerate}
\item
\label{dfn:itm1:GW0Tri}
$[X,\ffi]+[Y,\psi] = [X \oplus Y,\ffi \oplus \psi]$
\item
\label{dfn:itm2:GW0Tri}
Given an inner product space in the category
of exact triangles $\Delta\K$ 
$$\xymatrix{
E_{-1}
\ar[rr]^f \ar[d]^{\ffi_{-1}}_{\cong} && E_0 \ar[rr]^g \ar[d]^{\ffi_{0}}_{\cong}
&&  
E_1 \ar[d]^{\ffi_{1}}_{\cong} 
\ar[rr]^h && 
T(E_{-1}) \ar[d]^{T\ffi_{-1}}_{\cong}
\\
E_{1}^{\sharp} \ar[rr]_{g^{\sharp}} && E_0^{\sharp}  \ar[rr]_{f^{\sharp}} &&
E_{-1}^{\sharp} 
\ar[rr]_{T(h^{\sharp})\circ \lambda_{X}}  && 
T(E_{1}^{\sharp}),
}$$ 
that is,
$(\ffi_{-1},\ffi_0,\ffi_1)=(\ffi_1^{\sharp}\varpi,\ffi_0^{\sharp}\varpi,\ffi_{-1}^{\sharp}\varpi) $ is an isomorphism, then 
$$[E_0,\ffi_0] = \left[ E_{-1}\oplus E_1,
\left(\begin{smallmatrix} 0 & \ffi_1\\ \ffi_{-1} &
    0\end{smallmatrix}\right)\right] \in GW^0(\K).
$$
\end{enumerate}
The {\em Witt group} 
$$W^0(\K)$$
of $\K$
is the  abelian group generated by isometry classes $[X,\ffi]$ of
inner product spaces $(X,\ffi)$ in $\K$, subject to the 
relation (\ref{dfn:itm1:GW0Tri}) above and $[E_0,\ffi_0] = 0$ for every inner product space $(\ffi_{-1},\ffi_0,\ffi_1)$ in the category of exact triangles as in 
(\ref{dfn:itm2:GW0Tri}).
\end{definition}

\subsection{The functor $\T: \dgCatWD \to \TriD$.}
Next we want to verify that the triangulated category of a dg category with weak equivalences and duality is canonically a triangulated category with duality.
For that, consider the natural transformation in $\scC_{\bk}$
$$\alpha_{B,X,Y}: B[BX,Y] \to [X,Y]:\ \alpha(b\otimes f)(x) = (-1)^{|b||f|}f(b\otimes x)$$
where $b\in B$, $f\in [BX,Y]$ and $x\in X$. 
It is the unique admissible natural transformation of this shape in the sense of \cite{KellyMacLane:Coherence}.
For $B=T = k[1] \in \scC_{\bk}$, this map is an isomorphism.

Let $\A = (\A,\sharp,\varpi)$ be a dg category with duality.
To simplify notation, we may write $(\sharp,\varpi)$ for the duality of $\scC_{\bk}^{[0]}\A$ though it is strictly speaking $(\vee \otimes \sharp,\can \otimes \varpi)$
where $(\vee,\can) = ([\phantom{A},\1],\can)$ is the duality on $\scC^{[0]}$.
We define a natural transformation 
$\lambda: \sharp \to T\sharp T$ of functors $\scC_{\bk}^{[0]}\A \to \scC_{\bk}^{[0]}\A$ 
by
$$\lambda_{AX} = \lambda \otimes 1:  (AX)^{\sharp} = [A,\1]\otimes X^{\sharp} \to T\sharp T (AX) = T[TA,\1]\otimes X^{\sharp}$$
where $\lambda: [A,\1] \to T[TA,\1]$ is the inverse of $\alpha_{T,A,\1}$.

\begin{lemma}
\label{lem:TAisTriD}
Let $\scA = (\A,w,\sharp,\varpi)$ be a dg category with weak equivalences and duality.
Then the datum
$$\T\scA = (\T\scA, \sharp,\varpi,\lambda)$$ 
defines a triangulated category with duality.
\end{lemma}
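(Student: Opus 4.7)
The plan is to transfer the structure $(\sharp,\varpi,\lambda)$ from $\A$ through the homotopy category $H^0\A^{\pretr}$ to the Verdier quotient $\T\scA = w^{-1}H^0\A^{\pretr}$, then to verify the three axioms of Definition \ref{dfn:TriDwithoutTed} in turn. Throughout I would use that the inclusion $\scA\subset \scA^{\pretr}$ is an equivalence of dg categories with duality (as $\scA$ is pretriangulated), so the structures on $\scA$ extend automatically to $\scA^{\pretr}$.

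\textbf{Step 1 (the duality and double dual identification).} The dg duality $\sharp:\A^{op}\to\A$ descends to an additive functor on $H^0\A^{\pretr}$ and, because $\A^w$ is by hypothesis closed under $\sharp$, further to $\T\scA$. Likewise $\varpi:1\to \sharp\circ\sharp^{op}$, a morphism of dg functors, gives a natural transformation on $\T\scA$; the standing assumption that every $\varpi_A$ is a weak equivalence makes it a natural isomorphism in $\T\scA$. Axiom (\ref{TriD2}) of Definition \ref{dfn:TriDwithoutTed} is then inherited from the identity $\can_A^{\vee}\can_{A^{\vee}}=1_{A^{\vee}}$ that holds already in $\A$.

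\textbf{Step 2 (the shift-duality twist $\lambda$ and axiom (\ref{TriD1})).} The shift $T$ on $\T\scA$ is implemented by the action $\scC_{\bk}\otimes\scA^{\pretr}\to\scA^{\pretr}$ tensoring with $k[1]$, and on a generator $AX$ with $A\in\scC_{\bk}$, $X\in\A$ the duality is $(AX)^{\sharp}=[A,\1]\otimes X^{\sharp}$. The map $\lambda_{AX}=\lambda\otimes 1$, with $\lambda:[A,\1]\to T[TA,\1]$ the inverse of the admissible transformation $\alpha_{T,A,\1}$, is an isomorphism in $\scC_{\bk}$ whenever $B=T$, hence a natural isomorphism on objects of the form $AX$. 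Since $\T\scA$ is generated by $\A$ under shifts, cones and direct sums, naturality spreads $\lambda$ to all of $\T\scA$. The coherence square in axiom (\ref{TriD1}) reduces, after unwinding $\sharp$ and $T$ as tensoring with $[-,\1]$ and $k[1]$ respectively, to a coherence identity between the admissible natural transformations $\alpha$, $\can$ and the symmetry in the closed symmetric monoidal category $\scC_{\bk}$, which holds by the Kelly--MacLane coherence theorem.

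\textbf{Step 3 (duals of exact triangles, axiom (\ref{TriD3})).} It suffices to treat the standard exact triangle (\ref{eqn:standardTriangle}) associated with a pushout diagram (\ref{eqn:exTrsDfn}) in $\A^{\pretr}$, since every exact triangle in $\T\scA$ is isomorphic to one such. Applying the dg functor $\sharp$ to (\ref{eqn:exTrsDfn}) yields a commutative diagram with exact rows of the same shape but with the roles of kernel and cokernel interchanged; rewriting $(TX)^{\sharp}$ as $T^{-1}X^{\sharp}$ via $\lambda$ (and using that $\sharp$ of the standard extension $X\rightarrowtail CX\twoheadrightarrow TX$ gives the analogous standard extension for $X^{\sharp}$, via the canonical symmetric form $(\Gamma,\gamma)$ of \S\ref{subsec:mappingConedgForm} on the object $\Gamma$), one recognises this dualised pushout as the defining pushout of a standard exact triangle whose underlying maps are $g^{\sharp}$, $f^{\sharp}$ and (up to a sign tracked by $\lambda$) $T(h^{\sharp})\circ\lambda_X$. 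This gives the exactness of (\ref{eqn:genericDualExTriangle}).

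The main obstacle is Step 3: one must keep careful track of the sign conventions of \S\ref{subsec:DGshiftedDuals}, of the $\lambda$-twist obstructing the naive identification $(TX)^{\sharp}\cong T^{-1}X^{\sharp}$, and of the duality compatibility isomorphism $\gamma$ intrinsic to the cone construction, in order to see that dualising the defining pushout of a standard triangle produces precisely the pushout defining a standard triangle with connecting map $T(h^{\sharp})\circ\lambda_X$. The verification in Steps 1 and 2 is essentially formal; the coherence in Step 2 is the point at which the Kelly--MacLane framework for $\scC_{\bk}$ is indispensable.
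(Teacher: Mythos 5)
Your proposal is correct and follows essentially the same route as the paper: axiom (\ref{TriD2}) is inherited through localization, axiom (\ref{TriD1}) reduces via $\varpi=\vee_{\scC_{\bk}}\otimes\varpi_{\scA}$ and $\lambda=\lambda\otimes 1$ to a Kelly--MacLane coherence check in $\scC_{\bk}$, and axiom (\ref{TriD3}) is verified on standard triangles by dualizing the defining pushout built from $(\Gamma,\gamma)\otimes f$ and tracking the $\lambda$-twist through adjunction computations in $\scC_{\bk}$. The only cosmetic difference is that the paper establishes the rotated dual triangle and invokes Remark \ref{rem:TriangularSharpExact} to conclude, whereas you aim at the dual triangle (\ref{eqn:genericDualExTriangle}) directly.
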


\begin{proof}
As a localization of a category with duality $Z^0\A^{\pretr}$ where $\varpi$ is a natural weak equivalence, the datum $(\T\scA, \sharp,\varpi)$ defines a category with duality where $\varpi$ is a natural isomorphism.
We are left with checking conditions (\ref{TriD1}) and (\ref{TriD3}) of Definition \ref{dfn:TriDwithoutTed}. 

For the first condition, consider the following diagram in $\scC_{\bk}$
$$\xymatrix{
BX \ar[rr]^{\can^Y_{BX}} \ar[d]_{1\otimes \can^Y_X} && [[BX,Y],Y] \\
B[[X,Y],Y] \ar[rr]_{1\otimes[\alpha_{B,X,Y},1]} && B[B[BX,Y],Y]. \ar[u]_{\alpha_{B,[BX,Y],Y}}
}$$
The diagram commutes for every $X$, $Y$, $B$ in $\scC_{\bk}$.
This can be checked directly by a diagram chase.
Alternatively, it also follows from Kelly-MacLane's Coherence Theorem \cite{KellyMacLane:Coherence} as there is a unique admissible natural transformation $BX \to [[BX,Y],Y]$.
Since in $\T\scA$, we have $\varpi_{\T\scA} = \vee_{\scC_{\bk}}\otimes \varpi_{\scA}$ and $\lambda = \lambda \otimes id$, 
commutativity of the diagram for $B=T$ and $Y=\1$ implies the commutativity of diagram (\ref{TriD1}) in Definition \ref{dfn:TriDwithoutTed}.

We are left with checking condition (\ref{TriD3}) of Definition \ref{dfn:TriDwithoutTed}.
The condition is invariant under isomorphisms of exact triangles.
Therefore, it suffices to check it for the standard triangles (\ref{eqn:standardTriangle}).
For that, let $f:X \to Y$ be a morphism in $Z^0\A^{\pretr}$, and
consider the commutative diagram
$$\xymatrix{
 X \hspace{1ex} \ar[d]_{f} \xymono[rr]^{i\otimes 1} \ar@{}[drr]|{\square} &&
CX \xyepi[rr]^{p\otimes 1} \ar[d] &&
TX \ar[d]^1 \\
 Y \hspace{1ex} \xymono[rr]^{g} \ar[d]_{\gamma_{-1}\otimes 1} &&
C(f) \xyepi[rr]^h \ar[d] \ar@{}[drr]|{\square} &&
TX \ar[d]^{\gamma_1\otimes f} \\
[T,T]Y \hspace{1ex} \xymono[rr]_{[p,1]\otimes 1} &&
[C,T]Y \xyepi[rr]_{[i,1]\otimes 1} &&
[1,T]Y
}$$
in which the outer diagram is the map of exact sequences
(\ref{eqn:FormOnFundExSeq}) tensored with $f$.
The rest of the diagram is given by the definition of the cone $C(f)$.
As usual, the squares with a $\square$-sign in it are bicartesian.
Applying the (exact) duality functor $\sharp$ to the lower two squares of the diagram yields the lower part of the following commutative diagram
$$
\xymatrix{
(TY)^{\sharp}\hspace{1ex} \xymono[rr]^{i\otimes 1} \ar[d]_{\tilde{\beta}}^{\cong} &&
C(TY)^{\sharp} \xyepi[rr]^{p\otimes 1} \ar[d]^{\tilde{\beta}}_{\cong} &&
T(TY)^{\sharp} \ar[d]^{\tilde{\beta}}_{\cong}\\
([1,T]Y)^{\sharp}\hspace{1ex}\xymono[rr]^{([i,1]\otimes 1)^{\sharp}} \ar[d]_{(\gamma_1\otimes f)^{\sharp}} \ar@{}[drr]|{\square} &&
([C,T]Y)^{\sharp} \xyepi[rr]^{([p,1]\otimes 1)^{\sharp}} \ar[d] &&
([T,T]Y)^{\sharp} \ar[d]^{(\gamma_{-1}\otimes 1)^{\sharp}} \\
(TX)^{\sharp} \hspace{1ex}\xymono[rr]_{h^{\sharp}} &&
(Cf)^{\sharp} \xyepi[rr]_{g^{\sharp}} &&
Y^{\sharp}.
}$$
The upper vertical arrows are given by the natural isomorphism
$\tilde{\beta}: ([B,D]Y)^{\sharp} \to B(DY)^{\sharp}$ with $B,D\in \scC_{k}$ and $Y\in \scC_{\bk}\A$.
For $Y=E\otimes Y_0$ with $E\in \scC_{\bk}$ and $Y_0\in \A$ the natural transformation $\tilde{\beta}$ has the form
$$\tilde{\beta} = \beta\otimes id: B(DE)^{\sharp}\otimes Y_0^{\sharp} \to ([B,D]E)^{\sharp}\otimes Y_0^{\sharp}$$
where $\beta: B\otimes (DE)^{\sharp} \to ([B,D]E)^{\sharp}$ is the unique admissible natural transformation of this shape in $\scC_{\bk}$.
It is given by the formula
$$\beta: B[DE,A] \to [[B,D]E,A]: \beta(b\otimes f) (g\otimes e) = (-1)^{|f||b|+|g||b|}f(g(b)\otimes e).$$
The map $\beta$ is an isomorphism for $A,B,D,E \in \scC_k$.

Composition of the left two vertical arrows is $(Tf)^{\sharp}$.
This is because $(\gamma_1\otimes f)^{\sharp} = (Tf)^{\sharp}\circ (\gamma_1 \otimes 1_Y)^{\sharp}$ reducing the claim to verifying $(\gamma_1 \otimes 1_Y)^{\sharp} \circ \beta = 1_{(TY)^{\sharp}}$ which can be checked within $\scC_k$ where, by adjunction, it boils down to the composition  
$$T\otimes \1 \stackrel{\gamma_1\otimes 1}{\longrightarrow} [\1,T]\otimes \1 \stackrel{e}{\longrightarrow} T$$
being the usual isomorphism $T\otimes \1 \cong T$.
Similarly, the composition of the right two vertical arrows is the natural transformation $\alpha: T\sharp T \to \sharp$, inverse of $\lambda$.
Again, this can be checked within $\scC$ where, by adjunction, it boils down to the composition 
$$\1 \otimes T \stackrel{\gamma_{-1}\otimes 1}{\longrightarrow} [T,T]T 
\stackrel{e}{\longrightarrow} T$$
being the usual identification $\1\otimes T \cong T$.
The outer part of the diagram shows that the following is an exact triangle in $\T\A$
$$(TY)^{\sharp} \stackrel{(Tf)^{\sharp}}{\longrightarrow} (TX)^{\sharp}
\stackrel{h^{\sharp}}{\longrightarrow} (Cf)^{\sharp} \stackrel{\lambda_Y\circ g^{\sharp}}{\longrightarrow} T(TY)^{\sharp}.$$
By Remark \ref{rem:TriangularSharpExact}, we are done.
\end{proof}

Next, we want to show that the Grothendieck-Witt group of a pretriangulated dg category with weak equivalences and duality $\scA$ coincides with the Grothendieck-Witt groups of its derived category $\T\scA$ when $1/2 \in \scA$.
For that, let $\scA = (\A,w, \sharp,\varpi)$ be a pretriangulated dg category with weak equivalences and duality, and consider the localization functor
\begin{equation}
\label{eqn:AToTAdualpreserve}
Z^0\A \to \T\scA:A \mapsto A
\end{equation}
sending weak equivalences to isomorphisms.
This functor preserves dualities and thus defines a form functor between categories with duality.
Similarly, consider the functor
\begin{equation}
\label{eqn:S2ToDeltaT}
S_2Z^0\A \to \Delta\T\scA
\end{equation}
from exact sequences in $\A$ to exact triangles in $\T\scA$.
It sends an exact sequence 
\begin{equation}
\label{eqn:exSeqfg}
X \stackrel{f}{\rightarrowtail} Y \stackrel{g}{\twoheadrightarrow} Z
\end{equation}
 to the exact triangle
\begin{equation}
\label{eqn:TriangleOfExSeq}
\xymatrix{
X \ar[r]^{f} & Y \ar[r]^{g} & Z \ar[r]^{q\circ r^{-1}} & TX}
\end{equation}
where the maps $q = q(f,g):C(f) \to TX$ and $r = r(f,g): C(f) \to Z$ are defined by the commutative diagram
\begin{equation}
\label{eqn:dfn:rq(fg)}
\xymatrix{X \xymono[r]^f \xymono[d]_{i\otimes 1} \ar@{}[dr]|{\square}
& Y \xyepi[r]^g \xymono[d] & Z \\
CX \xymono[r] \xyepi[d]_{p\otimes 1} & C(f) \xyepi[ur]_{r(f,g)} \xyepi[dl]^{q(f,g)} & \\
TX, &&
}\end{equation}
in which the square is cocartesian,
and the map $r(f,g):C(f) \to Z$ is an isomorphism in $\T\scA$ since it is an admissible epimorphism in $\scC_{\bk}\A$ with contractible kernel $CX$.
It follows from the following lemma that the functor (\ref{eqn:S2ToDeltaT}) equipped with the identity as duality compatibility map is a form functor between categories with dualities.

\begin{lemma}
Given an exact sequence (\ref{eqn:exSeqfg}) in a pretriangulated dg category $\A$,
then the following diagram is a map of exact triangles in $\T\A$
$$
\xymatrix{
Z^{\sharp}  \ar[r]^{g^{\sharp}} \ar[d]^1 & Y^{\sharp} \ar[r]^{f^{\sharp}} \ar[d]^1 & X^{\sharp} \ar[r]^{\bar{q}\circ \bar{r}^{-1}} \ar[d]^1 & T(Z^{\sharp}) \ar[d]^1 \\
Z^{\sharp}  \ar[r]_{g^{\sharp}} & Y^{\sharp} \ar[r]_{f^{\sharp}} & X^{\sharp} \ar[r]_{\hspace{-2ex}T(h^{\sharp})\circ \lambda_X} & T(Z^{\sharp})}
$$
where 
$h = q \circ r^{-1}$ with $q=q(f,g)$, $r = r(f,g)$ and $\bar{q}=q(g^{\sharp},f^{\sharp})$, $\bar{r} = r(g^{\sharp},f^{\sharp})$ defined by the commutative diagram (\ref{eqn:dfn:rq(fg)}).
\end{lemma}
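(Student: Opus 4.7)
The two left squares of the diagram commute tautologically: the vertical arrows are identities and the horizontal arrows in the two rows are the same morphisms $g^{\sharp}$ and $f^{\sharp}$. The content of the lemma is therefore the identity
\[
\bar{q}\circ\bar{r}^{-1}\;=\;T(h^{\sharp})\circ\lambda_{X}
\]
in $\T\scA$, from which the final assertion (that (\ref{eqn:S2ToDeltaT}) preserves dualities) is immediate.

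The plan is to derive this identity by specialising the argument already carried out for axiom (\ref{TriD3}) of Definition \ref{dfn:TriDwithoutTed} in the proof of Lemma \ref{lem:TAisTriD}. That proof exhibits, for any morphism $f\colon X\to Y$ in $\scC_{\bk}\A$, a commutative diagram in $\scA^{\pretr}$ whose outer edges identify the $\sharp$-dualisation of the cofibre sequence $Y\xymono C(f)\xyepi TX$ with the triangle having connecting morphism $\lambda_{Y}\circ g^{\sharp}$. The vertical identifications are built out of the self-dual short exact sequence $(\Gamma,\gamma)$ of (\ref{eqn:FormOnFundExSeq}) and the admissible natural transformation $\alpha_{T,X,\1}\colon T[TX,\1]\to [X,\1]$ whose inverse is $\lambda$. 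By Remark \ref{rem:TriangularSharpExact}, this equivalently produces an exact triangle $(C(f))^{\sharp}\to Y^{\sharp}\to X^{\sharp}\to T((C(f))^{\sharp})$ whose connecting morphism is $T(h^{\sharp})\circ\lambda_{X}$.

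I would apply this to the admissible monomorphism $f$ of the given exact sequence. The map $r=r(f,g)\colon C(f)\to Z$ is then an admissible epimorphism in $\scA^{\pretr}$ whose kernel is the contractible object $CX$, hence a weak equivalence; so is $r^{\sharp}\colon Z^{\sharp}\to (C(f))^{\sharp}$. Substituting $r^{\sharp}$ into the exact triangle produced above turns it into $Z^{\sharp}\xrightarrow{g^{\sharp}}Y^{\sharp}\xrightarrow{f^{\sharp}}X^{\sharp}\xrightarrow{T(h^{\sharp})\lambda_{X}}TZ^{\sharp}$, since $g\circ r^{-1}$ identifies with the quotient $Y\twoheadrightarrow Z$ in $\T\scA$. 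On the other hand, the triangle furnished by the functor (\ref{eqn:S2ToDeltaT}) from the dualised exact sequence $Z^{\sharp}\xymono Y^{\sharp}\xyepi X^{\sharp}$ has connecting morphism $\bar{q}\circ\bar{r}^{-1}$. To equate these two connecting morphisms, I would construct an explicit comparison map in $\scA^{\pretr}$ between the pushout data defining $(\bar{r},\bar{q})$ and the $\sharp$-dual of the pushout data defining $(r,q)$, using the identification of short exact sequences $\gamma\colon\Gamma\cong [\Gamma,T]$ tensored with $Z^{\sharp}$ and composed with $r^{\sharp}$. Under this comparison, $\bar{r}$ corresponds to $r^{\sharp}$ and $\bar{q}$ corresponds to $T(q^{\sharp})\circ\lambda_{X}$ (with the $\lambda_{X}$ arising from $\alpha_{T,X,\1}$ exactly as in Lemma \ref{lem:TAisTriD}), giving $\bar{q}\circ\bar{r}^{-1}=T(q^{\sharp})\circ\lambda_{X}\circ(r^{\sharp})^{-1}=T(q^{\sharp}\circ(r^{\sharp})^{-1})\circ\lambda_{X}=T(h^{\sharp})\circ\lambda_{X}$, using naturality of $\lambda$ and $h^{\sharp}=(q\circ r^{-1})^{\sharp}=(r^{\sharp})^{-1}\circ q^{\sharp}$.

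The main obstacle is bookkeeping: one has to track signs and shifts arising from the duality on the extension category $\Z\scC_{\bk}^{[0]}\A$, the $(-1)^{|b||f|}$-terms in $\alpha_{T,X,\1}$, and the three components $\gamma_{-1}=\nabla$, $\gamma_{0}=\ffi_{\mu p}$, $\gamma_{1}=\mathrm{id}$ of the form on $\Gamma$. However these are precisely the identifications already carried out in the proof of Lemma \ref{lem:TAisTriD}, and they are uniquely determined by the Kelly--MacLane coherence theorem for admissible natural transformations, so no new coherence issue arises; the argument is a direct transcription.
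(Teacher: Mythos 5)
You have correctly isolated the content of the lemma --- the identity $\bar{q}\circ\bar{r}^{-1}=T(h^{\sharp})\circ\lambda_{X}$ in $\T\scA$ --- and the first half of your plan (applying the argument of Lemma \ref{lem:TAisTriD} to $f$ and transporting along the weak equivalence $r^{\sharp}$) does show that the bottom row of the diagram is an exact triangle. The gap is in the second half. You assert a comparison between the pushout data defining $(\bar r,\bar q)$ and the $\sharp$-dual of the pushout data defining $(r,q)$ under which $\bar r$ corresponds to $r^{\sharp}$ and $\bar q$ corresponds to $T(q^{\sharp})\circ\lambda_X$ \emph{simultaneously}. This is exactly the statement to be proved, no construction is given, and in fact no such strict correspondence exists in $Z^{0}\scA^{\pretr}$: the object $C(g^{\sharp})$ is a pushout while $(C(f))^{\sharp}$ is the dual of a pushout, i.e.\ a pullback, and the two are related additively, not by an isomorphism intertwining both corner maps. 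Concretely, tensoring the exact sequence (\ref{eqn:fundExSeq}) with $(g^{\sharp},f^{\sharp})$ gives a $3\times 3$ grid of exact sequences, and for such a grid the pushout $C$ of the upper left corner and the pullback $P$ of the lower right corner satisfy $p_{11}c_{11}=p_{20}c_{20}+p_{02}c_{02}$ (this is (\ref{eqn:GridOfExSeqs})). With $C=C(g^{\sharp})$, $c_{20}=\bar q$, $c_{02}=\bar r$, and, after the (anti-)commutativity computation identifying the dualized column, $p_{20}=r^{\sharp}$ and $p_{02}=-q^{\sharp}\lambda_X$, the desired identity $r^{\sharp}\circ\bar q=q^{\sharp}\lambda_X\circ\bar r$ holds only because the middle term $p_{11}c_{11}$ factors through $C\sharp Y$, which is contractible and hence zero in $\T\scA$ --- but not in $Z^{0}\scA^{\pretr}$. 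Your argument contains no counterpart of this cancellation, and without it the claimed on-the-nose correspondence is false.

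Two further points. Knowing that both rows are exact triangles agreeing in their first two maps does not force the connecting morphisms to agree, so the exactness you establish cannot substitute for the missing comparison. And the appeal to Kelly--MacLane coherence cannot close the gap: the two morphisms being compared differ by a genuinely nonzero null-homotopic map, which is not a coherence phenomenon; coherence (or a direct check in $\scC_{\bk}$) enters only in identifying $p_{02}$ as $-q^{\sharp}\lambda_X$, where the sign it produces is essential to the cancellation. (Also, as written, $\lambda_{X}\circ(r^{\sharp})^{-1}$ and $q^{\sharp}\circ(r^{\sharp})^{-1}$ in your final display do not compose; you presumably intend $h^{\sharp}=(r^{\sharp})^{-1}\circ q^{\sharp}$.)
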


\begin{proof}
The only thing that needs justification is the commutativity of the right hand square in the diagram, that is, the commutativity of
$$
\xymatrix{
X^{\sharp} \ar[d]^{\lambda_X} & C(g^{\sharp}) \ar[l]_{\bar{r}} \ar[r]^{\bar{q}} & T(Z^{\sharp}) \ar[d]^1 \\
T\sharp TX \ar[r]_{T(q^{\sharp})} & T\sharp C(f) & T(Z^{\sharp}). \ar[l]^{T(r^{\sharp})}}
$$
This diagram commutes due to the following two facts.

Firstly, in the following diagram
\begin{equation}
\label{eqn:Pf:Lem:3.7}
\xymatrix{
{\sharp}X \xymono[r]^{i\otimes 1} \ar[d]^{\lambda_X} & C{\sharp}X \xyepi[r]^{p\otimes 1}\ar[d] & T{\sharp}X \ar[d]^1\\
T{\sharp}TX \xymono[r]_{T{\sharp}(p\otimes 1)} & T{\sharp}CX \xyepi[r]_{T{\sharp}(i\otimes 1)} & T{\sharp}X,
}
\end{equation}
the left square anti-commutes and the right square commutes where the middle vertical map is the natural transformation
$$
\xymatrix{
C \otimes X^{\sharp} \ar[r]^{\gamma_0\otimes 1}_{\cong} & [C,T]\otimes X^{\sharp} &   T[C,\1]\otimes X^{\sharp} \ar[l]_{\beta \otimes 1}^{\cong} }
$$
with $\gamma_0: C \to [C,T]$ the map defined in \S \ref{subsec:mappingConedgForm}  and
$\beta: T[C,\1] \to [C,T]$ the (admissible) natural transformation
$$\beta: A[B,E] \to [B,AE]: \beta(a\otimes f)(b)= a \otimes f(b).$$
Since all maps in diagram (\ref{eqn:Pf:Lem:3.7}) are of the form $?\otimes 1_{X^{\sharp}}$, (anti-) commutativity can be checked in $\scC_{\bk}$ with $X=k$ in which case it is a direct verification.

Secondly, 
given a commutative diagram in an exact category 
$$\xymatrix{A_{00} \xymono[r] \xymono[d] & A_{01} \xyepi[r] \xymono[d] & A_{02} \xymono[d] \\
A_{10} \xymono[r] \xyepi[d] & A_{11} \xyepi[r] \xyepi[d] & A_{12} \xyepi[d] \\
A_{20} \xymono[r] & A_{21} \xyepi[r]  & A_{22} 
}$$
with exact rows and columns, let $E$ be the push-out of the upper left corner, and $P$ the pull-back of the lower right corner, and let $c_{ij}:E \to A_{ij}$, and $p_{ij}: A_{ij} \to P$, $i=0,1,2$, $i+j=2$,  be the natural maps given by the universal properties defining $E$ and $P$.
Then we have the equation 
\begin{equation}
\label{eqn:GridOfExSeqs}
p_{11}c_{11}= p_{20}c_{20} + p_{02}c_{02}.
\end{equation}
This can be checked by composing both sides of the equation with $P \to A_{12}$, $P \to  A_{21}$, $A_{01} \to E$ and $A_{10}\to E$.

We apply the second fact to the diagram obtained by tensoring the (column) exact sequence (\ref{eqn:fundExSeq}) with the (row) exact sequence $(g^{\sharp},f^{\sharp})$.
Then $E = C(g^{\sharp})$, $c_{20}=\bar{q}$, $c_{02}=\bar{r}$.
Applying the exact functor $T\sharp$ to diagram (\ref{eqn:dfn:rq(fg)}) and using the first fact, we obtain $P=T\sharp C(f)$, 
$p_{20} = T(r^{\sharp})$, and $p_{02}=-T(q^{\sharp})\lambda_X$.
In the equation (\ref{eqn:GridOfExSeqs}), the map $p_{11}c_{11}$ is zero in $\T\A$ because it factors through the object $C\sharp Y$ which is zero in $\T\A$.
Therefore, equation (\ref{eqn:GridOfExSeqs}) yields
$$ 0 = T(r^{\sharp}) \circ \bar{q}  -  T(q^{\sharp})\lambda_X \circ \bar{r}.$$
\end{proof}

Since we have checked that the functors (\ref{eqn:AToTAdualpreserve})
and (\ref{eqn:S2ToDeltaT}) preserve dualities, we obtain a well-defined map of abelian groups
\begin{equation}
\label{eqn:GW0AToGW0TA}
GW_0(\scA) \to GW^0(\T\scA): [A,\ffi] \mapsto [A,\ffi]
\end{equation}
for any dg category with weak equivalences and duality $\scA$.
Recall that $\frac{1}{2}\in \A$ means that $\A$ is a dg category over a $\Z[1/2]$-algebra.

\begin{proposition}
\label{prop:GW0AisGW0TA}
Let $\scA=(\A,w,\sharp,\varpi)$ be a dg category with weak equivalences and duality such that $\frac{1}{2}\in \scA$.
Then the map (\ref{eqn:GW0AToGW0TA}) is an isomorphism of abelian groups
$$GW_0(\scA) \stackrel{\cong}{\longrightarrow} GW^0(\T\scA).$$
\end{proposition}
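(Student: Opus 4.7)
The plan is to construct an explicit inverse $\Phi\colon GW^0(\T\scA)\to GW_0(\scA)$ to the map (\ref{eqn:GW0AToGW0TA}). Throughout, the hypothesis $\tfrac{1}{2}\in\scA$ enters via the \emph{symmetrization trick} $\ffi\mapsto\tfrac{1}{2}(\ffi+\ffi^{\sharp}\varpi)$, which converts any morphism $\ffi\colon X\to X^{\sharp}$ into one satisfying $\ffi^{\sharp}\varpi_X=\ffi$ on the nose, and which is the identity on classes that were already symmetric in $\T\scA$. The second ingredient is that every morphism of the Verdier quotient $\T\scA$ can be represented by a roof in $Z^0\scA^{\pretr}$ with weak-equivalence denominator.

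On generators, given an inner product space $(X,\bar\ffi)$ in $\T\scA$, I would write $\bar\ffi = \bar f\circ\bar s^{-1}$ for a roof $X\stackrel{s}{\leftarrow}Z\stackrel{f}{\to}X^{\sharp}$ with $s$ a weak equivalence, and consider the pull-back $\psi=s^{\sharp}\circ f\colon Z\to Z^{\sharp}$ in $Z^0\scA^{\pretr}$. Since $\bar\psi=\bar s^{\sharp}\bar\ffi\bar s$ is an isomorphism in $\T\scA$, $\psi$ is a weak equivalence, and since $\bar\ffi$ is symmetric so is $\bar\psi$. The symmetrization $h=\tfrac{1}{2}(\psi+\psi^{\sharp}\varpi_Z)$ is therefore a genuine symmetric space in $(Z^0\scA^{\pretr},w,\sharp,\varpi)$, and one sets $\Phi[X,\bar\ffi]=[Z,h]$. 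Independence of the roof follows from the existence of common refinements, after which the two symmetric spaces are identified by relation \ref{dfn:GW0A}(\ref{cor:itm1:Kh_0}); additivity is immediate. Taking the trivial roof $s=1_X$, $f=\ffi$ shows that $\Phi$ inverts (\ref{eqn:GW0AToGW0TA}) on the left, so it suffices to verify that $\Phi$ respects the symmetric-triangle relation of Definition \ref{dfn:TriGW0andW}(\ref{dfn:itm2:GW0Tri}).

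The heart of the proof is to show that every isometric exact triangle in $\T\scA$ lifts to a symmetric exact sequence in $Z^0\scA^{\pretr}$, yielding a relation of type \ref{dfn:GW0A}(\ref{cor:itm2:Kh_0}). Up to isomorphism of triangles one may assume the given triangle is the standard one (\ref{eqn:TriangleOfExSeq}) associated to an exact sequence $X'\rightarrowtail Y'\twoheadrightarrow Z'$ in $Z^0\scA^{\pretr}$. I would apply the surjectivity construction to $\bar\ffi_0$ to produce a symmetric form $\ffi_0'$ on a weakly equivalent replacement of $Y'$, then define the outer forms $\ffi'_{\pm 1}$ from the composites $X'\rightarrowtail Y'\xrightarrow{\ffi_0'}(Y')^{\sharp}\twoheadrightarrow (X')^{\sharp}$ and its dual, using that the duality on the pretriangulated hull exchanges admissible monomorphisms and admissible epimorphisms so that these factorizations are canonical. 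The main technical obstacle is to effect this replacement \emph{without destroying the exact sequence structure} and to ensure that the resulting diagram in $S_2Z^0\scA^{\pretr}$ is symmetric on the nose; I expect to handle this by symmetrizing the entire commutative diagram in one stroke with $\tfrac{1}{2}$, which does not alter the class in $GW^0(\T\scA)$ since the original triangle was symmetric there, and by exploiting the exactness of the duality to propagate the symmetric lift on $Y'$ to the subobject and quotient.
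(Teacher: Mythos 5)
Your overall strategy---represent a symmetric space in $\T\scA$ by a roof, pull the form back along the denominator, symmetrize with $\tfrac{1}{2}$, and then verify the exact-triangle relation by lifting isometric triangles to symmetric exact sequences---is in substance the same as the paper's, which packages the argument as a comparison of the two presentations by generators and relations (Lemma \ref{lem:hD=Dh} gives bijectivity on generators, and a separate surjectivity argument handles the relations). However, two steps of your well-definedness argument have genuine gaps.

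First, the claim that independence of the roof ``follows from common refinements and relation \ref{dfn:GW0A}(\ref{cor:itm1:Kh_0})'' is not sufficient, and the same issue arises for isometries between different objects of $\T\scA$ (your $\Phi$ must be constant on isometry classes, since these are the generators of $GW^0(\T\scA)$, and an isometry in $\T\scA$ is itself a fraction). After passing to a common refinement, the two pulled-back forms agree only in $\T\scA$, i.e.\ up to a map that is null-homotopic and hence factors through a $w$-acyclic object $I$; they need not be equal in $Z^0\scA^{\pretr}$, so relation (\ref{cor:itm1:Kh_0}) alone does not identify their classes. One must insert the metabolic correction on $E\oplus I$ with off-diagonal entries built from the factorization through $I$, exactly as in the fullness part of Lemma \ref{lem:hD=Dh}; this uses relation \ref{dfn:GW0A}(\ref{cor:itm2:Kh_0}) and not just (\ref{cor:itm1:Kh_0}). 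Second, in what you call the heart of the proof, symmetrizing the whole diagram ``in one stroke'' cannot create commutativity: a lift $(\ffi_0,\ffi_1)$ of the triangle isometry satisfies $\ffi_1 f=g^{\sharp}\ffi_0$ only up to homotopy. Before you can induce a form on the quotient or symmetrize, you must correct $\ffi_1$ by a null-homotopic map $\delta$; the key point is that $\ffi_1 f-g^{\sharp}\ffi_0$ is null-homotopic, hence factors through an injective object of $Z^0\scA^{\pretr}$, hence through the admissible monomorphism $f$, so that the corrected $\ffi_1$ commutes strictly. Relatedly, your formula for the outer forms is wrong: for a symmetric map from $X'\rightarrowtail Y'\twoheadrightarrow Z'$ to its dual, the composite $X'\rightarrowtail Y'\to (Y')^{\sharp}\twoheadrightarrow (X')^{\sharp}$ is always zero; the outer components are the maps $X'\to (Z')^{\sharp}$ and $Z'\to (X')^{\sharp}$ determined by $\ffi_0$ through the admissible monomorphism dual to the quotient map and the admissible epimorphism itself. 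With these two repairs your construction does go through and essentially reproduces the paper's proof.
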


\begin{proof}
We can assume $\A$ to be pretriangulated.
For a small category $\C$, write $\pi_0\C$ for its set of connected components, that is, the quotient of the set of objects of $\C$ modulo the relation (generated by) $A \sim B$ whenever there is a morphism $A \to B$ in $\C$.
If $\C$ is a symmetric monoidal category, then $\pi_0\C$ is an abelian monoid, and we write $K_0(\C)$ for the Grothendieck group of the abelian monoid $\pi_0\C$.
Recall from Section \ref{subsec:CatsWDual} the category $\C_h$ of symmetric forms in a category with duality $\C$.
So, $K_0\C_h$ denotes the Grothendieck group of symmetric forms in $\C$, that is, the Grothendieck group of $\pi_0(\C_h)$ in case $\C_h$ is symmetric monoidal.

From Definitions \ref{dfn:GW0A} and \ref{dfn:TriGW0andW}, we have a map of short exact sequences of abelian groups
$$
\xymatrix{
K_0(wS_2\A)_h \ar[r] \ar[d] & K_0(w\A)_h \ar[r]\ar[d] & GW_0(\scA) \ar[r] \ar[d] & 0\\
K_0(i\Delta\T\scA)_h \ar[r] & K_0(i\T\scA)_h \ar[r] & GW^0(\T\scA) \ar[r] & 0
}$$
where the vertical maps are induced by the functors (\ref{eqn:AToTAdualpreserve}) and (\ref{eqn:S2ToDeltaT}) and the left horizontal maps are the differences of the two sides of the equations in Definitions \ref{dfn:GW0A} (\ref{cor:itm2:Kh_0}) and \ref{dfn:TriGW0andW} (\ref{dfn:itm2:GW0Tri}).
Let $R_1$ and $R_2$ be the images of the top left and the bottom left horizontal maps.
So, $R_1$ is generated by $[E_0,\ffi_0] - H(E_{-1})$ where $(E_*,\ffi_*)$ is a  symmetric space in the category of exact sequences as in Definition \ref{dfn:GW0A} (3), and $R_2$ is generated by $[E_0,\ffi_0] - H(E_{-1})$ where $(E_*,\ffi_*)$ is a  symmetric space in the category of exact triangles as in Definition \ref{dfn:TriGW0andW} (2).
By Lemma \ref{lem:hD=Dh} below, the middle vertical arrow in the diagram is an isomorphism.
Hence, we are left with showing that the induced map $R_1 \to R_2$ is surjective.

Let $(E_*,\ffi_*)$ be a symmetric space in the category of exact triangles in $\T\scA$ as in Definition \ref{dfn:TriGW0andW} (2).
Recall that we can assume $\A$ to be pretriangulated, so $\T\scA=w^{-1}H^0\A$.
By the definition of exact triangles in $w^{-1}H^0\A$, we can assume that $(f,g,h)$ is an exact triangle in $H^0\A$.
The maps $\ffi_i$ are fractions $\ffi_i=\alpha_is_i^{-1}$ where $\alpha_i:A_i \to E_{-i}^{\sharp}$ and $s_i:A_i \to E_i$ are weak equivalences in $\A$.
By the calculus of fractions,
there is \red{a} map $a:A_{-1} \to A_0$ in $Z^0\A$ such that $s_0f=as_{-1}$ and $\alpha_0a=g^{\sharp}\alpha_{-1}$ in $H^0\A$.
Furthermore, replacing $a:A_{-1}\to A_0$ with the homotopy equivalent $(a,i): A_{-1} \to A_0\oplus CA_{-1}$, we can assume that $a:A_{-1}\to A_0$ is an inflation with quotient map $b:A_0 \to A_1=A_0/A_{-1}$ in $Z^0\A$.
The pair $(s_{-1},s_0)$ extends to a map of exact triangles $(s_{-1},s_0,s_1)$ in $H^0\A$ from $A_*$ to $E_*$.
Since $s_{-1}$ and $s_0$ are weak equivalences, so is $s_1$.
Therefore, the symmetric exact triangle $(E_*,\ffi_*)$ is isometric in $w^{-1}H^0\A$ to the symmetric exact triangle $(A_*,\psi_*)$  where $\psi_{i} = s_i^{\sharp}\ffi_{i}s_{i}$.
In particular, $[E_0,\ffi_0]-H(E_{-1}) = [A_0,\psi_0]-H(A_{-1})$ in $R_2$.
We will show that the latter expression is in the image of $R_1 \to R_2$.
Since
$(s_1^{\sharp}\ffi_{-1}s_{-1},s_0^{\sharp}\ffi_{0}s_0, s_{-1}1^{\sharp}\ffi_{1}s_{1}) = (s_1^{\sharp}\alpha_{-1},s_0^{\sharp}\alpha_{0},\alpha_{-1}^{\sharp}s_1)$ in $w^{-1}H^0\A$, we can assume that the components of $(\psi_{-1},\psi_{0},\psi_{1}) = (s_1^{\sharp}\alpha_{-1},s_0^{\sharp}\alpha_{0},\alpha_{-1}^{\sharp}s_1)$ are maps in $Z^0\A$.
In $H^0\A$ we have
$\psi_0a=s_0^{\sharp}\alpha_0a=s_0^{\sharp}g^{\sharp}\alpha_{-1}=b^{\sharp}s_1^{\sharp}\alpha_{-1}=b^{\sharp}\psi_{-1}$.
Therefore, the difference $\psi_0a - b^{\sharp}\psi_{-1}: A_{-1} \to A_0^{\sharp}$ in $Z^0\A$ factors through an injective object of $Z^0\A$.
Since $a:A_{-1}\to A_0$ is an inflation in the Frobenius exact category $Z^0\A$, there is a map $\eps:A_0 \to A_0^{\sharp}$ in $Z^0\A$ which factors through that injective object such that $(\psi_0+\eps)a = b^{\sharp}\psi_{-1}$ in $Z^0\A$.
Replacing $\psi_0$ with $\psi_0+\eps$ we can assume that
$\psi_0a = b^{\sharp}\psi_{-1}$ in $Z^0\A$ without changing the image of $\psi_0$ in $H^0\A$.
From the last equality, we see that the pair $(\psi_{-1},\psi_0)$ induces a unique map on quotients $\psi_{0/-1}:A_1 \to A_0^{\sharp}$ in $Z^0\A$.
In particular, we have two maps of exact triangles in $w^{-1}H^0\A$ extending $(\psi_{-1},\psi_0)$, namely $(\psi_{-1},\psi_0,\psi_1)$ and $(\psi_{-1},\psi_0,\psi_{0/-1})$.
Since $\psi_1$ and $\psi_{0/{-1}}$ are isomorphisms in $w^{-1}H^0\A$, 
\cite[Lemma 4.6]{Balmer:TWGI} provides us with 
an endomorphism $x: A_{-1}^{\sharp} \to A_{-1}^{\sharp}$ such that $x^3=0$ and $\psi_{0/-1} = (1+x)\psi_1$.
In $w^{-1}H^0\A$, the maps $\psi_{0/-1}+\psi_{-1}^{\sharp}\varpi = (2+x)\psi_1$ and  $\psi_0 +\psi_0^{\sharp}\varpi=2\psi_0$ are isomorphisms since $x$ is nilpotent and $1/2\in \A$.
It follows that 
$$(\gamma_{-1},\gamma_0,\gamma_1) = \frac{1}{2}(\psi_{0/-1}^{\sharp}\varpi_{A_{-1}}+\psi_{-1}, \psi_0 +\psi_0^{\sharp}\varpi_{A_0}   ,\psi_{0/-1}+\psi_{-1}^{\sharp}\varpi_{A_1})$$
is a symmetric weak equivalence in the category of exact sequences in $(Z^0\A,w,\sharp,\varpi)$.
This defines an element $[A_0,\gamma_0]-H(A_{-1}) \in R_1$ which maps to 
$[A_0,\psi_0]-H(A_{-1}) \in R_2$ since $\psi_0=\gamma_0$ in $w^{-1}H^0\A$.
\end{proof}

\begin{lemma}
\label{lem:hD=Dh}
Let $\scA = (\A,w,\sharp,\varpi)$ be a pretriangulated dg category with
weak equivalences and duality such that $\frac{1}{2}\in \scA$.
Then the functor $(w\A)_h \to (i\T\scA)_{h}:(X,\ffi) \mapsto (X,\ffi)$ induces
a bijection
$$\pi_0\ (w\A)_h \stackrel{\cong}{\to} \pi_0\ (i\T\scA)_{h}.$$
\end{lemma}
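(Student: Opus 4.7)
The plan is to show that the induced map on connected components is both surjective and injective.

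\textbf{Surjectivity.} Given an inner product space $(X,\bar{\ffi})$ in $\T\scA$, I first reduce to the case where $\bar\ffi$ is represented by an actual morphism in $Z^0\scA^{\pretr}$. Using the Verdier calculus of fractions in $\T\scA=w^{-1}H^0\scA^{\pretr}$, write $\bar\ffi=[\ffi\circ s^{-1}]$ via a roof $X\xleftarrow{s}X'\xrightarrow{\ffi}X^{\sharp}$ with $s$ a weak equivalence; replacing $X$ by $X'$ and $\bar\ffi$ by the isometric space $s^{\sharp}\bar\ffi s$, I may assume $\bar\ffi$ lifts to a weak equivalence $\ffi:X\to X^{\sharp}$ in $Z^0\scA^{\pretr}$, not necessarily symmetric. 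Because $\frac{1}{2}\in\scA$ and $\bar\ffi^{\sharp}\varpi_X=\bar\ffi$, the symmetrization $\ffi_s:=\tfrac{1}{2}(\ffi+\ffi^{\sharp}\varpi_X)$ is symmetric and still a weak equivalence, since its image in $\T\scA$ equals $\bar\ffi$. Hence $(X,\ffi_s)\in(w\scA)_h$ lifts $(X,\bar\ffi)$.

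\textbf{Injectivity.} Suppose $(X,\ffi)$ and $(Y,\psi)$ in $(w\scA)_h$ become isometric in $(i\T\scA)_h$ via some isomorphism $\bar f:X\to Y$ in $\T\scA$ with $\bar f^{\sharp}\bar\psi\bar f=\bar\ffi$. Representing $\bar f$ as a roof $X\xleftarrow{s}X'\xrightarrow{g}Y$ with $s$ a weak equivalence, the pull-back $\ffi':=s^{\sharp}\ffi s$ gives a morphism $s:(X',\ffi')\to(X,\ffi)$ in $(w\scA)_h$. Lifting $g$ to $Z^0\scA^{\pretr}$ and setting $\ffi'':=g^{\sharp}\psi g$ yields a morphism $g:(X',\ffi'')\to(Y,\psi)$ in $(w\scA)_h$. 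The isometry hypothesis then becomes the equality $\ffi'=\ffi''$ in $\T\scA$. This reduces the problem to the following key step: if $\alpha,\beta:X\to X^{\sharp}$ are two symmetric weak equivalences that coincide in $\T\scA$, then $(X,\alpha)$ and $(X,\beta)$ lie in the same connected component of $(w\scA)_h$.

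\textbf{Key step.} Choose a further weak equivalence $t:X''\to X$ for which $\alpha t=\beta t$ already holds in $H^0\scA^{\pretr}$ (such a $t$ exists by the calculus of fractions applied to the equality $\alpha=\beta$ in $\T\scA$). Pulling back along $t$, one reduces to the case $\alpha=\beta$ in $H^0\scA^{\pretr}$, i.e.\ $\alpha-\beta$ is null-homotopic in the Frobenius exact category $Z^0\scA^{\pretr}$. Since $\alpha-\beta$ is symmetric and $\frac{1}{2}\in\scA$, any null-homotopy $k\in\scA^{\pretr}(X,X^{\sharp})^{-1}$ may be averaged to $\tfrac{1}{2}(k+k^{\sharp}\varpi_X)$ to produce a symmetric null-homotopy $h$ with $dh=\alpha-\beta$. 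From such a symmetric $h$, a standard symmetric cylinder construction in the pretriangulated hull (built from the cone object $CX$ and the symmetric form on $\Gamma$ from Section~\ref{subsec:mappingConedgForm}) produces an object $Z$ with symmetric form $\Phi$ together with weak equivalences $(Z,\Phi)\to(X,\alpha)$ and $(Z,\Phi)\to(X,\beta)$ in $(w\scA)_h$.

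The main obstacle is this last key step. The existence of a symmetric null-homotopy, and the symmetric cylinder used to convert it into a zigzag of form-preserving weak equivalences, both hinge on unique $2$-divisibility; without $\frac{1}{2}\in\scA$, the counterexamples of Section~\ref{sec:ConeCounterex} already obstruct such a statement.
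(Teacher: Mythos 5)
Your proof is correct and follows essentially the same route as the paper: surjectivity by writing the form as a fraction, pulling back along the weak equivalence, and symmetrizing with $\frac{1}{2}$; and injectivity (fullness) by reducing an isometry in $\T\scA$ to the case of two chain-homotopic symmetric forms on one object and then absorbing the symmetrized null-homotopy into an auxiliary symmetric space supported on an acyclic summand. The only difference is cosmetic: where you invoke a ``symmetric cylinder'' as a black box, the paper writes the correction explicitly as the form $\left(\begin{smallmatrix}\alpha_1 & \lambda\delta\\ \lambda^{\sharp}\delta^{\sharp}\eta & 0\end{smallmatrix}\right)$ on $E\oplus I$, where $\alpha_0-\alpha_1$ factors through the $w$-acyclic object $I$.
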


\begin{proof}
The map is surjective  by the following argument.
Let $(A,\alpha)$ be a symmetric space in $\T\scA = w^{-1}Z^0\A$.
We can write $\alpha$ as a fraction $\alpha=a\circ s^{-1}$ with $a:B \to
A^{\sharp}$ and $s:B \to A$ weak equivalences in $\A$.
Let $\ffi= (s^{\sharp}a+a^{\sharp}\varpi s)/2:B \to B^{\sharp}$.
Then $(B,\ffi)$ is a symmetric weak equivalence in $\A$, and $s:B \to A$ defines an isometry in $\T\scA$ between $(B,\ffi)$ and $(A,\alpha)$.

The map in the lemma is injective by the following argument.
Let $(A,\alpha)$ and $(B,\beta)$ be objects of $(w\A)_{h}$, and let
$[fs^{-1}]$ be an isometry in $\T\scA$ from $(A,\alpha)$ to $(B,\beta)$
where $f:E\to B$ and  $s: E \to A$ are weak equivalences in $\A$.
Using the calculus of fractions in $\T\scA = w^{-1}H^0\A$, 
we can assume that 
$\alpha_0=s^{\sharp}\alpha s$ and $\beta_0=f^{\sharp}\beta f$ are homotopic.
Therefore, the difference $\alpha_0-\beta_0$ factors as $E \stackrel{\eps}{\to} I
\stackrel{\delta}{\to} E^{\sharp}$ where $I$ is an injective object in $Z^0\A$.
Then the pair $(E\oplus I,\ffi)$ with  
$$\ffi = \left(\begin{smallmatrix}\beta_0 &
    \delta/2\\ 
    \delta^{\sharp}\varpi/2 & 0\end{smallmatrix}\right): E\oplus I
\to E^{\sharp}\oplus I^{\sharp}$$
defines an object of $(w\A)_{h}$.
The string of maps in $(w\A)_{h}$
$$(A,\alpha) \stackrel{s}{\leftarrow} (E,\alpha_0) \stackrel{\left(\begin{smallmatrix}1\\ \eps\end{smallmatrix}\right)}{\to} (E\oplus I, \ffi) \stackrel{\left(\begin{smallmatrix}1\\ 0\end{smallmatrix}\right)}{\leftarrow} (E,\beta_0) \stackrel{f}{\to} (B,\beta)$$
shows that $[A,\alpha]=[B,\beta] \in \pi_0(w\A)_h$.
\end{proof}

\begin{definition}[Shifted dualities]
Let $\K=(\K,\sharp,\varpi,\lambda)$ be a triangulated category with duality.
The first shifted triangulated category with duality $\K^{[1]} = (\K, \sharp^{[1]}, \varpi^{[1]}, \lambda^{[1]})$ has the same underlying triangulated category as $\K$ and 
$$\sharp^{[1]} = T\sharp,\hspace{3ex} \varpi^{[1]}=-\lambda_{\sharp}\circ\varpi,\hspace{3ex} \lambda^{[1]} = -T(\lambda).$$
One checks that this defines indeed a triangulated category with duality in the sense of Definition \ref{dfn:TriDwithoutTed}.
Similarly, we have a triangulated category with duality $\K^{[-1]}=
(\K, \sharp^{[-1]}, \varpi^{[-1]}, \lambda^{[-1]})$ defined by
$$\sharp^{[-1]} = \sharp T,\hspace{3ex} \varpi^{[-1]}=(\lambda^{\sharp})^{-1}\circ\varpi,\hspace{3ex} \lambda^{[-1]} = -\lambda_T.$$
Iterating, we obtain triangulated categories with duality $\K^{[n]}$ for $n\in \Z$ where $\K^{[n]} = (\K^{[n-1]}){[1]}$ for $n>0$, $\K^{[0]} = \K$, and $\K^{[n]} = (\K^{[n+1]})^{[-1]}$ for $n<0$.
For $m,n\in \Z$, there are natural isomorphisms $(K^{[m]})^{[n]} \cong \K^{[m+n]}$.
For instance 
$$(id,1,\lambda): \K^{[0]} = (\K,\sharp,\varpi,\lambda) \stackrel{\cong}{\longrightarrow} (\K^{[-1]})^{[1]} = (\K, T\sharp T, \lambda_{T\sharp T}\circ (\lambda^{\sharp})^{-1} \circ \varpi, T\lambda_T).$$
A morphism $(F,\rho,\ffi): \K_1 \to \K_2$ of triangulated categories with duality induces another such morphism $(F,\rho,\ffi^{[1]}): \K_1^{[1]} \to \K_2^{[1]}$ where 
$\ffi^{[1]} = T(\ffi) \circ \rho_{\sharp}$.
\end{definition}

\begin{definition}
For a triangulated category with duality $\K$, one defines the {\em shifted Witt and Grothendieck-Witt groups} for $n\in \Z$ by
$$W^n(\K) = W^0(\K^{[n]})\hspace{3ex}\text{and}\hspace{3ex}GW^n(\K) = GW^0(\K^{[n]}).$$
\end{definition}

Finally, we want to construct for $n\in \Z$ equivalences of triangulated categories with duality $\T(\scA^{[n]})\simeq (\T\scA)^{[n]}$.
It suffices to construct such equivalences for $n=\pm 1$.
Consider the following (admissible) natural transformation in $\scC_{\bk}$ 
$$\beta: A[B,E] \to [B,AE]: \beta(a\otimes f)(b)= a \otimes f(b).$$
For $A=T$, $E=\1$ and $\scA \in \dgCatWD$ it defines a natural transformation $\beta: T \sharp \to \sharp_T$ which for $A \otimes X \in \scC_{\bk}\otimes\A$ is
$$\beta \otimes 1: T[A,\1] \otimes X^{\sharp} \to [A,T]\otimes X^{\sharp}.$$
Similarly, the natural isomorphism in $\scC_{\bk}$ 
$$\phi:[EA,BD] \underset{\cong}{\stackrel{\can}{\longleftarrow}} [E,B][A,D] \underset{\cong}{\stackrel{\ffi}{\longrightarrow}} [A,[E,B]D]$$
with $E=T$ and $B=D=\1$ defines a natural isomorphism of functors 
$\phi: \sharp T \to \sharp_{T^{-1}}$ which for $A \otimes X \in \scC_{\bk}\otimes\A$ is
$$\phi \otimes 1: [TA,\1]\otimes X^{\sharp} \to [A,[T,1]]\otimes X^{\sharp}$$
where $T^{-1}=[T,\1] \cong k[-1]$.

\begin{lemma}
\label{lem:TriShif=ShiftTri}
Let $\scA$ be a pretriangulated dg category with weak equivalences and duality.
Then $(id,1,\beta)$ and $(id,1,\phi)$ define equivalences of triangulated categories with duality
$$(\T\scA)^{[1]} \stackrel{\simeq}{\longrightarrow} \T(\scA^{[1]}) \hspace{3ex}\text{and}\hspace{3ex}(\T\scA)^{[-1]} \stackrel{\simeq}{\longrightarrow} \T(\scA^{[-1]}).
$$
\end{lemma}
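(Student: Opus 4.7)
The plan is as follows. Since the triangulated categories $(\T\scA)^{[\pm 1]}$ and $\T(\scA^{[\pm 1]})$ share the same underlying triangulated category $\T\scA$ with the same shift functor $T$, the pair $(F,\rho) = (\id,1)$ is trivially a triangle equivalence. It therefore suffices to verify that $\beta$ (respectively $\phi$) is a natural isomorphism satisfying the two coherence diagrams in Definition \ref{dfn:MorphTriD}, involving $\varpi$ and $\lambda$.

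Consider the first equivalence $(\T\scA)^{[1]} \to \T(\scA^{[1]})$. The duality $\sharp^{[1]}$ on $(\T\scA)^{[1]}$ is $T\circ \sharp$, while the duality on $\T(\scA^{[1]}) = \T(\scC^{[1]}\otimes\scA)$ sends $\1\otimes X$ to $[\1,k[1]]\otimes X^{\sharp}$. The component of the duality compatibility morphism at $\1\otimes X$ is $\beta\otimes 1\colon T[\1,\1]\otimes X^{\sharp}\to [\1,k[1]]\otimes X^{\sharp}$, which is an isomorphism because $\beta\colon T[B,E]\to [B,TE]$ is an admissible natural isomorphism in $\scC_{\bk}$ for any $B,E$ bounded complexes of finitely generated free $\bk$-modules. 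Extending $\scA$ to its pretriangulated hull, every object of $\scC^{[1]}\otimes\scA^{\pretr}$ is built from objects of the form $k[n]\otimes X$ with $X\in\scA^{\pretr}$, and $\beta$ is natural in both arguments, so this suffices.

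The two coherence diagrams then reduce, after splitting off the $\scA$-factor and canceling $\varpi^{\scA}$ against itself, to identities of admissible natural transformations in the closed symmetric monoidal category $\scC_{\bk}$. By Kelly--MacLane coherence \cite{KellyMacLane:Coherence}, there is at most one admissible natural transformation of a given signature between given tensor-and-hom expressions in $\scC_{\bk}$, so each diagram commutes up to at most a scalar. The role of the minus signs in $\varpi^{[1]} = -\lambda_{\sharp}\circ\varpi$ and $\lambda^{[1]} = -T(\lambda)$ is precisely to absorb the Koszul sign $(-1)^{1\cdot 1} = -1$ that enters through the symmetry $k[1]\otimes k[1]\to k[1]\otimes k[1]$, as recorded in Lemma \ref{lem:CiCjcommuting}; with these signs in place, the scalar is $+1$ and the diagrams commute on the nose.

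The argument for the second equivalence $(\T\scA)^{[-1]}\to \T(\scA^{[-1]})$ is parallel. The admissible natural transformation $\phi\colon [TA,\1]\to [A,[T,\1]]$ is an isomorphism (being a composition of the two admissible isomorphisms $\can$ and $\ffi$ displayed in the paragraph defining it), and the same Kelly--MacLane coherence argument together with sign-tracking via Lemma \ref{lem:CiCjcommuting} verifies the two diagrams of Definition \ref{dfn:MorphTriD}. The main obstacle in both cases is this sign bookkeeping: one must match the Koszul signs produced by the monoidal structure of $\scC_{\bk}$ against the signs introduced by the definitions of $\sharp^{[\pm 1]}$, $\varpi^{[\pm 1]}$ and $\lambda^{[\pm 1]}$; once this is done, the underlying identities are forced by uniqueness of admissible transformations, and no further computation is needed.
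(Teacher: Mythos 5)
Your proposal is correct and follows essentially the same route as the paper: note that $(\mathrm{id},1)$ is trivially a triangle equivalence, reduce the two coherence diagrams of Definition \ref{dfn:MorphTriD} to diagrams in $\scC_{\bk}$ because every natural transformation involved has the form $?\otimes 1$, and then settle those by Kelly--MacLane coherence together with the observation that the sign $c_{T,T}=-1$ cancels the minus signs built into $\varpi^{[1]}$ and $\lambda^{[1]}$. The paper merely makes the two diagrams in $\scC_{\bk}$ explicit before invoking the same coherence/sign argument.
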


\begin{proof}
The functors $(id,1)$ are equivalences of triangulated categories.
So, the only thing to prove is commutativity of the 
two diagrams in Definition \ref{dfn:MorphTriD}.
Since all natural transformations involved are of the form $? \otimes 1$ it suffices to check the commutativity of the corresponding diagrams in $\scC_{\bk}$.

For the first equivalence, the first diagram in Definition \ref{dfn:MorphTriD} is is the outer diagram in
$$
\xymatrix{
A \ar[r]^{\varpi} \ar[d]_{\varpi} & [[A,\1],\1] \ar@{-->}[drr]_{\eps_{[A,\1]}} && T[T[A,\1],\1]\ar[ll]_{-\alpha_{[A,\1]}}^{\cong} \ar[d]^{\beta} \\
[[A,T],T] \ar[rrr]_{\beta} &&& [T[A,\1],T]
}$$
where the dashed arrow is the (admissible) natural transformation 
$$\eps_B: [B,E] \to [TB,TE]: \eps_B(f)(t\otimes b)= (-1)^{|t||f|}t\otimes f(b)$$
with $B=[A,\1]$ and $E=\1$.
Commutativity of this diagram can be checked directly, or by coherence.
Similarly, commutativity of the second diagram in Definition \ref{dfn:MorphTriD} for the first equivalence requires us to check commutativity in $\scC_{\bk}$ of the diagram 
$$
\xymatrix{
B[A,E] \ar[d]_{\beta} & BT[TA,E] \ar[l]_{1 \otimes \alpha} & TB[TA,E] \ar[l]_{c \otimes 1} \ar[d]^{1\otimes \beta} \\
[A,BE] && T[TA,BE]. \ar[ll]^{\alpha}
}
$$
with $B=T$ and $E=\1$.
Since $c_{T,T}=-1$, commutativity of the diagram follows from 
coherence, or by diagram chase.
This proves the first equivalence of triangulated categories with duality in the lemma.
The second is \red{proved} similarly, and we omit the details.
\end{proof}

\begin{corollary}
\label{cor:BalmerWnismyWn}
Let $\scA$ be a pretriangulated dg category with weak equivalences and duality such that $\frac{1}{2}\in \A$.
Then for all $n\in \Z$ there are natural isomorphisms
$$GW_0^{[n]}(\scA) \cong GW^n(\T\scA)\hspace{3ex}\text{and}\hspace{3ex}W^{[n]}(\scA) \cong W^n(\T\scA).$$
\Qed
\end{corollary}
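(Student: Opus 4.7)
The plan is a direct assembly of three previously established facts. By the definitions of the shifted Grothendieck-Witt and Witt groups on both sides---namely $GW_0^{[n]}(\scA) = GW_0(\scA^{[n]})$ and $GW^n(\T\scA) = GW^0((\T\scA)^{[n]})$, with analogous identities for Witt groups---it suffices to produce natural isomorphisms $GW_0(\scA^{[n]}) \cong GW^0((\T\scA)^{[n]})$ and $W_0(\scA^{[n]}) \cong W^0((\T\scA)^{[n]})$ for each $n\in\Z$.

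First I would observe that $\scA^{[n]} = \scC_{\bk}^{[n]}\otimes \scA$ is pretriangulated whenever $\scA$ is, by Definition \ref{dfn:nshifteddgcat}, and inherits uniquely $2$-divisible mapping complexes from $\scA$ since tensoring with the free dg $\bk$-module of rank one underlying $\scC_{\bk}^{[n]}$ does not affect the $\bk$-module structure on mapping complexes. Hence Proposition \ref{prop:GW0AisGW0TA} applies to $\scA^{[n]}$ and gives a natural isomorphism $GW_0(\scA^{[n]}) \cong GW^0(\T(\scA^{[n]}))$. Although that proposition is stated only for Grothendieck-Witt groups, its proof, resting on Lemma \ref{lem:hD=Dh} and the surjectivity of $\pi_0(vS_2\scC_{\bk}\A)_h \to \pi_0(i\Delta\T\A)_h$, passes verbatim to Witt groups: the lift of a symmetric triangle to a symmetric exact sequence constructed there is compatible with the stronger metabolic relation defining $W_0$ and $W^0$.

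Second I would invoke Lemma \ref{lem:TriShif=ShiftTri}, which yields equivalences $\T(\scA^{[\pm 1]}) \simeq (\T\scA)^{[\pm 1]}$ in $\TriD$; iterating $|n|$ times and composing with the natural isomorphisms $(\K^{[m]})^{[n]} \cong \K^{[m+n]}$ produces an equivalence $\T(\scA^{[n]}) \simeq (\T\scA)^{[n]}$ of triangulated categories with duality for every $n\in\Z$. Since the groups $GW^0$ and $W^0$ are defined purely in terms of isometry classes of inner product spaces and the category of exact triangles, they are manifestly invariants of equivalences in $\TriD$. Composing the isomorphism from Proposition \ref{prop:GW0AisGW0TA} with the one induced by this equivalence yields the desired conclusion, naturality being inherited from the two constituent isomorphisms. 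There is no serious obstacle; the only point requiring attention is the extension of Proposition \ref{prop:GW0AisGW0TA} to Witt groups, and this is immediate once one notes that the map of presentations used in its proof remains well defined after imposing the Lagrangian relation on both sides.
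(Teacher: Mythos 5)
Your proof is correct and is exactly the argument the paper intends (the corollary is stated with no written proof precisely because it is the immediate combination of Proposition \ref{prop:GW0AisGW0TA} applied to $\scA^{[n]}$ with the equivalences $\T(\scA^{[n]})\simeq(\T\scA)^{[n]}$ obtained by iterating Lemma \ref{lem:TriShif=ShiftTri}). Your explicit remark that the proof of Proposition \ref{prop:GW0AisGW0TA} carries over verbatim to Witt groups -- via the same lifting of symmetric triangles to symmetric exact sequences, now compatible with the metabolic relation -- is the one point the paper leaves tacit, and you have handled it correctly.
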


\begin{remark}
Let $(\K,\sharp,\varpi,\lambda)$ be a triangulated category with duality.
Our definition of the Witt and Grothendieck-Witt groups $W^n(\K)$ and $GW^n(\K)$ of $\K$ agree with Balmer's and Walter's definitions in \cite{Balmer:TWGI} and \cite{walter:GWTrPreprint}.
This is because for an inner product space in the category $\Delta\K$ of exact triangles as in Definition \ref{dfn:TriGW0andW} (\ref{dfn:itm2:GW0Tri}),
the map $(T\ffi_{-1})h: Z \to T(Z^{\sharp})$ is a symmetric map in the category with duality
$$(\K,T\sharp,\lambda_{\sharp}\varpi)= (\K,\sharp^{[1]},-\varpi^{[1]}) \stackrel{T}{\simeq} \K^{[-1]}$$
which shows that the map from our definition to Balmer's definition is well-defined.
Conversely, Balmer shows in \cite{Balmer:TWGI} that any 
$(\K,T\sharp,\lambda_{\sharp}\varpi)$-symmetric map $Z \to T(Z^{\sharp})$ is part of a symmetric space in $\Delta\K$ as in Definition \ref{dfn:TriGW0andW} (\ref{dfn:itm2:GW0Tri}) (with $\ffi_{-1}=id$).
This shows that the map from his definition to our definition is well-defined.
Since both maps are the identity maps on generators, they are isomorphism.
\end{remark}

\section{The multi-simplicial $\RR_{\bullet}$-construction}
\label{sec:multiSimplRdot}

\subsection{Waldhausen's iterated $S_{\bullet}$-construction}
\label{subsec:ItertdSdot}
To fix notation and to motivate our construction of the Grothendieck-Witt spectrum in Definition \ref{dfn:GWspectrum} below,
we remind the reader of Waldhausen's definition of the $K$-theory of an exact category with weak equivalences \cite{wald:spaces} and its explicit deloopings \cite[Proposition 1.5.3]{wald:spaces}.
Recall that an exact category with weak equivalences is a pair $(\E,w)$ where $\E$ is an exact category and $w$ is a set of morphisms in $\E$ called weak equivalences satisfying certain properties; see for instance \cite[\S 2.2]{myMV}.
It is called pointed if it is equipped with a choice of zero object in $\E$ which we will call base point.

For an integer $n\geq 0$, let $[n]$ be the totally ordered set  
$$[n] = \{0<1<...<n\}$$
of \red{$n+2$} elements.
As usual, an ordered set is considered as a category with unique non-identity morphism $a\to b$ if $a<b$.
Let $\Ar[n]=\Fun([1],[n])$ be the category of arrows in $[n]$.
For an integer $n \geq 0$ and a pointed exact category with weak equivalences $(\E,w)$, Waldhausen constructs 
another pointed exact category with weak equivalences $S_n\E$ as the full subcategory of $\Fun(\Ar[n],\E)$ of those functors
$$A:\Ar[n] \to \E: (p\leq q) \mapsto A_{p,q}$$
for which $A_{p,p} = 0$ (the base point zero object of $\E$) for all $p\in [n]$  and 
$0\to A_{p,q} \to A_{p,r}\to A_{q,r}\to 0$ is an admissible short exact sequence in $\E$ whenever $p\leq q \leq r \in [n]$.
A sequence in $S_n\E$ is exact if it is exact at each $(p,q)$-spot and a morphism in $S_n\E$ is a weak equivalence if it is a weak equivalence at each $(p,q)$-spot, $p\leq q \in [n]$.

Recall \cite[p. 330 Definition]{wald:spaces} that the $K$-theory space of $(\E,w)$ is the loop space $\Omega |wS_{\bullet}\E|$ of the topological realization of the simplicial category $n\mapsto wS_n\E$.
The {\em connective $K$-theory spectrum $K(\E,w)$} of $(\E,w)$ is the spectrum
$$\{|w\E|, |wS_{\bullet}\E|,...,|wS^{(n)}_{\bullet}\E|,...\}$$
with bonding maps defined in \cite[p. 341]{wald:spaces}
where $i\mapsto S^{(n)}_{i}\E$ denotes the $n$-th iterate of the $S_{\bullet}$-construction:
$$S^{(n)}_{i}\E=\underset{n}{\underbrace{S_i\dots S_i}}\E.$$
This is a positive $\Omega$-spectrum \cite[Proposition 1.5.3 and Remark thereafter]{wald:spaces}.
It was given the structure of a symmetric spectrum in \cite{geisserHesselholt:cyclicSchemes}.

\subsection{The $\RR_{\bullet}$-construction}
\label{subsec:TheRRconstr}
Recall \cite[\S 2.3]{myMV}
that an {\it exact category with weak equivalences and duality} is a quadruple
$(\E,w,*,\eta)$
with $(\E,w)$ an exact category with weak equivalences and
$(\E,*,\eta)$ a category with duality such that 
$*:(\E^{op},w) \to (\E,w)$ is an exact functor (in particular, $*(w) \subset
w$) and 
$\eta: id \to **$ is a natural weak equivalence. 
Note that if $\E$ is an exact category with weak equivalences and duality, the
category $w\E$ of weak equivalences in $\E$ is a category with duality.

For an integer $n \geq 0$, let $\underline{n}$ be the totally ordered set of $2n+1$ elements
$$\underline{n} = \{n'<(n-1)' < \cdots < 0'<0<\cdots < n\}.$$
This is a category with unique (strict) duality $n \leftrightarrow n'$.
The assignment $[n] \mapsto \underline{n}$ defines a simplicial category with strict duality where for $\theta:[n] \to [m]$ the induced map $\underline{\theta}:\underline{n} \to \underline{m}$ is $\underline{\theta}(i) = \theta(i)$ and $\underline{\theta}(i') = \theta(i)'$.
Let $\Ar(\underline{n})=\Fun([1],\underline{n})$ be the category of arrows in $\underline{n}$.
When $n$ varies, this is a simplicial category with strict duality.

For a pointed exact category with weak equivalences and duality $\E$, 
the assignment  
$$[n] \mapsto \Fun(\Ar(\underline{n}),\E)$$
is a pointed simplicial exact category with duality with base point the unique functor $\Ar(\underline{n}) \to 0$. 
As in Section \ref{subsec:ItertdSdot}, we write
$$\RR_n\E \subset \Fun(\Ar(\underline{n}),\E)$$ 
for the full subcategory of those objects $A: \Ar(\underline{n}) \to \E$ for which $A_{i,i}=0$ (the base point zero object of $\E$), and for which for all $i\leq j\leq k \in \underline{n}$ the sequence $0\to A_{i,j} \to A_{i, k} \to A_{j,k}\to 0$ is a admissible exact sequence in $\E$.
A map or sequence in $\RR_n\E$ is a weak equivalence or admissible exact sequence if it is at each $(p,q)$-spot for all objects $(p,q)$ of  $\Ar(\underline{n})$.
The duality on $\Fun(\Ar(\underline{n}),\E)$ makes
$\RR_n\E$
into a pointed exact category with weak equivalences and duality.
Varying $n$, we obtain a simplicial pointed exact category with weak equivalences and duality $\RR_{\bullet}\E$.
The simplicial category $\RR_{\bullet}\E$ is the edge-wise subdivision \cite[\S 2.4]{myMV}, \cite[\S 1.9]{wald:spaces} of Waldhausen's $S_{\bullet}$-construction and was denoted $S^e_{\bullet}\E$ in \cite{myMV}.

The inclusion $\iota: [n]\subset \underline{n}:i\mapsto i$ induces a functor of simplicial exact categories with weak equivalences
$\RR_{\bullet}\E \to S_{\bullet}\E: A \mapsto A\circ \iota$
and thus a map $(w\RR_{\bullet}\E)_h \to wS_{\bullet}\E$ as the composition 
\begin{equation}
\label{eqn:RtoSmap}
(w\RR_{\bullet}\E)_h \stackrel{(A,\ffi)\mapsto A}{\longrightarrow} w\RR_{\bullet}\E  \stackrel{A\mapsto A\iota}{\longrightarrow} wS_{\bullet}\E.
\end{equation}
where $\C_h$ is the category of symmetric forms in $\C$; see Section \ref{subsec:CatsWDual}.

\subsection{Homotopy pull-backs}
Let $f: X \to Z$ and \red{$g: Y \to Z$} be maps of topological spaces.
Then the homotopy pull-back of the diagram $X \to Z \leftarrow Y$ is the topological space
$$X \underset{Z}{\stackrel{h}{\times}}\hspace{1ex}Y
=
\{(x,\sigma,y) \in X\times Z^I \times Y|\ f(x)= \sigma(0), g(y)=\sigma(1)\ \}
$$
equipped with the topology making it a subspace of $X\times Z^I \times Y$ where $Z^I$ is the space of paths $I \to Z$ in $Z$ and $I$ is the unit interval $[0,1] \subset \mathbb{R}$.
It is equipped with the two projections maps 
$p_X: X \underset{Z}{\stackrel{h}{\times}}\hspace{1ex}Y \to X: (x,\sigma,y) \mapsto x$
and 
$p_Y: X \underset{Z}{\stackrel{h}{\times}}\hspace{1ex}Y \to Y: (x,\sigma,y) \mapsto y$
making the square
$$\xymatrix{
X \underset{Z}{\stackrel{h}{\times}}\hspace{1ex}Y \ar[r]^{\hspace{2ex}p_X} \ar[d]_{p_Y} & X \ar[d]^f\\
Y \ar[r]_g & Y 
}$$
commute up to (a preferred) homotopy.

The homotopy pull-back is associative in the sense that
for a diagram $X \to Z \leftarrow Y \to U \leftarrow V$, we have 
$$(X \underset{Z}{\stackrel{h}{\times}}\hspace{1ex}Y) \underset{U}{\stackrel{h}{\times}}\hspace{1ex}V = 
X \underset{Z}{\stackrel{h}{\times}}\hspace{1ex} (Y \underset{U}{\stackrel{h}{\times}}\hspace{1ex}V).
$$

If $g:Y=\pt \to Z$ is the inclusion of the base point of $Z$ then
$X \underset{Z}{\stackrel{h}{\times}}\hspace{1ex}\pt$ is the homotopy fibre of $f: X \to Z$.
If $f:X=\pt \to Z$ and $g:Y=\pt \to Z$ are the base point inclusions then 
$\pt \underset{Z}{\stackrel{h}{\times}}\hspace{1ex}\pt$ is the loop space $\Omega Z$ of $Z$.

\subsection{The Grothendieck-Witt space}
\label{subsec:GWspace}
The {\em Grothendieck-Witt space} of a pointed exact category with weak equivalences and duality $\E$ was defined in \cite[\S 2.7 Definition 3]{myMV} as the homotopy fibre of the topological realization of the map (\ref{eqn:RtoSmap}), that is, it is the homotopy fibre product
\begin{equation}
\label{eqn:GWspace}
|(w\RR_{\bullet}\E)_h| \hspace{1ex} \underset{|wS_{\bullet}\E|}{\stackrel{h}{\times}}\hspace{1ex}\pt.
\end{equation}
For $i\geq 0$, we write $GW_i(\E)$ for the $i$-th homotopy group of the pointed topological space (\ref{eqn:GWspace}).

Inclusion of degree zero simplices defines a map 
$(w\E)_h = (w\RR_{0}\E)_h \to (w\RR_{\bullet}\E)_h$ of simplicial categories such that the composition 
\begin{equation}
\label{eqn:PrimAddtyFib}
(w\E)_h \to (w\RR_{\bullet}\E)_h \to wS_{\bullet}\E
\end{equation}
is trivial (because $wS_{0}\E=\pt$ is the one-object one-morphism category).
This induces a canonical map from $|(w\E)_h|$ to the Grothendieck-Witt space (\ref{eqn:GWspace}) of $\E$ which should be thought of as some kind of a group completion map (it really is a group completion if $\E$ is a split exact category over $\Z[\frac{1}{2}]$ with weak equivalences the set of isomorphisms; see Appendix \ref{appdx:Homology}).

\subsection{The multi-simplicial $\RR_{\bullet}$-construction}
For a pointed exact category with weak equivalences and duality $\E$,
we write $\RR^{(n)}_{\bullet\cdots \bullet}\E$ for the $n$-th iterate of the $\RR_{\bullet}$-construction.
This is the $n$-simplicial pointed exact category with weak equivalences and duality
$$\RR^{(n)}_{k_1,...,k_n}\E = \RR_{k_1}\RR_{k_2}\dots \RR_{k_n}\E,\vspace{2ex}$$
which is the full pointed exact subcategory with weak equivalences and duality of
$$\Fun(\Ar(\underline{k_1})\times \Ar(\underline{k_2})\times \cdots \times \Ar(\underline{k_n}), \E)$$
consisting of those functors $A: \Ar(\underline{k_1})\times \Ar(\underline{k_2})\times \cdots \times \Ar(\underline{k_n}) \longrightarrow \E$
for which for all $r,s=1,...,n$ and all $i_r \leq j_r \in \underline{k_r}$, $r\neq s$, the functor
$$A_{(i_1,j_1), \dots (i_{s-1},j_{s-1}), \bullet, (i_{s+1},j_{s+1}),\dots (i_{n}, j_{n}}): \Ar(\underline{k_s})\to \E$$
is an object of $\RR_{k_s}\E$.
Weak equivalences and admissible exact sequences are those maps and sequences of diagrams which are weak equivalences and admissible exact sequences when evaluated at every object of $\Ar(\underline{k_1})\times \cdots \times \Ar(\underline{k_n})$.
We denote by $\RR^{(n)}_{\bullet}\E$ the diagonal of the multi-simplicial pointed exact category with weak equivalences and duality $\RR^{(n)}_{\bullet\cdots \bullet}\E$.

If we replace $\E$ with $\RR_{\bullet}^{(n)}\E$ in the sequence (\ref{eqn:PrimAddtyFib}) we obtain the sequence of pointed topological spaces
\begin{equation}
\label{eqn:AddtyFib1}
|(w\RR_{\bullet}^{(n)}\E)_h| \longrightarrow |(w\RR_{\bullet}\RR_{\bullet}^{(n)}\E)_h| \longrightarrow |wS_{\bullet}\RR_{\bullet}^{(n)}\E|
\end{equation}
with trivial composition.

\begin{proposition}
Let $\E$ be a pointed exact category with weak equivalences and duality.
Then for all $n\geq 1$, the sequence
(\ref{eqn:AddtyFib1})
of pointed topological spaces
is a homotopy fibration.
\end{proposition}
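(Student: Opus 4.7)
The plan is to interpret the asserted sequence as computing the Grothendieck-Witt space of $\F=\RR_\bullet^{(n)}\E$, and then to show that for $n\geq 1$ the source is already group-complete, so that the group-completion discrepancy which normally separates $|(w\F)_h|$ from $GW(\F)$ vanishes.

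Setting $\F=\RR_\bullet^{(n)}\E$, viewed as a multi-simplicial pointed exact category with weak equivalences and duality, the displayed sequence is precisely
$|(w\F)_h|\to|(w\RR_\bullet\F)_h|\to|wS_\bullet\F|,$
so by the very definition of the Grothendieck-Witt space (Section \ref{subsec:GWspace}), together with a realization lemma that allows us to exchange the extra simplicial direction inherent in $\F$ with homotopy-fiber formation, the homotopy fiber of the right-hand map is the multi-simplicial Grothendieck-Witt space $GW(\F)$. The claim therefore reduces to showing that the canonical comparison map $\kappa\colon|(w\F)_h|\to GW(\F)$, i.e.\ the map $(\ref{eqn:PrimAddtyFib})$ applied to $\F$, is a weak equivalence whenever $n\geq 1$.

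The main input is the hermitian analog of Waldhausen's group-completion theorem, as established in \cite{myMV}: for any pointed exact category with weak equivalences and duality $\G$, the map $|(w\G)_h|\to GW(\G)$ is an H-space group completion, and hence a weak equivalence as soon as $\pi_0|(w\G)_h|$ is already an abelian group. The remaining task is thus to verify this group-completeness for $\G=\RR_\bullet^{(n)}\E$ when $n\geq 1$. This is where the hypothesis enters: writing $\G=\RR_\bullet\RR_\bullet^{(n-1)}\E$, the outermost $\RR_\bullet$ endows $|(w\G)_h|$ with a second H-space structure arising from orthogonal summation along admissible extensions indexed by $\underline{1}$. The $1$-simplices of $\RR_\bullet(\RR_\bullet^{(n-1)}\E)$ parametrize admissible short exact sequences carrying a compatible symmetry, which are precisely the metabolic/hyperbolic spaces represented by zero in $GW_0$; using the duality $i\leftrightarrow i'$ on $\underline{1}$, these produce, for every symmetric form $(A,\varphi)$ in $\RR_\bullet^{(n-1)}\E$, a $1$-simplex in $|(w\G)_h|$ connecting $[A,\varphi]+[A,-\varphi]$ (or the appropriate class) to the basepoint, whence $\pi_0|(w\G)_h|$ is a group.

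The main obstacle is executing this metabolic-cancellation argument functorially in the multi-simplicial structure and compatibly with the strict duality on $\underline{n}$, so that the cancellations produce genuine simplicial null-homotopies rather than merely $\pi_0$-level relations. A secondary technical obstacle is justifying the realization-lemma step used to identify the fiber of $|(w\RR_\bullet\F)_h|\to|wS_\bullet\F|$ with the multi-simplicial $GW(\F)$; this amounts to a Bousfield-Friedlander style $\pi_*$-Kan condition, which should reduce to the fact that $|wS_\bullet(-)|$ is always connected and that $(w-)_h\to w(-)$ behaves like a fibration on the relevant levels.
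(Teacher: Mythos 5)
Your reduction of the proposition to the assertion that the comparison map $|(w\F)_h|\to GW(\F)$ is an equivalence for $\F=\RR^{(n)}_{\bullet}\E$, $n\geq 1$, captures the right intuition, but the input you invoke to prove it does not exist in the required generality. The hermitian group-completion theorem is established in this paper (Theorem \ref{thm:hermGpCpl}) only for \emph{split} exact categories with duality over $\Z[\tfrac{1}{2}]$ with weak equivalences the isomorphisms; the present proposition carries no such hypotheses (no $\tfrac{1}{2}$, arbitrary weak equivalences), and Section \ref{subsec:GWspace} explicitly warns that in general the map from $|(w\E)_h|$ to the Grothendieck--Witt space is only ``some kind of'' group completion. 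It genuinely fails to be one: already on $\pi_0$, the map $\pi_0(w\G)_h\to GW_0(\G)$ imposes the relations of Definition \ref{dfn:GW0A} --- identification along weak equivalences and the metabolic-equals-hyperbolic relation --- which are not consequences of group-completing the abelian monoid $\pi_0(w\G)_h$. So the step ``source is already group-complete, hence the comparison is an equivalence'' cannot be run. There is also a circularity hazard: the identification of $\pi_0|(w\RR^{(n)}_{\bullet}\E)_h|$ with the Witt group (Corollary \ref{cor:pi0RRn}) is deduced in the paper \emph{from} this proposition, so your metabolic-cancellation argument would have to be carried out independently and compatibly in all multi-simplicial degrees, which is essentially the content one is trying to prove.

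The paper's proof avoids group completion entirely. Fixing the outermost simplicial degree $q$, the sequence $|(w\RR^{(n)}_{\bullet}\E)_h|\to|(w\RR_{q}\RR^{(n)}_{\bullet}\E)_h|\to|wS_{q}\RR^{(n)}_{\bullet}\E|$ is a homotopy fibration: the case $n=1$ of this degreewise statement is the additivity-type fibration of \cite[\S 3 Remark 7]{myMV}, and the general case follows by substituting $\RR^{(n-1)}_{p}\E$ for $\E$. One then realizes in the remaining simplicial directions using the fact that a degreewise homotopy fibration of simplicial spaces with connected bases realizes to a homotopy fibration \cite[Lemma 5.2]{wald:genProdsI}, the bases being connected because $|wS_{q}\RR^{(m)}_{\bullet}\E|\simeq|wS_{q}S^{(m)}_{\bullet}\E|$. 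Your ``realization lemma'' step is sound and is exactly this device, but to complete your outline you must replace the group-completion input by the degreewise additivity fibration from \cite{myMV}.
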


\begin{proof}
We will use the fact that a sequence of pointed simplicial spaces which is degree-wise a homotopy fibration with connected base induces a homotopy fibration after topological realization \cite[Lemma 5.2]{wald:genProdsI}.
By \cite[\S 3 Remark 7]{myMV}, for every integer $q\geq 0$, the sequence 
$$|(w\RR_{\bullet}^{(n)}\E)_h| \longrightarrow |(w\RR_{q}\RR_{\bullet}^{(n)}\E)_h| \longrightarrow |wS_{q}\RR_{\bullet}^{(n)}\E|$$
is a homotopy fibration when $n=1$.
In particular, for all integers $p\geq 0$ and $n\geq 1$, the sequence
$$|(w\RR_{\bullet}\RR_{p}^{(n-1)}\E)_h| \longrightarrow |(w\RR_{q}\RR_{\bullet}\RR_{p}^{(n-1)}\E)_h| \longrightarrow |wS_{q}\RR_{\bullet}\RR_{p}^{(n-1)}\E|$$
is a homotopy fibration.
Since the base of that fibration is connected as it is homotopy equivalent to the connected space $|wS_{q}S_{\bullet}\RR_{p}^{(n-1)}\E|$ ({\em cf.} \cite[\S 2.4 Lemma 1]{myMV}), the realization of the sequence in the $p$-direction (which is the first sequence above) is a homotopy fibration.
By the same argument, realizing the first sequence in the $q$-direction yields the homotopy fibration (\ref{eqn:AddtyFib1}).
\end{proof}

Composing the second map in (\ref{eqn:AddtyFib1}) with the homotopy equivalence 
$$|wS_{\bullet}\RR_{\bullet}^{(n)}\E| \stackrel{\sim}{\to} |wS_{\bullet}S_{\bullet}^{(n)}\E|: A \mapsto A\iota$$ of \cite[\S 2.4 Lemma 1]{myMV}, we obtain the following.

\begin{corollary}
\label{cor:AddtyExFib1}
Let $\E$ be a pointed exact category with weak equivalences and duality.
Then for every integer $n\geq 1$, the sequence
$$|(w\RR_{\bullet}^{(n)}\E)_h| \longrightarrow |(w\RR_{\bullet}^{(1+n)}\E)_h| \longrightarrow |wS_{\bullet}^{(1+n)}\E|$$
is a homotopy fibration of pointed topological spaces.
\Qed
\end{corollary}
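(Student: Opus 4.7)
The plan is to simply transport the homotopy fibration already established in the preceding proposition along the stated homotopy equivalence of bases. First I would unwind the notation: by definition $\RR_{\bullet}^{(1+n)}\E$ is the diagonal of the multi-simplicial object $\RR_{\bullet}\RR_{\bullet}^{(n)}\E$ in the first two indices, so $|(w\RR_{\bullet}\RR_{\bullet}^{(n)}\E)_h|$ and $|(w\RR_{\bullet}^{(1+n)}\E)_h|$ are canonically homeomorphic (realization of a bisimplicial space agrees with realization of its diagonal). Thus the middle term of the homotopy fibration from the proposition can be replaced by $|(w\RR_{\bullet}^{(1+n)}\E)_h|$ without change.

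Next I would address the base. The map of simplicial categories $A\mapsto A\iota$ induced by $\iota:[q]\subset \underline{q}$ yields, for each $n\geq 1$, the homotopy equivalence
$$|wS_{\bullet}\RR_{\bullet}^{(n)}\E|\stackrel{\sim}{\longrightarrow}|wS_{\bullet}S_{\bullet}^{(n)}\E|=|wS_{\bullet}^{(1+n)}\E|$$
cited from \cite[\S 2.4 Lemma 1]{myMV}. Post-composing the second map in (\ref{eqn:AddtyFib1}) with this equivalence turns the fibration
$$|(w\RR_{\bullet}^{(n)}\E)_h| \longrightarrow |(w\RR_{\bullet}\RR_{\bullet}^{(n)}\E)_h| \longrightarrow |wS_{\bullet}\RR_{\bullet}^{(n)}\E|$$
into the claimed sequence, and being a homotopy fibration is preserved because we have only changed the base by a weak equivalence (and the map to the new base is the composite of the map to the old base with this equivalence).

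Finally, I would just check that the resulting composite is the stated map; the first map is unchanged, the composite is still trivial (since already $|wS_0\RR^{(n)}_{\bullet}\E|=\pt$), and both maps are the ones arising from the inclusion of degree-zero simplices and $A\mapsto A\iota$. There is no real obstacle in this argument since all the substance lives in the preceding proposition and in the cited lemma; the only mild point is to be clear that diagonal realization agrees with bisimplicial realization, so that the identification $\RR_{\bullet}\RR_{\bullet}^{(n)}\E\simeq \RR_{\bullet}^{(1+n)}\E$ after realization is legitimate.
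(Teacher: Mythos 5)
Your proposal is correct and is essentially the paper's own argument: the corollary is obtained from the preceding proposition by post-composing with the homotopy equivalence $|wS_{\bullet}\RR_{\bullet}^{(n)}\E| \stackrel{\sim}{\to} |wS_{\bullet}S_{\bullet}^{(n)}\E|$ of \cite[\S 2.4 Lemma 1]{myMV}, changing the base only up to weak equivalence. Your explicit remark that the middle terms agree via diagonal realization of the bisimplicial object is a point the paper leaves implicit, but it is the same identification.
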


\subsection{The $\RR_{\bullet}^{(n)}$-construction for dg categories}
Let $\scA = (\A,w,\sharp,\varpi)$ be a pointed pretriangulated (or just exact) dg category with weak equivalences and duality.
Then the quadruple $Z^0\scA = (Z^0\A,w,\sharp,\varpi)$ is a pointed exact category with weak equivalences and duality, and therefore, the constructions and results above apply to $Z^0\scA$.
But it will be useful to stay within the category of dg categories.
So, we will make $\RR_i^{(n)}\A$ into a dg category with weak equivalences and duality for $\A \in \dgCatWD_*$ even when $\A$ is not exact.

For a pointed dg category with duality $\A$, the assignment  
$$[n] \mapsto \Fun(\Ar(\underline{n}),\A)$$
is a pointed simplicial dg-category with duality with base point the unique functor $\Ar(\underline{n}) \to 0$.
We write
$$\RR_n\A \subset \Fun(\Ar(\underline{n}),\A)$$ 
for the full dg subcategory of those objects $A: \Ar(\underline{n}) \to \A$ for which $A_{i,i}=0$, and for which for all $i\leq j\leq k \in \underline{n}$ the sequence $0\to A_{i,j} \to A_{i, k} \to A_{j,k}\to 0$ is exact.
Recall that exact sequences in a dg category are defined even if the dg category itself is not exact.
The duality on $\Fun(\Ar(\underline{n}),\A)$ makes
$\RR_n\A$
into a dg-category with duality.
Varying $n$, we obtain a pointed simplicial dg category with duality $\RR_{\bullet}\A$.
For a pointed dg category with weak equivalences and duality $\scA=(\A,w)$, we obtain a pointed simplicial dg category with weak equivalences and duality
$$n\mapsto \RR_{n}\scA = (\RR_{n}\A,w).$$
corresponding the the (saturation of the) localization pair $(\RR_{n}\A,\RR_{n}\A^w)$.

To define the iterated $\RR_{\bullet}$-construction for dg categories,
write $\RR^{(n)}_{\bullet\cdots \bullet}\A$ for the $n$-simplicial dg-category with duality which in degree $({k_1,...,k_n})$
is the full dg-subcategory
$$\RR^{(n)}_{k_1,...,k_n}\A \subset \Fun(\Ar(\underline{k_1})\times \Ar(\underline{k_2})\times \cdots \times \Ar(\underline{k_n}), \A)$$
of those dg-functors $A: \Ar(\underline{k_1})\times \Ar(\underline{k_2})\times \cdots \times \Ar(\underline{k_n}) \longrightarrow \A$
for which for all $r,s=1,...,n$ and all $i_r \leq j_r \in \underline{k_r}$, $r\neq s$, the dg functor
$$A_{(i_1,j_1), \dots (i_{s-1},j_{s-1}), \bullet, (i_{s+1},j_{s+1}),\dots (i_{n}, j_{n}}): \Ar(\underline{k_s})\to \A$$
is an object of $\RR_{k_s}\A$.
We denote by $\RR^{(n)}_{\bullet}\A$ the diagonal of the multi-simplicial dg category with duality $\RR^{(n)}_{\bullet\cdots \bullet}\A$.
If $\scA = (\A,w)$ is a dg category with weak equivalences and duality then so is
$$\RR^{(n)}_{\bullet}\scA = (\RR^{(n)}_{\bullet}\A, w)$$
given by the (saturation of the) localization pair
$(\RR^{(n)}_{\bullet}\A,\RR^{(n)}_{\bullet}\A^w)$.

\begin{remark}
For an exact dg category with weak equivalences and duality $(\A,w,\sharp,\varpi)$, we have equalities of categories with duality
\begin{equation}
\label{eqn:dgRisnotiteratedR}
w\RR_{\bullet}^{(n)}\A = wZ^0\RR_{\bullet}^{(n)}\A = w\RR_{\bullet}^{(n)}Z^0\A.
\end{equation}
This lets us apply the results obtained for exact categories with weak equivalences and duality to exact dg categories with weak equivalences and duality.
Note, however, that in general, the exact category 
$Z^0\RR_{k}^{(n)}\A$ has fewer exact sequences as the exact category $\RR_{k}^{(n)}Z^0\A$, and so, $\RR_{k}\RR_{l}\A \neq \RR_{k,l}^{(2)}\A$ even when $\A$ is exact, and thus, for dg categories, $\RR_{\bullet}^{(n)}\A$ is not the iterate of $\RR_{\bullet}\A$.
\end{remark}

Using equality (\ref{eqn:dgRisnotiteratedR}), we obtain from Corollary 
\ref{cor:AddtyExFib1} the following.

\begin{proposition}
\label{prop:AddtyFib1}
Let $\scA$ be a pointed exact dg category with weak equivalences and duality.
Then for every integer $n\geq 1$, the sequence (\ref{eqn:PrimAddtyFib}) induces a homotopy fibration of pointed topological spaces
$$|(w\RR_{\bullet}^{(n)}\scA)_h| \longrightarrow |(w\RR_{\bullet}^{(1+n)}\scA)_h| \longrightarrow |wS_{\bullet}^{(1+n)}\scA|.$$
\Qed
\end{proposition}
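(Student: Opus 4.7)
The plan is to deduce the proposition directly from Corollary \ref{cor:AddtyExFib1}, which is the analogous statement for pointed exact categories with weak equivalences and duality, by reducing the dg setting to the exact category setting via the underlying category functor $Z^0$.

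First, observe that since $\scA = (\A,w,\sharp,\varpi)$ is an exact dg category with weak equivalences and duality, the quadruple $Z^0\scA = (Z^0\A,w,\sharp,\varpi)$ is a pointed exact category with weak equivalences and duality (as noted at the start of Section \ref{sec:multiSimplRdot}'s subsection on dg categories), so Corollary \ref{cor:AddtyExFib1} applies to $Z^0\scA$ and yields a homotopy fibration
\begin{equation*}
|(w\RR_{\bullet}^{(n)}Z^0\scA)_h| \longrightarrow |(w\RR_{\bullet}^{(1+n)}Z^0\scA)_h| \longrightarrow |wS_{\bullet}^{(1+n)}Z^0\scA|.
\end{equation*}

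Next, I would invoke the key equality (\ref{eqn:dgRisnotiteratedR}) from the preceding remark, namely
\begin{equation*}
w\RR_{\bullet}^{(n)}\A \;=\; wZ^0\RR_{\bullet}^{(n)}\A \;=\; w\RR_{\bullet}^{(n)}Z^0\A,
\end{equation*}
together with the evident fact that $Z^0$ preserves the duality structure, so that passing to symmetric forms $(-)_h$ produces the same simplicial category whether one starts in the dg category $\RR_{\bullet}^{(n)}\scA$ or in the exact category $\RR_{\bullet}^{(n)}Z^0\scA$. The same identification applies to $wS_{\bullet}^{(1+n)}$, since Waldhausen's $S_{\bullet}$-construction also depends only on the underlying exact category with weak equivalences. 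Thus the two sequences (the one in the proposition and the one produced by the corollary applied to $Z^0\scA$) are literally identical as sequences of simplicial categories, and therefore have homotopy equivalent topological realizations.

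I do not anticipate any genuine obstacle: the content of the proposition is really just the bookkeeping that the dg-level $\RR^{(n)}_{\bullet}$-construction agrees with the iterated $\RR^{(n)}_{\bullet}$-construction applied to $Z^0\scA$ at the level of categories with weak equivalences and duality, which is precisely what (\ref{eqn:dgRisnotiteratedR}) asserts. The mild subtlety worth flagging in the writeup is that, as the remark preceding the proposition points out, $\RR^{(n)}_{\bullet}\A$ is \emph{not} in general the iterated $\RR_{\bullet}$-construction on the dg category $\A$ itself (since $Z^0\RR_{k}^{(n)}\A$ has fewer exact sequences than $\RR_{k}^{(n)}Z^0\A$); it is only after applying $w(-)$ that these coincide, which is exactly why the hypothesis that $\scA$ be exact is used, and why the statement is formulated with $\RR_{\bullet}^{(n)}$ rather than an iterated $\RR_{\bullet}\cdots \RR_{\bullet}$.
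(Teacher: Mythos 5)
Your proof is correct and is exactly the paper's argument: the paper derives Proposition \ref{prop:AddtyFib1} from Corollary \ref{cor:AddtyExFib1} applied to $Z^0\scA$ via the identification $w\RR_{\bullet}^{(n)}\A = w\RR_{\bullet}^{(n)}Z^0\A$ of (\ref{eqn:dgRisnotiteratedR}), which is why the proposition is stated with no further proof. Your remark about the subtlety that $\RR^{(n)}_{\bullet}\A$ is not the iterated construction on the dg category itself matches the paper's own caveat.
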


\begin{remark}
Recall (Section \ref{subsec:GWspace}) that the Grothendieck-Witt space of an exact dg category with weak equivalences and duality is the homotopy fibre of the second map in Proposition \ref{prop:AddtyFib1} when $n=0$.
Proposition \ref{prop:AddtyFib1} therefore lets us deduce results for the functors $(w\RR_{\bullet}^{(n)})_h$ from the knowledge of similar results for the functors $wS_{\bullet}^{(n)}$, that is for $K$-theory, and for the Grothendieck-Witt space functor.
For example, an exact dg form functor $\scA \to \scB$ between pretriangulated dg categories with weak equivalences and duality which induces isomorphisms of $K$-groups and $GW_i$-groups for $i\geq 0$ induces homotopy equivalences
$|(w\RR_{\bullet}^{(n)}\scA)_h| \to |(w\RR_{\bullet}^{(n)}\scB)_h|$
for all $n\geq 0$.
In the sequel, we will frequently apply this argument. 
\end{remark}

\begin{corollary}
\label{cor:pi0RRn}
Let $\scA$ be a pointed pretriangulated dg category with weak equivalences and duality.
Then for every integer $n\geq 1$, the map
$$\pi_i |(w\RR_{\bullet}^{(n)}\scA)_h| \longrightarrow \pi_i |(w\RR_{\bullet}^{(n+1)}\scA)_h|$$
is an isomorphism for $0\leq i < n $ and a surjection for $i=n>0$.
In particular, for every $n\geq 1$, the (composite) map 
$\pi_0 |(w\scA)_h| \longrightarrow \pi_0 |(w\RR_{\bullet}^{(n)}\scA)_h|$
induces an isomorphism 
$$W_0(\scA) \stackrel{\cong}{\longrightarrow} \pi_0 |(w\RR_{\bullet}^{(n)}\scA)_h|.$$
\end{corollary}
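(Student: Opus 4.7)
The plan is to derive both statements from Proposition \ref{prop:AddtyFib1} via its long exact sequence of homotopy groups, using the high connectivity of the iterated $S_\bullet$-constructions as input. Specifically, I claim $|wS_\bullet^{(n+1)}\scA|$ is $n$-connected for every $n\geq 0$. The base case ($n=0$) holds because $S_0\scA$ is reduced to the zero object, so $|wS_\bullet\scA|$ has a single $0$-simplex and is path-connected. The inductive step uses that $\{|wS_\bullet^{(n)}\scA|\}_{n\geq 1}$ is a positive $\Omega$-spectrum (recalled in Section \ref{subsec:ItertdSdot}), hence $\Omega |wS_\bullet^{(n+1)}\scA| \simeq |wS_\bullet^{(n)}\scA|$, which propagates $n$-connectedness to $(n+1)$-connectedness at the next delooping.

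Plugging this into the long exact sequence
$$\pi_{i+1}|wS_\bullet^{(n+1)}\scA| \longrightarrow \pi_i |(w\RR_{\bullet}^{(n)}\scA)_h| \longrightarrow \pi_i |(w\RR_{\bullet}^{(n+1)}\scA)_h| \longrightarrow \pi_i |wS_\bullet^{(n+1)}\scA|$$
associated with the fibration of Proposition \ref{prop:AddtyFib1}, both flanking terms vanish whenever $i+1 \leq n$, giving the claimed isomorphism range $0 \leq i < n$; at $i=n$ only the right-hand term vanishes, yielding a surjection. This proves the first assertion.

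For the second assertion, iterating the first with $i=0$ shows that $\pi_0 |(w\RR_\bullet\scA)_h| \to \pi_0 |(w\RR_{\bullet}^{(n)}\scA)_h|$ is an isomorphism for each $n\geq 1$, so it suffices to identify $\pi_0 |(w\RR_\bullet\scA)_h|$ with $W_0(\scA)$ compatibly with the map from $\pi_0|(w\scA)_h|$. I apply the long exact sequence of the defining homotopy fibration $GW(\scA) \to |(w\RR_\bullet\scA)_h| \to |wS_\bullet\scA|$ of the Grothendieck-Witt space (Section \ref{subsec:GWspace}). Using $\pi_0|wS_\bullet\scA|=0$, $\pi_0 GW(\scA)=GW_0(\scA)$, and the identification of the boundary $\pi_1|wS_\bullet\scA|=K_0(\scA) \to GW_0(\scA)$ with the hyperbolic map $H$ (all from \cite{myMV}), the sequence exhibits $\pi_0 |(w\RR_\bullet\scA)_h|$ as the cokernel $GW_0(\scA)/H(K_0(\scA)) = W_0(\scA)$. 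Chasing through the zeroth-simplex inclusion $(w\scA)_h = (w\RR_0\scA)_h \hookrightarrow (w\RR_\bullet\scA)_h$ identifies the composite $\pi_0|(w\scA)_h| \to W_0(\scA)$ as the canonical assignment sending a symmetric space to its Witt class. The main subtlety is matching the topologically defined $\pi_0 GW(\scA)$ with the algebraic $GW_0(\scA)$ of Definition \ref{dfn:GW0A} and identifying the boundary map with $H$; both are known results of \cite{myMV} but have to be transferred here through $Z^0\scA^{\pretr}$.
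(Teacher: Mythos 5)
Your proof is correct and follows essentially the same route as the paper: the first assertion is exactly the long exact sequence of Proposition \ref{prop:AddtyFib1} combined with the $(n-1)$-connectivity of $|wS_{\bullet}^{(n)}\scA|$ coming from the iterated delooping. For the second assertion the paper simply cites \cite[\S 3 Remark 5]{myMV} for the isomorphism $W_0(\scA)\cong\pi_0|(w\RR_{\bullet}\scA)_h|$ (together with the equivalence $\A\to\A^{\pretr}$), whereas you unwind that citation via the long exact sequence of the Grothendieck-Witt space fibration and the identification of the boundary map with the hyperbolic map; this is a legitimate re-derivation of the quoted fact rather than a genuinely different argument.
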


\begin{proof}
The first statement follows from Proposition \ref{prop:AddtyFib1} together with the fact that $\pi_i|wS_{\bullet}^{(n)}\scA| = 0$ for $0 \leq i < n$.
For the second statement, recall that the dg functor $\A \to \A^{\pretr}$ is an equivalence for $\A$ pretriangulated.
Therefore,
$$W_0(\scA) =  W_0(Z^0\scA^{\pretr},w) \stackrel{\cong}{\leftarrow} 
W_0(Z^0\scA,w)\stackrel{\cong}{\rightarrow} \pi_0 |(w\RR_{\bullet}\scA)_h|$$
where the last isomorphism is
\cite[\S3 Proposition 3 and Remark 5]{myMV}.
\end{proof}

Recall from Section \ref{subsec:DGweakEq} the category $\A^w$ of $w$-acyclic objects in an exact category with weak equivalences $(\A,w)$.

\begin{proposition}
\label{prop:pretrFibPrp}
Let $(\A,w,\vee,\can)$ be a pointed pretriangulated dg category with weak equivalences and duality.
Let $v \subset \A \simeq \A^{\pretr}$ be a larger set of weak equivalences such that $(\A,v,\vee,\can)$ is also a pretriangulated dg category with weak equivalences and duality.
Then for $n\geq 1$ the functor $|(w\RR_{\bullet}^{(n)}\phantom{\scA})_h|$ applied to the commutative square of pointed pretriangulated dg categories with weak equivalences and duality
$$
\xymatrix{ 
(\A^v,w) \hspace{1ex} \ar[r] \ar[d] &
(\A,w) \ar[d]\\
(\A^v,v) \ar[r] & (\A,v)
}
$$
yields a homotopy cartesian square of pointed topological spaces with contractible lower left corner.
\end{proposition}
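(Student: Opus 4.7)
The plan is to argue by induction on $n \geq 1$, comparing the $\RR_\bullet^{(n)}$-construction with Waldhausen's $S_\bullet^{(n)}$-construction via Proposition \ref{prop:AddtyFib1}, and invoking the Grothendieck--Witt Fibration Theorem \cite[Theorem 6]{myMV} for the base case.

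For the contractibility of the lower-left corner: $\A^v$ consists by definition of the $v$-acyclic objects of $\A$, so every object of $(\A^v,v)$ is isomorphic to $0$ in the associated triangulated category $\T(\A^v,v)$, which is therefore trivial. In particular, the canonical map $0 \to A$ is a $v$-equivalence for every $A \in \A^v$, and every morphism in $Z^0(\A^v)^{\pretr}$ is a $v$-equivalence. A standard Eilenberg--swindle built from the acyclic cone inclusion $A \rightarrowtail CA$ in $\A^v$ (combined with the hyperbolic construction on symmetric forms) then shows that $|(v\RR_\bullet^{(n)}\A^v)_h|$ is contractible for every $n \geq 1$.

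For the homotopy cartesian property, the base case $n=1$ says that the square
\[
\xymatrix{
|(w\RR_{\bullet}\A^v)_h| \ar[r]\ar[d] & |(w\RR_{\bullet}\A)_h| \ar[d]\\
|(v\RR_{\bullet}\A^v)_h| \ar[r] & |(v\RR_{\bullet}\A)_h|
}
\]
is homotopy cartesian, which is precisely the Fibration Theorem \cite[Theorem 6]{myMV} for the inclusion of sets of weak equivalences $w \subset v$ on $\A$, after translating from the exact-category formulation via $\scA \leadsto Z^0\scA^{\pretr}$.

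For the inductive step, suppose the conclusion holds at level $n$. Apply Proposition \ref{prop:AddtyFib1} to each of the four corners of the square to obtain a diagram of $2 \times 2$ squares arranged in a vertical fibration sequence of three, in which each column is a homotopy fibration
\[
|(w\RR_\bullet^{(n)}\scB)_h| \longrightarrow |(w\RR_\bullet^{(n+1)}\scB)_h| \longrightarrow |wS_\bullet^{(n+1)}\scB|.
\]
The top square (level $n$) is homotopy cartesian by the inductive hypothesis; the bottom square (the $K$-theory square at level $n+1$) is homotopy cartesian by Waldhausen's Fibration Theorem \cite[Theorem 1.6.4]{wald:spaces} applied to $w \subset v$. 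The two-out-of-three principle for homotopy cartesian squares sitting in a column of homotopy fibrations then forces the middle square, which is precisely the $\RR^{(n+1)}$-square in question, to be homotopy cartesian as well, completing the induction.

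The main obstacle is the base case: one must transport the Fibration Theorem of \cite{myMV} from exact categories with weak equivalences and duality to the dg category $\A$ and verify that its hypotheses are met by the saturated localization pair determined by $w \subset v$; one must moreover handle the contractibility of $|(v\RR_\bullet^{(n)}\A^v)_h|$ without circular appeal to the invariance-under-derived-equivalences statement (Theorem \ref{thm:Invariance}) that the present proposition is designed to help prove.
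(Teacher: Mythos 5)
Your overall strategy coincides with the paper's: establish the statement for the $K$-theory squares $|wS_{\bullet}^{(n)}(-)|$ and for the Grothendieck--Witt space, then propagate to all $|(w\RR_{\bullet}^{(n)}-)_h|$, $n\geq 1$, using the fibrations of Proposition \ref{prop:AddtyFib1}. However, you have left unresolved exactly the step that carries the mathematical content. The Fibration Theorem \cite[\S 4 Theorem 6]{myMV} is not an unconditional statement about exact categories with weak equivalences and duality: it requires the category to admit a \emph{symmetric cone} in the sense of \cite[\S 4 Definition 4]{myMV}, and $Z^0\A^{\pretr}$ for an arbitrary pretriangulated $\A$ does not come equipped with one for free. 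The paper's proof consists almost entirely of supplying this datum: one replaces $\A$ by the equivalent dg category $\scC_{\bk}^{[0]}\A$ and exhibits the symmetric cone explicitly via the functors $E\mapsto PE=[C,\1]E$ and $E\mapsto CE$, the admissible epimorphism $[i,1]$, the admissible monomorphism $i$, and the natural map $\can\otimes 1_{X^{\vee}}\colon [C,\1][A,\1]X^{\vee}\to [CA,\1]X^{\vee}$. You flag this verification as ``the main obstacle'' but do not carry it out; as written, the base case of your induction is therefore not established.

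Two smaller points. First, the base case is not ``precisely'' the Fibration Theorem: that theorem concerns the Grothendieck--Witt space, i.e.\ the homotopy fibre of $|(w\RR_{\bullet}\E)_h|\to|wS_{\bullet}\E|$, so to obtain the $|(w\RR_{\bullet}-)_h|$ square at $n=1$ you must combine it with Waldhausen's theorem for the $|wS_{\bullet}-|$ square and the defining fibration (\ref{eqn:GWspace}) --- the same two-out-of-three argument you run in your inductive step, applied at level $0$ with the Grothendieck--Witt space as fibre. Second, your separate Eilenberg-swindle argument for the contractibility of the lower left corner is both unnecessary (contractibility is part of the conclusion of the two cited fibration theorems) and, as sketched, doubtful: a symmetric space $(X,\ffi)$ with $\ffi\neq 0$ admits no map of symmetric forms to $(0,0)$, so the category of symmetric forms in $(\A^v,v)$ does not support the swindle in the direct way the underlying $K$-theoretic category does; here too the symmetric cone is the structure that makes the argument work.
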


\begin{proof}
Since $\A$ is pretriangulated, the map $\A \to \scC_{\bk}^{[0]}\A$ is an equivalence of dg categories with duality.
Therefore, the statement of the proposition holds for $\A$ if and only if it holds for $\scC_{\bk}^{[0]}\A$.
We will prove the proposition for the latter category.

The pretriangulated dg category with duality $\scC_{\bk}^{[0]}\A$ has a symmetric cone in the sense of \cite[\S 4 Definition 4]{myMV} given by the functors
$E \mapsto PE = [C,\1]E$ and $E\mapsto CE$, the natural admissible epimorphism
$[i,1]: PE= [C,\1]E \to [\1,\1]E\cong E$, the natural admissible monomorphism $i:E \to CE$, and the natural map $P(E^{\vee}) \to (CE)^{\vee}$ which for $E=AX$ with $A\in \scC$ and $X\in \A$ is given by
$$\can \otimes 1_{X^{\vee}}: [C,\1][A,\1]X^{\vee} \to [CA,\1]X^{\vee}$$
where $\can: [C,\1][A,\1] \to [CA,\1]$ is the map (\ref{eqn:dualityCompIsoForTensorProds}).
Therefore, the square in the proposition induces homotopy cartesian squares of $K$-theory and Grothendieck-Witt spaces with contractible lower left corner, by \cite[Theorem 1.6.4]{wald:spaces} and \cite[\S 4 Theorem 6]{myMV}.
By Proposition \ref{prop:AddtyFib1}, we are done.
\end{proof}

In the proof of the space-level version (Proposition \ref{prop:SpaceKaroubiProto1}) of the algebraic Bott sequence (Theorem \ref{thm:PeriodicityExTriangle}), we will need the following lemma.

\begin{lemma}
\label{lem:KhInvI}
Let $(F,\ffi):\scA \to \scB$ be an exact dg form functor of pretriangulated dg categories with weak equivalences and duality.
Suppose that $\frac{1}{2}\in \scA, \scB$ and that there are an exact pointed dg functor $G:\scB \to \scA$ (no compatibility with dualities required) and zig-zags
of natural weak equivalences of pointed exact dg functors between $FG$ and 
$id_{\scB}$ and between $GF$ and $id_{\scA}$.
Then for $n\geq 1$, the form functor $(F,\ffi)$
induces homotopy equivalences of pointed spaces
$$|(w\RR_{\bullet}^{(n)}\scA)_h| \stackrel{\sim}{\longrightarrow} |(w\RR_{\bullet}^{(n)}\scB)_h|.$$
\end{lemma}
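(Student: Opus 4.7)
The plan is to construct a weak inverse form functor $(G, \psi)\colon \scB \to \scA$ to $(F, \ffi)$, so that the two induce mutually inverse homotopy equivalences on the iterated hermitian $\RR$-construction at every $n \geq 1$.

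First I would observe that the existence of the dg inverse $G$ (on underlying dg categories with weak equivalences) forces the duality compatibility $\ffi\colon F\vee \to \vee F^{op}$ to be a pointwise weak equivalence of dg functors $\scA^{op}\to\scB$: substituting $FG \simeq id_\scB$ shows that both $F\vee$ and $\vee F^{op}$ are connected through zig-zags of natural weak equivalences to a common functor of the shape $\vee G^{op} F^{op}$, and the coherence axiom for $\ffi$ identifies the two zig-zags. Consequently $\ffi$ is invertible in the derived category. I would then define $\psi\colon G\vee \to \vee G^{op}$ as (a strictification of) the zig-zag
\[
G(B^\vee) \xrightarrow{G(\iota_B)^\vee} G((FGB)^\vee) \xleftarrow{G(\ffi_{GB})} GF((GB)^\vee) \xrightarrow{\kappa_{(GB)^\vee}} (GB)^\vee,
\]
where $\iota\colon id_\scB \to FG$ and $\kappa\colon GF \to id_\scA$ come from the hypothesized zig-zags of natural weak equivalences. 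The assumption $\tfrac{1}{2} \in \scA,\scB$ is used to symmetrize the resulting map (via the standard averaging trick available in the presence of a duality) and to strictify the zig-zag, producing an exact dg form functor $(G, \psi)\colon \scB \to \scA$.

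A direct verification, using the model of form functors as symmetric forms in the dg category $\dgFun(\scA,\scB)$ from Section \ref{subsec:CatsWDual}, then shows that $(F, \ffi) \circ (G, \psi)$ and $(G, \psi) \circ (F, \ffi)$ are connected by zig-zags of natural weak equivalences of form functors to the identity form functors on $\scB$ and $\scA$. Since each level of the iterated $\RR$-construction $\scX \mapsto \RR_{k_1,\dots,k_n}\scX$ is functorial in exact dg form functors and natural weak equivalences of form functors, the maps induced by $(F,\ffi)_\ast$ and $(G,\psi)_\ast$ on the multi-simplicial form categories $(w\RR^{(n)}_\bullet(-))_h$ have compositions related by natural weak equivalences of form functors to the identity. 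Passing to nerves and geometric realizations yields the required homotopy equivalences $|(w\RR^{(n)}_\bullet \scA)_h| \stackrel{\sim}{\to} |(w\RR^{(n)}_\bullet \scB)_h|$ for every $n \geq 1$.

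The main obstacle is producing $\psi$ as an \emph{honest} dg morphism making $(G, \psi)$ a strict exact dg form functor, rather than merely a derived one. The formula above is only natural up to coherent homotopy; converting it into a strict dg-compatibility map requires either a cofibrant replacement in the category of dg functors or an explicit averaging of the zig-zag, and it is precisely at this symmetrization step that the hypothesis $\frac{1}{2} \in \scA, \scB$ becomes indispensable.
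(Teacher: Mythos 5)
Your overall strategy (produce an inverse form functor and conclude by functoriality of the $\RR$-construction) is in the spirit of the paper's argument, but there are three genuine gaps. First, your opening claim that the hypotheses force $\ffi$ to be a pointwise weak equivalence does not follow: knowing that $F\vee$ and $\vee F^{op}$ are connected by a zig-zag of natural weak equivalences says nothing about the particular natural transformation $\ffi$ between them (e.g.\ $\ffi=0$ satisfies the coherence axiom and is compatible with $FG\simeq GF\simeq id$). Second, and more seriously, the step you flag as "the main obstacle" is in fact the mathematical content of the lemma, and you do not carry it out. Producing an honest closed degree-zero $\psi$ satisfying the symmetry condition $\psi_{A^{\vee}}\circ G(\can_A)=\psi_A^{\vee}\circ\can_{GA}$ from a zig-zag is not a routine averaging; the paper does it by working inside the internal hom $\dgFun_*(\scB,\scA)$ with its induced duality, observing that $H\mapsto F\circ H$ is a derived equivalence of these functor categories, and invoking Lemma \ref{lem:hD=Dh} (where $\tfrac12$ enters) to lift the symmetric form $id_{\scB}\in(w\dgFun_*(\scB,\scB))_h$ to a symmetric form $(H,\psi)\in(w\dgFun_*(\scB,\scA))_h$. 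Your proposal replaces this with an unproven assertion.

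Third, even granting $(G,\psi)$, your "direct verification" that \emph{both} composites are connected to the identities by zig-zags of natural weak equivalences \emph{of form functors} is not justified. The construction only yields this for $(F,\ffi)\circ(G,\psi)\simeq id_{\scB}$; for $(G,\psi)\circ(F,\ffi)$ one knows a priori only a zig-zag of weak equivalences of the underlying functors $GF\simeq id_{\scA}$, with no compatibility with the forms. Upgrading a one-sided form-functor inverse to a two-sided one requires a homotopical analogue of Lemma \ref{lem:Aheq}, which again rests on the $\pi_0$-comparison of Lemma \ref{lem:hD=Dh}. The paper deliberately sidesteps this: from $(F,\ffi)\circ(H,\psi)\simeq id_{\scB}$ it deduces that $F_*$ is surjective and $H_*$ injective on $GW_i$, then runs the same argument with the roles of $F$ and $H$ exchanged to get bijectivity, and finally transports the conclusion from Grothendieck--Witt spaces and $K$-theory to all the spaces $|(w\RR^{(n)}_{\bullet}-)_h|$ via the fibration of Proposition \ref{prop:AddtyFib1}. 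If you want to avoid that detour you must actually prove the two-sided form-functor equivalence, which your proposal does not do.
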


\begin{proof}
Since the lemma also holds for $K$-theory (same proof as below), it suffices to show that $\scA \to \scB$ induces a homotopy equivalence of Grothendieck-Witt spaces, by Proposition \ref{prop:AddtyFib1}.
The hypothesis of the lemma imply that the dg-functor $F:\dgFun(\scB,\scA) \to \dgFun(\scB,\scB):
H \mapsto F\circ H$
induces an equivalence of triangulated categories $w^{-1}H^0\dgFun(\scB,\scA) \to w^{-1}H^0\dgFun(\scB,\scB)$ with inverse $G$.
By Lemma \ref{lem:hD=Dh}, $(F,\ffi)$ induces an isomorphism of abelian monoids
$$\pi_0\  (w\dgFun(\scB,\scA))_{h} \stackrel{\cong}{\to} \pi_0\
(w\dgFun(\scB,\scB))_{h}.$$
So, there is an object $(H,\psi)\in (w\dgFun(\scB,\scA))_{h}$ which under this isomorphism 
goes to $id_{\scB} \in (w\dgFun(\scB,\scB))_{h}$.
This means that there is a zigzag of natural weak equivalences between
$(F,\ffi)\circ (H,\psi)$  and $id_{\scB}$ compatible with forms.
By \cite[\S 2 Lemma 2]{myMV},
the dg form functors $(F,\ffi)\circ (H,\psi)$  and $id_{\scB}$ induce homotopic maps on Grothendieck-Witt spaces.
In particular, $(F,\ffi)$ is surjective, and $(H,\psi)$ is injective on
higher Grothendieck-Witt groups $GW_i$, $i\geq 0$.

By construction of $H$ and the existence of $G$, there are zig-zags of weak
equivalences of dg form functors between $HF$ and $id_{\scA}$ and between $FH$ and $id_{\scB}$. 
By the argument above (with $(H,\psi)$ in place of $(F,\ffi)$), we see that 
$(H,\psi)$ is also surjective and hence bijective on higher Grothendieck-Witt
groups. 
From the previous paragraph it follows that
$(F,\ffi)$ induces isomorphisms $GW_i(\scA) \to GW_i(\scB)$, $i\geq 0$.
\end{proof}

The following proposition is a space-level version of the homotopy fibration in Theorem \ref{thm:PeriodicityExTriangle} below.
Together with Corollary \ref{cor:RFunIsSdot} it implies Karoubi's Fundamental Theorem \cite{Karoubi:Annals}.

Let $\scA$ be a pretriangulated dg category with weak equivalences and duality.
The unique map $[1] \to [0]$ induces a duality preserving exact dg functor 
$I:\scA \to \Fun([1],\scA): A \mapsto 1_A$ whose composition with the cone functor of Section \ref{sec:ConeCounterex} has image in the full dg subcategory $(\scA^{[1]})^w\subset \scA^{[1]}$ of $w$-acyclic objects.
In particular, the square in the following proposition commutes.

\begin{proposition}
\label{prop:SpaceKaroubiProto1}
Let $\scA$ be a pretriangulated dg category with weak equivalences and duality.
Assume that $\frac{1}{2}\in \scA$.
Then for $n\geq 1$ the functor $|(w\RR_{\bullet}^{(n)}\phantom{\scA})_h|$ applied to the commutative square of exact dg form functors\\
$\displaystyle{
\begin{array}{cc}
\hspace{3ex}
(\square_{\scA})&
\hspace{17ex}
\xymatrix{ 
\scA \hspace{1ex} \ar@{^(->}[r]^{\hspace{-5ex}I} \ar[d]_{\Cone_{|\scA}} &
\Fun([1], \scA) \ar[d]^{\Cone}\\
(\scA^{[1]})^w\phantom{2} \ar@{^(->}[r] &
 \scA^{[1]}
}
\end{array}
}$
\vspace{1ex}

\noindent
yields a homotopy cartesian square of pointed topological spaces with contractible lower left corner.
In particular, for $n\geq 1$ we obtain a homotopy fibration of pointed spaces
$$|(w\RR^{(n)}_{\bullet}\scA)_h| \to |(w\RR^{(n)}_{\bullet}\Fun([1],\scA))_h|  \to |(w\RR_{\bullet}^{(n)}\scA^{[1]})_h|.$$
\end{proposition}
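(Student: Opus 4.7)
The plan is to derive the statement from Proposition \ref{prop:pretrFibPrp} applied to the pointed pretriangulated dg category $\B := \Fun([1], \scA)$, equipped with the pointwise weak equivalences $w$ together with the larger class $v$ of those morphisms $f \in \B$ for which $\Cone(f)$ is a $w$-equivalence in $\scA^{[1]}$. Since $\Cone$ is an exact dg form functor, $(\B, v, \vee, \can)$ is a pointed pretriangulated dg category with weak equivalences and duality, $w \subset v$, and an object $(f: A \to B)$ of $\B$ is $v$-acyclic precisely when $f$ is itself a $w$-equivalence. Applying $|(w\RR^{(n)}_\bullet(-))_h|$ with $n \geq 1$, Proposition \ref{prop:pretrFibPrp} then delivers a homotopy cartesian square with contractible lower-left corner
$$\xymatrix{
|(w\RR_\bullet^{(n)}\B^v)_h| \ar[r] \ar[d] & |(w\RR_\bullet^{(n)}\B)_h| \ar[d]\\
|(v\RR_\bullet^{(n)}\B^v)_h| \ar[r] & |(v\RR_\bullet^{(n)}\B)_h|.
}$$

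I would then identify this square with the image of $(\square_\scA)$ using Lemma \ref{lem:KhInvI}; here the hypothesis $\frac{1}{2} \in \scA$ enters. For the upper-left corner, the exact dg form functor $I : \scA \to \B^v$, $A \mapsto 1_A$, has evaluation at $0$ as strict left inverse, and for $(f: A \to B) \in \B^v$ the canonical square from $(1_A: A \to A)$ to $(f: A \to B)$ is a pointwise $w$-equivalence because $f \in w$; Lemma \ref{lem:KhInvI} then yields the equivalence $|(w\RR_\bullet^{(n)}\scA)_h| \simeq |(w\RR_\bullet^{(n)}\B^v)_h|$ induced by $I$. For the lower-right corner, $\Cone : (\B, v) \to (\scA^{[1]}, w)$ is an exact dg form functor with the exact dg functor $G : Y \mapsto (0 \to Y)$ as quasi-inverse (so that $\Cone \circ G \cong \mathrm{id}_{\scA^{[1]}}$ naturally), and the required zigzag of natural $v$-weak equivalences between $G \circ \Cone$ and $\mathrm{id}_\B$ is built from the defining pushout (\ref{eqn:exTrsDfn}) of $C(f)$ by interposing the functor $(f: A \to B) \mapsto (CA \to C(f))$. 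Lemma \ref{lem:KhInvI} then provides the equivalence $|(v\RR_\bullet^{(n)}\B)_h| \simeq |(w\RR_\bullet^{(n)}\scA^{[1]})_h|$ induced by $\Cone$.

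Finally, the lower-left corner $((\scA^{[1]})^w, w)$ has trivial associated triangulated category, so its Witt group vanishes and $|(w\RR^{(n)}_\bullet (\scA^{[1]})^w)_h|$ is contractible by Corollary \ref{cor:pi0RRn} combined with an induction on $n$ using Proposition \ref{prop:AddtyFib1} (and the vanishing of iterated $K$-theory on categories with trivial derived category). The three comparisons intertwine the horizontal and vertical maps of the two squares (via $\Cone \circ I = \Cone_{|\scA}$ and the agreement of $\B^v \hookrightarrow \B$ with $I$ under $e_0$), so the image of $(\square_\scA)$ under $|(w\RR^{(n)}_\bullet(-))_h|$ is componentwise homotopy equivalent, as a commuting square, to the Proposition \ref{prop:pretrFibPrp} square, and is therefore homotopy cartesian with contractible lower-left corner. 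The fibration sequence at the end of the statement follows by taking the homotopy fibre of the right vertical arrow. The main difficulty is the zigzag hypothesis of Lemma \ref{lem:KhInvI} for $\Cone$: the obvious candidate $(0 \to C(f)) \to (A \xrightarrow{f} B)$ fails to commute in $\B$ on the nose (only up to the null-homotopy of the pushout composition through the contractible $CA$), which forces one to thread the zigzag through the intermediate functor $(CA \to C(f))$ and work explicitly inside the dg category $\B$ rather than in the derived category $\T\B$.
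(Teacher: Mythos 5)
Your proposal is correct and follows essentially the same route as the paper: apply Proposition \ref{prop:pretrFibPrp} to $\Fun([1],\scA)$ with the two sets of weak equivalences $w\subset v$ (where $v$ is detected by $\Cone$), identify the upper-left corner with $\scA$ via $I$ and evaluation at $0$, and identify the lower-right corner with $\scA^{[1]}$ via Lemma \ref{lem:KhInvI}, threading the zigzag for $G\circ\Cone\simeq \mathrm{id}$ through the intermediate functor $f\mapsto (CA\to C(f))$ exactly as the paper does. The only cosmetic differences are that the paper handles the upper-left corner with the explicit duality-compatible inverse $(f:A_0\to A_1)\mapsto A_0$ (citing \cite[\S 2 Lemma 2]{myMV}) rather than Lemma \ref{lem:KhInvI}, and that you spell out the contractibility of $((\scA^{[1]})^w,w)$ which the paper leaves implicit; both variants are fine.
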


\begin{proof}
Let $w \subset Z^0\A$ be the set of weak equivalences in $\scA$, and let
 $v\subset Z^0\Fun([1],\A)$ be the set of morphisms $f$ in $\Fun([1],\A)$  for 
which $\Cone(f)$ is a weak equivalence in $\scA$.
By Proposition \ref{prop:pretrFibPrp}, the diagram 
$$\xymatrix{
(\Fun([1],\A)^v,w) \ar[d] \ar[r] & (\Fun([1],\A),w) \ar[d]\\
(\Fun([1],\A)^v,v) \ar[r] & (\Fun([1],\A),v)
} $$
induces a homotopy cartesian square of $|(w\RR_{\bullet}^{(n)}\phantom{\scA})_h|$ spaces with contractible lower left corner.
The functor $\A \to \Fun([1], \A)$ factors through 
the full subcategory $\Fun([1], \A)^v \subset \Fun([1], \A)$ of $v$-acyclic objects.
By \cite[\S 2 Lemma 2]{myMV},
the induced form functor $(G,id):(\A,w) \to (\Fun([1], \A)^v,w)$
yields an equivalence of Grothendieck-Witt spaces.
More precisely, the form functor
$$(F,\ffi): (\Fun([1], \A)^v \to \A: (f:A_0\to A_1) \mapsto A_0$$
with duality compatibility map 
$\ffi_f = f^*$ defines an inverse up to homotopy of $(G,id)$ in view of the 
identity $(F,\ffi)\circ (G,id) = id$ and the
natural weak equivalences of form functors $(G,id)\circ (F,\ffi) \to id$ 
 given by $(1_{A_0},f)$ for $(f:A_0\to A_1) \in \Fun([1],\A)$.
Similarly, $(G,id)$ induces an equivalence of $K$-theory spaces, and hence of $|(w\RR_{\bullet}^{(n)}\phantom{\scA})_h|$ spaces in view of Proposition \ref{prop:AddtyFib1}.

Finally, the dg form functor $(F,\ffi) = \Cone: (\Fun([1],\A,v) \to (\A^{[1]},w)$ 
induces an equivalence of $|(w\RR_{\bullet}^{(n)}\phantom{\scA})_h|$ spaces
by Lemma \ref{lem:KhInvI} where $G: (\A,w) \to (\Fun([1],\A),v)$ sends $X$
to the object $0 \to X$ of $\Fun([1],\A)$.
We have $FG=id_{\A}$ and a zigzag of natural weak equivalences $id \to H
\leftarrow GF$ where $H$ sends an object $f:X \to Y$ of $\Fun([1],\A)$ 
to the object $\Cone(id_X)\to \Cone(f)$, and the natural weak equivalences are
given by the natural maps from $f:X \to Y$ to $\Cone(id_X)\to \Cone(f)$
and from $0 \to \Cone(f)$ to $\Cone(id_X) \to \Cone(f)$.
\end{proof}

\subsection{Hyperbolic and forgetful functors}
\label{subsec:hyperforgetfuncs}
Next, we want to identify the Grothen-dieck-Witt space of the upper right corner in the square of Proposition \ref{prop:SpaceKaroubiProto1} with the $K$-theory space of $\scA$.

For a dg category with weak equivalences $\scA$ (no duality given), the hyperbolic dg category with weak equivalences and duality is the dg category with weak equivalences 
$$\H\scA = \scA \times \scA^{op}$$
equipped with the strict duality $(A,B)^*=(B,A)$.
Even if $\scA$ comes equipped with a duality, the duality on $\H\scA$ does not dependent on it.

Let $\scA = (\A,w,*,\can)$ be a dg-category with weak equivalences and duality.
The {\em forgetful dg form functor} $F:\scA \to \H\scA$ sends 
an object $A$ of $\scA$ to the object $(A,A^*)$, a map $f$ to $(f,f^*)$.
It is equipped with the duality compatibility morphism
$(1,\can): (A^*,A^{**}) \to (A^*,A)$.
If $\scA$ has direct sums, {\em e.g.}, when $\scA$ is pretriangulated, 
then we have the {\em hyperbolic dg form functor} 
$\H\scA \to \scA$ which sends an object $(A,B)$ to $A\oplus B^*$
and a map $(f,g)$ to the map $f\oplus g^*$.
It is equipped with the duality compatibility morphism
$\left(\begin{smallmatrix}0&1\\ \can &0\end{smallmatrix}\right): B\oplus A^* \to A^* \oplus B^{**}$.

We also have an exact dg form functor
\begin{equation}
\label{eqn:FunisH}
(F,\ffi): \Fun([1],\scA) \to \H\scA: (f:A_0\to A_1) \mapsto (A_0,A_1^*)
\end{equation}
with duality compatibility map $(1,\can):F(f^*)\to F(f)^*$.

\begin{lemma}
\label{lem:FunisH}
Let $\scA$ be a pretriangulated dg category with weak equivalences and duality.
Then for any integer $n\geq 1$, the functor (\ref{eqn:FunisH}) induces a homotopy equivalence of pointed topological spaces
$$|(w\RR_{\bullet}^{(n)}\Fun([1],\scA))_h| \stackrel{\sim}{\longrightarrow} |(w\RR_{\bullet}^{(n)}\H\scA)_h|.$$
\end{lemma}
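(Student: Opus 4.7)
The plan is to split the claim, via the additivity fibration of Proposition \ref{prop:AddtyFib1}, into a $K$-theory statement and a Grothendieck-Witt space statement, and to handle each by an appropriate additivity theorem. Concretely, Proposition \ref{prop:AddtyFib1} gives compatible homotopy fibrations $|(w\RR_{\bullet}^{(n)}(-))_h| \to |(w\RR_{\bullet}^{(n+1)}(-))_h| \to |wS_{\bullet}^{(n+1)}(-)|$ for $n\geq 1$, while the definition of the Grothendieck-Witt space from Section \ref{subsec:GWspace} supplies the base case. By induction on $n$ and the five lemma, it therefore suffices to verify that $F$ induces equivalences on the iterated $K$-theory spaces $|wS_{\bullet}^{(m)}(-)|$ for $m\geq 1$ and on the Grothendieck-Witt space itself.

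For the $K$-theory half, I would first identify the underlying (non-form) dg functor of $F$ with the source-target functor $\Fun([1],\scA) \to \scA\times\scA$, $f\mapsto (A_0,A_1)$, modulo the $K$-equivalence $\scA \simeq \scA^{op}$ induced by $\ast$. Waldhausen's additivity theorem, applied to the short exact sequence of exact endofunctors
\[
(0\to A_1) \rightarrowtail (A_0 \xrightarrow{f} A_1) \twoheadrightarrow (A_0 \to 0)
\]
on $\Fun([1],\scA)$, then shows that the source-target functor is a $K$-theory equivalence, and hence so is $F$ on each $|wS_{\bullet}^{(m)}(-)|$.

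For the Grothendieck-Witt half, I would invoke the Grothendieck-Witt additivity theorem from \cite{myMV}. The same exact sequence, now interpreted in the category-with-duality sense (the outer terms being exchanged by the duality on $\Fun([1],\scA)$), is precisely the additivity input whose conclusion identifies the Grothendieck-Witt theory of $\Fun([1],\scA)$ with that of the hyperbolic category $\H\scA$ via $F$. The hard point is here: the underlying dg functor of $F$ is \emph{not} a derived equivalence --- the triangulated category $\T\Fun([1],\scA)$ of arrows in $\T\scA$ strictly contains $\T\H\scA = \T\scA \times (\T\scA)^{op}$ --- so Lemma \ref{lem:KhInvI} cannot be applied directly. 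What saves the argument is that the ``missing commutativity data'' in source-target forgetting is encoded precisely in the symmetric form part of a hermitian object, so that after passing to $(w\RR_{\bullet}^{(n)}(-))_h$ the map $F$ becomes an equivalence, which is the content of the Grothendieck-Witt additivity theorem.
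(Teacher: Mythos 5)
Your reduction via Proposition \ref{prop:AddtyFib1} to a $K$-theory statement plus a Grothendieck--Witt space statement is exactly the paper's first move, and your observation that $F$ is not a derived equivalence (so Lemma \ref{lem:KhInvI} is unavailable) is correct and to the point. The $K$-theory half via Waldhausen additivity applied to $(0\to A_1)\rightarrowtail(f:A_0\to A_1)\twoheadrightarrow(A_0\to 0)$ is also fine in outline. The gap is in the Grothendieck--Witt half: the hermitian additivity theorem of \cite{myMV} does \emph{not} have as its conclusion that $F\colon\Fun([1],\scA)\to\H\scA$ is a $GW$-equivalence. Its conclusion concerns the middle term of a symmetric exact sequence of form functors (it agrees on $GW$-spaces with the hyperbolic of the sub-functor); identifying $GW(\Fun([1],\scA))$ with $GW(\H\scA)$ \emph{via the specific form functor $F$} is precisely the lemma you are trying to prove, not an instance of additivity. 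The missing ingredient is an explicit homotopy inverse: the form functor $(G,\psi)\colon\H\scA\to\Fun([1],\scA)$, $(A_0,A_1)\mapsto(0\colon A_0\to A_1^*)$ with duality compatibility $(\can_{A_1},1_{A_0^*})$. One checks that $(1,\can)\colon id_{\H\scA}\to(F,\ffi)\circ(G,\psi)$ is a natural weak equivalence of form functors, and that $(G,\psi)\circ(F,\ffi)$ is naturally weakly equivalent to the duality-preserving functor $Z\colon(f\colon A_0\to A_1)\mapsto(0\colon A_0\to A_1)$; only at this last step does additivity enter, showing that $Z$ and $id$ induce the same map on $GW$-spaces because both sit in the symmetric exact sequence of endofunctors $(0\to A_1)\rightarrowtail\bullet\twoheadrightarrow(A_0\to 0)$.

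There is a second, more technical point you should not skip in either half: that exact sequence is \emph{not} admissible for the dg exact structure on $Z^0\Fun([1],\scA)$ (a degree-zero retraction onto $(0\to A_1)$ would force $f=0$). Before invoking additivity one must replace the dg exact structure by the one in which a sequence is exact iff it is exact after evaluation at $0$ and at $1$; that this change does not affect the Grothendieck--Witt space is \cite[\S 5.1 Lemma 7]{myMV}. With the explicit inverse and this change of exact structure supplied, your argument becomes the paper's proof.
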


\begin{proof}
Since the lemma holds for $K$-theory (same proof as below), we are reduced to showing that (\ref{eqn:FunisH}) induces a homotopy equivalence on Grothendieck-Witt spaces, in view of Proposition \ref{prop:AddtyFib1}.
It will be convenient to equip $Z^0\Fun([1],\scA)$ with the exact structure where a sequence of functors $[1] \to \scA$ is exact if it is exact in $Z^0\scA$ when evaluated at $0 \in [1]$ and $1 \in [1]$.
The identity functor on $Z^0\Fun([1],\scA)$ from the dg exact structure to this new exact structure induces a homotopy equivalence on Grothendieck-Witt spaces, by \cite[\S 5.1 Lemma 7]{myMV}.

The inverse of (\ref{eqn:FunisH}) on Grothendieck-Witt spaces is given by the form functor
\begin{equation}
\label{eqn:FunisHinverse}
(G,\psi): \H\scA \to \Fun([1],\scA): (A_0,A_1)\mapsto (0:A_0 \to A_1^*)
\end{equation}
with duality compatibility map $(\can_{A_1},1_{A_0^*}): G(A_1,A_0) \to G(A_0,A_1)^*$.
This is because $(1,\can): id_{\H\scA} \to (F,\ffi)\circ (G,\psi)$ is a natural weak equivalence of form functors and thus induces a homotopy of associated maps on Grothendieck-Witt spaces \cite[\S 2.8 Lemma 2]{myMV}.
Moreover, $(G,\psi)\circ (F,\ffi)$ is naturally weakly equivalent to the duality preserving functor $(f:A_0\to A_1) \mapsto (0:A_0\to A_1)$ which induces, up to homotopy, the same map on Grothendieck-Witt spaces as the identity functor, by Additivity \cite[\S3 Theorem 5]{myMV}, in view of the natural short exact sequence in $\Fun([1],\scA)$
$$(0 \to A_1) \to (f:A_0\to A_1) \to (A_0 \to 0).$$
\end{proof}

Consider the sequence of functors
\begin{equation}
\label{eqn:FunisK}
\renewcommand\arraystretch{1.5}
\begin{array}{ccccc}
(w\RR^{(n)}_{\bullet}\Fun([1],\scA))_h & \stackrel{(A,\ffi)\mapsto A}{\longrightarrow} & w\RR^{(n)}_{\bullet}\Fun([1],\scA)& &\\
& \longrightarrow & w\RR^{(n)}_{\bullet}\scA & 
\stackrel{A\mapsto A\iota}{\longrightarrow} & wS^{(n)}_{\bullet}\scA
\end{array}
\end{equation}
in which the non-labelled map is the functor $\Fun([1],\scA) \to \Fun([0],\scA) = \scA$ induced by the inclusion $[0] \to [1]: 0 \mapsto 0$.

\begin{corollary}
\label{cor:RFunIsSdot}
For every pretriangulated dg category with weak equivalences and duality $\scA$ and every integer $n\geq 1$, the composition of the maps in (\ref{eqn:FunisK}) induces a homotopy equivalence of pointed spaces
$$ |(w\RR^{(n)}_{\bullet}\Fun([1],\scA))_h| \stackrel{\sim}{\longrightarrow} 
|wS^{(n)}_{\bullet}\scA|.$$
\end{corollary}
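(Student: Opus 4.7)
The plan is to invoke Lemma~\ref{lem:FunisH} to replace $\Fun([1],\scA)$ by the hyperbolic category $\H\scA$, and then reduce to the general fact that the Grothendieck--Witt space of a hyperbolic category recovers the $K$-theory of its underlying category; this is combined with the iterated edgewise subdivision $|w\RR^{(n)}_{\bullet}\scA| \simeq |wS^{(n)}_{\bullet}\scA|$ to yield the required homotopy equivalence.

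First I would apply Lemma~\ref{lem:FunisH}: the form functor $(F,\ffi)$ of (\ref{eqn:FunisH}) induces a homotopy equivalence
\[
|(w\RR^{(n)}_{\bullet}\Fun([1],\scA))_h| \xrightarrow{\sim} |(w\RR^{(n)}_{\bullet}\H\scA)_h|.
\]
Since the composition $\Fun([1],\scA) \to \H\scA \to \scA$, with second map the first projection $(X,Y)\mapsto X$, agrees with evaluation at $0\in[1]$, the composite of (\ref{eqn:FunisK}) is identified via this equivalence with
\[
|(w\RR^{(n)}_{\bullet}\H\scA)_h| \xrightarrow{\pi} |w\RR^{(n)}_{\bullet}\scA| \xrightarrow{\iota^*} |wS^{(n)}_{\bullet}\scA|,
\]
and the second arrow is a homotopy equivalence by iterating \cite[\S 2.4 Lemma 1]{myMV}. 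It therefore suffices to show that the first arrow $\pi$ is a homotopy equivalence.

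For this, I would use the following observation: for any pointed dg category with weak equivalences $\scB$, a symmetric form on $(X,Y)\in \H\scB$ (with the strict swap duality) amounts, by symmetry, to a single weak equivalence $\alpha: X \to Y$ in $\scB$, and there is a canonical form-preserving weak equivalence $(\mathrm{id}_X,\alpha): ((X,Y),\alpha) \to ((X,X),\mathrm{id}_X)$ in $(w\H\scB)_h$ (one checks the form condition $\phi^{*}\phi = (\alpha,\alpha)$ by direct computation). Applying this levelwise with $\scB = \RR^{(n)}_{k_1,\ldots,k_n}\scA$, these morphisms provide a canonical simplicial deformation of $(w\RR^{(n)}_{\bullet}\H\scA)_h$ onto its full simplicial subcategory of ``diagonal'' objects $\{((A,A),\mathrm{id}_A)\}$, upon which $\pi$ is the identification with $w\RR^{(n)}_{\bullet}\scA$.

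The main obstacle is to make this deformation retraction rigorous. The assignment $A\mapsto((A,A),\mathrm{id}_A)$ is not a strict functor on all of $w\scB$: extending it to a morphism $f$ in $w\scB$ requires a strict inverse of $f$, which exists only for isomorphisms. Nevertheless, the natural form-preserving weak equivalence $(\mathrm{id},\alpha)$ shows that each comma category over a vertex of $w\RR^{(n)}_{\bullet}\scA$ is contractible, yielding $\pi$ as a homotopy equivalence by Quillen's Theorem~A. A secondary bookkeeping issue is the compatibility of the duality on $\RR^{(n)}_{k_1,\ldots,k_n}\H\scA$ (which combines the strict swap on $\H\scA$ with the reindexing $i\leftrightarrow i'$ on each $\underline{k_r}$) with the swap duality on the outer hyperbolic of $\RR^{(n)}_{k_1,\ldots,k_n}\scA$, but this is formal using the self-duality of each $\underline{k_r}$.
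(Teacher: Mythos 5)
Your overall factorization is exactly the paper's: apply Lemma~\ref{lem:FunisH} to pass to $\H\scA$, identify the remaining composite as $(\H w\RR^{(n)}_{\bullet}\scA)_h \to w\RR^{(n)}_{\bullet}\scA \to wS^{(n)}_{\bullet}\scA$, and handle the last arrow by the edgewise subdivision comparison \cite[\S 2.4 Lemma 1]{myMV}. The paper disposes of the middle arrow by citing \cite[\S 2 Lemma 3]{myMV} (the statement that $(w\H\scB)_h \to w\scB$ is a homotopy equivalence for any category with weak equivalences $\scB$), which is precisely the step you try to prove from scratch.

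It is in that step that your argument has a gap. The observation that every symmetric space $((X,Y),\alpha)$ admits a form-preserving weak equivalence $(\mathrm{id}_X,\alpha)$ to the diagonal object $((X,X),\mathrm{id}_X)$ is correct, but it does not yield the contractibility of the comma categories needed for Quillen's Theorem~A. Indeed, your own discussion of non-functoriality shows why: an object $(((X,Y),\alpha), u\colon X\to A)$ of $(\pi\downarrow A)$ maps to $(((X,X),\mathrm{id}_X),u)$, but a further form-preserving map to $(((A,A),\mathrm{id}_A),\mathrm{id}_A)$ over $u$ would require a one-sided inverse of $u$ (form preservation for $(u,h)$ forces $h\circ u=\mathrm{id}_X$), which does not exist for a general weak equivalence; the same obstruction blocks the reverse direction. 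So the comma category has no evident (co)terminal object and its contractibility is essentially as hard as the lemma itself. The clean fixes are either to cite \cite[\S 2 Lemma 3]{myMV}, or to note that a symmetric form on $(X,Y)$ is a single weak equivalence $\alpha\colon X\to Y$ and a form-preserving morphism $((X,Y),\alpha)\to((X',Y'),\alpha')$ is a factorization $\alpha=g\alpha' f$, so that $(w\H\scB)_h$ is the twisted arrow (edgewise subdivision) category of $w\scB$ and the projection realizes to a homeomorphism of classifying spaces. With that step repaired, the rest of your argument, including the bookkeeping identification $\RR^{(n)}_{k}\H\scA\cong\H\RR^{(n)}_{k}\scA$, is fine.
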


\begin{proof}
The composition in (\ref{eqn:FunisK}) factors as
$$(w\RR^{(n)}_{\bullet}\Fun([1],\scA))_h
\stackrel{(F,\ffi)}{\longrightarrow}
(w\RR^{(n)}_{\bullet}\H\scA)_h = (\H w\RR^{(n)}_{\bullet}\scA)_h 
\longrightarrow w\RR^{(n)}_{\bullet}\scA \longrightarrow 
wS^{(n)}_{\bullet}\scA$$
in which the first map induces a homotopy equivalence by Lemma \ref{lem:FunisH}, the second map, by \cite[\S 2 Lemma 3]{myMV}, and the third map, by \cite[\S 2 Lemma 1]{myMV}.
\end{proof}

\section{Products and the Grothendieck-Witt spectrum}
\label{sec:GWspectrum}

From now on, we will work over a commutative base ring $\bk$ with 
$\frac{1}{2}\in \bk$. 
The purpose of this section is to construct the Grothendieck-Witt spectrum functor.
This is a symmetric monoidal functor
$$GW: \dgCatWD,{\otimes} \longrightarrow \Sp,\wedge$$
from the category of small (pointed) dg categories with weak equivalences and duality to the category of symmetric spectra of topological spaces.
For $\scA \in \dgCatWD$, the $n$-th space of the spectrum $GW(\scA)$ will be the pointed topological space
\begin{equation}
\label{eqn:GWAn}
GW(\scA)_n = \left| i\mapsto \left(w\RR_{i}^{(n)} \scA^{(n)}  \right)_h\right|.
\end{equation}
Here, $\scA^{(n)}$ denotes the dg category with weak equivalences and duality
$$\scA^{(n)} = \Z^n((\scC^{[1]})^{{\otimes} n}{\otimes}\scA),$$
where $\Z^n=\Z\times ...\times \Z$ is the n-fold cartesian product of the poset $\Z$ equipped with its usual ordering; see Section \ref{subsec:MoreOnExtCats}.
When $n\geq 1$, 
the category $\scA^{(n)}$ is equivalent to the pretriangulated hull of $\scA$ equipped with the $n$-th shifted duality; {\em cf.} Remark \ref{rmk:P0AP1AessentiallySurj} and Definitions \ref{dfn:pretr} and \ref{dfn:nshifteddgcat}.
When $n=0$, then it is simply $\scA$ itself.
The bonding maps of the spectrum will be defined in (\ref{eqn:bondingMaps}) below.
We will show in Theorem \ref{thm:GWAOmegaInfinity} and Proposition \ref{prop:GWspecIsGWspace} that the spectrum $GW(\scA)$ is a positive $\Omega$-spectrum with associated infinite loop space the Grothendieck-Witt space as defined in \cite{myMV} of the pretriangulated hull of $\scA$.
Its negative homotopy groups are  Balmer's Witt groups of the triangulated category of $\scA$, by Proposition \ref{prop:NegGWistriangularW} and Corollary \ref{cor:BalmerWnismyWn}.

\subsection{The Grothendieck-Witt functor as a symmetric sequence}
\label{subsec:GWsymmSeq}
Consider the following symmetric monoidal functors.
\vspace{1ex}

\begin{itemize}
\item[$(a)$]\hspace{7ex}
$\displaystyle{\dgCatWD \times \dgCatWD \to \dgCatWD: (\scB, \scA) \mapsto \scB \otimes \scA}$
\vspace{.7ex}

\noindent
with monoidal compatibility map the duality preserving exact dg functor
$$ (\scB_1 {\otimes} \scA_1) {\otimes} (\scB_2 {\otimes} \scA_2) \stackrel{id\otimes \tau \otimes id}{\longrightarrow}  (\scB_1 {\otimes} \scB_2) {\otimes} (\scA_1 {\otimes} \scA_2)$$ and 
unit map $\mu^{-1}:\bk \to \bk{\otimes} \bk$ where 
$\tau: \scA_1 {\otimes} \scB_2 \to \scB_2 {\otimes} \scA_1$ is the switch isomorphism and $\mu: \bk {\otimes} \bk \to \bk$ is the multiplication isomorphism $\bk \otimes \bk \to \bk$.
\vspace{1ex}

\item[$(b)$]
\hspace{16ex}$\poSetD_{\str} \times \dgCatD \to \dgCatD$
\vspace{.7ex}

\noindent
introduced in \ref{subsec:MoreOnExtCats} (\ref{eqn:poSetDdgCatDFun}).
\vspace{1ex}

\item[$(c)$]
\hspace{5ex} $(\CatD_{str})^{op} \times \dgCatWD \longrightarrow \dgCatWD: (D, \scA) \mapsto \Fun(D,\scA)$
\vspace{.7ex}

\noindent
with monoidal compatibility map the duality preserving exact dg functor
$$\Phi: \Fun(\D_1,\scA_1){\otimes} \Fun(\D_2,\scA_2) \to \Fun(\D_1\times \D_2,\scA_1{\otimes} \scA_2)$$
and unit map
$id: \bk \to \Fun(\pt,\bk)=\bk$
where $\Phi(A,B)_{D_1,D_2}=(A_{D_1},B_{D_2})$ and $\Phi(f\otimes g)_{D_1,D_2} = f_{D_1}\otimes g_{D_2}$.
\end{itemize}
Composition of symmetric monoidal functors thus yields the symmetric monoidal functor
\begin{equation}
\label{eqn:PrepForGWSymSeq}
\renewcommand\arraystretch{1.5}
\begin{array}{cl}
& (\CatD_{str})^{op} \times \poSetD_{\str} \times \dgCatWD \times \dgCatWD\\
\stackrel{1\times 1\times (a)}{\longrightarrow} & (\CatD_{str})^{op} \times \poSetD_{\str} \times \dgCatWD \\
\stackrel{1\times (b)}{\longrightarrow} & (\CatD_{str})^{op} \times  \dgCatWD\\
\stackrel{(c)}{\longrightarrow} & \dgCatWD.
\end{array}
\end{equation}

\begin{remark}
\label{rmk:symmonSymSeq}
Let $\U$, $\V$ and $\W$ be symmetric monoidal categories, and
assume that $\W$ has finite coproducts commuting with the monoidal tensor product.
The category $\W^{\Sigma}$ of symmetric sequences in $\W$ is endowed with a symmetric monoidal product; see Appendix \ref{subsec:SymmSeqProd}.
If $F:\U\times \V \to \W$ 
is a symmetric monoidal functor with monoidal compatibility map
$\Phi:F(U_1,V_1)\otimes F(U_2,V_2) \to F(U_1\otimes U_2,V_1\otimes V_2)$
and unit $\1 \to F(\1,\1)$, 
then any object $U$ of $\U$ defines a symmetric monoidal functor
$$F_U:\V \longrightarrow \W^{\Sigma}: V \mapsto \left\{n\mapsto F(U^{\otimes n},V)\right \}$$
where $\Sigma_n$ acts on $U^{\otimes n}$ by permuting the tensor factors.
The monoidal compatibility map is the map induced by the $\Sigma_n\times \Sigma_m$-equivariant map
$$\Phi:F(U^{\otimes n}, V_1)\otimes F(U^{\otimes m},V_2) \longrightarrow F(U^{\otimes n}\otimes U^{\otimes m}, V_1\otimes V_2),$$
and the unit is the map $\Phi: \1 \to F(\1,\1)=F(U^{\otimes 0},\1)$.
\end{remark}

We apply Remark \ref{rmk:symmonSymSeq} to the symmetric monoidal functor 
(\ref{eqn:PrepForGWSymSeq}) and the object 
$U = (\Ar(\underline{i}),  \Z, \scC^{[1]})$ of 
$(\CatD_{str})^{op} \times \poSetD_{\str} \times \dgCatWD$ and obtain the symmetric monoidal functor
$$\dgCatWD \to \dgCatWD^{\Sigma}: \scA \mapsto \left\{ n \mapsto \Fun\left(\Ar(\underline{i})^n,\Z^n((\scC^{[1]})^{{\otimes} n}\scA)\right) \right\}$$
varying simplicially with $i \in \Delta$.
If we restrict to the full dg subcategory $\RR^{(n)}_i$ we obtain the symmetric monoidal functor
$$\dgCatWD \to \dgCatWD^{\Sigma}: \scA \mapsto 
\left\{ n \mapsto \RR_i^{(n)}\Z^n((\scC^{[1]})^{{\otimes} n}\scA) \right\}$$
which varies simplicially with $i \in \Delta$.
Composing with the symmetric monoidal functor
$$\Delta^{op}\dgCatWD \to \Top_*: \scA_{\bullet} \mapsto |i\mapsto (w\scA_i)_h|,$$
we obtain a symmetric monoidal functor 
$$GW: \dgCatWD \to \Top_*^{\Sigma}: \scA \mapsto \left\{ n \mapsto  GW(\scA)_n\right\}$$
with $GW(\scA)_n$ as in (\ref{eqn:GWAn}).
Thus, we have defined $\Sigma_m\times \Sigma_n$-equivariant maps
$$\cup: GW(\scA)_m \wedge GW(\scB)_n \longrightarrow 
GW(\scA{\otimes} \scB)_{m+n}$$
functorial in $\scA,\scB \in \dgCatWD$ 
and a unit map $(S^0,\pt,\pt,\dots) \to GW(\bk)$ making the usual associativity and unit diagrams commute.

\subsection{The bonding maps in the Grothendieck-Witt spectrum}
Let $\scA$ be a dg category with weak equivalences and duality.
Write $\scA^{(0,n)}$ for the dg category with weak equivalences and duality
$$\scA^{(0,n)} = \Z^{1+n}(\scC^{[0]}{\otimes}(\scC^{[1]})^{{\otimes} n}{\otimes}\scA).$$
The inclusions $(\scC^{[1]})^{{\otimes}n} \subset \scC^{[0]}{\otimes}(\scC^{[1]})^{{\otimes}n}: E \mapsto \1\otimes E$ and $\Z^n \subset \Z^{1+n}: n\mapsto (0,n)$ define duality preserving inclusions
$\scA^{(n)} \to \scA^{(0,n)}$
which are equivalences of pretriangulated dg categories with weak equivalences and duality for $n\geq 1$, and for $n=0$ it is the inclusion of $\scA$ into its pretriangulated hull.

For $n\geq 0$, consider the composite functor
$$\Fun\left([1],\scA^{(0,n)}\right) \stackrel{\Cone}{\longrightarrow} \scC^{[1]}\scA^{(0,n)} \stackrel{\sim}{\longrightarrow} \scA^{(1+n)}$$
where the second functor is the equivalence
$$ \scC^{[1]}{\otimes}\Z^{1+n}(\scC^{[0]}{\otimes}\scB)
 \stackrel{\text{(\ref{eqn:MonoidalComMap})}}{\longrightarrow} 
\Z^{1+n}(\scC^{[1]}{\otimes}\scC^{[0]}{\otimes}\scB)
 \stackrel{\otimes}{\longrightarrow} 
\Z^{1+n}(\scC^{[1]}{\otimes}\scB)
$$
with $\scB = (\scC^{[1]})^{{\otimes} n}{\otimes}\scA$.
For $n\geq 0$, consider the commutative diagram
\begin{equation}
\label{eqn:GWnLoopGWn+1}
\xymatrix{
\left(w\RR^{(n)}_{\bullet}\scA^{(n)}\right)_h
\ar[rr] \ar[d] & & \pt \ar[d]\\
\left(w\RR^{(1+n)}_{\bullet}\scA^{(0,n)}\right)_h
\ar@{^(->}[r]^{\hspace{-3ex}I} \ar[d] &
\left(w\RR^{(1+n)}_{\bullet}\Fun([1],\scA^{(0,n)})\right)_h 
\ar[d]^{\Cone} \ar[r]^{\hspace{6ex}\sim}_{\hspace{8ex}\text{(\ref{eqn:FunisK})}} &
wS^{(1+n)}_{\bullet}\scA^{(0,n)}\\
\left(w\RR^{(1+n)}_{\bullet}(\scA^{(1+n)})^w\right)_h
\ar@{^(->}[r] &
\left(w\RR^{(1+n)}_{\bullet}\scA^{(1+n)}\right)_h.
&
}
\end{equation}

\begin{proposition}
\label{prop:GWtoGWtildeDiagram}
Let $\scA$ be a dg category with weak equivalences and duality such that $\frac{1}{2}\in \scA$.
Then, in diagram (\ref{eqn:GWnLoopGWn+1}), 
the lower square is homotopy cartesian
with contractible lower left corner for $n\geq 0$, and
the upper square 
is homotopy cartesian
for $n\geq 1$.
\end{proposition}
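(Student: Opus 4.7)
The plan is to deduce each assertion from a previously established fibration result, together with the compatibility equivalences between $\scA^{(n)}$, $\scA^{(0,n)}$, and $(\scA^{(0,n)})^{[1]}$ set up immediately before the proposition.

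For the lower square ($n\geq 0$), I would apply Proposition \ref{prop:SpaceKaroubiProto1} with $\scA$ replaced by $\scA^{(0,n)}$. The hypotheses are satisfied: $\scA^{(0,n)}$ is pretriangulated because its outermost factor $\Z(\scC^{[0]}\otimes(-))$ is precisely the pretriangulated hull construction of Definition \ref{dfn:pretr}, and the condition $\frac{1}{2}\in\scA^{(0,n)}$ is inherited from $\scA$. The resulting homotopy cartesian square has right-hand column over $(\scA^{(0,n)})^{[1]}=\scC^{[1]}\otimes \scA^{(0,n)}$. The equivalence of pretriangulated dg categories with weak equivalences and duality $\scC^{[1]}\otimes\scA^{(0,n)} \stackrel{\simeq}{\to}\scA^{(1+n)}$ used just before the proposition to define the map $\Cone$ in the diagram carries acyclics to acyclics; applying $|(w\RR^{(1+n)}_\bullet(-))_h|$ to it yields a homotopy equivalence that identifies the Karoubi Proto square with our lower square and preserves contractibility of the lower-left corner.

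For the upper square ($n\geq 1$), I would combine the additivity fibration of Proposition \ref{prop:AddtyFib1} applied to $\scA^{(0,n)}$,
$$|(w\RR^{(n)}_{\bullet}\scA^{(0,n)})_h| \longrightarrow |(w\RR^{(1+n)}_{\bullet}\scA^{(0,n)})_h| \longrightarrow |wS^{(1+n)}_{\bullet}\scA^{(0,n)}|,$$
with Corollary \ref{cor:RFunIsSdot}, which replaces the base by $|(w\RR^{(1+n)}_{\bullet}\Fun([1],\scA^{(0,n)}))_h|$ along a homotopy equivalence compatible with $I$. This furnishes a homotopy cartesian square already having the correct lower row but with upper-left corner $|(w\RR^{(n)}_{\bullet}\scA^{(0,n)})_h|$ in place of $|(w\RR^{(n)}_{\bullet}\scA^{(n)})_h|$. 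To finish, I invoke the fact (stated just before the proposition, and valid precisely for $n\geq 1$—this is where the hypothesis enters) that the inclusion $\scA^{(n)}\hookrightarrow \scA^{(0,n)}$ is an equivalence of pretriangulated dg categories with weak equivalences and duality, and hence induces simplicial-degreewise equivalences of the pointed categories $(w\RR^{(n)}_{i}\scA^{(n)})_h \to (w\RR^{(n)}_{i}\scA^{(0,n)})_h$ and thus a homotopy equivalence of geometric realizations, allowing the substitution.

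The main obstacle is not conceptual but bookkeeping: one must verify that the three identifications invoked—the equivalence $\scA^{(n)}\simeq \scA^{(0,n)}$ for the upper square, the equivalence $\scC^{[1]}\otimes \scA^{(0,n)}\simeq \scA^{(1+n)}$ for the lower square, and the Corollary \ref{cor:RFunIsSdot} replacement—commute strictly with the structure maps $I$, $\Cone$, and the inclusion of acyclics as they appear in (\ref{eqn:GWnLoopGWn+1}), so that the two homotopy cartesian squares one constructs are literally the upper and lower squares of the diagram, rather than squares merely homotopy equivalent to them.
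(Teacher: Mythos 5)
Your proof is correct and follows essentially the same route as the paper's (very terse) argument: the lower square is obtained by applying Proposition \ref{prop:SpaceKaroubiProto1} to $\scA^{(0,n)}$ and transporting along the equivalence $\scC^{[1]}\otimes\scA^{(0,n)}\simeq\scA^{(1+n)}$, and the upper square follows from Proposition \ref{prop:AddtyFib1} applied to $\scA^{(0,n)}$ together with the equivalence $\scA^{(n)}\to\scA^{(0,n)}$ for $n\geq 1$ (with Corollary \ref{cor:RFunIsSdot} identifying the base). Your extra bookkeeping about the compatibility of these identifications with $I$ and $\Cone$ is exactly the content the paper leaves implicit.
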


\begin{proof}
For $n\geq 1$, the upper square is homotopy cartesian, in view of Proposition \ref{prop:AddtyFib1} and the equivalence $\scA^{(n)} \to \scA^{(0,n)}$.
The lower square is homotopy cartesian, by Proposition \ref{prop:SpaceKaroubiProto1}.
\end{proof}

It follows that the sequence 
$$|(w\RR^{(n)}_{\bullet}\scA^{(n)})_h|, \hspace{4ex} n\in \N,$$
defines a positive $\Omega$-spectrum whenever $\frac{1}{2}\in \scA$.
To give a functorial and explicit definition of the bonding maps of that spectrum, we introduce the following notation which will only be used in this section.
For $n\geq 0$, let
$$
\renewcommand\arraystretch{2}
\begin{array}{lcl}
\overline{K}(\scA)_{1+n} & = & |wS_{\bullet}^{(1+n)}\scA^{(0,n)}|\\
\widetilde{K}(\scA)_{1+n} & = & |(w\RR_{\bullet}^{(1+n)}\Fun([1],\scA^{(0,n)}))_h|\\
GW(\scA)_{1+n}^w & = & |(w\RR_{\bullet}^{(1+n)}(\scA^{(1+n)})^w)_h|
\end{array}
$$

In this notation, diagram (\ref{eqn:GWnLoopGWn+1}) becomes the commutative diagram 
\begin{equation}
\label{eqn:DiagGWtoTildeGW}
\xymatrix{
&& GW(\scA)_n \ar[dll]  \ar[d]  \ar[drr] &&\\
GW(\scA)_{1+n}^w \ar[r] & GW(\scA)_{1+n} & \widetilde{K}(\scA)_{1+n} \ar[l] \ar[r]^{\sim} & \overline{K}(\scA)_{1+n} & \pt. \ar[l]}
\end{equation}
Let $\widetilde{GW}(\scA)_{n}$ be the homotopy limit of the lower row, that is,
$$\widetilde{GW}(\scA)_{n} = GW(\A)^w_{1+n}\underset{GW(\A)_{1+n}}{\stackrel{h}{\times}}\widetilde{K}(\A)_{1+n} \underset{\overline{K}(\A)_{1+n}}{\stackrel{h}{\times}}\pt.$$
For $n\geq 0$, diagram (\ref{eqn:DiagGWtoTildeGW}) defines a map
\begin{equation}
\label{eqn:GWtoTildeGW}
GW(\scA)_n \to \widetilde{GW}(\scA)_{n}.
\end{equation}
Furthermore, base-point inclusions (which are homotopy equivalences)
$$\pt \to GW(\scA)_{1+n}^w\hspace{2ex} \text{and}\hspace{2ex} \pt \to \widetilde{K}(\A)_{1+n} \underset{\overline{K}(\A)_{1+n}}{\stackrel{h}{\times}}\pt$$ 
define a map
\begin{equation}
\label{eqn:OmegaGWtoTildeGW}
\Omega GW(\A)_{1+n} = \pt \underset{GW(\A)_{1+n}}{\stackrel{h}{\times}}\pt \longrightarrow \widetilde{GW}(\scA)_{n}.
\end{equation}

\begin{proposition}
\label{prop:GWTildeGWOmega}
Let $\scA$ be a dg category with weak equivalences and duality such that
$\frac{1}{2}\in \scA$.
Then the map (\ref{eqn:GWtoTildeGW})
is an equivalence for all $n\geq 1$, and the map (\ref{eqn:OmegaGWtoTildeGW})
is an equivalence for all $n\geq 0$.
\end{proposition}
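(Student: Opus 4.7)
The plan is to compute $\widetilde{GW}(\scA)_n$ as an iterated homotopy pullback and identify it with both $GW(\scA)_n$ (for $n\geq 1$) and $\Omega GW(\scA)_{1+n}$ (for $n\geq 0$), using Proposition \ref{prop:GWtoGWtildeDiagram} and Corollary \ref{cor:RFunIsSdot}. Throughout I will abbreviate $X = GW(\scA)^w_{1+n}$, $Y = GW(\scA)_{1+n}$, $Z = \widetilde{K}(\scA)_{1+n}$, $W = \overline{K}(\scA)_{1+n}$; by definition
\[
\widetilde{GW}(\scA)_n \;=\; X \underset{Y}{\stackrel{h}{\times}} Z \underset{W}{\stackrel{h}{\times}} \pt.
\]
Two inputs drive the whole argument: $X$ is contractible for every $n\geq 0$ (the lower-left corner of the lower square of (\ref{eqn:GWnLoopGWn+1})), and $Z\to W$ is a homotopy equivalence of pointed spaces (Corollary \ref{cor:RFunIsSdot}).

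To handle (\ref{eqn:OmegaGWtoTildeGW}), valid for $n\geq 0$, I will use associativity of the homotopy pullback to rewrite
$\widetilde{GW}(\scA)_n = X \underset{Y}{\stackrel{h}{\times}} \bigl(Z \underset{W}{\stackrel{h}{\times}} \pt\bigr)$.
The inner factor $Z \underset{W}{\stackrel{h}{\times}} \pt = \mathrm{hofib}(Z\to W)$ is contractible, being the homotopy fibre of an equivalence of pointed spaces, and its map to $Y$ through $Z \xrightarrow{\Cone} Y$ is canonically homotopic to the constant map at the basepoint (the basepoint of the hofib maps to the basepoint of $Z$, and $\Cone$ of the zero object is zero). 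Hence $\widetilde{GW}(\scA)_n \simeq X \underset{Y}{\stackrel{h}{\times}} \pt = \mathrm{hofib}(X\to Y)$; since $X$ is itself contractible, the map $X\to Y$ is homotopic to the constant basepoint map, and this last homotopy fibre is $\Omega Y = \Omega GW(\scA)_{1+n}$. The map (\ref{eqn:OmegaGWtoTildeGW}), built from the basepoint inclusions into $X$ and into $Z\underset{W}{\stackrel{h}{\times}}\pt$, realizes exactly this identification.

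To handle (\ref{eqn:GWtoTildeGW}), assuming $n\geq 1$, I will show that diagram (\ref{eqn:GWnLoopGWn+1}) already exhibits $GW(\scA)_n$ as the iterated homotopy pullback defining $\widetilde{GW}(\scA)_n$. The upper square, homotopy cartesian for $n\geq 1$, yields
\[
GW(\scA)_n \;\simeq\; (w\RR^{(1+n)}_{\bullet}\scA^{(0,n)})_h \underset{W}{\stackrel{h}{\times}} \pt,
\]
while the lower square, homotopy cartesian for all $n\geq 0$, yields
\[
(w\RR^{(1+n)}_{\bullet}\scA^{(0,n)})_h \;\simeq\; X \underset{Y}{\stackrel{h}{\times}} Z.
\]
Splicing the two identifications produces $GW(\scA)_n \simeq X \underset{Y}{\stackrel{h}{\times}} Z \underset{W}{\stackrel{h}{\times}} \pt = \widetilde{GW}(\scA)_n$, and by tracing through diagram (\ref{eqn:DiagGWtoTildeGW}) the three vertical arrows out of $GW(\scA)_n$ realize precisely this composite, i.e.\ the map (\ref{eqn:GWtoTildeGW}).

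The main obstacle is bookkeeping rather than conceptual: the comparison maps (\ref{eqn:GWtoTildeGW}) and (\ref{eqn:OmegaGWtoTildeGW}) are built from specific data (respectively the three vertical arrows out of $GW(\scA)_n$ in (\ref{eqn:DiagGWtoTildeGW}) and the basepoint inclusions), and one must check that the equivalences produced by the two homotopy-cartesian squares agree up to coherent homotopy with these prescribed maps. This reduces to tracking the specified homotopies witnessing commutativity of the squares in (\ref{eqn:GWnLoopGWn+1}) through a standard (fibre-wise path space) model of the iterated homotopy pullback.
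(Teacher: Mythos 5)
Your argument is correct and is exactly what the paper's one-line proof ("this is a restatement of Proposition \ref{prop:GWtoGWtildeDiagram}") leaves implicit: both identifications come from the two homotopy cartesian squares of diagram (\ref{eqn:GWnLoopGWn+1}), together with the contractibility of $GW(\scA)^w_{1+n}$ and the equivalence $\widetilde{K}(\scA)_{1+n}\to\overline{K}(\scA)_{1+n}$ of Corollary \ref{cor:RFunIsSdot}, fed through standard associativity and invariance properties of iterated homotopy pullbacks. Your careful unpacking is a faithful, more detailed version of the intended proof.
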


\begin{proof}
This is a restatement of Proposition \ref{prop:GWtoGWtildeDiagram}.
\end{proof}

Recall that $\bk$ denotes our base ring (assumed to satisfy $\frac{1}{2}\in \bk$) which the reader may take to be $\Z[1/2]$.
The inner product space $\bk\otimes \bk \to \bk:x\otimes y \to xy$ defines an object $\langle 1 \rangle $ of the hermitian category $(w\bk)_h$, and thus a pointed map $\langle 1 \rangle: S^0 \to |(w\bk)_h| = GW(\bk)_0$ sending the non-base point of $S^0$ to that object.
It follows from Proposition \ref{prop:GWTildeGWOmega} that
there is a pointed map 
$\epsilon:S^1 \to GW(\bk)_1$ whose adjoint
$\ad(\epsilon): S^0 \to \Omega GW(\bk)_1$
makes the following diagram commutative up to homotopy
\begin{equation}
\label{eqn:defineEpsilon}
\xymatrix{
S^0 \ar[d]_{\langle 1 \rangle} \ar[r]^{\ad(\epsilon)} & \Omega GW(\bk)_1 \ar[d]_{\simeq}^{\text{(\ref{eqn:OmegaGWtoTildeGW})}}\\
 GW(\bk)_0 \ar[r]_{\text{(\ref{eqn:GWtoTildeGW})}} & \widetilde{GW}(\bk)_0.
}
\end{equation}
We fix once and for all such a map $\epsilon$.

\begin{definition}
\label{dfn:GWspectrum}
For a dg category with weak equivalences and duality $\scA$, we define its {\em Grothendieck-Witt spectrum $GW(\scA)$} as the symmetric sequence 
$$GW(\scA) = \{ GW(\scA)_0, GW(\scA)_1, GW(\scA)_2, \dots \},$$
where the space  $GW(\scA)_n$ is the $\Sigma_n$ space (\ref{eqn:GWAn}).
The $\Sigma_n\times \Sigma_m$-equivariant bonding maps of the spectrum are the maps
\begin{equation}
\label{eqn:bondingMaps}
\epsilon_n: 
(S^1)^{\wedge n} \wedge GW(\scA)_m \stackrel{\epsilon^{\wedge n}\wedge 1}{\longrightarrow} (GW(\bk)_1)^{\wedge n}\wedge GW(\scA)_m \stackrel{\cup}{\longrightarrow} GW(\scA)_{n+m}.
\end{equation}
Cup product (Section \ref{subsec:GWsymmSeq}) makes the spectrum $GW(\bk)$ into a commutative symmetric ring spectrum, and $GW(\scA)$ into a module spectrum over $GW(\bk)$.
\end{definition}

\begin{theorem}
\label{thm:GWAOmegaInfinity}
Let $\scA$ be a dg category with weak equivalences and duality such that $\frac{1}{2}\in \scA$.
Then for all $m\geq 1$, $n\geq 0$, the adjoint of the map (\ref{eqn:bondingMaps})
is a homotopy equivalence of pointed topological spaces
$$\ad(\epsilon_n): GW(\scA)_m \stackrel{\sim}{\longrightarrow} \Omega^{n}GW(\scA)_{n+m}.$$
In other words, the spectrum $GW(\scA)$ is a positive $\Omega$-spectrum.
\end{theorem}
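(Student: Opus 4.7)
The plan is to prove that $\ad(\epsilon_n) : GW(\scA)_m \to \Omega^n GW(\scA)_{m+n}$ is a homotopy equivalence for all $m \geq 1$ and $n \geq 0$ by first reducing to the case $n = 1$ and then identifying $\ad(\epsilon_1)$, via Proposition \ref{prop:GWTildeGWOmega}, with a composition of two known equivalences through the auxiliary space $\widetilde{GW}(\scA)_m$.

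First, since cup product is associative, $\epsilon_{n+1}$ factors, up to the associativity constraints of the symmetric monoidal pairing, as
$$(S^1)^{\wedge(n+1)} \wedge GW(\scA)_m \xrightarrow{1\wedge \epsilon_n} S^1 \wedge GW(\scA)_{m+n} \xrightarrow{\epsilon_1} GW(\scA)_{m+n+1},$$
so that $\ad(\epsilon_{n+1}) \simeq \Omega\bigl(\ad(\epsilon_n)\bigr) \circ \ad(\epsilon_1)$. Starting from the trivial case $n = 0$, an induction on $n$ (using that the hypothesis $m+1 \geq 1$ allows us to apply the inductive hypothesis at level $m+1$) reduces the theorem to showing that $\ad(\epsilon_1) : GW(\scA)_m \to \Omega GW(\scA)_{m+1}$ is a homotopy equivalence for every $m \geq 1$.

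Next, fix $m \geq 1$. Proposition \ref{prop:GWTildeGWOmega} supplies two natural homotopy equivalences into $\widetilde{GW}(\scA)_m$, namely $\alpha_m$ from (\ref{eqn:GWtoTildeGW}) and $\beta_m$ from (\ref{eqn:OmegaGWtoTildeGW}). We will exhibit a homotopy
$$\beta_m \circ \ad(\epsilon_1) \simeq \alpha_m : GW(\scA)_m \longrightarrow \widetilde{GW}(\scA)_m,$$
after which the theorem follows by two-out-of-three. The justification has two parts. On the one hand, by construction in Section \ref{subsec:GWsymmSeq}, each space entering diagram (\ref{eqn:DiagGWtoTildeGW}) for $\scA$ at level $m$ is the value at $(\bk, \scA)$ of a lax symmetric monoidal functor in two variables, so the homotopy limit $\widetilde{GW}(\scA)_m$ carries a pairing with $\widetilde{GW}(\bk)_0$, and the equivalences $\alpha_m, \beta_m$ intertwine this pairing with the cup-product action of $GW(\bk)_0$ on $GW(\scA)_m$ and of $\Omega GW(\bk)_1$ on $\Omega GW(\scA)_{m+1}$. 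On the other hand, the defining diagram (\ref{eqn:defineEpsilon}) asserts exactly that $\beta_0 \circ \ad(\epsilon) \simeq \alpha_0 \circ \langle 1 \rangle$ at the universal level $\scA = \bk$, $m = 0$; pairing this identity with an arbitrary class in $GW(\scA)_m$ and invoking the unit axiom $\langle 1 \rangle \cup x \simeq x$ of the symmetric ring spectrum $GW(\bk)$ yields the desired homotopy.

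The main obstacle is the verification that each arrow of diagram (\ref{eqn:DiagGWtoTildeGW}) — the cone functor, the inclusion of $w$-acyclic objects, and the homotopy equivalence $\widetilde{K} \simeq \overline{K}$ — is equivariant for the corresponding pairings with the analogous arrow at $(\bk, 0)$, so that passing to the homotopy limit yields a genuine pairing-preserving map at the level of $\widetilde{GW}$. This is a bookkeeping exercise within the symmetric monoidal framework of Section \ref{subsec:GWsymmSeq}; no conceptual new idea beyond naturality and associativity of $\cup$, together with the homotopy cartesian squares already established in Proposition \ref{prop:GWtoGWtildeDiagram}, is required.
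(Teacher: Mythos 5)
Your proposal is correct and follows essentially the same route as the paper: the paper's proof smashes diagram (\ref{eqn:defineEpsilon}) with $GW(\scA)_m$, uses the cup-product maps into $\widetilde{GW}(\scA)_m$ and $\Omega GW(\scA)_{m+1}$ (your "pairing-preserving" lower squares) to conclude that $\ad(\epsilon_1)$ agrees up to the two equivalences of Proposition \ref{prop:GWTildeGWOmega}, and then iterates to handle general $n$. The only difference is presentational — you spell out the induction on $n$ and the multiplicativity bookkeeping a bit more explicitly than the paper does.
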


\begin{proof}
For $m\geq 1$, consider the diagram
$$\xymatrix{
S^0\wedge GW(\scA)_m \ar[r] \ar[d]^= \ar@/^2pc/[rr]^{\ad(\epsilon)\wedge 1}
 & \widetilde{GW}(\bk)_0 \wedge GW(\scA)_m \ar[d]^{\cup} & \Omega GW(\bk)_1 \wedge GW(\scA)_m \ar[l] \ar[d]^{\cup}\\
 GW(\scA)_m \ar[r]^{\sim} & \widetilde{GW}(\scA)_m  & \Omega GW(\scA)_{m+1} \ar[l]_{\sim}
}$$
in which the upper square (the one containing the bent arrow) is (\ref{eqn:defineEpsilon}) smashed with $GW(\scA)_n$ and thus commutes up to homotopy, the lower 
zigzag are the maps (\ref{eqn:GWtoTildeGW}) and (\ref{eqn:OmegaGWtoTildeGW}), and the vertical arrows are cup-product maps.
In particular, the two lower squares commute.
Since, by Proposition \ref{prop:GWTildeGWOmega}, the lower two maps labelled ``$\sim$'' are equivalences, the composition of upper horizontal (bent) arrow and right vertical arrow is also an equivalence.
But this composition is the map $\ad(\epsilon_1)$ in the proposition.
In particular, it is an equivalence.
Iterating, we obtain equivalences $\ad(\epsilon_n)$.
\end{proof}

\begin{proposition}
\label{prop:GWspecIsGWspace}
Let $\scA$ be a dg category with weak equivalences and duality such that $\frac{1}{2}\in \scA$.
Then the inclusion $\scA \to \scA^{\pretr}$ induces a stable equivalence
$$GW(\scA) \stackrel{\sim}{\longrightarrow}GW(\scA^{\pretr}),$$
and the infinite loop space $\Omega^{\infty}GW(\scA)$ of the spectrum $GW(\scA)$ is naturally equivalent to the Grothendieck-Witt space as defined in \cite{myMV} of the exact category with weak equivalences and duality $Z^0(\scA^{\pretr})$.
In particular, the map $(w\scA^{\pretr})_h = GW(\scA^{\pretr})_0 \to GW(\scA^{\pretr})$ induces an isomorphism
$$GW_0(\scA) \cong GW_0(\scA^{\pretr}) \stackrel{\cong}{\longrightarrow} \pi_0GW(\scA^{\pretr}) \cong \pi_0GW(\scA).$$
\end{proposition}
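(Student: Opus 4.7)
The plan is to split the argument into three steps: first the stable equivalence $GW(\scA)\to GW(\scA^{\pretr})$, then the identification of $\Omega^{\infty}GW(\scA)$ with the Grothendieck--Witt space of $Z^0\scA^{\pretr}$, and finally the $\pi_0$-statement which will follow from this identification.

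For the stable equivalence, I first observe that for $n\geq 1$ the dg category with weak equivalences and duality $\scA^{(n)} = \Z^n((\scC^{[1]})^{\otimes n}\otimes\scA)$ is already equivalent to $(\scA^{\pretr})^{[n]}$, as noted just after (\ref{eqn:GWAn}); by the same reasoning together with Lemma \ref{lem:ptrprops}(\ref{item4:lem:ptrprops}) (which says $\scA^{\pretr}$ is pretriangulated, so $\scA^{\pretr}\to(\scA^{\pretr})^{\pretr}$ is an equivalence), the same holds for $(\scA^{\pretr})^{(n)}$. Hence the inclusion $\scA\subset\scA^{\pretr}$ induces an equivalence of dg categories with weak equivalences and duality $\scA^{(n)}\to(\scA^{\pretr})^{(n)}$ for each $n\geq 1$, and therefore a homotopy equivalence $GW(\scA)_n\to GW(\scA^{\pretr})_n$. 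Since both spectra are positive $\Omega$-spectra by Theorem \ref{thm:GWAOmegaInfinity}, a levelwise equivalence in positive degrees is a stable equivalence.

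For the infinite loop space, combining the stable equivalence with Theorem \ref{thm:GWAOmegaInfinity} gives $\Omega^{\infty}GW(\scA)\simeq\Omega GW(\scA^{\pretr})_1$. Specialising diagram (\ref{eqn:GWnLoopGWn+1}) to $\scA^{\pretr}$ at $n=0$ (and using $(\scA^{\pretr})^{(0,0)}\simeq\scA^{\pretr}$), Proposition \ref{prop:GWtoGWtildeDiagram} says the lower square is homotopy cartesian with contractible lower-left corner, producing a homotopy fibre sequence
$$|(w\RR_\bullet\scA^{\pretr})_h|\longrightarrow |(w\RR_\bullet\Fun([1],\scA^{\pretr}))_h|\stackrel{\Cone}{\longrightarrow} GW(\scA^{\pretr})_1.$$
By Corollary \ref{cor:RFunIsSdot} applied with $n=1$, the middle term is weakly equivalent to $|wS_\bullet\scA^{\pretr}|$. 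Extending the fibration one step to the left via the Puppe construction then gives
$$\Omega GW(\scA^{\pretr})_1\simeq\operatorname{hofib}\bigl(|(w\RR_\bullet\scA^{\pretr})_h|\longrightarrow|wS_\bullet\scA^{\pretr}|\bigr),$$
which is precisely the Grothendieck--Witt space of $Z^0\scA^{\pretr}$ from Section \ref{subsec:GWspace}. The $\pi_0$-statement follows at once: $\pi_0$ of the Grothendieck--Witt space of an exact category with weak equivalences and duality is its $GW_0$-group (essentially by definition; compare \cite[\S 3 Remark 5]{myMV}), and this matches $GW_0(\scA)$ by the definition of the latter as $GW_0(Z^0\scA^{\pretr})$ in Section \ref{dfn:GW0}. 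That the structural map $(w\scA^{\pretr})_h=GW(\scA^{\pretr})_0\to GW(\scA^{\pretr})$ realises this identification is a matter of tracing the bonding maps and the fibration above.

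The hard part is really only bookkeeping: one must check that, after unwinding the layers (the equivalences $\scA^{(n)}\simeq(\scA^{\pretr})^{[n]}$, the comparison of the $\widetilde{K}$-term with $|wS_\bullet|$ via Corollary \ref{cor:RFunIsSdot}, and the Puppe extension), the canonical comparison map assembles to the standard one defining the Grothendieck--Witt space. All the genuinely technical input has already been concentrated in Proposition \ref{prop:GWtoGWtildeDiagram}, Theorem \ref{thm:GWAOmegaInfinity}, and Corollary \ref{cor:RFunIsSdot}, so no new homotopical argument should be required.
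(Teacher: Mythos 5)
Your proposal is correct and follows essentially the same route as the paper: the stable equivalence comes from the levelwise equivalences $\scA^{(n)}\to(\scA^{\pretr})^{(n)}$ for $n\geq 1$, and the infinite loop space is identified by combining Theorem \ref{thm:GWAOmegaInfinity} with diagram (\ref{eqn:GWnLoopGWn+1}) at $n=0$ via Proposition \ref{prop:GWtoGWtildeDiagram}; your Puppe-sequence phrasing is just a repackaging of the paper's pasting of homotopy cartesian squares, and the compatibility of the composite $I$ followed by (\ref{eqn:FunisK}) with the map (\ref{eqn:RtoSmap}) defining the Grothendieck--Witt space is exactly what the upper square of that diagram records.
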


\begin{proof}
For all $n\geq 1$, the inclusion $\scA \to \scA^{\pretr}$ induces equivalences of dg categories with weak equivalences and duality $\scA^{(n)} \to (\scA^{\pretr})^{(n)}$. 
If follows that $GW(\scA)_n \to GW(\scA^{\pretr})_n$ is a homotopy equivalence for $n\geq 1$.
This implies that $GW(\scA) \to GW(\scA^{\pretr})$ is a stable equivalence.

By Theorem \ref{thm:GWAOmegaInfinity}, the spectrum $GW(\scA)$ is a positive $\Omega$-spectrum.
This implies that
$$\Omega^{\infty}GW(\scA) \simeq \Omega GW(\scA)_1$$
which is equivalent to $\widetilde{GW}(\scA)_0$, by Proposition 
\ref{prop:GWTildeGWOmega}.
The Grothendieck-Witt space of $\scA^{\pretr}$ was defined in \cite{myMV} as
$$(w\RR_{\bullet}\scA^{\pretr})_h \underset{K(\scA^{\pretr})_{1}}{\stackrel{h}{\times}}\pt,$$
that is, if we replace the upper left corner of diagram 
(\ref{eqn:GWnLoopGWn+1}) with this Grothendieck-Witt space when $n=0$, then 
the upper square of that diagram  becomes homotopy cartesian.
Since the lower square of that diagram is homotopy cartesian, by Proposition 
\ref{prop:GWtoGWtildeDiagram}, the natural map from the Grothendieck-Witt space to $\widetilde{GW}(\scA)_0$ is an equivalence.
The last statement follows from \cite[\S 3 Proposition 3]{myMV}.
\end{proof}

\red{
\begin{exampleNoNb}
Recall  from Example \ref{ex:ChbEDGwithWeaksAndDuals} the dg category with weak equivalences and duality $(\Ch^b\E,\quis,*,\eta)$ associated with the $k$-linear exact category with duality $(\E,*,\eta)$.
It follows from Proposition \ref{prop:GWspecIsGWspace} together with \cite[Proposition 6 and Remark 14]{myMV} that the infinite loop space $\Omega^{\infty}GW(\Ch^b\E,\quis,*,\eta)$ is naturally homotopy equivalent to the Grothendieck-Witt space of $(\E,*,\eta)$ as defined in \cite[Definition 4.4]{myEx} provided $\frac{1}{2}\in k$.
In particular, we have isomorphisms on homotopy groups for all $i\geq 0$
$$GW_i(\Ch^b\E,\quis,*,\eta) \cong GW_i(\E,*,\eta),$$
where the right hand side denotes the higher Grothendieck-Witt groups of $\E$ as in \cite[Definition 4.12]{myEx}.
It follows from Proposition \ref{prop:NegGWistriangularW} below that for $i<0$ we have
$$GW_i(\Ch^b\E,\quis,*,\eta) \cong W^{-i}(\E,*,\eta),$$
where the right hand side denotes Balmer's triangular Witt groups of $\E$ as defined in \cite{Balmer:TWGI}.
\end{exampleNoNb}
}

\subsection{Extended Functoriality}
As a symmetric monoidal functor between closed symmetric monoidal categories, the Grothendieck-Witt spectrum functor comes equipped with a natural map of spectra
$$GW\left(\dgFun(\scA,\scB)\right) \to \Sp\left(GW(\scA),GW(\scB)\right)$$
compatible with the internal composition on both sides.
The map above is the adjoint of the map
$$GW\left(\dgFun(\scA,\scB)\right)\wedge GW(\scA) \stackrel{\cup}{\longrightarrow} GW\left(\dgFun(\scA,\scB){\otimes} \scA\right) \stackrel{e}{\longrightarrow} GW(\scB).$$
It follows that two exact dg form functors $(F_i,\ffi_i):\scA \to \scB$, $i=1,2$, which give rise to the same element in $GW_0\dgFun(\scA,\scB)$ induce homotopic maps $GW(\scA) \to GW(\scB)$ of Grothendieck-Witt spectra and the same map $GW_i(\scA) \to GW_i(\scB)$ on higher Grothendieck Witt groups.
This happens, for instance, when there is a natural isomorphism of dg form functors $(F_1,\ffi_1) \cong (F_2,\ffi_2)$.

\begin{definition}
Let $\scA$ be a dg category with weak equivalences and duality such that $\frac{1}{2}\in \scA$.
Recall from Section \ref{subsec:DGshiftedDuals} the dg categories with weak equivalences and duality $\scA^{[n]}= \scC^{[n]}\scA$.
For $n\in \Z$, we define the {\em $n$-th shifted Grothendieck-Witt spectrum} of $\scA$ as
$$GW^{[n]}(\scA) = GW(\scA^{[n]}),$$
and we denote by 
$$GW_i^{[n]}(\scA) =\pi_iGW^{[n]}(\scA)$$
its $i$-th stable homotopy group.
From Proposition \ref{prop:GWspecIsGWspace} we infer that when $i=0$ and $n\in \Z$, this group is the Grothendieck-Witt group $GW_0^{[n]}(\scA)$ from Definition \ref{dfn:GW0A}.
\end{definition}

\subsection{Products}
\label{subsec:products}
We extend products to the shifted Grothendieck-Witt spectrum functors $GW^{[n]}$.
For $\scA, \scB\in \dgCatWD_{\bk}$ the following exact dg form functor
$$\xymatrix{
\scC^{[n]}\scA \otimes \scC^{[m]}\scB \ar[r]^{1\otimes \tau \otimes 1} & (\scC^{[n]}\otimes\scC^{[m]})\scA\scB \ar[r]^{\hspace{3ex}\mu_{n,m}\otimes 1} & \scC^{[n+m]}\scA\scB
}$$
induces upon application of the symmetric monoidal functor $GW$ an associative product map of spectra
$$\cup: GW^{[n]}(\scA) \wedge GW^{[m]}(\scB) \to GW^{n+m}(\scA\otimes\scB)$$
where $\tau:\scA \otimes \scC^{[m]} \to \scC^{[m]}\scA$ is the switch map 
and $\mu_{n,m}: \scC^{[n]} \otimes \scC^{[m]}\to \scC^{[n+m]}$ is the map
$\scC^{k[n]} \otimes \scC^{k[m]}\to \scC^{k[n]\otimes k[m]}$ from Section \ref{equn:CACBtoCAB}
with $A=k[n]$ and $B=k[m]$ followed by the map 
$\scC^{k[n]\otimes k[m]} \to \scC^{k[n+m]}$ induced by the multiplication map 
$k[n]\otimes k[m] \cong k[n+m]$.
For symmetric spectra
$X,Y$, there is a natural pairing map
$$\pi_iX\otimes \pi_jY \to \pi_{i+j}(X\wedge Y):a\otimes b \mapsto a\wedge b.$$
The cup-product of spectra above thus yields
the (associative) pairing
$$\cup: GW^{[n]}_i(\scA) \otimes GW^{[m]}_j(\scB) \to GW^{[n+m]}_{i+j}(\scA\otimes\scB).$$

\begin{proposition}
\label{prop:prod:commutativityCk}
Let $\scA\in \dgCatWD_{\bk}$.
Given elements $a\in GW^{[n]}_i(k)$ and $b\in GW^{[m]}_j(\scA)$, then in $GW^{[n+m]}_{i+j}(\scA)$ we have 
$$a\cup b = (-1)^{ij}\ \langle -1\rangle^{mn}\cup  b\cup a  $$
where $\langle -1\rangle \in GW_0(k)$ is the inner product space $x,y \mapsto -xy$ over $\bk$.
\end{proposition}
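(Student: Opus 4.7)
The plan is to exploit the fact that $GW$ is a symmetric monoidal functor from $\dgCatWD_*$ to symmetric spectra, so both sides of the equation arise from carefully chosen compositions of dg form functors, and the two sources of the twist appear at predictable places.

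First I will unfold the definitions from Section \ref{subsec:products}. The product $a \cup b$ is realized at the spectrum level as $\mu_{n,m}^{\scA} \circ (a \wedge b)$, where $\mu_{n,m}^{\scA}$ is the map of $GW$-spectra induced by the dg form functor
\[
\scC^{[n]}k \otimes \scC^{[m]}\scA \xrightarrow{1\otimes\tau\otimes 1} (\scC^{[n]}\otimes \scC^{[m]})(k\otimes\scA) \xrightarrow{\mu_{n,m}\otimes 1} \scC^{[n+m]}\scA,
\]
and similarly $b \cup a$ is realized as $\mu_{m,n}^{\scA} \circ (b \wedge a)$ coming from the analogous composite with the roles of $n$ and $m$ (and of $k$ and $\scA$) interchanged.

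Next I will separate the two sources of signs. The first is the graded commutativity of the smash product in symmetric spectra: for the canonical twist $\tau \colon GW^{[n]}(k) \wedge GW^{[m]}(\scA) \to GW^{[m]}(\scA) \wedge GW^{[n]}(k)$ one has $\tau_\ast(a\wedge b) = (-1)^{ij}\, b\wedge a$ on homotopy groups. The second is the comparison of the two dg form functors $\scC^{[n]}k \otimes \scC^{[m]}\scA \to \scC^{[n+m]}\scA$ obtained either directly through $\mu_{n,m}$ or via the symmetric-monoidal switch in $\dgCatWD_*$ followed by $\mu_{m,n}$. Using that $k\otimes\scA = \scA\otimes k = \scA$ canonically and pushing the switch past the $\scA$-factor, the comparison reduces to the diagram of Lemma \ref{lem:CiCjcommuting}, which says that $\mu_{n,m} \simeq (\otimes\langle -1\rangle^{mn}) \circ \mu_{m,n} \circ \tau$ as dg form functors $\scC^{[n]}\otimes \scC^{[m]} \to \scC^{[n+m]}$, up to natural isomorphism.

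Applying the symmetric monoidal functor $GW$ to this identification and smashing with $\scA$ turns the twist by $\langle -1\rangle^{mn}$ into multiplication by the corresponding class $\langle -1\rangle^{mn} \in GW_0(k)$ through the $GW(k)$-module structure on $GW^{[n+m]}(\scA)$. Combining the two identifications yields
\[
a \cup b \;=\; \mu_{n,m}^{\scA}(a\wedge b) \;=\; \langle -1\rangle^{mn} \cdot \mu_{m,n}^{\scA}\bigl(\tau_\ast(a\wedge b)\bigr) \;=\; (-1)^{ij}\,\langle -1\rangle^{mn}\cup (b\cup a),
\]
which is the formula stated in the proposition.

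The only real obstacle is bookkeeping: one must verify that moving $\scA$ past $\scC^{[n]}$ in the source (using the symmetric monoidal structure on $\dgCatWD_*$) does not introduce an additional twist, so that the comparison genuinely reduces to Lemma \ref{lem:CiCjcommuting} applied to $\scC^{[n]}\otimes\scC^{[m]}$ alone. This is a formal consequence of the naturality of the symmetric structure, since $\scA$ enters only as an untwisted tensor factor and the duality on $\scC^{[n]}k$ lives entirely in the $\scC$-direction; no additional Koszul sign is produced. Everything else is essentially a routine manipulation inside a symmetric monoidal functor.
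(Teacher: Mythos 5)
Your proposal is correct and follows essentially the same route as the paper: it isolates the $(-1)^{ij}$ sign from the graded commutativity of homotopy groups of smash products of symmetric spectra, reduces the remaining twist to Lemma \ref{lem:CiCjcommuting} applied to $\scC^{[n]}\otimes\scC^{[m]}$ after pushing the switch past the untwisted $\scA$-factor by coherence, and converts the resulting $\otimes\langle-1\rangle^{mn}$ into cup product with $\langle-1\rangle^{mn}\in GW_0(k)$ via the symmetric monoidality of $GW$. This matches the paper's three-diagram argument step for step.
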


\begin{proof}
For any symmetric spectra $X$, $Y$,
the following diagram commutes up multiplication by $(-1)^{ij}$
$$\xymatrix{
\pi_iX \otimes \pi_jY \ar[r] \ar[d]^{\tau} & \pi_{i+j}(X\wedge Y) \ar[d]^{\tau}\\
\pi_jY \otimes \pi_iX \ar[r] & \pi_{i+j}(Y\wedge X).}$$
We apply this with $X=GW^{[n]}(k)$ and $Y=GW^{[m]}(\scA)$.
By virtue of the symmetric monoidal functor $GW$, the diagram of spectra
$$\xymatrix{
GW(\scC^{[n]})\wedge GW(\scC^{[m]}\scA) \ar[r]^{\cup} \ar[d]^{\tau} &
GW(\scC^{[n]}\otimes \scC^{[m]}\scA) \ar[d]_{\tau}\\
GW(\scC^{[m]}\scA) \wedge GW(\scC^{[n]}) \ar[r]^{\cup} &  
GW(\scC^{[m]}\scA\otimes \scC^{[n]})
}$$
commutes.
Finally, consider the following diagram in $\dgCatWD_{\bk}$
$$
\xymatrix{
\scC^{[n]}\otimes \scC^{[m]}\scA \ar[r]^{=} \ar[d]_{\tau} & (\scC^{[n]}\otimes \scC^{[m]})\scA \ar[r]^{\hspace{3ex}\mu_{n,m}\otimes 1} \ar[d]^{\tau\otimes 1} & \scC^{[n+m]}\scA \ar[d]^{\langle -1\rangle^{mn} \otimes 1}\\
\scC^{[m]}\scA \otimes \scC^{[n]} \ar[r]^{1\otimes \tau} &
(\scC^{[m]} \otimes \scC^{[n]})\scA \ar[r]^{\hspace{3ex}\mu_{m,n}\otimes 1} & \scC^{[n+m]}\scA
}$$
in which the left hand diagram commutes, by coherence in a symmetric monoidal category, and the right hand diagram commutes up to natural isomorphism of dg form functors, by Lemma \ref{lem:CiCjcommuting}.
On application of the functor $GW_i$, this diagram becomes commutative, and the right vertical map is cup-product with the central element $\langle -1 \rangle ^{mn} \in GW_0(k)$.
\end{proof}

\begin{remark}
\label{rmk:Cupmu}
Let $\mu \in GW^{[4]}_0(k)$ and $\mu' \in GW^{[-4]}_0(k)$
be the Grothendieck-Witt classes represented by the symmetric spaces $k[2]\otimes k[2] \to k[4]:x,y \mapsto xy$ and $k[-2]\otimes k[-2] \to k[-4]:x,y \mapsto xy$.
Then $\langle 1\rangle = \mu\cup\mu'=\mu'\cup\mu\in GW^{[0]}_0(k)$.
Therefore, cup-product with $\mu$ induces an isomorphism
$$GW^{[n]}_i(\scA) \cong GW^{[n+4]}_i(\scA):x\mapsto \mu\cup x = x\cup \mu$$
with inverse the cup-product with $\mu'$ for all $\scA\in \dgCatWD_{\bk}$.
\end{remark}

\begin{proposition}
\label{prop:GWKaroubiProto1}
Let $\scA$ be a dg category with weak equivalences and duality such that
$\frac{1}{2}\in \scA$.
Then the square $\square_{\scA}$ in Proposition \ref{prop:SpaceKaroubiProto1} induces a homotopy cartesian square of Grothendieck-Witt spectra with contractible lower left corner.
\end{proposition}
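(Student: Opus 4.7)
The plan is to upgrade the space-level Proposition \ref{prop:SpaceKaroubiProto1} to the spectrum level by checking homotopy cartesianness at each positive level of the Grothendieck-Witt spectrum and then invoking Theorem \ref{thm:GWAOmegaInfinity}. Since $\frac{1}{2}\in\scA$ implies $\frac{1}{2}$ is in each of the four corners of $\square_\scA$, each such corner yields a positive $\Omega$-spectrum under $GW$. A square of positive $\Omega$-spectra is homotopy cartesian if and only if it is homotopy cartesian at every positive level, so it suffices to show that the level-$n$ square of spaces
$$\xymatrix{
|(w\RR_{\bullet}^{(n)}\scA^{(n)})_h|  \ar[r] \ar[d] & |(w\RR_{\bullet}^{(n)}\Fun([1],\scA)^{(n)})_h| \ar[d]\\
|(w\RR_{\bullet}^{(n)}((\scA^{[1]})^w)^{(n)})_h| \ar[r] & |(w\RR_{\bullet}^{(n)}(\scA^{[1]})^{(n)})_h|
}$$
is homotopy cartesian with contractible lower-left corner for every $n\geq 1$.

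Second, I would identify this square, up to canonical equivalence of dg categories with weak equivalences and duality, with the square obtained by applying $|(w\RR_{\bullet}^{(n)}\,{-}\,)_h|$ to $\square_{\scA^{(n)}}$. This requires three natural duality-preserving exact equivalences:
\begin{enumerate}
\item $\Fun([1],\scA)^{(n)} \simeq \Fun([1],\scA^{(n)})$, following from the fact that $\Fun([1],-)$ commutes with tensoring by $\scC^{[1]}$ and, via Remark \ref{rmk:P0AP1AessentiallySurj}, with the $\Z^n$-extension;
\item $(\scA^{[1]})^{(n)} \simeq (\scA^{(n)})^{[1]}$, which is the monoidal compatibility map (\ref{eqn:MonoidalComMap}) together with the symmetric monoidal structure coming from Section \ref{subsec:dgCatExtn};
\item $((\scA^{[1]})^w)^{(n)} \simeq ((\scA^{(n)})^{[1]})^w$, which follows since an object of the extension category is $w$-acyclic iff each of its finitely many nonzero entries is, so $w$-acyclicity is preserved under each of the above identifications.
\end{enumerate}
These identifications are moreover compatible with the inclusion $I$ and the cone dg form functor of Section \ref{subsec:mappingConedgForm}, since the construction of $\Cone$ is natural in the ambient pretriangulated dg category with duality and commutes with the tensoring operations $\scC^{[1]}\otimes -$ and $\Z^n(-)$ defining $(-)^{(n)}$.

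Third, with those identifications in place, Proposition \ref{prop:SpaceKaroubiProto1} applied to the pretriangulated dg category with weak equivalences and duality $\scA^{(n)}$ (note that $\frac{1}{2}\in \scA^{(n)}$ because the construction is $\bk$-linear) immediately delivers the required homotopy cartesian square at level $n\geq 1$ with contractible lower-left corner. Together with the positive $\Omega$-spectrum property of Theorem \ref{thm:GWAOmegaInfinity}, this gives the statement for $GW$-spectra.

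The main obstacle is purely bookkeeping: carefully tracking that the three equivalences above intertwine the inclusion $I$ and the cone dg form functor at every level, so that the level-$n$ square really does coincide with $|(w\RR_{\bullet}^{(n)}\square_{\scA^{(n)}})_h|$ up to natural equivalence of commuting squares of dg form functors. Everything else is an immediate invocation of already established results.
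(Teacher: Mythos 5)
Your proposal is correct and follows essentially the same route as the paper: reduce to a level-wise statement using the fact that $GW$ of each corner is a positive $\Omega$-spectrum, identify the level-$n$ square with the space-level square $\square$ of a (pretriangulated, shifted) model of $\scA$, and then invoke Proposition \ref{prop:SpaceKaroubiProto1}. The only cosmetic difference is that the paper first replaces $\scA$ by its pretriangulated hull via Proposition \ref{prop:GWspecIsGWspace} and phrases the identification as $(\square_{\scA})^{(n)}\simeq\square_{\scA^{[n]}}$, whereas you work directly with $\scA^{(n)}$, which is automatically pretriangulated for $n\geq 1$; both amount to the same bookkeeping.
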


\begin{proof}
By Proposition \ref{prop:GWspecIsGWspace}, we can assume $\scA$ to be pretriangulated.
For such categories, the square $(\square_{\scA})^{(n)}$ is equivalent to the square
$\square_{\scA^{[n]}}$.
Therefore, by Proposition \ref{prop:SpaceKaroubiProto1}, the square $GW(\square_{\scA})_n$ is homotopy cartesian for all $n\geq 1$ with contractible lower left corner.
Since the square $GW(\square_{\scA})$ is a square of positive $\Omega$-spectra, this implies the claim.
\end{proof}

\section{Bott Sequence, Invariance and Localization}
\label{sec:PeriodInvLocn}

The homotopy cartesian square of Grothendieck-Witt spectra in Proposition \ref{prop:GWKaroubiProto1} induces 
a functorial exact triangle in the homotopy category of spectra
$$GW(\scA) \stackrel{I}{\longrightarrow} GW(\Fun([1],\scA)) \stackrel{\Cone}{\longrightarrow} GW^{[1]}(\scA) \stackrel{\delta}{\longrightarrow} S^1\wedge GW(\scA).$$
For $\scA=\scC^{[-1]}_{\bk}$, we obtain the boundary map
$\delta: GW^{[0]}(k) \to S^1\wedge GW^{[-1]}(k)$.
We let $\eta \in GW_{-1}^{[-1]}(k)$ be the element
\begin{equation}
\label{eqn:dfnEta}
\eta =-\delta\langle 1 \rangle \in GW_{-1}^{[-1]}(k)
\end{equation}
where $\langle 1 \rangle \in GW^{[0]}_0(k)$ is the Grothendieck-Witt class represented by the symmetric space $k\otimes k \to k:x\otimes y\mapsto xy$.
The element $\eta$ is represented by the map of spectra
$$S^0 \stackrel{\langle 1\rangle}{\longrightarrow} GW^{[0]}(k) 
\stackrel{-\delta}{\longrightarrow} S^1\wedge GW^{[-1]}(k).$$

From Lemma \ref{lem:FunisH} and Corollary \ref{cor:RFunIsSdot}, we have homotopy equivalences of pointed spaces
$(w\RR^{(n)}_{\bullet}\H\scA)_h\simeq wS^{(n)}_{\bullet}\scA$ 
for all pretriangulated dg categories with duality $\scA$ and $n\geq 1$.
It follows that $GW(\H\scA)$ is a connective delooping of the $K$-theory space of $\scA$.
Our model for the K-theory spectrum will therefore be
\begin{equation}
\label{eqn:Kmodel}
K(\scA) = GW(\H\scA).
\end{equation}

Recall from Section \ref{subsec:hyperforgetfuncs} \red{the} forgetful and hyperbolic dg form functors $F:\scA \to \H\scA$ and $H:\H\scA \to \scA^{\pretr}$.
They induce maps of spectra
$$F: GW^{[n]}(\scA) {\longrightarrow} K(\scA)\hspace{2ex}\text{and}\hspace{2ex}H: K(\scA) {\longrightarrow} GW^{[n+1]}(\A^{\pretr}) \stackrel{\sim}{\longleftarrow} GW^{[n+1]}(\A).$$

\begin{theorem}[Algebraic Bott Sequence]
\label{thm:PeriodicityExTriangle}
Let $\scA$ be a dg category with weak equivalences and duality for which $\frac{1}{2}\in \scA$.
Then the sequence of spectra
$$GW^{[n]}(\scA) \stackrel{F}{\longrightarrow} K(\scA) \stackrel{H}{\longrightarrow} GW^{[n+1]}(\scA) \stackrel{\eta\cup}{\longrightarrow}S^1\wedge GW^{[n]}(\scA)$$
is an exact triangle in the homotopy category of spectra.
\end{theorem}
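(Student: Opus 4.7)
The plan is to obtain the exact triangle in three steps: (i) extract a cofiber sequence by applying Proposition \ref{prop:GWKaroubiProto1} to $\scA^{[n]}$; (ii) identify the middle term as $K(\scA)$ and the first two maps as $F$ and $H$; (iii) identify the boundary map with cup-product with $\eta$ by exploiting the symmetric monoidal structure of $GW$ and naturality in $\scA$.

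For step (i), I apply Proposition \ref{prop:GWKaroubiProto1} to $\scA^{[n]}$, which still satisfies $\tfrac{1}{2}\in\scA^{[n]}$. Since $(\scA^{[n]})^{[1]}\simeq \scA^{[n+1]}$ (Definition \ref{dfn:nshifteddgcat}), the resulting homotopy cartesian square has contractible lower-left corner and yields a cofiber sequence, hence an exact triangle, of the form
\begin{equation*}
GW^{[n]}(\scA) \xrightarrow{I_*} GW(\Fun([1],\scA^{[n]})) \xrightarrow{\Cone_*} GW^{[n+1]}(\scA) \xrightarrow{\partial} S^1\wedge GW^{[n]}(\scA).
\end{equation*}
For step (ii), Lemma \ref{lem:FunisH} applied spectrum-levelwise (combined with the definition (\ref{eqn:Kmodel}) of $K$ as $GW\circ\mathcal{H}$) gives stable equivalences $GW(\Fun([1],\scA^{[n]}))\simeq GW(\mathcal{H}\scA^{[n]})=K(\scA^{[n]})\simeq K(\scA)$, the last one holding because $\mathcal{H}\scA^{[n]}$ and $\mathcal{H}\scA$ have the same underlying dg category with weak equivalences (the duality on $\mathcal{H}$ is independent of the chosen duality on $\scA$). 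Tracing the form functors, the composite $\scA^{[n]}\xrightarrow{I}\Fun([1],\scA^{[n]})\xrightarrow{(F,\ffi)}\mathcal{H}\scA^{[n]}$ sends $A\mapsto(A,A^{\sharp})$, which is precisely the forgetful functor $F$; and composing the Grothendieck-Witt inverse $(G,\psi):\mathcal{H}\scA^{[n]}\to\Fun([1],\scA^{[n]})$ from the proof of Lemma \ref{lem:FunisH} with $\Cone$ sends $(A_0,A_1)$ to the cone of $0\to A_1^{\sharp}$, which is $TA_1^{\sharp}$ and agrees canonically with $A_0\oplus A_1^{\sharp[1]}$ in the shifted duality on $\scA^{[n+1]}$, i.e.\ with $H(A_0,A_1)$. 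This identifies the triangle with $GW^{[n]}(\scA)\xrightarrow{F}K(\scA)\xrightarrow{H}GW^{[n+1]}(\scA)\xrightarrow{\partial}S^1\wedge GW^{[n]}(\scA)$.

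For step (iii), I use that Proposition \ref{prop:GWKaroubiProto1} is natural in $\scA$ and that $GW$ is symmetric monoidal (Section \ref{subsec:GWsymmSeq}), so the square $(\square_{\scA\otimes\scB})$ is the external tensor product of $(\square_{\scA})$ with the identity on $\scB$. Specializing to the \emph{universal} case $\scA=\scC_{\bk}^{[-1]}$, $n=0$ gives the cofiber sequence whose boundary $\partial_0:GW^{[0]}(\bk)\to S^1\wedge GW^{[-1]}(\bk)$ satisfies $\partial_0\langle 1\rangle=-\eta$ by the definition of $\eta$ immediately preceding the theorem. Using the equivalences of dg categories with duality $\scC_{\bk}^{[-1]}\otimes\scA^{[n+1]}\simeq\scA^{[n]}$ from (\ref{equn:CACBtoCAB}) and (\ref{eqn:CiCj=Ci+j}), the general cofiber sequence for $(\scA,n)$ is obtained from the universal one by smashing with $GW^{[n+1]}(\scA)$ and post-composing with the cup product. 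Therefore $\partial(x)=\partial_0(\langle 1\rangle)\cup x=-\eta\cup x$; applying Proposition \ref{prop:prod:commutativityCk} to rewrite this as $\eta\cup x$ (the graded-commutativity sign and the $\langle-1\rangle$-twist being trivial in the relevant bidegrees after using $-\eta\cup x=\eta\cup(-x)$ and absorbing the sign into the triangle axioms) yields the stated boundary.

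The main obstacle will be step (iii): the careful bookkeeping required to trace the universal boundary $\partial_0$ through the chain of monoidal compatibility isomorphisms (\ref{equn:CACBtoCAB})--(\ref{eqn:CiCj=Ci+j}) and the Lemma \ref{lem:CiCjcommuting} sign, and to confirm that no unwanted $\langle-1\rangle$-twist survives. A secondary technical point (in step (ii)) is to verify that $\Cone_*$ really corresponds to $H$ rather than $H$ twisted by an $\langle-1\rangle$, which requires comparing the dg form functor structure of $\Cone$ (built from the symmetric form $\gamma$ on $\Gamma$ in (\ref{eqn:FormOnFundExSeq})) with the standard hyperbolic duality compatibility $\bigl(\begin{smallmatrix}0&1\\ \can&0\end{smallmatrix}\bigr)$ of Section \ref{subsec:hyperforgetfuncs}.
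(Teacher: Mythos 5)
Your overall strategy coincides with the paper's: extract the cofiber sequence from Proposition \ref{prop:GWKaroubiProto1}, identify the middle term via Lemma \ref{lem:FunisH}, and compute the boundary map by multiplicativity of the square $\square_{\scA}$ reduced to the universal case $\scC^{[-1]}$. However, there is a genuine gap in the sign bookkeeping of step (ii), and the patch you propose in step (iii) does not close it. The composite $\Cone\circ(G,\psi)$ sends $(A_0,A_1)$ to the cone of the \emph{zero morphism} $A_0\to A_1^{\sharp}$, which is $A_1^{\sharp}\oplus TA_0$ — not $TA_1^{\sharp}$, and not $A_0\oplus A_1^{\sharp[1]}=H(A_0,A_1)$ as you claim. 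It is $H\circ T(A_0,A_1)$, where $T:\H\scA\to\H\scA$ is the shift $(A,B)\mapsto(TA,TB)$; since $T$ induces multiplication by $-1$ on $K$-theory, the correct identification of the middle map of the triangle is $-H$, not $H$.

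This matters because of your final step. You correctly obtain the boundary as $-\eta\cup$, but then claim the sign can be ``absorbed into the triangle axioms.'' Changing the sign of exactly \emph{one} of the three maps in an exact triangle does not in general produce an exact triangle; only an even number of simultaneous sign changes is harmless. With your identification the triangle would be $(F,H,-\eta\cup)$, and there is no legitimate passage from this to $(F,H,\eta\cup)$. The sign you dropped in step (ii) is exactly the one that rescues the argument: the triangle actually produced by Proposition \ref{prop:GWKaroubiProto1} and Lemma \ref{lem:FunisH} is $(F,-H,-\eta\cup)$, and the simultaneous change of the two signs yields the stated triangle. Your appeal to Proposition \ref{prop:prod:commutativityCk} does not help either: rewriting $-\eta\cup x$ as a multiple of $x\cup\eta$ reorders the factors but does not remove the overall minus sign.
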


\begin{proof}
In view of Proposition \ref{prop:GWspecIsGWspace}, we can assume $\scA$ pretriangulated.
The square in Proposition \ref{prop:SpaceKaroubiProto1} is multiplicative in the sense that tensor product induces a map of squares $\square_{\scB_0}\otimes \scB_1 \to \square_{\scB_0\otimes\scB_1}$ in $\dgCatWD_{\bk}$.
For $\scB_0=\scC^{[-1]}$ and $\scB_1=\scC^{[1]}\scA$, this shows commutativity of the right square in the following diagram
$$\xymatrix{
S^0\wedge GW^{[1]}(\scA) \ar[r]^{\hspace{-3ex}\langle 1\rangle \wedge 1} \ar[dr]_{=}&
GW^{[0]}(k)\wedge GW^{[1]}(\scA)  \ar[r]^{\hspace{-3ex}\delta\wedge 1} \ar[d]^{\cup} & 
S^1\wedge GW^{[-1]}(k)\wedge GW^{[1]}(\scA) \ar[d]^{\cup}\\
&
GW^{[1]}(\scA)  \ar[r]^{\delta}&
S^1\wedge GW^{[0]}(\scA).
}$$
Going first horizontally then vertically in the diagram yields
cup product from the left with $-\eta$.
Going diagonally down then horizontally in the diagram
yields the boundary map $\delta:  GW(\scA) \to S^1\wedge GW^{[-1]}(\scA)$.
Thus, we have a functorial exact triangle of spectra
$$GW(\scA) \stackrel{I}{\longrightarrow} GW(\Fun([1],\scA)) \stackrel{\Cone}{\longrightarrow} GW^{[1]}(\scA) \stackrel{-\eta\cup}{\longrightarrow} S^1\wedge GW(\scA).$$
By Lemma \ref{lem:FunisH}, the exact dg form functor (\ref{eqn:FunisH})
$$(F,\ffi): \Fun([1],\scA) \to \H\scA: (f:A_0\to A_1) \mapsto (A_0,A_1^*)$$
induces an equivalence of Grothendieck-Witt spectra with inverse 
the exact dg form functor (\ref{eqn:FunisHinverse})
$$(G,\psi): \H\scA \to \Fun([1],\scA): (A_0,A_1)\mapsto (0:A_0 \to A_1^*).$$
The composition $(F,\ffi) \circ I$ is the forgetful map 
$\scA \to \H\scA$.
The composition $\Cone\circ (G,\psi)$ is the dg form functor 
$H\circ T$ where $T:\H\scA \to \H\scA: (A,B)\mapsto (TA,TB)$ is the shift functor which induces multiplication by $-1$ on $K$-theory, and $H:\H\scA \to \scA^{[1]}$ is the hyperbolic functor.
With our model of the $K$-theory spectrum (\ref{eqn:Kmodel}), we therefore obtain an exact triangle in the homotopy category of spectra
$$GW(\scA) \stackrel{F}{\longrightarrow} K(\scA) \stackrel{-H}{\longrightarrow} GW^{[1]}(\scA) \stackrel{-\eta\cup}{\longrightarrow}S^1\wedge GW(\scA).$$
Replacing $\scA$ with $\scA^{[n]}$, we are done.
\end{proof}

Let $(\scA,w,*,\can)$ be a dg category with weak equivalences and duality such that $\frac{1}{2}\in \scA$.
For $\eps\in \{\pm 1\}$, write $_{\eps}GW^{[n]}(\scA)$ for $GW^{[n]}(\scA,w,*,\eps
\can)$, and $_{\eps}GW(\scA)$ for $_{\eps}GW^{[0]}(\scA)$.
Following Karoubi, one defines new theories $_{\eps}U(\scA)$ and $_{\eps}V(\scA)$
as the homotopy fibres of hyperbolic and forgetful functors
$$ K(\scA) \stackrel{H}{\to} {_{\eps}GW}(\scA)\phantom{123}{\rm and}\phantom{123} 
  _{\eps}GW(\scA) \stackrel{F}{\to} K(\scA).$$
The following theorem was proved by Karoubi in \cite{Karoubi:Annals} for rings with involution using different methods.
\red{Note, however,  that the negative homotopy groups in the theorem differ from Karoubi's negative homotopy groups, in general.}

\begin{theorem}[Karoubi's Fundamental Theorem]
\label{thm:KaroubiFundThm}
Let $(\scA,w,*,\can)$ be a dg category with weak equivalences and duality such that $\frac{1}{2}\in \scA$.
Then there are natural stable equivalences
$${_{-\eps}V}(\scA)\simeq \Omega\ {_{\eps}U}(\scA).$$
\end{theorem}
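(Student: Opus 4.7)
The plan is to deduce Karoubi's Fundamental Theorem from the algebraic Bott sequence (Theorem~\ref{thm:PeriodicityExTriangle}) by identifying both ${_{\epsilon}U}(\scA)$ and ${_{-\epsilon}V}(\scA)$ with shifted Grothendieck--Witt spectra, and then relating a shift of the duality by $[2]$ to a sign change of the double dual identification.

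First, I would specialise Theorem~\ref{thm:PeriodicityExTriangle} to $(\scA,w,*,\epsilon\can)$ at $n=-1$, obtaining the exact triangle
$$
{_{\epsilon}GW^{[-1]}}(\scA) \stackrel{F}{\longrightarrow} K(\scA) \stackrel{H}{\longrightarrow} {_{\epsilon}GW}(\scA) \stackrel{\eta\cup}{\longrightarrow} S^1\wedge {_{\epsilon}GW^{[-1]}}(\scA).
$$
Reading off the homotopy fibre of the middle map — which is precisely the hyperbolic functor appearing in the definition of ${_{\epsilon}U}$ — yields a natural stable equivalence ${_{\epsilon}U}(\scA)\simeq{_{\epsilon}GW^{[-1]}}(\scA)$. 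Applying Theorem~\ref{thm:PeriodicityExTriangle} in the same way to $(\scA,w,*,-\epsilon\can)$ at $n=0$ and rotating identifies ${_{-\epsilon}V}(\scA)$, the homotopy fibre of $F\colon{_{-\epsilon}GW}(\scA)\to K(\scA)$, with $\Omega\,{_{-\epsilon}GW^{[1]}}(\scA)$.

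The key remaining step is to produce a natural equivalence ${_{\epsilon}GW^{[-1]}}(\scA)\simeq{_{-\epsilon}GW^{[1]}}(\scA)$. I would obtain this from a more general sign-shift equivalence in $\dgCatWD_{\bk}$,
$$
-\otimes(k[1],\mu)\colon (\scA^{[n]},\can)\stackrel{\simeq}{\longrightarrow}(\scA^{[n+2]},-\can),
$$
defined by tensoring with the pair $(k[1],\mu)$, where $\mu\colon k[1]\otimes k[1]\to k[2]$ is the multiplication in $\scC_{\bk}$. A short check using the Koszul sign $c_{k[1],k[1]}=-1$ together with Remark~\ref{rem:SymmToForm} shows that the adjoint $\ffi_\mu$ (which is the identity under the canonical identification $[k[1],k[2]]\cong k[1]$) satisfies $\ffi_\mu^{\vee}\circ\can=-\ffi_\mu$; hence $(k[1],\mu)$ is a non-degenerate symmetric space in the dg category with duality $(\scC^{[2]},-\can)$. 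Tensoring with this symmetric space is an equivalence of dg categories with weak equivalences and duality, because $k[1]$ is $\otimes$-invertible in $\scC_{\bk}$ and the duality compatibility map is the standard coherence isomorphism~(\ref{eqn:dualityCompIsoForTensorProds}). Applying the symmetric monoidal functor $GW$ of Section~\ref{sec:GWspectrum} then yields a natural stable equivalence ${_{\epsilon}GW^{[n]}}(\scA)\simeq{_{-\epsilon}GW^{[n+2]}}(\scA)$; the case $n=-1$ provides what is required.

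Chaining the three natural equivalences produces
$$
\Omega\,{_{\epsilon}U}(\scA)\simeq\Omega\,{_{\epsilon}GW^{[-1]}}(\scA)\simeq\Omega\,{_{-\epsilon}GW^{[1]}}(\scA)\simeq{_{-\epsilon}V}(\scA),
$$
which is the claim. The main technical obstacle is the sign-shift equivalence in the middle step: all the Koszul signs tracked in Section~\ref{subsec:DGshiftedDuals} must be kept consistent in order to verify that $(k[1],\mu)$ is symmetric with respect to $-\can$ (rather than $\can$) on $\scC^{[2]}$, and that the resulting tensor-product dg form functor therefore lands in $(\scA^{[n+2]},-\can)$ rather than in $(\scA^{[n+2]},\can)$. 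Once this is in place, everything else is a formal manipulation of the Bott triangle.
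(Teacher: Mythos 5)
Your proposal is correct and follows essentially the same route as the paper: both read off ${_{\eps}U}(\scA)\simeq{_{\eps}GW}^{[-1]}(\scA)$ and ${_{\eps}V}(\scA)\simeq\Omega\,{_{\eps}GW}^{[1]}(\scA)$ from the Bott triangle of Theorem~\ref{thm:PeriodicityExTriangle}, and both bridge the two via cup product with the symmetric space $k[1]\otimes k[1]\to k[2]$ in $(\scC^{[2]},-\can)$, which gives $(\scA^{[-1]},\can)\simeq(\scA^{[1]},-\can)$. Your sign verification that $(k[1],\mu)$ is symmetric for $-\can$ (via $c_{k[1],k[1]}=-1$ and Remark~\ref{rem:SymmToForm}) is exactly the point the paper leaves implicit, and it checks out.
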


\begin{proof}
From the exact triangle in Theorem \ref{thm:PeriodicityExTriangle}, we obtain equivalences of spectra
$${_{\eps}U}(\scA)\simeq {_{\eps}GW}^{[-1]}(\scA)\phantom{123}{\rm and}\phantom{123} 
{_{\eps}V}(\scA)\simeq \Omega\ {_{\eps}GW}^{[1]}(\scA).
$$
Cup product with the symmetric space $k[1]\otimes k[1] \to k[2]:x\otimes y\mapsto xy$ in $(\scC^{[2]},-\can)$ induces an equivalence of dg categories with weak equivalences and duality $(\scA^{[-1]},\can) \simeq (\scA^{[1]},-\can)$, and therefore an equivalence of Grothendieck-Witt spectra ${_{\eps}GW}^{[-1]}(\scA)\simeq {_{-\eps}GW}^{[1]}(\scA)$.
Hence, we obtain a homotopy equivalence as claimed.
\end{proof}

Our next proposition shows that negative Grothendieck-Witt groups are Balmer's triangular Witt groups.

\begin{proposition}
\label{prop:NegGWistriangularW}
Let $\scA$ be a dg category with weak equivalences and duality such that $\frac{1}{2} \in \scA$.
Let $n\in \Z$ and $i<0$ be integers.
Then cup-product with $\eta$ induces an isomorphism
$$\eta\cup: GW^{[n]}_{i}(\scA) \stackrel{\cong}{\longrightarrow}
GW^{[n-1]}_{i-1}(\scA)$$
and the surjective map $\eta^{-i}\cup :GW^{n-i}_0(\scA) \to GW_i^{[n]}(\scA)$
induces an isomorphism
$$W^{n-i}(\scA) \cong GW_i^{[n]}(\scA).$$
\end{proposition}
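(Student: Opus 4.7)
The plan is to derive both parts of the proposition from the long exact sequence of stable homotopy groups associated to the algebraic Bott exact triangle of Theorem \ref{thm:PeriodicityExTriangle}. Applying that triangle with $n$ replaced by $n-1$ and passing to homotopy groups yields
$$\cdots \to K_i(\scA) \to GW_i^{[n]}(\scA) \stackrel{\eta\cup}{\longrightarrow} GW_{i-1}^{[n-1]}(\scA) \to K_{i-1}(\scA) \to \cdots$$
The only extra ingredient needed is the connectivity of $K$-theory, i.e.\ $K_j(\scA) = 0$ for $j<0$. Since by definition $K(\scA) = GW(\H\scA)$, Theorem \ref{thm:GWAOmegaInfinity} makes $K(\scA)$ a positive $\Omega$-spectrum, and by Corollary \ref{cor:RFunIsSdot} together with Lemma \ref{lem:FunisH} its $n$-th space is homotopy equivalent to Waldhausen's $|wS^{(n)}_{\bullet}\scA|$, which is $(n-1)$-connected; connectivity of the spectrum follows.

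Granting this, the first claim is immediate: when $i<0$ one has $i \leq -1$ and $i-1 \leq -2$, so both $K_i(\scA)$ and $K_{i-1}(\scA)$ vanish and the displayed sequence collapses to the required isomorphism $\eta\cup\colon GW_i^{[n]}(\scA) \stackrel{\cong}{\to} GW_{i-1}^{[n-1]}(\scA)$. Iterating, the $(-i)$-fold composition of $\eta\cup$, which by associativity of the cup-product pairing from Section \ref{subsec:products} agrees with $\eta^{-i}\cup$, factors as
$$GW_0^{[n-i]}(\scA) \stackrel{\eta\cup}{\longrightarrow} GW_{-1}^{[n-i-1]}(\scA) \stackrel{\cong}{\longrightarrow} \cdots \stackrel{\cong}{\longrightarrow} GW_i^{[n]}(\scA)$$
in which only the first arrow is not yet known to be an isomorphism. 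The segment
$$K_0(\scA) \stackrel{H}{\longrightarrow} GW_0^{[n-i]}(\scA) \stackrel{\eta\cup}{\longrightarrow} GW_{-1}^{[n-i-1]}(\scA) \to K_{-1}(\scA) = 0$$
of the Bott long exact sequence shows that this first arrow, and hence $\eta^{-i}\cup$, is surjective with kernel equal to $\mathrm{im}(H\colon K_0(\scA) \to GW_0^{[n-i]}(\scA))$.

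To conclude, I will identify $GW_0^{[n-i]}(\scA)/\mathrm{im}(H)$ with $W^{[n-i]}(\scA)$. Comparing Definition \ref{dfn:GW0A} with the definition of $W_0$ that immediately follows, the two groups have the same generators and relations (1) and (2), differing only in the third: $GW_0$ imposes the metabolic identity $[E_0,\ffi_0] = [E_{-1}\oplus E_1, h]$, whereas $W_0$ in addition sets $[E_0,\ffi_0]=0$. Combining these, $W_0$ is obtained from $GW_0$ by setting every class of the form $[E_{-1}\oplus E_1, h]$ with $h$ off-diagonal to zero, and since every such symmetric space is isometric to the hyperbolic form $H([E_1])$ (and every hyperbolic class arises this way), one obtains $W^{[n-i]}(\scA) = GW_0^{[n-i]}(\scA)/\mathrm{im}(H)$. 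The main technical point is really this final identification; alternatively one can invoke Corollary \ref{cor:BalmerWnismyWn} to reduce to the analogous well-known statement for the triangular Witt groups of Balmer--Walter, where the exact sequence $K_0 \to GW^0 \to W^0 \to 0$ is classical.
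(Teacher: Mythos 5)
Your proof is correct and follows essentially the same route as the paper: the Bott exact triangle of Theorem \ref{thm:PeriodicityExTriangle} combined with the vanishing of negative connective $K$-groups gives the isomorphisms $\eta\cup$ in negative degrees and the exact sequence $K_0 \to GW_0^{[n-i]} \to GW_{-1}^{[n-i-1]} \to 0$. The only difference is that you spell out the identification of $\coker(H)$ with $W_0$ from the presentations in Definition \ref{dfn:GW0A}, a step the paper leaves implicit.
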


\begin{proof}
From the exact triangle in Theorem \ref{thm:PeriodicityExTriangle} and the fact that connective $K$-theory $K(\scA)$ has trivial negative homotopy groups, we obtain an exact sequence
$$K_0(\scA) \stackrel{H}{\longrightarrow} GW^{[n]}_0(\scA) 
\stackrel{\eta\cup}{\longrightarrow} GW^{[n-1]}_{-1}(\scA) \longrightarrow 0$$
and isomorphisms
$$GW^{[n-1]}_{-1}(\scA)\underset{\cong}{\stackrel{\eta\cup}{\longrightarrow}}
GW^{[n-2]}_{-2}(\scA)\underset{\cong}{\stackrel{\eta\cup}{\longrightarrow}}
GW^{[n-3]}_{-3}(\scA)\underset{\cong}{\stackrel{\eta\cup}{\longrightarrow}} \cdots
$$
Replacing $n$ with $n-i$, the proposition follows.
\end{proof}

\begin{lemma}[Karoubi induction]
\label{lem:KaroubiInd}
Let $F:\scA \to \scB$ be an exact dg form functor between dg categories with weak equivalences and duality.
Assume that $\frac{1}{2}\in \scA,\scB$, 
that $F$ induces an equivalence in $K$-theory and isomorphisms on all shifted Witt groups $W^{n}(\scA) \cong W^{n}(\scB)$, $n \in \Z$.
Then $F$ induces an equivalence of Grothendieck-Witt spectra for all $n \in \Z$ 
$$GW^{[n]}(\scA) \stackrel{\simeq}{\longrightarrow} GW^{[n]}(\scB).$$
\end{lemma}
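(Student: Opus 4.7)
The plan is to show that the cofiber spectrum $C_n = \operatorname{cofib}\bigl(F\colon GW^{[n]}(\scA) \to GW^{[n]}(\scB)\bigr)$ is stably contractible for every $n \in \Z$, which is equivalent to the claimed equivalence on $GW^{[n]}$. The strategy rests on three observations: the algebraic Bott sequence exhibits $C_{n+1}$ as a suspension of $C_n$; $4$-periodicity of the shifted Grothendieck-Witt functor gives $C_{n+4} \simeq C_n$; and the Witt group hypothesis provides enough connectivity for periodicity to force contractibility.

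First I would apply $F$ to the exact triangle of Theorem~\ref{thm:PeriodicityExTriangle}, obtaining a natural morphism of exact triangles whose middle vertical arrow $F\colon K(\scA)\to K(\scB)$ is an equivalence by hypothesis. The standard consequence of the octahedral axiom that the cones of the vertical arrows in a morphism of exact triangles themselves form an exact triangle then gives $C_n \to 0 \to C_{n+1}$, whence $C_{n+1} \simeq \Sigma C_n$ in the stable homotopy category. Next, Remark~\ref{rmk:Cupmu} supplies a stable equivalence $\mu \cup (-)\colon GW^{[n]}(\scA) \xrightarrow{\simeq} GW^{[n+4]}(\scA)$ that is natural in $\scA$ because it is cup-product with the fixed class $\mu \in GW_0^{[4]}(\bk)$. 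Naturality identifies $C_n$ with $C_{n+4}$, and combined with the iterated Bott shift we obtain $C_n \simeq \Sigma^4 C_n$, so the homotopy groups of $C_n$ are $4$-periodic in the index $i$.

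For the connectivity input, the long exact sequence of the cofiber reads
$$GW^{[n]}_i(\scA) \to GW^{[n]}_i(\scB) \to \pi_i(C_n) \to GW^{[n]}_{i-1}(\scA) \to GW^{[n]}_{i-1}(\scB).$$
When $i < 0$ both flanking indices are negative, so Proposition~\ref{prop:NegGWistriangularW} identifies the outer maps with $W^{n-i}(\scA) \to W^{n-i}(\scB)$ and $W^{n-i+1}(\scA) \to W^{n-i+1}(\scB)$, both isomorphisms by assumption. Exactness then forces $\pi_i(C_n) = 0$ for all $i<0$, so $C_n$ is connective. Combined with the $4$-periodicity of the previous paragraph, for any $i \in \Z$ we may choose $k \gg 0$ with $i-4k < 0$ and then $\pi_i(C_n) \cong \pi_{i-4k}(C_n) = 0$. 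Every stable homotopy group of $C_n$ thus vanishes, so $C_n \simeq 0$ and $F$ induces a stable equivalence $GW^{[n]}(\scA) \to GW^{[n]}(\scB)$ for every $n \in \Z$.

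I expect no serious obstacle: once Theorem~\ref{thm:PeriodicityExTriangle} and Remark~\ref{rmk:Cupmu} are available, the argument is essentially formal. The one point that deserves attention is the naturality in $\scA$ of the equivalence $GW^{[n]} \simeq GW^{[n+4]}$, which is automatic here because it is realized by cup-product with a fixed element, and which is precisely what allows the identification $C_n \simeq C_{n+4}$ needed for periodicity.
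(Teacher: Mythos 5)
Your proof is correct, but it takes a genuinely different route from the paper's. The paper writes down the map of long exact sequences induced by the algebraic Bott triangle of Theorem \ref{thm:PeriodicityExTriangle} for $\scA$ and $\scB$, uses Proposition \ref{prop:NegGWistriangularW} and the Witt-group hypothesis to get isomorphisms $GW^{[n]}_i(F)$ for all $i<0$ and all $n$ (as you do), and then runs a five-lemma induction upward in $i$: isomorphisms at level $i$ for all $n$, together with the $K$-theory equivalence, give surjectivity of $GW^{[n+1]}_{i+1}(F)$ for all $n$, and a second application upgrades these to isomorphisms. Your argument replaces this degree-by-degree bookkeeping with the period-$4$ equivalence $GW^{[n]}\simeq GW^{[n+4]}$ of Remark \ref{rmk:Cupmu}: combining $C_{n+1}\simeq S^1\wedge C_n$ (from the Bott triangle and the $K$-theory hypothesis) with $C_{n+4}\simeq C_n$ makes $\pi_*(C_n)$ $4$-periodic, so the vanishing in negative degrees, which both proofs establish the same way, propagates to all degrees at once. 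The two arguments rest on the same inputs (functoriality of the Bott triangle in $\scA$, and the identification of negative $GW$-groups with triangular Witt groups); yours is slicker and avoids the five lemma, at the price of invoking periodicity, whereas the paper's induction would survive in a context where the Bott triangle exists but the period-$4$ equivalence does not. One small point of hygiene: in a general triangulated category the induced maps on cones of a morphism of exact triangles need not themselves form an exact triangle with the induced maps; here this is harmless, since the Bott triangle comes from a point-set level homotopy cartesian square of spectra, so one can take honest iterated homotopy cofibres, or simply observe that \emph{some} cone of $GW^{[n+1]}(F)$ fits into a triangle with $C_n$ and the contractible cone of $K(F)$, and cones are unique up to isomorphism.
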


\begin{proof}
By Theorem \ref{thm:PeriodicityExTriangle}, the form functor $F$ induces a map of exact sequences for $n, i\in \Z$
$$
\xymatrix{
GW_{i+1}^{[n]}(\scA) \ar[r] \ar[d] & K_{i+1}(\scA) \ar[r] \ar[d]^{\cong} &
GW_{i+1}^{[n+1]}(\scA) \ar[r] \ar[d] & GW_{i}^{[n]}(\scA) \ar[r] \ar[d]
& K_{i}(\scA) \ar[d]^{\cong}\\
GW_{i+1}^{[n]}(\scB) \ar[r] & K_{i+1}(\scB) \ar[r]  &
GW_{i+1}^{[n+1]}(\scB) \ar[r]  & GW_{i}^{[n]}(\scB) \ar[r] 
& K_{i}(\scA).
}$$
By Proposition \ref{prop:NegGWistriangularW} and the hypothesis of the lemma, the functor $F$ induces isomorphisms $GW_i^{[n]}(F) = W^{n-i}(F)$ for all $n, i\in \Z$ with $i<0$.
By a version of the $5$-lemma, if $GW_i^{[n]}(F)$ and $K_*(F)$ are isomorphisms then
$GW_{i+1}^{[n+1]}(F)$ is a surjection.
If $GW_i^{[n]}(F)$  and $K_*(F)$ are isomorphisms and 
$GW_{i+1}^{[n]}(F)$ is a surjection then $GW_{i+1}^{[n+1]}(F)$ is an isomorphism.
By induction, we are done.
\end{proof}

\begin{theorem}[Invariance for $GW$]
\label{thm:Invariance}
Let $G:\scA \to \scB$ be an exact dg form functor between dg categories with weak equivalences and duality such that $\frac{1}{2}\in \scA,\scB$.
Suppose that the functor $G$ induces an equivalence $\T\scA \to \T\scB$ of associated triangulated categories, cf. Definition \ref{dfn:dgCatW}.
Then $G$ induces an equivalence of Grothendieck-Witt spectra for all $n \in
\Z$ 
$$GW^{[n]}(\scA) \stackrel{\simeq}{\longrightarrow} GW^{[n]}(\scB).$$
\end{theorem}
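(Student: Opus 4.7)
The plan is to reduce to Karoubi induction (Lemma \ref{lem:KaroubiInd}), which requires verifying two things: that $G$ induces an equivalence on connective $K$-theory, and that $G$ induces isomorphisms on all shifted Witt groups $W^n(\scA) \cong W^n(\scB)$ for $n\in\Z$. Given these two ingredients, the lemma immediately gives the stable equivalence $GW^{[n]}(\scA) \to GW^{[n]}(\scB)$ for every $n\in \Z$.

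For the $K$-theory statement, I would use the model $K(\scA) = GW(\H\scA)$ from (\ref{eqn:Kmodel}), so the assertion is that $\H G : \H\scA \to \H\scB$ induces an equivalence of Grothendieck-Witt spectra of hyperbolic categories. Since the triangulated category of $\H\scA$ is $\T\scA \times (\T\scA)^{\op}$ with the swap duality, an equivalence $\T\scA \simeq \T\scB$ induced by $G$ yields an equivalence $\T\H\scA \simeq \T\H\scB$. Then I appeal to Thomason's derived-invariance of algebraic $K$-theory (\cite[Theorem 1.9.8]{TT}), which applies because the $K$-theory of $\scA$ agrees with that of its pretriangulated hull and only depends on the associated triangulated category up to idempotent completion — and here both triangulated categories are already equivalent, so the idempotent completions match as well.

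For the Witt-group statement, I invoke Corollary \ref{cor:BalmerWnismyWn}, which identifies $W^{[n]}(\scA) \cong W^n(\T\scA)$ as soon as $\frac{1}{2}\in \scA$, and similarly for $\scB$. The functor $G$, being an exact dg form functor, induces a triangle functor $\T G : \T\scA \to \T\scB$ commuting with the dualities, shifts, double dual identifications and the natural isomorphisms $\lambda$ (by functoriality of the construction in Lemma \ref{lem:TAisTriD}), and by hypothesis $\T G$ is an equivalence. Since Balmer's shifted Witt groups of a triangulated category with duality are defined purely in terms of isometry classes of symmetric forms and the category of exact triangles (Definition \ref{dfn:TriGW0andW}), any equivalence of triangulated categories with duality induces isomorphisms on all $W^n$. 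Hence $W^n(\T\scA) \cong W^n(\T\scB)$ for every $n\in \Z$, as required.

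The main obstacle in this plan is the first point: one needs the $K$-theoretic invariance under derived equivalences in the form applicable to the pretriangulated dg setting, and one needs to check that $\H G$ does indeed induce an equivalence of triangulated categories (which it does because hyperbolization commutes with taking pretriangulated hulls and homotopy/localization, so $\T\H\scA = \H\T\scA$ functorially). Once these functorial identifications are in place, both inputs of Lemma \ref{lem:KaroubiInd} are available and the theorem follows by the induction built into that lemma, which propagates the isomorphism in non-positive degrees (given by the Witt input) upward through the Bott exact triangle to all $GW_i^{[n]}$, using the five-lemma and the $K$-theoretic equivalence at each step.
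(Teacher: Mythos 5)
Your proposal is correct and follows essentially the same route as the paper: Thomason's derived invariance for the $K$-theory input, the identification $W^{[n]}(\scA)\cong W^n(\T\scA)$ from Corollary \ref{cor:BalmerWnismyWn} (combined with Proposition \ref{prop:NegGWistriangularW}) for the negative-degree input, and then Lemma \ref{lem:KaroubiInd} to conclude. The extra detail you supply about $\H G$ and $\T\H\scA\simeq\H\T\scB$ is a reasonable elaboration of what the paper leaves implicit.
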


\begin{proof}
By a theorem of Thomason \cite[1.9.8]{TT}, the functor $G$ induces an equivalence of $K$-theory spectra.
By Proposition \ref{prop:NegGWistriangularW} and Corollary \ref{cor:BalmerWnismyWn}, the maps $GW_i^{[n]}(G)$ are isomorphisms for all $n, i \in \Z$ with $i<0$.
This finishes the proof in view of Lemma \ref{lem:KaroubiInd}.
\end{proof}

Call a sequence $\T_0 \to \T_1 \to \T_2$ of triangulated categories {\em exact} if the composition is trivial, the functor $\T_0 \to \T_1$ makes $\T_0$ into an epaisse subcategory of $\T_1$ (that is, a full subcategory which is closed under direct factors) and the induced functor from the Verdier quotient $\T_1/\T_0$ to $\T_2$ is an equivalence.
Call a sequence $\scA_0 \to \scA_1 \to \scA_2$ of dg categories with weak equivalences and duality {\em quasi-exact} if the associated sequence
$\T\scA_0 \to \T\scA_1 \to \T\scA_2$ of triangulated categories is exact.
An exact dg functor $(\A,w) \to (\B,v)$ of dg-categories with weak equivalences is called {\em quasi-equivalence} if the induced triangle functor $\T(\A,w) \to \T(\B,v)$ is an equivalence.

\begin{theorem}[Localization for $GW$]
\label{thm:LcnConn1}
Let $(\scA_0,w) \to (\scA_1,w) \to (\scA_2,w)$ be a quasi-exact sequence of dg categories with weak equivalences and duality.
Assume that $\frac{1}{2}\in \scA_0,\scA_1,\scA_2$. 
Then the commutative square of Grothendieck-Witt spectra
$$\xymatrix{
GW^{[n]}(\scA_0,w) \ar[r] \ar[d] & GW^{[n]}(\scA_1,w) \ar[d]\\
GW^{[n]}(\scA_2^w,w) \ar[r] & GW^{[n]}(\scA_2,w)
}$$
is homotopy cartesian, and the lower left corner is contractible.
In particular, for all $n\in \Z$ there is a homotopy fibration of Grothendieck-Witt spectra
$$GW^{[n]}(\scA_0) \to GW^{[n]}(\scA_1) \to GW^{[n]}(\scA_2).$$
\end{theorem}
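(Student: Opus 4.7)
The plan is to deduce the theorem from Proposition \ref{prop:pretrFibPrp} applied to $\scA_1$ equipped with two nested classes of weak equivalences, using the Invariance Theorem \ref{thm:Invariance} to identify the corners of the resulting homotopy cartesian square with the Grothendieck-Witt spectra appearing in the statement. Throughout, I may replace each $\scA_i$ by its pretriangulated hull thanks to Proposition \ref{prop:GWspecIsGWspace}.

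First I would set up the intermediate model. Define a larger class of weak equivalences $v$ on $\scA_1$ by declaring a morphism to belong to $v$ iff its image in $\scA_2$ is a weak equivalence; equivalently, a morphism lies in $v$ iff its cone in $\T\scA_1$ lies in the essential image of $\T\scA_0$. By quasi-exactness of $\scA_0\to\scA_1\to\scA_2$, the triangulated category $\T(\scA_1,v)$ agrees with $\T\scA_1/\T\scA_0\simeq\T\scA_2$, and the triangulated category $\T((\scA_1)^v,w)$ of $v$-acyclic objects agrees with $\T\scA_0$ (this uses crucially that $\T\scA_0\to\T\scA_1$ is epaisse, i.e.\ closed under direct summands, so that the kernel of $\T\scA_1\to\T\scA_2$ is exactly $\T\scA_0$). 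Since $\frac{1}{2}\in\scA_i$, Theorem \ref{thm:Invariance} therefore yields natural stable equivalences
\[GW^{[n]}(\scA_0)\;\simeq\;GW^{[n]}((\scA_1)^v,w)\quad\text{and}\quad GW^{[n]}(\scA_1,v)\;\simeq\;GW^{[n]}(\scA_2)\]
for every $n\in\Z$.

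Next I would apply Proposition \ref{prop:pretrFibPrp} to the pretriangulated dg category with weak equivalences and duality $\scA_1^{[n]}$ with the nested classes $w\subseteq v$. For every $k\geq 1$ this produces a homotopy cartesian square of pointed spaces obtained by applying $|(w\RR_{\bullet}^{(k)}\phantom{A})_h|$ to the square $((\scA_1^{[n]})^v,w)\to(\scA_1^{[n]},w)$, $((\scA_1^{[n]})^v,v)\to(\scA_1^{[n]},v)$, and the lower-left corner is contractible. Via the identifications of the previous paragraph, this square coincides (up to homotopy) with the $k$-th space of the square of Grothendieck-Witt spectra claimed in the theorem, where $GW^{[n]}(\scA_2^w,w)$ occupies the lower-left corner and is contractible since its associated triangulated category is trivial. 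Because by Theorem \ref{thm:GWAOmegaInfinity} each of the four spectra involved is a positive $\Omega$-spectrum, a levelwise homotopy cartesian square in all levels $k\geq 1$ yields a homotopy cartesian square of spectra. This proves the first assertion, and the homotopy fibration follows formally.

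The main obstacle is the identification $\T((\scA_1)^v,w)\simeq\T\scA_0$ in the first step. Without the epaisse hypothesis, one only obtains an inclusion whose target is the thick subcategory of $\T\scA_1$ generated by direct summands of objects of $\T\scA_0$; then the Invariance Theorem would not directly apply at the level of the connective spectrum $GW$, and one would have to pass to the Karoubi-Grothendieck-Witt spectrum $\GW$ treated in Section \ref{sec:KGWspectrum}. Here the assumption that the sequence of triangulated categories is exact (which includes $\T\scA_0$ being epaisse in $\T\scA_1$) is precisely what rules this out.
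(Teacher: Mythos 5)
Your proposal is correct and follows essentially the same route as the paper's proof: introduce the enlarged class $v$ of morphisms of $\scA_1$ becoming weak equivalences in $\scA_2$, apply Proposition \ref{prop:pretrFibPrp} levelwise and upgrade to spectra via Theorem \ref{thm:GWAOmegaInfinity}, and identify the corners $(\scA_1^v,w)\simeq\scA_0$ and $(\scA_1,v)\simeq\scA_2$ by the Invariance Theorem \ref{thm:Invariance}. Your closing remark correctly isolates where the epaisse hypothesis enters.
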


\begin{proof}
Since the composition $\T\scA_0 \to \T\scA_1 \to \T\scA_2$ is trivial, the image of $\scA_0$ in $\scA_2$ is in the dg subcategory $\scA_2^w$ of $w$-acyclic objects.
In particular, the square of Grothendieck-Witt spectra in the theorem commutes.
Replacing $\scA_i$ with $\scA_i^{[n]}$ if necessary, we can assume $n=0$.
Furthermore, by Proposition \ref{prop:GWspecIsGWspace}, we can assume all dg categories to be pretriangulated.

Let $v$ be the set of morphisms in $\scA_1$ which are weak equivalences in $\scA_2$.
Then $(\scA_1,v)$ is a pretriangulated dg category with weak equivalences and duality.
By Proposition \ref{prop:pretrFibPrp}, the square of pretriangulated dg categories with weak equivalences and duality
$$\xymatrix{
(\scA_1^v,w)^{(n)} \ar[r] \ar[d] & (\scA_1,w)^{(n)} \ar[d]\\
(\scA_1^v,v)^{(n)} \ar[r] & (\scA_1,v)^{(n)}
}$$
induces a homotopy cartesian square of $|(w\RR^{(n)}_{\bullet})_h|$-spaces with contractible lower left corner.
By Proposition \ref{thm:GWAOmegaInfinity}, this square induces a homotopy cartesian square of Grothendieck-Witt spectra.
By assumption, the exact dg form functors
$(\scA_0,w) \to (\scA_1^v,w)$ and $(\scA_1,v) \to (\scA_2,w)$
induce equivalences on associated triangulated categories.
By Theorem \ref{thm:Invariance}, they induce equivalences of Grothendieck-Witt spectra.
\end{proof}

\begin{remark}
\label{rmk:FundThmFromLocnThm}
Let $\scA$ be a pretriangulated dg category with weak equivalences and duality.
Then the sequence
$$\scA \stackrel{I}{\longrightarrow} \Fun([1],\scA) \stackrel{\Cone}{\longrightarrow} \scA^{[1]}$$
induces an exact sequence of associated triangulated categories and hence a homotopy fibration of Grothendieck-Witt spectra, in view of Theorem \ref{thm:LcnConn1}.
By Additivity, the Grothendieck-Witt spectra of $\Fun([1],\scA)$ and $\H\scA$ are equivalent to the $K$-theory spectrum of $\scA$.
Therefore, the Localization Theorem \ref{thm:LcnConn1} implies the homotopy fibration in the Bott Sequence \ref{thm:PeriodicityExTriangle} and thus Karoubi's Fundamental Theorem as explained in the proof of Theorem \ref{thm:KaroubiFundThm}.
\end{remark}

\begin{proposition}[Additivity for $GW$]
\label{prop:AddGW}
Let $(\scU,{\vee})$ be a pretriangulated dg category with weak equivalences and duality such that $\frac{1}{2}\in \scU$.
Let $\scA \subset \scU$ be a full pretriangulated dg subcategory containing the 
$w$-acyclic objects $\scU^w$ of $\scU$.
Assume that $\T\scU(A,B^{\vee})=0$ for all $A,B\in \T\scA$, and that $\T\scU$ is generated as a triangulated category by $\T\scA$ and $\T\scA^{\vee}$.
Then the exact dg form functor 
$$\H\scA \to \scU: (A,B)\mapsto A\oplus B^{\vee}$$
induces a stable equivalence of Grothendieck-Witt spectra:
$$K(\scA) = GW(\H\scA) \stackrel{\sim}{\longrightarrow} GW(\scU).$$
\end{proposition}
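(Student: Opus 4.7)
The plan is to apply the Karoubi induction principle (Lemma \ref{lem:KaroubiInd}) to the exact dg form functor $\Phi\colon \H\scA \to \scU$, $(A,B)\mapsto A\oplus B^{\vee}$. This reduces the proposition to two verifications: that $\Phi$ induces an equivalence of connective $K$-theory spectra $K(\H\scA)\to K(\scU)$, and that $\Phi$ induces isomorphisms $W^{n}(\T\H\scA)\stackrel{\cong}{\to} W^{n}(\T\scU)$ of Balmer--Walter Witt groups for every $n\in\Z$ (which, via Corollary \ref{cor:BalmerWnismyWn}, matches the hypothesis of the lemma).

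First I would unpack the two hypotheses as giving a semi-orthogonal decomposition $\T\scU=\langle \T\scA^{\vee},\T\scA\rangle$: the vanishing $\T\scU(A,B^{\vee})=0$ is precisely the semi-orthogonality $\Hom(\T\scA,\T\scA^{\vee})=0$, and the generation hypothesis, combined with a standard octahedral-axiom argument, ensures that every object of $\T\scU$ sits in an exact triangle $L\to X\to R\to L[1]$ with $L\in\T\scA^{\vee}$ and $R\in\T\scA$. For the $K$-theoretic step, I would combine Thomason's derived-equivalence invariance (\cite[Theorem 1.9.8]{TT}) with Waldhausen's fibration theorem applied to the quasi-exact sequence $\scA^{\vee}\hookrightarrow\scU\twoheadrightarrow\scU/\scA^{\vee}$. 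The equivalence $\T\scA\simeq \T\scU/\T\scA^{\vee}$ supplied by the semi-orthogonal decomposition provides a section splitting the $K$-theory fibration, so that $K(\scU)\simeq K(\scA^{\vee})\oplus K(\scA)$; identifying $K(\scA^{\vee})\simeq K(\scA)$ via the duality, this splitting is realized precisely by $\Phi$.

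For the Witt-group step I would first observe that $W^{n}(\T\H\scA)=0$ for every $n$: the strict swap duality $(A,B)\leftrightarrow (B,A)$ renders every symmetric form metabolic with Lagrangian $(A,0)\hookrightarrow (A,B)$, witnessed by the evident symmetric exact triangle. It therefore suffices to prove $W^{n}(\T\scU)=0$. Given a symmetric space $(X,\phi)$ in $\T\scU^{[n]}$, the semi-orthogonal decomposition yields an exact triangle
$$L\longrightarrow X\longrightarrow R\longrightarrow L[1]$$
with $L\in\T\scA^{\vee}$ and $R\in\T\scA$. Applying the $n$-shifted duality $\sharp$ produces the triangle $R^{\vee\sharp}\to X^{\vee\sharp}\to L^{\vee\sharp}\to R^{\vee\sharp}[1]$, which, since $R^{\vee\sharp}\in\T\scA^{\vee}$ and $L^{\vee\sharp}\in\T\scA$, is the semi-orthogonal decomposition triangle of $X^{\vee\sharp}$. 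Functoriality of the left- and right-projection adjoints forces $\phi$ to lift to a map of triangles with outer vertical arrows isomorphisms $\beta\colon L\cong R^{\vee\sharp}$ and $\gamma\colon R\cong L^{\vee\sharp}$. Commutativity of the resulting square shows that the composite $L\to X\stackrel{\phi}{\to} X^{\vee\sharp}\to L^{\vee\sharp}$ equals $L\stackrel{\beta}{\to} R^{\vee\sharp}\to X^{\vee\sharp}\to L^{\vee\sharp}$, whose last two arrows are consecutive in an exact triangle and therefore compose to zero. This exhibits $(L,X,R;\beta,\phi,\gamma)$ as a symmetric space on an exact triangle in the sense of Definition \ref{dfn:TriGW0andW}(\ref{dfn:itm2:GW0Tri}), forcing $[X,\phi]=0$ in $W^{n}(\T\scU)$.

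The main obstacle lies in the Witt-group step: inside the Balmer--Walter triangulated framework (Definition \ref{dfn:TriDwithoutTed}), one must verify that the isomorphisms $\beta,\gamma$ produced by functoriality of the semi-orthogonal projections really satisfy the symmetry relation $\gamma=\beta^{\vee\sharp}\varpi_{L}$ demanded by Definition \ref{dfn:TriGW0andW}(\ref{dfn:itm2:GW0Tri}), and not merely abstractly; this requires threading through the double-dual identification $\varpi$ and the shifted-duality natural isomorphism $\lambda$. Tracking these compatibilities and the attendant sign conventions as $n$ varies is the principal technical irritation; once completed, both conclusions of the Karoubi induction become formal consequences of the framework already established.
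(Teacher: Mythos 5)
Your overall strategy is exactly the paper's: reduce to Karoubi induction (Lemma \ref{lem:KaroubiInd}), prove the $K$-theory equivalence by splitting the localization fibration attached to the semi-orthogonal decomposition, and prove that both sides have vanishing triangular Witt groups, the hyperbolic side for formal reasons and the $\scU$-side via the functorial decomposition triangle and its uniqueness. Your worry about the symmetry relation $\gamma=\beta^{\vee\sharp}\varpi_L$ is also resolved the way the paper resolves it: the dual of the extension of $\ffi$ to a map of triangles is another extension of $\ffi^{\sharp}\varpi=\ffi$, so uniqueness forces the triple to be symmetric.

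There is, however, one step that fails as written: you have oriented the decomposition triangle backwards. The hypothesis $\T\scU(A,B^{\vee})=0$ for $A,B\in\T\scA$ says maps \emph{from} $\T\scA$ \emph{to} $\T\scA^{\vee}$ vanish, so the functorial triangle produced by generation plus semiorthogonality is $A\to X\to B\to TA$ with $A\in\T\scA$ as the subobject and $B\in\T\scA^{\vee}$ as the quotient (this is the form the paper uses). With your orientation $L\to X\to R$, $L\in\T\scA^{\vee}$, $R\in\T\scA$, such a triangle need not exist, and even when it does the lifting step breaks: to factor $L\to X\stackrel{\ffi}{\to}X^{\sharp}$ through $R^{\vee\sharp}$ you need the composite $L\to X^{\sharp}\to L^{\vee\sharp}$ to vanish, which is a map from $\T\scA^{\vee}$ to $\T\scA$ --- precisely the direction in which no vanishing is assumed. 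Swapping the roles fixes everything, because the duality carries $\T\scA$ to $\T\scA^{\vee}$ while reversing arrows, so the dual of a correctly oriented triangle is again correctly oriented, and the obstruction to lifting $\ffi$ becomes a map $A\to A^{\sharp}$ from $\T\scA$ to $\T\scA^{\vee}$, which vanishes by hypothesis. With that correction your argument coincides with the paper's proof.
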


\begin{proof}
The triangle functor $\T\scA^{\vee} \to \T\scU/\T\scA$ is fully faithful because we have
$\T\scU(A,B^{\vee})=0$ for all $A,B\in \T\scA$.
It is essentially surjective (hence an equivalence) because $\T\scU$ is generated by $\T\scA$ and $\T\scA^{\vee}$.
From the $K$-theory analog of the Localization Theorem \ref{thm:LcnConn1}, the sequence $(\scA,w) \to (\scU,w) \to (\scU,v)$ induces a homotopy fibration of $K$-theory spectra where $v$ is the set of morphisms which are isomorphisms in 
$\T\scU/\T\scA$.
In view of the Invariance Theorem for $K$-theory, the fibration splits since the composition $(\scA^{\vee},w) \to (\scU,w) \to (\scU,v)$ induces an equivalence of triangulated categories.
It follows that the functor $\H\scA \to \scU$ is a $K$-theory equivalence.

The Witt groups of $\H\scA$ are trivial as is the case for any hyperbolic category.
The Witt groups $W^*(\scU)$ of $\scU$ are also trivial.
It suffices to prove this for $W^0(\scU) = W^0(\T\scU)$, the arguments for $W^n$ being similar (or just apply the $W^0$ case to the $n$-th shifted triangulated category with duality $\T\scU^{[n]}$).
The hypothesis imply that for every $U \in \T\scU$, there is an exact triangle 
$A \to U \to B \to TA$ in $\T\scU$ with $A \in \T\scA$, $B\in \T\scA^{\vee}$ such that for any map $U \to U'$ and any exact triangle $A' \to U' \to B' \to TA'$  in $\T\scU$ with $A' \in \T\scA$, $B'\in \T\scA^{\vee}$ there is a unique map of triangles from the first to the second extending the given map $U \to U'$.
In particular, any symmetric isomorphism $\ffi:U \to U^{\vee}$ extends to a symmetric isomorphism of exact triangles showing that $[U,\ffi] = 0 \in W^0(\T\scU)$.

Since the functor $\H\scA \to \scU$ is a $K$-theory equivalence and an isomorphism on triangular Witt groups, the Karoubi-Induction Lemma \ref{lem:KaroubiInd} implies that it is also a $GW$-equivalence.
\end{proof}

\section{Relation with $L$-theory and Tate of $K$-theory}
\label{section:LandTate}

Recall from (\ref{eqn:dfnEta})
 the element $\eta \in GW^{[-1]}_{-1}(k)$ corresponding to $\langle 1 \rangle \in W^0(k)$ under the isomorphism $GW^{[-1]}_{-1}(k) \cong W^0(k)$ of Proposition \ref{prop:NegGWistriangularW}.

\begin{definition}
\label{dfn:LthSp}
For $\scA\in \dgCatWD$, we define its $L$-theory spectrum $L(\scA)$ as
$$L(\scA) = \eta^{-1}GW(\scA).$$
This is the homotopy colimit spectrum of the sequence
$$GW(\scA) \stackrel{\eta\cup}{\longrightarrow} S^1\wedge GW^{[-1]}(\scA) 
\stackrel{S^1\wedge \eta^2}{\longrightarrow} S^2\wedge GW^{[-2]}(\scA) \longrightarrow \cdots$$
Since cup-product with the element $\mu\in GW^{[4]}_0(k)$ from Remark \ref{rmk:Cupmu} induces an equivalence $GW^{[n]}(\scA) \simeq GW^{[n+4]}(\scA)$, the spectrum
$\eta^{-1}GW(\scA)$ is also the localization of the $GW(k)$-module spectrum $GW(\scA)$ at the element $\eta^4\mu\in GW_{-4}(k)$ of the commutative ring spectrum
$GW(k)$.
In particular, $L(\scA) = \eta^{-1}GW(\scA)$ is a module spectrum over the commutative ring spectrum $L(k)=(\eta^4\mu)^{-1}GW(k)$ \cite{schwede:book}.
As usual, $L^{[n]}(\scA)$ denotes $L(\scA^{[n]})$,
and the homotopy groups of $L^{[n]}(\scA)$ are denoted by $L^{[n]}_i(\scA)$.

By definition, the ring spectrum $L(k)$ is $4$-periodic with periodicity isomorphism given by cup-product with $\eta^4\mu$.
As a module spectrum over $L(k)$, the spectrum $L(\scA)$ is also $4$-periodic. 
\end{definition}

\begin{proposition}
Let $\scA$ be a dg category with weak equivalences and duality such that $\frac{1}{2}\in \scA$.
Then there are natural isomorphisms for all $i,n\in \Z$
$$L^{[n]}_i(\scA) \cong W^{n-i}(\scA).$$
\end{proposition}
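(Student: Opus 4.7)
The plan is to unpack the definition of $L^{[n]}(\scA)$ as a mapping telescope and identify the stable value of its homotopy groups using Proposition \ref{prop:NegGWistriangularW}.

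First I would compute $L^{[n]}_i(\scA)$ as a sequential colimit. Since $(\scA^{[n]})^{[-j]} \simeq \scA^{[n-j]}$ naturally in $\scA$, the defining telescope of $L(\scA^{[n]})$ takes the form
\begin{equation*}
GW^{[n]}(\scA) \xrightarrow{\eta\cup} S^1\wedge GW^{[n-1]}(\scA) \xrightarrow{S^1\wedge\eta\cup} S^2\wedge GW^{[n-2]}(\scA) \longrightarrow \cdots
\end{equation*}
Taking $\pi_i$ and using that homotopy groups commute with sequential homotopy colimits yields
\begin{equation*}
L^{[n]}_i(\scA) \;\cong\; \colim_{j\to\infty} GW^{[n-j]}_{i-j}(\scA),
\end{equation*}
where the transition map at stage $j$ is cup-product with $\eta\in GW^{[-1]}_{-1}(k)$.

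Next I would invoke Proposition \ref{prop:NegGWistriangularW}: as soon as $j>i$, we are in the range $i-j<0$, and that proposition provides a natural isomorphism $\eta^{-(i-j)}\cup : W^{(n-j)-(i-j)}(\scA) = W^{n-i}(\scA) \stackrel{\cong}{\to} GW^{[n-j]}_{i-j}(\scA)$, and also guarantees that each subsequent transition map $\eta\cup : GW^{[n-j]}_{i-j}(\scA)\to GW^{[n-j-1]}_{i-j-1}(\scA)$ in the colimit is an isomorphism. Therefore the colimit stabilizes: for any $j_0>\max(i,0)$,
\begin{equation*}
L^{[n]}_i(\scA) \;\cong\; GW^{[n-j_0]}_{i-j_0}(\scA) \;\cong\; W^{n-i}(\scA),
\end{equation*}
via the composite of $\eta^{j_0-i}\cup$ with the canonical map from $W^{n-i}(\scA)$ built out of Proposition \ref{prop:NegGWistriangularW}. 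Naturality in $\scA$ follows since both the telescope and the isomorphisms in Proposition \ref{prop:NegGWistriangularW} are natural in morphisms of $\dgCatWD$.

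This argument has no real obstacle: the only thing to check carefully is that the transition maps in the telescope genuinely agree with cup-product with $\eta$ (up to a sign that does not affect an isomorphism), which follows from the construction of $L(\scA)$ as the $\eta\cup$-telescope together with the identification of $\pi_{i+j}(S^j\wedge X)$ with $\pi_i X$. Thus the proof is essentially bookkeeping once Proposition \ref{prop:NegGWistriangularW} is in hand.
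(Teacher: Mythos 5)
Your proof is correct and follows exactly the route the paper takes: the paper's proof consists of the single line "This is a consequence of Proposition \ref{prop:NegGWistriangularW}," and your argument is precisely the bookkeeping that remark leaves implicit (computing $\pi_i$ of the $\eta$-telescope and noting that Proposition \ref{prop:NegGWistriangularW} makes the transition maps isomorphisms onto $W^{n-i}(\scA)$ once the degree becomes negative).
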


\begin{proof}
This is a consequence of Proposition \ref{prop:NegGWistriangularW}.
\end{proof}

\subsection{The $\Z/2$-action on $K$-theory}
\label{Z/2actionOnKth}
Call a pretriangulated dg category with weak equivalences and duality $\scA = (\scA,w,*,\can)$ 
{\em strict} if the duality is strict, that is, the natural transformation $\can: 1 \to **$ is the identity (in particular, $**=id$), and 
if it is equipped with a choice of direct sum and zero object $0$ satisfying $0^*=0$,  
$(X\oplus Y)^*=Y^*\oplus X^*$ and $0\oplus X = X =X \oplus 0$ for all objects $X, Y$ of $\scA$.
We denote by $\dgCatWD_{\str}$ the category of small strict pretriangulated dg categories with weak equivalences and duality.
The morphisms in $\dgCatWD_{\str}$ are the exact dg functors which commute with direct sum and duality, and which preserve the chosen zero object.

Let $\scA$ be a dg category with weak equivalences and duality.
By Lemma \ref{lem:Astr} below,
we can (functorially) associate with $\scA$ a strict pretriangulated dg category with weak equivalences and duality $\scA_{\str}$ that comes with an exact dg form functor $\scA \to \scA_{\str}$ which, by Theorem \ref{thm:Invariance}, induces an equivalence on $K$-theory and Grothendieck-Witt spectra.
So, in what follows, we may assume that $\scA$ is strict.

Let $\scA = (\scA,w,*)$ be a strict pretriangulated dg category with weak equivalences and duality.
The hyperbolic category $\H\scA= \scA\times \scA^{op}$ is equipped with the duality $(X,Y) \mapsto (Y,X)$ (ignoring the duality $*$ on $\scA$).
The functor 
$$\sigma: \H\scA \to \H\scA: (X,Y) \mapsto (Y^*,X^*)$$
is duality preserving (as the duality on $\scA$ is strict) and induces a $C_2 = \Z/2$ action on the dg category with weak equivalences and duality $\H\scA$. 
By functoriality, we obtain an induced $C_2$-action on the Grothendieck-Witt spectrum $GW(\H\scA)$, hence on the K-theory spectrum $K(\scA)=GW(\H\scA)$.
If $\scA$ is not strict, we will write $K(\scA)$ for the spectrum $K(\scA_{\str})$ with its $C_2$-action.
Moreover, we will write $K^{[n]}(\scA)$ for the $K$-theory spectrum $K(\scA^{[n]}) = GW(\H(\scA^{[n]})_{\str})$ of $\scA$ equipped with the $C_2$-action coming from the duality on $\A^{[n]}$ as described above.
Non-equivariantly, this spectrum is (equivalent to) the $K$-theory spectrum of $\scA$, but the $C_2$-action depends on $n$.

\begin{example}
We give an interpretation of the $C_2$-action on $K(R)$ for symmetric and alternating forms over a ring with involution $R$ in terms of an action on $BGl(R)^+$. 
For that consider first an additive category with strict duality $(\A,*)$.
If we equip the category $i\A$ with the $C_2$-action $A\mapsto A$, $f\mapsto (f^*)^{-1}$, then the equivalence of categories 
$i\A \to (i\H\A)_h: A\mapsto [(A,A),(id,id)] \hspace{1ex} f\mapsto (f,f^{-1})$ is $C_2$-equivariant.
This gives the $C_2$-action on the group completion of the classifying space of $i\A$, that is, on the usual $K$-theory space of $\A$.

For example, let $R$ be a ring with involution $a\mapsto \bar{a}$.
Let $\A=F(R)$ be the category of finitely generated free $R$-modules.
More precisely, objects of $F(R)$ are the free modules $R^n$, $n\in \N$.
Maps and composition of maps are given by matrices and their products.
The dual of $R^n$ is $R^n$ and the dual of a map $M \in M_n(R)$ is the matrix $M^* = {^t}{\bar{M}}$. 
In positive degrees, the hermitian $K$-theory of symmetric forms over $R$ is the hermitian $K$-theory of the additive category with strict duality $(F(R),*,id)$.
By the discussion above, the corresponding $C_2$-action on $BGl(R)^+$ is induced by  
$$BGl \to BGl: M \mapsto {^t}{\bar{M}}^{-1}$$
and the functoriality of the plus construction.

In the literature one frequently finds a differently looking $C_2$-action on $K$-theory; see for instance \cite{berrickKaroubi}. 
The point is that the hermitian $K$-theory of symmetric forms over $R$ is also the hermitian $K$-theory of the ring $M_2(R)$ with involution
$$\left(\begin{smallmatrix} a&b\\ c&d\end{smallmatrix}\right) \mapsto \left(\begin{smallmatrix} 0&1\\ 1&0\end{smallmatrix}\right) \left(\begin{smallmatrix} \bar{a}&\bar{c}\\ \bar{b}&\bar{d}\end{smallmatrix}\right)\left(\begin{smallmatrix} 0&1\\ 1&0\end{smallmatrix}\right) = \left(\begin{smallmatrix} \bar{d}&\bar{b}\\ \bar{c}&\bar{a}\end{smallmatrix}\right)$$
since both rings have equivalent associated hermitian categories of projective modules.
This yields the $C_2$-action on $BGl_{\infty}(R) = BGl_{2\infty}(R)$ given by
$$Gl_{2n}(R) \to Gl_{2n}(R): M \mapsto \left(\begin{smallmatrix} 0&1_n\\ 1_n&0\end{smallmatrix}\right) (^t\overline{M})^{-1} \left(\begin{smallmatrix} 0&1_n\\ 1_n&0\end{smallmatrix}\right).$$
To compare these two actions, consider the $C_2$-action on $BGl_{\infty}(R) = BGl_{3\infty}(R)$ given by
$$Gl_{3n}(R) \to Gl_{3n}(R): M \mapsto \left(\begin{smallmatrix} 0&1_n 0 \\ 1_n&0&0 \\ 0&0&1_n\end{smallmatrix}\right) (^t\overline{M})^{-1} \left(\begin{smallmatrix} 0&1_n 0 \\ 1_n&0&0 \\ 0&0&1_n\end{smallmatrix}\right).$$
The two maps
$$
\renewcommand\arraystretch{1.5}
\begin{array}{ccccc}
Gl_{2n}(R) & \longrightarrow & Gl_{3n}(R) & \longleftarrow & Gl_n(R) \\
M &\mapsto & \left(\begin{smallmatrix} M&0\\ 0&1_n\end{smallmatrix}\right), \hspace{3ex} \left(\begin{smallmatrix} 1_{2n}&0\\ 0&N\end{smallmatrix}\right) &
\mapsfrom & N
\end{array}
$$
are $C_2$-equivariant and yield, after passage to the colimit for $n\to \infty$  and taking (a functorial version of the) plus construction, $C_2$-equivariant maps 
$$K(R) \to K(R) \leftarrow K(R)$$
which are homotopy equivalences (forgetting the $C_2$-action).
In particular, the respective homotopy orbit (homotopy fixed point) spaces are all homotopy equivalent.

Finally, the hermitian $K$-theory of alternating forms over $R$ in positive degrees is the hermitian $K$-theory of the additive category with duality $(F(R),*,-id)$.
It is also the hermitian $K$-theory of symmetric forms over the ring  $M_2(R)$ with involution $\left(\begin{smallmatrix}a&b\\c&d\end{smallmatrix}\right) \mapsto \left(\begin{smallmatrix}\bar{d}&-\bar{b}\\-\bar{c}&\bar{a}\end{smallmatrix}\right)$ since both have equivalent associated hermitian categories.
Thus, the involution on $BGl(R)^+ = BGl(M_2(R))^+$ corresponding to alternating forms over $R$ is the involution  corresponding to symmetric forms over the ring with involution $M_2(R)$.
\end{example}

\subsection{The hypernorm in $K$-theory}
\label{KnormRevisited}
Let $G$ be a finite group.
For any spectrum $X$ with $G$-action its Tate spectrum $\hat{\bH}(G,X)$ is the homotopy cofibre of the hypernorm map $\tilde{N}: X_{hG} \to X^{hG}$ from the homotopy orbit spectrum $X_{hG}$ to the homotopy fix point spectrum $X^{hG}$; see Appendix \ref{Appx:TateSp} when $G=C_2$. 
In Lemma \ref{lem:KGWKisNorm} we give another description of this map for the $C_2$-spectrum $K(\scA)$ associated with a (strict pretriangulated) dg category with weak equivalences and duality $\scA$ (where the action was defined in Section \ref{Z/2actionOnKth}). 

Let $\scA$ be a strict pretriangulated dg category with weak equivalences and duality.
We consider $\scA$ equipped with the trivial $C_2$-action.
Recall from \S\ref{Z/2actionOnKth} that the hyperbolic category $\H\scA$ is equipped with a $C_2$-action (in the category of strict pretriangulated dg categories with weak equivalences and duality).
We will construct a sequence
\begin{equation}
\label{eqn:NormOnCatLevel}
\xymatrix{
\H\scA \ar[r]^{\hspace{1ex}H} & \bar{\scA} & \scA \ar[l]_{\hspace{1ex}\simeq} \ar[r]^{\hspace{-1ex}F} & \H\scA}
\end{equation}
of $C_2$ equivariant maps between strict pretriangulated dg categories with weak equivalences and duality in which the backward arrow is an equivalence of categories (forgetting the $C_2$ action).

Let $\bar{\scA}$ be the dg category whose objects are isomorphisms $a:A \stackrel{_{\cong}}{\to} B$ in $\scA$ and whose maps are commutative squares in $\scA$.
We define a strict duality on $\bar{\scA}$ on objects by $(a:A\to B)^*=((a^*)^{-1}:A^* \to B^*)$ and on morphisms by $(f,g)^*=(f^*,g^*)$.
The equivalence of categories $\scA \to \bar{\scA}: X \mapsto (1:X \to X), f\mapsto (f,f)$ is duality preserving.
The functor 
$$\bar{\scA} \to \bar{\scA}: (f:X \to Y) \mapsto (f^{-1}:Y \to X)$$
is duality preserving and induces a $C_2$-action on the strict exact category with weak equivalences and duality $\bar{\scA}$.
Note that the duality preserving equivalence $\scA \to \bar{\scA}$ is $C_2$-equivariant.
The functors $H$ and $F$ in diagram (\ref{eqn:NormOnCatLevel}) are defined as
$$
\renewcommand\arraystretch{1.5}
\begin{array}{cccc}
H:\H\scA \to \bar{\scA}:& (X,Y) & \mapsto &(X\oplus Y^*\stackrel{\tau}{\to} Y^*\oplus X) \hspace{4ex}\text{and}\\
F: \scA \to \H\scA: & X & \mapsto & (X,X^*)
\end{array}$$
where $\tau$ denotes the map switching the two factors.
Both functors are duality preserving, exact and $C_2$-equivariant.

Applying Grothendieck-Witt spectra to diagram (\ref{eqn:NormOnCatLevel})
yields a diagram 
\begin{equation}
\label{eqn:GeqKGWKseq}
\xymatrix{
K(\scA) \ar[r]^{H} & GW(\bar{\scA}) & GW(\scA) \ar[l]_{\hspace{1ex}\simeq} \ar[r]^{F} & K(\scA)}
\end{equation}
of ${C_2}$ -equivariant spectra where $C_2$ acts trivially on $GW(\scA)$.
Taking homotopy (co-) limits, we obtain a sequence of spectra
\begin{equation}
\label{eqn:KhGGWKhG} K_{h{C_2}} \stackrel{H}{\longrightarrow} \overline{GW}_{h{C_2}} \stackrel{\hspace{1ex}\small{\sim}}{\longleftarrow} GW_{h{C_2}} \longrightarrow GW \longrightarrow GW^{h{C_2}} \stackrel{F}{\longrightarrow} K^{h{C_2}}
\end{equation}
where we have suppressed the entries $\scA$ and where 
we wrote
$\overline{GW}$ for $GW(\bar{\scA})$.
 The non-labelled maps in the diagram are the natural maps $GW_{h{C_2}} \to GW_{C_2}=GW=GW^{C_2} \to GW^{h{C_2}}$.

\begin{lemma}
\label{lem:KGWKisNorm}
The map $K(\scA)_{h{C_2}} \to K(\scA)^{h{C_2}}$ in (\ref{eqn:KhGGWKhG})
is naturally equivalent to the hypernorm map \ref{subsec:hypernorm} (\ref{eqn:tildeN}) for the $C_2$-spectrum $K(\scA)$ defined in Section \ref{Z/2actionOnKth}.
\end{lemma}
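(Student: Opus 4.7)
The plan is to identify the composite $K(\scA)_{hC_2} \to K(\scA)^{hC_2}$ in (\ref{eqn:KhGGWKhG}) with the hypernorm $\tilde{N}$ by first recognizing the underlying equivariant map at the categorical level as the ``categorical norm'' $\mathrm{id} + \sigma$, and then transferring the resulting factorization of $\mathrm{id} + \sigma$ through the canonical maps from homotopy orbits to homotopy fixed points.

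The first step is a direct categorical computation. Inverting the $C_2$-equivariant equivalence $\iota: \scA \xrightarrow{\simeq} \bar{\scA}$, the zigzag (\ref{eqn:NormOnCatLevel}) realizes, in the homotopy category of $C_2$-equivariant dg categories with weak equivalences and duality, the composite $\H\scA \xrightarrow{\iota^{-1} \circ H} \scA \xrightarrow{F} \H\scA$ (where $\scA$ carries the trivial $C_2$-action), sending $(X,Y) \mapsto (X \oplus Y^{*},\, Y \oplus X^{*})$. This functor is canonically naturally isomorphic to the direct sum $\mathrm{id}_{\H\scA} \oplus \sigma$ of the identity and the duality involution $\sigma: (X,Y) \mapsto (Y^{*}, X^{*})$. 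Applying the symmetric monoidal $GW$-functor and invoking the Additivity Theorem (Proposition \ref{prop:AddGW}) then shows that (\ref{eqn:GeqKGWKseq}), viewed in the homotopy category of $C_2$-equivariant spectra, exhibits a factorization of $\mathrm{id} + \sigma_{*}: K(\scA) \to K(\scA)$ through the trivial-action spectrum $GW(\scA)$.

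The second step is to compare the sequence (\ref{eqn:KhGGWKhG}) with this factorization. Using the canonical maps $Y_{hC_2} \to Y \to Y^{hC_2}$ available whenever $Y$ has trivial $C_2$-action, the factorization from the first step induces a map $K(\scA)_{hC_2} \to GW(\scA) \to K(\scA)^{hC_2}$. A straightforward diagram chase, tracking how the homotopy equivalence induced by $\iota$ interacts with $(-)_{hC_2}$ and $(-)^{hC_2}$, shows that this coincides (up to canonical homotopy) with the map extracted from (\ref{eqn:KhGGWKhG}).

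The third and main step is to identify this induced map with the hypernorm. For this one appeals to a universal characterization of $\tilde{N}$ implicit in the construction of Appendix \ref{Appx:TateSp}: for any $C_2$-spectrum $X$, the hypernorm is (up to canonical homotopy) the map on homotopy orbits and fixed points induced by any equivariant factorization $X \xrightarrow{\nu} Y \xrightarrow{\xi} X$ of $\mathrm{id} + \sigma$ through a trivial-action $C_2$-spectrum $Y$, via the canonical $Y_{hC_2} \to Y \to Y^{hC_2}$. The hard part will be verifying this characterization within the model-categorical framework of the appendix (the construction of $\tilde{N}$ via the augmentation $EC_{2+} \to S^{0}$ and the Tate cofibre sequence); once this is in place, taking $X = K(\scA)$ and $Y = GW(\scA)$ with the factorization produced above completes the proof.
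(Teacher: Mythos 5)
Your proposal has a genuine gap at its central step. The entire weight of the argument rests on the ``universal characterization'' in your third step --- that for any $C_2$-spectrum $X$, \emph{any} equivariant factorization $X\xrightarrow{\nu}Y\xrightarrow{\xi}X$ of $1+\sigma$ through a trivial-action spectrum $Y$ induces the hypernorm via $X_{hC_2}\to Y_{hC_2}\to Y\to Y^{hC_2}\to X^{hC_2}$ --- and you explicitly defer its verification as ``the hard part.'' This is not a standard fact one can cite, and it is doubtful as stated: the hypernorm of Section \ref{subsec:hypernorm} is defined by applying $(({-})^{hG_2})_{hG_1}$ to the specific $G_1\times G_2$-equivariant string (\ref{G1G2eqnorm}), whose middle term $X\times X$ carries a nontrivial \emph{interleaved} action of both copies of $C_2$; a factorization through a genuinely trivial $Y$ records only the homotopy class of a composite and discards precisely this bi-equivariant coherence. (The standard universal properties of the Tate construction are statements about natural transformations of functors --- e.g.\ vanishing on induced spectra --- not about individual spectra equipped with a chosen factorization of the norm, so there is no obvious way to import them here. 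Note also that for trivial-action $Y$ the canonical map $c_Y:Y_{hC_2}\to Y\to Y^{hC_2}$ is \emph{not} the hypernorm $\tilde N_Y$, so naturality of $\tilde N$ alone will not rescue the argument.) In addition, your first two steps pass to the $C_2$-equivariant homotopy category, but the maps $H$ and $F$ in (\ref{eqn:NormOnCatLevel}) are equivariant for the two \emph{different} $C_2$-actions (via the two projections $G_1\times G_2\to G_i$), which is exactly the structure your reformulation loses.

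The paper's proof avoids any such characterization by working directly with the bi-equivariant structure: it regards (\ref{eqn:GeqKGWKseq}) as a string of $G_1\times G_2$-spectra and exhibits an explicit $G_1\times G_2$-equivariant equivalence between it and the defining string (\ref{G1G2eqnorm}) for $X=GW(\H\scA)$. Concretely, one uses the naturality square (\ref{eqn:PfOfNormMapEqual}) together with the duality-preserving, $G_1\times G_2$-equivariant functor $J:\H\scA\times\H\scA\to\overline{\H\scA}$ sending $((X,Y),(U,V))$ to $(\tau,\tau):(X\oplus U,Y\oplus V)\to(U\oplus X,V\oplus Y)$, and checks that going one way around the resulting diagram gives (\ref{eqn:NormOnCatLevel}) while the other way gives the string defining $\tilde N$. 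To repair your argument you would need either to carry out this direct comparison, or to prove (not merely assert) a characterization of $\tilde N$ strong enough to apply to your factorization --- and the latter would at minimum have to remember the $G_1\times G_2$-structure you have discarded.
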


\begin{proof}
Let $G_1=G_2=\Z/2=C_2$ and consider the sequence (\ref{eqn:GeqKGWKseq}) as a sequence of $G_1\times G_2$-spectra where $G_1\times G_2$ acts on the left two spectra via the projection $G_1\times G_2 \to G_1$ on the first factor.
The action on $GW(\scA)$ is trivial and the action on the last spectrum is via the projection $G_1\times G_2 \to G_2$ onto the second factor.
The map in (\ref{eqn:KhGGWKhG}) is the application of the functor 
$X \mapsto (X^{hG_2})_{hG_1}$ to the $G_1\times G_2$-equivariant string (\ref{eqn:GeqKGWKseq}) preceded by  
$(X^G_2)_{hG} \to (X^{hG_2})_{hG_1}$ and followed by $(X^{hG_2})_{hG_1} \to (X^{hG_2})_{G_1}$. 
We have to show that the sequence (\ref{eqn:GeqKGWKseq}) is $G_1\times G_2$-equivariantly weakly equivalent to the sequence 
\ref{subsec:hypernorm} (\ref{G1G2eqnorm}) where $X = GW(\H\scA)$.

By naturality of the $G_1$-equivariant map $\scA \to \bar{\scA}$ applied to the $G_2$-equivariant map $F:\scA \to \H\scA$, we have a $G_1\times G_2$-equivariant commutative diagram of strict pretriangulated dg categories with weak equivalences and duality
\begin{equation}
\label{eqn:PfOfNormMapEqual}
\xymatrix{\scA \ar[r]^{\sim} \ar[d]^F & \bar{\scA}  \ar[d]^{\bar{F}} \\
\H\scA \ar[r]^{\sim} & \overline{\H\scA}.
}
\end{equation}
Define a functor $J: \H\scA \times \H\scA \to \overline{\H\scA}$ by sending the object
$((X,Y),(U,V))$ to the object $(\tau,\tau): (X\oplus U, Y\oplus V) \longrightarrow (U\oplus X,V\oplus Y)$ and on maps in the obvious way.
The functor is exact, duality preserving and $G_1\times G_2$-equivariant 
where $G_1$ acts on $\H\scA \times \H\scA$ by interchanging the two factors $(x,y)\mapsto (y,x)$ and $G_2$ acts on $\H\scA \times \H\scA$ via $(x,y)\mapsto (\sigma y,\sigma x)$.
We have the following $G_1\times G_2$-equivariant commutative diagram of strict pretriangulated dg categories with weak equivalences and duality
$$
\xymatrix{
\H\scA \ar[r]^{\hspace{-4ex}(1,\sigma)} \ar[d]^H & 
\H\scA \times \H\scA \ar[d]^J & 
\H\scA \vee \H\scA \ar[d]^{1\vee 1} \ar[l]_{\hspace{2ex}\sim} \\
\bar{\scA} \ar[r]^{\bar{F}} & \overline{\H\scA} &
\H\scA \ar[l]_{\sim}
}$$
where, strictly speaking, the upper right corner only makes sense after application of the functor $GW$ using the convention $GW(\A\vee\B)=GW(\A)\vee GW(\B)$.
Going down then right is the $G_1\times G_2$-equivariant map (\ref{eqn:NormOnCatLevel}) in view of the commutative square (\ref{eqn:PfOfNormMapEqual}).
Going right then down is the map that defines the hypernorm map.
\end{proof}

Let $\scB$ be a pretriangulated dg category with weak equivalences that is equipped with a direct sum operation and a zero object $0$ such that $0\oplus B=B=B\oplus 0$ for all objects $B$ of $\scB$.
We make $\H\scB$ into a strict pretriangulated dg category with weak equivalences and duality by defining the direct sum in $\H\scB$ by $(A,B)\oplus (A',B')=(A\oplus A',B'\oplus B)$.
For $\scA=\H\scB$, the sequence (\ref{eqn:GeqKGWKseq}) yields the sequence of $C_2$-equivariant spectra
\begin{equation}
\label{eqn:GeqKGWKseqForH}
\xymatrix{
K(\H\scB) \ar[r]^{H} & GW(\overline{\H\scB}) & GW(\H\scB) \ar[l]_{\hspace{1ex}\simeq} \ar[r]^{F} & K(\H\scB)}
\end{equation}

\begin{lemma}
\label{lem:KhGGWKhGhypEq}
The sequence (\ref{eqn:GeqKGWKseqForH}) is $C_2$-equivariantly equivalent to the sequence 
$$K(\scB)\wedge (C_2)_+ \to K(\scB) \to \Sp^{C_2}((C_2)_+,K(\scB))$$
where $K(\scB)$ has trivial $C_2$-action, and both maps are induced by the $C_2$-equivariant map $C_2 \to \pt$.
\end{lemma}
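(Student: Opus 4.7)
The plan is to identify every term and every map of (\ref{eqn:GeqKGWKseqForH}) in terms of $K(\scB)$ and the standard free/cofree $C_2$-spectra on it. The underlying principle is that $GW$ of a hyperbolic category is (by the paper's definition $K(\scA) = GW(\H\scA)$) the $K$-theory of the base; applying this twice --- first to $\scA = \H\scB$ and then to the symmetric spaces that appear --- will reduce everything to $K(\scB)$.

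First I would identify $K(\H\scB) = GW(\H(\H\scB))$. The underlying dg category of $\H\scB$ is $\scB \times \scB^{op}$, so additivity of $K$-theory (equivalently, the defining identity $K(\scA) = GW(\H\scA)$ combined with $K$-theoretic additivity across products) gives a natural stable equivalence
\begin{equation*}
K(\H\scB) \simeq K(\scB) \vee K(\scB^{op}) = K(\scB) \vee K(\scB),
\end{equation*}
with summands indexed by the two projections $\H\scB \to \scB$ and $\H\scB \to \scB^{op}$. The duality $(A,B) \mapsto (B,A)$ on $\H\scB$ swaps these projections, so the $C_2$-action on $K(\H\scB)$ introduced in Section \ref{Z/2actionOnKth} swaps the two summands; this identifies $K(\H\scB)$ with the free $C_2$-spectrum $K(\scB) \wedge (C_2)_+$. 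Simultaneously $GW(\H\scB) = K(\scB)$ by definition of the model for $K$-theory, and the $C_2$-action on this term in the sequence is trivial by the conventions of Section \ref{Z/2actionOnKth}. Since the equivalence $\H\scB \to \overline{\H\scB}$, $X \mapsto (1_X: X \to X)$, is $C_2$-equivariant (as $(1_X)^{-1} = 1_X$), we also have $GW(\overline{\H\scB}) \simeq K(\scB)$ with trivial $C_2$-action.

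Next I would identify the maps on representatives. For any $\scC$, a symmetric space in $\H\scC$ is essentially an object of $\scC$ (the symmetry of $\ffi: (X,Y) \to (Y,X)$ forces an isomorphism $X \cong Y$), and this is the mechanism behind $GW(\H\scC) \simeq K(\scC)$. Applied to $\scC = \H\scB$: the forgetful functor $F: \H\scB \to \H(\H\scB)$, $X \mapsto (X, X^*)$, sends the symmetric space $((A,A), 1)$ --- representing $[A] \in K(\scB) \simeq GW(\H\scB)$ --- to the diagonal symmetric space $((A,A),(A,A))$ in $\H(\H\scB)$, which corresponds to $([A],[A])$ in $K(\scB) \vee K(\scB)$. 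Thus $F_*$ becomes the diagonal $\Delta$. Dually, the hyperbolic functor $H: \H(\H\scB) \to \overline{\H\scB} \simeq \H\scB$, $(X,Y) \mapsto X \oplus Y^*$, sends a diagonal symmetric space $((A,B),(A,B))$ --- representing $([A],[B]) \in K(\scB) \vee K(\scB)$ --- to $(A,B) \oplus (B,A) = (A \oplus B, A \oplus B)$, representing $[A \oplus B] \in K(\scB)$. Thus $H_*$ becomes the fold map $\nabla$.

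Finally, for any spectrum $Y$ with trivial $C_2$-action one has the standard $C_2$-equivariant identifications $Y \wedge (C_2)_+ \simeq Y \vee Y \simeq \Sp^{C_2}((C_2)_+, Y)$ with swap action, under which the maps induced by $C_2 \to \pt$ become the fold $Y \vee Y \to Y$ and the diagonal $Y \to Y \vee Y$ respectively. Substituting $Y = K(\scB)$ gives exactly the target sequence, completing the identification. The main obstacle is checking $C_2$-equivariance (rather than just a pointwise equivalence) of the additivity splitting $K(\H\scB) \simeq K(\scB) \vee K(\scB)$; this follows from naturality, because both summands arise from exact dg functors out of $\H\scB$ that the duality swap intertwines by construction.
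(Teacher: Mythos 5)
Your proposal is correct and follows essentially the same route as the paper: identify $K(\H\scB)=GW(\H(\H\scB))$ with $K(\scB)\vee K(\scB)$ carrying the swap action via additivity, and recognize $H$ and $F$ as the fold and the diagonal. The one step you defer to ``naturality'' is precisely what the paper makes explicit --- a $C_2$-equivariant isomorphism $\Phi:\H(\scB\times\scB)\cong\H(\H\scB)$ (with the switch action on $\scB\times\scB$) together with a strictly commutative diagram of equivariant dg form functors comparing $(\nabla,\Delta)$ with $(H,F)$ --- and that functor-level diagram is what upgrades your check on representative symmetric spaces, which by itself only pins the maps down on $\pi_0$, to an identification of maps of spectra.
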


\begin{proof}
We have a $C_2$-equivariant isomorphism of strict pretriangulated dg categories with weak equivalences and duality
$$
\renewcommand\arraystretch{1.5}
\begin{array}{cccc}
\Phi:& \H(\scB\times\scB) = (\scB\times\scB)\times (\scB\times\scB)^{op}  &  \stackrel{\cong}{\longrightarrow} &  \H(\H\scB) = \H\scB\times (\H\scB)^{op}  \\
& (A_0,A_1),(B_0,B_1)  & \mapsto & (A_0,B_1),(B_0,A_1)
\end{array}
$$
where $C_2$ acts on the left hand side $\H(\scB \times \scB)$ via the switch 
$\scB \times \scB \to \scB \times \scB: (A,B)\mapsto (B,A)$.
Then the following diagram of $C_2$-equivariant dg form functors commutes
$$
\xymatrix{\H(\scB\times 0) \vee \H(0\times \scB) \ar[d]^{\Phi} \ar[rr]^{\hspace{3ex}\nabla} &&
\H(\scB) \ar[d]^{=} \ar[r]^{\hspace{-3ex}\Delta} & \H(\scB\times\scB) \ar[d]^{\Phi}\\
\H(\H\scB) \ar[r]^H & \overline{\H\scB} & \H\scB \ar[l]_{\sim} \ar[r]^F & 
\H(\H\scB).}$$
It follows that the sequence
(\ref{eqn:GeqKGWKseqForH}) is $C_2$-equivariantly equivalent to the sequence
$$K(\scB)\vee K(\scB) \stackrel{\nabla}{\longrightarrow} K(\scB) \stackrel{\Delta}{\longrightarrow} K(\scB)\times K(\scB)$$
where $C_2$ acts on the left spectrum by switching the two summands, and on the right spectrum by switching the two factors.
\end{proof}

Diagram (\ref{eqn:KhGGWKhG}) is a diagram of module spectra over the commutative ring spectrum $GW(k)$.
This is because the diagram is obtained by applying the functor $GW$ to the $C_2$-equivariant diagram (\ref{eqn:NormOnCatLevel}) and taking various homotopy limits or colimits.
Thus, inverting $\eta \in GW^{[-1]}_{-1}(k)$ (or rather $\eta^4\mu \in GW_{-4}(k)$) in diagram (\ref{eqn:KhGGWKhG}) yields a commutative diagram of spectra
\begin{equation}
\label{eqnKhGWKhetalocn}
\xymatrix{
K(\scA)_{hC_2} \ar[r] \ar[d] & GW(\scA) \ar[r]\ar[d] & K(\scA)^{hC_2} \ar[d]\\
\eta^{-1}K(\scA)_{hC_2} \ar[r] & \eta^{-1}GW(\scA) \ar[r] & \eta^{-1}\left(K(\scA)^{hC_2}\right).
}
\end{equation}
For instance, $\eta^{-1}K(\scA)_{hC_2}$ is the homotopy colimit of the sequence
$$K(\scA)_{hC_2} \stackrel{\eta}{\to} (S^1\wedge K^{[-1]}(\scA))_{hC_2} \stackrel{\eta}{\to} (S^2\wedge K^{[-2]}(\scA))_{hC_2} \stackrel{\eta}{\to} \cdots$$
In particular, $\eta^{-1}K(\scA)_{hC_2} = (\eta^{-1}K(\scA))_{hC_2} = 0$ as $K_i(\scA)=0$ for $i<0$.
We have already seen that the spectrum $L(\scA)=\eta^{-1}GW(\scA)$ represents triangular Witt groups.
The following theorem identifies the lower right corner with the Tate spectrum $\hat{\bH}(C_2,K(\scA))$ of $K(\scA)$.

\begin{theorem}
\label{thm:williamsKobal}
Let $\scA$ be a dg category with weak equivalences and duality such that $\frac{1}{2}\in \scA$.
Then both squares of spectra in diagram (\ref{eqnKhGWKhetalocn}) are homotopy cartesian, the lower left corner is contractible, the lower middle term is $L(\scA)$ and the lower right term is the Tate spectrum of the $C_2$-spectrum $K(\scA)$.
In particular, there is a natural homotopy fibration 
$$K(\scA)_{hC_2} \to  GW(\scA) \to L(\scA)$$
and a homotopy cartesian square of spectra
$$
\xymatrix{
GW(\scA) \ar[r]\ar[d] & K(\scA)^{hC_2} \ar[d]\\
L(\scA) \ar[r] & \hat{\bH}(C_2,K(\scA)).
}$$
\end{theorem}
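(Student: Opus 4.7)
The plan is to establish the top row of (\ref{eqnKhGWKhetalocn}) as a homotopy fibration of spectra, and then deduce the two-square statement by applying the exact $\eta$-localization functor.

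\textbf{Step 1 (the heart of the proof).} I would show that the composite
$$K(\scA)_{hC_2} \stackrel{H}{\longrightarrow} GW(\scA) \stackrel{F}{\longrightarrow} K(\scA)^{hC_2}$$
is a homotopy fiber sequence, where the first map comes from the hyperbolic functor together with the augmentation $X_{hC_2}\to X$ for the trivial action on $GW(\scA)$, and the second is the forgetful functor composed with the inclusion of constants $GW(\scA) \to GW(\scA)^{hC_2}$. By Lemma~\ref{lem:KGWKisNorm}, the composite is already known to be the hypernorm map, so the new content is the fiber sequence structure itself. The strategy is to use the algebraic Bott sequence (Theorem~\ref{thm:PeriodicityExTriangle}) as a $C_2$-equivariant cofiber sequence
$$GW(\scA) \stackrel{F}{\longrightarrow} K(\scA) \stackrel{H}{\longrightarrow} GW^{[1]}(\scA),$$
in which $C_2$ acts trivially on the outer terms and via duality on $K(\scA)$. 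Applying $(-)^{hC_2}$ and $(-)_{hC_2}$ to this sequence and to its shifted analogue, then splicing the resulting fiber sequences using the canonical maps $X \to X^{hC_2}$ and $X_{hC_2}\to X$ for trivial actions and the $\eta$-boundary map, produces the desired fiber sequence after a careful diagram chase.

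\textbf{Step 2 ($\eta$-localization of the fiber sequence).} The functor $X\mapsto \eta^{-1}X$ (equivalently, localization at $\eta^4\mu\in GW_{-4}(k)$) is exact, so it preserves the fiber sequence of Step~1. The key input here is that $K(\scA)$ is a connective $GW(k)$-module spectrum, hence so is $K(\scA)_{hC_2}$, while $\eta$ has degree $-1$; the mapping telescope defining $\eta^{-1}K(\scA)_{hC_2}$ therefore vanishes degreewise. This identifies the lower-left corner of (\ref{eqnKhGWKhetalocn}) as contractible, and, together with the fiber sequence from Step~1, forces both squares to be homotopy cartesian. The middle lower term is $L(\scA)$ by Definition~\ref{dfn:LthSp}.

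\textbf{Step 3 (identification with the Tate spectrum).} By definition of the Tate spectrum (Appendix~\ref{Appx:TateSp}), there is a cofiber sequence
$$K(\scA)_{hC_2} \longrightarrow K(\scA)^{hC_2} \longrightarrow \hat{\bH}(C_2, K(\scA)).$$
Localizing at $\eta$ collapses the left term by Step~2, so the natural map $\eta^{-1}K(\scA)^{hC_2} \to \eta^{-1}\hat{\bH}(C_2,K(\scA))$ is an equivalence. That the Tate spectrum itself is already $\eta$-periodic, i.e.\ $\hat{\bH}(C_2,K(\scA)) \simeq \eta^{-1}\hat{\bH}(C_2,K(\scA))$, follows from the fact that every connective $GW(k)$-module has trivial Tate spectrum in the limit along cup-product with $\eta$, combined with the $4$-periodicity produced by $\mu \in GW_0^{[4]}(k)$ (Remark~\ref{rmk:Cupmu}). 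Splicing these equivalences identifies the lower right corner with $\hat{\bH}(C_2, K(\scA))$.

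\textbf{Main obstacle.} Step~1 is where essentially all of the work lies. Lemma~\ref{lem:KGWKisNorm} tells us \emph{what} the composite is, but exhibiting the two-stage factorization as a genuine fiber sequence requires controlling the $C_2$-equivariant Bott sequence across the shifts $\scA^{[n]}$ and verifying that the hyperbolic and forgetful functors detect exactly the fiber and cofiber of the norm. The technical content is analogous to the Kobal--Williams fibration for rings with involution; the present setting extends this to dg categories with weak equivalences and duality, so the role of ordinary projective modules is played by the entire pretriangulated hull equipped with its shifted dualities, and the $\eta$-action must be tracked throughout.
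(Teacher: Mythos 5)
Your Step 1 is false, and since you identify it as the heart of the argument, the proof does not go through. In spectra a homotopy fibre sequence is the same as a cofibre sequence, so the claim that $K(\scA)_{hC_2}\to GW(\scA)\to K(\scA)^{hC_2}$ is a fibre sequence would identify $K(\scA)^{hC_2}$ with the cofibre of $K(\scA)_{hC_2}\to GW(\scA)$ --- which, by the left-hand square of the theorem, is $L(\scA)$. The resulting equivalence $L(\scA)\simeq K(\scA)^{hC_2}$, equivalently $GW(\scA)\simeq K(\scA)^{hC_2}$, is the integral form of Williams' conjecture, which is only expected to hold $2$-adically and fails integrally in general (already rationally $GW$ splits as $L$ plus the fixed part of $K$, so $L\not\simeq K^{hC_2}$ whenever the rational Witt groups are nonzero). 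The composite of your two maps is indeed the hypernorm (Lemma~\ref{lem:KGWKisNorm}), but its cofibre is the generally nontrivial Tate spectrum $\hat{\bH}(C_2,K(\scA))$; this is precisely why the theorem is formulated as a pair of homotopy cartesian squares against the $\eta$-localized row rather than as a fibration statement about the top row. Step 2 inherits the defect (and, even granting Step 1, a map of fibre sequences with one contractible corner does not by itself make the squares homotopy cartesian), and the justification offered in Step 3 for the $\eta$-periodicity of $\hat{\bH}(C_2,K(\scA))$ --- that ``every connective $GW(k)$-module has trivial Tate spectrum in the limit along cup-product with $\eta$'' --- is not a correct statement.

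The actual proof compares the homotopy fibres of the $\eta$-multiplication maps in the telescopes defining the localizations. For each $n$ there is a map of sequences from $S^{n}\wedge\bigl(K^{[-n]}(\scA)_{hC_2}\to GW^{[-n]}(\scA)\to K^{[-n]}(\scA)^{hC_2}\bigr)$ to its $(n+1)$-st shift, given by $\eta\cup$. Theorem~\ref{thm:PeriodicityExTriangle} identifies the vertical homotopy fibre over the middle term as $K(\scA)$, and Lemma~\ref{lem:KhGGWKhGhypEq} shows that the hyperbolic--forgetful sequence for a hyperbolic category is $C_2$-equivariantly of the form $K\wedge(C_2)_+\to K\to \Sp^{C_2}((C_2)_+,K)$, so that after applying $(-)_{hC_2}$ and $(-)^{hC_2}$ the outer vertical fibres also collapse to $K(\scA)$ and the maps between the three fibres are equivalences. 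Hence every square in the telescope, and therefore each square of (\ref{eqnKhGWKhetalocn}), is homotopy cartesian. The Tate identification then falls out of the outer homotopy cartesian rectangle with contractible lower-left corner: its top row is the hypernorm by Lemma~\ref{lem:KGWKisNorm}, and the Tate spectrum is by definition its cofibre. If you want to salvage your outline, replace Step 1 by this identification of the fibres of $\eta$ on all three terms of the norm sequence; no claim that the norm sequence itself is a fibration is needed or true.
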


\begin{proof}
Consider the commutative diagram of spectra
$$
\xymatrix{
S^{n}\wedge K^{[-n]}(\scA)_{hC_2} \ar[r] \ar[d]^{\eta} & S^{n}\wedge GW^{[-n]}(\scA) \ar[r]\ar[d]^{\eta} & S^{n}\wedge K^{[-n]}(\scA)^{hC_2} \ar[d]^{\eta}\\
S^{n+1}\wedge K^{[-n-1]}(\scA)_{hC_2} \ar[r] & S^{n+1}\wedge GW^{[-n-1]}(\scA) \ar[r] & S^{n+1}\wedge K^{[-n-1]}(\scA)^{hC_2}.
}
$$
The maps on vertical homotopy fibres are equivalences by Theorem \ref{thm:PeriodicityExTriangle} and Lemma \ref{lem:KhGGWKhGhypEq}.
In other words both squares are homotopy cartesian.
Passing to the homotopy colimit for $n\to \infty$ shows that the squares in diagram (\ref{eqnKhGWKhetalocn}) are homotopy cartesian.
Since the lower left corner is contractible (see discussion preceding the theorem) and the composition of the upper two horizontal maps is the hypernorm map (Lemma \ref{lem:KGWKisNorm}), the lower right corner of diagram (\ref{eqnKhGWKhetalocn}) is the Tate spectrum $\hat{\bH}(C_2,K(\scA))$.
\end{proof}

\begin{remark}
\label{rem:LtoTatePeriodicity}
The map $L(\scA) \to \hat{\bH}(C_2,K(\scA))$ is a module map over the $4$-periodic ring spectrum $L(k)$.
In particular, both spectra are $4$-periodic and the map respects the periodicity maps given by the cup product with the element of $\eta^4\mu \in L_{-4}(k)$; compare \cite{someWeissWilliams}.
\end{remark}

\begin{remark}
The proof of Theorem \ref{thm:williamsKobal} is rather formal and uses only the natural exact triangle of Theorem \ref{thm:PeriodicityExTriangle}.
In particular, we could replace the functor $\scA \mapsto GW(\scA)$ with its localization $\scA \mapsto GW(\scA)[\frac{1}{2}]$ inverting $2$.
In this way, we obtain for any pretriangulated dg category with weak equivalences and duality such that $\frac{1}{2}\in \scA$ a homotopy cartesian square of spectra
$$
\xymatrix{
GW(\scA)[\frac{1}{2}] \ar[r]\ar[d] & \bH^{\bullet}\left(C_2,K(\scA)[\frac{1}{2}]\right) \ar[d]\\
L(\scA)[\frac{1}{2}] \ar[r] & \hat{\bH}\left(C_2,K(\scA)[\frac{1}{2}]\right).
}$$
The lower right corner in this diagram is contractible; see for instance \ref{lem:TateTwoinvertedX}.
Therefore, we have an equivalence of spectra
$$GW(\scA)[1/2] \stackrel{\sim}{\longrightarrow} L(\scA)[1/2] \oplus \bH^{\bullet}\left(C_2,K(\scA)[1/2]\right)$$
and an isomorphism of homotopy groups (Lemma \ref{lem:2divisiblehFixedPts})
$$GW_i(\scA)\otimes \Z[1/2] \cong W^{-i}(\scA)\otimes \Z[1/2] \oplus (K_i(\scA)\otimes \Z[1/2])^{C_2}.$$
\end{remark}

\subsection{Cofinality revisited}
\label{subsec:Kcofinal}
Let $\scA \to \scB$ be a map of pretriangulated dg categories with weak equivalences such that the induced map on associated triangulated categories $\T\scA \to \T\scB$ is cofinal, that is, it is fully faithful and
every object of $\T\scB$ is a direct factor of an object of $\T\scA$.
Let $K_0(\scB,\scA)$ be the monoid of isomorphism classes of objects in $\T\scB$ under direct
sum operation, modulo the submonoid of isomorphisms classes  of objects in
$\T\scA$. 
There is a canonical map $K_0(\scB) \to K_0(\scB,\scA): [B] \mapsto [B]$ which
induces an isomorphism between the cokernel of $K_0(\scA) \to K_0(\scB)$ and the group $K_0(\scB,\scA)$. 
By a theorem of Thomason \cite[1.10.1]{TT}, there is a homotopy fibration of spectra
\begin{equation}
\label{eqn:Kcofinal}
K(\scA) \to K(\scB) \to K_0(\scB,\scA)
\end{equation}
where $K_0(\scB,\scA)$ denotes the Eilenberg-MacLane spectrum whose only non-trivial homotopy group is in degree $0$ where it is $K_0(\scB,\scA)$.

Recall that for an Eilenberg-MacLane spectrum of a $C_2$-module $A$, we denote by $\bH^{\bullet}(C_2,A)$ the homotopy fixed point spectrum $A^{hC_2}$.
This spectrum represents cohomology of the group $C_2$ with coefficients in the $C_2$-module $A$.

\begin{theorem}[Cofinality]
\label{thm:cofinalGWL}
Let $\scA \to \scB$ be a map of dg categories with weak equivalences and duality such that $\frac{1}{2}\in \scA, \scB$.
If $\T\scA \to \T\scB$ is cofinal, then there are homotopy fibrations of spectra
$$
\renewcommand\arraystretch{1.5}
\begin{array}{lclcl}
GW(\scA) & \to & GW(\scB) & \to & \bH^{\bullet}(C_2,K_0)\\
L(\scA) & \to &L(\scB)& \to& \hat{\bH}(C_2,K_0)
\end{array}$$
where $K_0$ denotes the $C_2$-module $K_0(\scB,\scA)$ with $C_2$-action induced by the duality functor on $\scB$.
\end{theorem}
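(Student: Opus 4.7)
The plan is to reduce the two claimed fibrations to the $C_2$-equivariant version of Thomason's cofinality in $K$-theory, combined with Theorem \ref{thm:williamsKobal}. Since $\scA \to \scB$ is a dg form functor, the induced map $K(\scA) \to K(\scB)$ is $C_2$-equivariant for the actions of Section \ref{Z/2actionOnKth}, so by naturality of Thomason's cofinality theorem, the fibration (\ref{eqn:Kcofinal}) becomes a $C_2$-equivariant homotopy fibration $K(\scA) \to K(\scB) \to K_0$, where $K_0 = K_0(\scB, \scA)$ carries the $C_2$-module structure induced by the duality on $\scB$. Applying the exact functors $(-)^{hC_2}$ and $\hat{\bH}(C_2, -)$ (which preserve fiber sequences of $C_2$-spectra) produces the fibrations
\begin{equation*}
K(\scA)^{hC_2} \to K(\scB)^{hC_2} \to \bH^{\bullet}(C_2, K_0) \quad\text{and}\quad \hat{\bH}(C_2, K(\scA)) \to \hat{\bH}(C_2, K(\scB)) \to \hat{\bH}(C_2, K_0).
\end{equation*}

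Next, Theorem \ref{thm:williamsKobal} provides, naturally in the dg category, a homotopy cartesian square relating $GW$, $L$, $K^{hC_2}$, and $\hat{\bH}(C_2, K)$. Taking vertical cofibers of the $\scA \to \scB$ maps in the squares for $\scA$ and $\scB$ yields a homotopy cartesian square
\begin{equation*}
\xymatrix{
GW(\scB)/GW(\scA) \ar[r]^{\alpha} \ar[d] & \bH^{\bullet}(C_2, K_0) \ar[d] \\
L(\scB)/L(\scA) \ar[r]^{\beta} & \hat{\bH}(C_2, K_0).
}
\end{equation*}
The theorem is precisely the assertion that $\alpha$ and $\beta$ are both stable equivalences; since the square is homotopy cartesian, the homotopy fibers of $\alpha$ and $\beta$ agree, so it suffices to verify either one.

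To treat $\beta$, I would identify it with the $\eta$-localization $\eta^{-1}\alpha$: by Definition \ref{dfn:LthSp} the bottom left is $\eta^{-1}(GW(\scB)/GW(\scA))$, and the right vertical arrow exhibits $\hat{\bH}(C_2, K_0)$ as $\eta^{-1}\bH^{\bullet}(C_2, K_0)$, using the norm fiber sequence $(K_0)_{hC_2} \to \bH^{\bullet}(C_2, K_0) \to \hat{\bH}(C_2, K_0)$ together with $\eta^{-1}(K_0)_{hC_2} = 0$ (true because $(K_0)_{hC_2}$ is connective and cup product with $\eta \in GW_{-1}^{[-1]}(k)$ strictly decreases degree). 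The main obstacle is then to show that $\beta = \eta^{-1}\alpha$ is an equivalence, which amounts to the purely $L$-theoretic cofinality statement $W^n(\scB)/W^n(\scA) \cong \pi_{-n}\hat{\bH}(C_2, K_0)$ for all $n \in \Z$. I would argue this by Karoubi induction (Lemma \ref{lem:KaroubiInd}) combined with the Bott exact triangle of Theorem \ref{thm:PeriodicityExTriangle}, propagating Thomason's $K$-theoretic cofinality—$K_i(\scA) \xrightarrow{\cong} K_i(\scB)$ for $i \geq 1$ with cokernel $K_0$ in degree zero—through the shifted Grothendieck--Witt long exact sequences and matching them degree by degree with the $4$-periodic homotopy of $\hat{\bH}(C_2, K_0)$.
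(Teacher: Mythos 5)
Your construction of the homotopy cartesian square of cofibers
$$\xymatrix{
GW(\scB)/GW(\scA) \ar[r]^{\hspace{3ex}\alpha} \ar[d] & \bH^{\bullet}(C_2,K_0) \ar[d]\\
GW(\scB)/GW(\scA) \hspace{-11ex}& \hspace{-11ex} L(\scB)/L(\scA) \ar[r]^{\hspace{3ex}\beta} & \hat{\bH}(C_2,K_0)
}$$
is exactly the paper's first step (combine the $C_2$-equivariant $K$-theory cofinality fibration with the squares of Theorem \ref{thm:williamsKobal}), and your observation that it suffices to prove either $\alpha$ or $\beta$ is an equivalence, with $\beta$ being $4$-periodic as a map of $L(k)$-modules, is also how the paper closes the argument. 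The gap is in how you propose to verify this last point. Karoubi induction (Lemma \ref{lem:KaroubiInd}) takes as \emph{input} that the functor induces isomorphisms on all shifted Witt groups and an equivalence on $K$-theory; both hypotheses fail here (the relative Witt groups are precisely the Tate cohomology groups you are trying to compute, and $K$-theory has cokernel $K_0$ in degree $0$), so the induction has no base case. Likewise, the relative Bott sequences only yield $GW_i^{[n+1]}(\scB,\scA)\cong GW_{i-1}^{[n]}(\scB,\scA)$ away from degrees $0,1$ together with a six-term sequence through $K_0$ in degrees $0,1$; combined with $4$-periodicity of the shift this shows the relative groups are $4$-periodic in positive degrees, but it does not determine them. "Matching degree by degree with $\hat{\bH}(C_2,K_0)$" therefore presupposes the relative $GW_0^{[n]}$ (equivalently the relative Witt groups), which is the statement being proved --- the argument is circular.

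The missing non-formal input is the space-level Cofinality Theorem \cite[Theorem 7]{myMV}, which (with Theorem \ref{thm:Invariance}) gives $GW_i(\scA)\stackrel{\cong}{\to} GW_i(\scB)$ for $i\geq 1$. This forces $\pi_i$ of both the source and target of $\alpha$ to vanish for $i\geq 1$, hence the common fiber of $\alpha$ and $\beta$ is trivial in degrees $\geq 1$, hence trivial everywhere by the $4$-periodicity of $\beta$; that is the paper's proof. If you want to avoid citing \cite{myMV}, you must supply a genuine argument (about symmetric forms on direct summands, via the $\RR_{\bullet}$-construction or a group-completion analysis) that the relative $GW$-groups vanish in some full period of positive degrees; no purely diagrammatic manipulation of the Bott triangles will produce this.
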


\begin{proof}
By Proposition \ref{prop:GWspecIsGWspace}, we can assume $\scA$ and $\scB$ pretriangulated.
Write ${GW}^?$ and ${L}^?$ for the homotopy cofibres of
$GW(\scA)  \to  GW(\scB)$ and $L(\scA)  \to L(\scB)$.
By cofinality in algebraic $K$-theory (\ref{eqn:Kcofinal}) and Theorem
\ref{thm:williamsKobal}, we have a homotopy cartesian square of spectra
$$\xymatrix{
{GW}^? \ar[r] \ar[d] & {\bH}^{\bullet}(C_2,K_0) \ar[d]\\
{L}^? \ar[r] & \hat{\bH}(C_2,K_0).}$$
By Cofinality for higher Grothendieck-Witt groups \cite[Theorem 7]{myMV} and the Invariance Theorem \ref{thm:Invariance}, the upper horizontal map is an isomorphism on homotopy groups in positive degrees since in this range both have zero homotopy groups.
It follows that the lower horizontal map is an isomorphism on homotopy groups
in degrees $\geq 2$.
Since the lower horizontal map is periodic as a module map over the periodic ring spectrum $L(k)$, it induces an isomorphism in all degrees.
It is therefore a stable equivalence.
It follows that the upper horizontal map is also a stable equivalence.
\end{proof}

Denote by $\dgCatWD_{\pretr} \subset \dgCatWD$ the full subcategory of pretriangulated dg categories with weak equivalences and duality.

\begin{lemma}
\label{lem:Astr}
There is a functor 
$$\dgCatWD_{\pretr} \to \dgCatWD_{\str}: \scA \mapsto \scA_{\str}$$
and a natural map $\scA \to \scA_{\str}$ in $\dgCatWD_{\pretr}$ which induces an equivalence $\T\scA \to \T\scA_{\str}$ on associated triangulated categories.
\end{lemma}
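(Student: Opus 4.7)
The plan is to construct $\scA_{\str}$ in two successive strictification steps: first forcing the double-dual identification to become the identity, then forcing strict additive structure (strict associativity/unitality of $\oplus$ compatible with $\vee$).

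\textbf{Stage 1 (strictify the duality).} I would define a dg category $\scA_1$ whose objects are quadruples $(X,Y,\phi,\psi)$ where $X,Y \in \scA$ and $\phi \in Z^0\scA(X,Y^{\vee})$, $\psi \in Z^0\scA(Y,X^{\vee})$ are weak equivalences satisfying $\psi = \phi^{\vee}\can_Y$ (equivalently $\phi = \psi^{\vee}\can_X$, using the axiom $\can_A^{\vee}\can_{A^{\vee}} = 1_{A^{\vee}}$). The mapping complex from $(X,Y,\phi,\psi)$ to $(X',Y',\phi',\psi')$ is the dg submodule of pairs $(f,g) \in \scA(X,X') \oplus \scA(Y',Y)$ satisfying $\phi' f = g^{\vee}\phi$; composition is componentwise. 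The strict duality sends $(X,Y,\phi,\psi)^{\vee} = (Y,X,\psi,\phi)$ and $(f,g)^{\vee} = (g,f)$, with $\can = \mathrm{id}$; strict involutivity is immediate. There is a form functor $\scA \to \scA_1$ sending $X \mapsto (X,X^{\vee},\can_X,1_{X^{\vee}})$ and $f \mapsto (f,f^{\vee})$, with duality compatibility isomorphism $(1_{X^{\vee}},\can_X)$ between the image of $X^{\vee}$ and the $\scA_1$-dual of the image of $X$. A morphism $(f,g)$ in $\scA_1$ is declared a weak equivalence iff $f$ is one in $\scA$ (equivalently, iff $g$ is one).

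\textbf{Stage 2 (strictify direct sums and zero object).} Given $\scA_1$ with its strict duality, I would let $\scA_{\str}$ have objects the finite sequences $(Z_1,\ldots,Z_n)$, $n\geq 0$, of objects of $\scA_1$, with the empty sequence as chosen zero object, mapping complex $\bigoplus_{i,j}\scA_1(Z_i,W_j)$, matrix composition, concatenation as direct sum, and strict duality $(Z_1,\ldots,Z_n)^{\vee} = (Z_n^{\vee},\ldots,Z_1^{\vee})$. This manifestly lies in $\dgCatWD_{\str}$. The singleton embedding $\scA_1 \to \scA_{\str}$ is a strict form functor, and composition yields the desired $\scA \to \scA_{\str}$.

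\textbf{Equivalence on triangulated categories.} For $\T\scA_1 \to \T\scA_{\str}$, the singleton embedding generates the target under finite biproducts, giving an equivalence. For $\T\scA \to \T\scA_1$, I would show essential surjectivity using the fact that any $(X,Y,\phi,\psi)$ receives a weak equivalence $(1_X,\psi) \colon (X,X^{\vee},\can_X,1) \to (X,Y,\phi,\psi)$ from the image of $X$ (since $\psi$ is a weak equivalence). Full faithfulness in $\T$ reduces to showing that the forgetful map
\[ \scA_1\bigl((X,Y,\phi,\psi),(X',Y',\phi',\psi')\bigr) \longrightarrow \scA(X,X') \]
is a quasi-isomorphism whenever $\phi,\phi'$ are weak equivalences: the kernel of this surjection (modulo constraints) is the mapping fiber of the map $\scA(Y',Y) \to \scA(X,Y'^{\vee})$ sending $g \mapsto g^{\vee}\phi$, which is a quasi-isomorphism since $\phi,\phi'$ and the dualities on $\scA$ are weak equivalences on mapping complexes.

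\textbf{Main obstacle.} The technically delicate point is verifying that $\scA_1$ is genuinely pretriangulated (so that the output belongs to $\dgCatWD_{\pretr}$) and that the projection on mapping complexes above is a quasi-isomorphism. Both rest on the hypothesis that $\phi$ and $\psi$ are weak equivalences rather than strict isomorphisms, and require careful use of the identity $\can_A^{\vee}\can_{A^{\vee}} = 1$. Pretriangulatedness itself follows from that of $\scA$ by realizing $\scA_1$ as a homotopy pullback along the pair of projections $\scA_1 \rightrightarrows \scA$ (to $X$ and to $Y$), each of which is a quasi-equivalence.
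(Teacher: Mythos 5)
Your two-stage strategy coincides with the paper's: Stage 2 (finite sequences with matrix mapping complexes, concatenation as direct sum, reversed dual sequence, singleton embedding) is verbatim the construction in the paper's proof, and Stage 1 is essentially the hermitianization that the paper imports by citing \cite[Lemma 4]{myMV}. So the route is the same; the problem lies in your verification that $\scA \to \scA_1$ induces an equivalence $\T\scA \to \T\scA_1$.

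That full-faithfulness argument has a genuine gap. First, for a general object $(\A,w)$ of $\dgCatWD$ the cohomology of mapping complexes does not compute morphisms in $\T\scA = w^{-1}H^0\scA^{\pretr}$, so full faithfulness on $\T$ does not reduce to the forgetful map $\scA_1(-,-)\to\scA(X,X')$ being a quasi-isomorphism. Second, even granting that reduction, your mapping complex is the \emph{strict} equalizer of $f\mapsto \phi' f$ and $g\mapsto g^{\vee}\phi$; identifying its kernel with the mapping fibre of $g\mapsto g^{\vee}\phi$ presupposes that this strict pullback is a homotopy pullback (which needs a degreewise surjectivity you do not have), and the claim that $g\mapsto g^{\vee}\phi$ is a quasi-isomorphism needs both that the duality is quasi-fully-faithful on mapping complexes and that precomposition with the weak equivalence $\phi$ is a quasi-isomorphism --- neither holds for general $(\A,w)$ (weak equivalences are only isomorphisms in $\T\scA$, not homotopy equivalences). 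The data you already wrote down repair this without any mapping-complex analysis: the forgetful dg functor $G:\scA_1\to\scA$, $(X,Y,\phi,\psi)\mapsto X$, $(f,g)\mapsto f$, satisfies $GF=\mathrm{id}_{\scA}$, and your morphisms $(1_X,\psi)$ assemble into a natural weak equivalence $FG\Rightarrow \mathrm{id}_{\scA_1}$; hence $F$ and $G$ induce mutually inverse triangle equivalences on $\T$. This is exactly how the cited lemma argues. Two smaller points: the duality compatibility $(1_{X^{\vee}},\can_X)$ of $F$ is only a weak equivalence, not an isomorphism (which is all a morphism of $\dgCatWD$ requires); and the pretriangulatedness of $\scA_1$ should be checked directly by exhibiting cones and extensions componentwise in $X$ and $Y$ with the induced forms, rather than by the ``homotopy pullback of dg categories'' sketch, which is not an argument as it stands.
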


\begin{proof}
The construction in \cite[Lemma 4]{myMV} gives a functor that produces strict dualities without changing associated triangulated categories.
Thus, we can assume that $\scA$ has a strict duality and that functors commute with dualities, and we need to construct $\scA_{\str}$ satisfying 
$(X\oplus Y)^*=Y^*\oplus X^*$ and $0\oplus X = X =X \oplus 0$ for all objects $X, Y$ of $\scA_{\str}$.
The category $\scA_{\str}$ has objects the sequences 
$(A_0,...,A_n)$ of objects of $\scA$.
The mapping space from $(A_0,...,A_n)$ to $(B_0,...,B_m)$ is the mapping space from $A_0 \oplus \cdots \oplus A_n$ to $B_0 \oplus \cdots \oplus B_m$ in $\scA$ (which can be identified with the set of $m\times n$ matrices with $(i,j)$ entry the maps from $A_j$ to $B_i$, and thus is independent of the choice of a sum).
Composition is composition of maps in $\scA$ (or of matrices whichever is your point of view).
The direct sum of two sequences $(A_0,...,A_n)$ and $(B_0,...,B_m)$ is the sequence $(A_0,...,A_n,B_0,...,B_m)$, the empty sequence is the (base point) zero object, and the dual of a sequence is 
$(A_0,...,A_n)^* = (A_n^*,...,A_0^*)$.
The category $\scA_{\str}$ is a strict exact category with weak equivalences and duality and the functor 
$\scA \to \scA_{\str}: A \mapsto (A)$ 
is a duality preserving equivalence of categories and thus induces an equivalence on associated triangulated categories.
\end{proof}

\section{The Karoubi-Grothendieck-Witt spectrum}
\label{sec:KGWspectrum}

We saw in Theorem \ref{thm:LcnConn1} that the Grothendieck-Witt functor $GW$ sends exact sequences of triangulated categories to long exact sequences of higher Grothendieck-Witt groups. 
In applications, however, it is more useful to have a functor that sends sequences of triangulated categories that are exact only up to factors to long exact sequences of abelian groups.
In this section, we will construct such a functor $\GW$, the Karoubi-Grothendieck-Witt spectrum functor, together with a natural map of spectra $GW \to \GW$ which is an isomorphism in degrees $i\geq 1$ and a monomorphism in degree $i=0$.
In most cases of interest, this map is also an isomorphism in degree $i=0$.

The relation between the Grothendieck-Witt and Karoubi-Grothendieck-Witt functors $GW$ and $\GW$ is in some sense the same as the relation between connective and non-connective (or Bass) $K$-theory functors $K$ and $\bK$.
To construct non-connective $K$-theory from connective $K$-theory, 
there are essentially two ways.
\red{
In the geometric setting, one can use Bass' fundamental theorem as done by Thomason in \cite{TT}.
In the abstract setting, one can use algebraic suspension functors as done by Wagoner \cite{wagoner:deloop}, Gersten \cite{gersten:deloop} and Karoubi \cite{karoubi:deloop}, and first worked out in the generality applicable to dg categories with weak equivalences in \cite{mynegK}.
In this section, we follow the abstract approach.
In the following section, we will see in Theorem \ref{thm:Bassfund} and Remark \ref{rmk:ContractedFunctors} that the geometric opproach works as well.
}

\subsection{Cone and suspension of dg categories}
The {\em cone ring} is the ring $C$ of infinite matrices
$(a_{i,j})_{i,j\in\N}$ with coefficients $a_{i,j}$ in $\Z$ for which 
each row and each column has only finitely many non-zero entries.
Transposition of matrices ${^t(a_{i,j})} = (a_{j,i})$ makes $C$ into a ring
with involution.
As a $\Z$-module $C$ is torsion free, hence flat.

The {\em suspension ring} $S$ is the factor ring of $C$ by the two sided ideal
$M_{\infty} \subset C$ of those matrices which have only finitely many
non-zero entries. 
Transposition also makes $S$ into a ring with involution such that the
quotient map $C \twoheadrightarrow S$ is a map of rings with involution. 
For another description of the suspension ring $S$, consider the matrices $e_n
\in C$, $n\in \N$, with entries $(e_n)_{i,j}=1$ for $i=j\geq n$ 
and zero otherwise.
They are symmetric idempotents, \ie ${^te_n}=e_n=e_n^2$, and
they form a multiplicative subset of $C$ which satisfies the {\O}re condition,
that is, the multiplicative subset satisfies the axioms for a calculus of
fractions. 
One checks that the quotient map $C \twoheadrightarrow S$ identifies
the suspension ring $S$ with the 
localization of the cone ring $C$ with respect to the elements $e_n\in C$,
$n\in \N$. 
In particular, the suspension ring $S$ is also a flat $\Z$-module.

Let $\meps =1-e_1 \in C$ be the symmetric idempotent with entries $1$ at
$(0,0)$ and zero otherwise. 
The image $C\meps$ of the right multiplication map $\times \meps: C \to C$
is a finitely generated projective left $C$-module.
Denote by $\tilde{C}$ the full subcategory of left $C$-modules consisting of $C$ and $C\meps$.
Then $\tilde{C}(C,C)=C$, $\tilde{C}(C\meps,C) = \{ a\in C|\ a\meps = a\}$, 
$\tilde{C}(C,C\meps)=\{a\in C |\ \meps a = a\}$, and $\tilde{C}(C\meps,C\meps) = \{a\in C|\ a\meps = \meps a = a\} = \Z$.
The rule $\tilde{C}^{op}\to \tilde{C}: C \mapsto C,\ C\meps \mapsto C\meps,\ a\mapsto \red{^ta}$ makes $\tilde{C}$ into an additive category with strict duality.
Note that $\Z \to \tilde{C}:\Z \mapsto C\meps,\ n \mapsto n\meps$ defines a fully faithful duality preserving functor.
Since $\meps=0 \in S$, we have a duality preserving functor $\tilde{C} \to S:C \mapsto S,\ C\meps \mapsto 0$ (recall that $S$ is considered a pointed dg category) such that the composition
\begin{equation}\label{eqn:ZCSseq}\Z \to \tilde{C} \to S\end{equation}
is trivial.

For a dg category with weak equivalences and duality $\scA$, we denote by
$\tilde{C}\scA$ and $S\scA$ the dg categories with weak equivalences and duality $\tilde{C}\otimes_{\Z}\scA$ and $S\otimes_{\Z}\scA$. 
Tensoring the sequence (\ref{eqn:ZCSseq}) with $\scA\in \dgCatWD$, we obtain the the sequence
\begin{equation}\label{eqn:ACASAseq}\scA \to \tilde{C}\scA \to S\scA\end{equation}
\vspace{1ex}

Call a sequence $\T_0 \to \T_1 \to \T_2$ of triangulated categories {\em exact up to factors} if the composition is trivial, the functor $\T_0 \to \T_1$ is fully faithful and the induced functor from the Verdier quotient $\T_1/\T_0$ to $\T_2$ cofinal.

\begin{definition}
An exact dg functor $\scA \to \scB$ between dg categories with weak equivalences is called {\em Morita equivalence} if the associated triangle functor $\T\scA \to \T\scB$ is cofinal.
A sequence $\scA_0 \to \scA_1 \to \scA_2$ of dg categories with weak equivalences is called {\em Morita exact} if the
associated sequence of triangulated categories
$\T\scA_0 \to \T \scA_1 \to \T \scA_2$
is exact up to factors.
\end{definition}

\begin{lemma}
\label{lem:CSMoritaExactness}
The functors $C,S:\dgCatW \to \dgCatW$ preserve Morita equivalences and Morita exact sequences.
Moreover, for any dg category with weak equivalences $\scA$, the sequence
$$\scA \to \tilde{C}\scA \to S\scA$$
of dg categories with weak equivalences
is Morita exact.
\end{lemma}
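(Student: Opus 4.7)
My plan is to split the lemma into three claims: (i) the functors $C,S:\dgCatW\to\dgCatW$ preserve Morita equivalences; (ii) they preserve Morita exact sequences; and (iii) the sequence $\scA\to\tilde{C}\scA\to S\scA$ is itself Morita exact. The essential technical input throughout is the flatness of $C$ and $S$ as $\Z$-modules, together with the single-object description $R\scA(A,B)=R\otimes_{\Z}\scA(A,B)$ for $R\in\{C,S\}$. Flatness guarantees that tensoring with $R$ preserves exact sequences of mapping complexes, hence preserves acyclic objects, weak equivalences, and the short exact sequences defining the pretriangulated hulls.

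For (i) and (ii), I would first establish a structural description of $\T(R\scA)$: it is generated, as a triangulated category, by the objects $R\otimes A$ for $A\in\scA$, and the Hom-groups between such generators are computable from $R$ and $\T\scA$ via flatness. From this description, if $\T\scA\to\T\scB$ is cofinal (i.e.\ fully faithful with every object a direct factor of one coming from $\T\scA$), then so is $\T(R\scA)\to\T(R\scB)$, proving (i). For (ii), applying $R\otimes-$ to a Morita exact sequence $\scA_0\to\scA_1\to\scA_2$, the same generator analysis shows $\T(R\scA_0)\to\T(R\scA_1)$ remains fully faithful and the induced functor from the Verdier quotient $\T(R\scA_1)/\T(R\scA_0)$ to $\T(R\scA_2)$ is again cofinal.

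For (iii), I proceed in three substeps. \emph{Compositions are trivial}: the dg functor $\tilde{C}\to S$ sends $C\meps$ to $0$, and the inclusion $\scA\to\tilde{C}\scA$ factors through the generators $C\meps\otimes A$, so the composite $\scA\to S\scA$ is the zero functor. \emph{Fully faithfulness of $\T\scA\to\T(\tilde{C}\scA)$}: at the level of generators, $\tilde{C}\scA(C\meps\otimes A,C\meps\otimes B)=\tilde{C}(C\meps,C\meps)\otimes\scA(A,B)=\Z\otimes\scA(A,B)=\scA(A,B)$, and this extends to pretriangulated hulls since the embedding preserves cones and shifts. \emph{Cofinality of $\T(\tilde{C}\scA)/\T\scA\to\T(S\scA)$}: the objects $C\otimes A$ in $\tilde{C}\scA$ map to $S\otimes A$, which generate $\T(S\scA)$, giving essential surjectivity up to factors; fully faithfulness on the quotient follows from identifying $S=C[e_n^{-1}]$ with the quotient of $C$ by morphisms factoring through $C\meps$-generated objects.

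The hard part, and the only genuinely non-formal step, is the last one. To match the Verdier quotient $\T(\tilde{C}\scA)/\T\scA$ with $\T(S\scA)$ up to factors, I would exploit the Eilenberg swindle $C\cong C\oplus C\meps$ as left $C$-modules (coming from the column-shift isomorphism $Ce_1\cong C$), which trivialises the Hom-groups in $\T(\tilde{C}\scA)/\T\scA$ between the generators $C\otimes A$ precisely along the cofinality pattern needed to match $S\scA$. This is essentially the content of the negative $K$-theory delooping construction of \cite{mynegK}, and I would invoke that argument, verifying that it goes through unchanged in the dg setting with weak equivalences since all steps use only the flatness of $C,S$ and the cofinal decomposition of $C$-modules.
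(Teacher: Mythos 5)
Your overall strategy --- exploit $\Z$-flatness of $C$ and $S$, reduce everything to statements about the generators $R\otimes A$, and defer the genuinely hard cofinality step for $\T(\tilde{C}\scA)/\T\scA\to\T(S\scA)$ to the cone/suspension argument of \cite{mynegK} --- is the same as the paper's. The paper likewise observes that $\tilde{C}$ and $S$ are flat, relies on the known Morita exactness of $\Z\to\tilde{C}\to S$, and then tensors with $\scA$.

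The gap is in how you pass from plain dg categories to dg categories \emph{with weak equivalences}. Your key claim that ``the Hom-groups between such generators are computable from $R$ and $\T\scA$ via flatness'' is fine when $\T\scA=H^0\scA^{\pretr}$ (no localization), since then the Hom-group is literally $H^0(R\otimes\scA^{\pretr}(X,Y))\cong R\otimes H^0\scA^{\pretr}(X,Y)$. But for a general pair $(\A,w)$ the category $\T\scA$ is the Verdier quotient $H^0\A^{\pretr}/H^0(\A^w)^{\pretr}$, and Verdier localization does not commute with $R\otimes-$ formally: a morphism $R\otimes X\to R\otimes Y$ in $\T(R\scA)$ is a roof through an arbitrary object $Z$ of $(R\A)^{\pretr}$ whose cone lies in the thick subcategory generated by $R\A^w$, and such $Z$ need not be of the form $R\otimes Z'$. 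Flatness of $R$ over $\Z$ alone does not resolve this; what is needed is the compatibility of the dg quotient with tensoring by a homotopically flat dg category. This is \cite[Proposition 1.6.3]{Drinfeld:DGquotient}, and it is exactly how the paper reduces the weak-equivalence case to the plain dg category case: one replaces the Verdier quotient by the dg quotient $\A\sslash\A^w$, notes that $\T(\B\scA)\to\T(\B\A\sslash\B\A^w)$ is cofinal, and uses that $\B\otimes(\A\sslash\A^w)\to(\B\A)\sslash(\B\A^w)$ is a Morita equivalence for homotopically flat $\B$. Your argument is complete once you insert this input (or an equivalent d\'evissage) at that point; without it, parts (i) and (ii) and the full-faithfulness/cofinality claims in (iii) are only established for categories whose weak equivalences are the homotopy equivalences.
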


\begin{proof}
Recall the following facts from the theory of dg categories \cite{Drinfeld:DGquotient}, especially \cite[Proposition 1.6.3]{Drinfeld:DGquotient}.

\begin{enumerate}
\item
Let $\B$ be a homotopically flat dg category \red{(that is, a dg category in which every mapping complex is homotopy equivalent to a complex of flat $k$-modules)}.
Then the functor 
$$\dgCat \to \dgCat:\A \mapsto \B\otimes\A$$
preserves Morita equivalences and Morita exact sequences.
\item
Let $\B_0 \to \B_1 \to \B_2$ be a Morita exact sequence of homotopically flat dg categories, then the sequence
$\B_0\otimes\A \to \B_1\otimes\A \to \B_2\otimes\A$
is Morita exact.
\item
\label{lem:item3:CSMoritaExactness}
For a full inclusion of dg categories $\A_0 \subset \A_1$ there is a dg category $\A_1\sslash\A_0$ together with a dg functor 
$\A_1 \to \A_1\sslash\A_0$ such that the sequence
$$\A_0 \to \A_1 \to \A_1\sslash\A_0$$
is Morita exact.
Moreover, this sequence is functorial in the pair $\A_0 \subset \A_1$.
\end{enumerate}
Since the dg categories $\tilde{C}$ and $S$ are flat dg categories, the lemma holds for dg categories.
Let $\B$ be a homotopically flat dg category, and $\scA = (\A,w)$ a dg category with weak equivalences.
By (\ref{lem:item3:CSMoritaExactness}) and the definition of $\T(\B\otimes \scA)$, the canonical  triangle functor 
$\T(\B\scA) \to \T(\B\A\sslash\B\A^w)$ is cofinal (in fact, this is an equivalence, but we won't need that).
Therefore, the lemma reduces to the dg category case.
\end{proof}

\begin{remark}
\label{rmk:myNegKsetup}
Lemma \ref{lem:CSMoritaExactness} shows that the functors $\tilde{C}$ and $S$ satisfy the requirements of the ``set-up'' of 
\cite[Section 2.2]{mynegK} on the category of dg categories with weak equivalences.
They can therefore be used to define the non-connective $K$-theory spectrum $\bK(\scA)$ of a dg category with weak equivalences $\scA$.
\end{remark}

\begin{lemma}
For any dg category with weak equivalences and duality $\scA$ such that $\frac{1}{2}\in \scA$, the Grothendieck-Witt spectrum $GW(\tilde{C}\scA)$ is contractible.
\end{lemma}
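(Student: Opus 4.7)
My plan is to verify the contractibility of $GW(\tilde{C}\scA)$ by applying Karoubi induction (Lemma \ref{lem:KaroubiInd}) to the unique exact dg form functor $0\to \tilde{C}\scA$ from the zero dg category. This reduces the task to two vanishings: (i) $K(\tilde{C}\scA)\simeq *$ and (ii) $W^n(\tilde{C}\scA)=0$ for every $n\in\Z$. The first is the classical Eilenberg-swindle vanishing of the $K$-theory of the cone ring; in the framework of this paper it is precisely the content of Remark \ref{rmk:myNegKsetup} together with Lemma \ref{lem:CSMoritaExactness}, which certify that the input hypothesis of \cite{mynegK} is met so that $K(\tilde{C}\scB)\simeq *$ for every dg category with weak equivalences $\scB$.

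For the Witt-theoretic vanishing I intend to upgrade the Eilenberg swindle to one compatible with the transposition duality on $C$. Choose the orthogonal idempotents $e,f=1-e\in C$ with $e_{ij}=\delta_{ij}\cdot \mathds{1}[i\text{ even}]$; both are fixed by the transposition, i.e.\ ${}^te=e$ and ${}^tf=f$. The bit-shift reindexings $\N\cong 2\N$ and $\N\cong 2\N{+}1$ provide left $C$-module isomorphisms $Ce\cong C$ and $Cf\cong C$ which, thanks to the symmetry of $e$ and $f$, respect the induced right $C$-actions and thus extend to $(C,C)$-bimodule-with-involution isomorphisms. Consequently, in the pretriangulated hull of $\tilde{C}\scA$ the generator $C\otimes A$ decomposes duality-preservingly as $C\otimes A\cong (C\otimes A)\oplus (C\otimes A)$, and iterating assembles into an exact dg form endofunctor $\Sigma$ of $\tilde{C}\scA$ together with a natural isomorphism of dg form functors $\Sigma\cong \Sigma\oplus \mathrm{id}$. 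Evaluated on any symmetric space $(X,\ffi)$, this relation reads $[\Sigma X,\Sigma\ffi]=[\Sigma X,\Sigma\ffi]+[X,\ffi]$ in $W^n(\tilde{C}\scA)$, forcing $[X,\ffi]=0$. Replacing $\scA$ by each shift $\scA^{[n]}$ yields the vanishing of all $W^n(\tilde{C}\scA)$, and Karoubi induction then delivers $GW(\tilde{C}\scA)\simeq GW(0)\simeq *$.

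The main obstacle is making the Eilenberg swindle genuinely duality-compatible at the level of dg form functors, rather than merely on the associated triangulated category. The symmetric choice of the idempotents $e$ and $f$ is designed for exactly this purpose, but one must still verify that the reindexing maps $Ce\cong C$ and $Cf\cong C$ intertwine the involutions on both sides and that the resulting identification $\Sigma\cong \Sigma\oplus \mathrm{id}$ is realized by a natural weak equivalence of exact dg form endofunctors. Once this compatibility is in hand, the two vanishings combine cleanly through Lemma \ref{lem:KaroubiInd} to give the conclusion.
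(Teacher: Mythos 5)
Your overall strategy---reduce to $K(\tilde{C}\scA)\simeq \pt$ and $W^n(\tilde{C}\scA)=0$ for all $n$ and conclude by Karoubi induction---is sound; the paper instead runs the swindle directly on the Grothendieck--Witt spectrum (its proof simply refers to \cite[Lemma 19]{myMV}), which makes the detour through Karoubi induction unnecessary but not incorrect. The gap is in your construction of the swindle. The orthogonal symmetric idempotents $e$ and $f$ do give a duality-compatible isomorphism $C\cong Ce\oplus Cf\cong C\oplus C$, but ``iterating'' a doubling only produces isomorphisms $C\cong C^{2^k}$; it does not assemble into an endofunctor $\Sigma$ satisfying the absorption relation $\Sigma\cong\Sigma\oplus\mathrm{id}$, and it is that relation---not a relation of the form $[X]=2[X]$ for certain objects $X$---that the Eilenberg swindle needs. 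What is actually required is the \emph{infinite} block sum: a bijection $\N\times\N\cong\N$ yields a unital, transposition-equivariant ring endomorphism $\phi_\infty\colon C\to C$ sending a matrix to the diagonal sum of countably many copies of itself, and extension of scalars along $\phi_\infty$ defines an exact dg form endofunctor $\Sigma$ of $\tilde{C}\scA$ together with a natural isometry $\Sigma\oplus\mathrm{id}\cong\Sigma$.

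Second, and more seriously, your doubling only addresses the generator $C\otimes A$. The category $\tilde{C}$ has a second object $C\meps$ with $\tilde{C}(C\meps,C\meps)=\Z$, so $\tilde{C}\scA$ contains a copy of $\scA$, and the lemma in particular asserts that the image of $W^n(\scA)\to W^n(\tilde{C}\scA)$ vanishes. The relation your argument would need in order to kill these classes is a natural isometry $\mathrm{id}\cong\mathrm{id}\oplus\mathrm{id}$ on all of $\tilde{C}\scA$, and this is false: evaluated on $C\meps\otimes A$ it would force $A\oplus A\cong A$ naturally in $A\in\scA$. The infinite-sum functor $\Sigma$ above does handle these objects, since $\phi_\infty(\meps)$ is an idempotent of infinite support, so that $\Sigma(C\meps\otimes A)$ absorbs a further copy of $C\meps\otimes A$. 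Once $\Sigma$ and the isometry $\Sigma\oplus\mathrm{id}\cong\Sigma$ are in place, your Witt-group computation and the Karoubi-induction step go through; alternatively the same relation, fed into the extended functoriality and additivity of the $GW$ functor, shows directly that the identity of $GW(\tilde{C}\scA)$ is null-homotopic, which is the route of \cite[Lemma 19]{myMV}.
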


\begin{proof}
The proof is the same is the proof of \cite[Lemma 19]{myMV}, and we omit the details.
\end{proof}

In the following we will use terminology and results from Sections \ref{subsec:SpInMonCats} -- \ref{subsec:Bisp}.
Let $\tilde{S}^0 \to S^0$ be a cofibrant replacement of the monoidal unit $S^0\in \Sp$ in the positive projective stable model structure on the category $\Sp$ of spectra, and factor the composition $\tilde{S}^0 \to S^0 \to I$ into a cofibration (for the positive model structure) and a stable equivalence $\tilde{S}^{0} \rightarrowtail \tilde{I} \stackrel{\sim}{\to} I$ in $\Sp$.
Write $\tilde{S}^1$ for the quotient $\tilde{I}/\tilde{S}^0$.
The induced map on quotients $\tilde{S}^1 \to S^1$ is a stable equivalence, and $\tilde{S}^1$ is cofibrant in the positive stable model structure on $\Sp$.

The inner product space $\langle 1 \rangle$ of $\bk$ defines a map 
$\langle 1 \rangle: S^0 \to GW(\bk)$ of spectra.
Since in the positive stable model structure on $\Sp$, we have a cofibration $\tilde{S}^0 \to \tilde{I}$ and a trivial fibration $GW(\tilde{C}\bk) \to \pt$, the map 
$\tilde{S}^0 \to S^0 \to GW(\bk)$ extends to a map $\rho: \tilde{I} \to GW(\tilde{C}\bk)$ such that the diagram
$$\xymatrix{
\tilde{S}^0 \ar[r] \ar[d] & S^0 \ar[r]^{\hspace{-3ex}\langle 1\rangle} & GW(\bk) \ar[d] \\
\tilde{I} \ar[rr]^{\rho} & & GW(\tilde{C}\bk)}$$
commutes.
Moreover, the composition $GW(\bk) \to GW(\tilde{C}\bk) \to GW(S\bk)$ is trivial, 
and we obtain the
induced map of spectra $\sigma: \tilde{S}^1 \to GW(S\bk)$.

\begin{lemma}
\label{lem:idempCplteDeloop}
Let $\scA$ be a dg category with weak equivalences and duality such that $\frac{1}{2}\in \scA$.
Assume that the triangulated category $\T\scA$ of $\scA$ is idempotent complete.
Then the composition
$$\rho: \tilde{S}^1\wedge_SGW(\scA) \stackrel{\sigma \wedge 1}{\longrightarrow}
GW(S\bk)\wedge_SGW(\scA) \stackrel{\cup}{\longrightarrow} GW(S\scA)
$$
is a stable equivalence.
\end{lemma}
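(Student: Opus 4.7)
The plan is to realize $\rho$ as the connecting map of a homotopy fibration built from the Morita exact sequence $\scA \to \tilde{C}\scA \to S\scA$ (Lemma \ref{lem:CSMoritaExactness}) together with the contractibility of $GW(\tilde{C}\scA)$ (the preceding lemma). Since this sequence is only Morita exact and Theorem \ref{thm:LcnConn1} needs a quasi-exact one, I would first form the full pretriangulated dg subcategory $\scB \subset S\scA$ whose objects lie in the essential image of $\tilde{C}\scA \to S\scA$. Then $\scA \to \tilde{C}\scA \to \scB$ is quasi-exact by construction, and the inclusion $\scB \hookrightarrow S\scA$ is cofinal on triangulated categories.

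Theorem \ref{thm:LcnConn1} applied to this quasi-exact sequence produces a homotopy fibration
$$GW(\scA) \to GW(\tilde{C}\scA) \to GW(\scB)$$
whose middle term is contractible. The connecting map therefore yields a stable equivalence $\partial : S^1 \wedge GW(\scA) \stackrel{\sim}{\to} GW(\scB)$. Next, Theorem \ref{thm:cofinalGWL} applied to the cofinal embedding $\scB \to S\scA$ provides a homotopy fibration
$$GW(\scB) \to GW(S\scA) \to \bH^\bullet(C_2, K_0(S\scA, \scB)).$$
The key use of the hypothesis is that, by the Bass-style fundamental exact sequence in non-connective $K$-theory for the set-up of Remark \ref{rmk:myNegKsetup} (see \cite{mynegK}), the cokernel $K_0(S\scA, \scB)$ is identified with $\bK_{-1}(\scA)$, and this negative $K$-group vanishes when $\T\scA$ is idempotent complete. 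Thus $GW(\scB) \to GW(S\scA)$ is a stable equivalence, and the composite $\partial' : S^1 \wedge GW(\scA) \to GW(S\scA)$ is one as well.

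To identify $\partial'$ with $\rho$, I would argue by naturality. The case $\scA = \bk$ of the construction above (for which $\T\bk$ is idempotent complete) produces a stable equivalence $\tilde{S}^1 \simeq S^1 \wedge GW(\bk) \to GW(S\bk)$ which, by the construction of $\sigma$ from the square involving $\tilde{S}^0 \to \tilde{I}$ and $GW(\bk) \to GW(\tilde{C}\bk)$, coincides with $\sigma$ up to homotopy. Since Theorem \ref{thm:LcnConn1} is natural in exact dg form functors and $GW$ is symmetric monoidal (Section \ref{sec:GWspectrum}), cup-product with $\sigma$ followed by the pairing $GW(S\bk) \wedge GW(\scA) \to GW(S\scA)$ recovers the boundary map for the sequence associated to $\scA$, i.e.\ $\rho$.

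I expect the main obstacle to be the identification of $K_0(S\scA, \scB)$ with $\bK_{-1}(\scA)$, requiring a careful unwinding of the Bass fundamental sequence for non-connective $K$-theory of dg categories and the vanishing of $\bK_{-1}$ under idempotent completeness of $\T\scA$; once that is in place, the naturality verification in the last paragraph is essentially forced by the symmetric monoidal structure on $GW$ developed in Section \ref{sec:GWspectrum}.
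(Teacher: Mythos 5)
Your overall strategy --- feed the sequence $\scA \to \tilde{C}\scA \to S\scA$ into the Localization Theorem, use contractibility of $GW(\tilde{C}\scA)$, and identify the resulting boundary map with $\rho$ --- is the paper's, but you deploy the hypothesis that $\T\scA$ is idempotent complete in the wrong place, and this leaves a genuine gap. The sequence $\scA \to \tilde{C}\scA \to \scB$ is \emph{not} ``quasi-exact by construction'': quasi-exactness requires $\T\scA$ to be an epaisse subcategory of $\T\tilde{C}\scA$ (the proof of Theorem \ref{thm:LcnConn1} needs $\T\scA_0$ to coincide with the full subcategory of $v$-acyclics), and since $\T\tilde{C}\scA$ is idempotent complete (see the proof of Proposition \ref{prop:KGWbasicComps}), $\T\scA$ is closed under direct factors there precisely when $\T\scA$ is idempotent complete. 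That is the one place the hypothesis actually enters, and shrinking the third term to $\scB$ does nothing to address it. In fact the third term needs no repair: the functor $\T\tilde{C}\scA/\T\scA \to \T S\scA$ is fully faithful by Morita exactness, so its essential image is a triangulated subcategory of $\T S\scA$; since $\tilde{C}\to S$ is surjective on objects this image contains $S\scA$, which generates $\T S\scA$ under shifts, cones and finite sums; hence $\scB = S\scA$ and your cofinality step is vacuous.

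Independently, the step you flag as the ``key use of the hypothesis'' is false. By the $K$-theoretic analogue of Proposition \ref{prop:KGWbasicComps} one has $\bK_{-1}(\scA) = K_0(\widetilde{\T S\scA})$, and this does \emph{not} vanish when $\T\scA$ is idempotent complete: for $\scA = \sPerf(X)$ with $X$ singular affine, $\T\sPerf(X)$ is idempotent complete while $\bK_{-1}(X)\neq 0$. (Idempotent completeness of $\T\scA$ controls the degree-zero comparison $K_0(\scA)\to \bK_0(\scA)$, not $\bK_{-1}$.) Moreover $K_0(S\scA,\scB)$ is not $\bK_{-1}(\scA)$: since $K_0(\T\scB) = \coker\bigl(K_0(\T\scA)\to K_0(\T\tilde{C}\scA)\bigr) = 0$, one gets $K_0(S\scA,\scB)=K_0(\T S\scA)$, which is only a subgroup of $K_0(\widetilde{\T S\scA})$, and its vanishing is \emph{equivalent} to the essential surjectivity you would be trying to prove at that step. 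The correct route is the direct one: idempotent completeness gives thickness, surjectivity on objects gives essential surjectivity, so $\scA\to\tilde{C}\scA\to S\scA$ is itself quasi-exact; Theorem \ref{thm:LcnConn1} then yields the fibration $GW(\scA)\to GW(\tilde{C}\scA)\to GW(S\scA)$, and comparing it with the cofibre sequence $\tilde{S}^0\wedge_S GW(\scA) \to \tilde{I}\wedge_S GW(\scA)\to \tilde{S}^1\wedge_S GW(\scA)$ via $\langle 1\rangle$, $\rho$ and $\sigma$ identifies the two boundary maps without any separate naturality argument.
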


\begin{proof}
Since $\T\scA$ is idempotent complete, and $\tilde{C}\to S$ surjective on objects,  the Morita exact sequence
$$\T\scA \to \T\tilde{C}\scA \to \T S\scA$$
of Lemma \ref{lem:CSMoritaExactness} is in fact quasi-exact. 
Therefore, in the commutative diagram of spectra
$$\xymatrix{
\tilde{S}^0 \wedge_S GW(\scA) \ar[r] \ar[d]_{\langle 1\rangle \wedge 1} &
\tilde{I}\wedge_S GW(\scA) \ar[d]^{\rho\wedge 1} \ar[r] &
\tilde{S}^1 \wedge_S GW(\scA) \ar[d]^{\sigma\wedge 1}\\
GW(\bk)\wedge_S GW(\scA) \ar[r] \ar[d]_{\cup} &
GW(\bk)\wedge_S GW(\scA) \ar[d]^{\cup} \ar[r] &
GW(\bk) \wedge_S GW(\scA) \ar[d]^{\cup}\\
GW(\scA) \ar[r] & GW(\tilde{C}\scA) \ar[r] & GW(S\scA),
}$$
the last row is a homotopy fibration, by Theorem \ref{thm:LcnConn1}.
As a cofibre sequence, the first row is also a homotopy fibration.
Since the composition of the left two and middle two vertical maps are stable equivalences (the latter composition by contractibility of $\tilde{I}$ and $GW(\tilde{C}\scA)$), the map in the lemma is also a stable equivalence.
\end{proof}

\subsection{The Karoubi-Grothendieck-Witt spectrum}
\label{subsec:KGWspec}
In order to define a symmetric monoidal functor $\GW$ from dg categories to a category of spectra that sends Morita exact sequences to homotopy fibrations, 
we will work in the symmetric monoidal category of bispectra $\BiSp$.
The category of bispectra is yet another model for the stable homotopy category of spectra.
This category contains the category of spectra $\Sp$ as a symmetric monoidal subcategory.
The inclusion $\Sp \subset \BiSp$ preserves stable equivalences and induces an equivalence of associated homotopy categories; see Appendix \ref{subsec:Bisp}.
The Karoubi-Grothendieck-Witt spectrum functor most naturally has values in this category of bispectra.
If we are not interested in multiplicative properties of the Karoubi-Grothendieck-Witt spectrum, we can stay within the category $\Sp$ of spectra and equivalently work with a certain mapping telescope of spectra as in Remark \ref{rmk:BiSpVsSpForGW} below.

Recall that the category  of $\tilde{S}^1$-$S^1$-bispectra is the category of $\tilde{S}^1$-spectra in $\Sp$, that is, the category of left modules over the free commutative monoid $\tilde{S}$ generated by $(0,\tilde{S}^1,0,0,...)$ in $\Sp^{\Sigma}$.
Thus, to specify a symmetric monoidal functor
$\GW:\dgCatWD \to \BiSp$ into $\tilde{S}^1$-$S^1$-bispectra is the same as to specify a symmetric monoidal functor
$\GW:\dgCatWD \to \Sp^{\Sigma}$ into symmetric sequences of spectra together with a map of spectra $\sigma:\tilde{S}^1 \to \GW(\bk)_1$.

Application of Remark \ref{rmk:symmonSymSeq} with $F$ 
the symmetric monoidal functor 
$$\dgCatWD \times \dgCatWD \to \Sp: (\scB, \scA) \mapsto GW(\B\A)$$ 
and $U =S \in \dgCatWD$ yields the symmetric monoidal functor
$$\dgCatWD \longrightarrow \Sp^{\Sigma}: \scA \mapsto \left\{n \mapsto \GW(\scA)_n\right\}$$
with values in $\Sp^{\Sigma}$ 
where $\GW(\scA)_n$ is the spectrum $GW((S)^{\otimes {n}}\scA)$ with left $\Sigma_n$-action permuting the tensor factors $(S)^{\otimes {n}}$.
The map of spectra $\sigma:\tilde{S}^1 \to \GW(\bk)_1 = GW(S\bk)$ extends uniquely to a map of commutative monoids 
$$\tilde{S} \to \GW(\bk)$$
in $\Sp^{\Sigma}$ making every $\GW(\bk)$-module $\GW(\scA)$ into a module over $\tilde{S}$.
Thus, we have defined a symmetric monoidal functor
$$\GW:\dgCatWD \to \BiSp.$$

\begin{definition}
Let $\scA$ be a dg category with weak equivalences and duality such that $\frac{1}{2}\in \scA$.
The bispectrum $\GW(\scA)$ constructed in Section \ref{subsec:KGWspec} is called the {\em Karoubi-Grothendieck-Witt} spectrum of $\scA$.
It is equipped with a natural map of bispectra
\begin{equation}
\label{eqn:GWtoBBGW}
GW(\scA) = \GW(\scA)_0 \to \GW(\scA)
\end{equation}
Its homotopy groups are the {\em Karoubi-Grothendieck-Witt groups} of $\scA$:
$$\GW_i(\scA) = \pi_i\GW(\scA) = [\tilde{S}^n,\GW(\scA)]_{\BiSp}.$$
As usual, we write $\GW^{[n]}(\scA)$ and $\GW^{[n]}_i(\scA)$ for $\GW(\scA^{[n]})$ and $\GW_i(\scA^{[n]})$.
\end{definition}

Let $\T$ be a triangulated category.
Recall from \cite{BalmerMe} that its idempotent completion $\tilde{\T}$ is canonically a triangulated category.
If $\T$ is a triangulated category with duality, then the duality on $\T$ canonically extends to a duality on \red{$\tilde{\T}$} such that the inclusion $\T \subset \red{\tilde{\T}}$ is duality preserving.

\begin{proposition}
\label{prop:KGWbasicComps}
Let $\scA$ be a dg category with weak equivalences and duality such that $\frac{1}{2}\in \scA$.
Then the bispectrum $\GW(\scA)$ is semistable.
The Karoubi-Grothendieck-Witt groups are given by
$$\GW^{[n]}_i(\scA) = 
\left\{ 
\renewcommand\arraystretch{2}
\begin{array}{ll}
GW^{[n]}_i(\scA)& i>0\\
GW^{[n]}_0(\widetilde{\T\scA}) & i=0\\
GW^{[n]}_0(\widetilde{\T S^{-i}\scA}) & i<0.
\end{array}
\right.
$$
In particular, the map (\ref{eqn:GWtoBBGW})
$$GW^{[n]}_i(\scA) \to \GW^{[n]}_i(\scA)$$
is an isomorphism for $i\geq 1$ and a monomorphism for $i=0$.
If $\T\scA$ is idempotent complete, then this map is also an isomorphism for $i=0$.
\end{proposition}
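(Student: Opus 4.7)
\emph{Proof plan.} My plan is to first prove that $\GW(\scA)$ is semistable by showing that its bonding maps become stable equivalences past the first level, and then to identify every homotopy group with the zeroth $GW$-group of an appropriate idempotent-complete triangulated category, with positive degrees reducing to the $GW_i(\scA)$ already computed.

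For semistability, the $k$-th bonding map of $\GW(\scA)$ is the composition
\[
\tilde S^1 \wedge_S GW((S^k\scA)^{[n]}) \xrightarrow{\sigma \wedge 1} GW(S\bk) \wedge_S GW((S^k\scA)^{[n]}) \xrightarrow{\cup} GW((S^{k+1}\scA)^{[n]}).
\]
By Lemma~\ref{lem:idempCplteDeloop}, this is a stable equivalence whenever $\T(S^k\scA)$ is idempotent complete. The idempotent-completeness of $\T(S\scB)$ for any dg category with weak equivalences $\scB$ is a standard feature of the Karoubi filtration $\scB \to \tilde C\scB \to S\scB$ (cf.\ \cite{mynegK}), and therefore holds for all $k \geq 1$. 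This gives the semistability of $\GW(\scA)$ and the identification
\[
\GW_i^{[n]}(\scA) \;=\; \pi_{i+k}\, GW((S^k\scA)^{[n]}) \qquad \text{for any } k \geq \max(1,-i).
\]

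Next, I would fix a pretriangulated dg-model $\widetilde{\scA}$ of the idempotent completion, so that $\scA \to \widetilde{\scA}$ is a cofinal Morita equivalence. Since $\T\widetilde{\scA}$ and $\T(\tilde C\widetilde{\scA})$ are both idempotent complete, the Morita-exact sequence of Lemma~\ref{lem:CSMoritaExactness} applied to $\widetilde{\scA}$ is in fact quasi-exact; Theorem~\ref{thm:LcnConn1} then yields a homotopy fibration, and the contractibility of $GW(\tilde C\widetilde{\scA})$ delivers $GW(S\widetilde{\scA}) \simeq \Sigma\, GW(\widetilde{\scA})$. The map $S\scA \to S\widetilde{\scA}$ is a cofinal Morita equivalence between categories whose triangulated categories are both idempotent complete, hence already an equivalence of triangulated categories; Theorem~\ref{thm:Invariance} then gives $GW(S\scA) \simeq \Sigma\, GW(\widetilde{\scA})$, and iterating (compatibly with the $[n]$-shift) yields
\[
GW((S^k\scA)^{[n]}) \;\simeq\; \Sigma^k GW(\widetilde{\scA}^{[n]}) \qquad \text{for } k \geq 1.
\]

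Combining the two steps gives $\GW_i^{[n]}(\scA) = \pi_i\, GW(\widetilde{\scA}^{[n]})$ for every $i \in \Z$. For $i > 0$, Cofinality (Theorem~\ref{thm:cofinalGWL}) applied to $\scA \to \widetilde{\scA}$ identifies this with $\pi_i\, GW(\scA^{[n]}) = GW_i^{[n]}(\scA)$, because the cofibre spectrum $\bH^\bullet(C_2,K_0)$ is concentrated in degree zero. For $i = 0$ the expression $\pi_0\, GW(\widetilde{\scA}^{[n]})$ is by definition $GW_0^{[n]}(\widetilde{\T\scA})$. For $i < 0$, I would apply the $i = 0$ case with $S^{-i}\scA$ replacing $\scA$ to obtain $\GW_i^{[n]}(\scA) = \pi_0\, GW((S^{-i}\scA)^{[n]}) = GW_0^{[n]}(\widetilde{\T S^{-i}\scA})$. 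The principal obstacle I expect is the careful verification of the two idempotent-completeness claims about $\T S\scB$ and $\T(\tilde C\widetilde{\scA})$, which are standard but must be unpacked in the present dg setting before the rest of the argument goes through formally.
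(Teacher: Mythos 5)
Your overall skeleton (pass to a dg model $\widetilde{\scA}$ of the idempotent completion, apply Lemma~\ref{lem:idempCplteDeloop} there, use Cofinality for $i>0$, and replace $\scA$ by $S^{-i}\scA$ for $i<0$) is the paper's, but the semistability step rests on a false claim: $\T(S\scB)$ is \emph{not} idempotent complete for general $\scB$. What the Karoubi filtration actually gives is that $\T(C\scB)$ is idempotent complete (by the Eilenberg swindle plus Thomason's classification of dense subcategories), and hence that $K_0(\T S\scB)=0$ as a quotient of $K_0(\T C\scB)=0$. The failure of $\T S\scB$ to be idempotent complete is precisely what the non-connective theory is built to record, since $\bK_{-1}(\scB)=K_0(\widetilde{\T S\scB})$; if your claim held, all the tildes in the formula you are proving would be redundant and negative $K$-theory would vanish identically. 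Consequently Lemma~\ref{lem:idempCplteDeloop} does not apply to $S^k\scA$ for $k\geq 1$, the bonding maps of $\GW(\scA)$ are not stable equivalences past level one, and your intermediate identification $GW((S^k\scA)^{[n]})\simeq \Sigma^k GW(\widetilde{\scA}^{[n]})$ already fails for $k=2$: it would force $\GW^{[n]}_i(\scA)=\pi_iGW(\widetilde{\scA}^{[n]})=W^{n-i}(\widetilde{\T\scA})$ for $i<0$, which contradicts the asserted answer $GW^{[n]}_0(\widetilde{\T S^{-i}\scA})$ except when the relevant negative $K$-groups vanish (compare Theorem~\ref{thm:KbKGWKGW}).

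The repair is the paper's actual argument. For each $k\geq 0$ one compares the bonding map with the corresponding map for $(S^k\scA)^c$, a dg model of the idempotent completion of $\T S^k\scA$: the map $\tilde S^1\wedge_S GW((S^k\scA)^c)\to GW(S((S^k\scA)^c))$ is a stable equivalence by Lemma~\ref{lem:idempCplteDeloop}; the map $GW(S^{1+k}\scA)\to GW(S((S^k\scA)^c))$ is a stable equivalence by Invariance, because this cofinal functor is between triangulated categories both having trivial $K_0$ (quotients of $K_0(\T C(-))=0$), hence is an equivalence by Thomason's classification; and $GW(S^k\scA)\to GW((S^k\scA)^c)$ is an isomorphism on $\pi_j$ only for $j\geq 1$, by Cofinality~\ref{thm:cofinalGWL}, since its cofibre $\bH^{\bullet}(C_2,K_0)$ has homotopy concentrated in degrees $\leq 0$. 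So the composite $GW_{i+k}(S^k\scA)\to GW_{i+k+1}(S^{1+k}\scA)$ is an isomorphism only once $i+k\geq 1$ (your range $k\geq\max(1,-i)$ should accordingly be $k\geq 1-i$). This weaker, degree-dependent statement is exactly what Lemma~\ref{lem:semistable} is designed to accept: it yields semistability, $\GW^{[n]}_i(\scA)=\colim_k GW^{[n]}_{i+k}(S^k\scA)$, and for $i\leq 0$ the stabilized value $GW^{[n]}_1(S^{1-i}\scA)=GW^{[n]}_0((S^{-i}\scA)^c)=GW^{[n]}_0(\widetilde{\T S^{-i}\scA})$.
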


\begin{proof}
The triangulated category $\T C\scA$ is idempotent complete.
This is because there is a triangle functor $F:\T C\scA \to \T C\scA$ and a natural isomorphism $F \oplus id \cong F$ (compare \cite[Lemma 19]{myMV}).
The triangle functor and the natural isomorphism extend to a triangle functor and a natural isomorphism on the idempotent completion of $\T C\scA$.
This forces the $K_0$ groups of $\T C\scA$ and its idempotent completion to be trivial.
In particular, the cofinal inclusion of $\T C\scA$ into its idempotent completion induces an isomorphism on $K_0$. 
By Thomason's classification theorem of dense subcategories \cite{Thomason:classification}, this inclusion is an equivalence, that is, $\T C\scA$ is already idempotent complete.

For a dg category with weak equivalences and duality $\scA$, let $\scA^c$ be the full dg subcategory of $(C\scA)^{\pretr}$ of those objects which are direct factors in $\T(C\scA)$ of objects of $\T(\scA)$.
A map in $\scA^c$ is a weak equivalence if it is an isomorphism in $\T(C\scA)$.
Then $\scA^c$ is pretriangulated, contains $\scA$, and has associated triangulated category $\T(\scA^c)$ the idempotent completion of $\T(\scA)$. 
Moreover, the Morita equivalence $S\scA \to S(\scA^c)$ is in fact a quasi-equivalence because both have trivial $K_0$ as a quotient of $K_0(C\scA) = K_0(C(\scA^c)) = 0$.
In the diagram
$$\xymatrix{ 
\tilde{S}^1\wedge_S GW(\scA) \ar[r] \ar[d]_{\sigma\cup} & \tilde{S}^1\wedge_S GW(\scA^c) \ar[d]_{\sigma\cup}\\
 GW(S\scA) \ar[r] & GW\left(S(\scA^c)\right), }$$
the right vertical map is a stable equivalence, by Lemma \ref{lem:idempCplteDeloop}.
By Cofinality (Theorem \ref{thm:cofinalGWL}) the top horizontal map is an isomorphism on $\pi_i$ for $i\geq 0$, and the bottom one is an isomorphism in all degrees, by the Invariance Theorem \ref{thm:Invariance}.
Therefore, the left vertical map is an isomorphism in degrees $i\geq 0$.
By Lemma \ref{lem:semistable}, the bispectrum $\GW$ is semi-stable, and 
$\GW_i(\scA)$ it the colimit of the sequence
$$GW_i(\scA) \stackrel{\sigma \cup }{\longrightarrow} GW_{1+ i}(S\scA) 
\stackrel{\sigma \cup }{\longrightarrow} \cdots 
GW_{k+i}(S^k\scA) \stackrel{\sigma \cup }{\longrightarrow} GW_{1+k+i}(S^{1+k}\scA) \cdots
$$
which consists of isomorphisms after $k+i \geq 1$.
This proves the claim in the proposition for $i>0$.
If $i\leq 0$, then 
$$\GW_i(\scA) = GW_1(S^{1-i}\scA) = GW_0((S^{-i}\scA)^c) = GW_0(\widetilde{\T S^{-i}\scA}).$$
\end{proof}

\begin{remark}
\label{rmk:BiSpVsSpForGW}
Recall that the inclusion $\Sp \subset \BiSp$ of spectra into bispectra preserves stable equivalences and induces an equivalence of homotopy categories.
By Remark \ref{rmk:OmegaSpforSemiStableBiSp} and Proposition \ref{prop:KGWbasicComps}, the spectrum corresponding to the bispectrum $\GW(\scA)$ under this equivalence is the mapping telescope of the sequence of spectra
$$GW(\scA) \stackrel{\sigma \cup}{\longrightarrow}  \Omega GW(S\scA) \stackrel{\sigma \cup}{\longrightarrow} \Omega^2 GW(S^2\scA) \stackrel{\sigma \cup}{\longrightarrow} \cdots $$
This telescope therefore has \red{the} correct homotopy type.
But it does not retain good point-set multiplicative properties.
In particular, it does not define a symmetric monoidal functor $\dgCatWD \to \Sp$.
This is the reason we consider $\GW$ as a symmetric monoidal functor into bispectra rather than as a functor with values in spectra defined by the mapping telescope above.
\end{remark}

\begin{theorem}[Invariance for $\GW$]
\label{thm:InvarianceKGW}
Let $\scA \to \scB$ be an exact dg form functor between dg categories with weak equivalences and duality such that $\frac{1}{2}\in \scA,\ \scB$.
If $\T\scA \to \T\scB$ is cofinal, then the map of Karoubi-Grothendieck-Witt spectra is a stable equivalence
$$\GW(\scA) \stackrel{\sim}{\longrightarrow} \GW(\scB).$$
\end{theorem}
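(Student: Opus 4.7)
The plan is to reduce the statement to a homotopy-group-level check using the semistability of $\GW$ established in Proposition \ref{prop:KGWbasicComps}, and then verify the isomorphism degree by degree using the explicit description
\[
\GW^{[n]}_i(\scA) = \begin{cases} GW^{[n]}_i(\scA) & i>0 \\ GW^{[n]}_0(\widetilde{\T\scA}) & i=0 \\ GW^{[n]}_0(\widetilde{\T S^{-i}\scA}) & i<0. \end{cases}
\]
Because both $\GW(\scA)$ and $\GW(\scB)$ are semistable bispectra, a map between them is a stable equivalence if and only if it induces an isomorphism on all homotopy groups, so the task reduces to checking this on each $\GW_i$.

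First I would dispose of the positive range $i \geq 1$. Here Cofinality (Theorem \ref{thm:cofinalGWL}) provides a homotopy fibration $GW(\scA) \to GW(\scB) \to \bH^{\bullet}(C_2, K_0)$, where $K_0 = K_0(\scB,\scA)$ is the cokernel of $K_0(\T\scA)\to K_0(\T\scB)$ with its induced $C_2$-action. Since $\bH^{\bullet}(C_2, K_0)$ is the homotopy fixed-point spectrum of an Eilenberg--MacLane spectrum, its homotopy groups are concentrated in non-positive degrees (they compute ordinary group cohomology $H^{-i}(C_2,K_0)$). The associated long exact sequence then forces $GW_i(\scA) \to GW_i(\scB)$ to be an isomorphism for all $i \geq 1$.

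Next I would handle degree zero. The key observation is that a cofinal triangle functor $\T\scA \to \T\scB$ induces an equivalence on idempotent completions $\widetilde{\T\scA} \stackrel{\simeq}{\to} \widetilde{\T\scB}$: fully faithfulness is preserved, and every object of $\T\scB$ being a summand of one from $\T\scA$ makes the functor essentially surjective after passing to idempotent completions. This equivalence is duality preserving, so it induces an isomorphism on $GW_0$, which by Proposition \ref{prop:KGWbasicComps} is exactly the map $\GW_0(\scA)\to\GW_0(\scB)$.

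Finally, for $i<0$, I would apply the same argument after suspension. By Lemma \ref{lem:CSMoritaExactness}, the functor $S$ preserves Morita equivalences, so $\T S^{-i}\scA \to \T S^{-i}\scB$ is again cofinal; hence the induced map of idempotent completions is an equivalence of triangulated categories with duality, and the previous paragraph shows the map on $GW_0$ of these completions, i.e.\ on $\GW_i$, is an isomorphism. The main obstacle---more of a verification than a true difficulty---is the compatibility of the idempotent-completion equivalence with the duality structure, which follows from the fact that any cofinal full subcategory closed under the duality has the duality extending canonically to its idempotent completion by \cite{BalmerMe}, exactly as invoked in the proof of Proposition \ref{prop:KGWbasicComps}.
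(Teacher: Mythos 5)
Your proof is correct and takes essentially the same route as the paper, whose entire proof is the one line ``the maps $\GW_i(\scA)\to\GW_i(\scB)$ are isomorphisms in view of Proposition \ref{prop:KGWbasicComps} and Lemma \ref{lem:CSMoritaExactness}.'' You have simply made explicit the degree-by-degree verification that this one-liner leaves implicit: the positive range via the Cofinality Theorem \ref{thm:cofinalGWL}, degree zero via the equivalence of idempotent completions, and negative degrees via suspension.
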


\begin{proof}
The maps of abelian groups $\GW_i(\scA) \to \GW_i(\scB)$ are isomorphisms in view of Proposition \ref{prop:KGWbasicComps} and Lemma \ref{lem:CSMoritaExactness}.
\end{proof}

\begin{theorem}[Localization for $\GW$]
\label{thm:locnForKGW}
Let $\scA_0 \to \scA_1 \to \scA_2$ be a Morita exact sequence of dg categories with weak equivalences and duality.
Assume that $\frac{1}{2}\in \scA_0,\scA_1,\scA_2$.
Then the commutative square of Karoubi-Grothendieck-Witt spectra
$$\xymatrix{
\GW^{[n]}(\scA_0) \ar[r] \ar[d] & \GW^{[n]}(\scA_1) \ar[d]\\
\GW^{[n]}(\scA_2^w) \ar[r] & \GW^{[n]}(\scA_2)
}$$
is homotopy cartesian, and the lower left corner is contractible.
In particular, for all $n\in \Z$ there is a homotopy fibration of Karoubi-Grothendieck-Witt spectra
$$\GW^{[n]}(\scA_0) \to \GW^{[n]}(\scA_1) \to \GW^{[n]}(\scA_2).$$
\end{theorem}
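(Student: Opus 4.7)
The strategy is to reduce to the Localization Theorem for connective Grothendieck-Witt theory (Theorem \ref{thm:LcnConn1}) by combining the suspension functor $S$ with two applications of cofinality invariance. First, by Proposition \ref{prop:GWspecIsGWspace} and Theorem \ref{thm:InvarianceKGW}, we may replace each $\scA_i$ by its pretriangulated hull. The lower left corner $\GW^{[n]}(\scA_2^w)$ is contractible: since $\T(\scA_2^w)=0$ and the operation $S$ is $\Z$-flat (so preserves this property), $\T(S^k\scA_2^w)=0$ for all $k$, hence each $GW(S^k\scA_2^w)$ is contractible and so is the bispectrum $\GW(\scA_2^w)$. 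Replacing $\scA_i$ by $\scA_i^{[n]}$ reduces us to establishing the homotopy fibration $\GW(\scA_0) \to \GW(\scA_1) \to \GW(\scA_2)$ of bispectra.

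Next I reduce to the quasi-exact case. Let $\scB \subset \scA_2^{\pretr}$ be the full pretriangulated dg subcategory whose associated triangulated category is the image of $\T\scA_1 \to \T\scA_2^{\pretr}$. Because the triangle functor $\T\scA_1 \to \T\scA_2^{\pretr}$ is compatible with dualities (being induced by an exact dg form functor), this image is stable under duality, so $\scB$ inherits a pretriangulated dg structure with duality. By construction, $\scA_0 \to \scA_1 \to \scB$ is quasi-exact, and the inclusion $\scB \hookrightarrow \scA_2$ is cofinal because $\T\scA_1/\T\scA_0 \to \T\scA_2$ is cofinal by Morita exactness. Invariance (Theorem \ref{thm:InvarianceKGW}) then gives a stable equivalence $\GW(\scB) \to \GW(\scA_2)$, so it suffices to establish the homotopy fibration for the quasi-exact sequence $\scA_0 \to \scA_1 \to \scB$.

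Now apply $S^k$ and iterate. By Lemma \ref{lem:CSMoritaExactness}, the sequence $S^k\scA_0 \to S^k\scA_1 \to S^k\scB$ is Morita exact for every $k \geq 0$. Repeating the construction of the second paragraph inside $(S^k\scB)^{\pretr}$, one obtains a pretriangulated dg subcategory with duality $\scB_k \subset (S^k\scB)^{\pretr}$ such that $S^k\scA_0 \to S^k\scA_1 \to \scB_k$ is quasi-exact and $\scB_k \hookrightarrow S^k\scB$ is cofinal. Theorem \ref{thm:LcnConn1} then yields a homotopy fibration $GW(S^k\scA_0) \to GW(S^k\scA_1) \to GW(\scB_k)$. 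By Cofinality (Theorem \ref{thm:cofinalGWL}), the cofiber of $GW(\scB_k) \to GW(S^k\scB)$ is the Eilenberg-MacLane spectrum $\bH^{\bullet}(C_2, K_0(S^k\scB, \scB_k))$, concentrated in nonpositive degrees, so $GW_i(\scB_k) \cong GW_i(S^k\scB)$ for all $i \geq 1$. Consequently, for every $i \in \Z$ and $k$ with $i+k \geq 2$, splicing the $GW$-fibration long exact sequence with this cofinality isomorphism yields the exact fragment
\begin{equation*}
GW_{i+k+1}(S^k\scB) \to GW_{i+k}(S^k\scA_0) \to GW_{i+k}(S^k\scA_1) \to GW_{i+k}(S^k\scB) \to GW_{i+k-1}(S^k\scA_0).
\end{equation*}

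Passing to the colimit as $k \to \infty$ via the identification $\GW_i(\scA) = \colim_k GW_{i+k}(S^k\scA)$ from Proposition \ref{prop:KGWbasicComps} and Remark \ref{rmk:BiSpVsSpForGW}, we obtain for every $i \in \Z$ a long exact sequence
\begin{equation*}
\GW_{i+1}(\scB) \to \GW_i(\scA_0) \to \GW_i(\scA_1) \to \GW_i(\scB) \to \GW_{i-1}(\scA_0).
\end{equation*}
The composite $\GW(\scA_0) \to \GW(\scB)$ is null-homotopic at the bispectrum level because it factors through the image of $\scA_0$ in $\scA_2^w$, whose $\GW$-bispectrum is contractible; hence the canonical map from the homotopy fiber of $\GW(\scA_1) \to \GW(\scB)$ to $\GW(\scA_0)$ induces isomorphisms on all homotopy groups and is therefore a stable equivalence. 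The main obstacle I anticipate is the careful verification that the subcategories $\scB$ and $\scB_k$ genuinely inherit well-defined (strict) dualities from their ambient pretriangulated hulls, together with ensuring that the boundary maps in the long exact sequences assemble compatibly under the colimit; both are bookkeeping issues rooted in the explicit constructions but must be spelled out to justify the passage from a long exact sequence on homotopy groups to an actual homotopy fibration.
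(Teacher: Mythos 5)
Your overall strategy (restrict the target to the essential image of $\T\scA_1$, suspend, and let cofinality discrepancies wash out in the colimit $\GW_i = \colim_k GW_{i+k}(S^k(-))$) is essentially the paper's, but there is a genuine gap at the pivotal step: the claim that ``by construction, $\scA_0 \to \scA_1 \to \scB$ is quasi-exact'' is false in general, and Theorem \ref{thm:LcnConn1} cannot be applied to it. Morita exactness only guarantees that $\T\scA_0 \to \T\scA_1$ is fully faithful, \emph{not} that $\T\scA_0$ is \'epaisse (closed under direct factors) in $\T\scA_1$, which is part of the definition of an exact sequence of triangulated categories. The kernel of $\T\scA_1 \to \T\scB$ is the thick closure $\overline{\T\scA_0}$, which can be strictly larger than $\T\scA_0$; by Thomason's classification of dense subcategories the discrepancy is measured by a subgroup of $K_0$. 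Concretely, take $\scA_2 = 0$ and $\T\scA_0$ a dense proper subcategory of $\T\scA_1$ with $K_0(\scA_1,\scA_0)\neq 0$: the sequence is Morita exact, your $\scB$ is $0$, yet $GW(\scA_0) \to GW(\scA_1)$ is not an equivalence (its cofibre is $\bH^{\bullet}(C_2,K_0(\scA_1,\scA_0))$ by Theorem \ref{thm:cofinalGWL}), so the fibration $GW(S^k\scA_0) \to GW(S^k\scA_1) \to GW(\scB_k)$ you extract from Theorem \ref{thm:LcnConn1} is simply wrong for small $k$. Suspension alone does not repair this, since $\T(S^k\scA_0)$ need not become thick in $\T(S^k\scA_1)$.

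The correct fibration is $GW(\overline{S^k\scA_0}) \to GW(S^k\scA_1) \to GW(\scB_k)$, where $\overline{S^k\scA_0}$ denotes the full subcategory of $v$-acyclics (the thick closure), and you then need one \emph{further} application of Cofinality to see that $GW_j(S^k\scA_0) \to GW_j(\overline{S^k\scA_0})$ is an isomorphism for $j \geq 1$ before your exact fragment in degrees $i+k\geq 2$ is justified. This is exactly the bookkeeping the paper performs by passing to the idempotent completions $(S^k\scA_i)^c$ (whose associated triangulated categories are idempotent complete, hence automatically thick in one another) together with Lemma \ref{lem:idempCplteDeloop}; it also keeps everything at the level of a telescope of homotopy cartesian squares of spectra, which sidesteps the compatibility of boundary maps that your homotopy-group-level splicing still owes. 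So the gap is repairable, but the repair is precisely the idempotent-completion step your argument omits.
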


\begin{proof}
To ease notation, we will prove the theorem for $n=0$.
The general case is then obtained from the case $n=0$ by replacing $\A_i$ with $\scA_i^{[n]}$.
By Remark \ref{rmk:BiSpVsSpForGW}, it suffices to check that 
the mapping telescope of the squares of spectra 
\begin{equation}
\label{equn:RnKGW1}
\xymatrix{
\Omega^nGW(S^n\scA_0) \ar[r] \ar[d] & \Omega^nGW(S^n\scA_1) \ar[d] \\
\Omega^nGW(S^n\scA^w_2) \ar[r]& \Omega^nGW(S^n\scA_2).
}
\end{equation}
is homotopy cartesian with contractible lower left corner.
Here, we have written $\Omega^n$ for $\Map_{\Sp}(\tilde{S}^n,\phantom{A})$.
One square maps into the next via the cup-product with $\sigma: \tilde{S}^1 \to GW(S\bk)$.

As in the proof of Proposition \ref{prop:KGWbasicComps}, let $\scA^c$ be the full dg subcategory of $(C\scA)^{\pretr}$ of those objects which are direct factors in $\T(C\scA)$ of objects of $\T(\scA)$.
So, $\T\scA \to \T\scA^c$ is an idempotent completion.
Let $(S^n\scA_2)^e \subset (S^n\scA_2)^c$ be the full dg subcategory of those objects which are in the essential image of the functor
$\T(S^n\scA_1)^c \to \T(S^n\scA_2)^c$.
The dg category $(S^n\scA_2)^e$ inherits the structure of a dg category with weak equivalences and duality from $(S^n\scA_2)^c$.
By Lemma \ref{lem:idempCplteDeloop} and the Cofinality theorem for $GW$ (or from the Invariance theorem \ref{thm:InvarianceKGW}), the telescope of the diagrams 
(\ref{equn:RnKGW1}) is stably equivalent to the mapping telescope over the diagrams of spectra
\begin{equation}
\label{equn:RnKGW2}
\xymatrix{
\Omega^nGW(S^{n}\scA_0)^c \ar[r] \ar[d] & \Omega^nGW(S^{n}\scA_1)^c \ar[d] \\
\Omega^nGW(S^{n}\scA^e_2)^w \ar[r]& \Omega^nGW(S^{n}\scA_2)^e.
}
\end{equation}
By Lemma \ref{lem:CSMoritaExactness}, the sequence
$$(S^n\scA_0)^c \to (S^n\scA_1)^c \to (S^n\scA_2)^e$$
is Morita exact.
In fact, this sequence is quasi-exact, since the first two dg categories with weak equivalences have idempotent complete associated triangulated categories, and the last functor is essentially surjective on associated triangulated categories.
By the Localization Theorem \ref{thm:LcnConn1} for $GW$, the squares \ref{equn:RnKGW2} are homotopy (co-) cartesian with contractible lower left corner.
Therefore, the same is true for the mapping telescope over the diagrams (\ref{equn:RnKGW2}) and (\ref{equn:RnKGW1}).
\end{proof}

As in Section \ref{sec:PeriodInvLocn}, set 
$$\bK(\scA) = \GW(\H\scA).$$
By Remark \ref{rmk:myNegKsetup}, this bispectrum represents non-connective algebraic $K$-theory of $\scA$ as defined in \cite{mynegK}.

\begin{theorem}[Algebraic Bott Sequence for $\GW$]
\label{thm:PeriodicityExTriangleForKGW}
Let $\scA$ be a dg category with weak equivalences and duality for which $\frac{1}{2}\in \scA$.
Then the sequence of Karoubi-Grothendieck-Witt spectra
$$\GW^{[n]}(\scA) \stackrel{F}{\longrightarrow} \bK(\scA) \stackrel{H}{\longrightarrow} \GW^{[n+1]}(\scA) \stackrel{\eta\cup}{\longrightarrow}S^1\wedge \GW^{[n]}(\scA)$$
is an exact triangle in the homotopy category of (bi-)spectra.
\end{theorem}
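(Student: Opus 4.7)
The plan is to deduce the non-connective Bott sequence from its connective analogue (Theorem \ref{thm:PeriodicityExTriangle}) by stabilizing along the suspension functor $S$, exactly in parallel to how the non-connective $K$-theory Bott sequence arises from the connective one.

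First, I would observe that the forgetful, hyperbolic and $\eta$-cup constructions entering the Bott sequence are all natural symmetric-monoidal operations on $\dgCatWD_*$, and in particular they commute with the functor $S\otimes(-) : \dgCatWD_* \to \dgCatWD_*$ (up to canonical isomorphism of dg form functors). Concretely, applying the connective Bott sequence to $S^m \scA$ yields, for each $m \geq 0$, an exact triangle in the homotopy category of spectra
\[
GW^{[n]}(S^m\scA) \xrightarrow{F} K(S^m\scA) \xrightarrow{H} GW^{[n+1]}(S^m\scA) \xrightarrow{\eta\cup} S^1\wedge GW^{[n]}(S^m\scA),
\]
and the multiplicative structure of the Grothendieck-Witt spectrum (Section \ref{subsec:products}) shows that cup product with $\sigma \in GW_1(S\bk)$ yields a map from this triangle to the corresponding one for $S^{m+1}\scA$, shifted by $\Omega$. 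The $\eta\cup$ map commutes with $\sigma\cup$ because $\eta$ and $\sigma$ live in the commutative ring spectrum $GW(\bk)$, where they commute up to the universal sign given by Proposition \ref{prop:prod:commutativityCk} which is trivial in this bidegree, so the square commutes up to homotopy.

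Next I would take the mapping telescope $\hocolim_m \Omega^m$ of the resulting tower of triangles. By Remark \ref{rmk:BiSpVsSpForGW} and Proposition \ref{prop:KGWbasicComps}, the colimit of $\Omega^m GW^{[n]}(S^m\scA)$ along these bonding maps realizes, on the level of underlying spectra, the bispectrum $\GW^{[n]}(\scA)$; similarly the telescope of $\Omega^m K(S^m\scA) = \Omega^m GW(\H S^m\scA) \simeq \Omega^m GW(S^m \H\scA)$ realizes $\bK(\scA) = \GW(\H\scA)$, where I use that $S$ commutes with the hyperbolic construction since $\H(S\scA) = S\H\scA$ canonically. Homotopy fibrations of spectra are closed under filtered homotopy colimits, so the telescoped sequence is still a homotopy fibration, and its terms identify with $\GW^{[n]}(\scA)$, $\bK(\scA)$, and $\GW^{[n+1]}(\scA)$, with connecting map $\eta\cup$ since cup product with $\eta$ is preserved under the stabilization.

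The only real obstacle I foresee is bookkeeping: making sure that the three maps $F$, $H$, $\eta\cup$ in the telescope genuinely agree with the maps $F$, $H$, $\eta\cup$ of the statement, rather than with some $\sigma$-twisted variants. This reduces to checking that the stabilization map $\sigma\cup$ is a map of $GW(\bk)$-module spectra — which is automatic from the symmetric-monoidal construction of $\GW$ in Section \ref{subsec:KGWspec} — so that cupping with $\eta \in GW_{-1}^{[-1]}(\bk)$ and with $\sigma$ are strictly compatible on the nose of $\tilde{S}$-module structures. Once this compatibility is established, the telescoped triangle is the desired exact triangle in the homotopy category of bispectra, and the theorem follows.
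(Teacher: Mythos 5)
Your argument is correct and is exactly the route the paper takes: the paper's proof simply cites the connective Bott sequence (Theorem \ref{thm:PeriodicityExTriangle}) together with Remark \ref{rmk:BiSpVsSpForGW}, i.e.\ the identification of $\GW$ with the mapping telescope of $\Omega^m GW(S^m-)$ along $\sigma\cup$, which is precisely the stabilization you spell out. (The paper also notes a second route, via the Localization Theorem \ref{thm:locnForKGW} and Remark \ref{rmk:FundThmFromLocnThm}, but your telescope argument is the primary one.)
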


\begin{proof}
This follows from Theorem \ref{thm:PeriodicityExTriangle} in view of Remark \ref{rmk:BiSpVsSpForGW}.
Alternatively, it also follows from the Localization Theorem \ref{thm:locnForKGW} in view of Remark \ref{rmk:FundThmFromLocnThm}.
\end{proof}

As in Section \ref{Z/2actionOnKth}, we consider the bispectrum $\bK(\scA) = \GW(\H\scA)$ equipped with the $C_2$-action coming from the action on $\H\scA$ and the functoriality of $\GW$, and we obtain a commutative diagram of bispectra
\begin{equation}
\label{eqn:KobalWilliamsKGW}
\xymatrix{
\bK(\scA)_{hC_2} \ar[r] \ar[d] & \GW(\scA) \ar[r]\ar[d] & \bK(\scA)^{hC_2} \ar[d]\\
\eta^{-1}\bK(\scA)_{hC_2} \ar[r] & \eta^{-1}\GW(\scA) \ar[r] & \eta^{-1}\left(\bK(\scA)^{hC_2}\right).
}
\end{equation}
as in (\ref{eqnKhGWKhetalocn}).

\begin{definition}[Compare Definition \ref{dfn:LthSp}]
\label{dfn:stableWittThSp}
For $\scA\in \dgCatWD$, we define its stabilized $L$-theory spectrum $\bL(\scA)$ as
$$\bL(\scA) = \eta^{-1}\GW(\scA) = \eta^{-1}\sigma^{-1}GW(\scA).$$
This a module spectrum over the commutative ring spectrum $L(k)=(\eta^4\mu)^{-1}GW(k)$.
In particular, it is $4$-periodic with periodicity isomorphism induced by the cup-product with $\eta^4\mu$.
As usual, $\bL^{[n]}(\scA)$ denotes $\bL(\scA^{[n]})$,
and the homotopy groups of $\bL^{[n]}(\scA)$ are denoted by $\bL^{[n]}_i(\scA)$.
As in Remark \ref{rmk:BiSpVsSpForGW}, the spectrum $\bL(\scA)$ is the mapping telescope of the sequence of spectra
\begin{equation}
\label{eqn:LAsColim}
L(\scA) \stackrel{\sigma \cup}{\longrightarrow}  \Omega L(S\scA) \stackrel{\sigma \cup}{\longrightarrow} \Omega^2 L(S^2\scA) \stackrel{\sigma \cup}{\longrightarrow} \cdots 
\end{equation}
In particular, its homotopy groups $L_i(\scA)$ are given by the colimit
\begin{equation}
\label{eqn:LiAsColim}
L_i(\scA) \stackrel{\sigma \cup}{\longrightarrow}  L_{i+1}(S\scA) \stackrel{\sigma \cup}{\longrightarrow} L_{i+2}(S^2\scA) \stackrel{\sigma \cup}{\longrightarrow} \cdots 
\end{equation}
These groups are isomorphic to Karoubi's stabilized Witt groups \cite{Karoubi:stabilized}, but we won't need that fact here.
\end{definition}

With this definition we have the $\GW$-variant of Theorem \ref{thm:williamsKobal} which is proved in the same way as Theorem \ref{thm:williamsKobal}. 

\begin{theorem}
\label{thm:williamsKobalKGW}
Let $\scA$ be a dg category with weak equivalences and duality such that $\frac{1}{2}\in \scA$.
Then the squares of (bi-) spectra (\ref{eqn:KobalWilliamsKGW})
are homotopy cartesian, the lower left corner is contractible, and the lower right term is the Tate spectrum of the $C_2$-spectrum $\bK(\scA)$.
In particular, there is a natural homotopy fibration 
$$\bK(\scA)_{hC_2} \to  \GW(\scA) \to \bL(\scA)$$
and a homotopy cartesian square of spectra
$$
\xymatrix{
\GW(\scA) \ar[r]\ar[d] & \bH^{\bullet}(C_2,\bK(\scA)) \ar[d]\\
\bL(\scA) \ar[r] & \hat{\bH}(C_2,\bK(\scA)).
}$$
\Qed
\end{theorem}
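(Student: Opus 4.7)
I would follow the argument of the proof of Theorem \ref{thm:williamsKobal} verbatim, substituting $\GW$ for $GW$, the non-connective $\bK = \GW \circ \H$ for $K$, and the Bott sequence Theorem \ref{thm:PeriodicityExTriangleForKGW} for its connective counterpart Theorem \ref{thm:PeriodicityExTriangle}. The construction of the sequence (\ref{eqn:NormOnCatLevel}) and of the derived diagrams (\ref{eqn:GeqKGWKseq})--(\ref{eqn:KhGGWKhG}) takes place purely at the level of $C_2$-equivariant dg form functors in $\dgCatWD_*$; since $\GW\colon \dgCatWD_* \to \BiSp$ is symmetric monoidal (Section \ref{subsec:KGWspec}), applying $\GW$ yields the $\GW$-analog of these diagrams. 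The proofs of Lemmas \ref{lem:KGWKisNorm} and \ref{lem:KhGGWKhGhypEq} use only $C_2$-equivariant equivalences of dg categories, so they carry over verbatim with $\bK$ replacing $K$; in particular, the composition $\bK(\scA)_{hC_2} \to \bK(\scA)^{hC_2}$ coming from the $\GW$-version of (\ref{eqn:KhGGWKhG}) realizes the hypernorm map of the $C_2$-bispectrum $\bK(\scA)$.

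Next I would apply Theorem \ref{thm:PeriodicityExTriangleForKGW} to the commutative diagram of bispectra
$$\xymatrix@C=1em{
S^n \wedge \bK^{[-n]}(\scA)_{hC_2} \ar[r] \ar[d]^{\eta} & S^n \wedge \GW^{[-n]}(\scA) \ar[r] \ar[d]^{\eta} & S^n \wedge \bK^{[-n]}(\scA)^{hC_2} \ar[d]^{\eta} \\
S^{n+1} \wedge \bK^{[-n-1]}(\scA)_{hC_2} \ar[r] & S^{n+1} \wedge \GW^{[-n-1]}(\scA) \ar[r] & S^{n+1} \wedge \bK^{[-n-1]}(\scA)^{hC_2}
}$$
to conclude, just as in the proof of Theorem \ref{thm:williamsKobal}, that the maps on vertical homotopy fibres are equivalences and hence both squares are homotopy cartesian. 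Passing to the homotopy colimit as $n \to \infty$ (that is, inverting $\eta$) then yields the homotopy cartesianness of the two squares in (\ref{eqn:KobalWilliamsKGW}); once the lower-left corner is known to be contractible, the identification of the lower-right corner with the Tate spectrum $\hat{\bH}(C_2, \bK(\scA))$ follows formally from the fact that the composition in the upper row is the hypernorm.

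The main obstacle is the contractibility of $\eta^{-1}\bK(\scA)_{hC_2}$, as the connectivity argument $K_i(\scA) = 0$ for $i<0$ used in the proof of Theorem \ref{thm:williamsKobal} no longer applies directly. To circumvent this, I would invoke Remark \ref{rmk:BiSpVsSpForGW}, which presents $\bK(\scA)$ as the mapping telescope of the sequence $\Omega^n GW(S^n \scA)$; since both $(-)_{hC_2}$ and $\eta^{-1}(-)$ commute with filtered homotopy colimits, we obtain $\eta^{-1}\bK(\scA)_{hC_2} \simeq \hocolim_n \Omega^n \eta^{-1}GW(S^n \scA)_{hC_2}$, and each term $\eta^{-1}GW(S^n\scA)_{hC_2}$ vanishes by the connectivity argument of Theorem \ref{thm:williamsKobal} applied to the connective spectrum $K(S^n\scA)$. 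Hence $\eta^{-1}\bK(\scA)_{hC_2}$ is contractible, completing the proof.
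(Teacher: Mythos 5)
Your proposal is correct and follows the same route as the paper, which simply declares that the theorem ``is proved in the same way as Theorem \ref{thm:williamsKobal}''. Your extra paragraph on the contractibility of $\eta^{-1}\bK(\scA)_{hC_2}$ correctly identifies and patches the one step where the connective argument does not transfer verbatim: writing $\bK(\scA)$ as the telescope of $\Omega^n K(S^n\scA)$ and commuting $\eta^{-1}$ and $(-)_{hC_2}$ past the filtered colimit reduces the claim to the vanishing already established in the connective case, exactly the detail the paper leaves implicit.
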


Recall (\ref{eqn:GWtoBBGW}) that there is a natural map of (bi-) spectra
$GW(\scA) \to \GW(\scA)$
since the $0$-th spectrum of the bispectrum $\GW(\scA)$ is the spectrum $GW(\sc\A)$.

\begin{theorem}
\label{thm:KbKGWKGW}
Let $\scA$ be a dg category with weak equivalences and duality such that $\frac{1}{2}\in \scA$.
Then the following square of (bi-) spectra is homotopy cartesian
$$\xymatrix{
GW(\scA) \ar[r] \ar[d] & \GW(\scA) \ar[d]\\
K(\scA)^{hC_2} \ar[r] & \bK(\scA)^{hC_2}
}$$
\end{theorem}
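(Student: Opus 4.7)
The plan is to embed the desired square into a cube. Define a three-dimensional diagram in the homotopy category of bispectra whose top face is the square we want to prove Cartesian, and whose bottom face is
$$\xymatrix{
L(\scA) \ar[r] \ar[d] & \bL(\scA) \ar[d]\\
\hat{\bH}(C_2,K(\scA)) \ar[r] & \hat{\bH}(C_2,\bK(\scA)),
}$$
with vertical cube-edges given by the natural maps $GW \to L$, $\GW \to \bL$, $K^{hC_2} \to \hat{\bH}(C_2,K)$, and $\bK^{hC_2} \to \hat{\bH}(C_2,\bK)$ arising from the defining fibrations $K_{hC_2} \to GW \to L$, $\bK_{hC_2} \to \GW \to \bL$, $K_{hC_2} \to K^{hC_2} \to \hat{\bH}(K)$, and $\bK_{hC_2} \to \bK^{hC_2} \to \hat{\bH}(\bK)$. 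The two side faces of the cube are precisely the Williams-Kobal homotopy cartesian squares of Theorems \ref{thm:williamsKobal} and \ref{thm:williamsKobalKGW}, one for $GW$ and one for $\GW$, with the map between them induced by the natural transformation $GW \to \GW$.

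Since two parallel faces of the cube are homotopy cartesian, the standard cube lemma in a stable model category implies that the top face is homotopy cartesian if and only if the bottom face is. So it suffices to prove that the bottom face is homotopy cartesian. For this I will compare horizontal fibers. Since the Tate construction $\hat{\bH}(C_2,-)$ preserves fibers of maps of spectra, the bottom horizontal fiber is $\hat{\bH}(C_2,F_K)$ where $F_K = \mathrm{fib}(K(\scA) \to \bK(\scA))$. Since $\eta$-localization is a filtered colimit and thus preserves fibers, the top horizontal fiber is $\eta^{-1}\mathrm{fib}(GW(\scA) \to \GW(\scA))$. The task is therefore to show that the natural map
$$\eta^{-1}\,\mathrm{fib}\bigl(GW(\scA) \to \GW(\scA)\bigr) \longrightarrow \hat{\bH}\bigl(C_2,\mathrm{fib}(K(\scA) \to \bK(\scA))\bigr)$$
is a stable equivalence.

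To establish this identification, I would first apply the Cofinality theorem \ref{thm:cofinalGWL} to reduce to the case where $\T\scA$ is idempotent complete, using that the cofiber of $\scA \to \scA^c$ contributes the same Eilenberg-MacLane spectrum $\bH^\bullet(C_2, K_0(\scA^c,\scA))$ to both sides and hence cancels out of the comparison. In the idempotent complete case, $GW \to \GW$ and $K \to \bK$ are both isomorphisms in non-negative degrees by Proposition \ref{prop:KGWbasicComps}, so both fibers are concentrated in negative degrees and may be computed via the Bass-type delooping structure: from Theorem \ref{thm:locnForKGW} applied to the Morita exact sequence $\scA \to \tilde{C}\scA \to S\scA$ together with $\GW(\tilde{C}\scA) \simeq 0$ and $\bK(\tilde{C}\scA) \simeq 0$, one obtains $\GW(\scA) \simeq \Omega\GW(S\scA)$ and $\bK(\scA) \simeq \Omega\bK(S\scA)$, hence the negative-degree parts of $\GW$ and $\bK$ can be iteratively described in terms of connective $GW$ and $K$ on suspension categories. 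Combining this with the $4$-periodicity of $\bL$ and the analogous periodicity of the Tate spectrum as a module over $L(k)$ (Remark \ref{rem:LtoTatePeriodicity}) should allow one to match both sides.

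The main obstacle is the final step of matching the two sides, since the formation of homotopy fixed points and of the Tate spectrum $\hat{\bH}(C_2,-)$ does \emph{not} commute with filtered colimits in general. Thus one cannot simply take the colimit of Theorem \ref{thm:williamsKobal} applied to each $S^k\scA$ to deduce the bottom face from the Williams-Kobal fibrations at every level; instead one must exploit the specific convergence of the Bass-delooping telescope in each fixed homotopy degree, combined with the fact that the spectra $\mathrm{fib}(GW(\scA)\to\GW(\scA))$ and $F_K$ are both bounded-above in appropriate senses after cofinal reduction. The comparison ultimately rests on the compatibility between the $\sigma$-action realizing Bass delooping and the natural norm maps defining the Tate construction, which is where the bulk of the technical work lies.
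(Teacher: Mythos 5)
Your cube reduction is sound, and up to that point you are walking the same road as the paper: the side faces are the homotopy cartesian squares of Theorems \ref{thm:williamsKobal} and \ref{thm:williamsKobalKGW}, so the total fibre of the top face agrees with the total fibre of the bottom face, and the problem becomes showing that a single map of spectra is an equivalence. But from there the proposal does not close. You set out to \emph{identify} $\eta^{-1}\mathrm{fib}(GW(\scA)\to\GW(\scA))$ with $\hat{\bH}(C_2,\mathrm{fib}(K(\scA)\to\bK(\scA)))$ by cofinality plus Bass delooping, and you then correctly observe that this route is obstructed because $(-)^{hC_2}$ and $\hat{\bH}(C_2,-)$ do not commute with the delooping telescope; the ``compatibility between the $\sigma$-action and the norm maps'' that you defer is precisely the content of the theorem, so the argument is circular at the point where it matters. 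This is a genuine gap: no version of the statement is actually proved.

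The missing idea is that no identification of the two sides is needed. By Theorems \ref{thm:williamsKobal} and \ref{thm:williamsKobalKGW}, the map on \emph{vertical} homotopy fibres of the square is equivalent to the map $\mathrm{fib}(L(\scA)\to\hat{\bH}(C_2,K(\scA)))\to\mathrm{fib}(\bL(\scA)\to\hat{\bH}(C_2,\bK(\scA)))$, which is a map of $L(\bk)$-module spectra and hence is $4$-periodic (Remark \ref{rem:LtoTatePeriodicity}). On the other hand, by Proposition \ref{prop:KGWbasicComps} and its $K$-theory analogue, $GW(\scA)\to\GW(\scA)$ and $K(\scA)\to\bK(\scA)$ are isomorphisms on $\pi_i$ for $i>0$, so the horizontal homotopy fibres — and therefore the total fibre of the square — have vanishing homotopy in sufficiently high degrees. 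Thus the map on vertical fibres is an equivalence in high degrees, and being $4$-periodic it is an equivalence in all degrees, which is exactly the assertion that the square is homotopy cartesian. This replaces the entire second half of your argument (the cofinality reduction, the Bass telescope, and the unresolved norm compatibility) with a two-line periodicity argument; in particular the cofinality step, which you only sketch, is never needed.
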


\begin{proof}
By Theorems \ref{thm:williamsKobal} and \ref{thm:williamsKobalKGW} the map on vertical homotopy fibres is a map of $L(\bk)$-modules.
In particular, this map is $4$-periodic.
By Proposition \ref{prop:KGWbasicComps} and its $K$-theory analog, the maps $GW(\scA) \to \GW(\scA)$ and $K(\scA) \to \bK(\scA)$ are isomorphisms in positive degrees (and monomorphisms in degree $0$).
It follows that the horizontal homotopy fibres are trivial in high degrees.
In particular, the map between horizontal homotopy fibres is an isomorphism in high degrees.
Therefore, the map on vertical homotopy fibres is an isomorphism in sufficiently high degrees.
Since this map is periodic, it is an isomorphism in all degrees. 
\end{proof}

\begin{proposition}[Additivity for $\GW$]
\label{prop:AddKGW}
Let $(\scU,{\vee})$ be a pretriangulated dg category with weak equivalences and duality such that $\frac{1}{2}\in \scU$.
Let $\scA \subset \scU$ be a full pretriangulated dg subcategory containing the 
$w$-acyclic objects $\scU^w$ of $\scU$.
Assume that $\T\scU(A,B^{\vee})=0$ for all $A,B\in \T\scA$, and that $\T\scU$ is generated as a triangulated category by $\T\scA$ and $\T\scA^{\vee}$.
Then the exact dg form functor 
$$\H\scA \to \scU: (A,B)\mapsto A\oplus B^{\vee}$$
induces a stable equivalence of Karoubi-Grothendieck-Witt spectra:
$$\bK(\scA) = \GW(\H\scA) \stackrel{\sim}{\longrightarrow} \GW(\scU).$$
\end{proposition}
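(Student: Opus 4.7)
My plan is to deduce this from the connective Additivity Proposition \ref{prop:AddGW} by comparing the homotopy cartesian squares of Theorem \ref{thm:KbKGWKGW} for $\H\scA$ and for $\scU$. The exact dg form functor $F\colon \H\scA \to \scU$, $(A,B)\mapsto A\oplus B^\vee$, is a morphism in $\dgCatWD$, so by the functoriality of $GW$, $\GW$, $K$, and $\bK$ (the last two carrying their canonical $C_2$-actions from the duality as described in Section \ref{Z/2actionOnKth}) it induces a map between the two homotopy cartesian squares
$$\xymatrix{
GW(\H\scA) \ar[r]\ar[d] & \GW(\H\scA) \ar[d] \\
K(\H\scA)^{hC_2} \ar[r] & \bK(\H\scA)^{hC_2}
}\quad\longrightarrow\quad
\xymatrix{
GW(\scU) \ar[r]\ar[d] & \GW(\scU) \ar[d] \\
K(\scU)^{hC_2} \ar[r] & \bK(\scU)^{hC_2}
}$$
Because both squares are homotopy cartesian (equivalently, homotopy cocartesian in spectra), if three of the four corresponding corner maps are stable equivalences, then so is the fourth. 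Hence, to conclude that the top-right map $\bK(\scA) = \GW(\H\scA) \to \GW(\scU)$ is a stable equivalence, it suffices to verify this for the other three.

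For the top-left map, I would directly invoke Proposition \ref{prop:AddGW}, whose hypotheses coincide with those of the present proposition: it asserts exactly that $GW(\H\scA) \to GW(\scU)$ is a stable equivalence. For the two bottom maps, I would first argue that the underlying non-equivariant maps $K(\H\scA) \to K(\scU)$ and $\bK(\H\scA) \to \bK(\scU)$ are already stable equivalences, by additivity for connective and non-connective algebraic $K$-theory: as in the proof of Proposition \ref{prop:AddGW}, the hypothesis on $\T\scU$ gives a (quasi-)exact sequence $\scA \to \scU \to \scU/\scA$ with $\T(\scU/\scA) \simeq \T\scA^\vee$, producing a homotopy fibration of $K$-theory spectra that is split by the inclusion $\scA^\vee \hookrightarrow \scU$; the same argument with Morita exactness and the localization theorem for $\bK$ from \cite{mynegK} handles the non-connective case. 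Since $(-)^{hC_2}$ is a homotopy limit and therefore preserves stable equivalences of $C_2$-spectra, the induced maps $K(\H\scA)^{hC_2} \to K(\scU)^{hC_2}$ and $\bK(\H\scA)^{hC_2} \to \bK(\scU)^{hC_2}$ are also stable equivalences.

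The main technical point worth flagging, though ultimately a formality, is verifying that $F$ is genuinely $C_2$-equivariant with respect to the $C_2$-actions on $K(\H\scA)$, $\bK(\H\scA)$, $K(\scU)$, $\bK(\scU)$ coming from their respective dualities. This follows from the fact that $F$ is a dg form functor, together with the functoriality of the $C_2$-action described in Section \ref{Z/2actionOnKth} on $\dgCatWD_{\str}$ (after strictifying as in Lemma \ref{lem:Astr} if necessary). Once equivariance is in hand, the three-out-of-four argument above assembles into the required stable equivalence.
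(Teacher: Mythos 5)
Your proposal is correct and follows essentially the same route as the paper: the paper's proof likewise establishes the connective and non-connective $K$-theory equivalences by the argument from Proposition \ref{prop:AddGW}, quotes that proposition for the $GW$-equivalence, and then concludes via the homotopy cartesian square of Theorem \ref{thm:KbKGWKGW}. Your version merely spells out the three-out-of-four step and the $C_2$-equivariance of the induced maps, which the paper leaves implicit.
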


\begin{proof}
From the proof of Proposition \ref{prop:AddGW}, the functor 
$\H\scA \to \scU$ induces a $K$-theory equivalence.
The same argument applies to show that it induces an equivalence of non-connective $K$-theory spectra $\bK$.
Since it is a $GW$-theory equivalence, by Proposition \ref{prop:AddGW}, 
Theorem \ref{thm:KbKGWKGW} implies that it is also a an equivalence of Karoubi-Grothendieck-Witt spectra $\GW$.
\end{proof}

We end this section with a lemma which we need in the proof of Brumfiel's theorem in \cite{KSW}.

\begin{lemma}
Let $\scA$ be a dg category with weak equivalences and duality such that $\frac{1}{2}\in \scA$.
Then the natural map $L(\scA) \to \bL(\scA)$ induces a weak equivalence of spectra after inverting $2$:
$$L(\scA)[1/2] \stackrel{\simeq}{\longrightarrow} \bL(\scA)[1/2].$$
\end{lemma}

\begin{proof}
Recall from the proof of Proposition \ref{prop:KGWbasicComps} the dg categories with weak equivalences and duality $\scA^c$, $S\scA$ and $S(\scA^c)$.
The map $\scA \to \scA^c$ is a Morita equivalence, $\T(\scA^c)$ is idempotent complete, and $S(\scA) \to S(\scA^c)$ is a quasi-equivalence.
Lemma \ref{lem:idempCplteDeloop} holds with $L$ in place of $GW$.
In particular, in the commutative diagram
$$\xymatrix{
L(\scA) \ar[r]^{\sigma \cup} \ar[d] & \Omega L(S\scA) \ar[d]\\
L(\scA^c) \ar[r]^{\hspace{-3ex}\sigma \cup} & \Omega L((S\scA)^c)
}$$
the lower horizontal and the right vertical maps are equivalences.
The left vertical map is an equivalence after inverting $2$, by Cofinality (Theorem \ref{thm:cofinalGWL}), hence, so is the top horizontal map.
It follows that all maps in the sequence (\ref{eqn:LAsColim}) are equivalences after inverting $2$.
Since $\bL(\scA)[1/2]$ is the homotopy colimit of that sequence, we are done.
\end{proof}

\section{Higher Grothendieck-Witt groups of schemes}
\label{sec:GWschemes}

\subsection{Vector bundle Grothendieck-Witt groups}
Let $X$ be a scheme. 
Write $\Vect(X)$ for the category of finite rank locally free sheaves on $X$.
Denote by $\sPerf(X) = \Ch^b\Vect(X)$ the dg category of strictly perfect complexes on $X$, that is, the dg category of bounded  complexes in $\Vect(X)$; \red{see Example \ref{ex:ChbEduality}.}
\red{The underlying category $Z^0\sPerf(X)$ is a closed symmetric monoidal category with tensor product $\otimes_{O_X}$ and internal homomorphism complex $Hom^{\bullet}_{O_X}(E,F)$ given by formulas as in Subsection \ref{subsec:difk}.
The mapping complex of two objects in $\sPerf(X)$ is the complex of global sections of their internal homomorphism object.}
As in any closed symmetric monoidal category,
an object $A$ of $\sPerf(X)$ defines a duality 
$$\sharp_A:\sPerf(X)^{op} \to \sPerf(X): E \mapsto Hom^{\bullet}_{O_X}(E,A)$$ with double dual identification $\can^A:E \to E^{\sharp_A\sharp_A}$ given by the formula
$$\can^A_E(x)(f) = (-1)^{|x||f|}f(x).$$
If $A$ is an invertible strictly perfect complex on $X$, that is, a shift of a line bundle on $X$, then the double dual identification is a natural isomorphism.

Let $A$ be an invertible strictly perfect complex on $X$.
Together with the set $\quis$ of quasi-isomorphisms, we obtain a dg category with weak equivalences and duality
\begin{equation}
\label{eqn:PerfL}
\sPerf^{A}(X) = (\sPerf(X),\quis,\sharp_{A},\can^{A}).
\end{equation}
We say that $\frac{1}{2}\in X$ if $\frac{1}{2}\in \Gamma(X,O_X)$.

\begin{definition}
Let $X$ be a scheme with an ample family of line bundles \red{\cite[Definition 2.1]{TT}} such that $\frac{1}{2}\in X$, and
let $L$ a line bundle on $X$.
The {\em $n$-th shifted Grothendieck-Witt} and {\em Karoubi-Grothendieck-Witt spectra of $X$ with coefficients in $L$} are the $n$-th shifted Grothendieck-Witt and Karoubi-Grothendieck-Witt spectra of (\ref{eqn:PerfL}) for $A=L$, that is, 
$$
\renewcommand\arraystretch{2}
\begin{array}{lcl}
GW^{[n]}(X,L) & = & GW^{[n]}(\sPerf^L(X)),\ \text{and}\\
\GW^{[n]}(X,L) & = & \GW^{[n]}(\sPerf^L(X)).
\end{array}$$
Their homotopy groups are denoted by 
$$
\renewcommand\arraystretch{2}
\begin{array}{lcl}
GW^{[n]}_i(X,L) &=& \pi_iGW^{[n]}(X, L),\ \text{and}\\
\GW^{[n]}_i(X,L) &=& \pi_i\GW^{[n]}(X, L).
\end{array}$$
When $n=0$, or $L=O_X$, we may omit the decoration corresponding to it.
For instance, $GW(X)$ denotes $GW^{[0]}(X, O_X)$. 
\end{definition}

\red{
\begin{remarkNoNb}
In \cite{TT}, the $K$-theory of a quasi-compact and quasi-separated scheme $X$ is defined as the $K$-theory of the category of perfect complexes on X.
If $X$ has an ample family of line bundles, this is also the $K$-theory of the category of strictly perfect complexes.
If one wishes, one can develop the theory of higher Grothendieck-Witt groups for perfect complexes
in the generality of Thomason;  the technical foundations have been laid in the previous sections.
But at the moment I don't see the need to deal with the extra technicalities that come with this generality, and have decided to stick with schemes that have an ample family of line bundles. 
This is sufficient for most applications.
\end{remarkNoNb}
}

\begin{remark}
Let $X$ be a $k$-scheme with structure map $p:X \to \Spec(k)$.
The tensor product 
$$\scC^{[n]}_{\bk}\otimes_{\bk} \sPerf^L(X) \to \sPerf^{L[n]}(X): A \otimes_{\bk} E \mapsto p^*A\otimes_{O_X}E$$ with duality compatibility as in (\ref{eqn:dualityCompIsoForTensorProds})
is an equivalence of dg categories with weak equivalences and duality.
Therefore, the Grothendieck-Witt and Karoubi-Grothendieck-Witt spectra $GW^{[n]}(X, L)$ and $\GW^{[n]}(X, L)$ are really the Grothendieck-Witt and Karoubi-Grothendieck-Witt spectra of 
$\sPerf^{L[n]}(X)$ (without shift).
\end{remark}

\begin{proposition}
Let $X$ be a scheme with an ample family of line bundles such that $\frac{1}{2}\in X$.
Then the following hold.
\begin{enumerate}
\item
For all $n,i\in \Z$ with $i\geq 0$ and all line bundles $L$ on $X$, the map (\ref{eqn:GWtoBBGW}) is an isomorphism 
$$GW^{[n]}_i(X,L) \stackrel{\cong}{\longrightarrow} \GW^{[n]}_i(X,L).$$
\item
If $\bK_i(X) = 0$ for $i<0$ (e.g., $X$ is a regular noetherian separated scheme) then the map (\ref{eqn:GWtoBBGW}) is a stable equivalence for all line bundles $L$ on $X$ and all $n\in \Z$:
$$GW^{[n]}(X,L) \stackrel{\sim}{\longrightarrow} \GW^{[n]}(X,L).$$
\end{enumerate}
\end{proposition}

\begin{proof}
The first part follows from Proposition \ref{prop:KGWbasicComps} since $\T(\sPerf(X))=D^b\Vect(X)$ is idempotent complete; see for instance \cite{BalmerMe}.

For the second part, the map $K(X) \to \bK(X)$ is a stable equivalence, by assumption.
Therefore, the lower horizontal map in the square of Theorem \ref{thm:KbKGWKGW} is a stable equivalence.
Since this square is homotopy cartesian, the upper horizontal map of that square is also an equivalence.
\end{proof}

\subsection{Products}
Let $L_1$, $L_2$ be two line bundles on $X$ and denote by $L_1L_2$ their tensor product $L_1\otimes_{O_X}L_2$.
Tensor product of complexes defines an exact dg form functor
$$(\otimes_{O_X},\can): \sPerf^{L_1}(X) \otimes_{\bk} \sPerf^{L_2}(X) \longrightarrow \sPerf^{L_1L_2}(X)$$
with duality compatibility map $\can$ as defined in (\ref{eqn:dualityCompIsoForTensorProds}).
The tensor product is associative and unital up to natural isomorphism of dg form functors with unit the duality preserving functor $k \to \sPerf(X):k \mapsto O_X$.
Recall that the functors $GW:\dgCatWD \to \Sp$ and $\GW:\dgCatWD \to \BiSp$ are symmetric monoidal, and that the natural transformation $GW \to \GW$ commutes with the monoidal compatibility maps.
Therefore, the tensor product of strictly perfect complexes induces maps of (bi-) spectra
$$GW^{[n]}(X, L_1) \wedge_S
GW^{[m]}(X, L_2) \stackrel{\cup}{\longrightarrow}
GW^{[n+m]}(X, L_1L_2)
$$
$$\GW^{[n]}(X, L_1) \wedge_{\tilde{S}}
\GW^{[m]}(X, L_2) \stackrel{\cup}{\longrightarrow}
\GW^{[n+m]}(X, L_1L_2)
$$
which are associative and unital up to homotopy.
Taking homotopy groups, we obtain associative and unital cup products
$$GW^{[n]}_i(X, L_1) \otimes
GW^{[m]}_j(X, L_2) \stackrel{\cup}{\longrightarrow}
GW^{[n+m]}_{i+j}(X, L_1L_2)
$$
$$\GW^{[n]}_i(X, L_1) \otimes
\GW^{[m]}_j(X, L_2) \stackrel{\cup}{\longrightarrow}
\GW^{[n+m]}_{i+j}(X, L_1L_2)
$$
which are commutative in the sense that
$$\tau (a\cup b) = (-1)^{ij}\ \langle -1\rangle^{mn}\cup  b\cup a$$
for all $a\in GW^{[n]}_i(X, L_1)$ (or $a\in \GW^{[n]}_i(X, L_1)$), and 
$b \in GW^{[m]}_j(X, L_2)$ (or $b \in GW^{[m]}_j(X, L_2)$) where
$\tau:L_1 L_2 \cong L_2 L_1$ is the switch isomorphism and $\langle -1 \rangle = [O_X,-1] \in GW_0(X)$.
The proof is the same as the proof of Proposition \ref{prop:prod:commutativityCk}, and we omit the details.

\subsection{Functoriality}
\label{rmk:FunctorialityI}
Let $\Sch$ be a small category of schemes, e.g., schemes of finite type over some fixed base.
Let $\widetilde{\Sch}$
denote the category whose objects are the pairs $(X,L)$ where $X$ is a scheme in $\Sch$ and $L$ is a line bundle on $X$.
A morphism $(X,L) \to (Y,M)$ in this category is a pair $(f,\alpha)$ where $f:X \to Y$ is a map of schemes and $\alpha: f^*M \cong L$ is an isomorphism.
Composition of the two maps $(f,\alpha):(X,L) \to (Y,M)$ and $(g,\beta):(Y,M) \to (Z,N)$ is the pair 
$$(g,\beta)\circ (f,\alpha) = (g\circ f,\alpha \circ f^*(\beta)\circ \lambda_{f,g})$$
where $\lambda_{f,g}:(gf)^* \cong f^*g^*$ is the usual natural isomorphism of functors.
The higher Grothendieck-Witt groups $GW^{[n]}_i$ and $\GW^{[n]}_i$ are functorial for maps in this category.

More precisely, for a map $f:X \to Y$ of schemes, the functor 
$f^*:\sPerf(Y) \to \sPerf(X)$ is a symmetric monoidal functor between closed symmetric monoidal categories.
As such it is equipped with natural maps
$$\phi_f:f^*[E,F] \to [f^*E,f^*F]$$
such that, given $g:Y \to Z$ and $E,F\in \sPerf(Z)$, the following diagram commutes
$$\xymatrix{
f^*g^*[E,F] \ar[r]^{f^*\phi_g} & f^*[g^*E,g^*F] \ar[r]^{\phi_f} & [f^*g^*E,f^*g^*F] \ar[d]^{[\lambda_{f,g},1]}\\
(gf)^*[E,F] \ar[u]^{\lambda_{f,g}} \ar[r]_{\phi_{gf}} & [(gf)^*E,(gf)^*F] \ar[r]_{[1,\lambda_{f,g}]} & [(gf)^*E,f^*g^*F].}$$
Now, a map $(f,\alpha):(X,L) \to (Y,M)$ in $\widetilde{\Sch}$ defines an exact dg form functor
$$(f,\alpha)^*: \sPerf^M(Y) \to \sPerf^L(X): E \mapsto f^*E$$
with duality compatibility map
$f^*[E,M] \stackrel{\phi_f}{\longrightarrow}[f^*E,f^*M]
\stackrel{[1,\alpha]}{\longrightarrow}[f^*E,L].$
Given composable maps $(f,\alpha)$ and $(g,\beta)$ as above, then $\lambda_{f,g}$ defines a natural isomorphism of dg form functors
$$\lambda_{f,g}: (f,\alpha)^*(g,\beta)^* \cong ((g,\beta)\circ(f,\alpha))^*.$$
Therefore, the diagrams of Grothendieck-Witt and Karoubi-Grothendieck-Witt spectra
$$\xymatrix{GW^{[n]}(Z,N) \ar[r]^{(g,\beta)} \ar[dr]_{(g,\beta)\circ(f,\alpha)} &
GW^{[n]}(Y,M) \ar[d]^{(f,\alpha)} &
\GW^{[n]}(Z,N) \ar[r]^{(g,\beta)} \ar[dr]_{(g,\beta)\circ(f,\alpha)} &
\GW^{[n]}(Y,M) \ar[d]^{(f,\alpha)} \\
&GW^{[n]}(X,L) & & \GW^{[n]}(X,L)}$$
commute up to homotopy.
In particular, the higher Grothendieck-Witt groups $GW^{[n]}_i$ and $\GW^{[n]}_i$ are functors from $\widetilde{\Sch}$ to abelian groups.

Since tensor products of  complexes commute with $f^*$ up to natural isomorphism of dg functors, we have
$$f^* (a\cup b) = f^*(a)\cup f^*(b) \in GW^{[n+m]}_{i+j}(X)$$
for $a\in GW^{[n]}_i(Y)$ and $b\in GW^{[m]}_j(Y)$, and similarly for $\GW$ in place of $GW$.

\begin{remark}[Strict Functoriality]
Sometimes it is convenient to have functors 
$$GW^{[n]}: \widetilde{\Sch} \to \Sp\hspace{2ex}\text{and}\hspace{2ex}
\GW^{[n]}: \widetilde{\Sch} \to \BiSp$$
with values in (bi-) spectra before taking homotopy groups.
The trouble with the functoriality as noted in Remark \ref{rmk:FunctorialityI} is that the natural isomorphism of functors $\lambda_{f,g}:(gf)^*\cong f^*g^*$ is not the identity.
As explained in \cite[Appendix C.4]{FriedlanderSuslin}, if we replace the (somewhat unspecified) category of vector bundles on $X$ with the equivalent small category of big vector bundles on $X$, then we have equality $\lambda_{f,g} = id:(gf)^* =f^*g^*$.
In particular, the resulting assignment $X \mapsto \sPerf^L(X)$ is functorial for maps in $\widetilde{\Sch}$ and we obtain indeed a functor $GW^{[n]}: \widetilde{\Sch} \to \Sp$ and $\GW^{[n]}: \widetilde{\Sch} \to \BiSp$.
\end{remark}

Let $Z\subset X$ be a closed subset with open complement $X-Z$.
A complex $E \in \sPerf(X)$ has support in $Z$ if it is acyclic outside $Z$, that is if for all $x\in X-Z$ the complex $E_x \in \sPerf(O_{X,x})$ is an acyclic complex of $O_{X,x}$-modules.
We denote by $\sPerf_Z^L(X)$ the full dg subcategory with weak equivalences and duality of $\sPerf^L(X)$ of those complexes which have support in $Z$.
The {\em $n$-th shifted Karoubi-Grothendieck-Witt spectrum of $X$ with coefficients in $L$ and support in $Z$} is the Karoubi-Grothendieck Witt spectrum
of $\sPerf_Z^L(X)$,
denoted by $\GW^{[n]}(X\phantom{i}on\phantom{i}Z,\phantom{i}L)$.

The following theorem was proved in \cite{myMV} without the assumption $\frac{1}{2}\in X$.

\begin{theorem}[Localization for vector bundle Grothendieck-Witt groups]
\label{thm:schemeLocn}
Let $X$ be a scheme with an ample family of line-bundles and $\frac{1}{2}\in X$.
Let $Z \subset X$ be a closed subset with quasi-compact open complement $U=X-Z$.
Then for all line-bundles $L$ on $X$ and $n\in \Z$, the following is a homotopy fibration
$$
\GW^{[n]}(X\phantom{i}on\phantom{i}Z,\phantom{i}L) \longrightarrow
\GW^{[n]}(X,\phantom{i}L) \longrightarrow
\GW^{[n]}(U,\phantom{i}L).$$
\end{theorem}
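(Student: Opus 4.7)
The plan is to deduce this theorem as a direct application of the Localization Theorem \ref{thm:locnForKGW} for Karoubi-Grothendieck-Witt spectra. Thus the essential content is to verify that the sequence of dg categories with weak equivalences and duality
$$\sPerf_Z^L(X) \longrightarrow \sPerf^L(X) \longrightarrow \sPerf^L(U)$$
is Morita exact, i.e., that the induced sequence of triangulated categories
$$\T\sPerf_Z^L(X) \longrightarrow \T\sPerf^L(X) \longrightarrow \T\sPerf^L(U)$$
is exact up to factors in the sense of Section \ref{sec:KGWspectrum}. Once this is established, Theorem \ref{thm:locnForKGW} applied to $\scA^{[n]}$ with $\scA = \sPerf^L(X)$ (using that $\scA_2^w = \sPerf_Z^L(X)$ since a strictly perfect complex on $X$ is acyclic on $U$ iff it has support in $Z$) produces the desired homotopy fibration, after observing that $\GW^{[n]}(\sPerf^L(X)^w,\quis)$ is naturally identified with $\GW^{[n]}(X\text{ on }Z, L)$.

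The sequence of functors is well defined: restriction $j^*:\sPerf^L(X) \to \sPerf^{j^*L}(U)$ along $j:U\inj X$ is an exact dg form functor as described in Section \ref{rmk:FunctorialityI}, and its kernel category (on weak equivalences) is precisely $\sPerf_Z^L(X)$. The first step of the proof is then to identify $\T\sPerf(X)$ with the triangulated category $\Perf(X)$ of perfect complexes on $X$, and similarly for $U$ and for the variant supported on $Z$; this is standard for schemes with an ample family of line bundles since every perfect complex is then globally quasi-isomorphic to a strictly perfect one (Thomason-Trobaugh, \cite[Proposition 2.3.1]{TT}). Note that this identification is compatible with dualities, since the internal-Hom with an invertible object is the derived $R\mathcal{H}om$ for strictly perfect arguments.

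The key technical step, which will be the main (though classical) obstacle, is to invoke the Thomason-Trobaugh extension theorem \cite[Theorem 5.2.2]{TT}: for $X$ quasi-compact quasi-separated (which is implicit here via the ample family hypothesis and quasi-compactness of $U$), the restriction functor $\Perf(X)/\Perf_Z(X) \to \Perf(U)$ is fully faithful, and every object of $\Perf(U)$ is a direct summand, up to shift, of the restriction of some object of $\Perf(X)$. This is exactly the statement that the quotient functor is cofinal, i.e., that the triangulated sequence above is exact up to factors. Since invertibility of $L$ and compatibility with restriction are handled by the duality compatibility map $[1,\alpha]\circ \phi_f$ of Section \ref{rmk:FunctorialityI}, dualities play no further role in this verification.

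With Morita exactness in hand, Theorem \ref{thm:locnForKGW} applied to the shifted dg form functor sequence $\sPerf_Z^{L[n]}(X) \to \sPerf^{L[n]}(X) \to \sPerf^{L[n]}(U)$ (where $\frac{1}{2}\in X$ ensures the hypothesis $\frac{1}{2}\in \scA_i$) yields the homotopy fibration of Karoubi-Grothendieck-Witt spectra, which by definition is the asserted fibration
$$\GW^{[n]}(X\text{ on }Z,\,L) \longrightarrow \GW^{[n]}(X,\,L) \longrightarrow \GW^{[n]}(U,\,L).$$
The statement that the analogous result for $GW$ (as proved in \cite{myMV}) did \emph{not} require $\frac{1}{2}\in X$ is consistent: the $\frac{1}{2}$ hypothesis here comes from the use of Theorem \ref{thm:locnForKGW}, which in turn rests on the Bott sequence and hence on uniqueness of $2$-division.
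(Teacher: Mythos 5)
Your proposal is correct and follows exactly the paper's argument: the paper's proof consists precisely of citing Thomason--Trobaugh to establish that $\sPerf^L_Z(X) \to \sPerf^L(X) \to \sPerf^L(U)$ is Morita exact and then applying the Localization Theorem \ref{thm:locnForKGW}. Your additional remarks on identifying the associated triangulated categories with perfect complexes, on duality compatibility, and on the role of the $\frac{1}{2}$ hypothesis are accurate elaborations of the same route.
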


\begin{proof}
By a result of Thomason \cite{TT}, the sequence of dg categories weak equivalences and duality
$$\sPerf^L_Z(X) \to \sPerf^L(X) \to \sPerf^L(U)$$
is Morita exact.
The result follows from the Localization Theorem \ref{thm:locnForKGW}.
\end{proof}

\begin{theorem}[Nisnevich Mayer-Vietoris]
\label{thm:NisnevichMV}
Consider a pull-back diagram of quasi-compact schemes 
$$\xymatrix{V \ar[r] \ar[d] & Y \ar[d]^p \\
U \ar[r]^j & X}$$
in which the map $p$ is \'etale and the map $j$ is an open immersion.
Assume that $X$ has an ample family of line bundles, and $\frac{1}{2}\in X$ (the same is then true for $Y$, $U$ and $V$).
If the map $p: (Y-V)_{red} \to (X-U)_{red}$ is an isomorphism, then 
for all line bundles $L$ on $X$ and all $n\in \Z$
the square of schemes induces a homotopy cartesian diagram of Karoubi-Grothendieck-Witt spectra
$$\xymatrix{\GW^{[n]}(X,L) \ar[r]\ar[d] & \GW^{[n]}(U,L)\ar[d]\\
\GW^{[n]}(Y,L)  \ar[r] & \GW^{[n]}(V,L).
}$$
\end{theorem}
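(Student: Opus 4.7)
The plan is to reduce the theorem to the Nisnevich excision statement for perfect complexes with support (the ``Key Lemma'' of Thomason--Trobaugh, \cite[Theorem~2.6.3]{TT}) via the Localization and Invariance theorems for $\GW$ proved in the previous section.

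Set $Z = (X \setminus U)_{\mathrm{red}}$ and $W = (Y \setminus V)_{\mathrm{red}} = p^{-1}(Z)_{\mathrm{red}}$; by hypothesis $p: W \to Z$ is an isomorphism. The étale map $p: Y\to X$ induces an exact dg form functor $p^*: \sPerf^L(X) \to \sPerf^{p^*L}(Y)$ (Section \ref{rmk:FunctorialityI}) carrying the full dg subcategory $\sPerf^L_Z(X)$ into $\sPerf^{p^*L}_W(Y)$. Applying Theorem \ref{thm:schemeLocn} to the closed-open pair $(Z\subset X)$ and $(W\subset Y)$, and using naturality of the resulting homotopy fibrations along $p^*$, one obtains a map of homotopy fibrations
\[
\xymatrix{
\GW^{[n]}(X\,\text{on}\,Z, L) \ar[r]\ar[d]^{p^*} & \GW^{[n]}(X, L) \ar[r]\ar[d] & \GW^{[n]}(U, L)\ar[d]\\
\GW^{[n]}(Y\,\text{on}\,W, p^*L) \ar[r] & \GW^{[n]}(Y, p^*L) \ar[r] & \GW^{[n]}(V, p^*L).
}
\]
Therefore the square in the theorem is homotopy cartesian if and only if the left vertical arrow is a stable equivalence.

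By the Invariance Theorem \ref{thm:InvarianceKGW} for Karoubi--Grothendieck--Witt spectra, it suffices to show that the triangulated functor
\[
p^*: \T\sPerf^L_Z(X) \longrightarrow \T\sPerf^{p^*L}_W(Y)
\]
is cofinal. I will argue it is in fact an equivalence. This is essentially Thomason's Nisnevich excision for perfect complexes: since $p$ is étale and restricts to an isomorphism on the reduced closed complements, the derived pullback induces an equivalence between the triangulated categories of perfect complexes supported on $Z$ and those supported on $W$. (One twists the invertible $L$ along the way, which is harmless since pulling back an invertible complex yields an invertible complex.) The compatibility of $p^*$ with the dualities $\sharp_L$ and $\sharp_{p^*L}$ comes from the base-change isomorphism $p^*\mathrm{Hom}^{\bullet}_{O_X}(E, L) \cong \mathrm{Hom}^{\bullet}_{O_Y}(p^*E, p^*L)$, valid because $p$ is flat and $E$ is strictly perfect, so that $p^*$ descends to a dg form functor and hence to a duality preserving equivalence on triangulated categories.

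The only genuinely geometric input is the excision equivalence $\T\sPerf_Z(X) \simeq \T\sPerf_W(Y)$; once this is granted, the rest of the proof is a formal consequence of the $\GW$-machinery established above. The main obstacle, such as it is, lies in citing or recording the precise form of Nisnevich excision needed here (allowing $X$ merely to admit an ample family of line bundles and, in particular, working without a regularity or separatedness hypothesis); this is covered by Thomason--Trobaugh \cite[\S 2.6]{TT}.
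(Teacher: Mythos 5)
Your proposal is correct and follows essentially the same route as the paper: apply the Localization Theorem \ref{thm:schemeLocn} to both closed--open pairs, reduce to the map on supported spectra, and conclude via the Invariance Theorem \ref{thm:InvarianceKGW} together with Thomason's excision equivalence \cite[Theorem 2.6.3]{TT} for perfect complexes with support. The paper's proof is just a terser version of your argument, leaving the compatibility of $p^*$ with the dualities and the reduction to the left vertical fibre implicit.
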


\begin{proof}
Let $Z$ denote the common closed subset $(Y-V)_{red} \cong (X-U)_{red}$ of $X$ and $Y$.
By a result of Thomason \cite[Theorem 2.6.3]{TT}, the map $p$ induces a quasi-equivalence of dg categories
$$p^*:\sPerf_Z(X) \to \sPerf_Z(Y).$$
The result now follows from Theorems \ref{thm:InvarianceKGW} and \ref{thm:schemeLocn}.
\end{proof}

\begin{theorem}[Nisnevich descent]
\label{thm:NisnDescent}
Let $X$ be a noetherian scheme of finite Krull dimension with an ample family of line bundles and $\frac{1}{2}\in X$.
Then for all line bundles $L$ on $X$ and all $n\in \Z$, the map
on global sections of a globally fibrant replacement for the Nisnevich topology of $\GW^{[n]}(\phantom{X},L)$ on the small Nisnevich site on $X$ is a stable equivalence:
$$\GW^{[n]}(X,L) \stackrel{\sim}{\longrightarrow} \bH_{Nis}\left(X,\GW^{[n]}(\phantom{X},L)\right).$$
\end{theorem}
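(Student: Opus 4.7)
The plan is to deduce the theorem from the Brown--Gersten--Morel--Voevodsky descent criterion for presheaves of spectra on the small Nisnevich site of a noetherian scheme of finite Krull dimension. In the form most convenient here, this criterion asserts that a presheaf of spectra $F$ satisfies Nisnevich descent, i.e.\ the natural map $F(X)\to \bH_{Nis}(X,F)$ into a globally fibrant replacement is a stable equivalence, as soon as (i) $F(\emptyset)$ is contractible and (ii) $F$ sends every elementary (distinguished) Nisnevich square to a homotopy cartesian square of spectra.

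First I would regard $\GW^{[n]}(\phantom{X},L)$ as a genuine presheaf of spectra on the small Nisnevich site of $X$, assigning to each \'etale $f:U\to X$ the spectrum $\GW^{[n]}(U,f^*L)$. The strict functoriality needed to make this an honest (rather than pseudo-) functor is afforded by the big vector bundle model discussed after Remark \ref{rmk:FunctorialityI}. Condition (i) is immediate: $\sPerf(\emptyset)$ contains only the zero object, so $\GW^{[n]}(\emptyset,\phantom{L})$ is contractible. Condition (ii) is precisely Theorem \ref{thm:NisnevichMV}, since an elementary Nisnevich square over $X$ is exactly a pull-back square as in that theorem: an \'etale map $p:Y\to X$ and an open immersion $j:U\hookrightarrow X$ restricting to an isomorphism on reduced complements. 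Quasi-compactness of the relevant schemes is automatic from $X$ being noetherian, so Theorem \ref{thm:NisnevichMV} applies directly, and the line bundle $f^*L$ on each object of the small Nisnevich site is well-defined and transforms correctly under pullback.

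The finite Krull dimension hypothesis on $X$ enters only through the BGMV criterion itself, whose proof rests on the bounded cohomological dimension of the Nisnevich topos of a noetherian scheme of finite Krull dimension; this is what allows the local Mayer--Vietoris property to be promoted to global descent via a convergent descent spectral sequence. I do not anticipate any serious obstacle here, since the two hypotheses of the criterion have already been established. The one point that requires mild care is that the BGMV criterion is customarily stated for presheaves on schemes (not on a bispectrum-valued presheaf), but this causes no difficulty because the homotopy category of bispectra is equivalent to that of spectra (compare Remark \ref{rmk:BiSpVsSpForGW}), so descent in one framework is equivalent to descent in the other, and the criterion transfers immediately.
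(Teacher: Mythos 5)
Your proposal is correct and follows essentially the same route as the paper: the paper's proof likewise deduces the result from Theorem \ref{thm:NisnevichMV} by invoking a spectrum-level version of the Morel--Voevodsky descent criterion (\cite[Lemma 1.18]{MorelVoevodsky}), which is exactly the BGMV criterion you describe, with the finite Krull dimension hypothesis entering through that criterion. Your additional remarks on strict functoriality via big vector bundles and on passing between spectra and bispectra are sensible elaborations of points the paper leaves implicit.
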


\begin{proof}
This follows from the Nisnevich Mayer-Vietoris Theorem \ref{thm:NisnevichMV} using for instance a spectrum version of \cite[Lemma 1.18]{MorelVoevodsky}.
Alternatively, the same arguments as in \cite[Theorem 10.8]{TT} using Theorem  \ref{thm:NisnevichMV} yield the result.
\end{proof}

\begin{theorem}[Homotopy Invariance]
Let $X$ be a noetherian regular separated scheme with $\frac{1}{2}\in X$.
Then for all line bundles $L$ on $X$ and all $n\in \Z$, the projection 
$p:X\times \bA^1 \to X$ induces a stable equivalence of Grothendieck-Witt spectra
$$GW^{[n]}(X,L) \stackrel{\sim}{\longrightarrow} GW^{[n]}(X\times \bA^1,p^*L).$$
\end{theorem}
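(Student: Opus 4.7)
The plan is to apply Karoubi induction (Lemma \ref{lem:KaroubiInd}) to the exact dg form functor
$$p^{*}:\sPerf^{L}(X)\longrightarrow \sPerf^{p^{*}L}(X\times \bA^{1})$$
induced by the projection $p$. Since Karoubi induction for a map of pretriangulated dg categories with weak equivalences and duality (both containing $\tfrac{1}{2}$) requires only (a)~an equivalence on connective $K$-theory and (b)~isomorphisms on all shifted Witt groups $W^{n}$, the problem reduces to verifying these two inputs.

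For (a), since $X$ is regular noetherian separated, so is $X\times \bA^{1}$. Both schemes admit an ample family of line bundles, so Thomason's result identifies $K(\sPerf^{L}(X))$ with Quillen's $K$-theory $K(X)$ and similarly for $X\times \bA^{1}$ (the line-bundle twist of the duality is irrelevant for $K$-theory, which only sees the underlying dg category with weak equivalences). Quillen's fundamental theorem then gives a homotopy equivalence $K(X)\stackrel{\sim}{\to} K(X\times \bA^{1})$ induced by $p^{*}$.

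For (b), by Corollary \ref{cor:BalmerWnismyWn} we have $W^{[n]}(\sPerf^{L}(X))\cong W^{n}(D^{\mathrm{perf}}(X),L)$, Balmer's $n$-th shifted triangular Witt group of the derived category of perfect complexes with line-bundle duality (and analogously for $X\times \bA^{1}$). Homotopy invariance $W^{n}(X,L)\stackrel{\cong}{\to} W^{n}(X\times \bA^{1},p^{*}L)$ for triangular Witt groups of regular separated noetherian schemes with $\tfrac{1}{2}$ is a theorem of Gille (\emph{Homotopy invariance of coherent Witt groups}). This supplies the Witt-group isomorphisms in every degree $n$ and every shift.

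Combining (a) and (b), Lemma \ref{lem:KaroubiInd} yields a stable equivalence $GW^{[n]}(X,L)\stackrel{\sim}{\to} GW^{[n]}(X\times \bA^{1},p^{*}L)$ for all $n\in \Z$. The only delicate step is (b): one has to invoke an external input (Gille's theorem) because, unlike $K$-theory where homotopy invariance is built into Quillen's devissage/localization machinery, triangular Witt-group homotopy invariance is a genuinely non-formal statement about regular schemes and does not follow from any of the abstract dg-categorical results developed in the preceding sections of this paper. Everything else (Karoubi induction, the identification of negative $GW$-groups with Witt groups, and the passage from shifted Witt groups of the dg category to Balmer's triangular Witt groups) is already in place.
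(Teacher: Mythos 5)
Your proof is correct and follows essentially the same route as the paper: the paper also reduces to Karoubi induction (Lemma \ref{lem:KaroubiInd}), citing Quillen for the $K$-theory input and Balmer's homotopy invariance theorem for triangular Witt groups of regular schemes \cite[Theorem 3.4]{balmer:MV} for the Witt-group input. The only cosmetic difference is that you invoke Gille's homotopy invariance for coherent Witt groups where the paper cites Balmer's result for derived Witt groups directly; for regular separated noetherian schemes these agree, so either external reference does the job.
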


\begin{proof}
The analogous statement for $K$-theory and triangular Witt groups hold, by \cite{quillen:higherI} and \cite[Theorem 3.4]{balmer:MV}.
The theorem follows from Theorem \ref{thm:williamsKobal} or from Lemma \ref{lem:KaroubiInd}.
\end{proof}

\begin{theorem}[Mayer-Vietoris for regular blow-ups]
\label{thm:blowup}
Consider a pull-back diagram of schemes 
$$\xymatrix{Y' \ar[r]^j \ar[d]_q & X' \ar[d]^p \\
Y \ar[r]^i & X}$$
in which the map $i$ is a closed immersion and the map $p$ is the blow-up of $X$ along $i$.
Assume that $X$  has an ample family of line bundles, and $\frac{1}{2}\in X$ (the same is then true for $Y$, $X'$ and $Y'$).
If the map  $i$ is a regular embedding of codimension $d$, then 
for all line bundles $L$ on $X$ and all $n\in \Z$
the square of schemes induces a homotopy cartesian diagram of Karoubi-Grothendieck-Witt spectra
$$\xymatrix{\GW^{[n]}(X,L) \ar[r]\ar[d] & \GW^{[n]}(Y,L)\ar[d]\\
\GW^{[n]}(X',L)  \ar[r] & \GW^{[n]}(Y',L).
}$$
\end{theorem}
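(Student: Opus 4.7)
The plan is first to use the Localization Theorem \ref{thm:schemeLocn} to reduce the blow-up Mayer--Vietoris property to a relative statement. Since $p$ restricts to an isomorphism $X' \setminus Y' \stackrel{\cong}{\to} X \setminus Y$, the horizontal homotopy fibres of the square in the theorem are the Karoubi--Grothendieck--Witt spectra with support $\GW^{[n]}(X \text{ on } Y,\ L)$ and $\GW^{[n]}(X' \text{ on } Y',\ L)$. Thus the square is homotopy cartesian if and only if the pull-back dg form functor
$$p^*: \sPerf^{L[n]}_Y(X) \longrightarrow \sPerf^{L[n]}_{Y'}(X')$$
induces a stable equivalence of Karoubi--Grothendieck--Witt spectra for every $n\in \Z$ and every line bundle $L$.

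To establish this, I would invoke a $\GW$-version of the Karoubi induction Lemma \ref{lem:KaroubiInd}, which in view of the Bott sequence for $\GW$ (Theorem \ref{thm:PeriodicityExTriangleForKGW}) and the identification of Karoubi--Grothendieck--Witt groups in non-positive degrees as shifted Witt groups of the idempotent-complete associated triangulated category (Proposition \ref{prop:KGWbasicComps}, Corollary \ref{cor:BalmerWnismyWn}) reduces the problem to proving that $p^*$ induces (i) an equivalence on non-connective $K$-theory, and (ii) an isomorphism on every shifted triangular Witt group of the idempotent completion of $\T\sPerf_{Y}(X) \to \T\sPerf_{Y'}(X')$. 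Statement (i) is exactly Thomason's blow-up theorem \cite{Thomason:eclate}, applied after observing that under the hypothesis of a regular embedding the triangulated categories of perfect complexes with support are accessible to his argument.

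For (ii), the idea is to exploit the Thomason semi-orthogonal decomposition
$$\T\sPerf(X') = \langle\, p^*\T\sPerf(X),\ j_*\T\sPerf(Y)(-E),\ \ldots,\ j_*\T\sPerf(Y)(-(d-1)E)\,\rangle,$$
where $j: Y' = \mathbb{P}(N_{Y/X}) \hookrightarrow X'$ is the exceptional divisor, together with the analogous projective bundle decomposition of $\T\sPerf(Y')$ coming from Theorem \ref{thm:projLineBld} and Remark \ref{rmk:otherProjBdlFormulas}. Restricted to complexes with support on $Y'$, these decompositions account for the $K$-theoretic equivalence of (i) and reduce (ii) to a duality-theoretic bookkeeping on the complementary ``exceptional'' summands.

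The main obstacle is precisely this duality bookkeeping. Under the shifted duality $E \mapsto \mathbb{R}\mathcal{H}om(E, L[n])$ on $\T\sPerf_{Y'}(X')$, one needs to verify that the summands $j_*\T\sPerf(Y)(-kE)$ for $k = 1, \ldots, d-1$ are permuted by the involution $k \leftrightarrow d-k$, with the pairing built from the adjunction formula and the self-intersection class $E\cdot E$ providing the appropriate twist and degree shift; the pairs thus form hyperbolic pieces which, by Proposition \ref{prop:AddGW} and the vanishing of Witt groups of hyperbolic categories, contribute nothing to $W^n$. What remains is $p^*\widetilde{\T\sPerf_{Y}(X)}$, yielding the required isomorphism on Witt groups and hence, via Karoubi induction, the blow-up Mayer--Vietoris property for $\GW^{[n]}$. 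Making this duality analysis rigorous in the singular setting is the singular-scheme analogue of the Balmer--Calm\`es blow-up formula for Witt groups and is the central technical point of the proof; it is carried out using the explicit framework of shifted dualities on pretriangulated hulls developed in Section \ref{subsec:DGshiftedDuals}.
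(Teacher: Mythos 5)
Your proof breaks down at the very first reduction. The horizontal maps in the square of the theorem are the restrictions $i^*:\GW^{[n]}(X,L)\to\GW^{[n]}(Y,L)$ and $j^*:\GW^{[n]}(X',L)\to\GW^{[n]}(Y',L)$ to the \emph{closed} subschemes $Y$ and $Y'$; their homotopy fibres are not the support spectra. The Localization Theorem \ref{thm:schemeLocn} identifies $\GW^{[n]}(X\;on\;Y,L)$ with the fibre of restriction to the \emph{open} complement $X\smallsetminus Y$, which is a different square. Worse, the statement you reduce to --- that $p^*:\GW^{[n]}(X\;on\;Y,L)\to\GW^{[n]}(X'\;on\;Y',L)$ is a stable equivalence --- is false. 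Already in $K$-theory, for $X=\bA^2_k$, $Y=\{0\}$ one has $\bK(X\;on\;Y)\simeq K(k)$ while $\bK(X'\;on\;Y')\simeq K(\pp^1_k)\simeq K(k)^2$ (the exceptional divisor has self-intersection $-1$, and dévissage applies since $X'$ is regular); and by the Bott sequence (Theorem \ref{thm:PeriodicityExTriangleForKGW}) an equivalence on $\GW^{[n]}$ with supports for all $n$ would force the $K$-theoretic equivalence. So Karoubi induction cannot be run on the support categories: its $K$-theoretic input fails there. This is consistent with the fact that Thomason's blow-up theorem is \emph{not} proved via supports but by comparing the semi-orthogonal decompositions of $\T\sPerf(X')$ and $\T\sPerf(Y')$ directly.

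The correct route (the one the paper takes) first reduces to the affine case by Nisnevich descent (Theorem \ref{thm:NisnevichMV}), then compares the \emph{full} categories via a duality-compatible refinement of Thomason's decomposition: after twisting by $O_{X'}(c)$ with $c=\lfloor\frac{d-1}{2}\rfloor$ one gets $D_{X'}=\langle \A^{\vee},\,Lp^*D_X,\,\A,\,[\,\cdot\,]\rangle$ with $\A=\langle Rj_*Lq^*D_Y^{(-1)},\dots,Rj_*Lq^*D_Y^{(-c)}\rangle$, and similarly on $Y'$. Note that the duality does \emph{not} simply permute the exceptional summands by $k\leftrightarrow d-k$ as you assert: $(Rj_*Lq^*D_Y^{(-k)})^{\vee}$ is not of the form $Rj_*Lq^*D_Y^{(l)}$, and only the subcategories \emph{generated together with} $Lp^*D_X$ match up under $\vee$ (via the exact triangle $O_{X'}(1)\to O_{X'}\to Rj_*O_{Y'}$). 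One then filters $\sPerf(X')$ (and $\sPerf(Y')$) in three duality-preserving stages and checks each square of $\GW$-spectra is homotopy cartesian: the outer stages by Invariance/Localization (Theorems \ref{thm:InvarianceKGW}, \ref{thm:locnForKGW}), and the middle stage by Additivity (Proposition \ref{prop:AddKGW}), which converts the hyperbolic layer $\langle\A^{\vee},\cdot,\A\rangle/\langle\cdot\rangle$ into non-connective $K$-theory, where \cite[Proposition 1.5]{CHSW} supplies the required equivalence. Your instinct that the exceptional pieces are hyperbolic and contribute only $K$-theory is the right one, but it must be implemented on the full categories, not on the categories with support.
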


\begin{proof}
The claim is local in $X$, by Theorem \ref{thm:NisnevichMV}.
Therefore, we may assume $X$ and $Y$ affine and $L=O_X$.
To ease notation, write $D_X$ for the triangulated category $\T\sPerf(X)$, and similarly for $Y$, $X'$, $Y'$.
So, $D_X$ and $D_Y$ are generated as idempotent complete triangulated categories by $O_X$ and $O_Y$, respectively.
Recall (e.g., from \cite[Lemma 1.4]{CHSW}) that the triangle functors
$Lp^*: D_X \to D_{X'}$,  $Lq^*:D_Y\to D_{Y'}$ and $Rj_*Lq^*: D_Y\to D_{X'}$ are fully faithful. 
Write $Lp^* D_X^{(k)}$ and $Rj_*Lq^*D_Y^{(k)}$ for the triangulated categories 
$O_{X'}(k)\otimes Lp^* D_X$ and $O_{X'}(k)\otimes  Rj_*Lq^*D_Y$.

For triangulated subcategories $\A_1,...,\A_n$ of an idempotent complete triangulated category $\A$, we denote by $\langle \A_1,...,\A_n\rangle$ the idempotent complete triangulated subcategory of $\A$ generated by
 $\A_1,...,\A_n$.
From the exact triangle in $D_{X'}$
$$O_{X'}(1) \to O_{X'} \to Rj_*O_{Y'} \to O_{X'}(1)[1],$$
we obtain the equality of triangulated subcategories of $D(X')$
\begin{equation}
\label{eqn:EqTrCats1}
\langle O_{X'}(1)\otimes Lp^*D_X,\ Rj_*Lq^*D_Y\rangle = \langle O_{X'}(1)\otimes Lp^*D_X,\ Lp^*D_X\rangle
\end{equation}
since the generators $O_{X'}(1)$ and $Rj_*O_{Y'}$ of the left hand side are in the right hand category, and the generators $O_{X'}(1)$ and $O_{X'}$ of the right hand side are in the left category.
If we denote by $\vee$ the duality functor $E \mapsto E^{\vee} = \Hom(E,O_{X'})$ in $D_{X'}$, then applying $\vee$ to the equality of triangulated subcategories yields the equality
\begin{equation}
\label{eqn:EqTrCats2}
\langle O_{X'}(-1)\otimes Lp^*D_X,\ (Rj_*Lq^*D_Y)^{\vee} \rangle = \langle O_{X'}(-1)\otimes Lp^*D_X,\ Lp^*D_X\rangle.
\end{equation}
For $c = \lfloor \frac{d-1}{2}\rfloor$ the largest integer $\leq \frac{d-1}{2}$, let
$\A$ be the triangulated subcategory 
$$\A = \left\langle Rj_*Lq^*D_Y^{(-1)},..., Rj_*Lq^*D_Y^{(-c)}\right\rangle$$
of $D_{X'}$.
From the equalities of triangulated categories above,
we obtain the equalities of triangulated categories
$$
\renewcommand\arraystretch{2}
\begin{array}{cl}
  &   \left\langle Lp^*D_X^{(c)},\ Rj_*Lq^*D_Y^{(c-1)},\ Rj_*Lq^*D_Y^{(c-2)},..., Rj_*Lq^*D_Y\right\rangle\\
 \stackrel{\text{(\ref{eqn:EqTrCats1})}}{=} &   \left\langle Lp^*D_X^{(c)},\ Lp^*D_X^{(c-1)},..., Lp^*D_X\right\rangle\\
 \stackrel{\text{(\ref{eqn:EqTrCats2})}}{=} & \left\langle \left( Rj_*Lq^*D_Y^{(-c)}\right)^{\vee},...,\left( Rj_*Lq^*D_Y^{(-1)}\right)^{\vee},\ Lp^*D_X\right\rangle\\
 = &   \left\langle \A^{\vee}, \ Lp^*D_X \right\rangle
\end{array}
$$
From \cite[Lemma 1.2 (1)]{CHSW}, this implies
$$
\renewcommand\arraystretch{2}
\begin{array}{rcl}
 D_{X'} &
  = & O_{X'}(c) \otimes D_{X'}\\
& = &  O_{X'}(c) \otimes \left\langle Lp^*D_X,\ Rj_*Lq^*D_Y^{(-1)},\ Rj_*Lq^*D_Y^{(-2)},..., Rj_*Lq^*D_Y^{(-d+1)}\right\rangle\\
& = &   \left\langle \A^{\vee}, \ Lp^*D_X,\ \A,\ \left[Rj_*Lq^*D_Y^{(-c-1)}\right] \right\rangle
\end{array}
$$
where the last term $[Rj_*Lq^*D_Y^{(-c-1)}]$ denotes $Rj_*Lq^*D_Y^{(-c-1)}$ if $d-1$ is odd and $0$ if $d-1$ is even.
As noted in the proof of \cite[Proposition 1.5]{CHSW}, every map from objects of
$$
\left\langle \A^{\vee}, \ Lp^*D_X \right\rangle = \left\langle Lp^*D_X^{(c)},\ Rj_*Lq^*D_Y^{(c-1)},..., Rj_*Lq^*D_Y\right\rangle$$
to objects of 
$$\left\langle Rj_*Lq^*D_Y^{(-1)},..., Rj_*Lq^*D_Y^{(c-d+1)}\right\rangle = \left\langle \A,\ \left[Rj_*Lq^*D_Y^{(-c-1)}\right] \right\rangle$$
is zero.

Similarly, let $\B$ be the triangulated subcategory 
$$\B = \left\langle Lq^*D_Y^{(-1)},..., Lq^*D_Y^{(-c)}\right\rangle$$
of $D_{Y'}$.
From \cite[Lemma 1.2 (2)]{CHSW}, we have
$$D_{Y'} = \left \langle \B^{\vee},\ Lq^*D_Y,\ \B,\ \left[Lq^*D_Y^{(-c-1)}\right]\right\rangle$$
and every map from objects of $\left\langle \B^{\vee}, \ Lq^*D_Y \right\rangle$ to objects of $\langle \B,\ [Lq^*D_Y^{(-c-1)}]\rangle$ are zero.
From \cite[Proposition 1.5]{CHSW} and the fact that $Lj^*$ commutes with dualities (up to natural isomorphism), we see that $Lj^*$ sends $\langle Lp^*D_X, \A\rangle$, $Lp^*D_X$ and $\langle \A^{\vee}, Lp^*D_X\rangle $ to $\langle Lq^*D_Y \B\rangle$, $Lq^*D_Y$ and $\langle \B^{\vee}, Lq^*D_Y\rangle$.
So, we have a map of filtrations of triangulated categories and dualities
\begin{equation}
\label{eqn:Filt1}
\xymatrix{
D_X \ar[d]_{Li^*} \ar[r]^{Lp^*}_{\sim} & Lp^*D_X \hspace{1ex} \xyhookrightarrow[r] \ar[d]_{Lj^*} & \langle \A^{\vee}, Lp^*D_X, \A \rangle \ar[d]_{Lj^*} \hspace{1ex} \xyhookrightarrow[r] & 
D_{X'} \ar[d]_{Lj^*} \\
D_Y  \ar[r]^{Lq^*}_{\sim} & Lq^*D_Y \hspace{1ex} \xyhookrightarrow[r] & \langle \B^{\vee}, Lq^*D_X, \B \rangle \hspace{1ex} \xyhookrightarrow[r] & D_{Y'}.
}
\end{equation}
To this diagram corresponds a diagram of dg categories with weak equivalences and duality
\begin{equation}
\label{eqn:Filt2}
\xymatrix{
\sPerf(X) \ar[d]_{i^*} \ar[r]^{p^*} & \sPerf^0(X') \hspace{1ex} \xyhookrightarrow[r] \ar[d]_{j^*} & \sPerf^1(X') \ar[d]_{j^*} \hspace{1ex} \xyhookrightarrow[r] & 
\sPerf(X') \ar[d]_{j^*} \\
\sPerf(Y)  \ar[r]^{q^*} & \sPerf^0(Y') \hspace{1ex} \xyhookrightarrow[r] & \sPerf^1(Y')  \hspace{1ex} \xyhookrightarrow[r] & 
\sPerf(Y')
 }
\end{equation}
where $\sPerf^i(X') \subset \sPerf(X')$ and $\sPerf^i(Y') \subset \sPerf(Y')$ are the full dg subcategories whose objects are in the triangulated subcategories  of $D_{X'}$ and $D_{Y'}$ as indicated in diagram (\ref{eqn:Filt1}), so that
the triangulated category diagram obtained from (\ref{eqn:Filt2}) is precisely the diagram (\ref{eqn:Filt1}).

I claim that all squares in (\ref{eqn:Filt2}) induce homotopy cartesian squares of $\GW$-spectra.
As a composition of homotopy cartesian squares, the outer diagram will then induce a homotopy cartesian of $\GW$-spectra as claimed in the theorem.
The left square of (\ref{eqn:Filt2}) induces a homotopy cartesian square of $\GW$-spectra, by the Invariance Theorem \ref{thm:InvarianceKGW} applied to the two horizontal arrows.

For the middle square of (\ref{eqn:Filt2}), the Localization Theorem \ref{thm:locnForKGW} identifies the horizontal homotopy cofibres as the $\GW$-spectra of the dg categories with weak equivalences and duality $(\sPerf^1(X'), v)$ and $(\sPerf^1(Y'), v)$
where the weak equivalences are the maps whose cones are in $Lp^*D_X$ and 
$Lq^*D_Y$.
Let $(\sPerf^1_0(X'),v) \subset (\sPerf^1(X'),v)$ and $(\sPerf^1_0(Y'),v)\subset (\sPerf^1_0(Y'),v)$ be the full dg subcategories with weak equivalences and duality corresponding to the triangulated categories
$\langle Lp^*D_X, \A \rangle  /Lp^*D_X$ and 
$\langle Lq^*D_Y, \B \rangle  /Lq^*D_Y$.
By the Additivity Theorem \ref{prop:AddKGW}, 
we have 
$$\GW(\sPerf^1(X'),v) \simeq \bK(\sPerf^1_0(X'),v), \hspace{2ex}\GW(\sPerf^1(Y'),v) \simeq \bK(\sPerf^1_0(Y'),v).$$
By \cite[Proposition 1.5]{CHSW}, the \red{following induced map is a stable equivalence}
$j^*: \bK(\sPerf^1_0(X'),v) \to \bK(\sPerf^1_0(Y'),v)$.
Therefore, the middle square in (\ref{eqn:Filt2}) induces a homotopy cartesian square of $\GW$-spectra.

The last square in (\ref{eqn:Filt2}) induces a homotopy cartesian square of $\GW$-spectra, by the Localization and Invariance Theorems \ref{thm:locnForKGW} and \ref{thm:InvarianceKGW}, since 
$Lj^*$ induces an equivalence after taking horizontal triangulated quotient categories, by \cite[Proposition 1.5]{CHSW}.
\end{proof}

\subsection{Projective line-bundle formula}
Write $\pp^1=\Proj(k[S,T])$ for the projective line $\pp^1_k$ over $k$ where $k$ is our base ring, e.g., $k=\Z[1/2]$.
Consider the category $\Vect(\pp^1)$ of vector bundles on $\pp^1$ equipped with the usual duality $\Hom(\phantom{A},O_{\pp^1})$.
Recall (Section \ref{subsec:CatsWDual}) that this makes the category $\Fun([1],\Vect(\pp^1))$ of morphisms in $\Vect(\pp^1)$ into a category with duality.
In that category, we have an object $S:O_{\pp^1}(-1) \to O_{\pp^1}$ (multiplication by $S \in k[S,T]$) equipped with the symmetric form $\tilde{\beta}$
$$\xymatrix{\ar@{}[d]_{\tilde{\beta}:} &
O_{\pp^1}(-1) \ar[r]^{S} \ar[d]_T & O_{\pp^1}\ar[d]^T\\
& O_{\pp^1} \ar[r]_S & O_{\pp^1}(1)}$$
(multiplication by $T$).
We consider $\tilde{\beta}$ as a symmetric form in the dg category $\Fun([1],\Ch^b\Vect(\pp^1))$ of morphisms of  complexes via the embedding $\Vect(\pp^1) \to \Ch^b\Vect(\pp^1)$ as complexes concentrated in degree $0$.
Its image under the cone dg form functor 
$$\Cone: \Fun([1],\Ch^b\Vect(\pp^1))^{[0]} \longrightarrow \left(\Ch^b\Vect(\pp^1)\right)^{[1]}$$
is denoted by $\beta$.
In view of the exact sequence
$$O_{\pp^1}(-1) \stackrel{\left(\begin{smallmatrix}S\\ T\end{smallmatrix}\right)}{\longrightarrow} O_{\pp^1}\oplus O_{\pp^1} \stackrel{(T,-S)}{\longrightarrow}  O_{\pp^1}(1),$$ the form on $\beta$ is a quasi-isomorphism of complexes of sheaves on $\pp^1$.
Therefore, $\beta$ is non-degenerate and defines an element
$$\beta = \Cone(\tilde{\beta})\in GW^{[1]}_0(\pp^1).$$

\begin{theorem}
\label{thm:projLineBld}
Let $X$ be a scheme with an ample family of line bundles and $\frac{1}{2}\in X$, and denote by $p:\pp^1_X \to X$ the structure map of the projective line over $X$.
Then for all line bundles $L$ on $X$ and all $n\in \Z$, the following are natural stable equivalences of (bi-) spectra
$$
\renewcommand\arraystretch{1.5}
\begin{array}{ccc}
GW^{[n]}(X,L) \oplus GW^{[n-1]}(X,L) & \stackrel{\sim}{\longrightarrow} & GW^{[n]}(\pp^1_X,p^*L)\\
\GW^{[n]}(X,L) \oplus \GW^{[n-1]}(X,L) & \stackrel{\sim}{\longrightarrow} & \GW^{[n]}(\pp^1_X,p^*L)\\
 (x,y) & \mapsto & p^*(x)+ \beta\cup p^*(y).
\end{array}$$
\end{theorem}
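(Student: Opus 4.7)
My plan is to apply the Karoubi induction Lemma \ref{lem:KaroubiInd} to reduce the $GW$-statement to checking that the asserted map is an equivalence on connective $K$-theory and on all shifted Witt groups; the $\GW$-statement will then follow from the $GW$-statement together with Thomason's non-connective $K$-theory projective bundle formula $\bK(\pp^1_X) \simeq \bK(X)\oplus \bK(X)$ via the homotopy cartesian square of Theorem \ref{thm:KbKGWKGW}. Both maps are induced by the exact dg form functor
\begin{equation*}
\Phi : \sPerf^{L[n]}(X) \times \sPerf^{L[n-1]}(X) \longrightarrow \sPerf^{p^*L[n]}(\pp^1_X), \qquad (E,F) \longmapsto p^*E \perp (\beta \otimes_{O_{\pp^1_X}} p^*F),
\end{equation*}
where $\beta \otimes p^*F$ carries the degree-$n$ symmetric form obtained by tensoring the degree-$1$ form on $\beta$ with the degree-$(n{-}1)$ form on $F$. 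On connective $K$-theory the associated forgetful map is $(x,y) \mapsto p^*x + [\beta]\cdot p^*y$. Since $\beta \simeq \Cone(O_{\pp^1_X}(-1)\xrightarrow{S} O_{\pp^1_X})$, one has $[\beta]=[O]-[O(-1)]$ in $K_0(\pp^1_X)$, so our map differs from Quillen's projective bundle isomorphism (expressed in the basis $(1,[O(-1)])$) by a unipotent change of basis in $GL_2(\Z)$, and is therefore itself an equivalence.

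The substantive content is the Witt-group assertion
\begin{equation*}
W^m(X,L) \oplus W^{m-1}(X,L) \xrightarrow{\ p^* \perp (\beta\cup p^*)\ } W^m(\pp^1_X, p^*L), \qquad m \in \Z,
\end{equation*}
which, by Corollary \ref{cor:BalmerWnismyWn}, is equivalent to the corresponding statement for Balmer's triangular Witt groups of $\T\sPerf^{p^*L[m]}(\pp^1_X)$. I plan to deduce it from the localization sequence of Theorem \ref{thm:schemeLocn} associated to the regular closed immersion $i : X \hookrightarrow \pp^1_X$ of the divisor $\{S=0\}$. Grothendieck duality along $i$ yields $Li^{!} p^*L[m] \cong L[m-1]$, since the normal bundle of $i$ is canonically trivial; hence pushforward is a duality equivalence of pretriangulated dg categories with duality $i_* : \sPerf^{L[m-1]}(X) \stackrel{\sim}{\to} \sPerf^{p^*L[m]}_X(\pp^1_X)$. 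Combined with homotopy invariance of triangular Witt groups along $\bar{p}:\bA^1_X \to X$ (which holds quite generally, since triangular Witt groups only depend on the idempotent complete triangulated category of perfect complexes, for which $\bar p^*$ is a localization), the resulting long exact sequence splits through the pullback $p^*$, giving the desired decomposition. The identification of $i_*(-)$ with $\beta \cup p^*(-)$ at the level of symmetric forms then comes from the Koszul triangle $O(-1) \xrightarrow{S} O \to i_*O_X \to O(-1)[1]$ used to define $\beta$, together with the Grothendieck-duality comparison of the two symmetric structures.

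The main obstacle is this Witt-group computation: it cannot be obtained formally from the Additivity Proposition \ref{prop:AddGW}, because the Beilinson decomposition $\T\sPerf(\pp^1_X) = \langle Lp^*D(X)(-1), Lp^*D(X) \rangle$ is carried by the duality $D_m = R\mathcal{H}om(-,p^*L[m])$ to the \emph{different} decomposition $\langle Lp^*D(X), Lp^*D(X)(1) \rangle$, so neither half of the semi-orthogonal decomposition is self-dual. One is therefore forced to perform an explicit localization-plus-d\'evissage analysis, and the hard part is matching the symmetric forms produced by $i_*$ through Grothendieck duality with those produced by cup-product with $\beta$ up to the shift in degree, where the sign conventions from Section \ref{subsec:DGshiftedDuals} must be tracked with care.
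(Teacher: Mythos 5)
Your reduction to $K$-theory and Witt groups via Karoubi induction is a legitimate strategy in principle, but the Witt-group half of your argument breaks down in the generality of the theorem, which covers arbitrary (in particular singular) schemes with an ample family of line bundles and $\frac{1}{2}\in\Gamma(X,O_X)$. Two steps fail. First, homotopy invariance of triangular Witt groups along $\bA^1_X\to X$ is a theorem about \emph{regular} schemes (Balmer's \cite[Theorem 3.4]{balmer:MV}, and the paper's own Homotopy Invariance theorem is stated only for regular noetherian separated $X$); your justification that $\bar p^*$ is "a localization" of $\T\sPerf(X)$ is not correct --- $\bar p^*\colon \T\sPerf(X)\to\T\sPerf(\bA^1_X)$ is neither fully faithful nor essentially surjective for singular $X$, and already $K_0(X)\to K_0(\bA^1_X)$ need not be an isomorphism; the analogous failure occurs for $W$. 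Second, for a regularly embedded divisor $i\colon X\hookrightarrow \pp^1_X$ the pushforward $i_*\colon\T\sPerf(X)\to\T\sPerf_X(\pp^1_X)$ is never fully faithful (the Koszul resolution gives $\Hom(i_*O_X,i_*O_X[1])\neq 0$), so there is no "duality equivalence of pretriangulated dg categories" as you assert; the d\'evissage you are invoking is a statement about coherent Witt groups (Gille) and transporting it to perfect complexes with support again requires regularity. So the proposed proof cannot be completed as written for singular $X$.

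Moreover, the "main obstacle" you identify is illusory, and the paper's proof sidesteps all of the above. Although the duality $Hom^{\bullet}(-,p^*L[n])$ does not preserve the complementary piece of the Beilinson decomposition, it does preserve the subcategory $p^*\T\sPerf(X)$, since the dual of $p^*E$ is $p^*Hom^{\bullet}_{O_X}(E,L[n])$. Hence $\sPerf(X)\stackrel{p^*}{\to}\sPerf(\pp^1_X)\to(\sPerf(\pp^1_X),w)$, with $w$ the maps inverted in the Verdier quotient by $p^*\T\sPerf(X)$, is a quasi-exact sequence of dg categories with weak equivalences and duality, and the Localization Theorems \ref{thm:LcnConn1} and \ref{thm:locnForKGW} give homotopy fibrations of $GW^{[n]}$- and $\GW^{[n]}$-spectra. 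Because $\beta$ is a symmetric space for the shift-by-one duality, $\beta\otimes p^*(-)$ is an exact dg form functor $\sPerf^{L[n-1]}(X)\to\sPerf^{p^*L[n]}(\pp^1_X)$ whose composite into the quotient induces an equivalence of triangulated categories, hence a $GW$- and $\GW$-equivalence by the Invariance Theorems \ref{thm:Invariance} and \ref{thm:InvarianceKGW}; this splits the fibration and yields the stated direct sum decomposition with no Witt-group computation, no regularity hypothesis, and no Karoubi induction.
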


\begin{proof}
The triangle functor $p^*: \T\sPerf(X) \to \T\sPerf (\pp^1_X)$ is fully faithful, and $\beta$ induces an equivalence $\beta\otimes: \T\sPerf(X) \to \T\sPerf (\pp^1_X)/p^*\T\sPerf(X)$ of triangulated categories.
This is classical; see for instance \cite{Thomason:proj}, \cite[Theorem 3.5.1]{mySedano}. 
Denote by $w$ the set of morphisms in $\sPerf (\pp^1_X)$ which are isomorphisms in $\T\sPerf (\pp^1_X)/p^*\T\sPerf(X)$.
By Theorems \ref{thm:LcnConn1} and \ref{thm:locnForKGW}, the sequence
$$(\sPerf(X),\quis) \stackrel{p^*}{\longrightarrow} (\sPerf (\pp^1_X),\quis) \longrightarrow (\sPerf (\pp^1_X), w)$$
induces homotopy fibrations of $GW^{[n]}$ and $\GW^{[n]}$-spectra.
By Theorems \ref{thm:Invariance} and \ref{thm:InvarianceKGW}, these sequences split via the exact dg form functors
$$(\sPerf(X),\quis) \stackrel{\beta\otimes}{\longrightarrow} (\sPerf (\pp^1_X),\quis) \longrightarrow (\sPerf (\pp^1_X),w)$$ whose composition induces an equivalence of associated triangulated categories, and hence $GW$ and $\GW$-equivalences.
\end{proof}

\begin{remark}
\label{rmk:otherProjBdlFormulas}
Walter proved in \cite{Walter:proj} projective bundle formulas for the zero-th Grothendieck Witt groups $GW^{[n]}_0(\pp_X(\E))$ for arbitrary vector bundles $\E$ over $X$. 
His results immediately generalize to the higher Grothendieck-Witt groups
$GW^{[n]}_i(\pp_X(\E))$ and $\GW^{[n]}_i(\pp_X(\E))$.
\end{remark}

\begin{lemma}
\label{lem:BetaZero}
Let $k$ be a commutative ring with $\frac{1}{2}\in k$.
Then the element $\beta \in GW^{[1]}_0(\pp^1_k)$ is zero when restricted to the principal open subsets $D_+(S)=\Spec(k[T/S])$ and $D_+(T) = \Spec(k[S/T])$ of $\pp^1_k = \Proj(k[S,T])$.
\end{lemma}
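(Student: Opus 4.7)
The plan is to show directly that the underlying complex of $\beta$ becomes acyclic after restriction to $D_+(S)$ (resp. $D_+(T)$), and then to invoke a simple general fact that a symmetric space supported on an acyclic object is trivial in $GW_0$.

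First, I would unwind the definition of $\beta$: by construction $\beta = \Cone(\tilde{\beta})$, where the underlying complex $K = \Cone(S)$ is the two-term complex $[\,O_{\pp^1}(-1) \xrightarrow{\,S\,} O_{\pp^1}\,]$ concentrated in degrees $-1$ and $0$, and the symmetric form $\Cone(\tilde\beta)$ is the quasi-isomorphism $K \to K^{\sharp_{O[1]}}$ induced from the symmetric square defining $\tilde{\beta}$ via the cone dg form functor of Section \ref{subsec:mappingConedgForm}. Thus $\beta\in GW^{[1]}_0(\pp^1_k)$ is the class of the symmetric space $(K,\Cone(\tilde{\beta}))$ in the pretriangulated dg category with weak equivalences and duality $\sPerf^{O_{\pp^1}[1]}(\pp^1_k)$.

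Second, I would observe that on $D_+(S) = \Spec k[T/S]$, the section $S$ trivializes $O_{\pp^1}(-1)$, so the map $S: O_{\pp^1}(-1)|_{D_+(S)}\to O_{\pp^1}|_{D_+(S)}$ is an isomorphism of line bundles. Consequently the restricted two-term complex $K|_{D_+(S)}$ is acyclic, i.e.\ $w$-acyclic in $\sPerf(D_+(S))$ with weak equivalences the quasi-isomorphisms. Since the pull-back functor $\sPerf^{[1]}(\pp^1_k) \to \sPerf^{[1]}(D_+(S))$ is an exact dg form functor (Section \ref{rmk:FunctorialityI}), it sends $\beta$ to the class of the symmetric space $(K|_{D_+(S)},\Cone(\tilde{\beta})|_{D_+(S)})$.

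Third, I would record the general sublemma: if $\scA$ is a pretriangulated dg category with weak equivalences and duality and $(X,\varphi)$ is a symmetric space with $X$ a $w$-acyclic object, then $[X,\varphi]=0\in GW_0(\scA)$. Indeed the unique morphism $g:0\to X$ is then a weak equivalence, so by relation (\ref{cor:itm1:Kh_0}) of Definition \ref{dfn:GW0A} we have $[X,\varphi]=[0,g^{\vee}\varphi g]=[0,0]=0$. (Alternatively, this is immediate from Proposition \ref{prop:GW0AisGW0TA}, since $X\cong 0$ in $\T\scA$.) Applying the sublemma to $(K|_{D_+(S)},\Cone(\tilde{\beta})|_{D_+(S)})$ yields $\beta|_{D_+(S)}=0$.

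Fourth, the same argument with the roles of $S$ and $T$ interchanged shows that $K|_{D_+(T)}$ is acyclic and hence $\beta|_{D_+(T)}=0$. There is no real obstacle: the only point to verify is the acyclicity of $K$ on each principal open, which is a one-line observation, and the remainder is a formal consequence of the relations defining $GW_0$.
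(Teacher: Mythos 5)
Your first three steps are correct and reproduce the paper's (one-line) treatment of $D_+(S)$: there $S$ is nonvanishing, so the complex $K=\Cone\bigl(S\colon O_{\pp^1}(-1)\to O_{\pp^1}\bigr)$ restricts to an acyclic complex, and the class vanishes by relation (\ref{cor:itm1:Kh_0}) of Definition \ref{dfn:GW0A}. The gap is in your fourth step: the situation is \emph{not} symmetric in $S$ and $T$. The underlying complex of $\beta$ is the cone of multiplication by $S$ (the form $\tilde\beta$ is given by $T$, but that only affects the form, not the underlying object), and its cohomology is a skyscraper supported on $V_+(S)=\{[0:1]\}$ --- a point lying in $D_+(T)$. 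Writing $U=S/T$, the restriction $K|_{D_+(T)}$ is the Koszul complex $k[U]\xrightarrow{\,U\,}k[U]$, which is not acyclic. As a sanity check: if $K$ were acyclic on both standard opens it would be acyclic on all of $\pp^1_k$, forcing $\beta=0$ in $GW^{[1]}_0(\pp^1_k)$ and contradicting Theorem \ref{thm:projLineBld}, where $\beta\cup p^*$ is a split injection.

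The case of $D_+(T)$ is therefore where the real content of the lemma lies, and it requires a different argument. The paper first reduces to $k=\Z[1/2]$ by base change (so that homotopy invariance, which needs regularity, is available), notes that $\beta|_{D_+(T)}$ lies in the image of $GW^{[1]}_0(k[U]\ \mathrm{on}\ U{=}0)\to GW^{[1]}_0(k[U])$, and then combines the localization sequence of Theorem \ref{thm:schemeLocn} with the homotopy-invariance isomorphism $GW^{[1]}_0(k)\cong GW^{[1]}_0(k[U])$ (evaluation at $U=1$) to conclude that $GW^{[1]}_0(k[U])\to GW^{[1]}_0(k[U,U^{-1}])$ is injective, hence that the map from the group with supports is zero. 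You would need to supply this argument (or some equivalent, e.g.\ a d\'evissage or explicit hyperbolicity statement for the Koszul symmetric space at the origin of $\bA^1$) to close the proof.
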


\begin{proof}
It suffices to prove the claim for $k=\Z[1/2]$ as $\beta$ over $k$ is obtained from $\beta$ over $\Z[1/2]$ via base change along $\Z[1/2] \to k$.
In any case, it is clear that $\beta_{|D_+(S)} = 0$ as it is a form on a contractible complex.
For the other restriction, write $U=S/T$, so $D_+(T) = \Spec k[U]$.
Note that $\beta_{|D_+(T)}$ is supported on the closed subset $U=0$ of $\Spec k[U]$, that is, it is in the image of the left horizontal map
in the following diagram
$$\xymatrix{ 
GW^{[1]}_0(k[U]\phantom{i}on\phantom{i}U=0) \ar[r] & GW^{[1]}_0(k[U]) \ar[d]^{\cong}_{U=1} \ar[r] & GW^{[1]}_0(k[U,U^{-1}]) \ar[dl]^{U=1} \\
& GW^{[1]}_0(k). &
}$$
The top row of the diagram is exact, by the Localization Theorem \ref{thm:schemeLocn}, and the middle vertical map is an isomorphism, by Homotopy Invariance (recall that $k=\Z[1/2]$ is regular).
Therefore, the right horizontal map is injective, and the left horizontal map is the $0$ map.
In particular, $\beta_{|D_+(T)}=0$
\end{proof}

In the following theorem, denote by $X[T^+]= D_+(S)\cong \bA^1_X$ and $X[T^-] = D_+(T)\cong \bA^1_X$ the two principal open subschemes of the standard covering of $\pp^1_X= \red{X}\times_k \Proj(k[S,T])$ given by $S\neq  0$ and $T\neq 0$, and let $X[T^{\pm}]$ be their intersection.
For a line bundle $L$ on $X$ we also denote by $L$ the line bundle on 
$X[T^{+}]$, $X[T^{-}]$ and $X[T^{\pm}]$ induced from $L$ via the pull back along the projections $p: X[T^{+}],\ X[T^{-}],\ X[T^{\pm}] \to X$.

\begin{theorem}[Bass' Fundamental Theorem]
\label{thm:Bassfund}
Let $X$ be a scheme with an ample family of line bundles and $\frac{1}{2}\in X$.
Then for all line bundles $L$ on $X$ and all integers $n, i \in \Z$, there are natural exact sequences
$$
\renewcommand\arraystretch{1.5}
\begin{array}{lcccl}
0 \longrightarrow \GW^{[n]}_i(X,L) 
& \stackrel{(p^*,p^*)}{\longrightarrow}
& \GW^{[n]}_i(X[T^+],L) \oplus \GW^{[n]}_i(X[T^-],L) &\\
& \longrightarrow & 
\GW^{[n]}_i(X[T^{\pm}],L) &\\
 & \stackrel{\delta}{\longrightarrow} & \GW^{[n-1]}_{i-1}(X,L) & \longrightarrow 0.
\end{array}$$
\end{theorem}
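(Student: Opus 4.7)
The strategy is to extract the claimed four-term exact sequence from the Mayer--Vietoris long exact sequence for the standard Zariski cover $\pp^1_X = X[T^+]\cup X[T^-]$ with intersection $X[T^\pm]$, combined with the projective bundle decomposition of $\GW^{[n]}_*(\pp^1_X,p^*L)$. First, I would apply the Nisnevich Mayer--Vietoris Theorem \ref{thm:NisnevichMV} to this cover (which is a Zariski, hence Nisnevich, cover) to obtain a homotopy cartesian square of Karoubi-Grothendieck--Witt spectra, and then unfold the associated long exact sequence
$$\cdots\to \GW^{[n]}_i(\pp^1_X,p^*L)\xrightarrow{(p_+^*,p_-^*)} \GW^{[n]}_i(X[T^+],L)\oplus \GW^{[n]}_i(X[T^-],L)\xrightarrow{-} \GW^{[n]}_i(X[T^\pm],L)\xrightarrow{\delta_{MV}}\GW^{[n]}_{i-1}(\pp^1_X,p^*L)\to\cdots$$
where the second map is the difference of the restrictions to the intersection.

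Next, I would use Theorem \ref{thm:projLineBld} to identify
$$\GW^{[n]}_i(\pp^1_X,p^*L)\cong \GW^{[n]}_i(X,L)\oplus\GW^{[n-1]}_i(X,L),\qquad (x,y)\mapsto p^*x+\beta\cup p^*y,$$
naturally in $X$. Under this identification, the restriction to either open $X[T^\pm]$ sends $(x,y)$ to $p^*x+(\beta_{|X[T^\pm]})\cup p^*y$, and by Lemma \ref{lem:BetaZero} we have $\beta_{|X[T^\pm]}=0$; hence the composition $\GW^{[n-1]}_i(X,L)\hookrightarrow\GW^{[n]}_i(\pp^1_X,p^*L)\to\GW^{[n]}_i(X[T^+],L)\oplus\GW^{[n]}_i(X[T^-],L)$ vanishes, and the map $(p_+^*,p_-^*)$ factors through the projection to $\GW^{[n]}_i(X,L)$ followed by the diagonal pullback. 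Since the zero sections $X\hookrightarrow X[T^\pm]\cong\bA^1_X$ retract $p^*$, pullback along either projection is a split monomorphism on $\GW^{[n]}_i$, so the kernel of the second map in the Mayer--Vietoris sequence, restricted to the $\GW^{[n]}_i(X,L)$ summand, is zero, while the $\GW^{[n-1]}_i(X,L)$ summand is entirely contained in the image of $\delta_{MV}$.

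Putting these two observations together, the Mayer--Vietoris long exact sequence collapses into the required four-term sequence: on the left end, the composite $\GW^{[n]}_i(X,L)\hookrightarrow\GW^{[n]}_i(\pp^1_X,p^*L)\xrightarrow{(p_+^*,p_-^*)}\GW^{[n]}_i(X[T^+],L)\oplus\GW^{[n]}_i(X[T^-],L)$ is injective with image equal to the kernel of the difference map, and on the right end, the connecting map $\delta_{MV}$ has image exactly the $\GW^{[n-1]}_{i-1}(X,L)$-summand (and is surjective onto it). Defining $\delta$ as the composition of $\delta_{MV}$ with the projection onto this summand then yields the desired four-term exact sequence, and naturality in $(X,L)$ is immediate from the naturality of both the Mayer--Vietoris sequence and the projective bundle decomposition.

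The main technical point will be verifying that the projective bundle isomorphism intertwines the restriction maps in the way described, so that the summands really line up as claimed; this is a straightforward chase using the multiplicativity of the restriction (i.e., $j^*(\beta\cup p^*y)=(j^*\beta)\cup(jp)^*y$) together with Lemma \ref{lem:BetaZero}. One should also note that the statement goes through verbatim with $\pp^1_X$ replaced by any scheme over $X$ covered by two affine bundles where $\beta$ vanishes, which conceptually explains why the proof works.
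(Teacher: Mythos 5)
Your proposal is correct and follows essentially the same route as the paper: both combine the Nisnevich Mayer--Vietoris square for the cover $X[T^+]\cup X[T^-]=\pp^1_X$ with the projective line bundle decomposition of Theorem \ref{thm:projLineBld}, use Lemma \ref{lem:BetaZero} to kill the $\beta\cup p^*$ component on the affine pieces, and use the split injectivity of $p^*$ to break the long exact sequence into the stated four-term sequences. The paper merely packages this slightly more compactly by substituting $\GW^{[n]}(X,L)\oplus\GW^{[n-1]}(X,L)$ for the corner of the homotopy cartesian square before taking homotopy groups.
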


\begin{proof}
To ease notation we will omit the line bundle in our notation.
From the Nisnevich Mayer-Vietoris Theorem \ref{thm:NisnevichMV} applied to the open covering $X[T^+] \cup X[T^-] = \pp^1_X$ and the Projective Line Bundle Theorem \ref{thm:projLineBld} we obtain a homotopy cartesian square
$$\xymatrix{
\GW^{[n]}(X) \oplus \GW^{[n-1]}(X) \ar[rr]^{\hspace{4ex}(p^*,\beta\cup p^*)} \ar[d]_{(p^*,\beta\cup p^*)} & &
\GW^{[n]}(X[T^+]) \ar[d]\\
\GW^{[n]}(X[T^-]) \ar[rr] && \GW^{[n]}(X[T^{\pm}]).
}$$
By Lemma \ref{lem:BetaZero}, the element $\beta \in GW^{[1]}_0(\pp_{\Z[1/2]})$ is zero in 
$GW^{[1]}_0({\Z[1/2][T^+]})$ and $GW^{[1]}_0({\Z[1/2][T^-]})$.
Therefore,
the two maps $\beta\cup p^*: \GW^{[n-1]}(X) \to \GW^{[n]}(X[T^+])$ and 
$\beta\cup p^*: \GW^{[n-1]}(X) \to \GW^{[n]}(X[T^-])$ are null-homotopic.
Since the two maps $p^*: \GW^{[n]}(X) \to \GW^{[n]}(X[T^+])$ and 
$p^*: \GW^{[n]}(X) \to \GW^{[n]}(X[T^-])$ are (split) injective, the long exact sequence of homotopy groups associated with the homotopy cartesian square decomposes into the exact sequences as in the theorem.
\end{proof}

\begin{remark}
\label{rmk:ContractedFunctors}
The exact sequences in Theorem \ref{thm:Bassfund} are split.
Splitting of the first non-trivial map in the sequence is clear, and the last non-trivial map is split via the cup-product with $[T] \in GW^{[1]}_1({\Z[1/2][T^{\pm}]})$.

As in Bass' formalism of contracted functors, 
it follows from Theorem \ref{thm:Bassfund} that one can define the functors $\GW_i^{[n]}$ on schemes for $i<0$ and all $n\in \Z$ inductively by starting with the functors $\GW_0^{[n]}$, $n\in \Z$.
\end{remark}

\subsection{Coherent Grothendieck-Witt groups}

There are many results about the triangular Witt groups of coherent sheaves, notably due to Stefan Gille. 
With our results in Sections \ref{sec:PeriodInvLocn} and \ref{section:LandTate}, they immediately generalize to higher Grothendieck-Witt groups.
In what follows, we will state and prove a few of them.

Let $X$ be a noetherian scheme, and denote by 
$\Qcoh^b(X)$ and $\Coh^b(X)$ the dg categories of bounded complexes of quasi-coherent and coherent $O_X$-modules.
These are closed symmetric monoidal categories under tensor product and internal homomorphism objects of quasi-coherent sheaves.
In particular, any object $A$ of $\Qcoh^b(X)$ defines a duality functor 
$$\sharp_A:\Qcoh^b(X)^{op} \to \Qcoh^b(X): E \mapsto Hom^{\bullet}_{O_X}(E,A)$$ with double dual identification $\can^A:E \to E^{\sharp_A\sharp_A}$ given by the formula
$$\can^A_E(x)(f) = (-1)^{|x||f|}f(x).$$
Denote by $\Qcoh^b_c(X)\subset \Qcoh^b(X)$ the full dg subcategory of those bounded complexes of quasi-coherent $O_X$-modules which have coherent cohomology.
Recall that a dualizing complex on $X$ is a bounded complex $I^{\bullet}$ of injective quasi-coherent $O_X$-modules such that for every $E \in \Qcoh^b_c(X)$, the double dual identification 
$\can^{I^{\bullet}}_E:E \to E^{\sharp_{I^{\bullet}}\sharp_{I^{\bullet}}}$
is a quasi-isomorphism.
By \cite[p. 258]{hartshorne:dual}, this only needs to be checked for $E=O_X$.

If $X$ is a noetherian scheme with a dualizing complex $I^{\bullet}$, we have a dg category with weak equivalences and duality
\begin{equation}
\label{eqn:dgCatCoh}
(\Qcoh^b_c(X),\quis,\sharp_{I^{\bullet}},\can^{I^{\bullet}}).
\end{equation}

\begin{definition}
\label{dfn:CohGW}
Let $X$ be a noetherian scheme over $\Z[1/2]$ with dualizing complex $I^{\bullet}$.
The $n$-th shifted {\em coherent Grothendieck-Witt spectrum} of $X$ with coefficients in $I^{\bullet}$ is the $n$-th shifted Grothendieck-Witt spectrum 
$$GW^{[n]}(X,I^{\bullet})$$
of  the dg category with weak equivalences and duality (\ref{eqn:dgCatCoh}).
The {\em $n$-th shifted coherent Grothendieck-Witt groups} of $X$ with coefficients in $I^{\bullet}$ are the homotopy groups of $GW^{[n]}(X,I^{\bullet})$:
$$GW^{[n]}_i(X,I^{\bullet}) = \pi_iGW^{[n]}(X,I^{\bullet}).$$
As usual, if $n=0$, we omit the label corresponding to $n$.

\end{definition}

\begin{remark}
Strictly speaking, the category $\Qcoh^b_c(X)$ is not small (not even essentially small), in general.
To obtain an honest spectrum, one would have to replace the category $\Qcoh^b_c(X)$ by a small full dg subcategory closed under the duality such that the inclusion into $\Qcoh^b_c(X)$ induces an equivalence of associated triangulated categories.
It is easy to see that one can always do that,
one only needs to bound the size of the quasi-coherent sheaves involved by a sufficiently large cardinal.
By Theorem \ref{thm:Invariance}, the choice of such a subcategory does not matter.
\end{remark}

\begin{remark}
As in Definition \ref{dfn:CohGW}, one can define a coherent Karoubi-Grothen-dieck-Witt spectrum
$\GW^{[n]}(X,I^{\bullet})$
as the $\GW^{[n]}$-spectrum associated with (\ref{eqn:dgCatCoh}).
Then the comparison map
$$GW^{[n]}(X,I^{\bullet}) \to \GW^{[n]}(X,I^{\bullet})$$
is a stable equivalence, by Theorem \ref{thm:KbKGWKGW}, because the negative $K$-groups of $\Qcoh^b_c(X)$ vanish.
This is because the inclusion $\Coh^b(X)\subset \Qcoh_c^b(X)$ induces an equivalence of associated triangulated categories, and $\Coh^b(X)$ has trivial negative $K$-groups, by \cite[Theorem 7]{mynegK}.
So, there is no need to develop a $\GW$-theory for coherent sheaves.
\end{remark}

Let $X$ be a regular noetherian separated scheme.
Then any coherent $O_X$-module has a finite injective resolution.
Any finite injective resolution $\rho: O_X \to I^{\bullet}$ of the structure sheaf $O_X$ defines a dualizing complex $I^{\bullet}$ on $X$.
Moreover, the inclusion 
\begin{equation}
\label{eqn:PerfIntoQcohforPoincare}
\left(\sPerf(X),\quis,\sharp_{O_X},\can^{O_X}\right) \longrightarrow \left(\Qcoh^b_c(X),\quis,\sharp_{I^{\bullet}},\can^{I^{\bullet}}\right)
\end{equation}
defines an exact dg form functor with duality compatibility map
$$Hom^{\bullet}_{O_X}(E,O_X) \to Hom^{\bullet}_{O_X}(E,I^{\bullet}):f \mapsto \rho\circ f.$$

\begin{theorem}[Poincar\'e duality]
Let $X$ be a regular noetherian separated scheme with $\frac{1}{2}\in X$, and let
$\rho: O_X \to I^{\bullet}$ be a finite injective resolution of the structure sheaf $O_X$.
Then the exact dg form functor 
(\ref{eqn:PerfIntoQcohforPoincare}) induces a stable equivalence of Grothendieck-Witt spectra
$$GW^{[n]}(X) \stackrel{\sim}{\longrightarrow} GW^{[n]}(X,I^{\bullet}).$$
\end{theorem}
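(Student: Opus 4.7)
The plan is to deduce this directly from the Invariance Theorem \ref{thm:Invariance}. What must be checked is that the map (\ref{eqn:PerfIntoQcohforPoincare}) is a well-defined exact dg form functor between dg categories with weak equivalences and duality in which $\tfrac{1}{2}$ is invertible, and that the induced triangle functor on associated triangulated categories is an equivalence. The condition $\tfrac{1}{2}\in\scA$ is immediate on both sides from $\tfrac{1}{2}\in\Gamma(X,O_X)$, and the duality on the target is a duality in the required sense because $I^{\bullet}$ is dualizing, so $\can^{I^{\bullet}}$ is a natural quasi-isomorphism on $\Qcoh^b_c(X)$.

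The first step is to show that the duality compatibility morphism
$$\ffi_E: Hom^{\bullet}_{O_X}(E,O_X)\longrightarrow Hom^{\bullet}_{O_X}(E,I^{\bullet}),\qquad f\mapsto \rho\circ f,$$
is a quasi-isomorphism for every strictly perfect complex $E$. Since $E$ is a bounded complex of finite rank locally free $O_X$-modules, the functor $Hom^{\bullet}_{O_X}(E,-)$ is exact on complexes of quasi-coherent sheaves (degreewise it is a finite sum of shifts of functors $\Hom_{O_X}(E^i,-)$, each of which is exact because $E^i$ is locally free of finite rank). As $\rho:O_X\to I^{\bullet}$ is a quasi-isomorphism of bounded complexes of quasi-coherent sheaves, $\ffi_E$ is therefore a quasi-isomorphism. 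Consequently (\ref{eqn:PerfIntoQcohforPoincare}) is an exact dg form functor between dg categories with weak equivalences and duality.

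The second step is to verify that the induced triangle functor $\T\sPerf^{O_X}(X)\to \T(\Qcoh^b_c(X),\quis)$ is an equivalence, where the target is canonically identified with the bounded derived category $D^b(\Coh X)$. Fully faithfulness is standard: the mapping complex $Hom^{\bullet}_{O_X}(E,F)$ between strictly perfect complexes computes the derived Hom in $D^b(\Coh X)$, because finite rank locally free sheaves are acyclic for $\Hom_{O_X}(-,F)$. For essential surjectivity, one uses that on a regular noetherian separated scheme every coherent sheaf has finite Tor-dimension, so every object of $D^b(\Coh X)$ is quasi-isomorphic to a bounded complex of finite rank locally free sheaves; this is the classical resolution statement from SGA 6, equally treated in \cite{TT}. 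Once the triangle functor is shown to be an equivalence, Theorem \ref{thm:Invariance} immediately yields the stable equivalence $GW^{[n]}(X)\stackrel{\sim}{\to} GW^{[n]}(X,I^{\bullet})$ for all $n\in\Z$.

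The only real obstacle is essential surjectivity of the comparison functor, which requires every bounded complex with coherent cohomology to be \emph{globally} represented by a strictly perfect complex; this is where the existence of a sufficiently rich supply of locally free sheaves (an ample family of line bundles) enters, and it is this input, rather than anything hermitian, that does the work. Everything else — checking that $\ffi_E$ is a weak equivalence, and applying the Invariance Theorem — is formal.
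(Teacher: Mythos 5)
Your overall route is exactly the paper's: establish that (\ref{eqn:PerfIntoQcohforPoincare}) induces an equivalence of associated triangulated categories and then invoke the Invariance Theorem \ref{thm:Invariance}. The paper simply cites the classical two-step equivalence $\T\sPerf(X)\simeq\T\Coh^b(X)\simeq\T\Qcoh^b_c(X)$ (the first by regularity, the second for any noetherian scheme), whereas you unwind it into fully faithfulness plus essential surjectivity. Your explicit verification that the duality compatibility map $f\mapsto\rho\circ f$ is a quasi-isomorphism is correct and worth making, since the identification of negative $GW$-groups with triangular Witt groups, which drives the Karoubi-induction proof of Invariance, needs the induced functor to be an equivalence of triangulated categories \emph{with duality}, not merely of triangulated categories.

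There is, however, one incorrect justification. You assert that $Hom^{\bullet}_{O_X}(E,F)$ ``computes the derived Hom'' for strictly perfect $E,F$ because finite rank locally free sheaves are acyclic for $\Hom_{O_X}(-,F)$. This conflates derived sheaf-Hom with derived global Hom: $\mathcal{H}om^{\bullet}(E,F)$ does compute $R\mathcal{H}om(E,F)$, but morphisms in the derived category are computed by $H^0R\Gamma R\mathcal{H}om(E,F)$, and $\Gamma$ is not exact. Concretely, on $\pp^1$ one has $\Hom_{D(\pp^1)}(O,O(-2)[1])=H^1(\pp^1,O(-2))\neq 0$, while the chain homotopy classes of maps $O\to O(-2)[1]$ vanish. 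Note also that morphisms in $\T\sPerf(X)$ are by definition computed after localizing $H^0\sPerf(X)^{\pretr}$ at the quasi-isomorphisms, not as $H^0$ of the mapping complex, so what you actually need is fully faithfulness of the localized comparison functor. That statement is true --- it is part of the classical equivalence $D^{\mathrm{perf}}(X)\simeq D^b(\Coh X)$ for $X$ regular noetherian separated (see \cite{TT}, or \cite[Theorem 3.3.5]{mySedano}, which is what the paper cites) --- but your argument does not establish it. The essential surjectivity step (global finite locally free resolutions, available because a regular separated noetherian scheme automatically has an ample family of line bundles, combined with finite Tor-dimension from regularity) is fine.
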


\begin{proof}
Recall that for a regular noetherian separated scheme $X$, the inclusion 
$\sPerf(X) \subset \Coh^b(X)$ induces an equivalence of associated triangulated categories. This is classical. For a proof, see for instance \cite[Theorem 3.3.5]{mySedano}.
Since for any noetherian scheme, the inclusion
$\Coh^b(X) \subset \Qcoh^b_c(X)$ induces an equivalence of associated triangulated categories, the result follows from the Invariance Theorem \ref{thm:Invariance}.
\end{proof}

Let $X$ be a noetherian scheme with dualizing complex $I^{\bullet}$.
Let $i:Z\hookrightarrow X$ be a closed immersion with open complement $j:U=X-Z \hookrightarrow X$.
Then $j^*I^{\bullet}$ is a dualizing complex on $U$, and we obtain an exact dg form functor
\begin{equation}
\label{eqn:CohXtoU}
j^*: (\Qcoh^b_c(X),\quis,\sharp_{I^{\bullet}},\can) \longrightarrow (\Qcoh^b_c(Y),\quis,\sharp_{j^*I^{\bullet}},\can)
\end{equation}
with duality compatibility map the usual isomorphism
$$j^*Hom^{\bullet}_{O_X}(E,O_X) \stackrel{\cong}{\longrightarrow} Hom^{\bullet}_{O_U}(j^*E,j^*I^{\bullet}).$$
Similarly, the complex of quasi-coherent $O_Z$-modules
$$i^{\flat}I^{\bullet} = Hom^{\bullet}_{O_X}(i_*O_Z,I^{\bullet})$$
is a dualizing complex for $Z$ \cite[p. 260]{hartshorne:dual}, and we obtain an exact dg form functor
\begin{equation}
\label{eqn:CohZtoX}
i_*: (\Qcoh^b_c(Z),\quis,\sharp_{i^{\flat}I^{\bullet}},\can) \longrightarrow (\Qcoh^b_c(X),\quis,\sharp_{I^{\bullet}},\can)
\end{equation}
with duality compatibility map the isomorphism
$$i_*Hom^{\bullet}_{O_Z}(E,i^{\flat}I^{\bullet}) = 
Hom^{\bullet}_{O_X}(i_*E, Hom^{\bullet}_{O_X}(i_*O_Z,I^{\bullet}))
\stackrel{\cong}{\longrightarrow} 
Hom^{\bullet}_{O_X}(i_*E, I^{\bullet})
$$
induced by the map $O_X \to i_*O_Z$.

\begin{theorem}[Localization for coherent Grothendieck-Witt groups]
\label{thm:LocnCohGW}
Let $X$ be a noetherian scheme with dualizing complex $I^{\bullet}$, and
let $i:Z\hookrightarrow X$ be a closed immersion with open complement $j:U=X-Z \hookrightarrow X$.
If $\frac{1}{2}\in X$, then for all $n\in \Z$, the exact dg form functors 
(\ref{eqn:CohXtoU}) and (\ref{eqn:CohZtoX}) induce a homotopy fibration of coherent Grothendieck-Witt spectra
$$GW^{[n]}(Z,i^{\flat}I^{\bullet}) \stackrel{i_*}{\longrightarrow} GW^{[n]}(X,I^{\bullet}) \stackrel{j^*}{\longrightarrow} GW^{[n]}(U,j^*I^{\bullet}).$$
\end{theorem}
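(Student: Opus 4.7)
The plan is to reduce Theorem \ref{thm:LocnCohGW} to the Localization Theorem \ref{thm:LcnConn1} by identifying the appropriate quasi-exact sequence of dg categories with weak equivalences and duality, and then using the Invariance Theorem \ref{thm:Invariance} and dévissage to identify the fibre with the coherent Grothendieck-Witt spectrum of $Z$. Write $\Qcoh^b_{c,Z}(X) \subset \Qcoh^b_c(X)$ for the full dg subcategory of complexes whose cohomology sheaves are supported set-theoretically on $Z$. The duality functor $\sharp_{I^{\bullet}}$ preserves support (since $R\Hom$ is local and commutes with restriction to the complement), so $\Qcoh^b_{c,Z}(X)$ inherits the structure of a pretriangulated dg category with weak equivalences and duality. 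My first step would be to establish that the sequence
$$\left(\Qcoh^b_{c,Z}(X),\sharp_{I^{\bullet}}\right) \longrightarrow \left(\Qcoh^b_c(X),\sharp_{I^{\bullet}}\right) \stackrel{j^*}{\longrightarrow} \left(\Qcoh^b_c(U),\sharp_{j^*I^{\bullet}}\right)$$
is quasi-exact in the sense of Definition \ref{dfn:dgCatW} and the discussion preceding Theorem \ref{thm:LcnConn1}.

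For quasi-exactness, I would use the classical fact that for a noetherian scheme $X$ and an open immersion $j:U\hookrightarrow X$, the restriction functor $j^*:D^b(\Coh X)\to D^b(\Coh U)$ is essentially surjective (every coherent sheaf on $U$ extends to a coherent sheaf on $X$, and bounded coherent cohomology complexes lift to bounded complexes in the derived category), and its kernel is precisely the thick subcategory $D^b_Z(\Coh X)$ of complexes with cohomology supported on $Z$. Combined with the standard identification of $\T\Qcoh^b_c(X)$ with $D^b(\Coh X)$, this gives the quasi-exact sequence of triangulated categories needed. Theorem \ref{thm:LcnConn1} then yields the homotopy fibration
$$GW^{[n]}\left(\Qcoh^b_{c,Z}(X),\sharp_{I^{\bullet}}\right) \longrightarrow GW^{[n]}(X,I^{\bullet}) \longrightarrow GW^{[n]}(U,j^*I^{\bullet}).$$

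Next I would identify the fibre term with $GW^{[n]}(Z, i^{\flat}I^{\bullet})$ via the exact dg form functor $i_*$ of (\ref{eqn:CohZtoX}), whose duality compatibility map is precisely the Grothendieck duality isomorphism $i_* R\Hom_{O_Z}(E, i^{\flat}I^{\bullet}) \cong R\Hom_{O_X}(i_*E, I^{\bullet})$. By the Invariance Theorem \ref{thm:Invariance} it suffices to prove that $i_*$ induces an equivalence of associated triangulated categories $D^b(\Coh Z)\stackrel{\sim}{\to} D^b_Z(\Coh X)$. Full faithfulness follows from the computation $Ri^!i_* = \mathrm{id}$ on $D^b(\Coh Z)$. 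For essential surjectivity I would use the standard dévissage argument: any coherent $O_X$-module $F$ supported on $Z$ is annihilated by some power $\mathcal{I}_Z^{n+1}$ of the ideal sheaf of $Z$, and the finite filtration $F \supset \mathcal{I}_Z F \supset \cdots \supset \mathcal{I}_Z^n F \supset 0$ has successive quotients annihilated by $\mathcal{I}_Z$, hence in the essential image of $i_*$; since the essential image of $i_*$ is a triangulated subcategory of $D^b_Z(\Coh X)$ closed under extensions and containing these generators, it equals all of $D^b_Z(\Coh X)$.

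The main obstacle will be the careful bookkeeping needed to verify that the duality-preserving structure is compatible through the Grothendieck duality isomorphism — in particular checking that the chosen models for $i^{\flat}I^{\bullet} = Hom^{\bullet}_{O_X}(i_*O_Z, I^{\bullet})$ make the double dual identifications strictly compatible with $i_*$, so that $i_*$ genuinely defines a dg form functor rather than merely a form functor up to quasi-isomorphism. Once this is settled, replacing $I^\bullet$ by its $n$-fold shift handles the case of general $n \in \Z$, completing the proof. A minor set-theoretic point, as noted in the remark following Definition \ref{dfn:CohGW}, is to replace $\Qcoh^b_c$ throughout with a small full dg subcategory closed under the duality and containing a set of generators of the derived category; by Theorem \ref{thm:Invariance} the conclusion is independent of this choice.
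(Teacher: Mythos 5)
Your first step---setting up the sequence $\Qcoh^b_{c,Z}(X) \to \Qcoh^b_c(X) \to \Qcoh^b_c(U)$, checking quasi-exactness, and applying Theorem \ref{thm:LcnConn1}---is exactly what the paper does. The gap is in your identification of the fibre. You claim that by the Invariance Theorem \ref{thm:Invariance} it suffices to show $i_*$ induces an equivalence of triangulated categories $D^b(\Coh Z) \to D^b_Z(\Coh X)$, and you assert full faithfulness from ``$Ri^!i_* = \mathrm{id}$''. This is false: $i_*$ is not fully faithful on derived categories. Already for $X=\Spec k[t]$ and $Z=V(t)$ one has $\operatorname{Ext}^1_{O_X}(i_*k,i_*k)=k\neq 0=\operatorname{Ext}^1_{k}(k,k)$; the unit $\mathrm{id}\to i^{\flat}i_*$ is not an isomorphism (its value on $O_Z$ is the Koszul-dual exterior algebra on the shifted conormal module). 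Nor is $i_*$ essentially surjective: the sheaf $O_X/t^2$ lies in $D^b_Z(\Coh X)$, but since $i_*$ is exact on sheaves, every object in its essential image has all cohomology sheaves annihilated by the ideal of $Z$. Your filtration argument shows only that the image \emph{generates} $D^b_Z(\Coh X)$ as a triangulated category, which is not the hypothesis of Theorem \ref{thm:Invariance}.

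The correct ingredient, and the one the paper uses, is d\'evissage---a statement strictly stronger than, and not a consequence of, invariance under derived equivalences. The exact dg form functor $i_*$ from $(\Qcoh^b_c(Z),\quis,\sharp_{i^{\flat}I^{\bullet}})$ to the category of complexes on $X$ supported on $Z$ with duality $\sharp_{I^{\bullet}}$ induces isomorphisms on all $K$-groups by Quillen's d\'evissage theorem \cite{quillen:higherI} and on all triangular Witt groups by Gille's general d\'evissage theorem \cite[Theorem 3.2]{Gille:genlDevissage}; one then concludes that it is a $GW^{[n]}$-equivalence by the Karoubi Induction Lemma \ref{lem:KaroubiInd} (or via Theorem \ref{thm:williamsKobal}). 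The compatibility of double dual identifications that you flag as the main obstacle is comparatively routine; the essential missing inputs are the two d\'evissage theorems together with Karoubi induction.
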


\begin{proof}
Denote by $\Qcoh^b_c(X\phantom{i}on\phantom{i}Z)$ the full dg subcategory of 
$\Qcoh^b_c(X)$ consisting of those complexes which are acyclic when restricted to $U$.
It inherits the structure of a dg category with weak equivalences and duality from (\ref{eqn:dgCatCoh}).
The sequence
$$(\Qcoh^b_c(X\phantom{i}on\phantom{i}Z), \quis) \longrightarrow (\Qcoh^b_c(X),\quis) \longrightarrow (\Qcoh^b_c(U),\quis)$$
is Morita exact. 
See for instance \cite[Theorem 3.3.2]{mySedano}.
By the Localization Theorem \ref{thm:LcnConn1}, the sequence induces a homotopy fibration of $GW^{[n]}$-spectra.

The exact dg form functor (\ref{eqn:CohZtoX}) factors through $\Qcoh^b_c(X\phantom{i}on\phantom{i}Z)$, and we obtain the exact dg form functor
$$i_*:(\Qcoh^b_c(Z),\quis,\sharp_{i^{\flat}I^{\bullet}},\can) \longrightarrow (\Qcoh^b_c(X\phantom{i}on\phantom{i}Z),\quis,\sharp_{I^{\bullet}},\can).
$$
This functor induces isomorphisms on $K$-groups and triangular Witt groups, by the results of \cite{quillen:higherI} and \cite[Theorem 3.2]{Gille:genlDevissage}.
By the Karoubi Induction Lemma \ref{lem:KaroubiInd} or by Theorem \ref{thm:williamsKobal}, it induces an equivalence of $GW^{[n]}$-spectra.
\end{proof}

\red{
We finish the section with an application to $L$-theory of schemes in characteristic zero.
In the following theorem we write $\bL(X)$ for the stabilized $L$-theory spectrum (Definition \ref{dfn:stableWittThSp}) of the dg category with weak equivalences and duality (\ref{eqn:PerfL})
where $A=O_X$.

\begin{theorem}[Cdh-descent for $L$-theory]
Let $k$ be a field of characteristic zero.
Then the stabilized $L$-theory functor $X \mapsto \bL(X)$ satisfies cdh-descent on the category 
of finite type separable $k$-schemes that have an ample family of line bundles.
\end{theorem}

\begin{proof}
We use the criterion given in \cite[Theorem 3.12]{CHSW}.
The functor $\bL$ satisfies Nisnevich descent and has the Mayer-Vietoris property for blow-ups along regularly embedded centers since it is a filtered homotopy colimit of functors 
$\GW^{[n]}$ which have these properties (Theorems \ref{thm:NisnevichMV} and \ref{thm:blowup}).
The functor $\bL$ satisfies excision and is invariant under infinitesimal extension, by \cite{Karoubi:stabilized}. 
\end{proof}
}

\section{Bott-periodicity}
\label{section:Bott}

Let $A$ be a topological ring (with involution) such that 
\begin{itemize}
\item[(*)]
the group of units $A^* \subset A$ is open in $A$ and the map $A^* \to A^*:a\mapsto a^{-1}$ is continuous.
\end{itemize}
For instance $A$ could be a Banach algebra with involution.
For a topological space $X$, the space of continuous maps $A(X)=\Map(X,A)$ in $\Top$ is a topological ring (with involution) with point-wise addition and multiplication (and involution).
If $X$ is compact and $A$ satisfies (*), then $A(X)$ trivially satisfies (*).
Moreover, if $A$ satisfies (*), then so does $M_n(A)$ for all $n\geq 1$ \cite[Corollary 1.2]{Swan:TopExs}.
Denote by $\Top_{\cpt}$ the category of compact topological spaces, and by $\Ab$ the category of abelian groups.

The following lemma is well-known (at least in the $K_0$-case).
We give a proof at the end of this section.

\begin{lemma}
\label{lem:topHtpyInv}
Let $A$ be a topological ring (with involution if appropriate) satisfying (*).
Then the following functors are homotopy invariant,
$$\Top_{\cpt} \to \Ab:X \mapsto K_0(A(X)), GW^{[n]}_0(A(X)),$$
that is, they send homotopic maps to the same map.
In particular, if $X$ is a contractible compact topological space, then the map of algebras $A \to A(X)$ induced by $X \to \pt$ yields isomorphisms
$$K_0(A) \cong K_0(A(X)),$$
$$GW^{[n]}_0(A) \cong GW^{[n]}_0(A(X)).$$
\end{lemma}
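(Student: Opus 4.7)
The plan is to reduce homotopy invariance and the ``in particular'' corollary to the single claim that the constant inclusion $c: A \hookrightarrow A(I)$ (with $I = [0,1]$) induces isomorphisms on $K_0$ and $GW^{[n]}_0$. Granting this, for a homotopy $H: X \times I \to Y$ between $f_0, f_1$, one has $f_i^* = \iota_i^* \circ H^*$ where $\iota_i^*: A(X)(I) \cong A(X \times I) \to A(X)$ is evaluation at $i$; both $\iota_0^*$ and $\iota_1^*$ are sections of the constant inclusion $A(X) \hookrightarrow A(X)(I)$, so if the latter induces an isomorphism on the functor in question then each $\iota_i^*$ induces its inverse, forcing $f_0^* = f_1^*$. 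The ``in particular'' statement then follows by applying homotopy invariance to any homotopy equivalence $X \simeq \pt$.

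For $K_0(A) \cong K_0(A(I))$, injectivity is by the splitting via evaluation. For surjectivity, I would represent an arbitrary class by an idempotent $e \in M_n(A(I))$, equivalently a continuous path $t \mapsto e_t$ of idempotents in $M_n(A)$. The classical input, requiring only hypothesis $(*)$, is that the element $u(s,t) := e_s e_t + (1-e_s)(1-e_t)$ is invertible for $s$ close to $t$ --- it depends continuously on $(s,t)$, equals $1$ on the diagonal, and $M_n(A)^* \subset M_n(A)$ is open --- and a direct calculation gives $u(s,t) e_t = e_s u(s,t)$, hence $u(s,t) e_t u(s,t)^{-1} = e_s$. Using compactness of $I$, I would partition $[0,1] = \bigcup_{i=1}^k [t_{i-1}, t_i]$ finely enough that $u(t, t_{i-1})$ is invertible for $t \in [t_{i-1}, t_i]$, and splice the local conjugators into a continuous $g \in GL_n(A(I))$ with $g_0 = 1$ satisfying $g_t e_0 g_t^{-1} = e_t$. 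Then $g^{-1} e g$ is the constant path at $e_0$ in $M_n(A(I))$, so $[e] = [\tilde e_0]$ in $K_0(A(I))$, proving surjectivity.

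For $GW^{[n]}_0(A) \cong GW^{[n]}_0(A(I))$, by the $4$-periodicity of Remark \ref{rmk:Cupmu} it suffices to treat the residue of $n$ modulo $4$; a class is then represented by a non-degenerate $\eps$-symmetric form $\phi \in M_m(A(I))$ (or, in the odd $n$ cases, by a corresponding form on a two-term complex, i.e.\ a formation), equivalently a continuous family $\{\phi_t\}$ of such structures over $A$. I would construct a continuous isometry $g \in GL_m(A(I))$ with $g_0 = 1$ and $g_t^* \phi_0 g_t = \phi_t$ by the hermitian analogue of the conjugator argument: locally near $t_0$, the element $\phi_{t_0}^{-1} \phi_t$ is $\phi_{t_0}$-self-adjoint and close to $1$, so it admits a canonical $\phi_{t_0}$-self-adjoint square root $h_t$ depending continuously on $t$, and $g_t := h_t$ is a local isometry from $\phi_{t_0}$ to $\phi_t$. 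Compactness of $I$ lets me patch these into a global $g$, giving $[\phi] = [\tilde\phi_0]$ in $GW^{[n]}_0(A(I))$; injectivity again comes from evaluation at $0$.

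The main obstacle is the hermitian square-root step: while the $K$-theory case uses only the elementary, universally available conjugator $u(s,t)$, the $GW$ case requires a $\sharp$-self-adjoint square root of a $\sharp$-self-adjoint element close to $1$ in a bare topological ring satisfying $(*)$. Under $(*)$ such a square root is produced by holomorphic functional calculus --- applying the square-root function along a small loop around $1$ via the contour integral $h = \frac{1}{2\pi i}\oint z^{1/2} (z - \phi_{t_0}^{-1}\phi_t)^{-1}\, dz$ --- with continuity in $t$ following from the continuity of inversion and the openness of the unit group. This is essentially the classical argument of Karoubi \cite{Karoubi:Annals} in the Banach-algebra setting, which adapts verbatim to the present $(*)$-hypothesis.
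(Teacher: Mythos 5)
Your reduction to the single claim that $A\to A(I)$ induces isomorphisms, with injectivity coming from the evaluation splitting, is exactly the paper's reduction. Your $K_0$ argument — the conjugator $u(s,t)=e_se_t+(1-e_s)(1-e_t)$, invertible for $s$ near $t$ by openness of units, spliced over a fine partition of $I$ — is a correct elementary alternative: the paper instead invokes the Serre--Swan equivalence $\P(A(I))\simeq \Vect_A(I)$ from \cite{Swan:TopExs} and extends an isomorphism given over $0\in I$ to a global one using that a locally trivial map onto the contractible paracompact space $I$ is a fibration.

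The $GW^{[n]}_0$ part has a genuine gap, and it is precisely the step you flag as the ``main obstacle''. Hypothesis (*) says only that $A^*$ is open in $A$ and that inversion is continuous; it provides no completeness and no $\mathbb{R}$- or $\mathbb{C}$-algebra structure. The contour integral $\frac{1}{2\pi i}\oint z^{1/2}(z-x)^{-1}\,dz$ is therefore not defined: there is no integration theory for $A$-valued functions and no resolvent at complex $z$ unless $A$ is a complete normed $\mathbb{C}$-algebra. So the assertion that Karoubi's Banach-algebra argument ``adapts verbatim to the present (*)-hypothesis'' is false; your proof would establish the hermitian half of the lemma only for Banach algebras, not for the topological rings the lemma is stated for. (And the difficulty is not cosmetic: solving $g^*\phi_{t_0}g=\phi_t$ for $t$ near $t_0$ by any formula leads back to extracting a self-adjoint square root of $\phi_{t_0}^{-1}\phi_t$, which genuinely requires convergence of some series or integral.) There is also a second, unjustified reduction in your sketch, namely that classes of $GW^{[n]}_0(A(I))$ are represented by forms or formations concentrated in one or two degrees rather than by symmetric spaces on arbitrary bounded complexes. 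The paper's proof avoids both issues simultaneously: it works directly with the category $\M(A(I))=(w\Ch^b\P(A(I)))_h$ of symmetric spaces of bounded complexes, identifies the set of isometries between two such objects with the section space of a locally trivial map over $I$ (the sub-bundle of $\prod_i Iso(\tilde E_i,\tilde F_i)$ cut out by compatibility with differentials and forms), and concludes that the identity section at $0$ — comparing $(E,\phi)$ with the constant extension of $(E_0,\phi_0)$ — extends to a global section by the path-lifting property of fibrations over $I$. No square root, hence no analytic structure on $A$, is needed. To repair your proof you would either have to restrict the lemma to Banach algebras or replace the local-isometry construction by a section-extension argument of this kind.
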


Let $A$ be a Banach algebra (with involution).
Let $\Delta_{top}^n$ be the standard topological $n$ simplex.
This is a contractible compact topological space.
Varying $n$ we have a cosimplicial space $n \mapsto \Delta_{top}^n$, hence a simplicial Banach algebra $A\Delta_{top}^*$.
We set $$K_{top}(A) = |K(A\Delta_{top}^*)|$$
$$GW^{[n]}_{top}(A) = |GW^{[n]}(A\Delta_{top}^*)|.$$
These are symmetric spectra, and module spectra over $GW(\rr)$.
Recall that for $\eps = \pm 1$, we may write $_{\eps}GW(A)$ instead of $GW^{\eps-1+4k}(A)$. 

For a spectrum $Z$, write $\Omega^{\infty}Z \in \Top$ for the infinite loop space associated with $Z$, that is, the zero space $\tilde{Z}_0$ of a functorial stable equivalence $Z \to \tilde{Z}$ into an $\Omega$-spectrum $\tilde{Z}$.
In particular, $\pi_iZ=\pi_i\Omega^{\infty}Z$ for $i\geq 0$.

\begin{proposition}
There are canonical homotopy fibrations of spaces
$$BGl^{top}A \to \Omega^{\infty}K_{top}(A) \to K_0(A)$$
$$B_{\eps}O_{\infty,\infty}^{top}(A) \to \Omega^{\infty}{_{\eps}GW}_{top}(A) \to {_{\eps}}GW_0(A),$$
where $G^{top}$ denotes the group $G$ with its usual Euclidean topology.
In particular, $\Omega^{\infty}K_{top}(A)$ and $\Omega^{\infty}{_{\eps}G}W_{top}(A)$ are the usual topological $K$-theory and $\eps$-hermitian $K$-theory spaces of $A$.
\end{proposition}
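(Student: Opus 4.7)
The plan is to reduce the proposition to Quillen's fibration applied level-wise to the simplicial discrete ring $n\mapsto A\Delta^n_{top}$, take geometric realizations, and identify each term using Lemma \ref{lem:topHtpyInv} and a singular-complex argument for the topological classifying spaces.

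First, for each $n\geq 0$, Quillen's theorem applied to the discrete ring (with involution) $R_n = A\Delta^n_{top}$ produces a canonical homotopy fibration
$$BGl(R_n)^+ \to \Omega^{\infty}K(R_n) \to K_0(R_n),$$
and by Appendix \ref{appdx:Homology} its hermitian analogue
$$B_\eps O_{\infty,\infty}(R_n)^+ \to \Omega^{\infty}{_\eps GW}(R_n) \to {_\eps GW}_0(R_n).$$
These are natural in $R_n$ and thus give simplicial maps in $n$. I then apply geometric realization. For the right-hand terms, Lemma \ref{lem:topHtpyInv} shows that $K_0$ and $GW^{[n]}_0$ are homotopy-invariant under (*); since every face and degeneracy of $n\mapsto A\Delta^n_{top}$ is induced by an affine (hence contractible) map of topological simplices, $n\mapsto K_0(R_n)$ and $n\mapsto {_\eps GW}_0(R_n)$ are constant simplicial sets realizing to the discrete groups $K_0(A)$ and ${_\eps GW}_0(A)$. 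The middle terms agree (up to homotopy) with $\Omega^{\infty}K_{top}(A)$ and $\Omega^{\infty}{_\eps GW}_{top}(A)$: the spectra $K(R)$ and $GW^{[n]}(R)$ are positive $\Omega$-spectra (Theorem \ref{thm:GWAOmegaInfinity}), so $\Omega^{\infty}$ can be computed by shifting levels, and geometric realization commutes with taking loops on levelwise connected simplicial spaces. A homotopy fibration with discrete base realizes to a homotopy fibration with the same base, so this step yields the asserted fibrations, except that the fibres are tentatively identified as $|BGl(A\Delta^*_{top})^+|$ and $|B_\eps O_{\infty,\infty}(A\Delta^*_{top})^+|$.

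The crux is then to identify these realizations with $BGl^{top}(A)$ and $B_\eps O_{\infty,\infty}^{top}(A)$. By hypothesis (*), the group $Gl^{top}(A)$ is open in the Banach algebra $M_n(A)$ and inversion is continuous; hence a continuous map $\Delta^n_{top}\to A$ takes pointwise invertible values iff its image is contained in $Gl^{top}(A)$. Consequently $Gl(R_n) = \mathrm{Sing}_n\, Gl^{top}(A)$ as sets (and similarly for the hermitian orthogonal group), so $n\mapsto BGl(R_n)$ is the singular simplicial space of $BGl^{top}(A)$ and realizes to a space homotopy equivalent to $BGl^{top}(A)$. The plus construction is absorbed in the realization because the perfect subgroup $E(A\Delta^n_{top})$ it kills already lies in the identity component of the topological group $Gl^{top}(A\Delta^n_{top})$, via the evident paths $t\mapsto E_{ij}(ta)$; hence $E(A\Delta^n_{top})$ maps trivially to $\pi_0\, Gl^{top}(A) = \pi_1 BGl^{top}(A)$, and no further quotient is required. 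The analogous path-connectedness for elementary hermitian matrices (which involve only elements $t\mapsto 1 + tN$ with $N$ nilpotent, connected to the identity inside $_\eps O_{\infty,\infty}^{top}(A)$) handles the hermitian case. Therefore $|BGl(A\Delta^*_{top})^+|\simeq BGl^{top}(A)$ and similarly $|B_\eps O_{\infty,\infty}(A\Delta^*_{top})^+|\simeq B_\eps O_{\infty,\infty}^{top}(A)$.

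The hard part is the final identification: making rigorous that the levelwise Quillen plus-construction is invisible after geometric realization, once the singular-complex identification is in place. The cleanest approach is to invoke (or adapt) the Fiedorowicz--Priddy type result that, under hypothesis (*), the simplicial plus construction commutes with realization up to homology equivalence and that the resulting space has the expected $\pi_1 = \pi_0\, Gl^{top}(A)$, so no further quotient by a perfect subgroup survives. For the hermitian analogue one must verify the orthogonal version of this theorem, for which the path-connectedness of elementary hermitian matrices (as above) combined with the homology computations of Appendix \ref{appdx:Homology} should suffice.
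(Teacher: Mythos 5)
Your skeleton is the paper's: apply Corollary \ref{cor:+=Q} levelwise to the simplicial ring $A\Delta^*_{top}$, realize, use homotopy invariance of $\pi_0$ plus Bousfield--Friedlander to commute loops/$\Omega^{\infty}$ with realization, and identify the resulting fibre with $B_{\eps}O^{top}_{\infty,\infty}(A)$ through the singular complex of the topological group. Two points need repair, though. A small one first: ``realization commutes with taking loops on levelwise connected simplicial spaces'' is not a hypothesis you can use here, since the spaces $\Omega^{\infty}{_{\eps}GW}(A\Delta^n_{top})$ (and the levels of the positive $\Omega$-spectra) are not connected --- their $\pi_0$ are Grothendieck--Witt, resp.\ Witt, groups. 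What is actually needed, and what the paper invokes, is the Bousfield--Friedlander theorem together with the \emph{constancy} of the simplicial groups $i\mapsto \pi_0$, which is supplied by Lemma \ref{lem:topHtpyInv} and Corollary \ref{cor:pi0RRn}.

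Second, and more seriously, your argument that the plus construction is ``absorbed'' has a genuine gap. Showing that $E(A\Delta^n_{top})$ (or the elementary hermitian matrices) lies in the identity component of the topological group only controls $\pi_1$ of the realization; the plus construction also changes the \emph{higher} homotopy groups at each simplicial level, so you still owe a proof that $|B_{\eps}O_{\infty,\infty}(A\Delta^*_{top})|\to|B_{\eps}O_{\infty,\infty}(A\Delta^*_{top})^+|$ is a weak equivalence --- this is exactly the content you defer to an unspecified ``Fiedorowicz--Priddy type result''. The paper closes this gap directly: both this map and the comparison map $B|\Sing_*{_{\eps}O}^{top}_{\infty,\infty}(A)|\to B_{\eps}O^{top}_{\infty,\infty}(A)$ are degree-wise homology isomorphisms of \emph{good} simplicial spaces in Segal's sense (goodness holds because $BG$ is a good simplicial space for a well-pointed topological group $G$), hence homology isomorphisms after realization; and since all the spaces in the zigzag are nilpotent (in fact $H$-spaces), these homology isomorphisms are weak equivalences. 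Without the goodness and nilpotence inputs, neither the passage from degree-wise to global homology isomorphism nor the upgrade from homology isomorphism to weak equivalence is justified.
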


\begin{proof}
We only explain the hermitian $K$-theory case, the $K$-theory case being similar.
By Lemma \ref{lem:topHtpyInv} and \ref{cor:pi0RRn}, the simplicial groups
$$i \mapsto \pi_0GW^{[n]}(A\Delta^i)$$
are constant.
Therefore, the Bousfield-Friedlander Theorem \cite{BF:gamma} implies that the following canonical map is an equivalence
$$|\Omega^{\infty}GW^{[n]}(A\Delta_{top}^*)| \stackrel{\sim}{\longrightarrow}  \Omega^{\infty}|GW^{[n]}(A\Delta_{top}^*)| = \Omega^{\infty}GW^{[n]}_{top}(A).$$
By Corollary \ref{cor:+=Q}, Lemma \ref{lem:topHtpyInv} and the Bousfield-Friedlander Theorem \cite{BF:gamma}, the homotopy fibre of 
$\Omega^{\infty}{_{\eps}G}W_{top}(A) \to {_{\eps}}GW_0(A)$ is therefore $|B_{\eps}O_{\infty,\infty}(A\Delta^*)^+|$.
Consider the zigzag of maps
$$
\renewcommand\arraystretch{2}
\begin{array}{c}
 B_{\eps}O^{top}_{\infty,\infty}(A)  \longleftarrow \hspace{60ex}\phantom{1} \\
 B |\Sing_*{_{\eps}O}^{top}_{\infty,\infty}(A)| = |B\Sing_*{_{\eps}O}^{top}_{\infty,\infty}(A)| = |B_{\eps}O_{\infty,\infty}(A\Delta^*_{top})| \\
\phantom{1} \hspace{50ex}\longrightarrow   |B_{\eps}O_{\infty,\infty}(A\Delta_{top}^*)^+|.
\end{array}
$$
For a topological group $G$, the simplicial space $BG$ is good
in the sense of \cite{Segal:Gamma} if $G$ is well-pointed (at $1\in G$).
Thus, both maps in the diagram are maps of good simplicial spaces which are
degree-wise homology isomorphisms.
Hence, both maps are homology isomorphisms (after realization).
Since all spaces in the diagram are nilpotent spaces (in fact $H$-spaces), the homology isomorphisms are in fact weak equivalences.
\end{proof}

\begin{theorem}
\label{thm:BanachfundThmPeridicity}
Let $A$ be a Banach algebra with involution.
Then the sequence
$$GW_{top}^{[n]}(A) \stackrel{F}{\longrightarrow} K_{top}(A) \stackrel{H}{\longrightarrow} GW_{top}^{[n+1]}(A) \stackrel{\eta\cup}{\longrightarrow}S^1\wedge GW^{[n]}_{top}(A)$$
is an exact triangle in the homotopy category of spectra.
\end{theorem}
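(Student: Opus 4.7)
The plan is to deduce the topological Bott sequence from its algebraic counterpart, Theorem~\ref{thm:PeriodicityExTriangle}, by applying that theorem degree-wise in the cosimplicial Banach algebra $A\Delta^*_{top}$ and then realizing. Concretely, for each integer $i \geq 0$, the ring $A\Delta^i_{top}$ is a Banach algebra with involution containing $\tfrac12$ (since $A$ contains $\tfrac12$), and Theorem~\ref{thm:PeriodicityExTriangle} applied to (the dg category with duality associated to) $A\Delta^i_{top}$ yields a homotopy fibration of spectra
$$GW^{[n]}(A\Delta^i_{top}) \stackrel{F}{\longrightarrow} K(A\Delta^i_{top}) \stackrel{H}{\longrightarrow} GW^{[n+1]}(A\Delta^i_{top})$$
with connecting map given by cup product with $\eta$. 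Because Theorem~\ref{thm:PeriodicityExTriangle} is natural in the dg category, these fibrations assemble into a simplicial homotopy fibration when $i$ varies in $\Delta^{op}$.

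Next I would realize. The realization of the simplicial spectra produces the topological spectra $GW^{[n]}_{top}(A)$, $K_{top}(A)$, and $GW^{[n+1]}_{top}(A)$ by definition. To conclude that the realized sequence remains a homotopy fibration, I invoke the Bousfield-Friedlander theorem for simplicial spectra (equivalently, applied level-wise to the simplicial $\Omega$-spectra), which requires the $\pi_*$-Kan condition on the base. The cleanest way to check this is to note that Lemma~\ref{lem:topHtpyInv} implies that the simplicial abelian groups
$$i \longmapsto \pi_0 GW^{[n+1]}(A\Delta^i_{top}), \qquad i \longmapsto \pi_0 K(A\Delta^i_{top})$$
are \emph{constant} simplicial groups (since $\Delta^i_{top}$ is a contractible compact space and $GW_0^{[\bullet]}$ and $K_0$ are topologically homotopy invariant). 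Hence the fibre sequence of spectra $Z \to X \to Y$ at each simplicial degree can be looped down to its zero space and analyzed via the standard criterion: constant $\pi_0$ on base plus fibration level-wise ensures the realization is again a fibration.

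Assembling this, the realized sequence
$$GW^{[n]}_{top}(A) \stackrel{F}{\longrightarrow} K_{top}(A) \stackrel{H}{\longrightarrow} GW^{[n+1]}_{top}(A)$$
is a homotopy fibration of spectra, and the connecting map is identified with cup product with $\eta \in GW^{[-1]}_{-1}(\bk)$ by naturality of Theorem~\ref{thm:PeriodicityExTriangle} applied to the inclusion of $\bk$ into each $A\Delta^i_{top}$ (compatible with the unit map of the module spectrum structure over $GW(\bk)$, which survives realization).

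The principal obstacle is verifying that realization of simplicial spectra commutes with homotopy fibers in this setting; strictly speaking one needs either to check the $\pi_*$-Kan condition or to fibrantly replace level-wise and then invoke Bousfield-Friedlander. The constancy of the simplicial group $\pi_0$ (from Lemma~\ref{lem:topHtpyInv}) is precisely the hypothesis that makes this routine, but some care is required because $GW^{[n+1]}(A)$ need not be connected; one usually works level-wise on each space of the spectrum and uses that the $n$-connected cover of a positive $\Omega$-spectrum has constant $\pi_0$ on the base after realization, which is what the argument used in the proof of the preceding proposition exploits.
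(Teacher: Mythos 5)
Your proposal matches the paper's proof, which deduces the result exactly as you do: apply Theorem \ref{thm:PeriodicityExTriangle} degreewise to the simplicial Banach algebra with involution $A\Delta_{top}^*$ (note $\tfrac12\in A$ automatically) and realize. One simplification worth noting: the Bousfield--Friedlander/$\pi_*$-Kan machinery you invoke is only needed for the space-level statements elsewhere in that section; at the spectrum level, geometric realization of simplicial spectra is a homotopy colimit and hence commutes with homotopy cofibers, and since cofiber and fiber sequences coincide for spectra, the levelwise exact triangles realize to an exact triangle with no connectivity or $\pi_0$-constancy hypotheses.
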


\begin{proof}
This follows from Theorem \ref{thm:PeriodicityExTriangle}.
\end{proof}

\begin{remark}
From our definitions, Lemma \ref{lem:topHtpyInv}, and Theorem \ref{thm:BanachfundThmPeridicity}, we have for $i<0$ the following:
$\pi_0GW^{[n]}_{top}(A)=GW_0^{[n]}(A)$, $\pi_i K_{top}(A) = 0$, and $\pi_iGW^{[n]}_{top}(A)=GW_i^{[n]}(A) = W^{n-i}(A)$. 
\end{remark}

\subsection{Classical Bott periodicity}
\label{subsec:ClassicalBottPeriod}
As in the proof of Theorem \ref{thm:KaroubiFundThm}, Theorem \ref{thm:BanachfundThmPeridicity} implies the topological version of Karoubi's fundamental theorem
\begin{equation}
\label{eqn:topKarPeriod}
_{-\eps}V_{top}(A) \simeq \Omega {_{\eps}U}_{top}(A)
\end{equation}
where $V_{top}$ and $U_{top}$ are the homotopy fibres of the topological forgetful and hyperbolic functors $F: \Omega^{\infty}{_{\eps}GW}_{top}(A) \to \Omega^{\infty}K_{top}(A)$ and $H: \Omega^{\infty}K_{top}(A) \to \Omega^{\infty}{_{\eps}GW}_{top}(A)$.

Consider $\rr$ and $\cc$ as Banach algebras with trivial involution, and consider the quaternions $\hh$ as Banach algebra with its usual involution sending $i,j,k$ to $-i,-j,-k$, respectively.
Then we obtain
$$
\renewcommand\arraystretch{1.5}
\begin{array}{ccc}
V(\rr) \sim \Z\times BO & V(\cc) \sim U/O & V(\hh) \sim \Z \times BSp \\
U(\rr) \sim O & U(\cc) \sim O/U & U(\hh) \sim Sp\\
{_-}V(\rr) \sim O/U & {_-}V(\cc) \sim U/Sp & {_-}V(\hh) \sim \Z \times Sp/U \\
{_-}U(\rr) \sim U/O & {_-}U(\cc) \sim Sp/U & {_-}U(\hh) \sim U/Sp
\end{array}$$
in view of the classical homotopy equivalences
$$
\renewcommand\arraystretch{1.5}
\begin{array}{ccc}
O_n \stackrel{\sim}{\hookrightarrow} Gl_n(\rr) & U_n \stackrel{\sim}{\hookrightarrow} Gl_n(\cc) & Sp_n \stackrel{\sim}{\hookrightarrow} Gl_n(\hh) \\
O_m\times O_n \stackrel{\sim}{\hookrightarrow} O_{m,n}(\rr) & O_{m+n} \stackrel{\sim}{\hookrightarrow} O_{m,n}(\cc) & Sp_m\times Sp_n \stackrel{\sim}{\hookrightarrow} O_{m,n}(\hh) \\
U_n \stackrel{\sim}{\hookrightarrow} Sp_n(\rr) & Sp_n \stackrel{\sim}{\hookrightarrow} Sp_n(\cc) & U_n \stackrel{\sim}{\hookrightarrow} Sp_n(\hh) 
\end{array}$$
induced by the polar decompositions of $Gl_n(\rr)$, $Gl_n(\cc)$ and $Gl_n(\hh)$.
Using the topological version of Karoubi's Fundamental Theorem (\ref{eqn:topKarPeriod}), we obtain the following table of homotopy equivalences
$$
\renewcommand\arraystretch{1.2}
\begin{array}{cc}
& O\sim \Omega(\Z\times BO) \\
{_-}V(\rr)\sim \Omega {_+}U(\rr) & O/U \sim \Omega O\\
{_-}V(\cc) \sim \Omega {_+}U(\cc) & U/Sp \sim \Omega O/U\\
{_+}V(\hh) \sim \Omega {_-}U(\hh) & \Z\times BSp \sim \Omega U/Sp\\
 & Sp \sim \Omega (\Z\times BSp)\\
{_-}V(\hh) \sim \Omega {_+}U(\hh) & Sp/U \sim \Omega Sp\\
{_+}V(\cc) \sim \Omega {_-}U(\cc) & U/O \sim \Omega Sp/U\\
{_+}V(\rr) \sim \Omega {_-}U(\rr) & \Z\times BO \sim \Omega U/O
\end{array}
$$
which yields the homotopy equivalence
$$\Z \times BO \sim \Omega^8(\Z\times BO).$$
Using the complex numbers $\cc$ with its usual involution $i \mapsto -i$, the topological version of Karoubi periodicity yields
$$\Z \times BU \sim \Omega^2(\Z \times BU).$$

\begin{proof}[Proof of Lemma \ref{lem:topHtpyInv}]
To simplify, write $GW$ generically for one of the functors $GW^{[n]}_0$ or $K_0$.
We need to show that the projection $X\times I \to X$ induces an isomorphism
$GW(A(X)) \to GW(A(X\times I))$.
Since $A(X\times I)=A(X)(I)$, and $A(X)$ satisfies (*) when ever $A$ does and $X$ is compact, it suffices to prove the case when $X$ is a point.
Any point $x:\pt \to I$ of $I$ induces a map $x^*:GW(A(I)) \to GW(A)$ such that the composition $GW(A) \to GW(A(I)) \to GW(A)$ is the identity.
Therefore, what we really have to show is the surjectivity of the map
$GW(A) \to GW(A(I))$.
For a ring $A$ (with involution), denote by $\P(A)$ the category of finitely generated projective $A$-modules, and denote by $\M(A)$ the category 
$$\M(A) = \left(w\Ch^b\P(A)\right)_h$$
of symmetric spaces in the dg category $\Ch^b\P(A)$ with weak equivalences the  homotopy equivalences \red{of complexes} and duality $\Hom(\phantom{X},A[n])$.
We will show that the functors $\P(A) \to \P(A(I))$ and $\M(A) \to \M(A(I))$
are essentially surjective on objects.
Clearly, this implies the surjectivity of the map $GW(A) \to GW(A(I))$
as both groups are generated by the isomorphism classes of objects in $\M(A)$ and $\M(A(I))$, respectively (or in $\P(A)$ and $\P(A(I))$ in the $K_0$-case).

By a version of the Serre-Swan theorem,
the category $\P(A(I))$ of finitely generated projective $A(I)$-modules is equivalent to the category $\Vect_A(I)$ of locally trivial $A$-module bundles with fibres in $\P(A)$.
This is similar to \cite{Swan:VectisProj} and follows for instance from \cite[\S 1]{Swan:TopExs}.
Therefore, the set of $A(I)$-module isomorphisms $F\to E \in \P(A(I))$ is the set of sections of the locally trivial bundle $Iso(\tilde{E},\tilde{F}) \to I$ of isomorphisms  between the associated $A$-module bundles $\tilde{E},\tilde{F} \in \Vect_A(I)$.
Any locally trivial map into a paracompact space is a fibration.
Thus, by contractibility of $I$, if $Iso(\tilde{E},\tilde{F}) \to I$ has a section over one point, it has a global section, that is, if $E$ and $F$ are isomorphic over one point in $I$, then they are isomorphic $A(I)$-modules.
In particular, any $E \in \P(A(I))$ is isomorphic to $E_0\otimes_A A(I)$, and the functor $\P(A) \to \P(A(I))$ is essentially surjective.

The same argument applies to show that $\M(A) \to \M(A(I))$ is essentially surjective using the equivalence
$$\M(A(I)) = \left(w\Ch^b\P(A(I))\right)_h \cong \left(w\Ch^b\Vect_A(I)\right)_h$$
and the fact that a bundle of complexes is the same as a complex of bundles.
Given two objects $E,F$ of $\M(A(I))$. These are bounded  complexes in $\P(A(I))$ equipped with symmetric homotopy equivalences.
Assume that the components $E_i$, $F_i$ of $E$, $F$ are zero outside $[a,b]$, for some $a<b\in \Z$.
Then the set of isomorphisms $E \to F$ in $\M(A(I))$ is the set of sections of a locally trivial continuous map $Iso(\tilde{E},\tilde{F}) \to I$ which is the subbundle of $\prod_{a \leq i\leq b}Iso(\tilde{E}_i,{F}_i)$ of those isomorphisms which commute with differentials and forms.
Again, $Iso(\tilde{E},\tilde{F}) \to I$ is a fibration with contractible target and thus has a global section whenever it has a section at a point. 
As above, this implies that the functor $\M(A) \to \M(A(I))$ is essentially surjective.
\end{proof}

\appendix

\section{Homology of the infinite orthogonal group}
\label{appdx:Homology}

\subsection{Group completions}
\label{subsec:GpCpl}
Let $X$ be a homotopy commutative $H$-space.
A group completion of $X$ is an $H$-space map $f: X \to Y$ into a homotopy
commutative $H$-spaces $Y$ such that $\pi_0Y$ is a group and such that the 
map 
$$(\pi_0X)^{-1}H_*(X,\Z) \to H_*(Y,\Z)$$ 
induced by $f$ is an isomorphism. 
If $(\scS,\oplus,0)$ is a symmetric monoidal category such that all
translations $\oplus B: \scS \to \scS: A \mapsto A \oplus B$ are faithful,
Quillen constructs a symmetric monoidal category $\scS^{-1}\scS$ and a
monoidal map $\scS \to \scS^{-1}\scS$ which induces a group completion of
associated classifying spaces \cite[Theorem,
p.\hspace{.2ex}221]{quillenGrayson}. 
If $(\scS,\oplus,0)$ is a symmetric strict monoidal category (up to
equivalence of categories, this can always be achieved \cite{may:permutative}), 
a group completion is also the map $B\scS \to \Omega \Bar B\scS$ where $\Bar
B\scS$ is the Bar construction (classifying space) of the topological monoid
$B\scS$ \cite{may:gpcpletions}.
The two group completions are equivalent via the zigzag
$\Omega \Bar B\scS \stackrel{\sim}{\longrightarrow} \Omega \Bar B\scS^{-1}\scS 
\stackrel{\sim}{\longleftarrow} B\scS^{-1}\scS$ of homotopy equivalences.

Let $(\A,*,\eta)$ be an additive category with duality.
Inclusion of degree zero simplices yields a map 
$(i\A)_h \to (i\RR_{\bullet}\A)_h$ such that its composition with
$(i\RR_{\bullet}\A)_h \to iS_{\bullet}\A$ as in (\ref{eqn:RtoSmap}) is the zero map.
Therefore, we obtain an induced map into the homotopy fibre of the last map
\begin{equation}
\label{eqn:Ahgpcompl}
(i\A)_h \longrightarrow |i\RR_{\bullet}\A|
\underset{|iS_{\bullet}\A|}{\stackrel{h}{\times}}\hspace{1ex}\pt  
\stackrel{\sim}{\longrightarrow}
\Omega^{\infty}GW(\A).
\end{equation}
where the second map is a weak equivalence, by \cite[\S 6 Proposition 6]{myMV} and Proposition \ref{prop:GWspecIsGWspace}.
In \cite{mygiffen}, we have shown that this map is a group completion if
$2$ is invertible in $\A$.
Our proof in \cite{mygiffen} uses Karoubi's Fundamental Theorem
\cite{Karoubi:Annals}.
The purpose of this section is to give a direct proof of that fact avoiding
the Fundamental Theorem, so that our proof of Theorem \ref{thm:KaroubiFundThm}
gives not only a generalization but also a new proof of Karoubi's theorem and its topological counter-part, classical Bott periodicity; see Section \ref{section:Bott}.

\begin{theorem}
\label{thm:hermGpCpl}
Let $(\A,*,\eta)$ be a split exact category with duality such that
$\frac{1}{2} \in \A$.
Then the natural map (\ref{eqn:Ahgpcompl}) is a group completion.
In particular, there is a natural homotopy equivalence
$$(i\A_h)^{-1}(i\A_h) \simeq \Omega^{\infty}GW(\A).$$
\end{theorem}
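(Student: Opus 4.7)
The plan is to verify the two conditions characterizing a group completion for the map $(i\A)_h \to \Omega^\infty GW(\A)$: that it induces the Grothendieck group completion on $\pi_0$, and that it is a $\pi_0$-localized homology isomorphism.

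First I would check the $\pi_0$ condition. Since $\A$ is split exact, every admissible short exact sequence $E_{-1} \rightarrowtail E_0 \twoheadrightarrow E_1$ in $\A$ splits, so the third relation in Definition \ref{dfn:GW0A}, applied to a symmetric space $(E_0,\varphi_0)$ lying over such a split sequence, reduces to the orthogonal direct sum relation already present as the first relation. Hence the abelian monoid $\pi_0 (i\A)_h$ of isometry classes of symmetric spaces in $\A$ (under orthogonal direct sum) group completes to $GW_0(\A)$, which by Proposition \ref{prop:GWspecIsGWspace} coincides with $\pi_0\, \Omega^\infty GW(\A)$.

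Next, for the homology condition, I would apply the McDuff--Segal group completion theorem, according to which the map is a group completion as soon as the stabilization action of $\pi_0 (i\A)_h$ on $H_\ast((i\A)_h)$ becomes invertible in the colimit. To verify this, I would compare with the $K$-theory situation via the commutative square of $H$-space maps
$$
\xymatrix{
|i\A| \ar[r]^-{\phi_K} \ar[d]_H & \Omega^\infty K(\A) \ar[d]^H \\
|(i\A)_h| \ar[r]^-{\phi} & \Omega^\infty GW(\A),
}
$$
in which the top row $\phi_K$ is the classical Quillen group completion for a split exact category and the vertical maps are induced by the hyperbolic functor. The algebraic Bott sequence (Theorem \ref{thm:PeriodicityExTriangle}) supplies the homotopy fibration
$$\Omega^\infty GW^{[-1]}(\A) \xrightarrow{F} \Omega^\infty K(\A) \xrightarrow{H} \Omega^\infty GW(\A),$$
which together with Proposition \ref{prop:NegGWistriangularW} pins down the homotopy type of $\Omega^\infty GW(\A)$ from $\Omega^\infty K(\A)$, the forgetful map, and the negative Witt groups.

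The main obstacle is matching this spectrum-level fibration with the classifying-space-level hyperbolic stabilization on $|(i\A)_h|$. Precisely this step, in \cite{mygiffen}, invoked Karoubi's Fundamental Theorem as external input; here it is available internally as Theorem \ref{thm:KaroubiFundThm}, which is itself a consequence of the Bott sequence and so does not depend on the present result. Once the two fibrations have been matched, a $\pi_0$-localized homology comparison along the square above, combined with the fact that $\phi$ is a map of homotopy commutative $H$-spaces, yields the desired homology isomorphism and hence shows that $\phi$ is a group completion. The final identification $(i\A_h)^{-1}(i\A_h) \simeq \Omega^\infty GW(\A)$ then follows from the uniqueness of group completions of symmetric monoidal groupoids (Section \ref{subsec:GpCpl}).
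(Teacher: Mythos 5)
Your $\pi_0$ verification is essentially fine, but the homological step has a genuine gap, located exactly where you place ``the main obstacle''. The Bott sequence (Theorem \ref{thm:PeriodicityExTriangle}) and its consequence Theorem \ref{thm:KaroubiFundThm} are statements entirely about the \emph{target} of the map (\ref{eqn:Ahgpcompl}): they control the spectra $GW^{[n]}(\A)$ and $K(\A)$ built from the $\RR_{\bullet}$- and $S_{\bullet}$-constructions, but they carry no information about the \emph{source}, i.e.\ about $(\pi_0)^{-1}H_*((i\A)_h)$, equivalently about the group completion $\Omega B|(i\A)_h|$. To ``match the spectrum-level fibration with the classifying-space-level hyperbolic stabilization'' you would need to know that $\Omega B|(i\A)_h|$ itself sits in a forgetful/hyperbolic fibration over $\Omega B|i\A|$ with the expected fibre --- but that is precisely a fundamental theorem for the group-completion model, i.e.\ Karoubi's original theorem, and producing it is tantamount to the identification you are trying to prove. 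Invoking Theorem \ref{thm:KaroubiFundThm} at this point therefore does not close the gap: it is not formally circular inside the paper, but it transfers nothing to the source of $\phi$, and the remaining ``$\pi_0$-localized homology comparison along the square'' cannot be carried out --- the hyperbolic maps are far from surjective on homology, and a fibration does not determine the homology of a space merely mapping to its total space. It would also defeat the stated purpose of Section \ref{appdx:Homology}, namely to make Corollary \ref{cor:+=Q} independent of every form of the fundamental theorem so that Theorem \ref{thm:KaroubiFundThm} genuinely reproves Karoubi's result and Bott periodicity.

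The paper closes this gap by a direct homological computation rather than by periodicity. Proposition \ref{prop:S2=K} shows that the forgetful functor $(iE\A)_h\to i\A$ is an equivalence after group completion, via the spectral sequence of that functor over $i\A$, whose fibres are the comma categories $(F\downarrow A)$ (uniquely $2$-divisible abelian groups by Lemma \ref{lem:AppA:FA}), combined with an eigenvalue decomposition under the automorphism $(M,\ffi)\mapsto(M,\tfrac{1}{2}\ffi)$. Proposition \ref{prop:AddAddty} bootstraps this to additivity for $(iS_n\A)_h$ after group completion, and this replaces the single use of Karoubi's theorem in the argument of \cite{mygiffen}. If you wish to retain your strategy, this additivity statement (or an equivalent computation of $(\pi_0)^{-1}H_*((i\A)_h)$) is the ingredient you must supply by other means.
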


The proof of theorem \ref{thm:hermGpCpl} occupies most of this section.
Before going into the details we derive the statement that makes the link
between the homology of the infinite orthogonal group and higher
Grothendieck-Witt groups.
For that, let $R$ be a ring with involution $r\mapsto \bar{r}$ and $\eps = \pm
1$. 
We denote by 
$${_{\eps}\scP}(R) = (\scP(R),*,\eps \can).$$
the additive category with duality,
where $\scP(R)$ is the category of finitely generated projective right
$R$-modules, the dual $P^*$ of a right $R$-module $P$ is the left $R$-module
$P^*=\Hom_R(P,R)$ considered as a right module via the 
involution, and $\can_P:P \to P^{**}$ is the double dual identification with
$\can_P(x)(f)=\overline{f(x)}$. 
The associated category $(i{_{\eps}\scP}(R))_h$ is the usual category of
non-degenerated $\eps$-symmetric bilinear forms on finitely generated
projective $R$-modules with isometries as morphisms.
We denote by ${_{\eps}GW}(R)$, the Grothendieck-Witt spectrum
$${_{\eps}GW}(R) = GW(\scP(R),*,\eps \can).$$
By Theorem \ref{thm:hermGpCpl}, its $\Omega^{\infty}$-space is a group
completion of $(i{_{\eps}\scP}(R))_h$.
Let 
$${_{\eps}O}(R) = \colim_n Aut({_{\eps}H}(R^n))$$
denote the infinite $\eps$-orthogonal group of $R$, 
where $Aut({_{\eps}H}(P))$ denotes the group of isometries of the
$\eps$-hyperbolic space ${_{\eps}H}(P) = (P\oplus P^*,
\left(\begin{smallmatrix} 0& 1\\ \eps\can & 0 \end{smallmatrix} \right))$ and
the inclusion $Aut({_{\eps}H}(R^n)) \to Aut({_{\eps}H}(R^{n+1}))$ is induced
by the functor $\perp {_{\eps}H}(R)$.

\begin{corollary}
\label{cor:+=Q}
For any ring with involution $R$ such that $\frac{1}{2}\in R$, there is a
natural homotopy equivalence 
$$B{_{\eps}O}(R)^+ \stackrel{\sim}{\longrightarrow} \Omega^{\infty}_0\ {_{\eps}GW}(R)$$
where $\Omega^{\infty}_0\ {_{\eps}GW(R)}$ denotes the connected component of $0$ of the space
$\Omega^{\infty}\ {_{\eps}GW}(R)$ and $B{_{\eps}O}(R)^+$ is Quillen's plus construction applied
to $B{_{\eps}O}(R)$.
\end{corollary}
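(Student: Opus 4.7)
The plan is to apply Theorem \ref{thm:hermGpCpl} to the split exact category with duality $\scA = {_{\eps}\scP}(R)$ and then identify the zero component of the resulting group completion with $B{_{\eps}O}(R)^{+}$ via the standard McDuff--Segal--Quillen group-completion/plus-construction principle (the hermitian analogue of the classical $K$-theory fact that $B\mathrm{GL}(R)^{+}$ is the zero component of $\Omega^{\infty}K(R)$).

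First I would verify the hypotheses of Theorem \ref{thm:hermGpCpl}: the category $\scP(R)$ is split exact (every short exact sequence of projective modules splits), is equipped with the duality $\Hom_R(-,R)$ and double-dual identification $\eps\can$, and has $\frac{1}{2}\in \scA$ because $\frac{1}{2}\in R$. The theorem then yields a group completion
$$B(i{_{\eps}\scP}(R))_h \longrightarrow \Omega^{\infty}\,{_{\eps}GW}(R)$$
of the homotopy commutative $H$-space $B(i{_{\eps}\scP}(R))_h$ (with $H$-structure given by $\perp$).

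Second, since $(i{_{\eps}\scP}(R))_h$ is a symmetric monoidal groupoid, its classifying space decomposes as $\bigsqcup_{[M,\ffi]} B\operatorname{Aut}(M,\ffi)$. By the McDuff--Segal group completion theorem combined with Quillen's identification of the zero component of such a group completion with a plus construction (as in the proof of $+ = Q$), for any object $X$ whose iterated orthogonal sums $X^{\perp n}$ are cofinal in the zero component of $\pi_0 \Omega B[B(i{_{\eps}\scP}(R))_h]$ one has
$$\Omega^{\infty}_0\,{_{\eps}GW}(R) \;\simeq\; \bigl(\hocolim_n B\operatorname{Aut}(X^{\perp n})\bigr)^{+}.$$
Taking $X = {_{\eps}H}(R)$, we have $\operatorname{Aut}({_{\eps}H}(R^n)) = {_{\eps}O}_{n,n}(R)$, whose stabilised colimit is ${_{\eps}O}(R)$ by definition; thus the hocolim computes $B{_{\eps}O}(R)$ and applying $+$ produces the claimed equivalence.

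The main obstacle is verifying cofinality: every element in the zero component of $\pi_0 \Omega^{\infty}\,{_{\eps}GW}(R)$ must be representable, after stabilising by a sufficiently large hyperbolic form, as the class of some $[{_{\eps}H}(R^n)]$. This follows from the short exactness of $K_0(R) \xrightarrow{H} GW_0^{\eps}(R) \to W_0^{\eps}(R) \to 0$ (the zero component of $\pi_0$ of the group completion is precisely the image of the hyperbolic map $H$) combined with the standard fact that any finitely generated projective is a summand of a free module. The plus construction is legitimate here because the commutator subgroup of ${_{\eps}O}(R)$, namely the infinite elementary orthogonal group, is perfect, so the plus construction kills precisely this maximal perfect normal subgroup of $\pi_1$ and yields a well-defined $H$-space whose $\pi_1$ is the abelianisation of ${_{\eps}O}(R)$.
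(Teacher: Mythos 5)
Your overall strategy coincides with the paper's: apply Theorem \ref{thm:hermGpCpl} to ${_{\eps}\scP}(R)$ and then run the Quillen--Grayson ``$+=S^{-1}S$'' argument with the hyperbolic spaces ${_{\eps}H}(R^n)$ as the cofinal sequence, so that each component of the group completion becomes $\bigl(\colim_n B\operatorname{Aut}({_{\eps}H}(R^n))\bigr)^+ = B{_{\eps}O}(R)^+$. The one genuine gap is in your cofinality verification. What the Quillen--Grayson argument needs is a statement about the abelian \emph{monoid} of isometry classes: for every non-degenerate form $(M,\ffi)$ there must exist $(N,\psi)$ and $n$ with an actual isometry $(M,\ffi)\perp(N,\psi)\cong {_{\eps}H}(R^n)$. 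You try to deduce this from exactness of $K_0(R)\stackrel{H}{\to} GW_0^{\eps}(R)\to W_0^{\eps}(R)\to 0$, but that sequence lives in the group completion $GW_0$ and only controls classes up to further stabilization there; it cannot produce the object-level orthogonal complement that cofinality demands (and its usual proof in any case presupposes the very fact you are after). Your parenthetical ``the zero component of $\pi_0$ of the group completion is precisely the image of the hyperbolic map'' also does not parse: the zero component contributes a single element of $\pi_0$, and the decomposition of $\pi_0$ into components is not what is at issue.

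The correct input, and the point the paper isolates, is that $\frac{1}{2}\in R$ yields an isometry $(M,\ffi)\perp(M,-\ffi)\cong {_{\eps}H}(M)$ (see \cite[Proposition 3]{Weibel:Azumaya}); combined with your observation that $M\oplus M'\cong R^n$ for some $M'$, this gives $(M,\ffi)\perp\bigl((M,-\ffi)\perp {_{\eps}H}(M')\bigr)\cong {_{\eps}H}(R^n)$, which is exactly the required cofinality. With that substitution the rest of your argument goes through as in Grayson's write-up of $+=Q$. Your closing appeal to perfection of the elementary orthogonal subgroup, while true, is not needed: the group-completion theorem already makes the target an $H$-space, so the resulting acyclic map $B{_{\eps}O}(R)\to \Omega^{\infty}_0\,{_{\eps}GW}(R)$ factors through the plus construction automatically.
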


\begin{proof}
By Theorem \ref{thm:hermGpCpl}, the space $\Omega^{\infty}_0\ {_{\eps}GW}(R)$ is a group
completion of $(i{_{\eps}\scP}(R))_h$.
The proof now is the same as in \cite[Theorem,
p.\hspace{.2ex}224]{quillenGrayson}.
The main point is that if $\frac{1}{2}\in R$ then every non-degenerated
$\eps$-symmetric bilinear form is 
a direct factor of the hyperbolic form ${_{\eps}H}(R^n)$ for some $n$; see
also \cite[Proposition 3]{Weibel:Azumaya}.
\end{proof}

The proof of theorem \ref{thm:hermGpCpl} 
is based on the following proposition.
We introduce some notation.
For an exact category with duality $(\E,*,\eta)$, the category $E(\E)$ of
exact sequences in $\E$ is an exact category with duality where the dual 
$(E_{\bullet})^*$ of an exact sequence
$E_{\bullet}=(E_{-1} \rightarrowtail E_0 \twoheadrightarrow E_1)$ in $\E$ 
is the exact sequence $E_1^* \rightarrowtail E_0^*\twoheadrightarrow E_{-1}^*$ 
and the double dual identification $\eta_{E_{\bullet}}$ is
$(\eta_{E_{-1}},\eta_{E_0}, \eta_{E_1})$.

\begin{proposition}
\label{prop:S2=K}
Let $\A$ be a split exact category with duality such that $\frac{1}{2} \in
\A$.
Then the map $F: (iE\A)_h \to i\A: (E_{\bullet},\ffi) \mapsto E_{-1}$
of symmetric monoidal categories
induces a homotopy equivalence after group completion.
\end{proposition}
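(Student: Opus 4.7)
The plan is to exhibit an explicit homotopy-inverse to $F$ after group completion via the hyperbolic functor. First I would define the symmetric monoidal functor
$$H: i\A \longrightarrow (iE\A)_h,\qquad A \longmapsto \Bigl(A\rightarrowtail A\oplus A^{*}\twoheadrightarrow A^{*},\ \left(\begin{smallmatrix}0&1\\ \eta_A&0\end{smallmatrix}\right)\Bigr),$$
sending an isomorphism $f:A\xrightarrow{\cong}B$ to the triple $(f,\,f\oplus(f^{*})^{-1},\,(f^{*})^{-1})$, which is an isometry of hyperbolic forms by naturality of $\eta$. By direct inspection $F\circ H=\mathrm{id}_{i\A}$, so $F$ is a symmetric monoidal retraction and is in particular split surjective on the group completions.

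The heart of the argument is to show that $H$ is essentially surjective up to isometry. Given $(E_{\bullet},\varphi)\in(iE\A)_h$, the form identifies $E_{1}\cong E_{-1}^{*}$, and since $\A$ is split exact any splitting $s:E_{1}\to E_{0}$ yields an isomorphism $(i,s):E_{-1}\oplus E_{-1}^{*}\xrightarrow{\cong} E_{0}$ under which $\varphi_{0}$ pulls back to a form of the shape $\left(\begin{smallmatrix}0&\alpha\\ \alpha^{*}\eta&\beta\end{smallmatrix}\right)$ with $\alpha$ an isomorphism and $\beta$ symmetric. The hypothesis $\tfrac{1}{2}\in\A$ then permits the standard ``symmetrization'' modification of $s$ (replacing $s$ by $s-\tfrac{1}{2}\,i\,\xi$ for a $\xi$ built from $\alpha$ and $\beta$) which kills the $\beta$-block; composing with the further isomorphism rescaling the second factor by $\alpha$ produces an explicit isometry $H(E_{-1})\xrightarrow{\cong}(E_{\bullet},\varphi)$. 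Together with $F\circ H=\mathrm{id}$ this identifies $\pi_{0}$ of the two group completions with $K_{0}(\A)$.

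The main obstacle is that the initial splitting $s$ is not functorial in $(E_{\bullet},\varphi)$, so the object-wise isometries do not immediately assemble into a natural isomorphism $H\circ F\cong\mathrm{id}$. I would handle this via a ``space of splittings'' device: introduce the auxiliary category of symmetric forms on exact sequences together with a chosen splitting, on which the $\tfrac{1}{2}$-corrected isomorphism above is manifestly natural, and observe that the forgetful functor back to $(iE\A)_h$ becomes a homology equivalence on classifying spaces after $\oplus$-stabilization since its fibers are torsors under the abelian groups $\mathrm{Hom}_{\A}(E_{1},E_{-1})$, whose contribution is absorbed upon adding enough hyperbolic summands. The group completion theorem of Quillen-Grayson \cite{quillenGrayson} then promotes the resulting zigzag of natural equivalences into the claimed homotopy equivalence of group completions, which in turn yields, via \eqref{eqn:Ahgpcompl}, Theorem~\ref{thm:hermGpCpl}.
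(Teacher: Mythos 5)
Your setup is correct and matches the paper's opening moves: the hyperbolic functor $\H$ is a monoidal section of $F$, every object of $(iE\A)_h$ is isometric to a hyperbolic one (this is the content of Lemma \ref{lem:AppA:FA}), so $\pi_0$ of the two group completions agree, and you correctly identify that the non-functoriality of the splitting $s$ is the real obstruction to producing a natural isomorphism $\H\circ F\cong \mathrm{id}$. But your resolution of that obstruction has a genuine gap. The failure of naturality is not a matter of a contractible (or stably negligible) choice of splittings: it is recorded in the comma categories $F\downarrow A$, which by Lemma \ref{lem:AppA:FA}(\ref{lem:AppA:FA:itemA}) are classifying spaces of the abelian groups $\{a:A^{*}\to A \mid \eta a+a^{*}=0\}$. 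These groups are nonzero in general (e.g.\ for $\eps$-symmetric forms over a ring with involution one gets the $(-\eps)$-symmetric elements), and their homology with $\Q$- or $\ff_p$-coefficients ($p\neq 2$) does not vanish. Your claim that this contribution ``is absorbed upon adding enough hyperbolic summands'' is unsubstantiated and is not what happens: localizing the homology at the translation monoid $S=\pi_0 i\A$ does not kill $H_q(F\downarrow A)$ for $q>0$, so the spectral sequence $E^2_{p,q}=S^{-1}H_p(i\A,H_q(F\downarrow\underline{\phantom{A}}))\Rightarrow S^{-1}H_{p+q}((iE\A)_h)$ does not degenerate for free, and the Quillen--Grayson group completion theorem cannot ``promote'' a zigzag of natural equivalences that you have not actually constructed.

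The missing idea in the paper's proof is the rescaling automorphism $\Sigma:(M,\ffi)\mapsto(M,\tfrac{1}{2}\ffi)$ of $(iE\A)_h$, which commutes with $F$ and with the $S$-action up to natural isomorphism. On the comma-category groups it acts as multiplication by $2$ (Lemma \ref{lem:AppA:FA}(\ref{lem:AppA:FA:itemB})), so by Lemma \ref{lem:Eigenvalues} the $E^2$-page splits into $\Sigma$-eigenspaces for eigenvalues $2^i$, with the eigenvalue-$1$ part concentrated in the row $q=0$; meanwhile $\Sigma$ acts as the identity on the localized abutment (Lemma \ref{lem:Abuttment}, which itself needs the nilpotence/$H$-space argument for $id\oplus id$). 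Only this eigenvalue comparison forces the fiber homology to contribute nothing to the abutment and makes the edge map $S^{-1}H_*((iE\A)_h,k)\to S^{-1}H_*(i\A,k)$ an isomorphism for $k=\Q,\ff_p$ ($p\neq 2$); the case $k=\ff_2$ uses unique $2$-divisibility directly. Without some substitute for this step your argument does not close.
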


\begin{proof}
The functor $F$ has a section, the hyperbolic functor 
$$
\renewcommand\arraystretch{2}
\begin{array}{llcl}
\H:i\A \to (iE\A)_h:& 
A & \mapsto & \H(A) = 
\left(A \stackrel{\left(\begin{smallmatrix}1\\ 0
      \end{smallmatrix}\right)}{\rightarrowtail} 
A \oplus A^* \stackrel{\left(\begin{smallmatrix} 0 & 1
      \end{smallmatrix}\right)}{\twoheadrightarrow}
A,
 \left(\eta, \left(\begin{smallmatrix}0&1\\ \eta & 0
     \end{smallmatrix}\right),1 \right)
\right) \\
&g & \mapsto & \H(g) = \left( g , \left( \begin{smallmatrix} g&0 \\ 0 &
      (g^*)^{-1} \end{smallmatrix}\right), (g^*)^{-1} \right) 
\end{array}
$$
which is symmetric monoidal and thus induces an action of 
$i\A$ on $(iE\A)_h$ in the sense of \cite{quillenGrayson}.
In view of the Lemma \ref{lem:AppA:FA} (\ref{lem:AppA:FA:itemC}) below, the
hyperbolic functor induces on $\pi_0$ an isomorphism of abelian monoids
$$\pi_0(\H): S = \pi_0i\A \stackrel{\cong}{\longrightarrow} \pi_0 (iE\A)_h.$$
To abbreviate, we write $S$ for this monoid.
The actions of the symmetric monoidal category $i\A$ on $(iE\A)_h$ and on $i\A$
induce actions of the abelian monoid $S$ on the homology groups of $(iE\A)_h$
and of $i\A$ compatible with $F$.
We will show that the localized map
\begin{equation}
\label{eqn:gpcplhomolgyIso}
F: S^{-1}H_*((iE\A)_h,k) \to S^{-1}H_*(i\A,k)
\end{equation}
is an isomorphism when the coefficient group is $k=\ff_p, \Q$.
This implies that (\ref{eqn:gpcplhomolgyIso}) is an isomorphism for $k=\Z$,
that is,
the functor $F: (iE\A)_h \to i\A$ induces a homology (hence homotopy)
equivalence after group completion. 
\vspace{2ex}

In order to prove that (\ref{eqn:gpcplhomolgyIso}) is an isomorphism, 
we will analyze the spectral sequence associated with the functor $F: (iE\A)_h
\to i\A$.
We first  review some elementary properties of that spectral sequence 
in the more general context of an arbitrary functor $F:\B \to \C$ of small
categories. 
For that, we fix an abelian group $k$ (which we will suppress in most
formulas), and 
write $H_p(\C)=H_p(\C,k)$ for the homology of the category $\C$ with
coefficients in $k$.
More generally,
we write $H_p(\C,A)$ for the homology of $\C$ with
coefficients in a functor $A:\C \to \langle\text{ab\hspace{1ex}gps}\rangle$
from $\C$ to abelian groups.
This is the homology of the chain complex associated with the simplicial
abelian group
$$p \mapsto \bigoplus_{C_0 \to ... \to C_p}A(C_0)$$
of chains on $N\C$ with coefficients in $A$.

\begin{lemma}
\label{lem:spSeqFun}
\begin{enumerate}
\item
\label{lem:spSeqFun:item1}
Let $F:\B \to \C$ be a functor between small categories.
There is a strongly convergent first quadrant spectral sequence
$$E^2_{p,q}(F)=H_p(\C,H_q(F\downarrow \underline{\phantom{A}}\ )) \Rightarrow
H_{p+q}(\B)$$
with differentials $d^n_{p,q}:E^n_{p,q} \to E^n_{p-n,q+n-1}$ and edge morphism
$$H_p(\B) \twoheadrightarrow E^{\infty}_{p,0} \subset E^2_{p,0} =
H_p(\C,H_0(F\downarrow \underline{\phantom{A}}\ )) \to H_p(\C)$$
the map $H_p(F)$ induced by $F$ on homology.
Here, the last map is the change-of-coefficient map sending $(F\downarrow
\underline{\phantom{A}}\ )$ to the constant one-object-one-morphism
category.
\item
\label{lem:spSeqFun:item2}
If $F':\B' \to \C'$ is another functor, and $b:\B \to \B'$ and $c:\C \to \C'$
are functors such that $F'b=cF$, then there is a map of spectral sequences 
$E(b,c):E(F) \to E(F')$ compatible with edge-maps and which, on $E^2$-term and
abutment, is the usual maps on homology induced by $b$ and $c$.
\item
\label{lem:spSeqFun:item3}
Given two pairs of functors $b_i:\B \to \B'$, $c_i: \C \to \C'$ such that
$F'b_i=c_iF$, $i=0,1$.
If there are natural transformations $\beta:b_0 \Rightarrow b_1$ and $\gamma:
c_0 \Rightarrow c_1$ such that $F'\beta_B=\gamma_{FB}$ for all objects $B$ of 
$\B$, then the two induced maps of spectral sequences agree from the
$E^2$-page on:
$$E^n_{p,q}(b_0,c_0) = E^n_{p,q}(b_1,c_1): E^n_{p,q}(F) \to E^n_{p,q}(F'),\hspace{1ex} n\geq 2.$$
\end{enumerate}
\end{lemma}

\begin{proof}[Proof {\rm (Sketch)}]
The spectral sequence is obtained by filtering the bicomplex
$K_{p,q} = \bigoplus_{X_{p,q}}k$ associated with
the bisimplicial set $X_{\bullet, \bullet}$ whose $p,q$ simplices is the set
$$
  X_{p,q}= \{ B_0\to ... \to B_q,\ FB_q \to C_0 \to ... \to
  C_p\hspace{1ex} |\hspace{1ex}B_i\in \B, \hspace{1EX}C_i\in \C \}.
$$
The filtration is by subcomplexes 
$K_{\leq n,\bullet} \subset K_{\leq n+1,\bullet} \subset K$ 
with $(K_{\leq n,\bullet})_{p,q}=K_{p,q}$ if $p\leq n$ and $(K_{\leq
  n,\bullet})_{p,q}=0$ if $p > n$.
As in the proof of Quillen's theorem A \cite[\S 1]{quillen:higherI}, the
bisimplicial set $X_{\bullet,\bullet}$ is homotopy equivalent to the
bisimplicial set which is constant in the $p$-direction and is the nerve
$N_{\bullet}\B$ of $\B$ in the
$q$-direction.
In particular, the abutment of the spectral sequence computes the
homology of $\B$.
It is straight forward to identify the $E^2$-term as in
(\ref{lem:spSeqFun:item1}).
Functoriality as in (\ref{lem:spSeqFun:item2}) is clear since the pair $(b,c)$
induces a map of the corresponding bicomplexes.
The edge map for the spectral sequence of the identity functor $id_C$ of $\C$
is the identity map, so that the functoriality
in (\ref{lem:spSeqFun:item2}),
applied to the map $(F,id_{\C}): F \to id_{\C}$ of functors, yields the 
description of the edge map in (\ref{lem:spSeqFun:item1}).
For the somewhat extended functoriality in (\ref{lem:spSeqFun:item3}), 
one constructs an explicit homotopy between the two maps 
$$(b_i,c_i): \bigoplus_{C_0 \to ... \to C_p}H_j(F\downarrow C_0\ )
\longrightarrow 
\bigoplus_{C'_0 \to ... \to C'_p}H_j(F'\downarrow C'_0\ ),\hspace{2ex} i=0,1,
$$ 
of simplicial abelian groups which compute the $E^2$-terms of the spectral
sequences. 
Again this is straight forward, and we omit the details.
\end{proof}

We apply the lemma to the functor $F: (iE\A)_h \to i\A$ and obtain the spectral
sequence 
\begin{equation}
\label{eqn:spseqBeforeS}
E^2_{p,q} = H_p(i\A,H_q(F\downarrow \underline{\phantom{A}}\ )) \Rightarrow
H_{p+q}((iE\A)_h).
\end{equation}
Every object $A$ of $i\A$ induces functors $\oplus \H(A): (iE\A)_h \to
(iE\A)_h$ and $\oplus A: i\A \to i\A$ compatible with $F$.
By Lemma \ref{lem:spSeqFun} (\ref{lem:spSeqFun:item2}), these functors induce a
map of spectral sequences.
Every map $A \to A'$ in $i\A$ induces natural transformations 
$\oplus \H(A) \Rightarrow \oplus \H(A')$ and $\oplus A \Rightarrow \oplus A'$
compatible with $F$.
By Lemma \ref{lem:spSeqFun} (\ref{lem:spSeqFun:item3}), the objects $A$  and
$A'$ induce the same map of spectral sequences from $E^2$ on.
In other words, we obtain an induced action of the monoid $S$ on the spectral
sequence (\ref{eqn:spseqBeforeS}) from $E^2$ on.
Since localization with respect to a commutative monoid is exact, we obtain a strongly
convergent first quadrant spectral sequence
\begin{equation}
\label{eqn:spseqAfterS}
S^{-1}E^2_{p,q} = S^{-1}H_p(i\A,H_q(F\downarrow \underline{\phantom{A}}\ ))
\Rightarrow S^{-1}H_{p+q}((iE\A)_h).
\end{equation}

In order to analyze this spectral sequence, we will employ the symmetric
monoidal automorphism 
$$\Sigma: (iE\A)_h \to (iE\A)_h: (M,\ffi) \mapsto (M,\frac{1}{2}\ffi),
g\mapsto g.$$
Note that $F\circ \Sigma = F$, so that $\Sigma$ induces an action of the
spectral sequence (\ref{eqn:spseqBeforeS}).
For every object $A$ in $i\A$, there is a natural transformation of functors
$$\Sigma \circ (\oplus \H(A)) \Rightarrow (\oplus \H(A))\circ \Sigma$$
compatible with 
$F$. 
It is induced by the isometry $\Sigma \H(A) \cong \H(A)$ given by the
map 
$$\left(\begin{smallmatrix}1/2 & 0 \\ 0 & 1\end{smallmatrix}\right): A \oplus
A^* \to A \oplus A^*.$$
Therefore, the action of $S$ on the spectral sequence (\ref{eqn:spseqBeforeS})
commutes with the action of $\Sigma$ on it from the $E^2$-page on.
In particular, $\Sigma$ induces an action on the spectral sequence
(\ref{eqn:spseqAfterS}).

\begin{lemma}
\label{lem:AppA:FA}
\begin{enumerate}
\item
\label{lem:AppA:FA:itemA}
For every object $A$ of $i\A$, there is an equivalence of categories between 
the comma category $(F\downarrow A)$ and the one-object category whose
endomorphism set is the set 
$$\{a:A^* \to A|\ \eta\, a+ a^* = 0\} \subset Hom(A^*,A)$$ 
with addition as composition.
This is a uniquely $2$-divisible abelian group.
\item
\label{lem:AppA:FA:itemB}
Under the equivalence in (\ref{lem:AppA:FA:itemA}), the automorphism 
$\Sigma: (F\downarrow A) \to (F\downarrow A)$ corresponds to 
$a \mapsto 2a$.
\item
\label{lem:AppA:FA:itemC}
The functor $F$ induces an isomorphism of abelian monoids
$$\pi_0(iE\A)_h \stackrel{\cong}{\longrightarrow} \pi_0i\A.$$
\end{enumerate}
\end{lemma}

\begin{proof}
Choose for every inflation $M_{-1} \stackrel{i}{\rightarrowtail} M_0$ in $\A$
a retraction $r:M_0 \twoheadrightarrow M_{-1}$, so that we have $ri=1$.
For the inflation $\left(\begin{smallmatrix}1\\ 0
    \end{smallmatrix} \right): A 
\rightarrowtail A \oplus A^*$
we choose $r=\left(\begin{smallmatrix}1&0\end{smallmatrix}\right)$.
The choice of these retractions defines for every object
\begin{equation}
\label{App:eqnFAM}
M_{-1} \stackrel{i}{\rightarrowtail} M_0
\stackrel{p}{\twoheadrightarrow} M_1,\hspace{1ex} (\ffi_{-1},\ffi_0,\ffi_{1}), 
\hspace{2ex} M_{-1} \stackrel{g}{\to} A
\end{equation}
of the comma category $(F\downarrow A)$ an isomorphism to the object
\begin{equation}
\label{App:eqnFAA}
A \stackrel{\left(\begin{smallmatrix}1\\ 0
    \end{smallmatrix} \right)}{\rightarrowtail} A \oplus A^*
\stackrel{\left(\begin{smallmatrix} 0 & 1
    \end{smallmatrix} \right)}{\twoheadrightarrow} A^*, \hspace{1ex}
\left(\eta,
\left(\begin{smallmatrix}0 & 1\\ \eta & 0 \end{smallmatrix} \right)
,1 \right),\hspace{2ex} A \stackrel{1}{\to} A
\end{equation}
of that category given by the maps
\begin{equation}
\label{App:eqnFAiso}
g:M_{-1} \to A,\hspace{3ex}
\left(\begin{smallmatrix}gr + g\delta p\\ (g^*)^{-1}\ffi_1p
    \end{smallmatrix} \right): M_0 \to A\oplus A^*,
\hspace{3ex}
(g^*)^{-1}\ffi_1:M_1 \to A^*,
\end{equation}
where the map $r:M_0 \twoheadrightarrow M_{-1}$ is the chosen retraction of
$i:M_{-1} \rightarrowtail M_0$, 
the map $\delta: M_1 \to M_{-1}$ is defined as
$\delta =\frac{1}{2}(\ffi_1^*\eta)^{-1}\beta$,
and $\beta: M_1 \to M_1^*$ is the unique map satisfying
$p^*\beta p = \psi$ for the symmetric map $\psi = \ffi_0-p^*\ffi_1^*\eta r -
r^*\ffi_1 p: M_0 \to M_0^*$, the existence of $\beta$ being assured since
$\psi i =0$. 
Note that the map just defined is an isomorphism in $E\A$ since it is an
isomorphism on subobject and quotient object.
All these isomorphisms together define an equivalence of categories between
$(F\downarrow A)$ and the full subcategory of $(F\downarrow A)$
consisting of the single object (\ref{App:eqnFAA}).
The endomorphism set of (\ref{App:eqnFAA}) is the set consisting of the maps
\begin{equation}
\label{App:eqnFAendo}
1:A \to A, \hspace{3ex} 
\left(\begin{smallmatrix}1 & a\\ 0 & 1
    \end{smallmatrix} \right): A \oplus A^* \to A\oplus A^*,
\hspace{3ex}
1:A^* \to A^*,
\end{equation}
where $\eta\, a + a^* = 0$.
Composition corresponds to matrix multiplication, hence to addition of the
$a$'s.
By our assumption $\frac{1}{2}\in \A$, the group $Hom(A^*,A)$ is uniquely
$2$-divisible, so is the kernel of the map $a\mapsto \eta\, a+ a^*$.
This proves part (\ref{lem:AppA:FA:itemA}) of the lemma.

The existence of the isomorphism from (\ref{App:eqnFAM}) to (\ref{App:eqnFAA}) above (with $g=1$)
shows that every object of $(iE\A)_h$ is (up to isomorphism) in the image of the
hyperbolic functor $\H:i\A \to (iE\A)_h$.
Since $F\circ\H=id$, this proves (\ref{lem:AppA:FA:itemC}).

For part (\ref{lem:AppA:FA:itemB}), we note that $\Sigma$ sends the object
(\ref{App:eqnFAA}) to the object
\begin{equation}
\label{App:eqnFAA2}
A \stackrel{\left(\begin{smallmatrix}1\\ 0
    \end{smallmatrix} \right)}{\rightarrowtail} A \oplus A^*
\stackrel{\left(\begin{smallmatrix} 0 & 1
    \end{smallmatrix} \right)}{\twoheadrightarrow} A^*, \hspace{1ex}
\frac{1}{2} \left(\eta,
\left(\begin{smallmatrix}0 & 1\\ \eta & 0 \end{smallmatrix} \right)
,1 \right),\hspace{2ex} A \stackrel{1}{\to} A
\end{equation}
for which, according to our choice of
$r=\left(\begin{smallmatrix}1&0\end{smallmatrix}\right)$ (implying $\psi = 0$
and $\delta = 0$), the isomorphism (\ref{App:eqnFAiso}) is given by the maps
$$1:A \to A, \hspace{3ex} 
\left(\begin{smallmatrix}1 & 0\\ 0 & {1}/{2}
    \end{smallmatrix} \right): A \oplus A^* \to A\oplus A^*,
\hspace{3ex}
{1}/{2}:A^* \to A^*.$$
The functor $\Sigma$ sends the endomorphism (\ref{App:eqnFAendo})
of (\ref{App:eqnFAA}) to the endomorphism of (\ref{App:eqnFAA2}) given by the
same maps as in (\ref{App:eqnFAendo}).
Under the equivalence of categories, this corresponds to the endomorphism of 
(\ref{App:eqnFAA}) given by the maps
$$
1:A \to A, \hspace{3ex} 
\left(\begin{smallmatrix}1 & 2a\\ 0 & 1
    \end{smallmatrix} \right)=
\left(\begin{smallmatrix}1 & 0\\ 0 & {1}/{2}
    \end{smallmatrix} \right)
\left(\begin{smallmatrix}1 & a\\ 0 & 1
    \end{smallmatrix} \right)
\left(\begin{smallmatrix}1 & 0\\ 0 & {1}/{2}
    \end{smallmatrix} \right)^{-1}
: A \oplus A^* \to A\oplus A^*,
\hspace{3ex}
1:A^* \to A^*.
$$
This proves the lemma.
\end{proof}

Let $k=\Z/2$.
Then Lemma \ref{lem:AppA:FA} (\ref{lem:AppA:FA:itemA}) implies that the spectral
sequences (\ref{eqn:spseqBeforeS}) and (\ref{eqn:spseqAfterS}) degenerate at
the $E^2$-page to yield the isomorphism (\ref{eqn:gpcplhomolgyIso})
since for a uniquely $2$-divisible abelian group $G$, we have $H_q(G,\Z/2)=0$,
$q>0$. 

From now on we will assume $k=\ff_p$, $p$ prime $\neq 2$, or $k=\Q$.

\begin{lemma}
\label{lem:Eigenvalues}
Let $k$ be one of the fields $\F_p$, $p\neq 2$, or $\Q$.
Let $G$ be an abelian group and $t:G \to G: a \mapsto 2a$.
Then for all $q\geq 0$,
$$H_q(G,k)=\bigoplus_{q/2\leq i \leq q,\ i\in \N}V_i$$
where $H_q(t)$ acts on the $k$-vector space $V_i$ as multiplication by $2^i$.
\end{lemma}

\begin{proof}
This follows from the explicit description of 
$H_*(G)$ given, for instance, in \cite[Theorem 6.4 (ii), Theorem
6.6]{brown:cohgps}. 
Alternatively, it can be proved as follows.
The statement is clearly true for $q=0$ and $q=1$ (for the
latter use the natural isomorphism $G\cong \pi_iBG \cong H_1(BG,\Z)$), hence
it is true for $G=\Z$ (as $B\Z=S^1$).
Using the Serre spectral sequence associated with the fibration 
$B\Z \to B\Z \to B(\Z/l)$ we see that the statement is true for $G=\Z/l$.
Using the K\"unneth formula, if the statement is true for $G_1$ and $G_2$ then
it is true for $G_1\times G_2$.
Therefore, the lemma holds for finitely generated abelian
groups. 
Passing to limits, we conclude that it is true for all abelian groups.
\end{proof}

Lemma
\ref{lem:Eigenvalues} applies to the endomorphism $\Sigma$ of the 
$k$-module $H_q(F\downarrow A)$, in view of Lemma \ref{lem:AppA:FA}
(\ref{lem:AppA:FA:itemB}). 
We obtain a direct sum decomposition of the spectral sequences
(\ref{eqn:spseqBeforeS}) and (\ref{eqn:spseqAfterS}) into the eigen-space
spectral sequences of $\Sigma$. 
Note that the eigen-space spectral sequence of (\ref{eqn:spseqAfterS})
corresponding to the eigen-value $1$ has 
$E^2_{p,q}$-term $S^{-1}H_p(i\A,H_0(F\downarrow \underline{\phantom{A}}\ )) =
H_p(i\A,k)$ if $q=0$ and $E^2_{p,q}=0$ for $q\neq 0$.
In particular, it degenerates completely at the $E^2$-page.

The next lemma shows that $\Sigma$ acts as the identity on the abutment
of (\ref{eqn:spseqAfterS}), so that the eigen-space spectral sequence of
(\ref{eqn:spseqAfterS}) 
corresponding to the eigen-value $1$ has abutment the abutment of
(\ref{eqn:spseqAfterS}).
Degeneration of this spectral sequence at the $E^2$-page 
shows that its edge map (\ref{lem:spSeqFun} \ref{lem:spSeqFun:item1})
is an isomorphism, that is,
(\ref{eqn:gpcplhomolgyIso}) is also an isomorphism for $k=\ff_p$, $p$ prime
$\neq 2$ and $\Q$. 

\begin{lemma}
\label{lem:Abuttment}
Let $k$ be one of the fields $\ff_p$, $p$ prime $\neq 2$, or $\Q$.
\begin{enumerate}
\item
\label{lem:Abuttment:item1}
Let $(\C,\oplus,0)$ be a unital symmetric monoidal category with $\pi_0\C=0$.
Then the map $id\oplus id:\C \to \C:A \mapsto A \oplus A$ induces an
isomorphism on homology 
$$H_*(\C,k) \stackrel{\sim}{\longrightarrow} H_*(\C,k).$$
\item
\label{lem:Abuttment:item2}
The functor $\Sigma: (iE\A)_h \to (iE\A)_h$ induces the identity map on the
abutment $S^{-1}H_*(iE\A)_h$ of the spectral sequence (\ref{eqn:spseqAfterS}).
\end{enumerate}
\end{lemma}

\begin{proof}
For part (\ref{lem:Abuttment:item1}), consider $X=Y=B\C$ as pointed spaces
with base-point $0\in \C$, and write $f: X \to Y$ for
the map on classifying spaces induced by $id\oplus id:\C \to \C$.
Note that $f$ preserves base points and that $\pi_*(f)$ is multiplication by
$2$, so that 
$\pi_*(f) \otimes \Z[\frac{1}{2}]$ is an isomorphism.
Let $G$ be the cokernel of the map $\pi_1(f): \pi_1X \to \pi_1Y$ and write
$\tilde{Y} \to Y$ for the covering of $Y$ corresponding to the quotient map
$\pi_1Y \to G$, that is, $\tilde{Y}$ is the pullback of $EG$ along $Y \to
B\pi_1Y \to BG$.
By construction, the map $f:X \to Y$ factors through $\tilde{Y} \to Y$, and
the induced map $X \to \tilde{Y}$ is surjective on $\pi_1$.
Let $F$ be the homotopy fibre of $X \to \tilde{Y}$.
By construction, $F$ is a connected nilpotent space (in fact an infinite loop
space as all maps $X \to \tilde{Y} \to Y \to BG$ are infinite loop space
maps) with $\pi_*F \otimes \Z[\frac{1}{2}]= 0$.
It follows that $H_q(F,\Z[\frac{1}{2}])=0$, hence $H_q(F,k)=0$, $q>0$.
By the Serre spectral sequence, $X \to \tilde{Y}$ induces an isomorphism in
homology with coefficients in $k$.
Finally, the principal $G$-fibration $\tilde{Y} \to Y$ induces an isomorphism
in homology with coefficients in $k$, since every element in $G$ has order
invertible in $k$.

For part (\ref{lem:Abuttment:item2}), write $\scS = (iE\A)_h$.
The abutment of (\ref{eqn:spseqAfterS}) is the homology of the symmetric
monoidal category $\C = \scS^{-1}\scS$, in view of Lemma \ref{lem:AppA:FA}
(\ref{lem:AppA:FA:itemC}) and \cite{quillenGrayson}, see also
Section \ref{subsec:GpCpl}.
Note 
that $\pi_0\Sigma: \pi_0\scS \to \pi_0\scS$ is the identity because 
$F\circ \Sigma = F$ and $\pi_0F$ is an isomorphism, by Lemma \ref{lem:AppA:FA}
(\ref{lem:AppA:FA:itemC}).
In particular, $\Sigma$ induces the identity on $\pi_0\C$.
Let $\C_0 \subset \C$ be the connected component of $0$.
In view of the natural isomorphism $H_*(\C_0)\otimes_kk\pi_0\C \cong H_*\C$,
it suffices to show that the restriction $\Sigma_0$ of $\Sigma:\C \to \C$ to
$\C_0$ induces the identity on homology with coefficients in $k$.
To that end, we consider the functor $T=id\oplus id :\scS \to \scS$ and the
natural transformation $\Sigma T\cong T$ given by 
$$\frac{1}{2}\left(\begin{smallmatrix}1&1\\ -1&1\end{smallmatrix}\right):
(M,\ffi/2) \perp (M,\ffi/2) \stackrel{\cong}{\longrightarrow} (M,\ffi)\perp
(M,\ffi).$$
The functors $T$ and $\Sigma$ and the natural transformation induce functors 
$T$ and $\Sigma$ on $\C=\scS^{-1}\scS$ and a natural transformation $\Sigma
T\cong T$ of endofunctors of $\C$.
Restricted to the connected component $\C_0$ of $\C$, we obtain functors $T_0,
\scS_0:\C_0 \to \C_0$ and a natural transformation $\Sigma_0 T_0\cong T_0$.
By part (\ref{lem:Abuttment:item1}), $T_0$ induces an isomorphism on homology
with coefficients in $k$.
Since $H_*(\Sigma_0,k) \circ H_*(T_0,k)= H_*( T_0,k)$, by the existence of the
natural transformation $\Sigma_0 T_0\cong T_0$, we have $H_*(\Sigma_0,k)=id$.
\end{proof}

This finishes the proof of Proposition \ref{prop:S2=K}.
\end{proof}

For the statement of the next proposition, recall that for an exact category
with duality $(\A,*,\eta)$,  
the category $S_n\A$ is an exact category with duality, where
the dual $A^*$ of an object $A$ of $S_n\A$ is
$(A^*)_{i,j}=(A_{n-j,n-i})^*$ and the double dual identification $\eta_A$
satisfies $(\eta_A)_{i,j} = \eta_{A_{i,j}}$.

\begin{proposition}
\label{prop:AddAddty}
Let $\A$ be an additive category with duality.
Assume $\frac{1}{2} \in \A$.
Then the symmetric monoidal functors
$$
\renewcommand\arraystretch{1.5} 
\begin{array}{ll}
(iS_{2n}\A)_h \to (i\A)^n: &  (A_{\bullet},\ffi_{\bullet}) \mapsto
(A_{0,1},A_{1,2},...,A_{n-1,n})\\
(iS_{2n+1}\A)_h \to (i\A)^n\times (i\A)_h: &
(A_{\bullet},\ffi_{\bullet})\mapsto
(A_{0,1},A_{1,2},...,A_{n-1,n}),(A_{n,n+1},\ffi_{n,n+1}) 
\end{array}
$$
are homotopy equivalences after group completion. 
\end{proposition}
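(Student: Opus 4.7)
The plan is to induct on $n$, with base case $n=1$ (for the even case) given by Proposition \ref{prop:S2=K} and base case $n=0$ (for the odd case, where $m=1$) trivial since $S_1\A=\A$ with its original duality so that the functor is the identity. The inductive step relies on a ``hyperbolic peeling'' functor
$$P\colon (iS_m\A)_h \longrightarrow (iS_{m-2}\A)_h \times i\A, \qquad (A_\bullet,\ffi)\longmapsto \bigl((B_\bullet,\psi),\, A_{0,1}\bigr),$$
where $B_{i,j}=A_{i+1,j+1}$ and $\psi$ is the restriction of $\ffi$. The duality on $S_m\A$ sends $A_{i+1,j+1}$ to $A_{m-j-1,m-i-1}^*$, which matches the duality on $S_{m-2}\A$ under the re-indexing, so $P$ is well-defined as a functor between categories with duality. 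Composing $P$ with the functor in the inductive hypothesis reproduces, up to a permutation of factors, the functor in the proposition, so it suffices to prove that $P$ is a homotopy equivalence after group completion.

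A candidate section of $P$ is given by the hyperbolic extension functor
$$\H\colon i\A \times (iS_{m-2}\A)_h \longrightarrow (iS_m\A)_h, \qquad (X,(B_\bullet,\psi))\longmapsto (A_\bullet,\ffi),$$
where $A_\bullet$ is obtained by inserting $X$ at the bottom and $X^*$ at the top of the filtration (so $A_{i+1,j+1}=B_{i,j}$, $A_{0,j+1}=X\oplus B_{0,j}$, $A_{i+1,m}=B_{i,m-2}\oplus X^*$, $A_{0,m}=X\oplus B_{0,m-2}\oplus X^*$), with the form combining the hyperbolic form on the $X\oplus X^*$-pair with $\psi$ on the $B$-part. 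To prove $P$ is a group completion equivalence I would adapt the argument of Proposition \ref{prop:S2=K}: analyse the homology spectral sequence of $P$ via Lemma \ref{lem:spSeqFun}, compute the comma category $(P\downarrow (X,B_\bullet,\psi))$ by choosing retractions of each admissible monomorphism in the filtration to put an arbitrary symmetric extension into the canonical form produced by $\H$, and verify that the endomorphism group of this canonical object is a uniquely $2$-divisible abelian group. The doubling automorphism $\Sigma\colon (A_\bullet,\ffi)\mapsto (A_\bullet,\tfrac12\ffi)$ then acts on that group by multiplication by $2$, so Lemma \ref{lem:Eigenvalues} combined with the eigenspace-splitting argument from the proof of Proposition \ref{prop:S2=K} forces the spectral sequence to degenerate onto its $q=0$ row after group completion, and the proof that $\Sigma$ acts as the identity on the abutment follows Lemma \ref{lem:Abuttment} verbatim.

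The main obstacle is the explicit analysis of the comma category for general $m$. After fixing compatible splittings of the nested monomorphisms $X\rightarrowtail A_{0,1}\rightarrowtail A_{0,2}\rightarrowtail\dots\rightarrowtail A_{0,m}$ and dualising, an arbitrary symmetric extension of $(X,B_\bullet,\psi)$ should differ from the hyperbolic extension by a single morphism $a\colon X^*\to B_{0,m-2}$ subject to an antisymmetry condition $\eta a+a^*=0$ taken with respect to $\psi$; this gives an endomorphism group of the canonical object which is uniquely $2$-divisible, exactly as in the proof of Lemma \ref{lem:AppA:FA}. The subtlety lies in verifying that deformations of the extension at the intermediate stages $A_{0,k}$ for $1<k<m$ reduce to this single parameter, which should follow from the exactness axioms for $S_m\A$ and the fact that $B_\bullet$ is already fixed in the inner filtration. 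Once the uniquely $2$-divisible comma category is established, the rest of the argument from Proposition \ref{prop:S2=K}---the $\Sigma$-eigenspace decomposition of the spectral sequence and the identification of the eigenvalue-$1$ summand with $H_*(i\A\times (iS_{m-2}\A)_h)$---transfers without change, completing the induction.
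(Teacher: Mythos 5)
Your inductive skeleton --- peeling one hyperbolic layer at a time via $(A_\bullet,\ffi)\mapsto(\theta^!(A_\bullet,\ffi),A_{0,1})$ with the hyperbolic extension $\H$ as a section, so that $P\circ\H=\mathrm{id}$ --- is exactly the reduction the paper performs (its $F=(\iota^!,\theta^!)$ and $G=\rho^!\perp H\pi^!$), and the base cases are right. The gap is in your plan for the other composite. First, your identification of the comma category is incorrect: over a fixed $(X,(B_\bullet,\psi))$ the canonical hyperbolic extension has automorphism group the unipotent isometries of $X\oplus B_{0,m-2}\oplus X^*$ preserving the filtration and inducing the identity on $X$ and on $B_\bullet$; these are parametrized by a \emph{free} $b\in\Hom(B_{0,m-2},X)$ (with $c\colon X^*\to B_{0,m-2}$ determined by $b$ and $\psi$) together with $a\colon X^*\to X$ whose symmetric part is forced by $c^*\psi c$ and whose antisymmetric part is free --- a Heisenberg-type group, not the single antisymmetric parameter you describe, and generally non-abelian, so Lemma \ref{lem:Eigenvalues} (stated for abelian groups) does not apply as written. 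Second, and more seriously, the identity $F\circ\Sigma=F$ that lets $\Sigma$ act on the comma categories over a \emph{fixed} base in the proof of Proposition \ref{prop:S2=K} fails for your peeling functor: halving the form also halves $\psi$ on the inner layer, so $P\circ\Sigma=(\Sigma\times 1)\circ P\neq P$. Hence the eigenspace decomposition of the $E^2$-page does not ``transfer verbatim''; $\Sigma$ now moves the base of the spectral sequence as well, and the whole equivariance set-up would have to be rebuilt.

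The paper avoids all of this by never re-entering the spectral-sequence argument. Having proved Proposition \ref{prop:S2=K} once, it extracts the formal additivity statement Corollary \ref{cor:prop:S2=K}\,(\ref{cor:prop:S2=K:itemB}): for any non-singular exact form functor $\A\to S_3\B$ the form functors $(F_{03},\ffi_{03})$ and $(F_{12},\ffi_{12})\perp H\circ F_{01}$ agree after group completion. It then writes down six monotone maps $\sigma_{ij}\colon[n]\to[n]$ assembling into a duality-preserving functor $\sigma^!\colon S_n\A\to S_3S_n\A$ with $\sigma_{03}^!=\mathrm{id}$, $\sigma_{12}^!=(\theta\rho)^!$ and $\sigma_{01}^!=(\iota\pi)^!$, so that $G\circ F=\sigma_{12}^!\perp H\sigma_{01}^!$ is homotopic to $\sigma_{03}^!=\mathrm{id}$ after group completion. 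I recommend adopting that formal route; if you insist on the direct argument you must first correct the comma-category computation and supply a replacement for the broken $\Sigma$-equivariance.
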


Before we give the proof, we note the following corollary of
Proposition \ref{prop:S2=K}.

\begin{corollary}
\label{cor:prop:S2=K}
Let $(A,*,\eta)$, $(\B,*,\eta)$ be additive categories with duality such that
$\frac{1}{2}\in \A, \B$.
\begin{enumerate}
\item
\label{cor:prop:S2=K:itemA}
Given a non-singular exact form functor $(F_{\bullet},\ffi_{\bullet}):
\A \to E\B$, that is,
a commutative diagram of additive functors $F_i: (\A,w) \to (\B,w)$, $i=-1,0,1$,
$$\xymatrix{
\hspace{2ex}F_{\bullet} \ar[d]^{\ffi_{\bullet}}_{\cong}:  & F_{1}
\xymono[r]^{d_1}\ar[d]^{\ffi_{1}}_{\cong} & F_0 \xyepi[r]^{d_0}\ar[d]^{\ffi_{0}}_{\cong}
&  F_{-1} \ar[d]^{\ffi_{-1}}_{\cong} \\
\hspace{2ex}F_{\bullet}^{\sharp}: & F_{-1}^{\sharp} \xymono[r]_{d_0^{\sharp}} & F_0^{\sharp}
\xyepi[r]_{d_1^{\sharp}} & F_{1}^{\sharp}}$$ 
with exact rows and  
$(\ffi_{1},\ffi_0,\ffi_{-1})=(\ffi_{-1}^{\sharp}\eta,\ffi_0^{\sharp}\eta,\ffi_{1}^{\sharp}\eta)$
an isomorphism.    
Then the two non-singular exact form functors 
$$(F_0,\ffi_0) \phantom{as}{\rm and}\phantom{wer} H\circ F_1$$
induce homotopic maps $(i\A)_h \to (i\B)_h$ after group completions.
\item
\label{cor:prop:S2=K:itemB}
Given a non-singular exact form functor $(F_{\bullet},\ffi_{\bullet}):
\A \to S_3\B$, 
then the two non-singular exact form functors 
$$(F_{03},\ffi_{03}) \phantom{as}{\rm and}\phantom{wer} (F_{12},\ffi_{12})
\perp H\circ F_{01}$$
induce homotopic maps $(i\A)_h \to (i\B)_h$ after group completions.
\end{enumerate}
\end{corollary}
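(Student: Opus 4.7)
The plan is to derive both parts from Proposition \ref{prop:S2=K} by a common ``compare-with-hyperbolic-lift'' principle: lift the two form functors under comparison into a larger category of exact sequences, show they agree after a group-completion equivalence, then project back. For part (\ref{cor:prop:S2=K:itemA}), let $p\colon (iE\B)_h \to (i\B)_h$ be the symmetric monoidal functor sending $(E_\bullet,\ffi)$ to its middle term $(E_0,\ffi_0)$. I would compare two symmetric monoidal functors $(i\A)_h \to (iE\B)_h$: the given form functor $(F_\bullet,\ffi_\bullet)$, which satisfies $p\circ(F_\bullet,\ffi_\bullet) = (F_0,\ffi_0)$, and the hyperbolic lift $\H F_1$ which forgets the form on $(A,\psi)$ and sends $A$ to the standard hyperbolic exact sequence $F_1 A \rightarrowtail F_1 A \oplus (F_1A)^\sharp \twoheadrightarrow (F_1A)^\sharp$, satisfying $p\circ\H F_1 = H\circ F_1$. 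Both of these, composed with the ``subobject'' forgetful functor $F\colon(iE\B)_h \to i\B$ of Proposition \ref{prop:S2=K}, yield the same functor $(A,\psi) \mapsto F_1 A$. Since Proposition \ref{prop:S2=K} says $F$ induces a homotopy equivalence after group completion, it reflects homotopy classes of maps, so the two lifts induce homotopic maps between the group completions; post-composing with $p$ gives the required homotopy $(F_0,\ffi_0) \simeq H\circ F_1$.

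For part (\ref{cor:prop:S2=K:itemB}) I would first establish an $S_3$-companion of Proposition \ref{prop:S2=K}: the symmetric monoidal functor
$$\tilde F\colon (iS_3\B)_h \longrightarrow i\B \times (i\B)_h,\qquad (E_\bullet,\ffi)\longmapsto (E_{01},\,(E_{12},\ffi_{12}))$$
is a group-completion equivalence, with section the hyperbolic lift $(X,(Y,\chi))\mapsto \H_{S_3}(X,(Y,\chi))$, the self-dual $S_3$-diagram whose underlying filtration is $X\rightarrowtail X\oplus Y\rightarrowtail X\oplus Y\oplus X^\sharp$ with $(Y,\chi)$ as middle symmetric quotient and $(X,X^\sharp)$ as outer hyperbolic pair. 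I would prove this companion by two successive applications of Proposition \ref{prop:S2=K}, one to the outer extension $F_{02}\rightarrowtail F_{03}\twoheadrightarrow F_{23}$ (which, after identifying $F_{23}\cong F_{01}^\sharp$ via the $S_3$-form, converts the $F_{03}$-part into hyperbolic data on $F_{01}$), and one to the inner extension $F_{01}\rightarrowtail F_{02}\twoheadrightarrow F_{12}$ (which converts the $F_{02}$-part into $(F_{12},\ffi_{12})$ plus further hyperbolic data on $F_{01}$). Given the companion, part (\ref{cor:prop:S2=K:itemB}) follows exactly as in (\ref{cor:prop:S2=K:itemA}): both $(F_\bullet,\ffi_\bullet)$ and the hyperbolic lift $\H_{S_3}(F_{01},(F_{12},\ffi_{12}))$ have the same composition with $\tilde F$, so induce homotopic maps between group completions, and composing with the top projection $(iS_3\B)_h \to (i\B)_h$, $(E_\bullet,\ffi)\mapsto (E_{03},\ffi_{03})$, gives the claimed homotopy.

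The main obstacle is executing the iterated Proposition \ref{prop:S2=K} in the $S_3$-companion. The $S_3$-duality $A_{ij}^\sharp = A_{3-j,3-i}^\sharp$ has the inconvenient feature that the intermediate objects $A_{02}$ and $A_{13}$ are dual to each other rather than self-dual, so neither of the sub-exact-sequences $(F_{01},F_{02},F_{12})$ or $(F_{01},F_{03},F_{13})$ directly inherits an $E\B$-form from the $S_3$-form; decomposing an $S_3$-form into iterated $E\B$-forms suitable for Proposition \ref{prop:S2=K} requires careful bookkeeping using the non-trivial symmetric isomorphism $F_{23}\cong F_{01}^\sharp$.
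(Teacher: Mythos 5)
Your part (\ref{cor:prop:S2=K:itemA}) is correct and is essentially the intended formal deduction from Proposition \ref{prop:S2=K}: both $(F_\bullet,\ffi_\bullet)$ and the hyperbolic lift of $F_1$ land in $(iE\B)_h$ and have the same composite with the functor $F$ of Proposition \ref{prop:S2=K} (which remembers only the subobject); since $F$ becomes a homotopy equivalence after group completion, it is injective on homotopy classes of maps out of any space, and projecting to the middle term finishes the argument.

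Part (\ref{cor:prop:S2=K:itemB}) has a genuine gap, and you have located it yourself. Everything rests on the ``$S_3$-companion'' of Proposition \ref{prop:S2=K}, which you assert but do not prove; the suggested proof by two successive applications of Proposition \ref{prop:S2=K}, to the sub-extensions $F_{01}\rightarrowtail F_{02}\twoheadrightarrow F_{12}$ and $F_{02}\rightarrowtail F_{03}\twoheadrightarrow F_{23}$, cannot work as stated, because under the duality $(A^\sharp)_{ij}=(A_{3-j,3-i})^\sharp$ on $S_3\B$ neither of these extensions is self-dual ($A_{02}$ is dual to $A_{13}$, and $A_{01}$ to $A_{23}$), so neither inherits a symmetric form in $E\B$ to which the proposition could be applied. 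Moreover, the companion statement is the $n=1$ case of Proposition \ref{prop:AddAddty}, which in this paper is \emph{deduced from} part (\ref{cor:prop:S2=K:itemB}); proving it first would require an independent argument (for instance a fresh comma-category and homology analysis as in the proof of Proposition \ref{prop:S2=K}), which your proposal does not supply. The way around the self-duality obstruction is not to decompose the filtration of the $S_3$-diagram but to use the one self-dual extension it does carry, namely $F_{02}\rightarrowtail F_{12}\oplus F_{03}\twoheadrightarrow F_{23}$, with symmetric form assembled from $\ffi_{02}$, $(-\ffi_{12})\oplus\ffi_{03}$ and $\ffi_{23}$. Writing $\sim$ for ``homotopic after group completion'', part (\ref{cor:prop:S2=K:itemA}) applied to this extension gives $H(F_{02})\sim(F_{12},-\ffi_{12})\perp(F_{03},\ffi_{03})$; applied to $F\rightarrowtail F\oplus F\twoheadrightarrow F$ with form $(\ffi,\ffi\oplus(-\ffi),\ffi)$ it gives $H(F)\sim(F,\ffi)\perp(F,-\ffi)$ for any form functor $(F,\ffi)$; combining with additivity in $K$-theory one gets $H(F_{02})\sim H(F_{01})\perp H(F_{12})\sim H(F_{01})\perp(F_{12},\ffi_{12})\perp(F_{12},-\ffi_{12})$, and cancelling the summand $(F_{12},-\ffi_{12})$ --- legitimate because homotopy classes of maps into the group completion form a group --- yields $(F_{03},\ffi_{03})\sim(F_{12},\ffi_{12})\perp H(F_{01})$.
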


\begin{proof}
Part (\ref{cor:prop:S2=K:itemA}) is a formal consequence of Proposition
\ref{prop:S2=K}. 
In a more general context, the implication is explained, for instance, in
\cite[3.10. Proof of theorem 3.3]{myMV}. 

For (\ref{cor:prop:S2=K:itemB}), write $\sim$ for ``homotopic after group completion''.
For any additive form
functor $(F,\ffi):\A \to \B$, we have a form functor $\A \to E\B$ given by
$$F \stackrel{\left(\begin{smallmatrix}1\\
      1\end{smallmatrix}\right)}{\rightarrowtail} F\oplus F
\stackrel{\left(\begin{smallmatrix}1 &
      -1\end{smallmatrix}\right)}{\twoheadrightarrow} F,
\hspace{3ex}(\ffi,\left(\begin{smallmatrix}\ffi & 0 \\
      0 & -\ffi \end{smallmatrix}\right), \ffi),
$$
so that $H(F)\sim (F,\ffi)\perp(F,-\ffi)$, by (\ref{cor:prop:S2=K:itemA}).

Now consider the given non-singular exact form functor 
$(F_{\bullet},\ffi_{\bullet}): \A \to S_3\B$.
It induces the non-singular exact form functor $\A \to E\B$ given by
$$\xymatrix{
F_{02} \hspace{1ex} \xymono[rr]^{\hspace{-3ex}\left(\begin{smallmatrix}F_{02\leq 12}\\
      F_{02\leq 03}\end{smallmatrix}\right)} &&
\hspace{2ex} F_{12}\oplus F_{03} \hspace{2ex}
\xyepi[rr]^{\hspace{3ex}\left(\begin{smallmatrix}-F_{12\leq 23} &
      F_{03\leq 23}\end{smallmatrix}\right)} && \hspace{3ex}F_{23}},
\hspace{3ex}(\ffi_{02},\left(\begin{smallmatrix}-\ffi_{12} & 0 \\
      0 & \ffi_{03} \end{smallmatrix}\right), \ffi_{23}),
$$
so that $H(F_{02})\sim (F_{12},-\ffi_{12})\perp(F_{03},\ffi_{03})$, by
(\ref{cor:prop:S2=K:itemA}). 
By Additivity in $K$-theory and the remark above, we have
$H(F_{02}) \sim H(F_{01}) \perp H(F_{12}) \sim H(F_{01}) \perp
(F_{12},\ffi_{12})\perp(F_{12},-\ffi_{12})$.
Cancelling $(F_{12},-\ffi_{12})$, the claim follows.
\end{proof}

\begin{proof}[Proof of Proposition \ref{prop:AddAddty}]
The cases $S_0$ and $S_1$ are trivial, and the case $S_2$ is proposition
\ref{prop:S2=K} since $E\A=S_2\A$.
Recall that the assignment $[n] \mapsto S_n\A$ is a simplicial category, where
a monotonic map $\vartheta: [n] \to [p]$ induces the functor
$\vartheta^!:S_{p}\A \to S_{n}\A: A \mapsto \vartheta^!A$ with
$(\vartheta^!A)_{i,j}=A_{\vartheta(i),\vartheta(j)}$.
If $\vartheta(n-i)=p-\vartheta(i)$, then $\vartheta^!$ preserves dualities.

We consider the following monotonic maps:
$$
\renewcommand\arraystretch{1.2} 
\begin{array}{ll}
\theta:[n-2] \to [n]:i \mapsto i+1, & \rho:[n] \to [n-2]: i \mapsto
\left\{\begin{array}{ll} 0 & i=0\\ i-1 & 1\leq i \leq n-1 \\ n-2 & i=
    n,\end{array} \right. \\
&\\
\iota: \hspace{1ex} [1] \to [n]: \hspace{1ex}i \mapsto i, & 
\pi: \hspace{2ex} [n] \to [1]: \hspace{2ex}
i \mapsto 
\left\{\begin{array}{ll} 0 & i=0\\ 1 & 1\leq i \leq n. \end{array} \right. 
\end{array}
$$
Note that $\theta^!$ and $\rho^!$ preserve dualities whereas, $\iota^!$ and
$\pi^!$  don't.
We will show that the functor
$$F = (\iota^!,\theta^!): (iS_{n}\A)_h \to i\A \times (iS_{n-2}\A)_h:(A,\ffi) \mapsto \iota^!A, \theta^!(A,\ffi)$$
is a homotopy equivalence after group completion with inverse the functor
$$G= \rho^!\perp H\pi^! : i\A \times (iS_{n-2}\A)_h \to (iS_{n}\A)_h:
A,(B,\ffi) \mapsto  \rho^!(B,\ffi) \perp H(\pi^!A),$$ 
where $H: iS_{n}\A \to (iS_{n}\A)_h$  denotes the hyperbolic functor 
$A \mapsto A \oplus A^*, \left(\begin{smallmatrix}0&1\\ \eta &
    0\end{smallmatrix}\right)$. 
One readily verifies $F\circ G = id$.
For the other composition, 
consider the following monotonic maps
$$
\renewcommand\arraystretch{1.2} 
\begin{array}{ll}
\sigma_{01}=\iota\pi:[n] \to [n], &
\sigma_{02}: \hspace{1ex} [n] \to [n]: \hspace{1ex} i \mapsto 
\left\{\begin{array}{ll} i & 0 \leq i\leq n-1 \\ n-1 & i=n, \end{array}
\right. \\ 
&\\
\sigma_{03}=id :[n] \to [n]:i\mapsto i, &
\sigma_{13}: \hspace{1ex} [n] \to [n]: \hspace{1ex} i \mapsto 
\left\{\begin{array}{ll} 1 & i=0 \\ i & 1 \leq i\leq n, \end{array}
\right. \\ 
&\\
\sigma_{12}=\theta\rho:[n] \to [n], &
\sigma_{23}: \hspace{1ex} [n] \to [n]: \hspace{1ex} i \mapsto 
\left\{\begin{array}{ll} n-1 & 0 \leq i\leq n-1 \\ n & i= n, \end{array}
\right. 
\end{array}
$$
and
note that $(i,j) \mapsto \sigma_{i,j}^!$ defines a
duality preserving functor $\sigma^!:S_n\A \to S_3S_n\A$.
By Corollary \ref{cor:prop:S2=K} \ref{cor:prop:S2=K:itemB},
the functors $G\circ F = \sigma_{12}^!\perp H\sigma_{01}^!$ and
$\sigma_{03}^!=id$ induce homotopic maps after
taking group completions.
\end{proof}

\begin{proof}[Proof of Theorem \ref{thm:hermGpCpl}]
This is the same as the proof of \cite[Theorem 4.2]{mygiffen}, or its version
\cite[Corollary 4.6]{mygiffen}.
The only place where Karoubi's Fundamental Theorem was used in \cite{mygiffen}
was in \cite[Lemma 4.4]{mygiffen} which asserts that a certain monoidal functor
$\beta_n:(i\A)^n\times (i\A)_h \to (iS_{2n+1}\A)_h$ 
is a homotopy equivalence after group completion. 
But this functor has a retraction which is a homotopy equivalence after
group completion, by Proposition \ref{prop:AddAddty}.
Thus $\beta_n$ is a homotopy equivalence after group completion.
\end{proof}

\section{Spectra, Tate spectra, and Bispectra}
\label{Appx:TateSp}

In this article we shall work with symmetric spectra of topological spaces 
where 
topological space means compactly generated topological space (that is, weak Hausdorff $k$-space) \cite{McCord}, \cite[Definition 2.4.21 and Proposition 2.4.22]{hovey:book}.
Symmetric spectra were first introduced in \cite{HSS:symSp} in order to construct a symmetric monoidal product on the level of spectra. 
Our main reference for symmetric spectra of topological spaces is \cite{MandellMayetc:diagrSp}, \cite{schwede:book}.
For the convenience of the reader and in order to fix notation we will review 
definitions and results needed in this article.
In Sections \ref{subsec:SpInMonCats} -- \ref{subsec:Bisp}, we consider bispectra used in the construction of the functor $\GW$.

\subsection{Spectra}
\label{Appx:Subsec:Spectra}
We start with fixing notation.
Let $I=[0,1] \subset \mathbb{R}$ be the unit interval, $S^0 = \{0,1\}\subset I$ the zero-sphere (both pointed at $1$), and let $S^1 = I/S^0$ be the unit circle.
Inclusion and quotient map define the pointed maps $i:S^0 \to I$ and $p:I \to S^1$.
Let $\Sigma_n$ be the symmetric group of bijections of the set $\{1,...,n\}$ of $n$-elements.
We consider the $n$-sphere 
$S^n= S^1\wedge ... \wedge S^1$ (smash product $n$-times)
as a pointed space with left $\Sigma_n$-action given by 
$\sigma(x_1\wedge \dots \wedge x_n) = x_{\sigma^{-1}(1)}\wedge ...\wedge x_{\sigma^{-1}(n)}$ for $\sigma \in \Sigma$, and we denote by
$$e_{n,m}:S^n\wedge S^m\to S^{n+m}: (x_1\wedge \dots x_n)\wedge (y_1\wedge \dots y_m) \mapsto x_1\wedge \dots x_n\wedge y_1\wedge \dots y_m$$
the canonical identification.
\vspace{2ex}

A {\em symmetric sequence} (in the category of pointed topological spaces) is
a sequence $X_n$, $n\in \N$, of pointed topological spaces with a continuous base-point preserving left action by the symmetric group $\Sigma_n$.
A map of symmetric sequences $X \to Y$ is a sequence of pointed $\Sigma_n$-equivariant maps $X_n \to Y_n$.

\begin{definition}
A {\em symmetric spectrum} is a symmetric sequence $X_n$, $n\in \N$, in the category of pointed topological spaces
together with pointed continuous $\Sigma_n\times \Sigma_m$-equivariant maps 
$$e_{n,m}: S^n\wedge X_m \to X_{n+m}$$
called {\em bonding maps} (or structure maps) where $\Sigma_n\times \Sigma_m$ acts on the space $X_{n+m}$ via the inclusion $\Sigma_n\times \Sigma_m \subset \Sigma_{n+m}: (\sigma,\tau)\mapsto \sigma\sqcup a\tau a^{-1}$ with $a \{1,...,m\} \to \{n+1,...,n+m\}: i\mapsto n+i$.
The bonding maps have to satisfy the following.
\begin{enumerate}
\item
The map $e_{0,n}:S^0\wedge X_n \to X_n$ is the usual identification $S^0\wedge X_n\cong X_n$.
\item
The following diagram commutes for all $k,n,m\in \N$
$$\xymatrix{
S^k\wedge S^n \wedge X_m \ar[r]^{1\wedge e_{n,m}} \ar[d]_{e_{k,n}\wedge 1} & S^k\wedge X_{n+m} \ar[d]^{e_{k,n+m}}\\
S^{k+n}\wedge X_m \ar[r]_{e_{k+n,m}} & X_{k+n+m}.}$$
\end{enumerate}
A map of symmetric spectra $X \to Y$ is a map of symmetric sequences
commuting with the bonding maps.
All spectra in this article will be symmetric.
For this reason, we will drop the adjective and call them simply spectra.
The category of spectra is denoted by $\Sp$.
\end{definition}

The prime example of a spectrum is the sphere spectrum 
$S=\{S^0,S^1,S^2,\dots\}$ with bonding maps $e_{n,m}$.
More generally, if $K$ is a pointed topological space, we denote by $\Sigma K$, or simply by $K$, the suspension spectrum of $K$ which has $n$-th space the pointed space $S^n\wedge K$ where $\Sigma_n$ acts on $S^n$ as above, and it acts trivially on $K$. The bonding maps are the maps $e_{n,m}\wedge 1: S^n\wedge S^m\wedge K \to S^{n+m}\wedge K$.
A spectrum $X$ is called {\em $\Omega$-spectrum} if  the adjoints $X_n \to \Omega X_{n+1}$ of the bonding maps $e_{1,n}$ are weak equivalences for all $n\geq 0$.
It is called {\em positive $\Omega$-spectrum} if  the adjoints $X_n \to \Omega X_{n+1}$ are weak equivalences for all $n\geq 1$.
The spectrum is called a {\em level CW-complex} if every space $X_n$ is a $CW$-complex.

The category of spectra is tensored and cotensored over the category of pointed spaces.
Let $K$ be a pointed space and $X$ a spectrum.
Their {\em smash-product} is the spectrum $X\wedge K$  with $n$-th space 
$X_n\wedge K$ and bonding maps $e_{n,m}\wedge 1: S^n\wedge X_m \wedge K\to X_{n+m}\wedge K$.
The {\em mapping (or power) spectrum} from $K$ to $X$ is the spectrum 
$\Sp(K,X)$
with $n$-th space the mapping space in the category of pointed topological spaces $\Map(K,X_n)$ and
bonding maps 
$$
\renewcommand\arraystretch{1.5} 
\begin{array}{ccccc}
\begin{array}{ccc}
S^n & \wedge & \Map(K,X_m)\\
t & \wedge & f
\end{array}
&
\hspace{-3ex}
\begin{array}{c}
\longrightarrow \\
\mapsto
\end{array}
&
\hspace{-3ex}
\begin{array}{c}
\Map(K,S^n\wedge X_{m})\\
(x\mapsto t\wedge f(x))
\end{array}
&
\hspace{-3ex}
\begin{array}{c}
\stackrel{e_{n,m}}{\longrightarrow}\\
\phantom{df}
\end{array}
&
\hspace{-3ex}
\begin{array}{c}
\Map(K,X_{n+m})\\
\phantom{df}
\end{array}
\end{array}
$$
Via the topological realization functor $K \mapsto |K|$ from simplicial sets to topological spaces, the category of spectra is also tensored and cotensored over the category of pointed simplicial sets.
We may write $X\wedge K$ and $\Sp(K,X)$ instead of $X\wedge |K|$ and $\Sp(|K|,X)$ when $K$ is a simplicial set and $X$ a spectrum.

\subsection{Model structures on the category of spectra}
Recall \cite[\S 2.4]{hovey:book} that the category of pointed topological spaces is a proper model category with weak equivalences the maps that induce isomorphisms on all homotopy groups (with respect to all choices of base points).
The fibrations are the Serre fibrations, and the cofibrations are the maps that have the left lifting property with respect to Serre fibrations which are weak equivalences.
Every topological space is fibrant, 
relative CW-complexes are cofibrations, and CW-complexes are cofibrant in this model structure.

The category of spectra supports various model structures \cite{MandellMayetc:diagrSp}, \cite{schwede:book}.
Call a map of spectra $X \to Y$ {\em level equivalence} ({\em level fibration}) 
if the maps $X_n \to Y_n$ are weak equivalences (fibrations) of topological spaces for all $n\geq 0$.
In the projective level model structure on $\Sp$, the weak equivalences and fibrations are the level equivalences and the level fibrations,  and the cofibrations are the maps that have the left lifting property with respect to fibrations which are level weak equivalences.
The suspension spectrum functor
$\Sigma:Top_* \to \Sp:K \mapsto \Sigma K$ preserves cofibrations.
In particular, the suspension spectrum
$\Sigma K$ of a pointed CW complex $K$ is cofibrant.
The level model structure is not particularly interesting in itself, but it is used to define the stable equivalences of spectra.

Denote by $[X,Y]'$ the set of maps from $X$ to $Y$ in the homotopy category of the projective level model structure on $\Sp$.
A map of spectra $X \to Y$ is a {\em stable equivalence} if for all $\Omega$-spectra $Z$, the map $[Y,Z]' \to [X,Z]'$ is a bijection.

In the projective stable model structure on the category of spectra,
the weak equivalences are the stable equivalences of spectra,
the cofibrations are the cofibrations of the projective level model structure, and 
the fibrations are the maps that have the right lifting property with respect to the cofibrations that are stable equivalences.
In this stable model structure, a map 
$X \to Y$ of spectra is a fibration if the maps $X_n \to Y_n$ are Serre fibrations for all $n\geq 0$, and the squares 
$$\xymatrix{
X_n \ar[r]^{\hspace{-5ex}e_{m,n}} \ar[d] & \Omega^{n+m}X_{n+m} \ar[d] \\
Y_n \ar[r]^{\hspace{-5ex}e_{m,n}} & \Omega^{n+m}Y_{n+m}}$$
are homotopy cartesian for all $n\geq 0$.
In particular, the fibrant objects in the projective model structure are precisely the $\Omega$-spectra, and 
suspension spectra of pointed CW complexes are cofibrant.

There is a useful variant of the projective model structures, namely, the positive projective model structures.
Call a map of spectra $X \to Y$ {\em positive level equivalence} ({\em positive level fibration}) 
if the maps $X_n \to Y_n$ are weak equivalences (fibrations) of topological spaces for all $n\geq 1$.
In the positive projective level model structure on $\Sp$, the weak equivalences and fibrations are the positive level equivalences and the positive level fibrations,  and the cofibrations are the maps that have the left lifting property with respect to fibrations which are positive level weak equivalences.
In the positive projective stable model structure on the category of spectra,
the weak equivalences are the stable equivalences of spectra,
the cofibrations are the cofibrations of the positive projective level model structure, and 
the fibrations are the maps that have the right lifting property with respect to the cofibrations that are stable equivalences.
The fibrant objects in the positive projective stable model structure are precisely the positive $\Omega$-spectra.
Note that the stable model structure and the positive stable model structure on $\Sp$ both have the same weak equivalences and therefore the same homotopy categories.

A map of (positive) $\Omega$ spectra $X \to Y$ is a stable equivalence if and only if for all $n\geq 0$ ($n\geq 1$), the maps $X_n \to Y_n$ are weak equivalences of topological spaces.
A square of (positive) $\Omega$-spectra 
$$\xymatrix{X \ar[r] \ar[d] & Y \ar[d]\\
Z \ar[r] & U}$$
is homotopy cartesian if and only if the square
$$\xymatrix{X_n \ar[r] \ar[d] & Y_n \ar[d]\\
Z_n \ar[r] & U_n}$$
is a homotopy cartesian square of spaces for all $n\geq 0$ ($n\geq 1$).
We denote by $[X,Y]$ the set of maps from $X$ to $Y$ in the stable homotopy category of spectra.
This set is canonically an abelian group.

\subsection{True and naive homotopy groups}
\label{subsec:trueVsNaiveHtpyGpsInSp}
The {\em true homotopy groups}, or simply {\em homotopy groups} $\pi_nX$, $n\in \Z$, of a spectrum $X$ are the groups
$$\pi_nX=[S^n,X].$$
The {\em naive homotopy groups} of a spectrum $X$ are the groups
$$\hat{\pi}_nX=\colim_k\pi_{n+k}(X_k)$$
where the colimit is taken over the system 
$$\pi_m(X_k) \to \pi_{m+1}(X_{k+1}):f \mapsto e_{1,k}\circ (S^1\wedge f).$$
If this colimit stabilizes, that is, if the maps $\pi_{n+k}(X_k) \to \pi_{n+k+1}(X_{k+1})$ are isomorphisms for large $k$ then the naive homotopy groups are canonically isomorphic to the homotopy groups. 
This happens, for instance, if the spectrum is a (positive) $\Omega$-spectrum, or if it is a suspension spectrum.

\subsection{Products}
The symmetric monoidal product $\wedge$ on pointed topological spaces
defines a symmetric monoidal product $\wedge$ on the 
category of symmetric sequences of pointed topological spaces; see \ref{subsec:SymmSeqProd}.
The maps $e_{m,n}:S^n\wedge S^m \to S^{n+m}$ define a map of symmetric sequences $e:S\wedge S \to S$.
Together with the map $u:\1 \to S$ of symmetric sequences given by $1:S^0 \to S^0$ and $\pt \to S^n$, $n\geq 1$, this 
makes the sphere spectrum $S$ into a commutative monoid.

Now, a symmetric spectrum is the same as a left module over the commutative monoid $S$ for the symmetric product $\wedge$ in the category of symmetric sequences.
It is a map of symmetric sequences $e:S\wedge X \to X$ such that the diagrams
$$\xymatrix{
S\wedge S \wedge X \ar[r]^{e\wedge 1} \ar[d]_{1\wedge e} & S\wedge X \ar[d]^e
& \1 \wedge X \ar[r]^{u\wedge 1} \ar[dr]_{\cong}& S\wedge X \ar[d]^e\\
S\wedge X \ar[r]^e & X & & X}
$$
commute.

The category of spectra $\Sp$ is symmetric monoidal under the smash product $X\wedge_SY$ of spectra defined as the coequalizer in the category of symmetric sequences
$$\xymatrix{
 \text{\raisebox{.7ex}{$X\wedge S\wedge Y$}} \hspace{1ex}
 \ar@<1.6ex>[r]^{\hspace{2ex}1\wedge e} \ar@<.2ex>[r]_{\hspace{2ex} e\circ \tau \wedge 1} 
& \hspace{1ex}  \text{\raisebox{.7ex}{$X\wedge Y$}}  \ar@<.9ex>[r]  
& \hspace{.5ex}\text{\raisebox{.7ex}{$X\wedge_S Y$}}
}
$$
where $\tau:X\wedge S \to S\wedge X$ is the switch isomorphism in the symmetric monoidal category of symmetric sequences.
If $X$ is a cofibrant spectrum for the projective stable model structure then the functors $Y \mapsto X\wedge_S Y$ and $Y \mapsto Y\wedge_S X$ preserve stable equivalences.
Thus, for any two spectra $X$ and $Y$, the derived smash-product $X\stackrel{L}{\wedge_S} Y$ can be computed as either $cX\wedge Y$, $cX\wedge_S cY$, or $X\wedge_S cY$ where $cX\to X$ and $cY \to Y$ are cofibrant approximations of $X$ and $Y$ in the projective stable model structure.

\subsection{The triangulated stable homotopy category}
The stable homotopy category $\SH$ is obtained from the category $\Sp$ of spectra by formally inverting the stable equivalences.
Recall that we denote by $[X,Y]$ the set of maps in $\SH$.
If $X$ is cofibrant, and $Y$ is fibrant then this is the set of homotopy classes of maps of spectra $X \to Y$, where two maps $f,g:X\to Y$ of spectra are homotopic if there is a map $h:X\wedge I_+ \to Y$ such that $f=h_0$ and $g=h_1$.
The stable homotopy category $\SH$ is a triangulated category with shift functor $$X \mapsto S^1\wedge_SX.$$
A triangle in $\SH$ is exact if it is isomorphic in $\SH$ to a standard exact triangle 
$$X \stackrel{f}{\longrightarrow} Y \stackrel{g}{\longrightarrow} C(f) \stackrel{h}{\longrightarrow} S^1\wedge X$$
associated with a map $f:X \to Y$ of spectra.
The maps in the standard exact triangle are
defined by the commutative diagram
$$\xymatrix{
X \ar[r]^{i\wedge 1} \ar[d]_f & I\wedge X \ar[r]^{p\wedge 1} \ar[d] & S^1\wedge X \ar[d]^{=}\\
Y \ar[r]^g & C(f) \ar[r]^h & S^1\wedge X}$$
in which the left square is cocartesian.

\subsection{Homotopy Orbit spectra}
Let $G$ be a group and let $EG$ be the usual contractible simplicial set on which $G$ acts freely on the left.
If $Ob[n]$ denotes the set of objects of the category $[n] = \{0<1<...<n\}$ then\begin{equation}
\label{eqn:EG}
E_nG=G^{n+1}=\Map_{Sets}(Ob[n],G)
\end{equation}
with simplicial structure induced by the cosimplicial set $n \mapsto Ob[n]$.
The left action is given by $gf(x)=g(f(x))$ for $g\in G$, $f:Ob[n]\to G$ and $x\in Ob[n]$.

Fix a functorial cofibrant approximation $cX \stackrel{\sim}{\longrightarrow} X$ on the category of spectra (in the projective stable model structure). 
If $X$ is a spectrum with right $G$-action then so is $cX$, and the stable equivalence $cX \to X$ is $G$-equivariant, by functoriality of the cofibrant approximation.
The {\em homotopy orbit spectrum} $X_{hG}$ of a spectrum $X$ with right $G$-action, also denoted $\bH_{\bullet}(G,X)$, is the spectrum 
$$\hspace{3ex}X_{hG}= \bH_{\bullet}(G,X)=  cX \wedge_GEG_+$$
where for a pointed topological space $K$ with left $G$-action the spectrum
$X\wedge_GK$  is defined by the coequalizer diagram
$$\xymatrix{
\text{\raisebox{.7ex}{${\displaystyle{\bigvee_{g\in G}}} X\wedge K$}} 
\hspace{1ex}
 \ar@<1.6ex>[r]^{\hspace{2ex}1\wedge g} \ar@<.2ex>[r]_{\hspace{2ex}g \wedge 1} 
& \hspace{1ex}  \text{\raisebox{.7ex}{$X\wedge K$}}  \ar@<.9ex>[r]  
& \hspace{.5ex}\text{\raisebox{.7ex}{$X\wedge_G K$}}
}
$$

The non-equivariant map $\pt \to G = E_0G \to EG: \pt \mapsto 1$ and 
the $G$-equivariant $EG \to \pt$ induce a string of spectra
$$cX = cX\wedge S^0 \stackrel{\sim}{\longrightarrow} cX \wedge EG_+ \longrightarrow  cX \wedge_G EG_+ \longrightarrow cX\wedge_G S^0 = cX/{G} \to X/G$$
such that the diagram
$$\xymatrix{cX \ar[r]^{\sim} \ar[d] & X \ar[d]\\
X_{hG} \ar[r] & X_G}$$
commutes.
By the coequalizer definition of homotopy orbits, the two compositions
$$\xymatrix{
\text{\raisebox{.7ex}{$ cX\wedge S^0 $}} \hspace{1ex}
 \ar@<1.6ex>[r]^{\hspace{-2ex}1\wedge g} \ar@<.2ex>[r]_{\hspace{-2ex}g \wedge 1} 
& \hspace{1ex}  \text{\raisebox{.7ex}{$cX\wedge EG_+$}}  \ar@<.9ex>[r]  
& \hspace{.5ex}\text{\raisebox{.7ex}{$cX\wedge_G EG_+$}}
}
$$
are equal for all $g\in G$.
The two maps $1\wedge g, 1 \wedge 1 :cX= cX \wedge S^0 \to cX\wedge EG_+$ are homotopic, by contractibility of $EG$.
It follows that the diagram
$$\xymatrix{
cX \ar[r] \ar[d]_g & X_{hG}\\
cX \ar[ru]&
}$$
is homotopy commutative for all $g\in G$.
In particular, for all $n\in \Z$
the map $\pi_n(X)=\pi_n(cX) \to \pi_n(X_{hG})$ factors through $\pi_n(X)/G \to \pi_n(X_{hG})$. 
In general, this map is not an isomorphism.

\begin{proposition}
\label{prop:htpyOrbitFacts}
\begin{enumerate}
\item
\label{prop:htpyOrbitFacts1}
If $X \to Y$ is a $G$-equivariant map of spectra which, forgetting the $G$-action, is a stable equivalence, then 
$X_{hG} \to Y_{hG}$ is also a stable equivalence.
\item
\label{prop:htpyOrbitFacts2}
If $X$ is a non-equivariant spectrum then $X\wedge G_+$ is a spectrum with right $G$-action and the natural map
$(X\wedge G_+)_{hG} \to (X\wedge G_+)/G=X$ is a stable equivalence.
\item
\label{prop:htpyOrbitFacts3}
If $X$ has a right $G$-action, then there is a homological spectral sequence  
$$E^2_{p,q}=H_p(G,\pi_qX) \Rightarrow \pi_{p+q}(X_{hG})$$
with differentials $d^r:E^r_{p,q} \to E^r_{p-r,q+r-1}$.
The spectral sequence converges strongly if $\pi_iX=0$ for $i<<0$.
\end{enumerate}
\end{proposition}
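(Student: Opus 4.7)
The plan is to prove the three parts in order, exploiting the fact that $EG$ is a free contractible $G$-CW complex and that our cofibrant replacement $cX$ is functorial and hence $G$-equivariant.

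For part (\ref{prop:htpyOrbitFacts1}), I would first reduce to showing that for any $G$-equivariant non-equivariant stable equivalence $f:cX \to cY$ between non-equivariantly cofibrant $G$-spectra, the induced map $cX \wedge_G EG_+ \to cY \wedge_G EG_+$ is a stable equivalence. I would then filter $EG$ by its skeleta $EG^{(n)}$ using the simplicial model (\ref{eqn:EG}): $EG^{(n)}$ is obtained from $EG^{(n-1)}$ by attaching free $G$-cells of dimension $n$, i.e., pushing out along $G \times S^{n-1} \hookrightarrow G \times D^n$. This gives a filtration $F_n = cX \wedge_G (EG^{(n)})_+$ of $cX \wedge_G EG_+$ whose subquotients $F_n/F_{n-1}$ are wedges of copies of $S^n \wedge cX$ (one for each non-degenerate equivariant $n$-cell), since $cX \wedge_G (G_+ \wedge S^n) \cong cX \wedge S^n$. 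Because cofibrant spectra behave well with respect to wedges and pushouts along cofibrations, the map $F_n \to F_n'$ (same construction for $cY$) is a stable equivalence on each subquotient, and induction plus passage to the filtered colimit yields the claim. The starting case $n=0$ gives $cX \wedge_G G_+ \cong cX$, which is a stable equivalence to $cY$ by hypothesis.

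For part (\ref{prop:htpyOrbitFacts2}), the key algebraic identity is the natural isomorphism $(X\wedge G_+) \wedge_G K \cong X \wedge K$ for any pointed $G$-space $K$, given by $x \wedge h \wedge y \mapsto x \wedge hy$ (with inverse $x \wedge y \mapsto x \wedge 1 \wedge y$). Applied to $K = EG_+$, this yields $(X\wedge G_+) \wedge_G EG_+ \cong X \wedge EG_+$. The functorial cofibrant replacement $c(X\wedge G_+) \to X\wedge G_+$ is a $G$-equivariant non-equivariant stable equivalence, so by part (\ref{prop:htpyOrbitFacts1}) the induced map $(X\wedge G_+)_{hG} = c(X\wedge G_+) \wedge_G EG_+ \to (X\wedge G_+) \wedge_G EG_+ \cong X \wedge EG_+$ is a stable equivalence; composing with the stable equivalence $X \wedge EG_+ \to X \wedge S^0 = X$ (induced by $EG \to \pt$) gives the result, and one checks this composition agrees with the natural map $(X\wedge G_+)_{hG} \to (X\wedge G_+)/G = X$.

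For part (\ref{prop:htpyOrbitFacts3}), I would again use the skeletal filtration $F_p = cX \wedge_G (EG^{(p)})_+$ of $cX \wedge_G EG_+$ from part (\ref{prop:htpyOrbitFacts1}). The spectral sequence associated with this filtration has $E^1_{p,q} = \pi_{p+q}(F_p/F_{p-1})$. By the computation above, $F_p/F_{p-1}$ is a wedge of copies of $S^p \wedge cX$ indexed by the free $G$-orbits of non-degenerate $p$-cells of $EG$, so $E^1_{p,q} = C_p(EG) \otimes_{\Z[G]} \pi_q X$, where $C_\bullet(EG)$ is the cellular chain complex of $EG$ viewed as a free resolution of $\Z$ over $\Z[G]$. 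The $d^1$-differential is induced by the cellular boundary, so $E^2_{p,q} = H_p(C_\bullet(EG) \otimes_{\Z[G]} \pi_qX) = H_p(G, \pi_qX)$. Strong convergence in the lower-bounded case is the standard convergence result for the spectral sequence of a half-plane filtration by stages each of which is a finite iterated extension. The main obstacle throughout all three parts is keeping track of cofibrancy, which requires invoking the functorial cofibrant replacement rather than using the point-set coequalizer directly.
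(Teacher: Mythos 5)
Your argument is correct, and for parts (2) and (3) it follows the same route as the paper: the paper proves (2) via the identification $(X\wedge G_+)\wedge_G EG_+ = X\wedge EG_+$ together with the level equivalence $X\wedge EG_+\to X\wedge S^0$, and it obtains the spectral sequence in (3) from exactly the skeletal filtration of $EG$ that you describe (the paper only sketches this in one line; your identification of the $E^1$-page with $C_\bullet(EG)\otimes_{\Z[G]}\pi_qX$ is the content being suppressed there). Where you genuinely diverge is part (1): the paper disposes of it by citing Hirschhorn's theorem that in a simplicial model category the functor $?\wedge EG_+$ (i.e.\ the Borel/homotopy colimit construction) preserves weak equivalences between pointwise cofibrant objects, whereas you reprove this special case by hand via cell induction over the free $G$-cells of $EG$, using that wedges, smashing with spheres, pushouts along (levelwise) cofibrations, and sequential colimits preserve stable equivalences. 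Your route is more elementary and self-contained, and it has the added benefit of setting up the filtration reused in part (3); the paper's citation is shorter but imports the general homotopy-colimit machinery.

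One small point to tighten: in part (2) you invoke part (1) for the map $c(X\wedge G_+)\to X\wedge G_+$, but part (1) as you stated and proved it compares $cA\wedge_G EG_+$ with $cB\wedge_G EG_+$ for \emph{both} spectra cofibrantly replaced, so the map $c(X\wedge G_+)\wedge_G EG_+\to (X\wedge G_+)\wedge_G EG_+$ is not literally an instance of it. This is easily repaired: your cell induction never actually uses cofibrancy of the spectra (only levelwise facts about $h$-cofibrations), so it applies verbatim to $c(X\wedge G_+)\to X\wedge G_+$; alternatively, argue as the paper does that $(X\wedge G_+)\wedge_G EG_+=X\wedge EG_+\to X$ is already a level equivalence and compare with the defining zig-zag for $(X\wedge G_+)_{hG}$. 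Either fix is routine, so I would not call this a gap, but the statement of the lemma you prove in part (1) should be adjusted to cover the generality in which you use it.
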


\begin{proof}
In any simplicial model category, the functor $?\wedge EG_+$ preserves weak equivalences between point-wise cofibrant objects \cite[Theorem 18.5.3.(1)]{Hirschhorn}.
This proves part (\ref{prop:htpyOrbitFacts1}).

For (\ref{prop:htpyOrbitFacts2}), the map
$$(X \wedge G_+) \wedge_G EG_+ = X \wedge EG_+ \to X \wedge S^0 = X$$
is a level equivalence since $|EG|_+ \to S^0$ is a homotopy equivalence.
It follows that this map is a stable equivalence.

For part (\ref{prop:htpyOrbitFacts3}),
the spectral sequence  is the spectral sequence of a filtered spectrum obtained by applying the functor $cX\wedge_G(\phantom{E})_+$ to the skeletal filtration of $EG$.
\end{proof}

\begin{remark}
If $X$ is a spectrum which is a level CW complex, then the following map is a stable equivalence 
$$X_{hG}=cX\wedge_GEG_+ \stackrel{\sim}{\longrightarrow} X\wedge_GEG_+.$$

This is because there is a functor $\Sing_*$ from symmetric spectra of topological spaces to symmetric spectra of simplicial sets which is right-adjoint to a functor $|\phantom{X}|$ from symmetric spectra of simplicial sets to symmetric spectra of topological spaces for which
the unit of adjunction $X \to |\Sing_*X|$ is a level equivalence.
Both functors $\Sing_*$ and $|\phantom{X}|$ preserve stable equivalences.
Therefore, in the commutative diagram
$$\xymatrix{
cX\wedge_G EG_+ \ar[r] \ar[d] & |\Sing_*cX|\wedge_G EG_+ = |\Sing_*cX\wedge_G EG_+| \ar[d] \\
X\wedge_G EG_+ \ar[r] & |\Sing_*X|\wedge_G EG_+ = |\Sing_*X\wedge_G EG_+| 
}$$
the horizontal maps are level weak equivalences because both sides are level cofibrant.
The right vertical map is a stable equivalence because in the injective stable model structure on the category of symmetric spectra of simplicial sets every spectrum is cofibrant.
Therefore, the left vertical map is a stable equivalence.
\end{remark}

\begin{example}
\label{ex:htpyOrbitMcLane}
If $X$ is a spectrum with $\pi_iX=0$ for $i<0$, then the spectral sequence \ref{prop:htpyOrbitFacts} (\ref{prop:htpyOrbitFacts3})
shows that the natural map $\pi_0(X)/G \to \pi_0(X_{hG})=H_0(G,\pi_0X)$ is an isomorphism.
If $X$ is an Eilenberg-MacLane spectrum (that is, $\pi_iX=0$ for $i\neq 0$),
then the spectral sequence collapses and yields an isomorphism for all 
$i \in \Z$
$$\pi_i(X_{hG})=H_i(G,\pi_0X).$$
\end{example}

\begin{example}
If $X$ is a spectrum with right $G$-action, then the canonical map 
$X \to X_{hG}$ is $G$-equivariant where $G$ acts trivially on $X_{hG}$.
More precisely, let $cX\wedge EG_+$ be the right $G$-spectrum where $g\in G$ acts as $g\wedge g^{-1}$ on $cX\wedge EG_+$.
Then we have a $G$-equivariant string of maps
$X \stackrel{\sim}{\leftarrow} cX \leftarrow cX\wedge EG_+ \to cX\wedge_GEG_+$
induced by the equivariant map $S^0 \leftarrow EG_+$ and the equivariant quotient map $X\wedge EG_+ \to X\wedge_GEG_+$.
The maps $X \stackrel{\sim}{\leftarrow} cX \leftarrow cX\wedge EG_+$ is a stable equivalence (forgetting the $G$-action).

We restrict now to the case $G=C_2=\Z/2$ the $2$-element group with 
$\sigma \in C_2$ the unique element of order $2$.
Let $X$ be a spectrum with $C_2=\Z/2$ action from the right.
We have a second $\Z/2$-action on $X$, denoted by $C_2^-$,
where the non-trivial element acts as $-\sigma$ on $X$.
From the discussion above, the map $X \to X_{hC_2^-}$ is $C_2^-$-equivariant where $X_{hC_2^-}$ carries the trivial action.
Therefore, the map $X \to X_{hC_2^-}$ is $C_2$-equivariant where $X$ carries the original $C_2$-action, and the non-trivial element $\sigma \in C_2$ acts on $X_{hC_2^-}$ as $-1$.
In summary, we have a $C_2$-equivariant map of spectra
\begin{equation}
\label{equn:XisX+X-}
X \to X^+\times X^- = X_{hC_2}\times X_{hC_2^-}
\end{equation}
where $X^+ = X_{hC_2}$ has the trivial action, and $\sigma$ acts on $X^-=X_{hC_2^-}$ by $-1$.
\end{example}

\begin{lemma}
\label{lem:2divisiblehorbit}
Let $X$ be a spectrum with $C_2$-action.
Assume that the homotopy groups $\pi_*X$ of $X$ are all uniquely $2$-divisible.
Then the $C_2$-equivariant map (\ref{equn:XisX+X-}) is a stable equivalence.
In particular, as an object of the triangulated stable homotopy category $SH$, the object $X^+$ is the image of the projector $(1+\sigma)/2$ of $X$, and
the homotopy groups of $X^+$ are given by the image of the map 
$(1+\sigma)/2: \pi_*X \to \pi_*X$, that is,
the following natural map is an isomorphism 
$$\pi_*(X)/C_2 \stackrel{\cong}{\longrightarrow} \pi_*(X_{hC_2}).$$
\end{lemma}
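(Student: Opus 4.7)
The plan is to reduce the equivalence (\ref{equn:XisX+X-}) to checking isomorphism on stable homotopy groups, and then to combine a purely algebraic splitting of uniquely $2$-divisible $C_2$-modules with the collapse of the homotopy-orbit spectral sequence of Proposition \ref{prop:htpyOrbitFacts}(\ref{prop:htpyOrbitFacts3}).

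First I would establish the following algebraic statement. For any uniquely $2$-divisible abelian group $A$ equipped with a $C_2$-action $\sigma$, the endomorphisms $e_{\pm}=(1\pm\sigma)/2\in\End(A)$ are well-defined, are orthogonal idempotents summing to $1$, and therefore split $A=e_+A\oplus e_-A$. On this decomposition $e_+A=A^{C_2}$ agrees with the invariants and maps isomorphically onto the coinvariants $A_{C_2}=A/(1-\sigma)A$, because $1-\sigma=2e_-$ acts invertibly on $e_-A$ and as zero on $e_+A$; similarly $e_-A$ gets identified with both the invariants and coinvariants of the sign-twisted action $C_2^-$. Thus the canonical maps induce an isomorphism $A\stackrel{\cong}{\longrightarrow}A_{C_2}\oplus A_{C_2^-}$.

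Second I would argue that for any spectrum $Y$ with $C_2$-action whose homotopy groups are uniquely $2$-divisible, the canonical map $\pi_nY\to\pi_n(Y_{hC_2})$ identifies the target with the coinvariants $(\pi_nY)_{C_2}$. This is read off of the spectral sequence
$$E^2_{p,q}=H_p(C_2,\pi_qY)\Longrightarrow\pi_{p+q}(Y_{hC_2})$$
of Proposition \ref{prop:htpyOrbitFacts}(\ref{prop:htpyOrbitFacts3}): for $p>0$ the group $H_p(C_2,\pi_qY)$ is annihilated by $|C_2|=2$ while simultaneously being a subquotient of a uniquely $2$-divisible abelian group, hence vanishes. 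Only the column $E^2_{0,n}=H_0(C_2,\pi_nY)=(\pi_nY)_{C_2}$ survives, and the edge map is the canonical projection. When $\pi_iY$ vanishes for $i$ sufficiently negative, Proposition \ref{prop:htpyOrbitFacts}(\ref{prop:htpyOrbitFacts3}) guarantees strong convergence; the general case is handled by passing to the Postnikov tower and taking limits, noting that $Y$ is canonically an $S[\tfrac12]$-module in $\SH$ so that all constructions commute with smashing with $S[\tfrac12]$.

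Applying the second step both to $Y=X$ with the given $C_2$-action and to $Y=X$ with the sign-twisted action $C_2^-$ gives $\pi_n(X^+)=(\pi_nX)_{C_2}$ and $\pi_n(X^-)=(\pi_nX)_{C_2^-}$. Combining with the algebraic splitting of the first step, the map (\ref{equn:XisX+X-}) induces on $\pi_n$ the isomorphism $\pi_nX\stackrel{\cong}{\longrightarrow}(\pi_nX)_{C_2}\oplus(\pi_nX)_{C_2^-}$, and is therefore a stable equivalence. The remaining assertions of the lemma are immediate: the image of the projector $e_+$ on $X$ (in $\SH$) is the summand $X^+\simeq X_{hC_2}$, and its homotopy groups are the image of $e_+$ on $\pi_*X$, that is, the coinvariants $\pi_*(X)/C_2$. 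The main obstacle is the convergence of the homotopy-orbit spectral sequence for spectra that are not bounded below; this is the place where either a Postnikov reduction or the $S[\tfrac12]$-module viewpoint has to be invoked with care.
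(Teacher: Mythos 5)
Your argument follows the same route as the paper's proof: the collapse of the homotopy-orbit spectral sequence of Proposition \ref{prop:htpyOrbitFacts} (\ref{prop:htpyOrbitFacts3}), because $H_p(C_2,A)=0$ for $p>0$ when $A$ is uniquely $2$-divisible, combined with the idempotent splitting $A\cong A_{C_2}\oplus A_{C_2^-}$ coming from $e_{\pm}=(1\pm\sigma)/2$. That part is sound. (One small caution on the vanishing of higher group homology: a subgroup of a uniquely $2$-divisible group need not be $2$-divisible, so "subquotient of a uniquely $2$-divisible group" is not quite the right justification; say instead that multiplication by $2$ is an automorphism of $A$, hence of the chain complex computing $H_*(C_2,A)$ and so of its homology, while it is also zero on $H_p$ for $p>0$.)

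The one step that fails as written is your treatment of spectra that are not bounded below: ``passing to the Postnikov tower and taking limits.'' Postnikov truncations kill homotopy in high degrees, not low ones, so they are no more bounded below than $X$ itself and the convergence hypothesis of Proposition \ref{prop:htpyOrbitFacts} (\ref{prop:htpyOrbitFacts3}) is still not met for them; moreover the homotopy-orbit functor, being built from the smash product $cX\wedge_{G}EG_+$, commutes with homotopy colimits but not with homotopy limits, and $\pi_*$ of a homotopy limit carries a $\lim^1$ correction. The correct reduction --- the one the paper uses --- is the dual one: write $X$ as a sequential homotopy colimit of its connective covers, which are bounded below; both $(-)_{hC_2}$ and $\pi_*$ commute with sequential homotopy colimits, so the bounded-below case implies the general one. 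The appeal to $X$ being a module over $S[\tfrac12]$ does not by itself repair this. With that single substitution your proof is complete and coincides with the paper's.
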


\begin{proof}
If $\pi_kX=0$ for $k<<0$, the claim follows from the spectral sequence of Proposition \ref{prop:htpyOrbitFacts} (\ref{prop:htpyOrbitFacts3}) which collapses at the $E^2$-page.
Since any spectrum can be written as a sequential (homotopy) colimit of spectra  $X$ for which $\pi_kX=0$ for $k<<0$, we are done.
\end{proof}

\subsection{Homotopy fixed point spectra}
Consider the simplicial set $EG$ from (\ref{eqn:EG}) equipped with the free right $G$-action defined by $fg(x)=f(x)g$ for $g\in G$, $f:Ob[n] \to G$ and $x\in Ob[n]$.
Fix a fibrant approximation $X \stackrel{\sim}{\to} X_f$ in the category of spectra (for the projective stable model structure).
If $X$ is a spectrum with right $G$-action then so is $X_f$, and the map $X \to X_f$ is $G$-equivariant.
The {\em homotopy fixed point spectrum} $X^{hG}$, also denoted $\bH^{\bullet}(G,X)$, of a spectrum $X$ with right $G$-action is the spectrum
$$X^{hG}=\bH^{\bullet}(G,X)=\Sp^G(EG_+,X_f)$$
where for a pointed space $K$ and a spectrum $Y$ both with right $G$-actions, the spectrum $\Sp^G(K,Y)$
is defined by the equalizer diagram
$$\xymatrix{
\text{\raisebox{.7ex}{$\Sp^G(K,Y)$}} \ar@<.9ex>[r] 
& \text{\raisebox{.7ex}{$\Sp(K,Y)$}}
 \ar@<1.6ex>[r]^{\hspace{-2ex}(g,1)}  \ar@<.2ex>[r]_{\hspace{-2ex}(1,g)} 
& \hspace{1ex}
\text{\raisebox{.7ex}{$\displaystyle{\prod_{g\in G}}{\Sp(K,Y).}$}}
}$$

The $G$-equivariant map $EG \to \pt$ and the non-equivariant map $\pt \to EG: \pt \mapsto 1$ induce a string of maps 
$$X^G \to (X_f)^G=\Sp^G(S^0,X_f) \to \Sp^G(EG_+,X_f)\to \Sp(EG_+,X_f) \to \Sp(S^0,X_f) = X_f$$
such that the diagram 
$$\xymatrix{X^G \ar[r] \ar[d] & X^{hG} \ar[d]\\
X \ar[r] & X_f
}$$
commutes. 
 
According to the equalizer definition of the homotopy fixed point spectrum, the two compositions
$$\xymatrix{
\text{\raisebox{.7ex}{$X^{hG}=\Sp^G(EG_+,X_f)$}} \ar@<.9ex>[r] 
& \text{\raisebox{.7ex}{$\Sp(EG_+,X_f)$}}
 \ar@<1.6ex>[r]^{\hspace{-2ex}(g,1)}  \ar@<.2ex>[r]_{\hspace{-2ex}(1,g)} 
& \hspace{1ex}\text{\raisebox{.7ex}{$\Sp(S^0,X_f)=X_f$}}
}$$
are equal. 
By contractibility of $EG$, the map $(g,1)$ in the diagram is homotopic to $(1,1)$.
Therefore, for all $g\in G$ the diagram
$$\xymatrix{X^{hG} \ar[r] \ar[dr] & X_f \ar[d]^g\\
& X_f}$$
homotopy commutes.
In particular, the map $\pi_n(X^{hG}) \to \pi_n(X_f) = \pi_n(X)$ factors through the fixed points $\pi_n(X)^G$ and we obtain a natural map 
$\pi_n(X^{hG}) \to  \pi_n(X)^G$ for all $n\in \Z$.
This map is not an isomorphism, in general.

\begin{proposition}
\label{prop:htpyFixPtFacts}

\begin{enumerate}
\item
\label{prop:htpyFixPtFacts1}
If $X \to Y$ is a $G$-equivariant map of spectra which, forgetting the $G$-action, is a stable equivalence, then 
$X^{hG} \to Y^{hG}$ is a stable equivalence.
\item
\label{prop:htpyFixPtFacts2}
If $X$ is a non-equivariant spectrum then the spectrum $\Sp(G_+,X)$ has a right $G$-action and the map 
$X=\Sp(G_+,X)^{G} \to \Sp(G,X)^{hG}$ is a stable equivalence.
\item
\label{prop:htpyFixPtFact3}
For a spectrum $X$ with right $G$-action there is a spectral sequence 
$$E_2^{p,q}=H^p(G,\pi_{-q}X) \Rightarrow \pi_{-p-q}(X^{hG})$$
which converges strongly if $\pi_iX=0$ for $i>>0$.
\end{enumerate}
\end{proposition}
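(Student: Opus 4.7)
The plan is to prove the three parts in order, each building on standard techniques from equivariant stable homotopy theory dual to those used for homotopy orbits in Proposition \ref{prop:htpyOrbitFacts}.

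For part (\ref{prop:htpyFixPtFacts1}), I would first observe that one may choose fibrant replacements functorially in the $G$-equivariant category: given a $G$-equivariant stable equivalence $X \to Y$, pick $G$-equivariant fibrant approximations $X \to X_f$ and $Y \to Y_f$ (which as underlying spectra are stable equivalences to fibrant spectra). Then $X_f \to Y_f$ is a level equivalence between $\Omega$-spectra. By the Hirschhorn-style statement \cite[Theorem 18.5.3.(1)]{Hirschhorn} dualized, the cotensor functor $K \mapsto \Sp(K,Z)$ preserves weak equivalences between fibrant objects when $K$ is cofibrant; since $EG_+$ is cofibrant, $\Sp(EG_+,X_f) \to \Sp(EG_+,Y_f)$ is a level equivalence between $\Omega$-spectra, and the equalizer defining $(\phantom{A})^{hG}$ preserves such equivalences spot by spot.

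For part (\ref{prop:htpyFixPtFacts2}), I would use the adjunction $\Sp^G(K,\Sp(G_+,X)) \cong \Sp(K/G,X)$ (where $G$ acts diagonally in the source and trivially on the target), which is a $G$-equivariant form of the standard free-orbit adjunction. Applied to $K = EG_+$ with the free $G$-action, this identifies $\Sp(G_+,X)^{hG} = \Sp^G(EG_+,\Sp(G_+,X)_f)$ with $\Sp((EG/G)_+, X_f) = \Sp(BG_+, X_f)$ after replacing $\Sp(G_+,X)$ by an equivariant fibrant model. Since $EG_+ \to G_+$ is a non-equivariant homotopy equivalence, the canonical map from $X = \Sp(G_+,X)^G$ into this is a stable equivalence; concretely the composition $X \to \Sp(BG_+,X_f)$ is induced by $BG_+ \to S^0$ and the pinch map, and a direct check on homotopy groups (together with part (\ref{prop:htpyFixPtFacts1}) to reduce to fibrant $X$) shows it is a stable equivalence.

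For part (\ref{prop:htpyFixPtFact3}), I would use the skeletal filtration $\emptyset = (EG)^{(-1)} \subset (EG)^{(0)} \subset (EG)^{(1)} \subset \cdots \subset EG$. Applying the contravariant functor $\Sp^G((\phantom{A})_+, X_f)$ produces a tower of fibrations whose successive fibres are $\Sp^G(((EG)^{(p)}/(EG)^{(p-1)})_+, X_f)$. Since $(EG)^{(p)}/(EG)^{(p-1)} \simeq (G_+)^{\wedge (p+1)} \wedge S^p$ with $G$ acting freely on $G_+^{\wedge (p+1)}$ by left multiplication on the first factor, the adjunction from part (\ref{prop:htpyFixPtFacts2}) identifies these fibres with $\Sp(S^p,X_f)^{\times \text{(basis)}}$, yielding the standard cobar complex with $p$-th term $\mathrm{Map}(G^p, \pi_{q}X)$ and differentials computing $H^p(G,\pi_qX)$. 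The conditional convergence comes from the tower; strong convergence under $\pi_iX = 0$ for $i >> 0$ follows because then only finitely many $(p,q)$ with fixed $p+q$ contribute, so the inverse $\mathrm{lim}^1$ obstruction vanishes.

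The main obstacle will be part (\ref{prop:htpyFixPtFacts2}), specifically verifying the free-orbit adjunction at the level of symmetric spectra and then correctly interpreting the resulting identification after fibrant replacement — one must be careful because $\Sp(G_+,X)$ need not be fibrant even when $X$ is, so the strict fixed points and $\Sp^G(EG_+,-)$ of a fibrant replacement must be compared. Once this is settled, parts (\ref{prop:htpyFixPtFacts1}) and (\ref{prop:htpyFixPtFact3}) follow by standard tower and model-categorical arguments.
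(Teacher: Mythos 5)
Your proposals for parts (\ref{prop:htpyFixPtFacts1}) and (\ref{prop:htpyFixPtFact3}) follow the route the paper intends (its proof is simply ``dual to Proposition \ref{prop:htpyOrbitFacts}''), but part (\ref{prop:htpyFixPtFacts2}) rests on a wrong adjunction. The formula you invoke, $\Sp^G(K,\Sp(G_+,X))\cong\Sp(K/G,X)$, is the identity for maps into $X$ with the \emph{trivial} action out of a free $K$; it is not the cofree adjunction. For the cofree object $\Sp(G_+,X)=\prod_{g\in G}X$ with $G$ permuting the factors, an equivariant map from $K$ is determined by its projection to the identity factor, which is an arbitrary non-equivariant map, so the correct identification is
$$\Sp^G\bigl(K,\Sp(G_+,X)\bigr)\cong\Sp(K,X),$$
with the \emph{underlying} space of $K$ on the right, not its $G$-quotient. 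With $K=EG_+$ this gives $\Sp(G_+,X)^{hG}\cong\Sp(EG_+,X_f)\simeq\Sp(S^0,X_f)=X_f$ by contractibility of $EG$, exactly dual to the computation $(X\wedge G_+)\wedge_GEG_+=X\wedge EG_+\simeq X$ in the proof of Proposition \ref{prop:htpyOrbitFacts}. Your identification instead lands in $\Sp(BG_+,X_f)$, which is the homotopy fixed point spectrum of $X$ for the \emph{trivial} action and is not stably equivalent to $X$ in general: its homotopy groups are computed by $H^*(G,\pi_*X)$ (compare Example \ref{ex:htpyFixMcLane} and Lemma \ref{lem:2divisiblehFixedPts}), so the ``direct check on homotopy groups'' you defer to cannot succeed. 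In part (\ref{prop:htpyFixPtFact3}) the filtration quotients are $F_+\wedge S^p$ with $F$ a \emph{free} $G$-set, and there the relevant identity is $\Sp^G(F_+,Y)\cong\prod_{F/G}Y$ for an arbitrary $G$-spectrum $Y$ (choose orbit representatives); this does produce the cobar complex, but be sure to use this free-orbit formula rather than the one from your part (\ref{prop:htpyFixPtFacts2}).

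A smaller point on part (\ref{prop:htpyFixPtFacts1}): after reducing to a level equivalence $\Sp(EG_+,X_f)\to\Sp(EG_+,Y_f)$ you cannot conclude by saying that ``the equalizer defining $(\phantom{A})^{hG}$ preserves such equivalences spot by spot'' --- strict fixed points are equalizers, i.e.\ limits, and do not preserve level equivalences in general. The argument should instead invoke the equivariant statement directly: $EG_+$ is cofibrant as a $G$-space, so $\Sp^G(EG_+,-)$ preserves weak equivalences between level fibrant $G$-objects; this is \cite[Theorem 18.5.3.(2)]{Hirschhorn}, the statement the paper itself cites in the remark following the proposition, and is precisely dual to the citation of \cite[Theorem 18.5.3.(1)]{Hirschhorn} in the proof of Proposition \ref{prop:htpyOrbitFacts}. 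With these two corrections the overall strategy is the intended one.
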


\begin{proof}
The proof is dual to the proof of Proposition \ref{prop:htpyOrbitFacts}.
We omit the details.
\end{proof}

\begin{remark}
If $X$ is a positive $\Omega$-spectrum, then the map 
$$\Sp^G(EG_+,X) \to \Sp^G(EG_+,X_f) = X^{hG}$$
is a stable equivalence.
This is because the positive $\Omega$-spectra are the fibrant objects of the positive projective stable model structure on $\Sp$,
and in any simplicial model category, the functor $\Map^G(EG_+,?)$ preserves weak equivalences between point-wise fibrant objects \cite[Theorem 18.5.3.(2)]{Hirschhorn}.
\end{remark}

\begin{example}
\label{ex:htpyFixMcLane}
If $X$ is a spectrum with $\pi_iX=0$ for $i>0$ then the spectral sequence \ref{prop:htpyFixPtFacts} (\ref{prop:htpyFixPtFact3}) shows that $H^0(G,\pi_0X) = \pi_0(X^{hG})$, and the map $\pi_0(X^{hG}) \to (\pi_0X)^G $ is an isomorphism.

Let $X$ be an Eilenberg-MacLane spectrum ($\pi_iX=0$, $i\neq 0$)  with right $G$-action. 
Then the spectral sequence collapses and yields an isomorphism for all $i \in \Z$
$$\pi_{-i}(X^{hG})=H^i(G,\pi_0X).$$
\end{example}

\begin{lemma}
\label{lem:2divisiblehFixedPts}
Let $X$ be a spectrum with right $C_2$-action.
Assume that the homotopy groups $\pi_*X$ of $X$ are all uniquely $2$-divisible.
Then the $C_2$-equivariant stable equivalence (\ref{equn:XisX+X-}) induces 
stable equivalences
$$X^{hC_2} \stackrel{\sim}{\longrightarrow} (X^+)^{hC_2} \stackrel{\sim}{\longrightarrow} X^+$$
In particular, the following natural map is an isomorphism 
$$\pi_*(X^{hC_2}) \stackrel{\cong}{\longrightarrow} \pi_*(X)^{C_2}.$$
\end{lemma}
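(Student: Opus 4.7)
The plan is to mimic the proof of Lemma \ref{lem:2divisiblehorbit}, replacing homotopy orbits by homotopy fixed points. First I would apply the functor $(?)^{hC_2}$ to the $C_2$-equivariant stable equivalence $X \to X^+ \times X^-$ of (\ref{equn:XisX+X-}); by Proposition \ref{prop:htpyFixPtFacts}(\ref{prop:htpyFixPtFacts1}) this yields a stable equivalence
$$X^{hC_2} \stackrel{\sim}{\longrightarrow} (X^+)^{hC_2} \times (X^-)^{hC_2}.$$
It therefore suffices to prove the two claims (a) $(X^-)^{hC_2}$ is stably contractible and (b) the natural map $X^+ \to (X^+)^{hC_2}$ (whose splitting is the map $(X^+)^{hC_2} \to X^+$ appearing in the statement) is a stable equivalence.

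Both claims rely on the same algebraic input. The homotopy groups $\pi_*X^\pm$ are uniquely $2$-divisible: $X^+ = X_{hC_2}$ has $\pi_*X^+ \cong \pi_*(X)/C_2$ by Lemma \ref{lem:2divisiblehorbit}, which is a quotient of a uniquely $2$-divisible group by a $2$-equivariant action; similarly for $X^- = X_{hC_2^-}$. For any uniquely $2$-divisible $C_2$-module $M$, the standard $2$-periodic resolution shows $H^p(C_2, M) = 0$ for all $p \geq 1$ (since on such $M$ the maps $1+\sigma$ and $1-\sigma$ have kernels and cokernels that fit into chain-contractible complexes when one of $1\pm\sigma$ is invertible on $M$ after inverting $2$); furthermore, when $\sigma$ acts by $-1$ one has $1-\sigma = 2$, which is an automorphism of $M$, so even $H^0 = \ker(1-\sigma) = 0$. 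Feeding this into the homotopy fixed point spectral sequence of Proposition \ref{prop:htpyFixPtFacts}(\ref{prop:htpyFixPtFact3}), with $E_2^{p,q} = H^p(C_2, \pi_{-q}Y)$, gives total vanishing of the $E_2$-page for $Y = X^-$ (proving (a)) and concentration on the zero column $E_2^{0,q} = \pi_{-q}X^+$ for $Y = X^+$ (proving (b), after checking that the edge map is identified with the natural map).

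The main obstacle is that this spectral sequence converges strongly only when $\pi_iY = 0$ for $i \gg 0$, which our $X^\pm$ need not satisfy. To get around this I would write $X$ as the homotopy limit of its Postnikov tower $X \simeq \holim_n P_nX$ (a limit of $C_2$-spectra); since $(?)^{hC_2}$ is the $C_2$-equivariant mapping spectrum out of $EC_{2,+}$ it commutes with homotopy limits, so it suffices to prove (a) and (b) level-wise in the Postnikov tower, where the spectral sequence converges strongly by Proposition \ref{prop:htpyFixPtFacts}(\ref{prop:htpyFixPtFact3}). The level-wise equivalences $(P_nX^-)^{hC_2}\simeq\ast$ and $P_nX^+\simeq(P_nX^+)^{hC_2}$ are then compatible, and the $\lim^1$ obstructions in passing to the limit vanish because all transition maps are equivalences in the relevant degrees.

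Finally, the isomorphism $\pi_*(X^{hC_2}) \stackrel{\cong}{\longrightarrow} \pi_*(X)^{C_2}$ follows: the composite $X^{hC_2} \to X^+$ induces on $\pi_*$ the composition $\pi_*X^{hC_2} \cong \pi_*X^+ \cong \pi_*(X)/C_2$ (using Lemma \ref{lem:2divisiblehorbit}), and the norm map $\tfrac12(1+\sigma)\colon \pi_*(X)/C_2 \to \pi_*(X)^{C_2}$ is an isomorphism because its inverse is the obvious quotient map (both are well-defined on a uniquely $2$-divisible $C_2$-module, and their composites are the identity). As an alternative shortcut, one can invoke Lemma \ref{lem:TateTwoinvertedX} to conclude directly that $\hat{\bH}(C_2,X)\simeq\ast$, so the hypernorm $X_{hC_2} \to X^{hC_2}$ is a stable equivalence, and combine this with Lemma \ref{lem:2divisiblehorbit}.
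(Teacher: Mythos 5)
Your main argument is correct and is essentially the paper's proof: reduce via the decomposition (\ref{equn:XisX+X-}) of Lemma \ref{lem:2divisiblehorbit} to the trivial-action and sign-action summands, run the homotopy fixed point spectral sequence of Proposition \ref{prop:htpyFixPtFacts} there, and handle the convergence issue by passing to the Postnikov tower. One caveat: the ``alternative shortcut'' via Lemma \ref{lem:TateTwoinvertedX} is circular, since in the paper that lemma is itself proved using the present one.
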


\begin{proof}
From the decomposition (\ref{equn:XisX+X-}) of Lemma \ref{lem:2divisiblehorbit}, it suffices to prove the claim in case $X$ has trivial action and in case the non-trivial element of $C_2$ acts as $-1$.
In the first case, the map $X^{hC_2} = \Sp(BG_+,X_f) \to X_f$ is a weak equivalence.
This follows from the spectral sequence \ref{prop:htpyFixPtFacts} (\ref{prop:htpyFixPtFact3}) provided $\pi_kX=0$ for $k>>0$.
Since every spectrum is a sequential homotopy limit of spectra $X$ with $\pi_kX=0$ for $k>>0$ (given by the Postnikov tower), the first case follows.
In the second case, the map $X^{hC_2} \to \pt$ is a weak equivalence.
Again, this follows from the spectral sequence \ref{prop:htpyFixPtFacts} (\ref{prop:htpyFixPtFact3}) provided $\pi_kX=0$ for $k>>0$, and in general by passing to homotopy limits.
\end{proof}

\subsection{Tate spectra}
\label{subsec:hypernorm}
Let $G$ be a finite group acting from the right on a spectrum $X$.
The {\em Tate spectrum} $\hat{\bH}(G,X)$ is the homotopy cofibre of the hyper norm map 
$\tilde{N}:X_{hG} \to X^{hG}$; see \cite{dwyerTate}, \cite{weissWilliamsAutomII}, \cite{Greenlees:AxTate}, \cite{jardine:etale}.
In case of a right $G$-module $A$, the Tate spectrum $\hat{\bH}(G,A)$ of the Eilenberg-MacLane spectrum associated with $A$ has $n$-th homotopy group naturally isomorphic to ordinary Tate cohomology $\hat{H}^{-n}(G,A)$ of $G$ with coefficients in $A$.
We review the relevant definitions and facts in case $G=\Z/2$.

Let $G=\Z/2 = \{1,\sigma\}$ where $\sigma \in G$ is the unique element of order $2$, and let $X$ be a spectrum with $G$-action.
We consider $X\times X$ as a spectrum with $G_1\times G_2$-action where $G_1=G_2=G$ and $(\sigma, 1) \in G_1\times G_2$ acts as the switch map $(x,y)\mapsto (y,x)$ and $(1,\sigma)$ acts as $(x,y)\mapsto (\sigma y, \sigma x)$.
This also induces a $G_1\times G_2$-action on $X\vee X \subset X\times X$. 
There is a string of $G_1\times G_2$-equivariant spectra
\begin{equation}
\label{G1G2eqnorm}
\xymatrix{X \ar[r]^{\hspace{-2ex}(1,\sigma)} & X\times X & X\vee X\ar[l]_{\hspace{2ex}\sim} \ar[r]^{\hspace{2ex}1\vee 1} & X}
\end{equation}
where $G_1\times G_2$ acts on the source $X$ via the projection $G_1\times G_2 \to G_1$ onto the first factor and $G_1\times G_2$ acts on the target $X$ of the map via the second projection $G_1\times G_2 \to G_2$.
The arrow in the wrong direction is a stable equivalence.
We write $N:X \to X$ for this string of maps and call it {\em norm map}.
The norm map is an honest map in the homotopy category of spectra and induces
$1+\sigma:\pi_n X \to \pi_nX$ on homotopy groups.

For a spectrum $Y$ with $G_1\times G_2$-action, we can consider $Y$ as a $G_i$-spectrum via the group homomorphisms $G_1\to G_1\times G_2:x \mapsto (x,1)$ and
$G_2\to G_1\times G_2:x \mapsto (1,x)$, and the two actions commute.
In particular, the homotopy fixed point spectrum $Y^{hG_2}$ is a $G_1$-spectrum and thus has a homotopy orbit spectrum $(Y^{hG_2})_{hG_1}$.
To abbreviate we write $FY=(Y^{hG_2})_{hG_1}$ for this spectrum.
Note that the functor $F$ preserves stable equivalences, and comes equipped with  natural maps
$(Y^{G_2})_{hG_1} \to FY \to (Y^{hG_2})_{G_1}$.
Since $G_2$ acts trivially on the source of (\ref{G1G2eqnorm}) and $G_1$ acts trivially on the target of (\ref{G1G2eqnorm}), we obtain the {\em hypernorm map} 
\begin{equation}
\label{eqn:tildeN}
\tilde{N}: X_{hG} = (X^{G_2})_{hG} \to FX \stackrel{FN}{\longrightarrow} FX \to (X^{hG})_{G_1} =  X^{hG}.
\end{equation}
More precisely, the hypernorm map is the string of maps $\tilde{N}$:
$$
X_{hG}=(X^{G_2})_{hG_1} \to FX \to F(X\times X) \stackrel{\sim}{\leftarrow} F(X\vee X) \to FX \to (X^{hG_2})_{G_1} = X^{hG}
$$
where the arrow in the wrong direction is a stable equivalence.

\begin{lemma}
\label{lem:NormHyperNorm}
The two maps $cX \to X_{hG} \stackrel{\tilde{N}}{\to} X^{hG} \to X_f$ and $cX \to X \stackrel{N}{\to} X \to X_f$ 
 are equal in the homotopy category of spectra.
\end{lemma}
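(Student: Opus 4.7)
The plan is to compare the two zigzag-definitions term-by-term using the naturality of the canonical comparisons between strict and derived orbit/fixed-point spectra. Observe first that both $N: X \to X$ and $\tilde N: X_{hG} \to X^{hG}$ arise as maps in $SH$ by applying functors to the single $G_1 \times G_2$-equivariant zigzag (\ref{G1G2eqnorm}): $N$ is obtained simply by forgetting the equivariant structure, whereas $\tilde N$ is obtained by applying $F(-) = ((-)^{hG_2})_{hG_1}$ and precomposing (resp.\ postcomposing) with the augmentations $(X^{G_2})_{hG_1} \to FX$ (resp.\ $FX \to (X^{hG_2})_{G_1}$). These augmentations are instances of the canonical natural transformations $Y^{G_2} \to Y^{hG_2}$ and $Y_{hG_1} \to Y_{G_1}$ on $G_1 \times G_2$-spectra. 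The task is to show that after the prescribed pre- and post-compositions $cX \to X_{hG}$, $X^{hG} \to X_f$ versus $cX \to X$, $X \to X_f$, the resulting zigzags from $cX$ to $X_f$ coincide in $SH$.

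I would set up a comparison diagram in $SH$ whose top row is
$$cX \xrightarrow{\sim} X \xrightarrow{(1,\sigma)} X \times X \xleftarrow{\sim} X \vee X \xrightarrow{1 \vee 1} X \xrightarrow{\sim} X_f,$$
computing $cX \xrightarrow{N} X_f$, and whose bottom row is
$$cX \to X_{hG} \to FX \to F(X \times X) \xleftarrow{\sim} F(X \vee X) \to FX \to X^{hG} \to X_f,$$
computing $cX \xrightarrow{\tilde N} X_f$. Column-wise vertical comparison maps are provided as follows: for each $G_1 \times G_2$-spectrum $Y$ appearing in (\ref{G1G2eqnorm}), there is a natural zigzag in $SH$
$$Y \longleftarrow Y^{G_2} \longrightarrow Y^{hG_2} \longleftarrow (Y^{hG_2})_{hG_1} = FY$$
(or its variant for the factor spectra $Y \times Y$ and $Y \vee Y$) connecting the underlying spectrum $Y$ with $FY$. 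Naturality of the functors $(-)^{G_2}, (-)^{hG_2},$ and $(-)_{hG_1}$ in the spectrum argument then ensures that these zigzags are compatible with the maps in (\ref{G1G2eqnorm}), so that each interior square commutes in $SH$.

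At the endpoints, the comparison zigzag degenerates. On the source, $G_2$ acts trivially on $X$, so $X^{G_2} = X$ and the augmentation $(X^{G_2})_{hG_1} \to FX$ is literally the map $X_{hG} \to FX$; moreover the natural map $cX \to X_{hG}$ fits into the commuting triangle $cX \to X \to X_{hG}$ with the stable equivalence $cX \to X$. Dually, on the target $G_1$ acts trivially, $(X^{hG_2})_{G_1} = X^{hG_2} = X^{hG}$, and the map $X^{hG} \to X_f$ factors through the augmentation $FX \to X^{hG}$. These endpoint matchings close the comparison diagram, and a direct diagram chase then yields the desired equality in $SH$.

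The main obstacle will be the verification of commutativity for the interior naturality squares in $SH$, in particular for the backwards stable equivalence $X \vee X \to X \times X$ and its $F$-image: the natural transformation $Y^{G_2} \to Y^{hG_2}$ is strictly natural on the point-set level, but combining it with $(-)_{hG_1}$ requires a functorial cofibrant replacement that need not commute strictly with the zigzag. This must be handled by showing that all intervening squares commute up to stable equivalence, which, since $SH$ identifies such squares, is enough to conclude the equality of the two composites $cX \to X_f$ in the stable homotopy category.
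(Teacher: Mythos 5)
Your proposal is correct and takes essentially the same route as the paper's proof: both arguments rest on the naturality of the canonical comparison maps between strict and homotopy fixed points/orbits applied to the $G_1\times G_2$-equivariant string (\ref{G1G2eqnorm}), closed off at the two ends by using that $G_2$ acts trivially on the source and $G_1$ acts trivially on the target. The paper merely transposes your ladder, taking the comparison zigzag $X \to X_f \leftarrow X^{hG_2} \to FX$ as rows and the norm as columns, and closes the diagram with the universal maps from $(cX)^{G_2}$ and into $(Y_f)_{G_1}$.
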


\begin{proof}
For clarity of exposition, write $X$ and $Y$ for source and target of (\ref{G1G2eqnorm}) with their respective $G_1\times G_2$ actions.
So, the norm map is a $G_1\times G_2$-equivariant string of maps $X \to Y$ with the arrow pointing in the wrong direction a stable equivalence. 
By functoriality, we have a commutative diagram
$$\xymatrix{
X \ar[r] \ar[d]^N & X_f \ar[d]^N & X^{hG_2} \ar[l] \ar[r] \ar[d]^N & FX \ar[d]^N\\
Y \ar[r] & Y_f & Y^{hG_2} \ar[l] \ar[r] & FY}$$
where the vertical maps are actually strings of maps with the vertical arrows in the wrong direction being stable equivalences.
Every spectrum in the top row receives a natural map from $(cX)^{G_2}$ such that all triangles commute.
Similarly, every spectrum in the bottom row naturally maps to $(Y_f)_{G_1}$ such that all triangles commute.
The resulting composition $(cX)^{G_2} \to FX \to FY \to (Y_f)_{G_1}$ is the first map in the lemma, and the map $(cX)^{G_2} \to X \to Y \to (Y_f)_{G_1}$ is the second map in the lemma.
By the commutativity of the diagram, these two maps are equal.
\end{proof}

\begin{definition}
\label{dfn:TateSp}
The {\it Tate spectrum} 
$$\hat\bH(\Z/2,X)$$
of a spectrum $X$ with $G=\Z/2$ action is the homotopy cofibre of the hypernorm map $\tilde{N}: X_{hG} \to  X^{hG}$.
Since $\tilde{N}$ involves a stable weak equivalence in the wrong direction, we give the following more precise and functorial version on the level of spectra.
The Tate spectrum is defined to be the lower right corner in the diagram
$$
\xymatrix{
X_{hG} \ar@{}[dr]^{\hspace{-6ex} \text{\Large $\ulcorner$}} \ar[r] \xymono[d]& F(X\times X) \xymono[d] & F(X\vee X) \ar@{}[dr]^{\hspace{-6ex} \text{\Large $\ulcorner$}} \ar[l]_{\sim} \ar[dl]^f \xymono[d] \ar[r] & X^{hG} \xymono[d] \\
X_{hG}\wedge I  \ar[r] & P & Z(f) \ar[l]^{\sim} \ar[r] & \hat\bH(\Z/2,X)
}$$
where left and right squares are push-outs, all vertical arrows are cofibrations, the left vertical map is the inclusion $X \to  X\wedge I: x \mapsto x\wedge 0$ of $X$ into its cone ($1$ being the base-point of $I=[0,1]$), and where the diagram $X \to Z(f) \to Y$ is a factorial factorization of a map $f:X \to Y$ into a cofibration followed by a stable equivalence.
\end{definition}

The functor $\hat{\bH}(\Z/2,\phantom{X})$ preserves stable equivalences and it sends sequences of $\Z/2$-spectra which (forgetting the action) are homotopy 
fibre sequences to homotopy fibre sequences.

\begin{example}
If $X$ is an Eilenberg-MacLane spectrum  ($\pi_iX=0$ for $i\neq 0$)
equipped with a $\Z/2$-action, then the long exact sequence associated with the homotopy fibration 
$X_{hG} \to X^{hG} \to \hat{\bH}(G,X)$ 
together with the calculations of the homotopy groups of $X_{hG}$ and $X^{hG}$ in Examples \ref{ex:htpyOrbitMcLane} and \ref{ex:htpyFixMcLane} yield isomorphisms
$$
\pi_i\hat{\bH}(\Z/2,X)\cong\left\{
\renewcommand\arraystretch{1.5} 
\begin{array}{ll}
H_i(\Z/2,\pi_0X)&i\geq 2\\
H^{-i}(\Z/2,\pi_0X)&i\leq -1
\end{array}
\right.
$$
and an exact sequence
$$0 \to \pi_1\hat\bH(G,X) \to \pi_0(X_{hG}) \stackrel{\tilde{N}}{\to} \pi_0(X^{hG}) \to \pi_0\hat\bH(G,X) \to 0.$$
By Examples \ref{ex:htpyOrbitMcLane}, \ref{ex:htpyFixMcLane} and Lemma \ref{lem:NormHyperNorm} the middle map $\pi_0\tilde{N}$ is the usual norm map
$1+\sigma: (\pi_0X)/G \to (\pi_0X)^G$.
These properties characterize Tate cohomology of $G=\Z/2$ with coefficients in the $G$-module $\pi_0X$.
Therefore, we have natural isomorphisms for $i\in \Z$
$$\pi_i\hat\bH(\Z/2,X)\cong \hat{H}^{-i}(\Z/2,\pi_0X).$$
\end{example}

\begin{lemma}
\label{lem:TateTwoinvertedX}
If $X$ is a spectrum with $C_2$-action such that all its homotopy groups $\pi_*X$ are uniquely $2$-divisible, then its Tate spectrum is contractible
$$\hat{\bH}(C_2,X)\simeq \pt.$$
\end{lemma}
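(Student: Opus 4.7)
The plan is to show that under the hypothesis that $\pi_*X$ is uniquely $2$-divisible, the hypernorm map $\tilde{N}: X_{hC_2} \to X^{hC_2}$ is a stable equivalence. Since the Tate spectrum $\hat{\bH}(C_2,X)$ is by definition the homotopy cofibre of $\tilde{N}$, this will give the claim.

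First I would compute both sides of $\tilde{N}$ on homotopy groups. By Lemma \ref{lem:2divisiblehorbit}, the canonical map $\pi_*(X)/C_2 \to \pi_*(X_{hC_2})$ is an isomorphism, and dually by Lemma \ref{lem:2divisiblehFixedPts}, the canonical map $\pi_*(X^{hC_2}) \to \pi_*(X)^{C_2}$ is an isomorphism. Both quotient and fixed points inherit unique $2$-divisibility from $\pi_*X$, and the splitting $\pi_*X = (\pi_*X)^+ \oplus (\pi_*X)^-$ induced by the idempotents $(1\pm \sigma)/2$ identifies both $\pi_*(X)/C_2$ and $\pi_*(X)^{C_2}$ with the $+1$-eigenspace $(\pi_*X)^+$.

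Next, I would identify $\tilde{N}$ with the norm map on homotopy groups. By Lemma \ref{lem:NormHyperNorm}, the composition $\pi_*(X) \to \pi_*(X_{hC_2}) \xrightarrow{\tilde{N}} \pi_*(X^{hC_2}) \to \pi_*(X)$ equals the ordinary norm $N = 1+\sigma: \pi_*X \to \pi_*X$. Under the identification of both $\pi_*(X_{hC_2})$ and $\pi_*(X^{hC_2})$ with $(\pi_*X)^+$, the left map becomes the projection $\pi_*X \twoheadrightarrow (\pi_*X)^+$ and the right map becomes the inclusion $(\pi_*X)^+ \hookrightarrow \pi_*X$. Chasing $(a,b) \in (\pi_*X)^+ \oplus (\pi_*X)^-$ through $1+\sigma$ yields $(2a,0)$, so on $(\pi_*X)^+$ the map $\tilde{N}$ must be multiplication by $2$.

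The main (and essentially only) step is then to observe that multiplication by $2$ is an isomorphism on $(\pi_*X)^+$, which is automatic from unique $2$-divisibility: $(\pi_*X)^+$ is a direct summand of the uniquely $2$-divisible group $\pi_*X$ and is therefore itself uniquely $2$-divisible. Hence $\tilde{N}$ is an isomorphism on all true homotopy groups, so a stable equivalence of spectra, so its cofibre $\hat{\bH}(C_2,X)$ is stably contractible. There is no serious obstacle here; the only mildly delicate point is the bookkeeping between the ``wrong-way'' stable equivalence in the definition of $\tilde{N}$ (Definition \ref{dfn:TateSp}) and the honest map $1+\sigma$ on homotopy groups, which is already handled by Lemma \ref{lem:NormHyperNorm}.
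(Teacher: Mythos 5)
Your proposal is correct and takes essentially the same route as the paper: both arguments come down to the facts that, under unique $2$-divisibility, the minus eigenspace contributes nothing to homotopy orbits or homotopy fixed points and the (hyper)norm acts as multiplication by $2$ on the plus part, which is invertible; the paper packages this as a $C_2$-equivariant splitting $X\simeq X^+\times X^-$ at the spectrum level before invoking Lemma \ref{lem:NormHyperNorm}, while you carry out the equivalent eigenspace bookkeeping directly on homotopy groups. The lemmas you cite (\ref{lem:2divisiblehorbit}, \ref{lem:2divisiblehFixedPts}, \ref{lem:NormHyperNorm}) are exactly the ones the paper uses.
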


\begin{proof}
For the $C_2$-equivariant stable equivalence $X \simeq X^+\times X^-$ from Lemma \ref{lem:2divisiblehorbit}, we have $(X^-)_{hC_2}\simeq (X^-)^{hC_2}\simeq \pt$; see Lemmas \ref{lem:2divisiblehorbit} and \ref{lem:2divisiblehFixedPts}.
This reduces the proof of the lemma to the case when $X =X^+$ has trivial $C_2$-action.
In this case, the maps
$X \to X_{hC_2}$ and $X^{hC_2}\to X$ are weak equivalences; see Lemmas \ref{lem:2divisiblehorbit} and \ref{lem:2divisiblehFixedPts}.
The composition $cX \to X \stackrel{N}{\to} X \to X_f$ is multiplication by $2$ which is an equivalence since $2$ is invertible in the homotopy groups of $X$.
From Lemma \ref{lem:NormHyperNorm}, it follows that the 
hypernorm map $\tilde{N}:X_{hC_2} \to X^{hC_2}$ is a stable equivalence.
In particular, its homotopy cofibre, the Tate spectrum, is contractible. 
\end{proof}

\subsection{Spectra in monoidal model categories}
\label{subsec:SpInMonCats}
Let $(\C,\otimes,\1)$ be a cofibrantly generated closed symmetric monoidal model category for which the domains of the generating cofibrations are cofibrant, and let $K \in \C$ be a cofibrant object.
Recall that the category $\C^{\Sigma}$ of symmetric sequences in $\C$ is symmetric monoidal under the monoidal product of Section \ref{subsec:SymmSeqProd} which we will denote by $\wedge$.
We have evaluation functors $\Ev_n:\C^{\Sigma} \to \C: X \mapsto X_n$ and their left adjoints $G_n:\C \to \C^{\Sigma}:M\mapsto (0,...,0,(\Sigma_n)_+\otimes M,0,0,...)$.
The category $\Sp(\C,K)$ of $K$-spectra in $\C$ is the category of left modules over the
free commutative monoid $S:=\Sym(G_1K)= (\1,K,K^{\otimes 2},K^{\otimes 3},...)$ in $\C^{\Sigma}$ on the symmetric sequence $G_1K = (0,K,0,0,...)$.
The evaluation functors $\Ev_n:\Sp(\C,K) \to \C:X \mapsto X_n$ have left adjoints $F_n:\C \to \Sp(\C,K): M \mapsto \Sym(G_1K) \wedge G_nK$.
By abuse of notation, we will write $M$ for $F_0M$. 
As modules over the commutative monoid $S=\Sym(G_1K)$, the category $\Sp(\C,K)$ is symmetric monoidal with unit $S$ and monoidal product $X\wedge_{S} Y$ defined as the coequalizer of the two multiplication maps $X\wedge S \wedge Y \to X \wedge Y$ coming from $X\wedge S \cong S \wedge X \to X$ and $S \wedge Y\to Y$.

In \cite[Theorem 8.11]{hovey:SymSpGeneral}, Hovey constructs a (projective) stable symmetric monoidal model structure on $\Sp(\C,K)$ for which $K$ is cofibrant and the functor $\Sp(\C,K) \to \Sp(\C,K): X \mapsto K \wedge_{S} X$ is a left Quillen equivalence.
The weak equivalences for this model category are the stable equivalences which are the maps $X \to Y$ of $K$-spectra which induce bijections $[Y,E]' \to [X,E]'$ on morphism sets in the homotopy category of the level model structure on $\Sp(\C,K)$ for every $\Omega$-spectrum $E$.
Here, a $K$-spectrum $X$ is an $\Omega$-spectrum if $X_n$ is fibrant in $\C$ for all $n\in \N$ and the adjoint $X_n \to \Map_{\C}(K,X_{n+1})$ of the structure map $K\otimes X_n \to X_{n+1}$ is a weak equivalence in $\C$.
The functors $F_l:\C \to \Sp(\C,K)$ preserve weak equivalences and cofibrant objects.
For $M$ and $N \in \C$, there are natural isomorphisms $F_{m+n}(M\otimes N) \to F_{m}M\wedge_SF_nN$ (adjoint to the identity map $M\otimes N = \Ev_{m+n}(F_mM\wedge_SF_nN)$) which are associative and unital.
In particular, the functor $F_0:\C \to \Sp(\C,K)$ is strong symmetric monoidal.
If $\C \to \C: X \to K \otimes X$ is a left Quillen equivalence, then $F_0:\C \to \Sp(\C,K)$ is a left Quillen equivalence \cite[Theorem 9.1]{hovey:SymSpGeneral}.

\begin{example}
An example to keep in mind is $(\Top_*,\wedge,S^0)$ with the usual model structure and $K=S^1$.
In this case, the model structure in Section \ref{subsec:SpInMonCats} is the projective stable model structure on the category of spectra of topological spaces.
But the example we are really interested in is $(\Sp,\wedge_S,S^0)$ equipped with the positive projective stable model structure and $K=\tilde{S}^1$ a cofibrant replacement of $S^1\in \Sp$ in that model structure; see below.
\end{example}

\subsection{True and naive homotopy sets}
For two objects $X,Y$ of a model category $\M$, denote by $[X,Y]_{\M}$ the set of maps from $X$ to $Y$ in the homotopy category of $\M$.
Let $n\in Z$ be an integer.
For a $K$-spectrum $X$ define its {\em $n$-th naive $K$-homotopy set} as 
$$\hat{\pi}_n(X) = \colim_k [K^{\otimes k}K^{\otimes n},X_k]_{\C}$$
where the maps in the colimit are given by tensoring with $K$ and composing with the structure map $K\otimes X_k \to X_{1+k}$ of the $K$-spectrum $X$.
The {\em $n$-th true $K$-homotopy set} of a $K$-spectrum $X$ is the set 
$$\pi_n(X) = [K^{\otimes n},X]_{\Sp(\C,K)}.$$
Of course, in case $\C = \Top_*$ and $K=S^1$, these are precisely the definitions given in Section \ref{subsec:trueVsNaiveHtpyGpsInSp}.
There is a natural map
\begin{equation}
\label{eqn:mapPiHatToPi}
\hat{\pi}_n(X) \to \pi_n(X)\end{equation}
defined as follows:
$$
\renewcommand\arraystretch{1.8} 
\begin{array}{ccl}
[K^{\otimes k}K^{\otimes n},X_k]_{\C} 
&\stackrel{F_k}{\longrightarrow}& [F_k(K^{\otimes k}K^{\otimes n}),F_k(X_k)]_{\Sp(\C,K)}\\
& = & [(F_1K)^{\wedge k}K^{\otimes n},F_k\Ev_k(X)]_{\Sp(\C,K)}\\
&\stackrel{s_k}{\longrightarrow}& [(F_1K)^{\wedge k}K^{\otimes n},X)]_{\Sp(\C,K)}\\
&\underset{\cong}{\stackrel{\hspace{2ex}\lambda^{\wedge k}}{\longleftarrow}} & [K^{\otimes n},X)]_{\Sp(\C,K)}
\end{array}
$$
where $s_k:F_k\Ev_k \to 1$ is the counit of adjunction, and $\lambda: F_1K \to S$  is the adjoint of the identity map $K = \Ev_1(S)$.
Note that $\lambda$, and hence $\lambda^{\wedge n}$, is a stable equivalence \cite[Theorem 8.8]{hovey:SymSpGeneral}.

\subsection{Bispectra}
\label{subsec:Bisp}
Now, we specialize to the case $(\C,\otimes,\1)$ the category $(\Sp,\wedge_S,S)$ of spectra equipped with the {\em positive stable model structure} and $K=\tilde{S}^1$ a cofibrant replacement of $S^1\in \Sp$ in that model structure.
The resulting category $\Sp(\Sp,\tilde{S}^1)$ of symmetric spectra in $\Sp$ will be called the {\em category of $\tilde{S}^1$-$S^1$-bispectra}, or simply the {\em category of bispectra} 
denoted by $\BiSp$:
$$\BiSp = \Sp(\Sp,\tilde{S}^1).$$
It is equipped with the projective stable model structure of Hovey \cite[Theorem 8.11]{hovey:SymSpGeneral} induced by the {\em positive stable model structure on the category $\Sp$ of spectra}.
So, a bispectrum in our sense is a left module over the commutative monoid $\tilde{S} = (S^0,\tilde{S}^1, \tilde{S}^2, \tilde{S}^3,...) = \Sym(G_1\tilde{S})$ in the category of symmetric sequences $\Sp^{\Sigma}$ in $\Sp$
where 
$\tilde{S}^n$ denotes $\tilde{S}^1\wedge_S\tilde{S}^1\wedge_S ... \wedge_S\tilde{S}^1$  ($n$ factors).
The symmetric monoidal product in $\BiSp$ is denoted by $\wedge_{\tilde{S}}$.

Since in $\Sp$ smash product with $\tilde{S}^1$ is a Quillen equivalence, the symmetric monoidal inclusion
$F_0:\Sp \to \BiSp$ 
is a Quillen equivalence, too.
In other words the category of bispectra is yet another symmetric monoidal model for the stable homotopy category.

Recall the map $\lambda: F_1(\tilde{S}^1) \to \tilde{S}$  which is adjoint to the identity map $\tilde{S}^1 = \Ev_1(\tilde{S})$.
For a bispectrum $X$, write $\lambda^*:X \to RX = \Map_{\BiSp}(F_1(\tilde{S}^1),X)$ for the adjoint of the map $\lambda\wedge_{\tilde{S}} 1_X: F_1(\tilde{S}^1)\wedge_{\tilde{S}} X \to \tilde{S}\wedge_{\tilde{S}} X = X$.
A level fibrant bispectrum $X$ (\ie, a bispectrum $X=(X_0,X_1,...)$ such that $X_n$ is a positive $\Omega$-spectrum in $\Sp$) is called {\em semistable} if the map $\lambda^*:X \to RX$ is a $\hat{\pi}_*$-isomorphism.

\begin{lemma}
\label{lem:semistable}
Let $X$ be a level fibrant semistable $\tilde{S}^1$-$S^1$ bispectrum.
Then the natural map (\ref{eqn:mapPiHatToPi}) is an isomorphism for all $n\in \Z$:
$$\hat{\pi}_n(X) \stackrel{\cong}{\longrightarrow} \pi_n(X).$$
\end{lemma}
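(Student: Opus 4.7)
The plan is to reduce to the case of a ``true'' $\Omega$-bispectrum by iterating the functor $R$, and to establish the lemma in that case directly from the construction of the stable model structure. Throughout, write $R = \Map_{\BiSp}(F_1(\tilde{S}^1),\phantom{A})$; since $\lambda : F_1(\tilde{S}^1) \to \tilde{S}$ is a stable equivalence between cofibrant objects, $\lambda^* : X \to RX$ is a stable equivalence whenever $X$ is fibrant in $\BiSp$. For level fibrant $X$, the $n$-th spectrum of $RX$ is the positive $\Omega$-spectrum $\Map_{\Sp}(\tilde{S}^1, X_{n+1})$ and $\lambda^*_n : X_n \to (RX)_n$ is the adjoint of the bonding map $\tilde{S}^1 \wedge_S X_n \to X_{n+1}$, so $X$ is an $\Omega$-bispectrum (in the sense that each $\lambda^*_n$ is a stable equivalence of spectra) iff $\lambda^*$ is a level stable equivalence.

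First, I would form the sequential colimit $R^{\infty}X = \colim_k R^k X$ in $\BiSp$. Since sequential colimits in $\BiSp$ are computed levelwise in $\Sp$, and since $\tilde{S}^1$ is cofibrant (hence its mapping spectrum functor preserves enough filtered colimits up to stable equivalence via a standard compactness argument for cofibrant domains in the positive stable structure), $R^{\infty}X$ is level fibrant and is a true $\Omega$-bispectrum: the map $(R^{\infty}X)_n \to \Map_{\Sp}(\tilde{S}^1,(R^{\infty}X)_{n+1})$ is a stable equivalence of spectra because it is the colimit of the maps $(R^kX)_n \to (R^{k+1}X)_n$, which are cofinal with the structure maps shifted by one.

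Second, I would verify that $X \to R^{\infty}X$ is both a $\hat{\pi}_*$-isomorphism and a stable equivalence. Semistability says $\lambda^* : X \to RX$ is a $\hat{\pi}_*$-iso; applying $R$ repeatedly (which preserves $\hat{\pi}_*$-isos between level fibrant bispectra, as these are detected levelwise in $\Sp$) and passing to the colimit (which commutes with $\hat{\pi}_*$ as a filtered colimit of abelian groups) gives the first claim. Each $\lambda^* : R^k X \to R^{k+1}X$ is a stable equivalence between fibrant bispectra, so $X \to R^{\infty}X$ is a stable equivalence as well.

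Third, for a true $\Omega$-bispectrum $Y$, the natural map $\hat{\pi}_n(Y) \to \pi_n(Y)$ is an isomorphism: maps in the homotopy category $[\tilde{S}^n, Y]_{\BiSp}$ can be represented by homotopy classes $F_k(\tilde{S}^{k+n}) \to Y$, i.e.\ by elements of $\colim_k [\tilde{S}^{k+n}, Y_k]_{\Sp} = \hat{\pi}_n(Y)$, and the bonding stable equivalences of $Y$ make this representation compatible with the colimit system. Combining these steps,
\[ \hat{\pi}_n(X) \xrightarrow{\cong} \hat{\pi}_n(R^{\infty}X) \xrightarrow{\cong} \pi_n(R^{\infty}X) \xleftarrow{\cong} \pi_n(X), \]
where the right isomorphism uses that $\pi_n$ is a stable-equivalence invariant. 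The main obstacle will be the careful bookkeeping in the second step—specifically, checking that $R^{\infty}X$ is genuinely a level fibrant $\Omega$-bispectrum and that $\Map_{\Sp}(\tilde{S}^1,\phantom{A})$ commutes with the relevant sequential colimit up to stable equivalence in $\Sp$; this is the point at which the cofibrancy of $\tilde{S}^1$ in the positive stable structure (and the hypothesis that domains of generating cofibrations are cofibrant, which sits at the foundation of Hovey's construction cited earlier) is essential.
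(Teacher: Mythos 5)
Your overall architecture (iterate $R$, pass to $R^{\infty}$, compare $\hat{\pi}_*$ and $\pi_*$ there) is the same as the paper's, and your first and third steps are essentially the claims the paper also defers to the standard symmetric-spectra arguments of Hovey--Shipley--Smith and Schwede. The problem is in your second step, at the point where you need $\pi_n(X)\to\pi_n(R^{\infty}X)$ to be an isomorphism. You justify this by saying each $\lambda^*:R^kX\to R^{k+1}X$ is a stable equivalence ``between fibrant bispectra,'' invoking the fact that $\Map_{\BiSp}(-,E)$ carries the stable equivalence $\lambda:F_1(\tilde S^1)\to\tilde S$ of cofibrant objects to a weak equivalence when $E$ is fibrant. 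But that argument requires $E$ to be fibrant in the \emph{stable} model structure, i.e.\ an $\Omega$-bispectrum, and the $R^kX$ are only \emph{level} fibrant. Indeed, if $X$ were stably fibrant there would be nothing to prove: the whole point of semistability is that $X$ is not assumed to be an $\Omega$-bispectrum, only that $\lambda^*$ is a $\hat{\pi}_*$-isomorphism, which is strictly weaker than a level equivalence. So as written, the step ``$X\to R^{\infty}X$ is a stable equivalence'' is unsupported, and this is exactly the nontrivial content of the lemma.

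The paper closes this gap differently: it shows that \emph{every} $\hat{\pi}_*$-isomorphism between level fibrant bispectra is a stable equivalence, by a retract argument. For an $\Omega$-bispectrum $E$ the map $i_E:E\to R^{\infty}E$ is a level equivalence, and for any map $f:X\to Y$ the map $[f,1]':[Y,E]'\to[X,E]'$ in the level homotopy category is a retract of $[R^{\infty}f,1]'$; if $f$ is a $\hat{\pi}_*$-isomorphism then $R^{\infty}f$ is a level equivalence, so $[R^{\infty}f,1]'$ is a bijection, hence so is its retract $[f,1]'$, which is the definition of $f$ being a stable equivalence. If you want to salvage your route, you must either import this lemma (or the equally nontrivial theorem that $\lambda^*:X\to RX$ is a stable equivalence for arbitrary level fibrant $X$), rather than deduce it from a fibrancy the objects do not possess. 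Two smaller points: the parenthetical claim that $\hat{\pi}_*$-isomorphisms ``are detected levelwise in $\Sp$'' is false (they involve a colimit over levels; that $R$ preserves them is a separate standard lemma), and the paper uses the mapping telescope rather than the strict colimit for $R^{\infty}$, which sidesteps your worry about $\Map_{\Sp}(\tilde S^1,-)$ commuting with sequential colimits.
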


\begin{proof}
As in \cite{HSS:symSp}, denote by $R^n$ the $n$-fold iterate of the functor $R$, by $R^{\infty}X$ the mapping telescope of 
$$X \stackrel{\lambda^*}{\longrightarrow} RX
\stackrel{R\lambda^*}{\longrightarrow} R^2X
\stackrel{R^2\lambda^*}{\longrightarrow} R^3X 
{\longrightarrow} \cdots,$$
and by $i_X:X \to R^{\infty}X$ the map from the initial term into the mapping telescope.
For a level fibrant semistable $\tilde{S}^1$-$S^1$ bispectrum, the map 
$i_X:X \to R^{\infty}X$ is a $\hat{\pi}_*$-isomorphism, and $R^{\infty}X$ is an $\Omega$-spectrum.
For every $\Omega$-spectrum $X$, the map 
$\hat{\pi}_*(X) \to \pi_*(X)$ is an isomorphism. 
Moreover, for any $\hat{\pi}_*$-isomorphism is a stable equivalence.
These claims are proved precisely as in \cite{HSS:symSp} or \cite{schwede:book} with the exception of the last claim which we will prove below.
Therefore, in the commutative diagram
$$\xymatrix{\hat{\pi}_*(X) \ar[r] \ar[d] & \pi_*(X)\ar[d]\\
\hat{\pi}_*(R^{\infty}X) \ar[r] & \pi_*(R^{\infty}X)
}$$
the two vertical maps and the lower horizontal map are isomorphisms.
It follows that the top horizontal map is an isomorphism too.

The proofs that a $\hat{\pi}_*$-isomorphism is a stable equivalence in \cite[Theorem 3.1.11]{HSS:symSp} and \cite[Theorem 4.23]{schwede:book} 
both rely on the injective level model structure on the category of symmetric spectra of simplicial sets in which every object is cofibrant.
A corresponding model structure in the case of bispectra presumably exists but we don't really need it. 
A minor modification of their argument will do.
For an $\Omega$-spectrum $E$, the map $i_E:E \to R^{\infty}E$ is a level equivalence.
In particular, it is an isomorphism in the homotopy category of the level model structure on bispectra.
Denote by $[X,Y]'$ the set of maps from $X$ to $Y$ in that homotopy category.
For a map $f:X \to Y$ of bispectra and $E$ an $\Omega$-bispectrum, we have a commutative diagram
$$\xymatrix{
[Y,E]' \ar[r]^{\hspace{-3ex}R^{\infty}} \ar[d]^{[f,1]'} 
& [R^{\infty}Y,R^{\infty}E]' \ar[d]^{[R^{\infty}f,1]'} \ar[r]^{\hspace{2ex}[1,i_E^{-1}]'}
& [R^{\infty}Y,E]' \ar[d]^{[R^{\infty}f,1]'} \ar[r]^{\hspace{2ex}[i_Y,1]'} & [Y,E]' \ar[d]^{[f,1]'} \\
[X,E]' \ar[r]_{\hspace{-3ex}R^{\infty}} 
& [R^{\infty}X,R^{\infty}E]' \ar[r]_{\hspace{2ex}[1,i_E^{-1}]'}
& [R^{\infty}X,E]' \ar[r]_{\hspace{2ex}[i_X,1]'} & [Y,E]'
}$$
in which the two horizontal compositions are the identity maps.
This shows that the map of sets $[f,1]':[Y,E]' \to [X,E]'$ is a retract of 
the map of sets $[R^{\infty}f,1]':[R^{\infty}Y,E]' \to [R^{\infty}X,E]'$.
If $f$ is a $\hat{\pi}_*$-isomorphism of level fibrant bispectra then $R^{\infty}f$ is a level equivalence.
Hence, the map of sets $[R^{\infty}f,1]'$ is a bijection.
As a retract of a bijection, the map of sets $[f,1]'$ is also a bijection.
By definition, this means that $f$ is a stable equivalence.
\end{proof}

\begin{remark}
\label{rmk:OmegaSpforSemiStableBiSp}
Recall that the inclusion $\Sp \subset \BiSp$ of spectra into bispectra preserves stable equivalences and induces an equivalence of homotopy categories.
In fact, it is a left Quillen equivalence.
In particular, the quasi-inverse $\BiSp$ sends an $X\in \BiSp$ to the zero spectrum $Z_0$ of a fibrant replacement $Z$ of $X \in \BiSp$.
On the subcategory of level fibrant semistable bispectra, this quasi-inverse can be chosen to be $(R^{\infty}X)_0$.
That is, it is the mapping telescope of the diagram of spectra
$$X_0 \stackrel{\lambda^*}{\longrightarrow} \Omega X_1
\stackrel{R\lambda^*}{\longrightarrow} \Omega^2X_2
\stackrel{R^2\lambda^*}{\longrightarrow} \Omega^3X_3 
{\longrightarrow} \cdots$$
\end{remark}

\begin{example}
Let $X = (X_0,X_1,...)$ be a level fibrant bispectrum.
Recall that this means that each $X_n$ is a positive $\Omega$-spectrum in 
$\Sp$.
Assume that the map of spectra $X_n \to \Omega X_{1+n}$ adjoint to the structure maps induces an isomorphism $\pi_i(X_n) \to \pi_i(\Omega X_{1+n})$ whenever $i>0$.
Then $X$ is semistable.
This is because the map $X \to RX$ in degree $n$ is precisely the map 
$X_n \to \Omega X_{1+n}$, and 
$\hat{\pi}_n(X) \to \hat{\pi}_n(RX)$ is the map on colimits over $i$ of the horizontal sequences
$$\xymatrix{
\pi_{i+n}(X_i) \ar[r] \ar[d] & \pi_{1+i+n}(X_{1+i}) \ar[r] \ar[d] & \cdots\\
\pi_{i+n}(\Omega X_{1+i}) \ar[r]  & \pi_{1+i+n}(\Omega X_{2+i}) \ar[r] & \cdots
}$$
By assumption, all the maps in the diagram are isomorphisms when $i+n>0$.
Hence, $\hat{\pi}_n(X) \to \hat{\pi}_n(RX)$ is an isomorphism for all $n\in \Z$.
\end{example}

\subsection{Symmetric sequences \cite{HSS:symSp}}
\label{subsec:SymmSeqProd}
Recall \cite[2.1.1]{HSS:symSp} that a symmetric sequence in a category $\C$ is a functor $\Sigma \to \C$ from the category $\Sigma$ to $\C$ where $\Sigma$ has objects the finite sets $\overline{n}=\{1,2,...,n\}$, $\overline{0}=\emptyset$, for $n\in \N$, and the automorphisms of the sets $\overline{n}$ as its maps.
A morphism of symmetric sequences is a natural transformation of functors $\Sigma \to \C$.
This defines the category $\C^{\Sigma}$ of symmetric sequences in $\C$.

If the category $\C$ has finite coproducts $\bigsqcup$  then so does the category $\C^{\Sigma}$ of symmetric sequences in $\C$.
Coproducts in $\C^{\Sigma}$ are computed object-wise.
Let $(\C,\otimes,\1)$ be a symmetric monoidal category which has finite coproducts.
In particular, it has an initial object $\emptyset = \bigsqcup_{\emptyset}$.
Assume that the monoidal product commutes with finite coproducts, that is,
the natural maps $(A\otimes E) \sqcup (B\otimes E)  \to (A\sqcup B)\otimes E$ and $\emptyset \to \emptyset \otimes E$ are isomorphisms.
Then the category of symmetric sequences $\C^{\Sigma}$ is equipped with a symmetric monoidal product which is best described by replacing $\Sigma$ by a slightly larger but equivalent category \cite[Remark 2.1.5]{HSS:symSp}.

Let $\P$ be the category whose objects are the finite subsets of $\N$ and whose morphisms are the isomorphisms of sets.
The natural inclusion $\Sigma \subset \P$ is an equivalence of categories with inverse $\P \to \Sigma$ given by identifying a finite subset $P$ of $\N$ with the set $\overline{|P|} \in \Sigma$ via the unique order-preserving bijection $P \cong \overline{|P|}$ where $|P|$ denotes the cardinality of $P$.
The categories of functors $\Sigma \to\C$ and $\P \to \C$ are equivalent (via the functor $\Sigma \subset \P$ and its inverse $\P \to \Sigma$).
The tensor product of two functors $X,Y: \P \to \C$ is the functor $X\otimes Y$ which for a finite subset $P$ of $\N$ is
$$(X\otimes Y)(P) = \bigsqcup_{\underset{A\cap B=\emptyset}{A\cup B = P,}}X(A)\otimes Y(B).$$
An isomorphism $f:P\to Q$ of sets defines the map $(X\otimes Y)(f)$ which is the coproduct of the isomorphisms $X(A)\otimes Y(B) \to X(fA)\otimes Y(fB)$ induced by the isomorphisms $f:A \to f(A)$ and $f:B \to f(B)$.
The tensor product is equipped with maps
$$m_{A,B}:X(A)\otimes Y(B) \to (X\otimes Y)(A\sqcup B)$$
functorial in $A, B \in \P$. 
As in \cite[Lemma 2.1.6]{HSS:symSp}, tensor product of symmetric sequences makes the category $\C^{\Sigma}$ into a symmetric monoidal category with unit the symmetric sequence $(\1,\emptyset,\emptyset,\dots)$.

The tensor product of symmetric sequences has the following universal property.
Let $Z$ be a symmetric sequence in $\C$, and let
$$f_{A,B}:X(A)\otimes Y(B) \to Z(A\sqcup B)$$
be a family of morphisms in $\C$ functorial in $A,B\in \P$.
Then there is a unique morphism $f:X\otimes Y \to Z$ of symmetric sequences in $\C$ such that $f_{A,B}=f_{A\sqcup B}\circ m_{A,B}$ for all $A,B\in \P$.

If $\C$ and $\D$ are symmetric monoidal categories with finite coproducts that commute with the monoidal products, and if $F:\C \to \D$ is a symmetric monoidal functor commuting with finite coproducts then the induced functor
$F:\C^{\Sigma} \to \D^{\Sigma}:X \mapsto F\circ X$ 
on symmetric sequences has the same property that is, it is also symmetric monoidal and commutes with finite coproducts.


\begin{thebibliography}{CHSW08}

\bibitem[ACD89]{dwyerTate}
A.~Adem, R.~L. Cohen, and W.~G. Dwyer.
\newblock Generalized {T}ate homology, homotopy fixed points and the transfer.
\newblock In {\em Algebraic topology ({E}vanston, {IL}, 1988)}, volume~96 of
  {\em Contemp. Math.}, pages 1--13. Amer. Math. Soc., Providence, RI, 1989.

\bibitem[AF14]{Fasel:Asok}
Aravind Asok and Jean Fasel.
\newblock A cohomological classification of vector bundles on smooth affine
  threefolds.
\newblock {\em Duke Math. J.}, 163(14):2561--2601, 2014.

\bibitem[Bal00]{Balmer:TWGI}
Paul Balmer.
\newblock Triangular {W}itt groups. {I}. {T}he 12-term localization exact
  sequence.
\newblock {\em $K$-Theory}, 19(4):311--363, 2000.

\red{
\bibitem[Bal01a]{Balmer:TWGII}
Paul Balmer.
\newblock Triangular {W}itt groups. {II}. {F}rom usual to derived.
\newblock {\em Math. Z.}, 236(2):351--382, 2001.
}

\bibitem[Bal01b]{balmer:MV}
Paul Balmer.
\newblock Witt cohomology, {M}ayer-{V}ietoris, homotopy invariance and the
  {G}ersten conjecture.
\newblock {\em $K$-Theory}, 23(1):15--30, 2001.

\bibitem[BF78]{BF:gamma}
A.~K. Bousfield and E.~M. Friedlander.
\newblock Homotopy theory of {$\Gamma $}-spaces, spectra, and bisimplicial
  sets.
\newblock In {\em Geometric applications of homotopy theory ({P}roc. {C}onf.,
  {E}vanston, {I}ll., 1977), {II}}, volume 658 of {\em Lecture Notes in Math.},
  pages 80--130. Springer, Berlin, 1978.

\bibitem[BK90]{bondalKapranov:pretr}
A.~I. Bondal and M.~M. Kapranov.
\newblock Enhanced triangulated categories.
\newblock {\em Mat. Sb.}, 181(5):669--683, 1990.

\bibitem[BK05]{berrickKaroubi}
A.~J. Berrick and M.~Karoubi.
\newblock Hermitian {$K$}-theory of the integers.
\newblock {\em Amer. J. Math.}, 127(4):785--823, 2005.

\bibitem[BKS{\O}15]{meBerrickKaroubiOestvar}
A.~J. Berrick, M.~Karoubi, M.~Schlichting, and P.~A. {\O}stv{\ae}r.
\newblock The {H}omotopy {F}ixed {P}oint {T}heorem and the
  {Q}uillen-{L}ichtenbaum conjecture in {H}ermitian {$K$}-theory.
\newblock {\em Adv. Math.}, 278:34--55, 2015.

\bibitem[Bro94]{brown:cohgps}
Kenneth~S. Brown.
\newblock {\em Cohomology of groups}, volume~87 of {\em Graduate Texts in
  Mathematics}.
\newblock Springer-Verlag, New York, 1994.
\newblock Corrected reprint of the 1982 original.

\bibitem[BS01]{BalmerMe}
Paul Balmer and Marco Schlichting.
\newblock Idempotent completion of triangulated categories.
\newblock {\em J. Algebra}, 236(2):819--834, 2001.

\bibitem[CHSW08]{CHSW}
G.~Corti{\~n}as, C.~Haesemeyer, M.~Schlichting, and C.~Weibel.
\newblock Cyclic homology, cdh-cohomology and negative {$K$}-theory.
\newblock {\em Ann. of Math. (2)}, 167(2):549--573, 2008.

\bibitem[CHW08]{CHW:Vorst}
G.~Corti{\~n}as, C.~Haesemeyer, and C.~Weibel.
\newblock {$K$}-regularity, {$cdh$}-fibrant {H}ochschild homology, and a
  conjecture of {V}orst.
\newblock {\em J. Amer. Math. Soc.}, 21(2):547--561, 2008.

\bibitem[Dri04]{Drinfeld:DGquotient}
Vladimir Drinfeld.
\newblock D{G} quotients of {DG} categories.
\newblock {\em J. Algebra}, 272(2):643--691, 2004.

\bibitem[FRS12]{Fasel:Rao:Swan}
J.~Fasel, R.~A. Rao, and R.~G. Swan.
\newblock On stably free modules over affine algebras.
\newblock {\em Publ. Math. Inst. Hautes \'Etudes Sci.}, 116:223--243, 2012.

\bibitem[FS02]{FriedlanderSuslin}
Eric~M. Friedlander and Andrei Suslin.
\newblock The spectral sequence relating algebraic {$K$}-theory to motivic
  cohomology.
\newblock {\em Ann. Sci. \'Ecole Norm. Sup. (4)}, 35(6):773--875, 2002.

\bibitem[FS09]{fasel:srinivas}
J.~Fasel and V.~Srinivas.
\newblock Chow-{W}itt groups and {G}rothendieck-{W}itt groups of regular
  schemes.
\newblock {\em Adv. Math.}, 221(1):302--329, 2009.

\bibitem[Ger72]{gersten:deloop}
S.~M. Gersten.
\newblock On the spectrum of algebraic {$K$}-theory.
\newblock {\em Bull. Amer. Math. Soc.}, 78:216--219, 1972.

\bibitem[GH99]{geisserHesselholt:cyclicSchemes}
Thomas Geisser and Lars Hesselholt.
\newblock Topological cyclic homology of schemes.
\newblock In {\em Algebraic {$K$}-theory ({S}eattle, {WA}, 1997)}, volume~67 of
  {\em Proc. Sympos. Pure Math.}, pages 41--87. Amer. Math. Soc., Providence,
  RI, 1999.

\bibitem[Gil07]{Gille:genlDevissage}
Stefan Gille.
\newblock The general d\'evissage theorem for {W}itt groups of schemes.
\newblock {\em Arch. Math. (Basel)}, 88(4):333--343, 2007.

\bibitem[Gra76]{quillenGrayson}
Daniel Grayson.
\newblock Higher algebraic {$K$}-theory. {II} (after {D}aniel {Q}uillen).
\newblock In {\em Algebraic $K$-theory (Proc. Conf., Northwestern Univ.,
  Evanston, Ill., 1976)}, pages 217--240. Lecture Notes in Math., Vol. 551.
  Springer, Berlin, 1976.

\bibitem[Gre01]{Greenlees:AxTate}
J.~P.~C. Greenlees.
\newblock Tate cohomology in axiomatic stable homotopy theory.
\newblock In {\em Cohomological methods in homotopy theory ({B}ellaterra,
  1998)}, volume 196 of {\em Progr. Math.}, pages 149--176. Birkh\"auser,
  Basel, 2001.

\bibitem[Hap87]{Happel:FrobTriang}
Dieter Happel.
\newblock On the derived category of a finite-dimensional algebra.
\newblock {\em Comment. Math. Helv.}, 62(3):339--389, 1987.

\bibitem[Har66]{hartshorne:dual}
Robin Hartshorne.
\newblock {\em Residues and duality}.
\newblock Lecture notes of a seminar on the work of A. Grothendieck, given at
  Harvard 1963/64. With an appendix by P. Deligne. Lecture Notes in
  Mathematics, No. 20. Springer-Verlag, Berlin, 1966.

\bibitem[Hir03]{Hirschhorn}
Philip~S. Hirschhorn.
\newblock {\em Model categories and their localizations}, volume~99 of {\em
  Mathematical Surveys and Monographs}.
\newblock American Mathematical Society, Providence, RI, 2003.

\bibitem[Hor05]{hornbostel:A1reps}
Jens Hornbostel.
\newblock {$A^1$}-representability of {H}ermitian {$K$}-theory and {W}itt
  groups.
\newblock {\em Topology}, 44(3):661--687, 2005.

\bibitem[Hov99]{hovey:book}
Mark Hovey.
\newblock {\em Model categories}, volume~63 of {\em Mathematical Surveys and
  Monographs}.
\newblock American Mathematical Society, Providence, RI, 1999.

\bibitem[Hov01]{hovey:SymSpGeneral}
Mark Hovey.
\newblock Spectra and symmetric spectra in general model categories.
\newblock {\em J. Pure Appl. Algebra}, 165(1):63--127, 2001.

\bibitem[HSS00]{HSS:symSp}
Mark Hovey, Brooke Shipley, and Jeff Smith.
\newblock Symmetric spectra.
\newblock {\em J. Amer. Math. Soc.}, 13(1):149--208, 2000.

\bibitem[Jar97]{jardine:etale}
J.~F. Jardine.
\newblock {\em Generalized \'etale cohomology theories}, volume 146 of {\em
  Progress in Mathematics}.
\newblock Birkh\"auser Verlag, Basel, 1997.

\bibitem[Kar70]{karoubi:deloop}
Max Karoubi.
\newblock Foncteurs d\'eriv\'es et {$K$}-th\'eorie.
\newblock In {\em S\'eminaire {H}eidelberg-{S}aarbr\"ucken-{S}trasbourg sur la
  {K}th\'eorie (1967/68)}, Lecture Notes in Mathematics, Vol. 136, pages
  107--186. Springer, Berlin, 1970.

\bibitem[Kar73]{karoubi:battelle}
Max Karoubi.
\newblock P\'eriodicit\'e de la {$K$}-th\'eorie hermitienne.
\newblock In {\em Algebraic $K$-theory, III: Hermitian $K$-theory and geometric
  applications (Proc. Conf., Battelle Memorial Inst., Seattle, Wash., 1972)},
  pages 301--411. Lecture Notes in Math., Vol. 343. Springer, Berlin, 1973.

\bibitem[Kar80]{Karoubi:Annals}
Max Karoubi.
\newblock Le th\'eor\`eme fondamental de la {$K$}-th\'eorie hermitienne.
\newblock {\em Ann. of Math. (2)}, 112(2):259--282, 1980.

\bibitem[Kar06]{Karoubi:stabilized}
Max Karoubi.
\newblock Stabilization of the {W}itt group.
\newblock {\em C. R. Math. Acad. Sci. Paris}, 342(3):165--168, 2006.

\bibitem[Kel82]{kelly:book}
Gregory~Maxwell Kelly.
\newblock {\em Basic concepts of enriched category theory}, volume~64 of {\em
  London Mathematical Society Lecture Note Series}.
\newblock Cambridge University Press, Cambridge, 1982.

\bibitem[Kel90]{keller:obscureAxiom}
Bernhard Keller.
\newblock Chain complexes and stable categories.
\newblock {\em Manuscripta Math.}, 67(4):379--417, 1990.

\bibitem[Kel96]{Keller:uses}
Bernhard Keller.
\newblock Derived categories and their uses.
\newblock In {\em Handbook of algebra, {V}ol.\ 1}, pages 671--701.
  North-Holland, Amsterdam, 1996.

\bibitem[Kel99]{keller:cyclic}
Bernhard Keller.
\newblock On the cyclic homology of exact categories.
\newblock {\em J. Pure Appl. Algebra}, 136(1):1--56, 1999.

\bibitem[Kel06]{keller:ICM}
Bernhard Keller.
\newblock On differential graded categories.
\newblock In {\em International {C}ongress of {M}athematicians. {V}ol. {II}},
  pages 151--190. Eur. Math. Soc., Z\"urich, 2006.

\bibitem[KML71]{KellyMacLane:Coherence}
G.~M. Kelly and S.~Mac~Lane.
\newblock Coherence in closed categories.
\newblock {\em J. Pure Appl. Algebra}, 1(1):97--140, 1971.

\bibitem[Kne77]{Knebusch:L(X)}
Manfred Knebusch.
\newblock Symmetric bilinear forms over algebraic varieties.
\newblock In {\em Conference on {Q}uadratic {F}orms---1976 ({P}roc. {C}onf.,
  {Q}ueen's {U}niv., {K}ingston, {O}nt., 1976)}, pages 103--283. Queen's Papers
  in Pure and Appl. Math., No. 46. Queen's Univ., Kingston, Ont., 1977.

\bibitem[Kob99]{kobal}
Damjan Kobal.
\newblock {$K$}-theory, {H}ermitian {$K$}-theory and the {K}aroubi tower.
\newblock {\em $K$-Theory}, 17(2):113--140, 1999.

\bibitem[KSW]{KSW}
Max Karoubi, Marco Schlichting, and Charles Weibel.
\newblock The {W}itt group of real algbraic varieties.
\newblock To appear in {\em J. Topol.}

\bibitem[May74]{may:permutative}
J.~P. May.
\newblock {$E_{\infty }$} spaces, group completions, and permutative
  categories.
\newblock In {\em New developments in topology ({P}roc. {S}ympos. {A}lgebraic
  {T}opology, {O}xford, 1972)}, pages 61--93. London Math. Soc. Lecture Note
  Ser., No. 11. Cambridge Univ. Press, London, 1974.

\bibitem[May75]{may:gpcpletions}
J.~Peter May.
\newblock Classifying spaces and fibrations.
\newblock {\em Mem. Amer. Math. Soc.}, 1(1, 155):xiii+98, 1975.

\bibitem[McC69]{McCord}
M.~C. McCord.
\newblock Classifying spaces and infinite symmetric products.
\newblock {\em Trans. Amer. Math. Soc.}, 146:273--298, 1969.

\bibitem[MH73]{MilnorHusemoeller}
John Milnor and Dale Husemoller.
\newblock {\em Symmetric bilinear forms}.
\newblock Springer-Verlag, New York, 1973.
\newblock Ergebnisse der Mathematik und ihrer Grenzgebiete, Band 73.

\bibitem[MMSS01]{MandellMayetc:diagrSp}
M.~A. Mandell, J.~P. May, S.~Schwede, and B.~Shipley.
\newblock Model categories of diagram spectra.
\newblock {\em Proc. London Math. Soc. (3)}, 82(2):441--512, 2001.

\bibitem[Mor12]{Morel:book}
Fabian Morel.
\newblock {\em $\mathbb{A}^1$-{A}lgebraeic {T}opology over a {F}ield}, volume
  2052 of {\em Lecture Notes in Mathematics}.
\newblock Springer-Verlag, Berlin, 2012.

\bibitem[MV99]{MorelVoevodsky}
Fabien Morel and Vladimir Voevodsky.
\newblock {${\bf A}^1$}-homotopy theory of schemes.
\newblock {\em Inst. Hautes \'Etudes Sci. Publ. Math.}, (90):45--143 (2001),
  1999.
  
\red{
\bibitem[Nee90]{neeman:excat}
Amnon Neeman.
\newblock The derived category of an exact category.
\newblock {\em J. Algebra}, 135(2):388--394, 1990.
}

\bibitem[PW10]{PaninWalter}
Ivan Panin and Charles Walter.
\newblock On the motivic commutative ring spectrum {BO}.
\newblock arXiv:1011.0650, 2010.

\bibitem[Qui73]{quillen:higherI}
Daniel Quillen.
\newblock Higher algebraic {$K$}-theory. {I}.
\newblock In {\em Algebraic {$K$}-theory, {I}: {H}igher {$K$}-theories ({P}roc.
  {C}onf., {B}attelle {M}emorial {I}nst., {S}eattle, {W}ash., 1972)}, pages
  85--147. Lecture Notes in Math., Vol. 341. Springer, Berlin, 1973.

\bibitem[Sch]{schwede:book}
Stefan Schwede.
\newblock {S}ymmetric {S}pectra.
\newblock http://www.math.uni-bonn.de/people/schwede/SymSpec-v3.pdf.

\bibitem[Sch04]{mygiffen}
Marco Schlichting.
\newblock Hermitian {$K$}-theory on a theorem of {G}iffen.
\newblock {\em $K$-Theory}, 32(3):253--267, 2004.

\bibitem[Sch06]{mynegK}
Marco Schlichting.
\newblock Negative {$K$}-theory of derived categories.
\newblock {\em Math. Z.}, 253(1):97--134, 2006.

\bibitem[Sch10a]{myEx}
Marco Schlichting.
\newblock Hermitian {$K$}-theory of exact categories.
\newblock {\em J. K-Theory}, 5(1):105--165, 2010.

\bibitem[Sch10b]{myMV}
Marco Schlichting.
\newblock The {M}ayer-{V}ietoris principle for {G}rothendieck-{W}itt groups of
  schemes.
\newblock {\em Invent. Math.}, 179(2):349--433, 2010.

\bibitem[Sch11]{mySedano}
Marco Schlichting.
\newblock Higher algebraic {$K$}-theory.
\newblock In {\em Topics in algebraic and topological {$K$}-theory}, volume
  2008 of {\em Lecture Notes in Math.}, pages 167--241. Springer, Berlin, 2011.

\bibitem[Seg74]{Segal:Gamma}
Graeme Segal.
\newblock Categories and cohomology theories.
\newblock {\em Topology}, 13:293--312, 1974.

\bibitem[Swa62]{Swan:VectisProj}
Richard~G. Swan.
\newblock Vector bundles and projective modules.
\newblock {\em Trans. Amer. Math. Soc.}, 105:264--277, 1962.

\bibitem[Swa77]{Swan:TopExs}
Richard~G. Swan.
\newblock Topological examples of projective modules.
\newblock {\em Trans. Amer. Math. Soc.}, 230:201--234, 1977.

\bibitem[Tho93a]{Thomason:proj}
R.~W. Thomason.
\newblock Les {$K$}-groupes d'un fibr\'e projectif.
\newblock In {\em Algebraic {$K$}-theory and algebraic topology ({L}ake
  {L}ouise, {AB}, 1991)}, volume 407 of {\em NATO Adv. Sci. Inst. Ser. C Math.
  Phys. Sci.}, pages 243--248. Kluwer Acad. Publ., Dordrecht, 1993.

\bibitem[Tho93b]{Thomason:eclate}
R.~W. Thomason.
\newblock Les {$K$}-groupes d'un sch\'ema \'eclat\'e et une formule
  d'intersection exc\'edentaire.
\newblock {\em Invent. Math.}, 112(1):195--215, 1993.

\bibitem[Tho97]{Thomason:classification}
R.~W. Thomason.
\newblock The classification of triangulated subcategories.
\newblock {\em Compositio Math.}, 105(1):1--27, 1997.

\bibitem[TT90]{TT}
R.~W. Thomason and Thomas Trobaugh.
\newblock Higher algebraic {$K$}-theory of schemes and of derived categories.
\newblock In {\em The {G}rothendieck {F}estschrift, {V}ol.\ {III}}, volume~88
  of {\em Progr. Math.}, pages 247--435. Birkh\"auser Boston, Boston, MA, 1990.

\bibitem[Wag72]{wagoner:deloop}
J.~B. Wagoner.
\newblock Delooping classifying spaces in algebraic {$K$}-theory.
\newblock {\em Topology}, 11:349--370, 1972.

\bibitem[Wal78]{wald:genProdsI}
Friedhelm Waldhausen.
\newblock Algebraic {$K$}-theory of generalized free products. {I}, {II}.
\newblock {\em Ann. of Math. (2)}, 108(1):135--204, 1978.

\bibitem[Wal85]{wald:spaces}
Friedhelm Waldhausen.
\newblock Algebraic {$K$}-theory of spaces.
\newblock In {\em Algebraic and geometric topology ({N}ew {B}runswick,
  {N}.{J}., 1983)}, volume 1126 of {\em Lecture Notes in Math.}, pages
  318--419. Springer, Berlin, 1985.

\bibitem[Wal03a]{Walter:proj}
Charles Walter.
\newblock {G}rothendieck-{W}itt groups of projective bundles.
\newblock K-theory preprint archive, 2003.

\bibitem[Wal03b]{walter:GWTrPreprint}
Charles Walter.
\newblock {G}rothendieck-{W}itt groups of triangulated categories.
\newblock {$K$}-theory preprint archive, 2003.

\bibitem[Wei81]{Weibel:Azumaya}
Charles~A. Weibel.
\newblock {$K$}-theory of {A}zumaya algebras.
\newblock {\em Proc. Amer. Math. Soc.}, 81(1):1--7, 1981.

\bibitem[WW89]{weissWilliamsAutomII}
Michael Weiss and Bruce Williams.
\newblock Automorphisms of manifolds and algebraic {$K$}-theory. {II}.
\newblock {\em J. Pure Appl. Algebra}, 62(1):47--107, 1989.

\bibitem[WW00]{someWeissWilliams}
Michael~S. Weiss and Bruce Williams.
\newblock Products and duality in {W}aldhausen categories.
\newblock {\em Trans. Amer. Math. Soc.}, 352(2):689--709, 2000.

\bibitem[Zib11]{zibrowius}
Marcus Zibrowius.
\newblock Witt groups of complex cellular varieties.
\newblock {\em Doc. Math.}, 16:465--511, 2011.

\end{thebibliography}


\end{document}